%-----------------------------------------------------------------------
% Beginning of memo-l-template.tex
%-----------------------------------------------------------------------

\documentclass{memo-l}

\usepackage{amssymb}
\usepackage{array}
\usepackage{stmaryrd}
\usepackage{enumitem}
\usepackage{tikz}
\usetikzlibrary{calc}
\usetikzlibrary{arrows}

\newcommand{\Z}{\mathbb{Z}}

\newcommand{\F}{\mathbb{F}}
\newcommand{\D}{\mathbb{D}}
\newcommand{\Sph}{\mathbb{S}}

\newcommand{\lamalg}{\scalebox{1.25}{$\amalg$}}

\newcommand\acts{\mathrel{\rotatebox[origin=c]{-90}{$\circlearrowright$}}}
\newcommand\xqed[1]{
  \leavevmode\unskip\penalty9999 \hbox{}\nobreak\hfill
  \quad\hbox{#1}}
\newcommand\customendremark{\xqed{$\vert\vert$}}

\DeclareFontFamily{OT1}{pzc}{}
\DeclareFontShape{OT1}{pzc}{m}{it}{<-> s * [1.13] pzcmi7t}{}
\DeclareMathAlphabet{\mathpzc}{OT1}{pzc}{m}{it}
\DeclareFontFamily{U}{mathx}{\hyphenchar\font45}
\DeclareFontShape{U}{mathx}{m}{n}{<-> mathx10}{}
\DeclareSymbolFont{mathx}{U}{mathx}{m}{n}
\DeclareMathAccent{\widebar}{0}{mathx}{"73}

\newtheorem{Theorem}{Theorem}[chapter]
\newtheorem{Proposition}[Theorem]{Proposition}
\newtheorem{Lemma}[Theorem]{Lemma}
\newtheorem{Corollary}[Theorem]{Corollary}

\theoremstyle{definition}
\newtheorem{Definition}[Theorem]{Definition}
\newtheorem{Remark}[Theorem]{Remark}
\newtheorem{Example}[Theorem]{Example}

\numberwithin{section}{chapter}
\numberwithin{equation}{chapter}

\makeindex

\begin{document}

\frontmatter

\title{Stabilizations of $\mathbb{E}_\infty$ Operads and $p$-Adic Stable Homotopy Theory}

\author{Montek Singh Gill}

% \address{Department of Mathematics, University of Michigan, Ann Arbor, Michigan 48109}

\email{monteksgill@gmail.com}

% \dedicatory{To Tavan, for her constant love and encouragement; and to Erbir, for the looks that he gives me when we read together.}

\date{November 3, 2022, and, in revised form, July 7, 2024.}

\subjclass[2010]{Primary 55P42, 18N70; Secondary 55S10, 55S20.}

\keywords{operads, algebras, cochain complexes, cohomology operations, stable homotopy theory}

\begin{abstract}
We study differential graded operads and $p$-adic stable homotopy theory. We first construct a new class of differential graded operads, which we call the stable operads. These operads are, in a particular sense, stabilizations of $\mathbb{E}_\infty$ operads. We provide an application of these stable operads to $p$-adic stable homotopy theory. It is well-known that cochains on spaces yield examples of algebras over $\mathbb{E}_\infty$ operads. We show that, in the stable case, cochains on spectra yield examples of algebras over our stable operads. Moreover, a result of Mandell says that, endowed with the $\mathbb{E}_\infty$ algebraic structure, cochains on spaces provide algebraic models of $p$-adic homotopy types. We show that, endowed with the algebraic structure encoded by our stable operads, spectral cochains provide algebraic models for $p$-adic stable homotopy types.
\end{abstract}

\maketitle

\tableofcontents

\mainmatter

%-----------------------------------------------------------------------
% Beginning of chap1.tex
%-----------------------------------------------------------------------

\chapter{Introduction}

\section{Description of the Results}

In this work, we cover two main topics:

\begin{itemize}
	\item[(1)] Stable differential graded operads, which are a new class of operads, and which are, in a particular sense, stable analogues of $\mathbb{E}_\infty$ operads. (Henceforth, ``differential graded'' will often be abbreviated as ``dg''\index{dg}.)
	\item[(2)] Algebraic models of $p$-adic stable homotopy types using these operads.
\end{itemize}

First, let us recall the notion of an $\mathbb{E}_\infty$ cochain operad and how it gives rise to algebraic models of $p$-adic homotopy types. Let $\mathpzc{E}^\dagger$ be a model for the $\mathbb{E}_\infty$ cochain operad (throughout this work, we have used the symbol $\dagger$ to distinguish dg objects, such as modules, operads, etc., with differentials of degree $+1$ from corresponding such objects with differentials of degree $-1$). Given a cochain complex $X$, the structure of an algebra over $\mathpzc{E}^\dagger$ encodes a homotopy coherent commutative, associative and unital multiplication. If we take the cohomology of the complex (killing the higher homotopies, so to speak), we find that $\mathrm{H}^\bullet(X)$ inherits a (graded) commutative algebra structure in the traditional sense. In fact, the cohomology inherits even more structure. It posseses certain cohomology operations $P^s$, $s \in \Z$, which satisfy an instability condition, and as a result becomes an unstable module over $\mathcal{B}$, the algebra of generalized Steenrod operations. \\

A particular case of such $\mathpzc{E}^\dagger$-algebras are the cochains $\mathrm{C}^\bullet(X)$ on spaces $X$. In this case, the algebra structure on the cohomology is given by the cup product, while the operations are the Steenrod operations. While the cochains, as a dg module, might not remember the homotopy type of a space, in~\cite{Mandell}, Mandell demonstrated that if we take cochains with coefficients in $\overline{\F}_p$, the cochains functor
\[
\text{C}^\bullet(-;\overline{\F}_p) \colon \mathsf{Spc}^{\text{op}} \to \widebar{\mathpzc{E}}^\dagger\text{-}\mathsf{Alg}
\]
as a functor to $\widebar{\mathpzc{E}}^\dagger$-algebras (where $\widebar{\mathpzc{E}}^\dagger$ is the $\mathbb{E}_\infty$ cochain operad over $\overline{\F}_p$), induces a full embedding of the homotopy category of spaces into the derived category of $\widebar{\mathpzc{E}}^\dagger$-algebras when we restrict to connected nilpotent $p$-complete spaces of finite $p$-type. In fact, there is a retraction from $\widebar{\mathpzc{E}}^\dagger$-algebras to spaces given by the derived mapping space of functions to the constant algebra $\overline{\F}_p$ (the cochains on a point). This latter part is implicit in Mandell’s paper, and is worked out in~\cite{French}. This implies that all higher homotopical information for spaces, such as Toda brackets, is captured in $\widebar{\mathpzc{E}}^\dagger$-algebras, via the $\overline{\F}_p$-cochains. Thus, while rational homotopy types admit algebraic models via CDGAs, when working $p$-adically, we can use $\widebar{\mathpzc{E}}^\dagger$-algebras.  \\ 

Now we move onto stable operads and stable homotopy types. First of all, we show that the operad $\mathpzc{E}^\dagger$ possesses a stabilization map
\[
\Psi \colon \Sigma \mathpzc{E}^\dagger \to \mathpzc{E}^\dagger
\]
from its operadic suspension to itself. Using the maps $\Sigma^k\Psi \colon \Sigma^{k+1}\mathpzc{E}^\dagger \to \Sigma^k\mathpzc{E}^\dagger$, for $k \ge 0$, via an inverse limit, we produce a new operad, denoted $\mathpzc{E}_{\text{st}}^\dagger$, which is our stable operad. In fact, we have a new class of operads, the stable operads, in the sense that we are able to perform the above construction for multiple models of the $\mathbb{E}_\infty$ operad (we have a stable Barratt-Eccles operad, a stable McClure-Smith operad, and also a stable Eilenberg-Zilber operad, though the Eilenberg-Zilber is not quite an $\mathbb{E}_\infty$ operad). The stable operad $\mathpzc{E}_{\text{st}}^\dagger$ appears to be of independent interest outside of its application, which we discuss below, to $p$-adic stable homotopy theory; for example,  due to its homotopy additivity, which we also discuss below. While $\mathbb{E}_\infty$ operads are characterized intrinsically as cofibrant replacements of the commutative operad, we unfortunately do not provide an intrinsic definition of a stabilized $\mathbb{E}_\infty$ operad (one could speculate a definition such as a cofibrant replacement of the constant operad in a model category of additive operads, where the constant operad is that for which the associated monad is the identity). After having constructed several stabilizations in Chapter 2, by ``the'' in ``the stable operad'', we shall generally mean the stabilization of the Barratt-Eccles operad. \\

Having constructed the stable operad, first, we demonstrate that one has homotopical control over $\mathpzc{E}_{\text{st}}^\dagger$ and the corresponding category of algebras $\mathpzc{E}_{\text{st}}^\dagger\text{-}\mathsf{Alg}$ in the following sense.

\begin{Theorem}\label{thm:monadweakequiv}
\textit{The monad $\mathbf{E}^\dagger_{\mathbf{st}}$ associated to $\mathpzc{E}_{\emph{st}}^\dagger$ preserves quasi-isomorphisms.}
\end{Theorem}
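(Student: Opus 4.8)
The plan is to reduce the statement to an arity-by-arity claim about the complexes $\mathpzc{E}_{\text{st}}^\dagger(n)$ and then to exploit the description of each as an inverse limit of $\Sigma_n$-free complexes. As an endofunctor of cochain complexes over $\overline{\F}_p$, the monad unwinds as $\mathbf{E}^\dagger_{\mathbf{st}}(X) = \bigoplus_{n\geq 0}\bigl(\mathpzc{E}_{\text{st}}^\dagger(n)\otimes X^{\otimes n}\bigr)_{\Sigma_n} = \bigoplus_{n\geq 0}\mathpzc{E}_{\text{st}}^\dagger(n)\otimes_{\overline{\F}_p[\Sigma_n]}X^{\otimes n}$, and it is essential here that we are taking \emph{strict} coinvariants: the entire content of the theorem is that, for the stable operad, these strict coinvariants are homotopically correct. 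Since cohomology commutes with direct sums, it suffices to show, for each $n$, that $X\mapsto \mathpzc{E}_{\text{st}}^\dagger(n)\otimes_{\overline{\F}_p[\Sigma_n]}X^{\otimes n}$ preserves quasi-isomorphisms. (If $\mathbf{E}^\dagger_{\mathbf{st}}$ is instead the product over arities, one first notes that the degree shifts built into the construction of $\mathpzc{E}_{\text{st}}^\dagger$ make only finitely many arities contribute in each cohomological degree, so the product is computed degreewise as a finite sum and the same reduction applies, with a Milnor sequence disposing of the residual $\lim^1$.) For $n=0$ the functor is constant and for $n=1$ it is $\mathpzc{E}_{\text{st}}^\dagger(1)\otimes_{\overline{\F}_p}(-)$, so these cases are immediate, $\overline{\F}_p$ being a field.

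For $n\geq 2$ the crux is that $\mathpzc{E}_{\text{st}}^\dagger(n)$ is $K$-flat — indeed $K$-projective — as a complex of $\overline{\F}_p[\Sigma_n]$-modules. Granting this, if $f\colon X\to Y$ is a quasi-isomorphism then $f^{\otimes n}$ is a quasi-isomorphism of complexes of $\overline{\F}_p[\Sigma_n]$-modules by the K\"unneth theorem over a field, and tensoring over $\overline{\F}_p[\Sigma_n]$ with $\mathpzc{E}_{\text{st}}^\dagger(n)$ — which then computes $\mathpzc{E}_{\text{st}}^\dagger(n)\otimes^{\mathbb{L}}_{\overline{\F}_p[\Sigma_n]}(-)$ — sends it to a quasi-isomorphism. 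So the proof is reduced to establishing this $K$-projectivity.

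To obtain it, I would argue as follows. Each operadic suspension $\Sigma^k\mathpzc{E}^\dagger$ is, in arity $n$, a degreewise $\Sigma_n$-free complex: $\mathpzc{E}^\dagger(n)$ is $\Sigma_n$-free because $\mathpzc{E}^\dagger$ is $\mathbb{E}_\infty$, and forming the operadic suspension only shifts degrees and twists by the sign character, both of which preserve $\Sigma_n$-freeness (for free $F$ one has $F\otimes\mathrm{sgn}_n\cong F$ via the usual untwisting isomorphism). Now $\mathpzc{E}_{\text{st}}^\dagger(n)=\lim_k(\Sigma^k\mathpzc{E}^\dagger)(n)$ is a countable inverse limit, computed degreewise, of $\Sigma_n$-free complexes along the maps induced by $\Psi$. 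Reading off the explicit form of $\Psi$ from its construction, these transition maps are degreewise split $\Sigma_n$-epimorphisms, so in each degree the limit is a countable product of projective $\overline{\F}_p[\Sigma_n]$-modules (the kernels, being summands of free modules, are projective). Since $\overline{\F}_p[\Sigma_n]$ is finite-dimensional over a field, hence Artinian, hence left perfect and coherent, an arbitrary product of projective modules over it is again projective (a theorem of Chase). Thus $\mathpzc{E}_{\text{st}}^\dagger(n)$ is a degreewise $\Sigma_n$-projective complex; combining this with the boundedness of $\mathpzc{E}_{\text{st}}^\dagger(n)$ in the cohomological grading — also visible from the construction — yields that it is $K$-projective.

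The main obstacle is precisely this control of the inverse limit: one must check, from the concrete form of $\Psi$, that the transition maps of the tower are (degreewise, equivariantly) split surjections — or at least Mittag--Leffler, sufficiently so that the strict limit agrees with the homotopy limit and degreewise projectivity is retained — and that $\mathpzc{E}_{\text{st}}^\dagger(n)$ remains bounded enough cohomologically for ``degreewise projective'' to upgrade to ``$K$-projective'' (boundedness is genuinely needed, since an unbounded acyclic complex of free $\overline{\F}_p[\Sigma_n]$-modules need not be contractible once $p\mid n!$). An alternative packaging with the same technical heart is to observe that $\{\Sigma^k\mathpzc{E}^\dagger\}_k$ is a tower of operads whose associated monads each preserve weak equivalences (being monads of $\Sigma_n$-free operads), that the strict limit agrees with the homotopy limit by a Mittag--Leffler argument, and that a homotopy limit of weak-equivalence-preserving functors preserves weak equivalences — but verifying the Mittag--Leffler condition and the commutation of the monad construction with the homotopy limit reproduces exactly the points above.
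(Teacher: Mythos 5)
Your reduction to the arity-by-arity statement and the $n\le 1$ base cases are fine, but the core claim for $n\ge 2$ — that $\mathpzc{E}_{\text{st}}^\dagger(n)$ is $K$-projective (or even $K$-flat) over $\F_p[\Sigma_n]$ — is false, and the proof collapses at exactly the point you flag as the danger. The boundedness you invoke does not hold: Proposition~\ref{prop:Est(2)} exhibits $\mathpzc{E}^\dagger_{\text{st}}(2)$ as a copy of $\F_2[\Sigma_2]$ in \emph{every} cohomological degree, unbounded in both directions. More decisively, $\mathpzc{E}_{\text{st}}^\dagger(n)$ cannot be $K$-flat: by Proposition~\ref{prop:stabhom} it is non-equivariantly acyclic for $n\ge 2$, yet the remark after Proposition~\ref{prop:Est(2)} (and Remark~\ref{rmk:eqhomnontrivial}) shows that the strict coinvariants $\mathpzc{E}_{\text{st}}^\dagger(n)/\Sigma_n$ have highly non-trivial cohomology. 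An acyclic $K$-flat complex has acyclic tensor products with everything, so the coinvariants would also have to vanish. Thus $\mathpzc{E}_{\text{st}}^\dagger(n)$ is precisely the ``unbounded acyclic complex of free modules that is not contractible'' you warned about. This is not a bug of the stable operad but its defining feature: the strict coinvariants do \emph{not} compute derived coinvariants, and the theorem cannot be deduced from them agreeing.

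Your ``alternative packaging'' is much nearer to the paper's actual argument (Proposition~\ref{prop:MS_st_pres_w_eqs} via Lemma~\ref{lem:finflatEst} and Proposition~\ref{prop:stabmapontoML}), but it also skips the step that carries the real weight. The functor $X\mapsto\mathpzc{E}_{\text{st}}^\dagger(n)\otimes_{\Sigma_n}X^{\otimes n}$ is \emph{not} the inverse limit, let alone the homotopy limit, of the functors $X\mapsto(\Sigma^k\mathpzc{E}^\dagger)(n)\otimes_{\Sigma_n}X^{\otimes n}$: the tensor product does not commute with the inverse limit for general $X$. The paper first proves a restricted statement — $\mathpzc{E}_{\text{st}}^\dagger(n)$ is ``semi-flat'' in the sense of Definition~\ref{def:fin_flat}, i.e.\ $\mathpzc{E}_{\text{st}}^\dagger(n)\otimes_{\Sigma_n}(-)$ preserves quasi-isomorphisms between \emph{finite} complexes — and it is exactly here that tensoring does commute with the limit (degreewise finite dimensionality of $(\Sigma^k\mathpzc{E}^\dagger)(n)$ plus $\F_p[\Sigma_n]$ Noetherian), after which a Mittag–Leffler $\lim^1$ argument finishes. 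One then has to pass from finite to arbitrary complexes by a separate filtered-colimit argument, exploiting that over a field every complex splits as a sum of sphere and disk complexes; this step is not automatic because the two filtered colimits (over finite subcomplexes of $X$ and of $Y$) are not indexed by the same category, which the paper resolves with a two-case analysis. So the correct route is the one your ``alternative'' gestures at, but with the Mittag–Leffler argument applied to $\mathpzc{E}_{\text{st}}^\dagger(n)\otimes_{\Sigma_n}Z$ for finite $Z$ rather than to the operad or monad by itself, and with an explicit colimit argument to drop the finiteness.
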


\begin{Theorem}\label{thm:semimodelstr}
\textit{The category $\mathpzc{E}_{\text{st}}^\dagger\text{-}\mathsf{Alg}$ admits a Quillen semi-model structure where the weak equivalences and fibrations are the quasi-isomorphisms and degreewise epimorphisms.}
\end{Theorem}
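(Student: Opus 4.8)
\emph{Sketch of the argument.} The plan is to obtain the claimed structure by transferring the standard (projective) model structure on the category $\mathsf{Ch}^\dagger$ of cochain complexes over the ground ring along the free--forgetful adjunction
\[
F \colon \mathsf{Ch}^\dagger \rightleftarrows \mathpzc{E}_{\text{st}}^\dagger\text{-}\mathsf{Alg} \colon U ,
\]
where $F$ is the free-algebra functor attached to the monad $\mathbf{E}^\dagger_{\mathbf{st}}$. Recall that $\mathsf{Ch}^\dagger$ is cofibrantly generated, with the usual generating cofibrations $I=\{\,S^{n-1}\hookrightarrow D^n\,\}$ and generating acyclic cofibrations $J=\{\,0\hookrightarrow D^n\,\}$, and that every object of $\mathsf{Ch}^\dagger$ is small. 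I would declare a morphism of $\mathpzc{E}_{\text{st}}^\dagger$-algebras to be a weak equivalence, respectively a fibration, exactly when $U$ carries it to a quasi-isomorphism, respectively a degreewise epimorphism, and a cofibration when it has the left lifting property against the acyclic fibrations; the candidate generating (acyclic) cofibrations are $FI$, respectively $FJ$. The task is then to verify the axioms of a cofibrantly generated Quillen semi-model structure, in the form isolated for algebras over operads by Spitzweck, Fresse, and White--Yau.

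The formal inputs are routine. The category $\mathpzc{E}_{\text{st}}^\dagger\text{-}\mathsf{Alg}$ is complete, with limits created by $U$, and cocomplete, because the monad $\mathbf{E}^\dagger_{\mathbf{st}}$ is finitary: in each arity it is assembled from a finite tensor power and a colimit over the finite group $\Sigma_n$, and these, together with the coproduct over $n$, commute with filtered colimits. In particular $U$ preserves filtered colimits, so $F$ sends small objects to small objects and the small object argument applies to $FI$ and to $FJ$, giving functorial factorizations. The factorization built from $FI$ yields (cofibration, acyclic fibration), since by adjunction the acyclic fibrations are precisely the maps with the right lifting property against $FI$, using the corresponding fact in $\mathsf{Ch}^\dagger$. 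The two-out-of-three and retract axioms pass through $U$ from $\mathsf{Ch}^\dagger$. So everything reduces to the factorization coming from $FJ$ and to the characterization of acyclic cofibrations with cofibrant domain.

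The crux is to show that every relative $FJ$-cell complex whose initial object is cofibrant has underlying map a quasi-isomorphism; this is what promotes the $FJ$-small-object factorization to an (acyclic cofibration, fibration) factorization on cofibrant algebras, and it is the point at which Theorem~\ref{thm:monadweakequiv} enters (which in particular already gives that $F$ preserves weak equivalences). Concretely, for $j\in J$, a cofibrant algebra $A$, and a pushout in $\mathpzc{E}_{\text{st}}^\dagger\text{-}\mathsf{Alg}$ of $F(j)\colon F(K)\to F(L)$ along a map $F(K)\to A$, with pushout object $P$, I would invoke the standard filtration $A=P_0\to P_1\to\dots\to\operatorname{colim}_i P_i = P$ in which $P_{i-1}\to P_i$ is a pushout along a map built from the $i$-ary part $\operatorname{Env}_A(i)$ of the enveloping operad of $A$, tensored over $\Sigma_i$ with an $i$-fold pushout-product power of $j$. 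Since $j$ is an acyclic cofibration of cochain complexes and $A$ is cofibrant, each such map is a quasi-isomorphism; unwinding this reduces to the statement that the enveloping operad of a cofibrant algebra is suitably cofibrant as a symmetric sequence, which is controlled by the same good $\Sigma_i$-homotopical behaviour of $\mathpzc{E}_{\text{st}}^\dagger$ that underlies Theorem~\ref{thm:monadweakequiv} (and, arity by arity, by the near-triviality of its non-equivariant homology recorded in that proof). Passing to the transfinite composite and using that $U$ preserves filtered colimits, $A\to P$ is a quasi-isomorphism, and a retract argument then shows that an acyclic cofibration with cofibrant domain has the left lifting property against all fibrations.

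The main obstacle will be precisely this homotopical analysis of pushouts of free maps over a ground ring that is not a field of characteristic zero: there $\Sigma_i$-modules need not be projective, so the bar-construction arguments that in the characteristic-zero case would yield a full model structure must be run with the cofibrancy hypothesis on $A$ in force, and one must check that the $\Sigma_i$-equivariant cofibrancy of the stable operad established en route to Theorem~\ref{thm:monadweakequiv} is exactly strong enough to make the associated graded pieces of the filtration into quasi-isomorphisms. It is the failure of this for non-cofibrant domains that accounts for ``semi-model'' rather than ``model''; granting it, the axioms of a cofibrantly generated Quillen semi-model structure with the stated weak equivalences and fibrations are all verified.
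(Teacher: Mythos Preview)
Your outline is essentially the paper's approach: reduce to showing that, for a cell algebra $A$, the map $A\to A\amalg \mathbf{E}_{\mathbf{st}}^\dagger(\D^n)$ is a quasi-isomorphism (this is exactly Mandell's criterion, recorded in the paper as Proposition~\ref{prop:amenable_implies_admissible}, and it is the same condition your $FJ$-pushout analysis isolates), and then compute that coproduct via the enveloping operad as $\bigoplus_{j\ge 0}\mathpzc{U}^A(j)\otimes_{\Sigma_j}(\D^n)^{\otimes j}$.

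Where your sketch is vague, the paper is precise, and the precision matters. The stable operad is \emph{not} $\Sigma_n$-cofibrant (unlike the unstable Barratt--Eccles operad, each $\mathpzc{E}_{\text{st}}^\dagger(n)$ is an inverse limit and is not $\F_p[\Sigma_n]$-free), so the standard ``$\Sigma$-cofibrant operad $\Rightarrow$ cofibrant enveloping operad'' argument you gesture at does not apply directly. The paper introduces instead the weaker notion of \emph{semi-flatness} (preserving quasi-isomorphisms between \emph{finite} $\F_p[\Sigma_n]$-modules), proves that each $\mathpzc{E}_{\text{st}}^\dagger(n)$ is semi-flat by commuting the inverse limit with the tensor product using the Mittag-Leffler property of the tower $\cdots\to\Sigma^k\mathpzc{E}^\dagger(n)\to\cdots$ together with the $\Sigma_n$-freeness of each stage, and then propagates semi-flatness to $\mathpzc{U}^A(j)$ for cell $A$ by the usual double induction on the cell filtration. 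Your parenthetical that this is controlled ``arity by arity, by the near-triviality of its non-equivariant homology'' is a slight misdirection: the relevant input is not Proposition~\ref{prop:stabhom} but rather the Mittag-Leffler/$\lim^1$ argument of Lemma~\ref{lem:finflatEst}, which is indeed the same lemma that underlies Theorem~\ref{thm:monadweakequiv}, as you correctly say.
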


(See Definition~\ref{def:category_semi_model_str} for the definition of a Quillen semi-model structure, a weakening of the more well-known notion of a Quillen model structure.) Next, we develop a theory of cohomology operations for algebras over $\mathpzc{E}_{\text{st}}^\dagger$. We once again get operations $P^s$ for $s \in \Z$, but they now no longer satisfy the instability condition.

\begin{Theorem}\label{thm:homops}
\textit{We have the following:}
\begin{itemize}
	\item[(i)] \textit{The cohomologies of $\mathpzc{E}_{\emph{st}}^\dagger$-algebras possess natural operations $P^s$ for $s \in \Z$ which satisfy the Adem relations.}
	\item[(ii)] \textit{Given a cochain complex $X$, we have a natural isomorphism}
\[
\emph{H}^\bullet(\mathbf{E}^\dagger_{\normalfont{\textbf{st}}} X) \cong \widehat{\mathcal{B}} \otimes \emph{H}^\bullet(X).
\]
\end{itemize}
\end{Theorem}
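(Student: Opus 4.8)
The plan is to build the operations $P^s$ out of the $\Sigma_p$-equivariant homology of $\mathpzc{E}_{\text{st}}^\dagger(p)$, to extract the Adem relations from arity $p^2$, to compute $\text{H}^\bullet(\mathbf{E}^\dagger_{\text{st}} X)$ by reducing to a complex with vanishing differential, and finally to read off the failure of instability from that computation. For the construction of $P^s$ I would imitate the usual construction of Steenrod-type operations for $\mathbb{E}_\infty$-algebras, the only new ingredient being that the arity-$p$ part of the $\mathbb{E}_\infty$ operad is replaced by its stabilization. Recall that $\mathpzc{E}^\dagger(p)$ is homologically a free resolution of the trivial $\Sigma_p$-module, so that $\text{H}^\bullet_{\Sigma_p}(\mathpzc{E}^\dagger(p))$ recovers the cohomology of $\Sigma_p$ with suitably twisted coefficients; this is supported in a half-line of degrees, and it is precisely its truncation that produces the instability condition in the unstable case. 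Operadic suspension shifts degrees and twists by $\operatorname{sgn}_p$, so the iterated stabilization $\mathpzc{E}_{\text{st}}^\dagger(p) = \varprojlim_k \Sigma^k \mathpzc{E}^\dagger(p)$ has $\Sigma_p$-equivariant homology a two-sided, Tate-type object. Pairing its classes through the structure map $\mathpzc{E}_{\text{st}}^\dagger(p) \otimes A^{\otimes p} \to A$ and restricting along the diagonal gives, for each $s \in \Z$, a natural operation $P^s$ on $\text{H}^\bullet(A)$; the extension of the index to all of $\Z$, rather than the classical half-line, is exactly what the passage to the Tate object buys.

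The Adem relations are obtained, as in the unstable setting, from arity $p^2$: using the operadic composition $\mathpzc{E}_{\text{st}}^\dagger(p) \otimes (\mathpzc{E}_{\text{st}}^\dagger(p))^{\otimes p} \to \mathpzc{E}_{\text{st}}^\dagger(p^2)$ and the compatibility of the wreath-product $\Sigma_p \wr \Sigma_p$-structure on $\mathpzc{E}_{\text{st}}^\dagger(p^2)$ with its $\Sigma_{p^2}$-structure, one reduces the required identities to relations in $\text{H}^\bullet_{\Sigma_p \wr \Sigma_p}(\mathpzc{E}_{\text{st}}^\dagger(p^2))$. Since the Adem relations carry no instability truncation, the classical derivation (for instance in the Bullett--Macdonald form) transports once the stabilized equivariant homology has been identified; the only point demanding care is commuting homology with the inverse limit defining $\mathpzc{E}_{\text{st}}^\dagger$, which I would handle by verifying a Mittag--Leffler condition so that the relevant $\varprojlim^1$ terms vanish.

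For part (ii), since $\mathbf{E}^\dagger_{\text{st}}$ preserves weak equivalences by Theorem~\ref{thm:monadweakequiv} and we are working over the ground field, I may replace $X$ by $\text{H}^\bullet(X)$ equipped with the zero differential, after which naturality reduces the claim to that case. I then write $\mathbf{E}^\dagger_{\text{st}} X$ as the (completed) sum over arities of the terms $(\mathpzc{E}_{\text{st}}^\dagger(n) \otimes X^{\otimes n})_{\Sigma_n}$, compute each by commuting cohomology past the inverse limit as above, and feed in the stabilized equivariant homology from the previous steps. Two features are decisive. First, the truncation that in the unstable case cuts each arity down to its unstable range disappears, so each prime-power arity $p^k$ contributes the whole family of length-$k$ admissible iterated operations. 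Second, the stabilization destroys the unit and the multiplication of $\mathpzc{E}^\dagger$ --- concretely, $\mathpzc{E}_{\text{st}}^\dagger(0)$ vanishes and the degree-zero part of $\mathpzc{E}_{\text{st}}^\dagger(2)$ no longer carries a multiplicative class --- so that there is no product on the cohomology of an $\mathpzc{E}_{\text{st}}^\dagger$-algebra, and the ``decomposable'' classes of the unstable computation either disappear in the limit or are reinterpreted as genuine (linear) operations; this is closely related to the homotopy-additivity of $\mathbf{E}^\dagger_{\text{st}}$. Matching admissible monomials with a topological basis of $\widehat{\mathcal{B}}$ --- which simultaneously encodes the Adem relations --- one concludes that the sum over all $n$ gives exactly $\widehat{\mathcal{B}} \otimes \text{H}^\bullet(X)$, naturally in $X$, exhibited thereby as the free $\widehat{\mathcal{B}}$-module on $\text{H}^\bullet(X)$. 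Specializing $X$ to a shift of the ground field and inspecting the action of $P^s$ on the generator then shows $P^s x \neq 0$ outside the classical instability range, which is the final assertion of part (i).

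I expect the main obstacle to be the equivariant homology computation and its interplay with the inverse limit: identifying $\text{H}^\bullet_{\Sigma_{p^k}}(\mathpzc{E}_{\text{st}}^\dagger(p^k))$ as the anticipated Tate-type object, controlling the $\varprojlim^1$ terms, and --- most delicately --- showing precisely how the unstable decomposables are absorbed, so that the outcome is a free module rather than a free algebra. The remaining ingredients, the construction of the $P^s$ and the Adem relations, are in essence the classical arguments rerun over the stabilized operad.
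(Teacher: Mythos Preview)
Your proposal is broadly correct and would succeed, but the paper takes a different and somewhat more economical route, essentially reversing your logical order. You propose to construct the $P^s$ first (from the Tate-type equivariant homology in arity $p$), derive the Adem relations directly from arity $p^2$ via the wreath-product comparison, and only then compute the free algebra cohomology. The paper instead proves part (ii) first and reads off part (i) as a corollary: once $\text{H}^\bullet(\mathbf{E}^\dagger_{\text{st}} A) \cong \widehat{\mathcal{B}} \otimes \text{H}^\bullet(A)$ is established, the operations and Adem relations come for free from the composite $\widehat{\mathcal{B}} \otimes \text{H}^\bullet(A) \cong \text{H}^\bullet(\mathbf{E}^\dagger_{\text{st}} A) \to \text{H}^\bullet(A)$, since the Adem relations are built into $\widehat{\mathcal{B}}$. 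The key technical device for (ii) is an intermediate functor $\text{A}(X) := \operatorname{colim}_t \varprojlim_k \text{F}_t \text{H}^\bullet((\Sigma^k\mathbf{E}^\dagger)X)$, where $\text{F}_t$ truncates at arity $\le t$; the point is that each $\text{H}^\bullet((\Sigma^k\mathbf{E}^\dagger)X)$ is already computed by the classical unstable result (a free graded-commutative algebra on $P^I c_i$ with the shifted instability bound $e(I) < |c_i| + k$), and a single lemma shows that the stabilization maps kill all products, including the unit. The completion $\widehat{\mathcal{B}}$ then arises precisely from the $\operatorname{colim}_t \varprojlim_k$ interplay, which is exactly how the paper \emph{defines} $\widehat{\mathcal{B}}$. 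Maps $\Phi_1, \Phi_2$ from $\text{H}^\bullet(\mathbf{E}^\dagger_{\text{st}} X)$ and $\widehat{\mathcal{B}} \otimes \text{H}^\bullet(X)$ into $\text{A}(X)$ are shown to be isomorphisms for finite $X$ and then to have coinciding image in general by a filtered-colimit argument.

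What your route buys is conceptual transparency: the Tate picture in arity $p$ makes the disappearance of instability visible at the outset. What the paper's route buys is economy---it never reopens the Adem computation, and your flagged ``most delicate'' step (absorption of unstable decomposables) reduces to the single observation that products die under stabilization. One small correction: $\mathbf{E}^\dagger_{\text{st}} X$ is an honest direct sum over arities, not a completed sum; the completion in $\widehat{\mathcal{B}}$ lives entirely inside each fixed arity, coming from the inverse limit there.
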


Here $\widehat{\mathcal{B}}$ is a certain completion, with respect to a filtration by excess, of the algebra $\mathcal{B}$. Note that, in the unstable case of $\mathpzc{E}^\dagger$, in (ii) above, we would not only have to tensor with $\mathcal{B}$ to account for the cohomology operations, but also enforce the instability condition and also take a polynomial algebra to add in products. In the case of $\mathpzc{E}_{\text{st}}^\dagger$, the instability of the operations and the products are no longer present. \\

Next, we justify the ``stable'' in ``stable operad''. This should of course be a statement about homotopy coherent, or $\infty$-, additivity, and this is exactly what we demonstrate. In particular, we demonstrate that the monad $\mathbf{E}_{\textbf{st}}^\dagger$ is homotopy coherent, or $\infty$-, additive, in the following sense.

\begin{Theorem}\label{thm:stab}
\textit{We have the following:}
\begin{itemize}
	\item[(i)] \textit{For dg modules $X$ and $Y$, we have a natural quasi-isomorphism}
\[
\mathbf{E}^\dagger_{\normalfont{\textbf{st}}}(X \oplus Y) \sim \mathbf{E}^\dagger_{\normalfont{\textbf{st}}}(X) \oplus \mathbf{E}^\dagger_{\normalfont{\textbf{st}}}(Y).
\]
	\item[(ii)] \textit{More generally, for cofibrant $\mathpzc{E}_{\emph{st}}^\dagger$-algebras $A$ and $B$, we have a natural quasi-isomorphism}
\[
A \amalg B \sim A \oplus B.
\]
\end{itemize}
\end{Theorem}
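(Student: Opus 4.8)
The two parts are linked: part~(i) is the computational heart, and part~(ii) will be deduced from it by resolving both algebras by free algebras.

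For part~(i) I would first write down the comparison map. The two projections $p_X\colon X\oplus Y\to X$ and $p_Y\colon X\oplus Y\to Y$ yield algebra maps $\mathbf{E}^\dagger_{\mathbf{st}}(p_X)$, $\mathbf{E}^\dagger_{\mathbf{st}}(p_Y)$, and since the underlying dg module of the product of $\mathbf{E}^\dagger_{\mathbf{st}}(X)$ and $\mathbf{E}^\dagger_{\mathbf{st}}(Y)$ in $\mathpzc{E}^\dagger_{\text{st}}\hyph\mathsf{Alg}$ is $\mathbf{E}^\dagger_{\mathbf{st}}(X)\oplus\mathbf{E}^\dagger_{\mathbf{st}}(Y)$, these assemble into a natural map $\Phi\colon\mathbf{E}^\dagger_{\mathbf{st}}(X\oplus Y)\to\mathbf{E}^\dagger_{\mathbf{st}}(X)\oplus\mathbf{E}^\dagger_{\mathbf{st}}(Y)$ (which, modulo the acyclicity remark below, represents the canonical map from the coproduct $\mathbf{E}^\dagger_{\mathbf{st}}(X)\amalg\mathbf{E}^\dagger_{\mathbf{st}}(Y)\cong\mathbf{E}^\dagger_{\mathbf{st}}(X\oplus Y)$ to the product). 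To see $\Phi$ is a quasi-isomorphism I would pass to cohomology and invoke Theorem~\ref{thm:homops}(ii), which identifies $\mathrm{H}^\bullet\circ\mathbf{E}^\dagger_{\mathbf{st}}$, naturally in the dg module argument, with the additive functor $\widehat{\mathcal{B}}\otimes\mathrm{H}^\bullet(-)$. Since $\mathrm{H}^\bullet$ carries $X\oplus Y$ to $\mathrm{H}^\bullet(X)\oplus\mathrm{H}^\bullet(Y)$ and $\widehat{\mathcal{B}}\otimes-$ commutes with finite direct sums, naturality forces $\mathrm{H}^\bullet(\Phi)$ to be the canonical isomorphism $\widehat{\mathcal{B}}\otimes(\mathrm{H}^\bullet X\oplus\mathrm{H}^\bullet Y)\xrightarrow{\ \sim\ }(\widehat{\mathcal{B}}\otimes\mathrm{H}^\bullet X)\oplus(\widehat{\mathcal{B}}\otimes\mathrm{H}^\bullet Y)$. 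The one subtlety is the arity-zero summand $\mathbf{E}^\dagger_{\mathbf{st}}(0)=\mathpzc{E}^\dagger_{\text{st}}(0)$, which appears once on the source of $\Phi$ and twice on the target; but Theorem~\ref{thm:homops}(ii) with $X=0$ shows it is acyclic, so this is invisible to $\mathrm{H}^\bullet$ and $\Phi$ is a quasi-isomorphism.

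For part~(ii) the claim is that the canonical comparison $\Phi_{A,B}\colon A\amalg B\to A\times B$ in $\mathpzc{E}^\dagger_{\text{st}}\hyph\mathsf{Alg}$, whose underlying dg module is $A\oplus B$, is a quasi-isomorphism for $A,B$ cofibrant; the plan is to reduce to part~(i). When $A=\mathbf{E}^\dagger_{\mathbf{st}}(X)$ and $B=\mathbf{E}^\dagger_{\mathbf{st}}(Y)$ are free, the free functor preserves coproducts, so $A\amalg B\cong\mathbf{E}^\dagger_{\mathbf{st}}(X\oplus Y)$ and $\Phi_{A,B}$ is, up to the same acyclicity remark, the map $\Phi$ of part~(i) — so the free-on-free case is exactly~(i). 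For general cofibrant $A,B$ I would form the monadic bar resolutions $\mathrm{B}_\bullet(\mathbf{E}^\dagger_{\mathbf{st}},\mathbf{E}^\dagger_{\mathbf{st}},A)$ and $\mathrm{B}_\bullet(\mathbf{E}^\dagger_{\mathbf{st}},\mathbf{E}^\dagger_{\mathbf{st}},B)$: simplicial $\mathpzc{E}^\dagger_{\text{st}}$-algebras with free terms whose geometric realizations map by quasi-isomorphisms to $A$ and $B$ (the standard extra-degeneracy argument, with Reedy cofibrancy and the fact that realization is left Quillen supplied by Theorems~\ref{thm:monadweakequiv} and~\ref{thm:semimodelstr}). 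Applying $\Phi$ levelwise to the diagonal gives a map of simplicial algebras $\mathrm{B}_\bullet(A)\amalg\mathrm{B}_\bullet(B)\to\mathrm{B}_\bullet(A)\times\mathrm{B}_\bullet(B)$ that is a levelwise quasi-isomorphism by the free-on-free case. Now realization commutes with the finite coproduct $\amalg$ (a colimit) and, on underlying dg modules, with the finite product $\times$ (there the finite direct sum, also a colimit); realization of a levelwise quasi-isomorphism between Reedy cofibrant simplicial dg modules is a quasi-isomorphism; and $A\amalg-$ preserves quasi-isomorphisms between cofibrant objects while $A\oplus-$ preserves all quasi-isomorphisms. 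Composing these and identifying the corners of the realized naturality square with $A\amalg B$, $A\times B$, $|\mathrm{B}_\bullet(A)|\amalg|\mathrm{B}_\bullet(B)|$ and $|\mathrm{B}_\bullet(A)|\times|\mathrm{B}_\bullet(B)|$ yields that $\Phi_{A,B}$ is a quasi-isomorphism.

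The routine parts are the indexing in~(i) and checking that the two bar constructions are Reedy cofibrant and that realization is homotopically well behaved in the semi-model structure of Theorem~\ref{thm:semimodelstr}. The real obstacle is the passage in~(ii) from the free case to the general cofibrant case: one cannot simply induct over a cell presentation of $A$, because the product functor $A\times-$ is not a left adjoint and need not commute with the colimits building a cell algebra — in particular it need not preserve homotopy pushouts — which is exactly why a symmetric simplicial resolution of \emph{both} $A$ and $B$ is forced. The content is that the additivity of $\mathrm{H}^\bullet\circ\mathbf{E}^\dagger_{\mathbf{st}}$ established in part~(i) propagates through the bar construction to the statement that binary coproducts and binary products agree in the homotopy category of $\mathpzc{E}^\dagger_{\text{st}}$-algebras.
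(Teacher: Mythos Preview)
Your argument for part~(i) is correct and close in spirit to the paper's. The differences are cosmetic: you use the projection map $\mathbf{E}^\dagger_{\mathbf{st}}(X\oplus Y)\to\mathbf{E}^\dagger_{\mathbf{st}}(X)\oplus\mathbf{E}^\dagger_{\mathbf{st}}(Y)$ and invoke the full cohomology computation $\mathrm{H}^\bullet(\mathbf{E}^\dagger_{\mathbf{st}}X)\cong\widehat{\mathcal{B}}\otimes\mathrm{H}^\bullet(X)$ directly, whereas the paper uses the inclusion map in the opposite direction and appeals to the additivity of the intermediate functor $\mathrm{A}$ (Proposition~\ref{prop:ABadditive}) together with the finite case (Lemma~\ref{lem:PhiPsILfinite}), then passes to filtered colimits. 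Both routes rest on the same underlying computation; no circularity arises since Proposition~\ref{prop:stablefreehom} is proven independently of Proposition~\ref{prop:additivity_of_monad}.

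For part~(ii) the approaches genuinely diverge, and your proposal has a gap. You assert that cell induction cannot work because $A\times-$ fails to commute with the pushouts building $A$, and therefore turn to bar resolutions. But the paper does precisely what you rule out: it inducts simultaneously over cell filtrations of \emph{both} $A$ and $B$, and the inductive step is handled not by commuting the product past a pushout but by filtering both $A_n\oplus B_n$ and $A_n\amalg B_n$ via the enveloping-operad filtrations $\mathrm{F}_m\mathpzc{U}^{(-)}(0)$ and comparing the resulting spectral sequences. The key inputs are Lemma~\ref{lem:stabilityigorsense} (that $\mathpzc{E}^\dagger_{\text{st}}(j)/\Sigma_{j_1}\times\cdots\times\Sigma_{j_k}$ is acyclic for nontrivial partitions, itself a consequence of part~(i)) and Lemma~\ref{lem:UandVforcofibA} (that $\mathpzc{E}^\dagger_{\text{st}}(j)\to\mathpzc{U}^A(j)$ is a quasi-isomorphism after any such quotient). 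These make the $E^1$-comparison go through.

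Your bar-resolution argument, by contrast, rests on the claim that ``realization commutes with the finite coproduct $\amalg$''. This is not routine: the normalization of a simplicial $\mathpzc{E}^\dagger_{\text{st}}$-algebra is defined via the shuffle map, not as a left adjoint from simplicial algebras to algebras, and there is no reason a priori for it to preserve algebra coproducts. Establishing this would require setting up a simplicial tensoring on $\mathpzc{E}^\dagger_{\text{st}}\text{-}\mathsf{Alg}$ and checking that geometric realization (as a coend) is left Quillen in the semi-model structure --- machinery the paper neither develops nor cites. Without it you cannot identify $|\mathrm{B}_\bullet(A)\amalg\mathrm{B}_\bullet(B)|$ with $|\mathrm{B}_\bullet(A)|\amalg|\mathrm{B}_\bullet(B)|$ or with $A\amalg B$, and the argument stalls at exactly the point you flag as ``routine''. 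The Reedy-cofibrancy checks you mention are likewise nontrivial in a semi-model setting. (The paper does later use a bar construction, in Proposition~\ref{prop:pushout_normalization}, but there the comparison $\mathrm{N}(\beta_\bullet)\to A\amalg_C B$ is proven directly by an enveloping-operad induction, not by asserting that normalization preserves colimits.)
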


In (i) and (ii) above, note that there are canonical maps $\mathbf{E}^\dagger_{\normalfont{\textbf{st}}}(X) \oplus \mathbf{E}^\dagger_{\normalfont{\textbf{st}}}(Y) \to \mathbf{E}^\dagger_{\normalfont{\textbf{st}}}(X \oplus Y)$ and $A \oplus B \to A \amalg B$, and it is exactly these maps which yield the quasi-isomorphisms. Also, for comparison, in the unstable case, in (ii), we have $A \otimes B$ in place of $A \oplus B$. \\

Finally, we move onto the application to $p$-adic stable homotopy types. For this, we need to fix a model for spectra. We take the classical sequential model in the sense of Bousfield-Friedlander, with the exception that, rather than the ordinary suspension $- \wedge \Sph^1$, we use the Kan suspension of based simplicial sets. We then define an appropriate, and concrete in the sense that we get dg modules, notion of spectral cochains and then prove the following, providing another sense in which $\mathpzc{E}_{\text{st}}^\dagger$ is a stable analogue of $\mathpzc{E}^\dagger$.

\begin{Theorem}\label{thm:specopaction}
\textit{Given any spectrum $E$, the spectral cochains $\emph{C}^\bullet(E)$ naturally form an algebra over $\mathpzc{E}_{\emph{st}}^\dagger$.}
\end{Theorem}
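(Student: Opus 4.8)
The plan is to build the $\mathpzc{E}_{\text{st}}^\dagger$-action on $\text{C}^\bullet(E)$ by transporting the $\mathbb{E}_\infty$-structure on the cochains of each space $E_n$ through the structure maps of the spectrum, using the stabilization map $\Psi \colon \Sigma\mathpzc{E}^\dagger \to \mathpzc{E}^\dagger$ to make everything compatible with the inverse limit defining $\mathpzc{E}_{\text{st}}^\dagger$. First I would recall that for a space (based simplicial set) $Z$ the reduced cochains $\widetilde{\text{C}}^\bullet(Z)$ carry a natural $\mathpzc{E}^\dagger$-algebra structure, and that the Kan suspension $\Sigma Z$ has the property that $\widetilde{\text{C}}^\bullet(\Sigma Z) \cong \text{s}\,\widetilde{\text{C}}^\bullet(Z)$ (a degree shift) in a way compatible with the $\mathbb{E}_\infty$-structures after applying an operadic suspension — i.e., the reduced cochain functor intertwines Kan suspension with operadic suspension of the acting operad. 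Concretely, $\widetilde{\text{C}}^\bullet(\Sigma Z)$ is an algebra over $\Sigma\mathpzc{E}^\dagger$, or dually there is a comparison identifying the $\mathpzc{E}^\dagger$-structure on the suspended cochains with the $\Sigma\mathpzc{E}^\dagger$-structure. This is the technical heart of why the Kan suspension (rather than $-\wedge\Sph^1$) was chosen as the model: it is the combinatorially strict model making this cochain-level identity hold on the nose.

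Next I would assemble the spectrum-level picture. A spectrum $E$ is a sequence of based simplicial sets $E_n$ together with structure maps $\sigma_n \colon \Sigma E_n \to E_{n+1}$. Applying reduced cochains contravariantly gives maps $\sigma_n^* \colon \widetilde{\text{C}}^\bullet(E_{n+1}) \to \widetilde{\text{C}}^\bullet(\Sigma E_n) \cong \text{s}\,\widetilde{\text{C}}^\bullet(E_n)$, hence after desuspending a system $\text{s}^{-1}\widetilde{\text{C}}^\bullet(E_{n+1}) \to \widetilde{\text{C}}^\bullet(E_n)$ — or, organizing it the other way, a system whose (homotopy) limit is the spectral cochains $\text{C}^\bullet(E)$. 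Each term $\widetilde{\text{C}}^\bullet(E_n)$ is an $\mathpzc{E}^\dagger$-algebra; using the Kan-suspension/operadic-suspension compatibility above, the map $\sigma_n^*$ is a map of $\Sigma\mathpzc{E}^\dagger$-algebras once we regard $\widetilde{\text{C}}^\bullet(E_n)$ as a $\Sigma\mathpzc{E}^\dagger$-algebra via restriction along $\Psi$. Thus the whole tower is a tower of $\Sigma\mathpzc{E}^\dagger$-algebras, or better, after restricting along the iterated $\Psi$'s, a compatible tower over the system $\cdots \to \Sigma^2\mathpzc{E}^\dagger \to \Sigma\mathpzc{E}^\dagger \to \mathpzc{E}^\dagger$ whose inverse limit is $\mathpzc{E}_{\text{st}}^\dagger$. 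Passing to the limit, $\text{C}^\bullet(E) = \lim_n \text{s}^{-n}\widetilde{\text{C}}^\bullet(E_n)$ acquires an action of $\lim_n \Sigma^n\mathpzc{E}^\dagger = \mathpzc{E}_{\text{st}}^\dagger$. Naturality in $E$ is automatic since every construction (reduced cochains, pullback along structure maps, the limit) is functorial.

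The key steps, in order: (1) establish the cochain-level identification of Kan suspension with operadic suspension of the $\mathbb{E}_\infty$-operad, i.e.\ that $\widetilde{\text{C}}^\bullet(\Sigma Z)$ is naturally a $\Sigma\mathpzc{E}^\dagger$-algebra and the suspension isomorphism $\widetilde{\text{C}}^\bullet(\Sigma Z) \cong \text{s}\,\widetilde{\text{C}}^\bullet(Z)$ is one of $\Sigma\mathpzc{E}^\dagger$-algebras (with $\text{s}\,\widetilde{\text{C}}^\bullet(Z)$ given its canonical suspended-algebra structure); (2) deduce that the pulled-back structure maps $\sigma_n^*$ are morphisms of $\Sigma\mathpzc{E}^\dagger$-algebras, hence of $\Sigma^{n+1}\mathpzc{E}^\dagger$-algebras after restricting along $\Psi^{(n)}$, so that the tower defining $\text{C}^\bullet(E)$ is compatible with the operad tower whose limit is $\mathpzc{E}_{\text{st}}^\dagger$; (3) commute operad actions past the inverse limit to equip $\text{C}^\bullet(E)$ with its $\mathpzc{E}_{\text{st}}^\dagger$-structure, and check naturality.

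The main obstacle I expect is step (1): proving that the suspension isomorphism on reduced cochains is strictly compatible with the operadic suspension of the $\mathbb{E}_\infty$-operad, and not merely up to coherent homotopy. Operadic suspension $\Sigma\mathpzc{P}$ twists the $\Sigma_n$-action by the sign representation and shifts degrees by $(n-1)$, and one must verify that the cochain-level cup-$i$/Barratt-Eccles-type operations on $\widetilde{\text{C}}^\bullet(\Sigma Z)$, computed via the simplicial structure of the Kan suspension, literally match the twisted-and-shifted operations on $\text{s}\,\widetilde{\text{C}}^\bullet(Z)$ — getting the signs and the combinatorics of the Kan suspension's nondegenerate simplices exactly right. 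A secondary, more formal concern is ensuring the inverse-limit step is well-behaved: since operads and their algebras involve limits (the $n$-ary part of $\mathpzc{E}_{\text{st}}^\dagger$ is $\lim_k \Sigma^k\mathpzc{E}^\dagger(n)$ and the free-algebra monad involves an infinite product/sum), one should confirm that the action map $\mathpzc{E}_{\text{st}}^\dagger(n) \otimes_{\Sigma_n} \text{C}^\bullet(E)^{\otimes n} \to \text{C}^\bullet(E)$ is genuinely induced — this is where the concreteness of the chosen spectral-cochains model (honest dg modules, a strict limit rather than a homotopy limit) does the work, but it is worth a remark that no derived-limit correction is needed at the level of the structure maps themselves.
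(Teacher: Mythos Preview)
Your approach is correct and would work, but the paper organizes the argument differently in two substantive ways, and both choices neutralize exactly the obstacles you flag.

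First, the paper does not act by $\mathpzc{E}_{\text{st}}^\dagger$ directly. It instead constructs a \emph{coaction} of the stable Eilenberg--Zilber operad $\mathpzc{Z}_{\text{st}}$ on spectral \emph{chains}, and only at the very end dualizes and pulls back along the operad map $\mathpzc{E}_{\text{st}}^\dagger \to \mathpzc{M}_{\text{st}}^\dagger \to \mathpzc{Z}_{\text{st}}^\dagger$. The point is that $\mathpzc{Z}_*(n)$ is by definition the complex of natural chain co-operations, so the level-wise coaction is tautological, and the stabilization map $\Psi \colon \Sigma\mathpzc{Z}_* \to \mathpzc{Z}_*$ is \emph{defined} via precomposition with the Kan suspension. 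Your step~(1) --- the strict compatibility of Kan suspension with operadic suspension --- is therefore built into the definition for $\mathpzc{Z}_*$, and the corresponding statement for $\mathpzc{E}^\dagger$ is then a consequence of the commutative square relating $\Psi$ for $\mathpzc{E}$ and $\Psi$ for $\mathpzc{Z}_*$ through $\text{AW}\circ\text{TR}$ (cited from Berger--Fresse). So the sign/combinatorics verification you worry about is isolated into that square and outsourced.

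Second, by working on the chain side, the paper replaces your inverse-limit assembly with a colimit: $\text{C}_\bullet(E)$ is the quotient $\bigoplus_n \text{C}_\bullet(E_n)[-n]/(x - \rho(\Sigma x))$, and the coaction is specified on generators by $\bar\mu((\alpha_0,\alpha_1,\dots), x) := \alpha_n(x)$ for $x \in (E_n)_e$. The only check is well-definedness modulo the relation $x \sim \rho(\Sigma x)$, which is a direct naturality square for $\alpha_{n+1}$ together with $\Psi(\alpha_{n+1}) = \alpha_n$. This sidesteps your secondary concern about commuting $\otimes$ past an inverse limit: mapping \emph{into} a colimit on the coalgebra side is formally easier than assembling an algebra action on a limit. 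Your argument that one can still map into the limit level by level is correct, but the chain-side formulation makes the structure map a single explicit formula rather than a universal-property construction.

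In short: your route is the ``algebra/limit'' dual of the paper's ``coalgebra/colimit'' route, with $\mathpzc{E}^\dagger$ in place of $\mathpzc{Z}_*$. What the paper's route buys is that both of your anticipated obstacles become either definitional (the Kan-suspension compatibility for $\mathpzc{Z}_*$) or absent (no limit/tensor interchange). What your route buys is directness --- no auxiliary operad --- at the cost of doing the Berger--Fresse compatibility check by hand.
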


Finally then, we get algebraic models for $p$-adic stable homotopy types in the following sense; in the statement, the $\overline{\F}_p$ cochains functor $\widebar{\mathrm{C}}{}^\bullet$ is constructed from the $\F_p$ cochains functor $\mathrm{C}^\bullet$ simply by tensoring with $\overline{\F}_p$, and similarly, the operad $\widebar{\mathpzc{E}}_{\text{st}}^\dagger$, an operad over $\overline{\F}_p$, is constructed from $\mathpzc{E}_{\text{st}}^\dagger$ by tensoring with $\overline{\F}_p$.

\begin{Theorem}\label{thm:modelsstable}
\textit{The spectral cochains functor}
\[
\widebar{\mathrm{C}}{}^\bullet \colon \mathsf{Sp}^{\mathrm{op}} \to \widebar{\mathpzc{E}}_{\mathrm{st}}^\dagger\text{-}\mathsf{Alg}
\]
\textit{induces a full embedding of the stable homotopy category into the derived category of $\widebar{\mathpzc{E}}_{\emph{st}}$-algebras when we restrict to bounded below $p$-complete spectra of finite $p$-type. In fact, there is a retraction from $\widebar{\mathpzc{E}}_{\mathrm{st}}^\dagger$-algebras to spectra given by a derived mapping space of functions to the cochains on the sphere spectrum.}
\end{Theorem}

We mentioned above that rational homotopy types can be modelled by commutative DGAs, and also that $p$-adic homotopy types can be modelled by $\mathbb{E}_\infty$ DGAs. It is also well-known that rational stable homotopy types can be modelled by dg modules. Our result for $p$-adic stable homotopy types then completes the following picture.

\begin{center}
\begin{tabular}{m{3.5cm} | m{3cm}  m{3cm}} 
%\hline
\textbf{Algebraic models of homotopy types} & unstable & stable \tabularnewline \hline
\tabularnewline [-1em]
rational & commutative DGAs & dg modules \tabularnewline
\tabularnewline [-1em]
$p$-adic & $\mathbb{E}_\infty$ DGAs & $\widebar{\mathpzc{E}}_{\text{st}}^\dagger$ dg algebras
\end{tabular}
\end{center}

Note that the commutative DGAs in the table above are exactly the algebras over $\mathpzc{Comm}^\dagger$, the commutativity cochain operad. In each arity, this commutativity operad is simply the ground field concentrated in degree zero. Moreover, dg modules are exactly the algebras over $\mathpzc{Const}^\dagger$, the constant cochain operad. In arity one, this constant operad is the ground field concentrated in degree zero; in all other arities, it is zero. We demonstrate that, in fact, the constant operad is a stabilization of the commutativity operad, in a manner exactly analogous to the stabilization of $\mathpzc{E}^\dagger$ to get $\mathpzc{E}_{\text{st}}^\dagger$. Now, as is well-known, $\mathpzc{E}^\dagger$ is a cofibrant replacement of $\mathpzc{Comm}^\dagger$. We demonstrate that $\mathpzc{E}^\dagger_{\text{st}}$ possesses a map $\mathpzc{E}_{\text{st}}^\dagger \to \mathpzc{Const}^\dagger$ which is a degreewise epimorphism and a quasi-isomorphism, and so an acyclic fibration in the projective model structure on dg operads. We, however, unfortunately have not verified that $\mathpzc{E}_{\text{st}}^\dagger$ is cofibrant (for this to be true, we may need to first restrict to additive operads; that is, to operads whose associated monads are additive). With this caveat mentioned, and assuming one can find the right model category in which $\mathpzc{E}_{\text{st}}^\dagger$ is indeed cofibrant, we get the following picture.

\begin{center}
\begin{tikzpicture}
\node [] (A) {
\begin{tabular}{m{3.5cm} | m{3cm}  m{3cm}}
%\hline
\textbf{Operad for algebraic models} & unstable & stable \tabularnewline \hline
\tabularnewline [-1em]
rational & commutativity operad & constant operad \tabularnewline
\tabularnewline [-1em]
$p$-adic & $\mathbb{E}_\infty$ operad & stabilized $\mathbb{E}_\infty$ operad
\end{tabular}
};

\node [above of = A, yshift = 5mm, xshift = 15mm] (B) {};
\node [right of = B, xshift = 0mm] (C) {};

\draw [->]  (B) -- (C) node[midway, anchor = south] {stabilize operad};

\node [left of = A, xshift = -45mm] (D) {};
\node [below of = D, yshift = 0mm] (E) {};

\draw [->]  (D) -- (E) node[midway, anchor = east] {\rotatebox{90}{cofibrant}\rotatebox{90}{replacement}};
\end{tikzpicture}
\end{center}

We can make a remark on the structure that is captured by the operad $\mathpzc{E}_{\text{st}}^\dagger$. In both the unstable and stable $p$-adic cases, the models arise via cochains, with coefficients in $\overline{\F}_p$. Let us consider just $\F_p$ cochains. Given a space $X$, the mod $p$ cochains are given, as a spectrum, by $\mathrm{F}(\Sigma^\infty_+X, \text{H}\F_p)$, where $\mathrm{F}(-,-)$ denotes the mapping spectrum. This object carries the following structure:
\begin{itemize}
	\item[(i)] It is an $\text{H}\F_p$-module via ``pointwise'' scalar multiplication.
	\item[(ii)] It possesses an action, via postcomposition, by $\mathrm{F}(\text{H}\F_p, \text{H}\F_p)$.
	\item[(iii)] It is a ring spectrum via the multiplication of $\text{H}\F_p$.
\end{itemize}
These manifest in a dg context as follows:
\begin{itemize}
	\item[(i$^\prime$)] The cochains can be modelled as a dg module.
	\item[(ii$^\prime$)] The cohomology inherits an action by $\mathcal{B}/(1-P^0) \cong \mathcal{A}$.
	\item[(iii$^\prime$)] The cochains form an $\mathbb{E}_\infty$ dg algebra. 
\end{itemize}
In fact, we shall see that (ii') is a consequence of (iii'). Now let us consider cochains on a spectrum $E$. The cochains are given, as a spectrum, by $\mathrm{F}(E, \text{H}\F_p)$. This object carries the structure described in (i) and (ii) above. It no longer posseses a ring structure as, although the multiplication of $\text{H}\F_p$ is still present, to define a ``pointwise'' multiplication, one needs a diagonal map, which general spectra, unlike spaces and their suspension spectra, do not possess. The structure that is still present manifests, respectively, in our work as (i$^\prime$) above together with the following modified version of (ii$^\prime$): the cohomology inherits an action by $\widehat{\mathcal{B}}/(1-P^0) \cong \mathcal{A}$. We shall see that these operations in (ii$^\prime$) are a consequence of the $\mathpzc{E}_{\text{st}}^\dagger$-algebra structure, and so one can say that it is primarily these operations which this operad serves to encode.

\section{Acknowledgments}

The author would like to thank Igor Kriz, under whose supervision this project was completed. We also thank Paul Goerss, Mike Mandell and Peter May, who made many helpful comments on earlier versions of this work.

\section{Notations, Terminology and Conventions}\label{sec:nots_convs}

We list here some notations, terminology and conventions which are used throughout the work. We do this for ease of reference and because setting them in place now will allow us to be precise in our statements and constructions in later parts of the work. \\

\textbf{General:}
\begin{itemize}
	\item For each integer $d \ge 0$, $[d]$ denotes the poset $0 \to 1 \to \cdots \to d$.
	\item For each integer $d \ge 0$, $(d)$ denotes the set $\{1,\dots,d\}$; $(0)$ is the empty set.
	\item For $d \ge 0$, $\Sigma_d$ denotes the symmetric group on $d$ letters; $\Sigma_0$ is the trivial group, where the unique element is thought of as representing the unique isomorphism on the empty set.
	\item The ends of proofs are marked by $\square$. The ends of remarks are marked by $\vert\vert$.
\end{itemize}

\textbf{In relation to simplicial sets:}
\begin{itemize}
	\item The category of simplicial sets will be denoted by $\mathsf{Spc}$, and that of based simplicial sets by $\mathsf{Spc}_*$.
	\item For each integer $d \ge 0$, $\Delta_d$ denotes the standard $d$-dimensional simplex as a simplicial set.
	\item Given a simplicial set $S$, $S_d^{\text{nd}}$ denotes the non-degenerate $d$-simplices of $S$.
	\item The disjoint basepoint functor, the left adjoint to the forgetful functor from $\mathsf{Spc}_*$ to $\mathsf{Spc}$, will be denoted by $(-)_+$. 
\end{itemize}

\textbf{In relation to (co)chain complexes:}
\begin{itemize}
	\item The term \textit{chain complex} will refer to a graded module equipped with a differential of degree $-1$ and the term \textit{cochain complex} will refer to a graded module equipped with a differential of degree $+1$; in addition, we shall let \textit{differential graded module} refer to either of these two possibilities. The phrase \textit{differential graded} is often shortened to \textit{dg}\index{dg}. By default, all dg modules are unbounded. The category of chain complexes over $k$, where $k$ is a field, is denoted by $\mathsf{Ch}_k$, and the category of cochain complexes over $k$ is denoted by $\mathsf{Co}_k$; the symbol $\mathsf{DG}_k$ will denote either of these two possibilities.
	\item Given a (co)chain complex $X$ over $k$, $X^\vee$ denotes the dual\index{dual!(co)chain complex} (co)chain complex of $X$. Thus $X^\vee := \text{F}(X,k[0])$ where $\text{F}$ denotes the internal hom of (co)chain complexes and $k[0]$ is the (co)chain complex with $k$ concentrated in degree $0$. This yields contravariant functors $\mathsf{Ch}_k \to \mathsf{Ch}_k$ and $\mathsf{Co}_k \to \mathsf{Co}_k$, which are both denoted by $(-)^\vee$. More concretely, unravelling the definition, we find that, given a dg module $X$ and a fixed degree $d$, $(X^\vee)_d$ consists of module maps $X_{-d} \to k$ in the chain case, and $X^{-d} \to k$ in the cochain case. The negative sign here ensures that the induced differential on $X^\vee$ has the same degree as the differential of the original dg module $X$. Note that if $X$ is concentrated in non-negative degrees, $X^\vee$ will be concentrated in non-positive degrees, and vice versa.
	\item Given a chain complex $X$ over $k$, $X^\dagger$ denotes the associated cochain complex\index{associated!(co)chain complex}, where $(X^\dagger)^p = X_{-p}$; similarly, if $X$ is a cochain complex over $k$, $X^\dagger$ denotes the associated chain complex, where $(X^\dagger)_p = X^{-p}$. These yield an inverse pair of functors $\mathsf{Ch}_k \to \mathsf{Co}_k$ and $\mathsf{Co}_k \to \mathsf{Ch}_k$, both of which are denoted by $(-)^\dagger$. As an example, note that, given any space, the cochains on the space are constructed from the chains by first dualizing via $(-)^\vee$, and then reindexing via $(-)^\dagger$.
	\item Given dg modules $X$ and $Y$, the tensor product\index{tensor product of (co)chain complexes} $X \otimes Y$ is defined, as usual, by letting $x \otimes y$ have degree $|x| + |y|$, and the differential follows the standard sign convention:
\[
\partial (x \otimes y) = \partial x \otimes y + (-1)^{|x|} x \otimes \partial y.
\]
This tensor product on dg modules is always endowed with the symmetry $X \otimes Y \to Y \otimes X$ defined by
\[
x \otimes y \mapsto (-1)^{|x||y|}(y \otimes x).
\]
The internal hom\index{internal hom of (co)chain complexes} $\text{F}(X,Y)$ is defined, in degree $d$, as the collection of degree $d$ graded module maps $f \colon X_\bullet \to Y_{\bullet + d}$ (no compatibility with the differnetial is required for these maps), and the differential on $\text{F}(X,Y)$ is given by
\[
\partial f = \partial \circ f - (-1)^{|f|} (f \circ \partial).
\]
	\item Let $k$ denote a field. We shall often denote the chain or cochain complex
\[
\cdots \leftarrow 0 \leftarrow \underset{\text{deg.} \: d}{k} \leftarrow 0 \leftarrow \cdots
\]
\[
\cdots \rightarrow 0 \rightarrow \underset{\text{deg.} \: d}{k} \rightarrow 0 \rightarrow \cdots
\]
by $\Sph^d$, or $k[d]$, and refer to it as a \textit{sphere complex}\index{sphere complex}, and the chain or cochain complex
\[
\cdots \leftarrow 0 \leftarrow \underset{\text{deg.} \: d-1}{k} \overset{\text{id}}\leftarrow \underset{\text{deg.} \:d}{k} \leftarrow 0 \leftarrow \cdots
\]
\[
\cdots \rightarrow 0 \rightarrow \underset{\text{deg.} \: d-1}{k} \overset{\text{id}}\rightarrow \underset{\text{deg.} \:d}{k} \rightarrow 0 \rightarrow \cdots
\]
by $\D^n$, and refer to it as a \textit{disk complex}\index{disk complex}.
	\item Given a dg module $X$ and $n \in \Z$, we let $X[n]$ be the dg module defined by setting $X[n]_d = X_{d-n}$\index{shifted (co)chain complex}. Note that, if we let $k$ denote a ground field, $X[1] \cong X \otimes k[1]$\index{suspension!of (co)chain complexes} and $X[-1] \cong \text{F}(k[1],X)$\index{looping!of (co)chain complexes}.
	\item All (co)chains on spaces or spectra are taken to be normalized\index{normalized (co)chains} (that is, degenerate simplices are always modded out). All (co)chains on a based space, unless explicitly mentioned otherwise, are reduced\index{reduced (co)chains}, in addition to being normalized (that is, the basepoint is always modded out).
\end{itemize}

\textbf{In relation to operads and their (co)algebras:}	
\begin{itemize}
	\item \index{chain and cochain operads}A \textit{chain operad} is an operad in $\mathsf{Ch}_k$. A \textit{cochain operad} is an operad in $\mathsf{Co}_k$. A \textit{differential graded operad}, or \textit{dg operad}, is an operad in either one of $\mathsf{Ch}_k$ and $\mathsf{Co}_k$.
	\item The category of chain operads is denoted by $\mathsf{Op}(\mathsf{Ch}_k)$ and the category of cochain operads is denoted by $\mathsf{Op}(\mathsf{Co}_k)$.
	\item All operads are dg operads, and are symmetric.
	\item The notations, in the sense of the typeface, for operads and their corresponding monads and free algebra functors will follow the following rule: if $\mathpzc{P}$ denotes an operad, the corresponding monad and free algebra functor will both be denoted by $\mathbf{P}$.
	\item Given a dg operad $\mathpzc{P}$, the categories of $\mathpzc{P}$-algebras and $\mathpzc{P}$-coalgebras, respectively, are denoted by $\mathpzc{P}\text{-}\mathsf{Alg}$ and $\mathpzc{P}\text{-}\mathsf{Coalg}$.
	\item We have seen that we have a reindexing operator $(-)^\dagger$ on dg modules. We also have such an operator on dg operads. When this operation is applied aritywise to an operad, we in fact get another operad (an easy check shows that the reindexing operation is compatible with all the structure data in an operad). As such, if $\mathpzc{P}$ is an operad in $\mathsf{Op}(\mathsf{Ch}_k)$, we let $\mathpzc{P}^{\dagger}$ denote the \textit{associated operad}\index{associated!(co)chain operad} in $\mathsf{Op}(\mathsf{Co}_k)$, where $\mathpzc{P}^\dagger (n) := \mathpzc{P}(n)^\dagger$. Similarly, if $\mathpzc{Q}$ is an operad in $\mathsf{Op}(\mathsf{Co}_k)$, wet let $\mathpzc{Q}^{\dagger}$ denote the \textit{associated operad} in $\mathsf{Op}(\mathsf{Ch}_k)$, where again $\mathpzc{Q}^\dagger (n) := \mathpzc{Q}(n)^\dagger$. These yield an inverse pair of functors $\mathsf{Op}(\mathsf{Ch}_k) \to \mathsf{Op}(\mathsf{Co}_k)$ and $\mathsf{Op}(\mathsf{Co}_k) \to \mathsf{Op}(\mathsf{Ch}_k)$, both of which are denoted by $(-)^\dagger$. Note: one can also apply the dualization operator $(-)^\vee$ aritywise to an operad, which yields, under suitable finiteness hypotheses, a co-operad, but we shall have no need for this.
	\item We have seen that we have a reindexing operator $(-)^\dagger$ on dg modules and dg operads. We also have such an operator on (co)algebras over dg operads\index{associated!(co)algebra}. In fact, given a dg operad $\mathpzc{P}$ and a (co)algebra $A$ over $\mathpzc{P}$, an easy check shows that $A^\dagger$ is canonically a (co)algebra over $\mathpzc{P}^\dagger$. Moreover, we also have a dualization operator $(-)^\vee$ on coalgebras over dg operads\index{dual!(co)algebra}: if $\mathpzc{P}$ is a dg operad and if $A$ is a $\mathpzc{P}$-coalgebra, then $A^\vee$ is canonically a $\mathpzc{P}$-algebra (see, e.g., Lemma 2.1 in~\cite{iLucio}).
\end{itemize}

%-----------------------------------------------------------------------
% Beginning of chap1.tex
%-----------------------------------------------------------------------

\chapter{Stabilizations of $\mathbb{E}_\infty$ Operads}\label{sec:stabilizations}

In this section, we shall construct stable analogues of the Eilenberg-Zilber, McClure-Smith and Barratt-Eccles operads. Note that only the latter two constitute stabilizations of $\mathbb{E}_\infty$ operads. In order to construct actions of the latter two on spectral cochains, however, as we will do later in Section~\ref{subsec:spec_cochains}, it is convenient to also have a stable analogoue of the Eilenberg-Zilber operad. Prior to constructing these stabilizations, we first need to discuss some general aspects of dg operads, their algebras and some basic constructions on simplicial sets, which we shall also need in later chapters. For reference throughout this chapter: we set that $p$ is to denote an unspecified but fixed prime, and, when considering the aforementioned operads, the ground field will be taken to be $\F_p$.

\section{Kan Suspensions and Moore Loopings}

As is standard, we let $\Delta$ denote the simplex category. We also let $\mathsf{Spc}$ denote the category of spaces, by which we mean simplicial sets, and let $\mathsf{Spc}_*$ denote the category of based spaces, by which we mean based simplicial sets. For each $d \ge 0$, we let $\Delta_d$ denote the standard $d$-simplex. Given a based simplicial set, there exists more than one possible choice for a suspension functor. The most obvious one is perhaps $- \wedge \Sph^1$, where $\Sph^1 = \Delta_1/\partial\Delta_1$, but we will use a different one, the Kan suspension, which is weakly equivalent to $- \wedge \Sph^1$. Similarly, rather than $\text{F}(\Sph^1,-)$ for loopings, we will use a different, but weakly equivalent, looping functor, the Moore looping. We reserve the standard suspension and loops notations, $\Sigma$ and $\Omega$, for the Kan suspensions and Moore loopings. \\

In order to define the Kan suspension, we first recall a cone\index{cone on simplicial set} functor $\text{C}(-)$ for based simplicial sets (see Chapter 3, Section 5 in~\cite{GoerssJardine} for details on this construction). Let $S$ be a based simplicial set as above. In degree $d$, we have that
\[
\text{C}(S)_d = S_d \vee S_{d-1} \vee \dots \vee S_{0}.
\]
Moreover, the action of the simplicial operators is as follows. Consider some map $\theta \colon [d] \to [e]$ in $\Delta$. We want a function $S_e \vee S_{e-1} \vee \dots \vee S_{0} \to S_d \vee S_{d-1} \vee \dots \vee S_{0}$. Let $i \in \{0,1,\dots,e\}$. Our function will be a based one, so that we need to define, for each such $i$, a map $S_i \to S_d \vee S_{d-1} \vee \dots \vee S_{0}$. Consider the final $i+1$ elements of $[e]$. If the preimage under $\theta$ of these elements is empty, the desired map is to be just the constant one at the basepoint. Otherwise, we form the restricted map with source the preimage of the final $i+1$ elements of $[e]$ and target these final $i+1$ elements of $[e]$ and then reindex so that we have a map
\begin{equation}\label{eq:theta_i}
\theta(i) \colon [j] \to [i]
\end{equation}
for some $j \in \{0,1,\dots,d\}$. The desired map is then set to be $\theta(i)^* \colon S_i \to S_j$ followed by the inclusion into $S_d \vee S_{d-1} \vee \dots \vee S_{0}$.

\begin{Example}\label{examp:cones_Delta_k_+}
For any $d \ge 0$, we have an isomorphism of based simplicial sets
\[
\Delta_{d+1} \overset{\cong}\longrightarrow \text{C}(\Delta_{d+})
\]
where $\Delta_{d+1}$ is based at $0$. The map is as follows. Consider some $\theta \colon [e] \to [d+1]$ in $(\Delta_{d+1})_e$. We have that $\text{C}(\Delta_{d+})_e = (\Delta_{d})_e \amalg \cdots \amalg (\Delta_{d})_0 \amalg *$. If $\theta$ doesn't map anything to the final $d+1$ elements of $[d+1]$, that is, if it maps everything to $0$, then we send it to $*$. Otherwise, we get some new map $\theta(d) \colon [j] \to [d]$ (the notation here is as in (\ref{eq:theta_i})), for some $j \in \{0,1\dots,e\}$ and $\theta$ is mapped to this element of $\text{C}(\Delta_{d+})_e$. An easy check shows that this does indeed define a map, in fact an isomorphism, of based simplicial sets.
\end{Example}

Note that, given a based simplicial set $S$, we have a canonical inclusion map
\begin{equation}\label{eq:X_to_cone}
i \colon S \to \text{C}(S)
\end{equation}
which, in degree $d$, is just the inclusion $S_d \to S_d \vee S_{d-1} \vee \cdots \vee S_0$ of the $S_d$ summand (this is a map of based simplicial sets because the simplicial operators act on the wedge sums ``summand-wise''). With this map, we are able to define the Kan suspension for based simplicial sets.

\begin{Definition}\label{def:kan_susps}
Given a based simplicial set $S$, its \textit{Kan suspension}\index{suspension!Kan suspension of simplicial sets}, denoted $\Sigma S$, is defined by setting
\[
\Sigma S := \text{C}(S)/S
\]
where the inclusion $S \to \text{C}(S)$ is as above.
\end{Definition}

Thus, given a based simplicial set $S$ and $d \ge 0$, we have
\[
(\Sigma S)_d \cong \left\{ \begin{array}{ll}
S_{d-1} \vee \cdots \vee S_0 & \text{if $d \ge 1$} \\
* & \text{if $d = 0$.}
\end{array}
\right.
\]
In particular, for the case of $S_+$, where $S$ is now an unbased simplicial set, we have
\[
(\Sigma S_+)_d \cong \left\{ \begin{array}{ll}
S_{d-1} \amalg \cdots \amalg S_0 \amalg * & \text{if $d \ge 1$} \\
* & \text{if $d = 0$.}
\end{array}
\right.
\]

\begin{Remark}
\index{suspension!relation between Kan suspension and $- \wedge \Sph^1$}The relation between the Kan suspension and the more usual smash suspension $- \wedge \Sph^1$, where $\Sph^1 = \Delta_1/\partial\Delta_1$, is that, for based simplicial sets $S$, there is a natural weak equivalence
\[
S \wedge \Sph^1 \to \Sigma S.
\]
For a proof, see Proposition 2.17 in~\cite{MarcStephan}. \customendremark
\end{Remark}

We now record some simple facts and definitions regarding the Kan suspension, which will be useful for us later. Given any simplicial set $S$, we let $S_d^{\text{nd}}$ denote the collection of non-degenerate $d$-simplices of $S$. Note that, for any $S$, $S_0^{\text{nd}} = S_0$.

\begin{Proposition}\label{prop:nd_susp}
Let $S$ be a based simplicial set. We have
\[
(\Sigma S)_d^{\emph{nd}} \cong \left\{ \begin{array}{ll}
S_{d-1}^{\emph{nd}} & \emph{if $d \ge 2$} \\
S_0 \smallsetminus * & \emph{if $d = 1$} \\
* & \emph{if $d = 0$}.
\end{array}
\right.
\]
In particular, for the case of $S_+$, where $S$ is now an unbased simplicial set, we have
\[
(\Sigma S_+)_d^{\emph{nd}} \cong \left\{ \begin{array}{ll}
S_{d-1}^{\emph{nd}} & \emph{if $d \ge 1$} \\
* & \emph{if $d = 0$}.
\end{array}
\right.
\]
\end{Proposition}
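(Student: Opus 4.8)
The plan is to read $(\Sigma S)_d^{\mathrm{nd}}$ off directly from the degeneracy operators of $\Sigma S$, which in turn come from the recipe for the simplicial action on $\mathrm{C}(S)$ recalled above (see Chapter~3, Section~5 of~\cite{GoerssJardine}) after passing to the quotient $\mathrm{C}(S)/S$ of Definition~\ref{def:kan_susps}. Recall that $(\Sigma S)_d = S_{d-1}\vee\cdots\vee S_0$ for $d\ge 1$ and $(\Sigma S)_0 = *$, so every $d$-simplex with $d\ge 1$ lies in a unique wedge summand; write it as $\iota_i(x)$ with $x\in S_i$ and $0\le i\le d-1$, where $\iota_i$ denotes the inclusion of the $i$-th summand. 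Each $\iota_i$ is injective, so $\iota_i(x)$ is non-degenerate precisely when it is not a value of any degeneracy $s_k\colon(\Sigma S)_{d-1}\to(\Sigma S)_d$ with $0\le k\le d-1$; hence it suffices to compute the images of the $s_k$.

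To do that I would apply the $\mathrm{C}(S)$-recipe to $\theta=\sigma_k\colon[d]\to[d-1]$ (the surjection repeating $k$), one source summand at a time. For the $i$-th source summand $S_i$ with $0\le i\le d-2$, the relevant set is $\sigma_k^{-1}\bigl(\{d-1-i,\dots,d-1\}\bigr)$; since $\sigma_k$ is surjective this is never empty, and it has $i+1$ elements when $k\le d-2-i$ and $i+2$ elements when $k\ge d-1-i$ (the extra element occurring exactly because $\sigma_k$ is two-to-one over $k$ and $k$ lies in the chosen set in that range). Reindexing in the sense of~(\ref{eq:theta_i}) then yields
\[
s_k\bigl(\iota_i(z)\bigr)=\begin{cases}\iota_i(z) & \text{if } i+k\le d-2,\\ \iota_{i+1}\bigl(s_{i+k-d+1}(z)\bigr) & \text{if } i+k\ge d-1.\end{cases}
\]
(Because $i\le d-2$ here, no source summand ever reaches the collapsed copy of $S_d$, so no basepoint terms occur.) Carrying out this index bookkeeping accurately is the computational heart of the argument and the step most prone to error; once it is in hand everything else is formal.

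It remains to read off the non-degenerate simplices. Let $d\ge 2$. Taking $k=0$ in the first line shows $\iota_i(x)=s_0(\iota_i(x))$ for all $0\le i\le d-2$, so every simplex outside the top summand is degenerate. A top-summand simplex $\iota_{d-1}(x)$ can be a value $s_k(\iota_i(z))$ only through the second line with $i+1=d-1$, i.e.\ with $i=d-2$ and $1\le k\le d-1$, where it equals $\iota_{d-1}\bigl(s_{k-1}(z)\bigr)$; since $\iota_{d-1}$ is injective and $k-1$ runs over $\{0,\dots,d-2\}$, which are exactly the indices of the degeneracies $S_{d-2}\to S_{d-1}$, we conclude that $\iota_{d-1}(x)$ is degenerate if and only if $x\in S_{d-1}$ is degenerate. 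Hence $\iota_{d-1}(x)\mapsto x$ is a bijection $(\Sigma S)_d^{\mathrm{nd}}\xrightarrow{\cong}S_{d-1}^{\mathrm{nd}}$. For $d=1$ we have $(\Sigma S)_1=S_0$, and the only degeneracy into degree $1$ is $s_0\colon(\Sigma S)_0=*\to(\Sigma S)_1$, whose image is the basepoint, so $(\Sigma S)_1^{\mathrm{nd}}=S_0\smallsetminus *$; and $(\Sigma S)_0=*$ handles $d=0$. Finally, the ``in particular'' statement for $\Sigma S_+$ follows by substituting $S_+$ and using that $(S_+)_d^{\mathrm{nd}}=S_d^{\mathrm{nd}}$ for $d\ge 1$ while $(S_+)_0=S_0\amalg *$, which converts the $d=1$ clause into $(S_0\amalg *)\smallsetminus *=S_0=S_0^{\mathrm{nd}}$ and thereby merges it with the $d\ge 2$ clause.
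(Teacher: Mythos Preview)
Your proof is correct and follows exactly the approach the paper indicates: the paper's proof says only that ``the first part follows from a laborious but easy direct check using the definition above of the simplicial operators on cones,'' and you have carried out precisely that check in full detail, including the second-part deduction for $S_+$. Your explicit formula for $s_k(\iota_i(z))$ is accurate, and the case analysis reading off the non-degenerate simplices from it is clean.
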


\begin{proof}
The first part follows from a laborious but easy direct check using the definition above of the simplicial operators on cones. For the second part, note that $(S_+)_{d-1}^{\text{nd}} = S_{d-1}^{\text{nd}}$ for $d \ge 2$ and $(S_+)_0 \smallsetminus * = S_0 = S_0^{\text{nd}}$.
\end{proof}

\begin{Proposition}\label{prop:susp_monos}
The Kan suspension $\Sigma$ preserves monomorphisms.
\end{Proposition}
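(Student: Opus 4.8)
The plan is to reduce to a pointwise (i.e.\ degreewise) statement and then use the explicit description of $\Sigma S$ from Definition~\ref{def:kan_susps}. Recall that a map of simplicial sets is a monomorphism if and only if it is injective in each simplicial degree, so it suffices to show that if $f \colon S \to T$ is a map of based simplicial sets that is injective in each degree, then $\Sigma f \colon \Sigma S \to \Sigma T$ is injective in each degree. By definition $\Sigma S = \mathrm{C}(S)/S$, and the formula after Definition~\ref{def:kan_susps} gives $(\Sigma S)_d = S_{d-1} \vee \cdots \vee S_0$ for $d \ge 1$ and $(\Sigma S)_0 = *$. The map $\Sigma f$ in degree $d \ge 1$ is visibly the wedge of the maps $f_{d-1}, \dots, f_0$, that is, $(\Sigma f)_d = f_{d-1} \vee \cdots \vee f_0 \colon S_{d-1} \vee \cdots \vee S_0 \to T_{d-1} \vee \cdots \vee T_0$.

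So the main point to verify is: a wedge of injective based maps of pointed sets is injective. First I would note that each $f_i \colon S_i \to T_i$ is an injective map of pointed sets (it sends basepoint to basepoint and is injective). Then, writing a wedge $A_{n} \vee \cdots \vee A_0$ of pointed sets concretely as the quotient of the disjoint union $\coprod_i A_i$ in which all the basepoints are identified to a single point, one checks directly that $\bigvee_i f_i$ is injective: if two elements of $\bigvee_i S_i$ have the same image, either both images are the common basepoint — in which case, since each $f_i$ reflects the basepoint (being injective and based), both elements were already the basepoint — or neither is the basepoint, in which case each element lies in a unique wedge summand $S_i$, those summands must agree (the summand is detected by the image in $\bigvee_i T_i$), and then injectivity of the single map $f_i$ finishes it. In degree $0$ the map $(\Sigma f)_0 \colon * \to *$ is trivially injective, and one should also remark that $\Sigma f$ is genuinely a map of simplicial sets — the wedge-summandwise description of the simplicial operators on cones (as recalled before Example~\ref{examp:cones_Delta_k_+}) makes compatibility with faces and degeneracies automatic, and this was already used implicitly in identifying $(\Sigma f)_d$ with $f_{d-1} \vee \cdots \vee f_0$.

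I do not expect any serious obstacle here: the statement is essentially bookkeeping once one unwinds the definition of the Kan suspension degreewise. The only thing requiring any care — and it is the ``main'' step only in the sense of being the sole non-formal assertion — is the elementary lemma that a wedge of injective based maps of pointed sets is injective, which hinges on the observation that an injective based map reflects the basepoint. Everything else is the observation that $(\Sigma -)_d$ is, functorially, the wedge $(-)_{d-1} \vee \cdots \vee (-)_0$, which is immediate from Definition~\ref{def:kan_susps} and the summandwise action of simplicial operators on cones. A one-line alternative, if one prefers: $\mathrm{C}(-)$ is degreewise a finite wedge and hence preserves monomorphisms, the inclusion $S \to \mathrm{C}(S)$ is natural and degreewise a summand inclusion, and passing to the cokernel $\mathrm{C}(S)/S$ preserves degreewise injectivity of maps of pairs; but I would present the direct computation above as it is the most transparent.
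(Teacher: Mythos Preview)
Your proof is correct and follows essentially the same approach as the paper, which dispatches the result in one line by noting that induced maps act ``summand-wise''. You have simply unpacked what that means: $(\Sigma f)_d = f_{d-1} \vee \cdots \vee f_0$, and a wedge of injective based maps is injective.
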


\begin{proof}
This is immediate from the fact that induced maps act ``summand-wise''.
\end{proof}

\begin{Definition}\label{def:susp_simps}
\index{suspension!of a simplex}Given a based simplicial set $S$ and a simplex $s \colon \Delta_d \to S$ of $S$, of dimension $d$, let $\Sigma s \colon \Delta_{d+1} \to \Sigma S$ denote the corresponding simplex of dimension $d+1$, given by inclusion into the first wedge summand, of $\Sigma S$. We call $\Sigma s$ the \textit{suspension} of $s$.
\end{Definition}

\begin{Proposition}\label{prop:susp_simps_maps}
Let $S$ and $T$ be based simplicial sets, $f \colon S \to T$ a based map and $s$ a simplex of $S$. We have that $(\Sigma f)(\Sigma s) = \Sigma (f(s))$.
\end{Proposition}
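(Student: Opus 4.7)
The plan is to unravel the definitions and observe that everything reduces to the fact that induced maps on cones, and hence on Kan suspensions, act summand-wise on the wedge sum decomposition.

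First, I would recall that by construction, $\mathrm{C}(S)_d = S_d \vee S_{d-1} \vee \cdots \vee S_0$, and an induced map $\mathrm{C}(f) \colon \mathrm{C}(S) \to \mathrm{C}(T)$ acts in degree $d$ by applying $f$ to each wedge summand $S_i$ and landing in the corresponding summand $T_i$ of $\mathrm{C}(T)_d = T_d \vee T_{d-1} \vee \cdots \vee T_0$. This is immediate from the way $\mathrm{C}(f)$ is forced to be defined as a natural transformation built levelwise from $f$, and it is precisely the observation already invoked in the proof of Proposition~\ref{prop:susp_monos}. Passing to the quotient $\Sigma S = \mathrm{C}(S)/S$, the map $\Sigma f$ still acts summand-wise on the identification $(\Sigma S)_d \cong S_{d-1} \vee \cdots \vee S_0$ for $d \geq 1$.

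Next, I would unwind Definition~\ref{def:susp_simps}. Given $s \colon \Delta_d \to S$, the suspension $\Sigma s \colon \Delta_{d+1} \to \Sigma S$ corresponds, under the adjunction/Yoneda identification, to a non-degenerate $(d+1)$-simplex of $\Sigma S$, namely $s$ itself sitting inside the first wedge summand $S_d$ of $(\Sigma S)_{d+1} = S_d \vee S_{d-1} \vee \cdots \vee S_0$. Applying $\Sigma f$ and invoking the summand-wise behavior from the previous paragraph, this simplex is sent to $f(s) \in T_d$ regarded as an element of the first wedge summand of $(\Sigma T)_{d+1} = T_d \vee T_{d-1} \vee \cdots \vee T_0$. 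But this is exactly the description of $\Sigma(f(s))$ as the inclusion of $f(s)$ into the first wedge summand, giving $(\Sigma f)(\Sigma s) = \Sigma(f(s))$.

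The argument is essentially a tautology once one tracks the definitions; there is no real obstacle. If anything, the only place requiring a moment of care is to confirm that passage to the quotient $\mathrm{C}(S)/S$ does not disturb the summand-wise description on the remaining summands $S_{d-1} \vee \cdots \vee S_0$, since the quotient collapses precisely the omitted $S_d$-summand in degree $d$ — but this is visible directly from the construction. Thus I would present the proof as a short paragraph citing Definition~\ref{def:susp_simps} and the summand-wise action of induced maps, with no computation required.
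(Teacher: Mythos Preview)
Your proposal is correct and takes essentially the same approach as the paper: both arguments reduce to the observation that $\Sigma f$ acts summand-wise on the wedge decomposition $(\Sigma S)_{d+1} = S_d \vee \cdots \vee S_0$, so that the first-summand element $s$ representing $\Sigma s$ is carried to the first-summand element $f(s)$ representing $\Sigma(f(s))$. The paper's proof is a one-line invocation of this fact, while you spell out the intermediate steps through the cone and the quotient; the content is identical.
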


\begin{proof}
This follows immediately from the fact that $\Sigma f$ acts ``summand-wise''.
\end{proof}

\begin{Proposition}\label{prop:faces_of_susp_simps}
Let $S$ be a based simplicial set and $s$ a $d$-simplex of $S$. Then we have
\[
d_i(\Sigma s) = \left\{
\begin{array}{ll}
\Sigma (d_{i-1}s) & i = 1,\dots,d+1 \\
* & i = 0.
\end{array}
\right.
\]
\end{Proposition}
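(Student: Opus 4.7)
The plan is a direct unwinding of the definitions. Recall that $\Sigma s$ is the image, under the quotient $\text{C}(S) \twoheadrightarrow \text{C}(S)/S = \Sigma S$, of $s$ viewed in the $S_d$ wedge summand of $\text{C}(S)_{d+1} = S_{d+1} \vee S_d \vee S_{d-1} \vee \cdots \vee S_0$; this quotient collapses only the leading summand $S_{d+1}$ to the basepoint. So my task is, for each $i$, to compute the action of $d_i \colon \text{C}(S)_{d+1} \to \text{C}(S)_d$ on the $S_d$ summand, and then read off where the resulting element sits relative to the quotient $\text{C}(S)_d \twoheadrightarrow (\Sigma S)_d$.

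Applying the explicit formula recorded earlier for the action of a simplicial operator on $\text{C}(S)$ to $\theta = \delta_i \colon [d] \to [d+1]$ and to the wedge-summand index $d$, I need the preimage under $\delta_i$ of the last $d+1$ elements of $[d+1]$, namely $\{1, 2, \ldots, d+1\}$. For $i = 0$, this preimage is all of $[d]$ and the restricted, reindexed map $[d] \to [d]$ is the identity; hence $d_0(\Sigma s)$ sits in the $S_d$ summand of $\text{C}(S)_d$ as $s$ itself, which is precisely the summand killed by the quotient, giving $d_0(\Sigma s) = \ast$. For $i \geq 1$ the preimage is $\{1, 2, \ldots, d\}$, of size $d$; after reindexing source as $[d-1]$ and target as $[d]$, a short check identifies the restricted map as $\delta_{i-1} \colon [d-1] \to [d]$. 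Therefore the induced map on the $S_d$ summand is $d_{i-1} \colon S_d \to S_{d-1}$, and $d_i(\Sigma s)$ lands in the $S_{d-1}$ summand of $\text{C}(S)_d$ as $d_{i-1} s$. Under the quotient this becomes the first surviving wedge summand of $(\Sigma S)_d = S_{d-1} \vee \cdots \vee S_0$, which by Definition~\ref{def:susp_simps} is exactly where $\Sigma(d_{i-1} s)$ lives; so $d_i(\Sigma s) = \Sigma(d_{i-1} s)$.

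The only real content is the reindexing calculation for $i \geq 1$, which is short but must be handled carefully; otherwise the argument is simply tracking which wedge summand each element occupies before and after quotienting.
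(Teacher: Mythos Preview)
Your proof is correct and follows essentially the same approach as the paper: both directly unwind the definition of the simplicial operators on the cone to track which wedge summand $d_i(\Sigma s)$ lands in. The paper phrases the key reindexing step inversely (observing that adjoining $0\mapsto 0$ to $d^i\colon[d-1]\to[d]$ yields $d^{i+1}\colon[d]\to[d+1]$), while you compute the restriction of $\delta_i$ forward; these are the same calculation, and your version is somewhat more explicit.
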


Note that the $\Sigma$'s here are used in the sense of Definition~\ref{def:susp_simps}, not as summation symbols.

\begin{proof}
Let $s \in S_d$ and consider some $d_i \colon S_d \to S_{d-1}$, $d^i \colon [d-1] \to [d]$. Consider the map $[d] \to [d+1]$ achieved by adjoining $0 \mapsto 0$ at the beginning (that is, we send $0$ to $0$ and otherwise, $i$, for $i \ge 1$, to $d^i(i-1)+1$) and note that this is exactly $d^{i+1}$. Using this relation between $d^{i+1}$ and $d^i$, and the definition of the action of simplicial operators on cones and suspensions, we have that $d_{i+1}(\Sigma s) = \Sigma (d_i s)$. As for the $d_0$ case, again, this follows from the definition of the action of the simplicial operators on cones and suspensions.
\end{proof}

\begin{Proposition}\label{prop:n_r_chains_susp_X}
Let $S$ be a based simplicial set. We have a natural isomorphism of chain complexes
\[
\Phi \colon \emph{C}_\bullet(\Sigma S) \overset{\cong}\longrightarrow \emph{C}_\bullet(S)[1].
\]
\end{Proposition}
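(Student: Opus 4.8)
The plan is to build $\Phi$ by hand, one homological degree at a time, give it explicitly on basis elements, and then check that it commutes with the differentials and is natural, using only the combinatorial facts already established. Throughout, recall that for a based simplicial set $\mathrm{C}_\bullet$ denotes the normalized chain complex reduced at the basepoint, so that $\mathrm{C}_d$ is the free module on the non-degenerate $d$-simplices other than the basepoint; in particular $\mathrm{C}_\bullet(S)[1]_d = \mathrm{C}_{d-1}(S)$, and in degree $0$ both sides of the asserted isomorphism vanish since $(\Sigma S)_0 = \ast$ and $\mathrm{C}_{-1}(S) = 0$.

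First I would use Proposition~\ref{prop:nd_susp}: unwinding its proof, the bijections there are realized by the suspension operation $s \mapsto \Sigma s$ of Definition~\ref{def:susp_simps}, so that for $d \ge 2$ one has $S_{d-1}^{\mathrm{nd}} \xrightarrow{\cong} (\Sigma S)_d^{\mathrm{nd}}$, for $d = 1$ one has $S_0 \smallsetminus \ast \xrightarrow{\cong} (\Sigma S)_1^{\mathrm{nd}}$, and degree $0$ is trivial. The same ``adjoin $0 \mapsto 0$'' analysis used to prove Proposition~\ref{prop:faces_of_susp_simps} shows that $\Sigma$ also intertwines the degeneracy $s_k$ with $s_{k+1}$, hence carries degenerate simplices to degenerate simplices; this is what makes the following assignment well-defined on normalized chains. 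I would then define $\Phi$ in degree $d$ to be the isomorphism of free modules determined by $\Phi(\Sigma s) = (-1)^{d}\, s$, where on the right $s$ is regarded as an element of $\mathrm{C}_{d-1}(S) = \mathrm{C}_\bullet(S)[1]_d$; the sign $(-1)^d$ is a normalization whose necessity I explain below.

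Next, the chain-map verification. By Proposition~\ref{prop:faces_of_susp_simps}, for $s \in S_{d-1}^{\mathrm{nd}}$ one has $d_0(\Sigma s) = \ast$, which is $0$ in reduced normalized chains, and $d_i(\Sigma s) = \Sigma(d_{i-1}s)$ for $1 \le i \le d$, so that
\[
\partial(\Sigma s) \;=\; \sum_{i=1}^{d}(-1)^i \Sigma(d_{i-1}s) \;=\; -\sum_{j=0}^{d-1}(-1)^j \Sigma(d_j s).
\]
Since the shift $\mathrm{C}_\bullet(S)[1]$ carries the reindexed differential of $\mathrm{C}_\bullet(S)$ with no extra sign (by the convention $X[1] \cong X \otimes k[1]$), applying $\Phi$ in degree $d-1$ to each term gives $\Phi(\partial \Sigma s) = (-1)^d \partial s = \partial(\Phi \Sigma s)$, so $\Phi$ is a chain map, and it is a degreewise isomorphism by construction. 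Finally, naturality: for a based map $f \colon S \to T$, Proposition~\ref{prop:susp_simps_maps} gives $(\Sigma f)(\Sigma s) = \Sigma(f(s))$, and since $\Sigma$ preserves both degeneracy and non-degeneracy of simplices and fixes basepoints, $\mathrm{C}_\bullet(f)$ and $\mathrm{C}_\bullet(\Sigma f)$ send a basis element to $0$, or to the corresponding basis element, simultaneously; the naturality square for $\Phi$ then commutes on the nose.

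The one genuine subtlety — the step I would be most careful about — is sign bookkeeping: the index shift $i \mapsto i-1$ coming from the face formula of Proposition~\ref{prop:faces_of_susp_simps} contributes a global $-1$, and reconciling this with the sign-free differential on $\mathrm{C}_\bullet(S)[1]$ is precisely what forces the factor $(-1)^d$ in the definition of $\Phi$; getting this right (and hence obtaining an honest chain isomorphism rather than one merely up to sign) requires being precise about the paper's shift convention and the placement of the tensor factor in $X[1] \cong X \otimes k[1]$. Everything else is a routine unwinding of Propositions~\ref{prop:nd_susp}, \ref{prop:susp_simps_maps} and~\ref{prop:faces_of_susp_simps}. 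One could alternatively note that $\mathrm{C}(S)$ has acyclic reduced chains and run the long exact sequence of the pair $(S, \mathrm{C}(S))$, but that only yields an isomorphism on homology, whereas the statement asserts an isomorphism of chain complexes, so the explicit construction above is the natural route.
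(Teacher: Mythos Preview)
Your proposal is correct and follows the same approach as the paper: the paper's proof is the one-line ``This follows from Propositions~\ref{prop:nd_susp} and~\ref{prop:faces_of_susp_simps}'', and you have spelled out exactly those details, together with the naturality check via Proposition~\ref{prop:susp_simps_maps} and a careful treatment of the sign coming from the index shift in the face formula. Nothing is missing and nothing diverges from the intended argument.
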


The chains here may be taken to have any desired coefficients, and are, as always throughout this work, normalized (and of course also reduced as our simplicial set is based).

\begin{proof}
This follows from Propositions~\ref{prop:nd_susp} and~\ref{prop:faces_of_susp_simps}. The isomorphism is given by $\Sigma s \mapsto s$.
\end{proof}

We now move on to a discussion of Moore loopings, which constitute the loops functor which is right adjoint to the Kan suspension defined above. For more detail on this loops functor, see, for example, Chapter 2, Section 6 of~\cite{Wu}. It is defined as follows.

\begin{Definition}\label{def:moore_loopings}
Let $S$ be a based simplicial set. The \textit{Moore looping}\index{looping!Moore looping of simplicial sets} of $S$ is defined by setting, for each $d \ge 0$
\[
(\Omega S)_d := \{s \in S_{d+1} \mid d_1 \cdots d_{d+1}(s) = *, d_0(s) = *\}.
\]
We of course also need actions of the simplicial operators $d_i^d \colon (\Omega S)_d \to (\Omega S)_{d-1}$ and $s_i^d \colon (\Omega S)_d \to (\Omega S)_{d+1}$. These are given by, respectively,  $d^{d+1}_{i+1}$ and $s^{d+1}_{i+1}$ (from the simplicial structure maps of $S$). One can check that, with these definitions, the required simplicial identities do indeed hold.
\end{Definition}

\begin{Remark}
We can give a more general description of the action of the simplicial operators for Moore loopings. Given a map $\theta \colon [d] \to [e]$ in $\Delta$, to act on an element of $(\Omega X)_e$, we first abut $0 \mapsto 0$ at the beginning to get a map $\hat\theta \colon [d+1] \to [e+1]$, where by ``abut $0 \mapsto 0$'', we mean construct the map which sends $0$ to $0$ and otherwise, for $i \ge 1$, sends $i$ to $\theta(i-1) + 1$. Next, we act by using the map $\hat\theta^*$ from the simplicial structure of $S$. \customendremark
\end{Remark}

Prior to discussing the adjunction with the Kan suspension, as we did with the suspensions, we compute the non-degenerates in Moore loopings.

\begin{Proposition}\label{prop:nd_loops}
Let $S$ be a based simplicial set. Given any $d \ge 0$, we have that
\[
(\Omega S)_d^{\emph{nd}} \cong \left\{ \begin{array}{ll}
S_{d+1}^{\emph{nd}} \cap (\Omega S)_d & \emph{if $d \ge 1$} \\
(S_1^{\emph{nd}} \cup *) \cap (\Omega S)_0 & \emph{if $d = 0$}.
\end{array}
\right.
\]
\end{Proposition}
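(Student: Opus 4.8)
The plan is to proceed exactly as the paper suggests by analogy with the suspension case (Proposition~\ref{prop:nd_susp}), but now tracking which simplices of $S_{d+1}$ that lie in the subset $(\Omega S)_d$ remain non-degenerate \emph{as simplices of $\Omega S$}. The point is that degeneracies in $\Omega S$ are computed by the operators $s_{i+1}^{d+1}$ acting on $S$, so a simplex $s \in (\Omega S)_d \subseteq S_{d+1}$ is degenerate in $\Omega S$ if and only if $s = s_{j+1}^{d+1}(t)$ for some $t \in (\Omega S)_{d-1}$ and some $0 \le j \le d-1$, i.e.\ if and only if $s$ lies in the image of one of $s_1, \dots, s_d$ (the degeneracies \emph{except} $s_0$) of $S$.

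First I would set up the claim as: for $s \in (\Omega S)_d$, $s$ is non-degenerate in $\Omega S$ iff $s$ is not in the image of any of $s_1^{d}, \dots, s_d^{d} \colon S_d \to S_{d+1}$. For the $d \ge 1$ case I then need to show that, within the constraint set $(\Omega S)_d$, avoiding the images of $s_1, \dots, s_d$ is equivalent to avoiding the images of all of $s_0, s_1, \dots, s_d$, i.e.\ to being non-degenerate in $S$. One inclusion is trivial. For the other, the key observation is that if $s \in (\Omega S)_d$ is in the image of $s_0^{d}$, say $s = s_0^d(u)$, then the boundary condition $d_0(s) = *$ forces $u = d_0 s_0^d(u) = \dots$; more usefully, $d_1 s = u$ and the conditions defining $(\Omega S)_d$ then pin down $u$, and one checks $s$ is also hit by some $s_j$ with $j \ge 1$ — or, more cleanly, that $s = s_0^d(u)$ together with $s \in (\Omega S)_d$ forces $s$ to be the basepoint or forces a degeneracy from a higher $s_j$, contradicting... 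Actually the cleanest route: use the Eilenberg--Zilber lemma. Every simplex of $S$ is uniquely $\sigma^* t$ for $t$ non-degenerate and $\sigma$ a surjection; apply this to $s \in (\Omega S)_d \subseteq S_{d+1}$, and analyze which surjections $\sigma \colon [d+1] \twoheadrightarrow [m]$ are compatible with $d_0 s = *$ and $d_1\cdots d_{d+1} s = *$. The condition $d_0 s = *$ handles the $s_0$ direction: it shows that the surjection $\sigma$, written as a composite of codegeneracies, cannot ``begin'' with $s_0$ in a way that survives, so effectively $s$ is non-degenerate in $\Omega S$ iff the underlying $s$ is non-degenerate in $S$, giving the $S_{d+1}^{\text{nd}} \cap (\Omega S)_d$ description.

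For the $d = 0$ case the set $(\Omega S)_0$ consists of $s \in S_1$ with $d_0 s = * = d_1 s$ (loops in the naive sense), and every $0$-simplex of $\Omega S$ is automatically non-degenerate; but as a simplex of $S$ such an $s$ is either a genuine non-degenerate $1$-simplex or the degenerate one $s_0^0(*)$, i.e.\ the basepoint, which is why the answer is $(S_1^{\text{nd}} \cup *) \cap (\Omega S)_0$ rather than just $S_1^{\text{nd}} \cap (\Omega S)_0$.

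The main obstacle is the bookkeeping in the $d \ge 1$ case: carefully verifying, via the Eilenberg--Zilber decomposition and the two basepoint conditions cutting out $(\Omega S)_d$, that the only degenerate-in-$S$ simplices that can appear in $(\Omega S)_d$ with $d \ge 1$ are \emph{also} degenerate in $\Omega S$ (so they get correctly excluded by intersecting with $S_{d+1}^{\text{nd}}$), while for $d = 0$ the lone exception, the basepoint, must be explicitly added back in. I expect this to be, in the paper's own words, a laborious but essentially mechanical check, and I would present it by reducing to the statement about which codegeneracy operators $s_1, \dots, s_{d+1}$ on $S$ preserve the subsets $(\Omega S)_\bullet$ and matching these against the definition of degeneracy in $\Omega S$.
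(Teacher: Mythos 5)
Your proposal is correct and is exactly the ``laborious but easy direct check'' the paper alludes to; the one spot to tighten is the $d \ge 1$ case, where once you write $s = s_0(u)$ with $s \in (\Omega S)_d$ you should simply finish the chain $u = d_0 s_0(u) = d_0(s) = *$, so $s$ is the basepoint of $S_{d+1}$, which (for $d \ge 1$) also equals $s_1(*_{S_d})$ with $*_{S_d} \in (\Omega S)_{d-1}$ and is therefore already degenerate in $\Omega S$ --- no Eilenberg--Zilber machinery is needed. Your treatment of $d = 0$ (no degeneracies land in simplicial degree $0$, yet the basepoint $*_{S_1} = s_0(*_{S_0})$ lies in $(\Omega S)_0$ and is degenerate in $S$, so it must be added back by hand) is correct.
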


\begin{proof}
This follows from an easy direct check using the definition above of the action of the simplicial operators on loopings.
\end{proof}

Finally, we record that we have an adjunction with $\Sigma$ and $\Omega$ and record facts about the unit and counit of this adjunction.

\begin{Proposition}\label{prop:SigmaOmegadj}
We have the following:
\begin{itemize}
	\item[(i)] The Kan suspensions and Moore loopings constitute an adjunction where $\Sigma \dashv \Omega$.
	\item[(ii)] For all based simplicial sets $S$, the unit $S \to \Omega\Sigma S$ is an isomorphism.
	\item[(iii)] For all based simplicial sets $S$, the counit $\Sigma \Omega S \to S$ is a monomorphism.
\end{itemize}
\end{Proposition}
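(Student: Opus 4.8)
The plan is to prove (i) by writing down an explicit natural bijection $\mathrm{Hom}_{\mathsf{Spc}_*}(\Sigma S, T)\cong\mathrm{Hom}_{\mathsf{Spc}_*}(S,\Omega T)$ and then to extract (ii) and (iii) from the resulting formulas for the unit and counit. (Since $\Sigma\dashv\Omega$ is classical one may alternatively cite~\cite{Wu}, but we want the concrete descriptions in any case.) For (i), to $f\colon\Sigma S\to T$ I would assign $\hat f\colon S\to\Omega T$, $\hat f_d(s):=f(\Sigma s)$, where $\Sigma s$ is the suspension simplex of Definition~\ref{def:susp_simps}. That $f(\Sigma s)$ lands in $(\Omega T)_d$ is immediate: Proposition~\ref{prop:faces_of_susp_simps} gives $d_0(\Sigma s)=*$, and $d_1\cdots d_{d+1}(\Sigma s)=*$ because it is a $0$-simplex of $\Sigma S$ and $(\Sigma S)_0=*$. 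That $\hat f$ is simplicial follows from Proposition~\ref{prop:faces_of_susp_simps} (together with the evident degeneracy analogue $s_i\Sigma s=\Sigma s_{i-1}s$ for $i\ge 1$), remembering that the operator $d_i$ (resp.\ $s_i$) on $\Omega T$ is $d_{i+1}$ (resp.\ $s_{i+1}$) on $T$. For the inverse, given $g\colon S\to\Omega T$ I would use Proposition~\ref{prop:nd_susp} — every non-degenerate simplex of $\Sigma S$ is a suspension $\Sigma s$ — to set $\check g(\Sigma s):=g(s)$ on non-degenerates and extend by degeneracy via the Eilenberg--Zilber lemma; simpliciality of $\check g$ then reduces to the face identities, which again match up using Proposition~\ref{prop:faces_of_susp_simps}. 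Naturality in $S$ and $T$ is clear, and one checks that $f\mapsto\hat f$ and $g\mapsto\check g$ are mutually inverse. The upshot is that the unit $\eta_S\colon S\to\Omega\Sigma S$ is $s\mapsto\Sigma s$ and the counit $\varepsilon_T\colon\Sigma\Omega T\to T$ is the map determined by $\Sigma y\mapsto y$.

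For (ii) I would show that $\eta_S$, which is $s\mapsto\Sigma s$, is bijective in each degree by computing $(\Omega\Sigma S)_d=\{x\in(\Sigma S)_{d+1}\mid d_0x=*,\ d_1\cdots d_{d+1}x=*\}$ directly. The second condition is automatic since $(\Sigma S)_0=*$. Writing $(\Sigma S)_{d+1}=S_d\vee S_{d-1}\vee\cdots\vee S_0$, a short computation with the definition of the simplicial operators on the cone shows that $d_0$ collapses the top summand $S_d$ to the basepoint while carrying each lower summand $S_j$ ($j<d$) identically onto the corresponding summand of $(\Sigma S)_d$; hence $d_0x=*$ forces $x=\Sigma s$ for a unique $s\in S_d$. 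Thus $(\Omega\Sigma S)_d=\{\Sigma s\mid s\in S_d\}$ and $\eta_S$ is an isomorphism.

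For (iii): by the description above, $\varepsilon_T$ sends the element $\Sigma y$ of the top wedge summand of $(\Sigma\Omega T)_d$ to $y\in(\Omega T)_{d-1}\subseteq T_d$. The key point is that $\varepsilon_T$ carries non-degenerate simplices to non-degenerate simplices: by Proposition~\ref{prop:nd_susp} each non-degenerate simplex of $\Sigma\Omega T$ is a suspension $\Sigma y$ of a non-degenerate simplex $y$ of $\Omega T$, and by Proposition~\ref{prop:nd_loops} such a $y$ is already non-degenerate in $T$; since $\varepsilon_T(\Sigma y)=y$, the claim follows (with the cases $d=0$ and $d=1$ checked separately). Because $\varepsilon_T$ is simplicial it commutes with degeneracy operators, so if $x=\sigma^*x_0$ is the unique Eilenberg--Zilber decomposition of a simplex of $\Sigma\Omega T$ — $\sigma$ a surjection, $x_0$ non-degenerate — then $\varepsilon_T(x)=\sigma^*\varepsilon_T(x_0)$ is already in Eilenberg--Zilber normal form in $T$. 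As $\varepsilon_T$ is manifestly injective on non-degenerate simplices (there it is $\Sigma y\mapsto y$), uniqueness of normal forms yields that $\varepsilon_T$ is injective in each degree, hence a monomorphism.

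The routine parts are the index bookkeeping in (i) and the cone computation in (ii). I expect the main obstacle to be (iii): one must be careful that $\varepsilon_T$ really does take the Eilenberg--Zilber normal form of $x$ to that of $\varepsilon_T(x)$ — the conjunction of simpliciality and preservation of non-degeneracy above — and in particular one must handle the low-dimensional edge cases of Propositions~\ref{prop:nd_susp} and~\ref{prop:nd_loops} (for instance, that a non-basepoint element of $(\Omega T)_0$ is genuinely non-degenerate in $T$, and that the basepoint simplices map as they should).
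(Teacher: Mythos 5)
Your proof is correct and follows essentially the same approach as the paper: part (ii) by computing $(\Omega\Sigma S)_d$ directly via the action of $d_0$ on the wedge decomposition of $(\Sigma S)_{d+1}$, and part (iii) by reducing to preservation of and injectivity on non-degenerate simplices via Propositions~\ref{prop:nd_susp} and~\ref{prop:nd_loops} together with the Eilenberg--Zilber lemma. The only real difference is in (i), where the paper simply cites the verification in~\cite{MarcStephan}, whereas you work out the natural bijection explicitly via $f \mapsto (s \mapsto f(\Sigma s))$ and its inverse; this is a self-contained alternative and dovetails nicely with (ii) and (iii) since it produces the explicit formulas $\eta_S\colon s \mapsto \Sigma s$ and $\varepsilon_T\colon \Sigma y \mapsto y$ that both arguments then use.
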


\begin{proof}
(i): The necessary verifications are straightforward. For a written account, see Proposition 2.14 in~\cite{MarcStephan}; our loops functor is dual to the one used there, but an entirely analogous argument carries through (the duality here is in the sense of mirror reflections of simplicial objects -- see Section 2.6.3 in~\cite{Wu} for details). Under the natural hom-isomorphism
\[
\mathsf{Spc}_*(\Sigma S, T) \cong \mathsf{Spc}_*(S, \Omega T)
\]
given a map $f \colon \Sigma S \to T$, the corresponding adjoint map $g \colon S \to \Omega T$ is given by the composites
\[
S_d \hookrightarrow S_d \vee S_{d-1} \vee \cdots \vee S_1 \vee S_0 = (\Sigma S)_{d+1} \to T_{d+1}
\]

(ii): To demonstrate this, we explicitly describe the unit of adjunction. It is given by maps $S \to \Omega \Sigma S$. We have
\begin{align*}
(\Omega \Sigma S)_d &= \{s \in (\Sigma S)_{d+1} \mid d_0(s) = d_1 \cdots d_{d+1}(s) = *\} \\
&= \{s \in S_d \vee \cdots \vee S_0 \mid d_0(s) = d_1 \cdots d_{d+1}(s) = *\}.
\end{align*}
Using the definition of the action of the simplicial operators on suspensions, we find that on each $S_d, \dots, S_0$, the action by $d_0$ is the identity, so that the elements that go to $*$ under $d_0$ are exactly those in $S_d$. Moreover, the condition $d_1 \cdots d_{d+1}(s) = *$ is automatic for all simplices since $(\Sigma T)_0 = *$ for any $T$ (one can also directly check that, for $s \in S_d$, $d_1\cdots d_{d+1}(s) = d_0\cdots d_d(s)$). Thus $(\Omega \Sigma S)_d = S_d$. One can check that the unit of adjunction is then just the identity on $S_d$ and hence an isomorphism. \\

(iii): It suffices (by, for example, the Eilenberg-Zilber lemma expressing degenerate simplices uniquely as iterated degeneracies of non-degenerate simplices) to show that the counit preserves non-degenerate simplices and that it is injective when restricted to the non-degenerate simplices. In dimension $d = 0$, this is clear since $(\Sigma T)_0 = *$ for any $T$. Let $d \ge 1$. We have that
\[
(\Sigma \Omega S)_d = (\Omega S)_{d-1} \vee \cdots \vee (\Omega S)_0.
\]
By Proposition~\ref{prop:nd_susp}, the non-degenerate simplices are exactly the elements which lie in the first summand, $(\Omega S)_{d-1}$ (excluding the basepoint if $d = 1$). Moreover, an easy check shows that the counit, restricted to this summand, is simply the inclusion into $S_d$. This map is then certainly injective on the non-degenerate simplices. It remains to show that the non-degenerate simplices are preserved, and this follows by Proposition~\ref{prop:nd_loops}, which tells us that a non-degenerate element in $(\Omega S)_{d-1}$ is necessarily non-degenerate in $S_d$ (except possibly in the case $d = 1$, where the element may also be the basepoint, but as mentioned just above, in the case $d = 1$, the basepoint is to be excluded).
\end{proof}

\section{Operadic (De)suspensions}\label{sec:opsusp}

We now discuss the second ingredient which we need prior to constructing stabilizations of $\mathbb{E}_\infty$ operads, that of operadic suspensions and desuspensions. For this purpose, we shall need to explicitly distinguish the case of chain complexes and cochain complexes, i.e., of chain operads and cochain operads. In either case, the operadic suspension and desuspension are endofunctors, which map a chain operad to another chain operad and a cochain operad to another cochain operad. If $X$ is a (co)chain complex and $n \in \Z$, we let $X[n]$ be the (co)chain complex where $X[n]_d = X_{d-n}$ (or, to be notationally more consistent in the cochain case, $X[n]^d = X^{d-n}$). We also let $k[n]$ denote the (co)chain complex with $k$ in degree $n$ and zeros elsewhere, where $k$ is an arbitrary ground field. We let $\mathpzc{End}_{X}$ denote the endomorphism operad on $X$, the operad having $\mathpzc{End}_X(n) = \mathrm{F}(X^{\otimes n}, X)$, where $\mathrm{F}(-,-)$ denotes the internal hom of (co)chain complexes. Finally, given (co)chain operads $\mathpzc{P}$ and $\mathpzc{Q}$, we let $\mathpzc{P} \otimes_k \mathpzc{Q}$ denote the operad where $(\mathpzc{P} \otimes_k \mathpzc{Q})(n) = \mathpzc{P}(n) \otimes_k \mathpzc{Q}(n)$ (the rest of the operad structure is inherited, in a straightforward manner, from the operad structures of $\mathpzc{P}$ and $\mathpzc{Q}$).

\begin{Definition}\label{def:operadic_susp_ch}
Given a chain operad $\mathpzc{P}$ over $k$:
\begin{itemize}
	\item The \textit{operadic suspension}\index{suspension!of operads} $\Sigma\mathpzc{P}$ is defined by setting $\Sigma \mathpzc{P} := \mathpzc{P} \otimes_k \mathpzc{End}_{k[1]}$.
	\item The \textit{operadic desuspension}\index{desuspension!of operads} $\Sigma^{-1}\mathpzc{P}$ is defined by setting $\Sigma^{-1} \mathpzc{P} := \mathpzc{P} \otimes_k \mathpzc{End}_{k[-1]}$.
\end{itemize}
Given a cochain operad $\mathpzc{P}$ over $k$:
\begin{itemize}
	\item The \textit{operadic suspension} $\Sigma\mathpzc{P}$ is defined by setting $\Sigma \mathpzc{P} := \mathpzc{P} \otimes_k \mathpzc{End}_{k[-1]}$.
	\item The \textit{operadic desuspension} $\Sigma^{-1}\mathpzc{P}$ is defined by setting $\Sigma^{-1} \mathpzc{P} := \mathpzc{P} \otimes_k \mathpzc{End}_{k[1]}$.
\end{itemize}
\end{Definition}

Note that we have $\mathpzc{End}_{k[1]}(n)_d = \{\text{maps $k[1]^{\otimes n} \to k[1]$ in $\mathsf{Gr}_k$ of degree $d$}\}$ where $\mathsf{Gr}_k$ denotes the category of $\Z$-graded $k$-modules. Thus $\mathpzc{End}_{k[1]}(n)_d$ is zero if $d \neq 1-n$ and is $k$ otherwise, so that $\mathpzc{End}_{k[1]}(n) = k[1-n]$. As a result, we find that:

\begin{center}
\begin{tabular}{m{6cm}  m{6cm}}
\hline
Chain operad $\mathpzc{P}$ & Cochain operad $\mathpzc{P}$ \tabularnewline \hline
\tabularnewline [-1em]
$(\Sigma \mathpzc{P})(n) \cong \mathpzc{P}(n)[1-n]$ & $(\Sigma \mathpzc{P})(n) \cong \mathpzc{P}(n)[n-1]$ \tabularnewline
\tabularnewline [-1em]
$(\Sigma^{-1} \mathpzc{P})(n) \cong \mathpzc{P}(n)[n-1]$ & $(\Sigma^{-1} \mathpzc{P})(n) \cong \mathpzc{P}(n)[1-n]$
\end{tabular}
\end{center}

As in Section~\ref{sec:nots_convs}, we have a reindexing operator $(-)^\dagger$ between chain and cochain operads. We now discuss how this construction behaves with respect to operadic (de)suspensions.

\begin{Proposition}\label{prop:opreindexsusp}
Let $\mathpzc{P}$ be a chain or cochain operad over $k$. We have that
\[
(\Sigma \mathpzc{P})^{\dagger} = \Sigma (\mathpzc{P}^{\dagger})
\quad
\text{and}
\quad
(\Sigma^{-1} \mathpzc{P})^{\dagger} = \Sigma^{-1} (\mathpzc{P}^{\dagger}).
\]
\end{Proposition}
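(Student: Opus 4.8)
The plan is to work directly from Definition~\ref{def:operadic_susp_ch}, which presents operadic (de)suspension as an aritywise tensor product with an endomorphism operad, and to reduce the claim to two facts. The first is that the reindexing operator $(-)^\dagger$ is strictly compatible with the aritywise tensor product of operads and with the formation of endomorphism operads; the second is the triviality that $(k[1])^\dagger = k[-1]$ and $(k[-1])^\dagger = k[1]$. Granted these, each of the four identities to be proved follows by a one-line computation.

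For the first fact, I would note that $(-)^\dagger$, viewed as a pair of mutually inverse functors $\mathsf{Ch}_k \to \mathsf{Co}_k$ and $\mathsf{Co}_k \to \mathsf{Ch}_k$, preserves the tensor product and the internal hom on the nose: $(X \otimes Y)^\dagger = X^\dagger \otimes Y^\dagger$ and $\text{F}(X,Y)^\dagger = \text{F}(X^\dagger, Y^\dagger)$, with the symmetry isomorphisms corresponding. This is because $(-)^\dagger$ merely relabels the grading by $d \mapsto -d$, leaving the underlying module, the differential and all structure maps untouched, and every Koszul sign that occurs --- $(-1)^{|x|}$ in the differential of a tensor product, $(-1)^{|x||y|}$ in the symmetry, $(-1)^{|f|}$ in the differential of $\text{F}$ --- is invariant under $d \mapsto -d$; the only change is the passage between a degree $+1$ and a degree $-1$ differential, which is precisely the bookkeeping that distinguishes $\mathsf{Co}_k$ from $\mathsf{Ch}_k$. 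Consequently $(-)^\dagger$ carries $\mathpzc{End}_X$ to $\mathpzc{End}_{X^\dagger}$ (its arity-$n$ piece $\text{F}(X^{\otimes n},X)$, its unit, its composition maps, and its symmetric group actions all being transported verbatim), and it commutes with the aritywise tensor product of operads, since $(\mathpzc{P} \otimes_k \mathpzc{Q})(n) = \mathpzc{P}(n) \otimes \mathpzc{Q}(n)$ and all the structure of $\mathpzc{P} \otimes_k \mathpzc{Q}$ --- unit, symmetric actions, composition maps (which permute tensor factors and then apply the composition maps of $\mathpzc{P}$ and $\mathpzc{Q}$) --- is assembled from the structure of the two factors together with permutation isomorphisms. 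This is the one place where genuine, if entirely routine, verification is needed; it is exactly the ``easy check'' alluded to in Section~\ref{sec:nots_convs} that the reindexing operation is compatible with all the structure data in an operad.

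The second fact is immediate: $k[1]$ is the chain complex equal to $k$ in degree $1$, so $(k[1])^\dagger$ is the cochain complex equal to $k$ in degree $-1$, that is $k[-1]$; symmetrically $(k[-1])^\dagger = k[1]$.

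Putting these together, for a chain operad $\mathpzc{P}$ over $k$ one has, using Definition~\ref{def:operadic_susp_ch}, then the first fact, then the second:
\[
(\Sigma \mathpzc{P})^\dagger = (\mathpzc{P} \otimes_k \mathpzc{End}_{k[1]})^\dagger = \mathpzc{P}^\dagger \otimes_k \mathpzc{End}_{(k[1])^\dagger} = \mathpzc{P}^\dagger \otimes_k \mathpzc{End}_{k[-1]} = \Sigma(\mathpzc{P}^\dagger),
\]
the last equality being the definition of the operadic suspension of the cochain operad $\mathpzc{P}^\dagger$. The identity $(\Sigma^{-1}\mathpzc{P})^\dagger = \Sigma^{-1}(\mathpzc{P}^\dagger)$ is obtained the same way with $k[1]$ replaced by $k[-1]$, and the case of a cochain operad $\mathpzc{P}$ follows by applying the chain case to $\mathpzc{P}^\dagger$ and using that the two $(-)^\dagger$ functors are mutually inverse. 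I expect the only real obstacle to be the monoidality bookkeeping of the second paragraph; a more hands-on alternative is to use the explicit arity formulas of the table above together with the reindexing identity $(X[m])^\dagger = X^\dagger[-m]$, but one then still has to verify that the resulting aritywise isomorphisms respect the operad structure, so little would be saved.
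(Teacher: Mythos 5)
Your proposal is correct. The paper's own proof of this proposition consists of the single sentence ``An easy direct check one degree at a time'' (plus the remark that the suspension on one side is for chain operads, on the other for cochain operads), so it leaves essentially the entire verification implicit. What you have done is to organize that direct check around a cleaner conceptual core: first that $(-)^\dagger$ is a strict monoidal involution compatible with $\text{F}(-,-)$, hence carries $\mathpzc{End}_X$ to $\mathpzc{End}_{X^\dagger}$ and commutes with the aritywise tensor of operads, and second that $(k[\pm 1])^\dagger = k[\mp 1]$. This is the same kind of elementary bookkeeping the paper gestures at, but bundled in a way that isolates the one nontrivial observation (the Koszul signs $(-1)^{|x|}$, $(-1)^{|x||y|}$, $(-1)^{|f|}$ are unchanged under the relabeling $d \mapsto -d$) and gets all four identities at once from it. I checked the key computation $(k[1])^\dagger = k[-1]$ against the paper's grading conventions and it is right: $k[1]$ is the chain complex with $k$ in homological degree $1$, so $(k[1])^\dagger$ has $k$ in cohomological degree $-1$, which is the cochain complex $k[-1]$. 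The reduction to this via $(\Sigma\mathpzc{P})^\dagger = (\mathpzc{P} \otimes \mathpzc{End}_{k[1]})^\dagger = \mathpzc{P}^\dagger \otimes \mathpzc{End}_{(k[1])^\dagger} = \mathpzc{P}^\dagger \otimes \mathpzc{End}_{k[-1]} = \Sigma(\mathpzc{P}^\dagger)$ is exactly the right move, and the cochain case and the desuspension case then follow as you say. In short: the paper says ``easy direct check''; you have supplied the check, and done so more structurally than a raw degree-by-degree comparison would.
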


\begin{proof}
An easy direct check one degree at a time. For example, in the case where $\mathpzc{P}$ is a chain operad, in operadic degree $n$ and chain degree $d$, we have:
\begin{align*}
(\Sigma \mathpzc{P})^{\dagger}(n)^d &= ((\Sigma \mathpzc{P})(n)^\dagger)^{d} \\
&= (\Sigma \mathpzc{P})(n)_{-d} \\
&= (\mathpzc{P}(n)[1-n])_{-d} \\
&= \mathpzc{P}(n)_{-d+n-1} \\
&= (\mathpzc{P}(n)^\dagger)^{d-n+1} \\
&= \mathpzc{P}^\dagger(n)^{d-n+1} \\
&= (\mathpzc{P}^\dagger(n)[n-1])^d \\
&= (\Sigma(\mathpzc{P}^\dagger))(n)^d
\end{align*}
Note that, for either identity, the notion of suspension on one side is that for chain operads, while on the other is for cochain operads.
\end{proof}

Next, we discuss (co)algebras over (de)suspended operads. Let $\mathpzc{P}$ be a dg operad over $k$. We wish to discuss the relation between (co)algebra structures over $\mathpzc{P}$ and (co)algebra structures over the (de)suspensions $\Sigma^r\mathpzc{P}$, where $r \in \Z$. Once again, we must explicitly distinguish between the chain and cochain cases.

\begin{Proposition}\label{prop:chainopsuspalg}
Let $\mathpzc{P}$ be a chain operad over $k$. Then, for each $r \in \Z$, we have functors as follows
\[
\mathpzc{P}\text{-}\mathsf{Alg} \overset{(\cdot)[r]}\longrightarrow \Sigma^r\mathpzc{P}\text{-}\mathsf{Alg}
\qquad
\mathpzc{P}\text{-}\mathsf{Coalg} \overset{(\cdot)[-r]}\longrightarrow \Sigma^r\mathpzc{P}\text{-}\mathsf{Coalg}.
\]
On the other hand, if $\mathpzc{P}$ is a cochain operad over $k$, then, we have functors as follows
\[
\mathpzc{P}\text{-}\mathsf{Alg} \overset{(\cdot)[-r]}\longrightarrow \Sigma^r\mathpzc{P}\text{-}\mathsf{Alg}
\qquad
\mathpzc{P}\text{-}\mathsf{Coalg} \overset{(\cdot)[r]}\longrightarrow \Sigma^r\mathpzc{P}\text{-}\mathsf{Coalg}.
\]
\end{Proposition}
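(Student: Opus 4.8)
The plan is to use the standard reformulation in which a $\mathpzc{Q}$-algebra structure on a dg module $M$ is the same thing as a morphism of operads $\mathpzc{Q} \to \mathpzc{End}_M$, where $\mathpzc{End}_M(n) = \text{F}(M^{\otimes n}, M)$, and a $\mathpzc{Q}$-coalgebra structure on $C$ is a morphism $\mathpzc{Q} \to \mathpzc{coEnd}_C$, where $\mathpzc{coEnd}_C(n) = \text{F}(C, C^{\otimes n})$; the four functors are then obtained by pre- and post-composition with fixed morphisms of operads. The inputs I would set up first are: (a) for dg modules $M, N$ the ``external tensor'' morphisms of operads $\mathpzc{End}_M \otimes \mathpzc{End}_N \to \mathpzc{End}_{M \otimes N}$ and $\mathpzc{coEnd}_C \otimes \mathpzc{coEnd}_D \to \mathpzc{coEnd}_{C \otimes D}$, given in arity $n$ by shuffling the $n$ tensor factors and pairing the two maps, each of which is an isomorphism as soon as one of the two arguments is $\otimes$-invertible in $\mathsf{DG}_k$; (b) the tautological observations that $k[s]$ is an algebra over $\mathpzc{End}_{k[s]}$ and a coalgebra over $\mathpzc{coEnd}_{k[s]}$, together with the arity-wise degree counts yielding operad isomorphisms $\mathpzc{End}_{k[s]}^{\otimes r} \cong \mathpzc{End}_{k[rs]}$ and $\mathpzc{coEnd}_{k[s]} \cong \mathpzc{End}_{k[-s]}$ (for each of these two pairs, both sides are, in arity $n$, a copy of $k$ concentrated in one and the same degree); and (c) the consequent identifications, for every $r \in \Z$, $\Sigma^r\mathpzc{P} \cong \mathpzc{P} \otimes \mathpzc{End}_{k[r]}$ for a chain operad $\mathpzc{P}$ and $\Sigma^r\mathpzc{P} \cong \mathpzc{P} \otimes \mathpzc{End}_{k[-r]}$ for a cochain operad, read off from Definition~\ref{def:operadic_susp_ch}. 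I also recall from Section~\ref{sec:nots_convs} that the shift functor is $X[m] = X \otimes k[m]$.

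With these in hand the four functors are assembled by the same recipe. Take a chain operad $\mathpzc{P}$ and a $\mathpzc{P}$-algebra $A$ with structure morphism $\alpha \colon \mathpzc{P} \to \mathpzc{End}_A$; then
\[
\Sigma^r\mathpzc{P} \,\cong\, \mathpzc{P} \otimes \mathpzc{End}_{k[r]} \xrightarrow{\,\alpha \otimes \mathrm{id}\,} \mathpzc{End}_A \otimes \mathpzc{End}_{k[r]} \xrightarrow{\ \sim\ } \mathpzc{End}_{A \otimes k[r]} \,=\, \mathpzc{End}_{A[r]}
\]
is a $\Sigma^r\mathpzc{P}$-algebra structure on $A[r]$, the last map being an isomorphism because $k[r]$ is invertible. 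For a $\mathpzc{P}$-coalgebra $C$ with $\gamma \colon \mathpzc{P} \to \mathpzc{coEnd}_C$, using $\mathpzc{coEnd}_{k[-r]} \cong \mathpzc{End}_{k[r]}$ one forms
\[
\Sigma^r\mathpzc{P} \,\cong\, \mathpzc{P} \otimes \mathpzc{End}_{k[r]} \xrightarrow{\,\gamma \otimes \mathrm{id}\,} \mathpzc{coEnd}_C \otimes \mathpzc{coEnd}_{k[-r]} \xrightarrow{\ \sim\ } \mathpzc{coEnd}_{C \otimes k[-r]} \,=\, \mathpzc{coEnd}_{C[-r]},
\]
a $\Sigma^r\mathpzc{P}$-coalgebra structure on $C[-r]$. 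The two cochain cases are identical except that now $\Sigma^r\mathpzc{P} \cong \mathpzc{P} \otimes \mathpzc{End}_{k[-r]}$, which is exactly what flips the shifts to $A[-r]$ and $C[r]$; the four resulting assignments on objects then agree on the nose with the four functors displayed in the statement. On morphisms each functor sends $f$ to $f[r]$, resp.\ $f[-r]$, which is already defined since $(-)[m] = (-) \otimes k[m]$ is a functor on $\mathsf{DG}_k$, and which is a morphism of the relevant (co)algebras by the naturality asserted in (a); functoriality (identities and composites) is then immediate.

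The only genuinely delicate point — and the step I expect to be the main obstacle — is (a): checking that the external tensor pairings are morphisms of operads (compatible with operadic composition, units, and the $\Sigma_n$-actions) and, above all, chain maps, once all the Koszul signs produced by shuffling the tensor factors of $M^{\otimes n} \otimes N^{\otimes n}$ are accounted for, and then that they are isomorphisms when an argument is invertible. Unwound, this is the same sign bookkeeping one would perform if one instead wrote out the candidate structure maps $\Sigma^r\mathpzc{P}(n) \otimes A[\pm r]^{\otimes n} \to A[\pm r]$ (respectively $\Sigma^r\mathpzc{P}(n) \otimes C[\pm r] \to C[\pm r]^{\otimes n}$) by hand and verified directly that they are chain maps satisfying the operad-(co)algebra axioms; it is laborious but purely mechanical, and I would carry it out one tensor factor at a time. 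Everything else — the degree counts identifying $\mathpzc{End}_{k[s]}^{\otimes r}$, $\mathpzc{coEnd}_{k[s]}$ and the pieces $\Sigma^r\mathpzc{P}(n)$, and the compatibility of these identifications with the operad structure — is formal, in the spirit of Proposition~\ref{prop:opreindexsusp}.
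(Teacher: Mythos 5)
Your argument is correct, and it reaches the result by a genuinely different organization than the paper's. The paper's proof simply writes down the candidate structure map directly: it observes $(\Sigma^r\mathpzc{P})(n)\otimes_{\Sigma_n}A[r]^{\otimes n} = \mathpzc{P}(n)[r-rn]\otimes_{\Sigma_n}A^{\otimes n}[rn] = (\mathpzc{P}(n)\otimes_{\Sigma_n}A^{\otimes n})[r]$ and then applies the given structure map of $A$, treating only the chain-algebra case and declaring the other three ``analogous''. You instead encode a (co)algebra structure as an operad morphism into $\mathpzc{End}$ or $\mathpzc{coEnd}$, factor $\Sigma^r\mathpzc{P}$ as $\mathpzc{P}\otimes\mathpzc{End}_{k[\pm r]}$ (using $\mathpzc{End}_{k[1]}^{\otimes r}\cong\mathpzc{End}_{k[r]}$), feed the two tensor factors into the external pairing $\mathpzc{End}_M\otimes\mathpzc{End}_N\to\mathpzc{End}_{M\otimes N}$ (an isomorphism since $k[\pm r]$ is $\otimes$-invertible), and use $\mathpzc{coEnd}_{k[s]}\cong\mathpzc{End}_{k[-s]}$ for the coalgebra side. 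What your route buys is uniformity: all four functors fall out of one recipe, the only moving part being which invertible one-dimensional object you tensor with and on which side of $\mathpzc{End}$ versus $\mathpzc{coEnd}$ it sits, whereas the paper proves one case and waves at the rest. What the paper's route buys is directness: it never needs the external-tensor map of endomorphism operads as an explicit intermediate construction, and the shift-rearrangement is plainly the same bookkeeping with less scaffolding. Both proofs, as written, defer the same sign verifications (the Koszul signs from shuffling $(M\otimes N)^{\otimes n}$ in your version, the identifications $A[r]^{\otimes n}\cong A^{\otimes n}[rn]$ and $X[a]\otimes Y[b]\cong(X\otimes Y)[a+b]$ in the paper's), and you correctly flag this as the one genuinely laborious step; your external-tensor morphism is a standard construction and all your degree counts check out, including the $\Sigma_n$-equivariance of the identifications $\mathpzc{End}_{k[s]}^{\otimes r}\cong\mathpzc{End}_{k[rs]}$ and $\mathpzc{coEnd}_{k[s]}\cong\mathpzc{End}_{k[-s]}$, where the potentially worrisome signs $(-1)^{rs^2}$ versus $(-1)^{(rs)^2}$ agree because $r(r-1)$ is always even.
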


\begin{proof}
We shall describe the case of algebras over a chain operad; the other three cases are analogous. Let $A$ be an algebra over a chain operad $\mathpzc{P}$. We wish to show that $A[r]$ is an algebra over $\Sigma^r\mathpzc{P}$. To do so, we must construct maps
\[
(\Sigma^r\mathpzc{P})(n) \otimes_{\Sigma_n} A[r]^{\otimes n} \to A[r].
\]
These are as follows
\[
(\Sigma^r \mathpzc{P})(n) \otimes_{\Sigma_n} A[r]^{\otimes n} = \mathpzc{P}(n)[r-rn] \otimes_{\Sigma_n} A^{\otimes n}[rn] = (\mathpzc{P}(n) \otimes_{\Sigma_n} A^{\otimes n})[r] \to A[r].
\]
\end{proof}

Finally, we discuss free algebras over (de)suspended operads. Given a dg operad $\mathpzc{P}$, and a (de)suspension $\Sigma^r\mathpzc{P}$, we shall let $\Sigma^r\mathbf{P}$ denote the monad and free algebra functor associated to $\Sigma^r\mathpzc{P}$. Note though that we are not (de)suspending the monad, but rather the operad.

\begin{Proposition}\label{prop:freealgsuspop}
Let $\mathpzc{P}$ denote a chain operad over $k$. Then, for a chain complex $X$, we have
\[
(\Sigma^r \mathbf{P}) X = \mathbf{P}(X[-r])[r].
\]
On the other hand, if $\mathpzc{P}$ is a cochain operad over $k$, for a cochain complex $X$, we have
\[
(\Sigma^r \mathbf{P}) X = \mathbf{P}(X[r])[-r].
\]
\end{Proposition}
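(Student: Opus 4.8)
The plan is to unwind the definition of the free algebra functor and push the operadic shift through the symmetric powers. Recall that for any dg operad $\mathpzc{Q}$ one has $\mathbf{Q}(Y) = \bigoplus_{n \ge 0} \mathpzc{Q}(n) \otimes_{\Sigma_n} Y^{\otimes n}$. Taking $\mathpzc{Q} = \Sigma^r\mathpzc{P}$ and using the aritywise identification $(\Sigma^r\mathpzc{P})(n) \cong \mathpzc{P}(n)[r(1-n)]$ (chain case; $\mathpzc{P}(n)[r(n-1)]$ in the cochain case) recorded after Definition~\ref{def:operadic_susp_ch}, I would rewrite the $n$-th summand of $(\Sigma^r\mathbf{P})(X)$ as $\big(\mathpzc{P}(n)\otimes_{\Sigma_n} X^{\otimes n}\big)[r(1-n)]$, moving the one-dimensional shift freely past $\otimes_{\Sigma_n}$. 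On the other side, the $n$-th summand of $\mathbf{P}(X[-r])[r]$ is $\big(\mathpzc{P}(n)\otimes_{\Sigma_n}(X[-r])^{\otimes n}\big)[r]$, and using the shuffle isomorphism $(X[-r])^{\otimes n}\cong X^{\otimes n}[-rn]$ this becomes $\big(\mathpzc{P}(n)\otimes_{\Sigma_n} X^{\otimes n}\big)[r - rn] = \big(\mathpzc{P}(n)\otimes_{\Sigma_n} X^{\otimes n}\big)[r(1-n)]$, matching the first expression summand by summand. The cochain case is identical after reversing the signs of all shifts.

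The one point that needs care --- and the step I expect to be the main obstacle --- is that the shuffle isomorphism $(X[-r])^{\otimes n}\cong X^{\otimes n}[-rn]$ is $\Sigma_n$-equivariant only once $X^{\otimes n}[-rn]$ is equipped with the $\Sigma_n$-action that permutes the $X$-factors with the usual Koszul signs \emph{and} also incorporates the sign $\mathrm{sgn}(\sigma)^{r}$ coming from permuting the $n$ copies of the one-dimensional complex $k[-r]$, which sits in degree $-r$, so that a transposition of two of them contributes $(-1)^{r^2} = (-1)^r$. This is precisely the sign twist encoded by the endomorphism operad $\mathpzc{End}_{k[\pm 1]}$ appearing in Definition~\ref{def:operadic_susp_ch}: since $\Sigma_n$ acts on $\mathpzc{End}_{k[1]}(n)\cong k[1-n]$ by the sign character, it acts on the $\mathpzc{End}_{k[\pm 1]}$-factor built into $(\Sigma^r\mathpzc{P})(n)$ by the matching power $\mathrm{sgn}(\sigma)^r$. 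Thus, once the bookkeeping of that factor is matched against the bookkeeping of the shuffle, the two sets of $\Sigma_n$-coinvariants agree on the nose, including the differentials and all signs.

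A slicker but less explicit alternative would be formula-free: the shift $(-)[r]$ is an autoequivalence of $\mathsf{DG}_k$, and by Proposition~\ref{prop:chainopsuspalg} it also yields an equivalence $(-)[r]\colon \mathpzc{P}\text{-}\mathsf{Alg} \to \Sigma^r\mathpzc{P}\text{-}\mathsf{Alg}$; these two squares commute with the forgetful functors by construction. Passing to left adjoints of this commuting square of right adjoints gives $\mathbf{P}\big(X[-r]\big)[r] \cong (\Sigma^r\mathbf{P})(X)$ naturally in $X$. This route sidesteps the sign computation, but it produces only a natural isomorphism rather than the asserted equality, so I would present the explicit summand-by-summand identification above as the actual proof and mention the conceptual argument as a remark.
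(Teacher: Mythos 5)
Your argument is correct and follows essentially the same route as the paper's proof: expand the free algebra aritywise, replace $(\Sigma^r\mathpzc{P})(n)$ by $\mathpzc{P}(n)[r(1-n)]$, and match this against the $n$-th summand of $\mathbf{P}(X[-r])[r]$ via $(X[-r])^{\otimes n}\cong X^{\otimes n}[-rn]$. The extra care you take in checking that the shuffle isomorphism is $\Sigma_n$-equivariant against the sign twist built into $\mathpzc{End}_{k[\pm 1]}$ is a genuine refinement which the paper leaves implicit, and the adjunction remark is a sound conceptual cross-check.
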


\begin{proof}
We describe the case of a chain operad; the cochain case is analogous. We have
\begin{align*}
(\Sigma^r \mathbf{P}) X &= \bigoplus_{n \ge 0} (\Sigma^r \mathpzc{P})(n) \otimes_{\Sigma_n} X^{\otimes n} \\
&= \bigoplus_{n \ge 0} \mathpzc{P}(n)[r-rn] \otimes_{\Sigma_n} X^{\otimes n} \\
&= \bigoplus_{n \ge 0} (\mathpzc{P}(n) \otimes_{\Sigma_n} X^{\otimes n}[-rn])[r] \\
&= \left(\bigoplus_{n \ge 0} \mathpzc{P}(n) \otimes_{\Sigma_n} X[-r]^{\otimes n}\right)[r] \\
&= \mathbf{P}(X[-r])[r].
\end{align*}
\end{proof}

\section{The Stable Eilenberg-Zilber Operad}\label{subsec:stable_ez_operad}

Let us recall the definition of the Eilenberg-Zilber operad\index{Eilenberg-Zilber operad}, in both a chain and a cochain version. Let $\mathsf{Gr}_{\F_p}$ denote the category with objects the $\Z$-graded $\F_p$-modules and morphisms the homogeneous maps. Let also $\mathsf{Spc}$ denote the category of spaces, by which we mean simplicial sets. Fix some $n \ge 0$. For each $d \in \Z$, we have that
\[
\mathpzc{Z}(n)_d := \left\{
\begin{array}{ll}
\text{degree $d$ natural transformations from $\mathrm{C}_{\bullet}(-)$ to $\mathrm{C}_{\bullet}(-)^{\otimes n}$} \\
\text{where $\mathrm{C}_{\bullet}(-)$ and $\mathrm{C}_{\bullet}(-)^{\otimes n}$ are viewed as functors $\mathsf{Spc} \to \mathsf{Gr}_{\F_p}$}
\end{array}
\right\}.
\]
Thus $\mathpzc{Z}(n)_d$ consists of the degree $d$, $n$-ary co-operations on chains; note that, as always throughout this work, the chains here are normalized. Given such a natural transformation $\alpha = \{\alpha_S \colon \text{C}_\bullet(S) \to \text{C}_\bullet(S)^{\otimes n}\}$ of degree $d$, we get a natural transformation $\partial\alpha$ of degree $d-1$ by setting, for a simplicial set $S$ and a non-degenerate simplex $s \in S$, $(\partial\alpha)_S(s)= \partial\alpha_S(s) - (-1)^{d}\alpha_S\partial(s)$. An easy check shows that this gives us a differential $\partial \colon \mathpzc{Z}(n)_d \to \mathpzc{Z}(n)_{d-1}$. The operad identity in $\mathpzc{Z}(1)$ is the identity transformation. The symmetric group $\Sigma_n$ acts on $\mathpzc{Z}(n)$ by permuting the tensor factors. Finally, the operad composition maps are analogous to those that occur in co-endomorphism operads.

\begin{Definition}\label{def:ez_op}
The \textit{Eilenberg-Zilber chain operad} is the operad $\mathpzc{Z}$ defined above; the \textit{Eilenberg-Zilber cochain operad} is the operad $\mathpzc{Z}^\dagger$.
\end{Definition}

Here $(-)^\dagger$ refers to the reindexing construction -- see Section~\ref{sec:nots_convs}.

\begin{Remark}\label{rmk:cochainEZ}
An easy check shows that, since we are working over a field and since co-operations on chains and operations on cochains are determined by their values on the standard simplices, we can equivalently define the Eilenberg-Zilber cochain operad by setting
\[
\mathpzc{Z}^\dagger(n)_d := \left\{
\begin{array}{ll}
\text{degree $d$ natural transformations from $\mathrm{C}^{\bullet}(-)^{\otimes n}$ to $\mathrm{C}^{\bullet}(-)$} \\
\text{where $\mathrm{C}^{\bullet}(-)$ and $\mathrm{C}^{\bullet}(-)^{\otimes n}$ are viewed as functors $\mathsf{Spc}^{\text{op}} \to \mathsf{Gr}_{\F_p}$}
\end{array}
\right\}
\]
and setting the rest of the operad data in a manner analogous to that above for the chain operad. Thus $\mathpzc{Z}^\dagger(n)_d$ consists of the degree $d$, $n$-ary operations on cochains. \customendremark
\end{Remark}

In order to stabilize the Eilenberg-Zilber operad, we first need to introduce basepoints. Thus, we alter the above construction slightly, and consider instead the following operad, consisting of co-operations on chains on based simplicial sets.

\begin{Definition}
The \textit{reduced Eilenberg-Zilber chain operad}\index{reduced Eilenberg-Zilber operad}, denoted $\mathpzc{Z}_*$, is the operad constructed in the same manner as the Eilenberg-Zilber chain operad $\mathpzc{Z}$ except that the chains are to be taken on based simplicial sets (and so are of course reduced, as well as being normalized as always). Thus, for example, we have the following
\[
\mathpzc{Z}_*(n)_d := \left\{
\begin{array}{ll}
\text{degree $d$ natural transformations from $\mathrm{C}_{\bullet}(-)$ to $\mathrm{C}_{\bullet}(-)^{\otimes n}$} \\
\text{where $\mathrm{C}_{\bullet}(-)$ denotes normalized reduced chains and where} \\
\text{$\mathrm{C}_{\bullet}(-)$ and $\mathrm{C}_{\bullet}(-)^{\otimes n}$ are viewed as functors $\mathsf{Spc}_* \to \mathsf{Gr}_{\F_p}$}
\end{array}
\right\}.
\]
The \textit{reduced Eilenberg-Zilber cochain operad} is then defined to be $\mathpzc{Z}_*^\dagger$.
\end{Definition}

Now, we note that the operad $\mathpzc{Z}_{*}$ admits a stabilization map
\[
\Psi \colon \Sigma\mathpzc{Z}_{*} \to \mathpzc{Z}_{*}
\]
where $\Sigma$ here denotes the operadic suspension of chain operads, found in Definition~\ref{def:operadic_susp_ch}. This map, or the slight variant of it in the case of $\mathpzc{Z}$, has also been noted, for example, in~\cite{BergerFresse}, \cite{Smirnov1} and \cite{Smirnov2}. There are many ways in which one can describe this map; we give a description via the Kan suspension which will be useful for us later when we discuss spectral cochains. To do this, we need to specify maps $\mathpzc{Z}_{*}(n)[1-n] \to \mathpzc{Z}_{*}(n)$. Consider some natural transformation $\alpha$ in $\mathpzc{Z}_{*}(n)[1-n]_d$. This is a natural transformation of graded modules $\text{C}_\bullet(-) \to \text{C}_\bullet(-)^{\otimes n}$ of degree $d+n-1$ over based simplicial sets. By precomposition with the Kan suspension, we get a natural transformation $\text{C}_\bullet(\Sigma -) \to \text{C}_\bullet(\Sigma -)^{\otimes n}$ over based simplicial sets $X$. From Proposition~\ref{prop:n_r_chains_susp_X}, we have a natural isomorphism $\text{C}_\bullet(\Sigma -) \cong \text{C}_\bullet(-)[1]$. Thus, by pre and postcomposition, we get a natural transformation of graded modules $\text{C}_\bullet(-)[1] \to (\text{C}_\bullet(-)[1])^{\otimes n}$ of degree $d+n-1$, or equivalently a natural transformation $\text{C}_\bullet(-) \to \text{C}_\bullet(-)^{\otimes n}$ of degree $(d+n-1) + 1 - n = d$, which is to say an element, say $\alpha'$, of $\mathpzc{Z}_{*}(n)_{d}$. This correpondence $\alpha \mapsto \alpha'$ gives us a map $(\Sigma\mathpzc{Z}_{*})(n) \to \mathpzc{Z}_{*}(n)$. Moreover, one can check that this map is a chain map, and then that, assembling over $n$, we get an operad map $\Psi \colon \Sigma\mathpzc{Z}_{*} \to \mathpzc{Z}_{*}$, as desired. \\

We are now able to define our stabilization of the Eilenberg-Zilber operad. For each $k \ge 0$, we have a map, namely $\Sigma^k\Psi$, from $\Sigma^{k+1}\mathpzc{Z}_*$ to $\Sigma^k\mathpzc{Z}_*$.

\begin{Definition}\label{def:stable_ez_operad}
The \textit{stable Eilenberg-Zilber chain operad}\index{stable Eilenberg-Zilber operad}, denoted $\mathpzc{Z}_{\text{st}}$, is the operad defined as follows:
\[
\mathpzc{Z}_{\text{st}} := \underset{\longleftarrow}{\text{lim}}(\cdots \overset{\Sigma^2\Psi}\longrightarrow \Sigma^2\mathpzc{Z}_{*} \overset{\Sigma\Psi}\longrightarrow \Sigma\mathpzc{Z}_{*} \overset{\Psi}\longrightarrow \mathpzc{Z}_{*}).
\]
The \textit{stable Eilenberg-Zilber cochain operad} is the operad $\mathpzc{Z}^\dagger_{\text{st}}$.
\end{Definition}

\begin{Remark}\label{rmk:stable_ez_cochain}
By Proposition~\ref{prop:opreindexsusp}, the operadic suspension $\Sigma$ and the reindexing operator $(-)^\dagger$ commute, so that we also have maps $\Sigma^{k+1}\mathpzc{Z}_*^\dagger \to \Sigma^{k}\mathpzc{Z}_*^\dagger$, and moreover, $(-)^\dagger$ clearly also commutes with inverse limits, so that one can also analogously construct the stable Eilenberg-Zilber cochain operad as the inverse limit
\[
\mathpzc{Z}^\dagger_{\text{st}} := \underset{\longleftarrow}{\text{lim}}(\cdots \overset{\Sigma^2\Psi^\dagger}\longrightarrow \Sigma^2\mathpzc{Z}^\dagger_{*} \overset{\Sigma\Psi^\dagger}\longrightarrow \Sigma\mathpzc{Z}^\dagger_{*} \overset{\Psi^\dagger}\longrightarrow \mathpzc{Z}^\dagger_{*}).
\] \customendremark
\end{Remark}

\begin{Remark}
The Eilenberg-Zilber chain and cochain operads are defined using natural transformations on (co)chains on spaces. The stable Eilenberg-Zilber operads also admit a similar interpretation, but using spectral (co)chains. See Remark~\ref{stable_EZ_operad_as_spectral_ops}. \customendremark
\end{Remark}

Consider the chain complex $\mathpzc{Z}_{\text{st}}(n)$ in operadic degree $n$ of the stable Eilenberg-Zilber chain operad. Since limits of operads are formed termwise, $\mathpzc{Z}_{\text{st}}(n)$ is equivalent to the limit, in chain complexes, of the diagram
\[
\cdots \longrightarrow \Sigma^2\mathpzc{Z}_{*}(n) \longrightarrow \Sigma\mathpzc{Z}_{*}(n) \longrightarrow \mathpzc{Z}_{*}(n).
\]
That is, it is the limit of
\[
\cdots \longrightarrow \mathpzc{Z}_*(n)[2-2n] \longrightarrow \mathpzc{Z}_*(n)[1-n] \longrightarrow \mathpzc{Z}_*(n).
\]
In degree $d \in \Z$ then, we have
\begin{equation}\label{eq:EZst_elts}
\mathpzc{Z}_{\text{st}}(n)_d \subseteq \prod_{k \ge 0} \mathpzc{Z}_{*}(n)[k-kn]_d = \prod_{k \ge 0} \mathpzc{Z}_{*}(n)_{d+kn-k}.
\end{equation}
More specifically, an element of $\mathpzc{Z}_{\text{st}}(n)$ in degree $d$ is a sequence $(\alpha_0,\alpha_1,\alpha_2,\dots)$ where $\alpha_0$ is a degree $d$ chain co-operation $\text{C}_\bullet(-) \to \text{C}_\bullet(-)^{\otimes n}$, $\alpha_1$ is a degree $d+n-1$ chain co-operation $\text{C}_\bullet(-) \to \text{C}_\bullet(-)^{\otimes n}$ such that $\Psi(\alpha_1) = \alpha_0$, and so on. Similarly, the cochain complex $\mathpzc{Z}^\dagger_{\text{st}}(n)$ is the limit, in cochain complexes, of the diagram
\[
\cdots \longrightarrow \mathpzc{Z}^\dagger_*(n)[2n-2] \longrightarrow \mathpzc{Z}^\dagger_*(n)[n-1] \longrightarrow \mathpzc{Z}^\dagger_*(n)
\]
and, recalling what was said in Remark~\ref{rmk:cochainEZ}, an element of $\mathpzc{Z}_{\text{st}}^\dagger(n)$ in degree $d$ is a sequence $(\alpha_0,\alpha_1,\alpha_2,\dots)$ where $\alpha_0$ is a degree $d$ cochain operation $\text{C}^\bullet(-)^{\otimes n} \to \text{C}^\bullet(-)$, $\alpha_1$ is a degree $d-n+1$ cochain operation $\text{C}^\bullet(-)^{\otimes n} \to \text{C}^\bullet(-)$ such that $\Psi(\alpha_1) = \alpha_0$, and so on.

\section{The Stable McClure-Smith Operad}\label{subsec:stable_MS_operad}

In this section, as we did for the Eilenberg-Zilber operad, we stabilize the McClure-Smith operad, as always in both a chain form and a cochain form. First, we need to recall the definition of the McClure-Smith operad. This operad was used by McClure and Smith in~\cite{McClureSmith}, and is also discussed in~\cite{BergerFresse}, where it is called the surjection operad. In~\cite{McClureSmith}, Theorem 2.15 and Remark 2.16(a) demonstrate that this operad is an $\mathbb{E}_\infty$ operad, so that the stabilization in this section will constitute our first stabilization of an $\mathbb{E}_\infty$ operad. \\

In order to recall the definition of the McClure-Smith operad, we require some preliminaries. For each integer $n \ge 0$, let $(n)$ denote the set $\{1,\dots,n\}$, where $(0) := \varnothing$. Given a set map $f \colon (m) \to (n)$, we will often view it as, and denote it by, the ordered sequence $(f(1),\dots,f(m))$. Suppose given $n \ge 0$ and a surjection $f \colon (m) \to (n)$. Say that $f$ is \textit{degenerate}\index{degenerate surjection} if there exists some $l \in (m)$ such that $f(l) = f(l+1)$, and otherwise \textit{non-degenerate}; that is, call $f$ degenerate exactly when the sequence $(f(1),\dots,f(m))$ contains two equal adjacent entries. For each $n \ge 0$, let $\text{S}(n)$ be the graded $\F_p$-module freely generated by maps $f \colon (-) \to (n)$, where, if the source is $(m)$, the assigned degree is $m-n$. Let $\text{N}(n)$ denote the sub graded module generated by the non-surjective maps and $\text{D}(n)$ the sub graded module generated by the degenerate surjections. For each $n \ge 0$, set
\[
\mathpzc{M}(n) := \text{S}(n)/(\text{N}(n)+\text{D}(n)).
\]

\begin{Remark}
Taken over $n \ge 0$, the above graded modules will be the underlying graded modules of the McClure-Smith chain operad. It is clear that $\mathpzc{M}(n)$ is the graded $\F_p$-module freely generated by the non-degenerate surjections $f \colon (-) \to (n)$, where, as above, if the source is $(m)$, the assigned degree is $m-n$. Moreover, $\mathpzc{M}(n)_d$ is zero if $d < 0$, and otherwise is freely generated by the non-degenerate surjections $(n+d) \to (n)$. \customendremark
\end{Remark}

We now endow the $\mathpzc{M}(n)$ with differentials. To keep track of signs, given a surjection $f \colon (m) \to (n)$ and $i \in (m)$, we set
\[
\tau_f(i) = \#\{j \in (m) \mid f(j) < f(i) \: \text{or} \: (f(j) = f(i) \: \text{and} \: j \le i)\}.
\]
The differential of $\mathpzc{M}(n)$ is then defined as follows. Given a non-degenerate surjection $f \colon (m) \to (n)$, denoted also by $(f(1),\dots,f(m))$, we have that
\[
\partial(f) = \sum_{i=1}^m (-1)^{\tau_f(i)-f(i)} (f(1),\dots,\widehat{f(i)},\dots,f(m)).
\]
Here, for $i \in (m)$, the term $(f(1),\dots,\widehat{f(i)},\dots,f(m))$ on the righthand side denotes the map $(m-1) \to (n)$ whose images are, in order, exactly those that appear in the sequence $(f(1),\dots,f(i-1),f(i+1),\dots,f(m))$; if, upon omitting $f(i)$, this resulting map is no longer a surjection or is a degenerate surjection, that term is taken to be zero. The verification that this indeed defines a well-defined differential may be found in Proposition 2.18 of~\cite{McClureSmith}. \\

It remains to describe the operadic identity, symmetric group actions and composition data. The identity in $\mathpzc{M}(1)$ is the identity on $(1)$ and the action of $\Sigma_n$ on $\mathpzc{M}(n)$ is by postcomposition of surjections onto $(n)$. As for the composition maps, we shall specify this in the form of maps $\circ_r \colon \mathpzc{M}(n) \otimes \mathpzc{M}(m) \to \mathpzc{M}(n+m-1)$, for $n,m \ge 0$ and $r=1,\dots,n$. Let $f \colon (N) \to (n)$ and $g \colon (M) \to (m)$ be non-degenerate surjections. We need to define a composite $f \circ_r g$, which will be zero or a non-degenerate surjection $(N+M-1) \to (n+m-1)$. This composite can be described algorithmically as follows:
\begin{itemize}
	\item In the sequence $(f(1),\dots,f(N))$, let $t$ be the number of occurences of $r$. Let these occurences be given by $f(i_1),\dots,f(i_t)$.
	\item Fix a choice of $t+1$ entries $1 = j_0 \le j_1 \le \cdots \le j_{t-1} \le j_t = M$ inside $(M)$, where the first is $1$ and the final $M$. In the sequence $(f(1),\dots,f(N))$, replace $f(i_1)$ by the subsequence $(g(j_{0}),\dots,g(j_1))$, $f(i_2)$ by the subsequence $(g(j_{1}),\dots,g(j_2))$, and so on, with the final replacement being that of $f(i_t)$ by the subsequence $(g(j_{t-1}),\dots,g(j_t))$. Note that the resulting sequence has length $N-t+M+t-1 = N+M-1$. Now alter this sequence as follows: (i) increase each entry $g(j)$ which has been entered by $r-1$ (ii) increase those entries $f(i)$ which remain and where $f(i) > r$ by $M-1$.
	\item The resulting sequence gives a map $f_{(j_0,\dots,j_t)} \colon (N+M-1) \to (n+m-1)$; if it is not a surjection or is a degenerate surjection, replace it with zero.
	\item The composite $f \circ_r g$ is then the sum of all the resulting maps $f_{(j_0,\dots,j_t)}$, the sum being taken over the tuples $(j_0, j_1, \dots j_t)$.
\end{itemize}
The verification that the above algorithmic procedure yields well-defined maps $\mathpzc{M}(n) \otimes \mathpzc{M}(m) \to \mathpzc{M}(n+m-1)$ for $r=1,\dots,n$, and yields an operad structure on the chain complexes $\mathpzc{M}(n)$, may be found as Proposition 1.2.7 in~\cite{BergerFresse}.

\begin{Definition}
The \textit{McClure-Smith chain operad}\index{McClure-Smith operad} is the operad consisting of the chain complexes $\mathpzc{M}(n)$ and structural data specified above. The \textit{McClure-Smith cochain operad} is the operad $\mathpzc{M}^{\dagger}$.
\end{Definition}

Now we shall describe the stabilizations. We first discuss the case of the chain operad. As with the Eilenberg-Zilber chain operad, we first note that $\mathpzc{M}$ admits a stabilization map
\[
\Psi \colon \Sigma \mathpzc{M} \to \mathpzc{M}.
\]
This map was also observed in~\cite{BergerFresse}. Here we have once again used the symbol $\Psi$, just as in the stabilization of $\mathpzc{Z}_{*}$. The context will always make it clear which map we intend by this symbol. Now, to define this map, we need to define, for each $n \ge 0$, a map $(\Sigma \mathpzc{M})(n) \to \mathpzc{M}(n)$, which is to say a map $\mathpzc{M}(n)[1-n] \to \mathpzc{M}(n)$. Consider a non-degenerate surjection $f \in \mathpzc{M}(n)[1-n]_d$. This is a non-degenerate surjection $f \colon (m) \to (n)$ where $m-n = d+n-1$ and so $m = d+2n-1$. We define $\Psi(f)$ algorithmically as follows:
\begin{itemize}
	\item If $(f(1),\dots,f(n))$ is not a permutation of $(1,\dots,n)$, $\Psi(f)$ is zero.
	\item If $(f(1),\dots,f(n))$ is a permutation of $(1,\dots,n)$, $\Psi(f)$ is represented by the map $(n+d) \to (n)$ given by the sequence $(f(n),\dots,f(d+2n-1))$, together with a sign which is the sign of $(f(1),\dots,f(n))$ as a permutation of $(1,\dots,n)$.
\end{itemize}

One can check that the above algorithmic procedure does indeed yield an operad map $\Psi \colon \Sigma \mathpzc{M} \to \mathpzc{M}$. See Remark 3.2.10 in~\cite{BergerFresse}. 

\begin{Definition}
The \textit{stable McClure-Smith chain operad}\index{stable McClure-Smith operad}, denoted $\mathpzc{M}_{\text{st}}$, is the operad defined as follows
\[
\mathpzc{M}_{\text{st}} := \underset{\longleftarrow}{\text{lim}}(\cdots \overset{\Sigma^2\Psi}\longrightarrow \Sigma^2\mathpzc{M} \overset{\Sigma\Psi}\longrightarrow \Sigma\mathpzc{M} \overset{\Psi}\longrightarrow \mathpzc{M}).
\]
The \textit{stable McClure-Smith cochain operad} is defined to be $\mathpzc{M}_{\text{st}}^\dagger$.
\end{Definition}

\begin{Remark}\label{rmk:stable_ms_cochain}
By Proposition~\ref{prop:opreindexsusp}, the operadic suspension $\Sigma$ and the reindexing operator $(-)^\dagger$ commute, so that we also have maps $\Sigma^{k+1}\mathpzc{M}^\dagger \to \Sigma^{k}\mathpzc{M}^\dagger$, and moreover, $(-)^\dagger$ clearly also commutes with inverse limits, so that one can also analogously construct the stable McClure-Smith cochain operad as the inverse limit
\[
\mathpzc{M}_{\text{st}}^\dagger := \underset{\longleftarrow}{\text{lim}}(\cdots \overset{\Sigma^2\Psi^\dagger}\longrightarrow \Sigma^2\mathpzc{M}^\dagger \overset{\Sigma\Psi^\dagger}\longrightarrow \Sigma\mathpzc{M}^\dagger \overset{\Psi^\dagger}\longrightarrow \mathpzc{M}^\dagger).
\] \customendremark
\end{Remark}

Next, we also note a comparison between the stabilization maps for the McClure-Smith and Eilenberg-Zilber operads. We note that there is a map, in fact an embedding, from the McClure-Smith operad to the Eilenberg-Zilber operad. To describe this map, we need some preliminaries. Given a finite linearly ordered set $L$, an \textit{overlapping partition} $\mathcal{A}$ of $L$ with $m$ pieces is a collection of subsets $A_1,\dots, A_m$ of $L$ with the following properties: (i) $\cup_i A_i = L$, (ii) if $i < j$, then each element of $A_i$ is $\leq$ each element of $A_j$, (iii) for $i < m$, $A_i \cap A_{i+1}$ has exactly one element.

\begin{Definition}\label{defn:seqcoop}
Suppose given $n \ge 0$ and a surjection $f \colon (m) \to (n)$. We then get a natural transformation, over simplicial sets, $\langle f \rangle \colon \text{C}_\bullet(-) \to \text{C}_\bullet(-)^{\otimes n}$, where, given a simplicial set $S$ and $\sigma \colon \Delta_p \to S$, we have that
\begin{equation}\label{eqn:AW_seq_ops}
\langle f \rangle (\sigma) = \sum_{\substack{\text{overlapping partitions} \\ \text{$(A_1, \dots, A_m)$ of $\{0,1,\dots,p\}$} \\ \text{with $m$ pieces}}} \bigotimes_{i=1}^n \sigma|\amalg_{f(j)=i}A_j.
\end{equation}
We call these natural transformations, the \textit{sequence co-operations}. Note that since the righthand side above has degree $\sum_{i=1}^n (|f^{-1}(i)|-1) = m - n$, the sequence operation $\langle f \rangle$ is homogeneous of degree $m-n$.
\end{Definition}

Note that, given $n \ge 0$ and a surjection $f \colon (m) \to (n)$, if $f$ is degenerate, $\langle f \rangle$ is the zero transformation. To see this, note that, if the surjection is degenerate, one of the tensor factors in the righthand side of (\ref{eqn:AW_seq_ops}) receives a repeated coordinate and so is zero as the chains are normalized. As a result, for each $n \ge 0$, we have a map $\text{AW}_n \colon \mathpzc{M}(n) \to \mathpzc{Z}(n)$. As in~\cite{BergerFresse}, we use the notation ``$\text{AW}_n$'' because the sequence operations generalize the classical Alexander-Whitney diagonal operation. In~\cite{McClureSmith}, Theorem 2.15 shows that the $\text{AW}_n$ are injective and that they yield an operad map $\text{AW} \colon \mathpzc{M} \to \mathpzc{Z}$. Thus the McClure-Smith operad $\mathpzc{M}$ embeds into the Eilenberg-Zilber operad $\mathpzc{Z}$. The $\text{AW}_n$ are also quasi-isomorphisms. To see this, note that, for each $n \ge 0$, $\mathpzc{M}(n)$ and $\mathpzc{Z}(n)$ have the homology of a point; for the former, see Theorem 2.15 in~\cite{McClureSmith}, and for the latter, see Proposition 3.2 in~\cite{MayUnpublished}. Thus we need only check that $\mathrm{AW}_n$ maps a generator of the degree zero homology of $\mathpzc{M}(n)$ to a generator of the degree zero homology of $\mathpzc{Z}(n)$. If, for each $n \ge 0$, we let $\varepsilon_n$ denote the map $\mathpzc{Z}(n) \to \mathbb{F}_p[0]$ which sends a degree zero natural transformation $\alpha = \{\alpha_S \colon \text{C}_\bullet(S) \to \text{C}_\bullet(S)^{\otimes n}\}$ to the element of $\F_p$ corresponding to $\alpha_{\Delta_0} \in \mathrm{Hom}_{\mathrm{Gr}_{\mathbb{F}_p}}(\text{C}_\bullet(\Delta_0), \text{C}_\bullet(\Delta_0)^{\otimes n}) \cong \mathrm{Hom}_{\mathrm{Gr}_{\mathbb{F}_p}}(\F_p[0], \F_p[0]) \cong \F_p$, Proposition 3.2 in~\cite{MayUnpublished} shows that in fact, for each $n \ge 0$, $\varepsilon_n$ is a quasi-isomorphism. The degree zero homology of $\mathpzc{M}(n)$ is generated by $\mathrm{id}_{(n)}$, the identity on $(n)$, and an easy check shows that $(\varepsilon_n \circ \mathrm{AW}_n)(\mathrm{id}_{(n)}) = 1$, which gives us the desired result. Note that, upon applying $(-)^\dagger$, we also get a quasi-isomorphic embedding $\text{AW}^\dagger \colon \mathpzc{M}^\dagger \to \mathpzc{Z}^\dagger$ of cochain operads. \\

Now, given a surjection $f \colon (m) \to (n)$, the sequence co-operations, as defined in Definition~\ref{defn:seqcoop}, in fact yield a natural transformation $\text{C}_\bullet(-) \to \text{C}_\bullet(-)^{\otimes n}$ not only over simplicial sets, but also over based simplicial sets. To see this, let $S$ be a based simplicial set and let $\Delta_0 \to S$ classify the basepoint $*_S \in S_0$ of $S$. Then, since every piece of an overlapping partition of $[0]$ is just $\{0\}$, the only case in which each tensor factor in (\ref{eqn:AW_seq_ops}) will be non-degenerate is if the input surjection is the identity on $(n)$. In this case the image is $*_S \otimes \cdots \otimes *_S$, and this tensor is zero in $\text{C}_\bullet(X)^{\otimes n}$ as the chains are reduced chains. As such, an easy check shows us that we then have an operad map
\[
\text{AW} \colon \mathpzc{M} \to \mathpzc{Z}_*
\]
which we denote by the same symbol, $\text{AW}$, as earlier. Upon applying $(-)^\dagger$, we also get a map between the corresponding cochain operads
\[
\text{AW}^\dagger \colon \mathpzc{M}^\dagger \to \mathpzc{Z}_*^\dagger.
\]

Moreover, these maps are compatible with the stabilizations of the two operads in the sense that the following squares commute:
\begin{center}
\begin{tikzpicture}[node distance = 1.5cm]
\node [] (A) {$\Sigma \mathpzc{M}$};
\node [right of = A,xshift=1.5cm] (B) {$\mathpzc{M}$};
\node [below of = A] (C) {$\Sigma \mathpzc{Z}_{*}$};
\node [below of = B] (D) {$\mathpzc{Z}_{*}$};

\draw [->] (A) -- (B) node[midway,anchor=south]{$\Psi$};
\draw [->] (A) -- (C) node[midway,anchor=east]{$\Sigma \text{AW}$};
\draw [->] (B) -- (D) node[midway,anchor=west]{$\text{AW}$};
\draw [->] (C) -- (D) node[midway,anchor=north]{$\Psi$};
\end{tikzpicture}
\qquad
\begin{tikzpicture}[node distance = 1.5cm]
\node [] (A) {$\Sigma \mathpzc{M}^\dagger$};
\node [right of = A,xshift=1.5cm] (B) {$\mathpzc{M}^\dagger$};
\node [below of = A] (C) {$\Sigma \mathpzc{Z}_{*}^\dagger$};
\node [below of = B] (D) {$\mathpzc{Z}_{*}^\dagger$};

\draw [->] (A) -- (B) node[midway,anchor=south]{$\Psi$};
\draw [->] (A) -- (C) node[midway,anchor=east]{$\Sigma \text{AW}^\dagger$};
\draw [->] (B) -- (D) node[midway,anchor=west]{$\text{AW}^\dagger$};
\draw [->] (C) -- (D) node[midway,anchor=north]{$\Psi$};
\end{tikzpicture}
\end{center}

A tedious but not too difficult diagram chase confirms this -- see also Remark 3.2.10 in~\cite{BergerFresse}. As a result, we have commutative diagrams as follows:
\begin{center}
\begin{tikzpicture}[node distance = 1.5cm]
\node [] (A) {$\Sigma \mathpzc{M}$};
\node [right of = A,xshift=1.5cm] (B) {$\mathpzc{M}$};
\node [below of = A] (C) {$\Sigma \mathpzc{Z}_{*}$};
\node [below of = B] (D) {$\mathpzc{Z}_{*}$};
\node [left of = A,xshift=-1.5cm] (E) {$\Sigma^2 \mathpzc{M}$};
\node [left of = E,xshift=-1.5cm] (F) {$\cdots$};
\node [left of = C,xshift=-1.5cm] (G) {$\Sigma^2 \mathpzc{Z}_{*}$};
\node [left of = G,xshift=-1.5cm] (H) {$\cdots$};

\draw [->] (A) -- (B) node[midway,anchor=south]{$\Psi$};
\draw [->] (A) -- (C) node[midway,anchor=west]{$\Sigma \text{AW}$};
\draw [->] (B) -- (D) node[midway,anchor=west]{$\text{AW}$};
\draw [->] (C) -- (D) node[midway,anchor=south]{$\Psi$};
\draw [->] (F) -- (E) node[midway,anchor=south]{$\Sigma^2\Psi$};
\draw [->] (E) -- (A) node[midway,anchor=south]{$\Sigma\Psi$};
\draw [->] (H) -- (G) node[midway,anchor=south]{$\Sigma^2\Psi$};
\draw [->] (G) -- (C) node[midway,anchor=south]{$\Sigma\Psi$};
\draw [->] (E) -- (G) node[midway,anchor=west]{$\Sigma^2 \text{AW}$};
\end{tikzpicture}
\end{center}
\begin{center}
\begin{tikzpicture}[node distance = 1.5cm]
\node [] (A) {$\Sigma \mathpzc{M}^\dagger$};
\node [right of = A,xshift=1.5cm] (B) {$\mathpzc{M}^\dagger$};
\node [below of = A] (C) {$\Sigma \mathpzc{Z}_{*}^\dagger$};
\node [below of = B] (D) {$\mathpzc{Z}_{*}^\dagger$};
\node [left of = A,xshift=-1.5cm] (E) {$\Sigma^2 \mathpzc{M}^\dagger$};
\node [left of = E,xshift=-1.5cm] (F) {$\cdots$};
\node [left of = C,xshift=-1.5cm] (G) {$\Sigma^2 \mathpzc{Z}_{*}^\dagger$};
\node [left of = G,xshift=-1.5cm] (H) {$\cdots$};

\draw [->] (A) -- (B) node[midway,anchor=south]{$\Psi$};
\draw [->] (A) -- (C) node[midway,anchor=west]{$\Sigma \text{AW}^\dagger$};
\draw [->] (B) -- (D) node[midway,anchor=west]{$\text{AW}^\dagger$};
\draw [->] (C) -- (D) node[midway,anchor=south]{$\Psi$};
\draw [->] (F) -- (E) node[midway,anchor=south]{$\Sigma^2\Psi$};
\draw [->] (E) -- (A) node[midway,anchor=south]{$\Sigma\Psi$};
\draw [->] (H) -- (G) node[midway,anchor=south]{$\Sigma^2\Psi$};
\draw [->] (G) -- (C) node[midway,anchor=south]{$\Sigma\Psi$};
\draw [->] (E) -- (G) node[midway,anchor=west]{$\Sigma^2 \text{AW}^\dagger$};
\end{tikzpicture}
\end{center}
From these, we get induced maps
\begin{equation}\label{eq:can_MSst_to_EZst}
\text{AW}_{\text{st}} \colon \mathpzc{M}_{\text{st}} \to \mathpzc{Z}_{\text{st}}
\qquad
\text{AW}_{\text{st}}^\dagger \colon \mathpzc{M}_{\text{st}}^\dagger \to \mathpzc{Z}_{\text{st}}^\dagger
\end{equation}
between the stable Eilenberg-Zilber and stable McClure-Smith, chain and cochain, operads.

\section{The Stable Barratt-Eccles Operad}\label{subsec:stable_barratt_eccles_operad}

We have now constructed stabilizations of both the Eilenberg-Zilber and McClure-Smith operads. In this section, we present a third stabilization, that of the Barratt-Eccles operad. This will constitute a second stabilization of an $\mathbb{E}_\infty$ operad. First, we recall the definition of the Barratt-Eccles operad. In spaces, by which we mean simplicial sets, there exists an operad $\mathpzc{E}_{\text{spc}}$ where
\[
\mathpzc{E}_{\text{spc}}(n) = \text{E}\Sigma_n.
\]
Here $\text{E}\Sigma_n$ denotes the total space of the universal $\Sigma_n$-bundle; in particular, in simplicial degree $d$, we have that $(\text{E}\Sigma_n)_d = \Sigma_n^{\times (d+1)}$. This is the operad originally defined by Barratt and Eccles in~\cite{BarrattEccles}. We are interested in dg operads and so we take chains on this operad.

\begin{Definition}\label{def:BE_op}
The \textit{Barratt-Eccles chain operad}\index{Barratt-Eccles operad}, denoted $\mathpzc{E}$, is the dg operad over $\F_p$ defined by
\[
\mathpzc{E}(n) = \text{C}_\bullet(\text{E}\Sigma_n).
\]
Moreover, the \textit{Barratt-Eccles cochain operad} is then the operad $\mathpzc{E}^\dagger$.
\end{Definition}

Here $\text{C}_\bullet$ denotes normalized chains, and we get a dg operad upon taking these because the normalized chains functor is symmetric monoidal. The chains here are of course taken with coefficients in $\F_p$, so that we get a dg operad over $\F_p$. Note that the Barratt-Eccles chain operad is concentrated in non-negative degrees, while the Barratt-Eccles cochain operad is concentrated in non-positive degrees. \\

As in the previous sections, we begin the stabilization by noting a stabilization map
\[
\Psi \colon \Sigma \mathpzc{E} \to \mathpzc{E}
\]
which will yet again be denoted by $\Psi$ just as in the previous two stabilizations (the context will always make it clear which map we intend by this symbol). In order to define this map, we need to define, for each $n \ge 0$, a map $(\Sigma \mathpzc{E})(n) \to \mathpzc{E}(n)$, which is to say a map $\mathpzc{E}(n)[1-n] \to \mathpzc{E}(n)$. Consider a tuple $(\rho_0,\dots,\rho_{d+n-1})$ in $\mathpzc{E}(n)[1-n]_d = \mathpzc{E}(n)_{d+n-1}$, where each $\rho_i$ is a permutation in $\Sigma_n$. Then we define the image $\Psi((\rho_0,\dots,\rho_{d+n-1}))$ algorithmically as follows:
\begin{itemize}
	\item If $(\rho_0(1),\dots,\rho_{n-1}(1))$ is not a permutation of $(1,\dots,n)$, then the image $\Psi((\rho_0,\dots,\rho_{d+n-1}))$ is zero.
	\item If $(\rho_0(1),\dots,\rho_{n-1}(1))$ is a permutation of $(1,\dots,n)$, $\Psi((\rho_0,\dots,\rho_{d+n-1}))$ is the tuple $(\rho_{n-1},\dots,\rho_{d+n-1}) \in \mathpzc{E}(n)_{d}$, together with a sign which is given by the sign of $(\rho_0(1),\dots,\rho_{n-1}(1))$ as a permutation of $(1,\dots,n)$.
\end{itemize}

One can check that the above algorithmic procedure does indeed yield an operad map $\Psi \colon \Sigma \mathpzc{E} \to \mathpzc{E}$ (see Proposition 3.2.9 in~\cite{BergerFresse}).

\begin{Definition}
The \textit{stable Barratt-Eccles chain operad}\index{stable Barratt-Eccles operad}, denoted $\mathpzc{E}_{\text{st}}$, is the operad defined as follows
\[
\mathpzc{E}_{\text{st}} := \underset{\longleftarrow}{\text{lim}}(\cdots \overset{\Sigma^2\Psi}\longrightarrow \Sigma^2\mathpzc{E} \overset{\Sigma\Psi}\longrightarrow \Sigma\mathpzc{E} \overset{\Psi}\longrightarrow \mathpzc{E}).
\]
The \textit{stable Barratt-Eccles cochain operad} is the operad $\mathpzc{E}_{\text{st}}^\dagger$.
\end{Definition}

\begin{Remark}\label{rmk:stable_be_cochain}
By Proposition~\ref{prop:opreindexsusp}, the operadic suspension $\Sigma$ and the reindexing operator $(-)^\dagger$ commute, so that we also have maps $\Sigma^{k+1}\mathpzc{E}^\dagger \to \Sigma^{k}\mathpzc{E}^\dagger$, and moreover, $(-)^\dagger$ clearly also commutes with inverse limits, so that one can also analogously construct the stable Barratt-Eccles cochain operad as the inverse limit
\[
\mathpzc{E}_{\text{st}}^\dagger := \underset{\longleftarrow}{\text{lim}}(\cdots \overset{\Sigma^2\Psi^\dagger}\longrightarrow \Sigma^2\mathpzc{E}^\dagger \overset{\Sigma\Psi^\dagger}\longrightarrow \Sigma\mathpzc{E}^\dagger \overset{\Psi^\dagger}\longrightarrow \mathpzc{E}^\dagger).
\] \customendremark
\end{Remark}

Next, we note a relation between the Barratt-Eccles and McClure-Smith operads. The two chain operads are related via a map
\[
\text{TR} \colon \mathpzc{E} \to \mathpzc{M}.
\]
This map can be described algorithmically as follows:
\begin{itemize}
	\item Consider some tuple $(\rho_0,\dots,\rho_d) \in \mathpzc{E}(n)_d = \text{C}_d(\text{E}\Sigma_n)$, where the $\rho_i$ are permutations of $(n)$.
	\item Let $r_0,\dots,r_d$ be any positive integers such that $r_0+ \cdots +r_d = n+d$. Note that each $r_i$ is necessarily $\le n$; moreover, each $r_0 + \cdots + r_i$ is necessarily $\le n+i$. Form a sequence $(\rho_0(1),\dots,\rho_0(r_0))$ of length $r_0$ using the first $r_0$ entries of the sequence given by $\rho_0$. Next, form a sequence of length $r_1$ using the first $r_1$ entries of the sequence given by $\rho_1$, but skipping any entry which has already occured as a non-final entry of a previous sequence (there are $r_0-1$ such entries, and we have $n-(r_0-1)-r_1 = n-r_0-r_1+1 \ge 0$). Now repeat this process to construct sequences of length $r_2,\dots,r_d$.
	\item Concatenate the $d+1$ sequences constructed in the previous point to construct an indexed sequence of length $r_0+\cdots+r_d = n+d$. This yields a map $f_{(r_0,\dots,r_d)} \colon (n+d) \to (n)$. If this map is not a surjection or is a non-degenerate surjection, replace it by zero. 
	\item The image of $(\rho_0,\dots,\rho_d)$ under $\text{TR}_n$ is the sum $\sum f_{(r_0,\dots,r_d)}$ over the tuples $(r_0,\dots,r_d)$.
\end{itemize}
The verification that the above algorithmic procedure defines a well-defined map of operads may be found as Assertion 1.3.3 in~\cite{BergerFresse}. Moreover Lemma 1.3.4 and Lemma 1.6.1 in the same work show that $\mathrm{TR}$ is both onto and a quasi-isomorphism in each operadic degree. Thus we see that the McClure-Smith chain operad $\mathpzc{M}$ is a quotient of the Barratt-Eccles chain operad $\mathpzc{E}$. Moreover, upon reindexing, we also get a map between the corresponding cochain operads
\[
\text{TR}^\dagger \colon \mathpzc{E}^\dagger \to \mathpzc{M}^\dagger.
\]
This map is of course also a surjective quasi-isomorphism in each operadic degree, so that the McClure-Smith cochain operad $\mathpzc{M}^\dagger$ is a quotient of the Barratt-Eccles cochain operad $\mathpzc{E}^\dagger$. \\

Finally, we note that the stabilization maps for the Barratt-Eccles and McClure-Smith operads are compatible in the sense that the following squares commute (see Remark 3.2.10 in~\cite{BergerFresse}).
\begin{center}
\begin{tikzpicture}[node distance = 1.5cm]
\node [] (A) {$\Sigma \mathpzc{E}$};
\node [right of = A,xshift=1.5cm] (B) {$\mathpzc{E}$};
\node [below of = A] (C) {$\Sigma \mathpzc{M}$};
\node [below of = B] (D) {$\mathpzc{M}$};

\draw [->] (A) -- (B) node[midway,anchor=south]{$\Psi$};
\draw [->] (A) -- (C) node[midway,anchor=east]{$\Sigma \text{TR}$};
\draw [->] (B) -- (D) node[midway,anchor=west]{$\text{TR}$};
\draw [->] (C) -- (D) node[midway,anchor=north]{$\Psi$};
\end{tikzpicture}
\qquad
\begin{tikzpicture}[node distance = 1.5cm]
\node [] (A) {$\Sigma \mathpzc{E}^\dagger$};
\node [right of = A,xshift=1.5cm] (B) {$\mathpzc{E}^\dagger$};
\node [below of = A] (C) {$\Sigma \mathpzc{M}^\dagger$};
\node [below of = B] (D) {$\mathpzc{M}^\dagger$};

\draw [->] (A) -- (B) node[midway,anchor=south]{$\Psi$};
\draw [->] (A) -- (C) node[midway,anchor=east]{$\Sigma \text{TR}^\dagger$};
\draw [->] (B) -- (D) node[midway,anchor=west]{$\text{TR}^\dagger$};
\draw [->] (C) -- (D) node[midway,anchor=north]{$\Psi$};
\end{tikzpicture}
\end{center}

Combining the above commutative squares with the ones earlier, in the previous section, which compared the stabilization maps for the McClure-Smith and Eilenberg-Zilber operads, we have the following commutative diagrams:
\begin{center}
\begin{tikzpicture}[node distance = 1.5cm]
\node [] (A) {$\Sigma \mathpzc{M}$};
\node [right of = A,xshift=1.5cm] (B) {$\mathpzc{M}$};
\node [below of = A] (C) {$\Sigma \mathpzc{Z}_{*}$};
\node [below of = B] (D) {$\mathpzc{Z}_{*}$};
\node [left of = A,xshift=-1.5cm] (E) {$\Sigma^2 \mathpzc{M}$};
\node [left of = E,xshift=-1.5cm] (F) {$\cdots$};
\node [left of = C,xshift=-1.5cm] (G) {$\Sigma^2 \mathpzc{Z}_{*}$};
\node [left of = G,xshift=-1.5cm] (H) {$\cdots$};
\node [above of = B] (I) {$\mathpzc{E}$};
\node [above of = A] (J) {$\Sigma \mathpzc{E}$};
\node [above of = E] (K) {$\Sigma^2 \mathpzc{E}$};
\node [above of = F] (L) {$\cdots$};

\draw [->] (A) -- (B) node[midway,anchor=south]{$\Psi$};
\draw [->] (A) -- (C) node[midway,anchor=west]{$\Sigma \text{AW}$};
\draw [->] (B) -- (D) node[midway,anchor=west]{$\text{AW}$};
\draw [->] (C) -- (D) node[midway,anchor=south]{$\Psi$};
\draw [->] (F) -- (E) node[midway,anchor=south]{$\Sigma^2\Psi$};
\draw [->] (E) -- (A) node[midway,anchor=south]{$\Sigma\Psi$};
\draw [->] (H) -- (G) node[midway,anchor=south]{$\Sigma^2\Psi$};
\draw [->] (G) -- (C) node[midway,anchor=south]{$\Sigma\Psi$};
\draw [->] (E) -- (G) node[midway,anchor=west]{$\Sigma^2 \text{AW}$};
\draw [->] (L) -- (K) node[midway,anchor=south]{$\Sigma^2\Psi$};
\draw [->] (K) -- (J) node[midway,anchor=south]{$\Sigma\Psi$};
\draw [->] (J) -- (I) node[midway,anchor=south]{$\Psi$};
\draw [->] (K) -- (E) node[midway,anchor=west]{$\Sigma^2 \text{TR}$};
\draw [->] (J) -- (A) node[midway,anchor=west]{$\Sigma \text{TR}$};
\draw [->] (I) -- (B) node[midway,anchor=west]{$\text{TR}$};
\end{tikzpicture}
\end{center}

\begin{center}
\begin{tikzpicture}[node distance = 1.5cm]
\node [] (A) {$\Sigma \mathpzc{M}^\dagger$};
\node [right of = A,xshift=1.5cm] (B) {$\mathpzc{M}^\dagger$};
\node [below of = A] (C) {$\Sigma \mathpzc{Z}_{*}^\dagger$};
\node [below of = B] (D) {$\mathpzc{Z}_{*}^\dagger$};
\node [left of = A,xshift=-1.5cm] (E) {$\Sigma^2 \mathpzc{M}^\dagger$};
\node [left of = E,xshift=-1.5cm] (F) {$\cdots$};
\node [left of = C,xshift=-1.5cm] (G) {$\Sigma^2 \mathpzc{Z}_{*}^\dagger$};
\node [left of = G,xshift=-1.5cm] (H) {$\cdots$};
\node [above of = B] (I) {$\mathpzc{E}^\dagger$};
\node [above of = A] (J) {$\Sigma \mathpzc{E}^\dagger$};
\node [above of = E] (K) {$\Sigma^2 \mathpzc{E}^\dagger$};
\node [above of = F] (L) {$\cdots$};

\draw [->] (A) -- (B) node[midway,anchor=south]{$\Psi$};
\draw [->] (A) -- (C) node[midway,anchor=west]{$\Sigma \text{AW}^\dagger$};
\draw [->] (B) -- (D) node[midway,anchor=west]{$\text{AW}^\dagger$};
\draw [->] (C) -- (D) node[midway,anchor=south]{$\Psi$};
\draw [->] (F) -- (E) node[midway,anchor=south]{$\Sigma^2\Psi$};
\draw [->] (E) -- (A) node[midway,anchor=south]{$\Sigma\Psi$};
\draw [->] (H) -- (G) node[midway,anchor=south]{$\Sigma^2\Psi$};
\draw [->] (G) -- (C) node[midway,anchor=south]{$\Sigma\Psi$};
\draw [->] (E) -- (G) node[midway,anchor=west]{$\Sigma^2 \text{AW}^\dagger$};
\draw [->] (L) -- (K) node[midway,anchor=south]{$\Sigma^2\Psi$};
\draw [->] (K) -- (J) node[midway,anchor=south]{$\Sigma\Psi$};
\draw [->] (J) -- (I) node[midway,anchor=south]{$\Psi$};
\draw [->] (K) -- (E) node[midway,anchor=west]{$\Sigma^2 \text{TR}^\dagger$};
\draw [->] (J) -- (A) node[midway,anchor=west]{$\Sigma \text{TR}^\dagger$};
\draw [->] (I) -- (B) node[midway,anchor=west]{$\text{TR}^\dagger$};
\end{tikzpicture}
\end{center}

From this, our canonical maps in (\ref{eq:can_MSst_to_EZst}) now extend to the following sequences of maps
\[
\mathpzc{E}_{\mathrm{st}} \overset{\mathrm{TR}_{\text{st}}}{\xrightarrow{\hspace{12mm}}} \mathpzc{M}_{\mathrm{st}} \overset{\mathrm{AW}_{\text{st}}}{\xrightarrow{\hspace{12mm}}} \mathpzc{Z}_{\mathrm{st}}
\qquad
\mathpzc{E}_{\mathrm{st}}^\dagger \overset{\mathrm{TR}_{\text{st}}^\dagger}{\xrightarrow{\hspace{12mm}}} \mathpzc{M}_{\mathrm{st}}^\dagger \overset{\mathrm{AW}_{\text{st}}^\dagger}{\xrightarrow{\hspace{12mm}}} \mathpzc{Z}_{\mathrm{st}}^\dagger.
\]

\section{The (Co)homologies, in Individual Arities, of the Stable Operads}\label{subsec:cohomologies_of_stable_operads}

We have constructed our stabilizations of $\mathbb{E}_\infty$ operads, and now begin a study of these stable operads. More specifically, we constructed two stabilizations of $\mathbb{E}_\infty$ operads, that of the McClure-Smith operad and that of the Barratt-Eccles operad. Henceforth, we shall fix the stable Barratt-Eccles operad as our model for a stable operad, though all results which we mention also hold with the stable McClure-Smith operad (see, e.g., Proposition~\ref{prop:EstAlg_MstAlg_equivalence}, where we demonstrate that the homotopy theories of algebras over these operads are equivalent). As our first result, we compute the non-equivariant (co)homologies of the terms of the stable Barratt-Eccles operad. To begin, we have a result regarding the stabilization maps for the Barratt-Eccles operad.

\begin{Proposition}\label{prop:stabmapontoML}
For each $n \ge 0$, the towers
\[
\cdots \to (\Sigma^{2}\mathpzc{E})(n) \to (\Sigma\mathpzc{E})(n) \to \mathpzc{E}(n)
\qquad
\cdots \to (\Sigma^{2}\mathpzc{E}^\dagger)(n) \to (\Sigma\mathpzc{E}^\dagger)(n) \to \mathpzc{E}^\dagger(n)
\]
satisfy the Mittag-Leffler condition. In fact, if $n \ge 1$, the maps in the towers are onto.
\end{Proposition}

\begin{proof}
We shall give a proof of the case of the chain operad; the case of the cochain operad follows by reindexing. First, suppose that $n = 0$. In this case, the Mittag-Leffler property holds because $(\Sigma^k\mathpzc{E})(0)$ is simply $\F_p[k]$, and so the stabilization maps are then necessarily zero maps. Now suppose that $n \ge 1$. We will prove surjectivity of the map $(\Sigma\mathpzc{E})(n) \to \mathpzc{E}(n)$; the surjectivity of the remaining maps is entirely analogous. Let $d \ge 0$. Then $\mathpzc{E}(n)_d$ is generated by tuples $(\rho_0,\dots,\rho_d)$ where the $\rho_i$ are permutations in $\Sigma_n$. On the other hand, $(\Sigma\mathpzc{E})(n)_d = \mathpzc{E}(n)[1-n]_d = \mathpzc{E}(n)_{d+n-1}$ is generated by tuples $(\rho_0',\dots,\rho'_{d+n-1})$ where the $\rho'_i$ are once again permutations in $\Sigma_n$. Given a particular tuple $(\rho_0,\dots,\rho_d)$ in $\mathpzc{E}(n)_d$, we can of course find permutations $\rho'_1,\dots,\rho'_{n-1}$ in $\Sigma_n$ such that $(\rho'_1(1),\dots,\rho'_{n-1}(1),\rho_0(1))$ is an even permutation of $(1,\dots,n)$. Then, by definition of the stabilization map, we have that $(\rho_0,\dots,\rho_d)$ is the image of $(\rho'_1,\dots,\rho'_{n-1},\rho_0,\dots,\rho_d)$, and so we have the desired surjectivity.
%As $\mathpzc{E}$ is $\mathbb{E}_\infty$, we have that $\mathrm{H}_d((\Sigma^k\mathpzc{E})(n)) = \mathrm{H}_d(\mathpzc{E}(n)[k-kn]) = \mathbb{F}_p[k-kn]$. Thus, if $n \neq 1$, each map in the tower is in fact zero. If $n = 1$, an easy check, using the definition of the stabilization map for $\mathpzc{E}$, shows that each map is the identity. This completes the proof in the case of the chain operad; the proof in the case of the cochain operad is analogous.
\end{proof}

The above result allows us to compute the non-equivariant (co)homology of the stable Barratt-Eccles operad. The result is that, non-equivariantly, the (co)homologies are simply zero, except that the unit is present in arity one. Later, we shall contrast this with a result which shows that the equivariant (co)homologies, on the other hand, are highly non-trivial; see Proposition~\ref{prop:stablefreehom} and Remark~\ref{rmk:eqhomnontrivial}.

\begin{Proposition}\label{prop:stabhom}
We have the following
\[
\emph{H}_\bullet\mathpzc{E}_{\emph{st}}(n) \cong \left\{
\begin{array}{ll}
0 & n \neq 1 \\
\F_p[0] & n=1
\end{array}
\right.
\qquad
\emph{H}^\bullet\mathpzc{E}^\dagger_{\emph{st}}(n) \cong \left\{
\begin{array}{ll}
0 & n \neq 1 \\
\F_p[0] & n=1.
\end{array}
\right.
\]
\end{Proposition}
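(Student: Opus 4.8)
The plan is to reduce the statement to a computation of the homology of an inverse limit of chain complexes, via a Milnor $\lim^1$ exact sequence, and then to observe that operadic suspension pushes the single nonzero homology group of $\mathpzc{E}(n)$ off to an unboundedly negative degree as soon as $n \ge 2$. Throughout I write $C_k := (\Sigma^k \mathpzc{E})(n)$, so that $\mathpzc{E}_{\text{st}}(n) = \lim_k C_k$ since limits of operads are formed arity-wise, and I recall from the table following Definition~\ref{def:operadic_susp_ch} that $C_k \cong \mathpzc{E}(n)[k(1-n)]$.

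First I would dispose of the small arities directly. For $n = 0$ we have $C_k \cong \mathpzc{E}(0)[k] \cong \F_p[k]$, and the transition maps are forced to be zero for degree reasons, so $\mathpzc{E}_{\text{st}}(0) = \lim_k \F_p[k] = 0$. For $n = 1$, since $\text{E}\Sigma_1 = \Delta_0$ we have $C_k \cong \mathpzc{E}(1) \cong \F_p[0]$ for every $k$, and the explicit formula for $\Psi$ shows that every transition map is the identity on $\F_p[0]$; hence $\mathpzc{E}_{\text{st}}(1) \cong \F_p[0]$, giving the $n = 1$ case.

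Now fix $n \ge 2$. By Proposition~\ref{prop:stabmapontoML} the transition maps in the tower $\cdots \to C_2 \to C_1 \to C_0$ are degreewise surjective, hence degreewise Mittag-Leffler; consequently $\lim^1$ of this tower of chain complexes vanishes, and the short exact sequences
\[
0 \longrightarrow \lim_k C_k \longrightarrow \prod_k C_k \xrightarrow{\ 1 - \text{shift}\ } \prod_k C_k \longrightarrow 0
\]
assemble into a short exact sequence of chain complexes. Taking homology, and using that homology commutes with products, yields the Milnor short exact sequences
\[
0 \longrightarrow {\lim_k}^1\, \text{H}_{d+1}(C_k) \longrightarrow \text{H}_d\big(\mathpzc{E}_{\text{st}}(n)\big) \longrightarrow \lim_k \text{H}_d(C_k) \longrightarrow 0
\]
for all $d \in \Z$. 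Since $\text{E}\Sigma_n$ is contractible, $\mathpzc{E}(n) = \text{C}_\bullet(\text{E}\Sigma_n)$ is quasi-isomorphic to $\F_p[0]$, so $\text{H}_\bullet(C_k) \cong \text{H}_\bullet(\mathpzc{E}(n))[k(1-n)] \cong \F_p[k(1-n)]$, a copy of $\F_p$ concentrated in degree $k(1-n)$. As $n \ge 2$ gives $1 - n \le -1$, we have $k(1-n) \to -\infty$, so in each fixed degree $d$ the groups $\text{H}_d(C_k)$ vanish for all sufficiently large $k$; therefore both $\lim_k \text{H}_d(C_k)$ and ${\lim_k}^1\, \text{H}_d(C_k)$ vanish, and the Milnor sequence forces $\text{H}_d(\mathpzc{E}_{\text{st}}(n)) = 0$ for every $d$. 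This proves the chain-operad statement; the cochain-operad statement then follows by applying the reindexing functor $(-)^\dagger$, which commutes with operadic suspension and with inverse limits (Proposition~\ref{prop:opreindexsusp}) and merely relabels $\text{H}_d$ as $\text{H}^{-d}$.

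The one step needing genuine care is the invocation of the Milnor $\lim^1$ sequence for the homology of an inverse limit of unbounded chain complexes: one must check that the degreewise surjectivity supplied by Proposition~\ref{prop:stabmapontoML} is exactly what makes $\lim^1$ of the tower of complexes vanish and makes the displayed short exact sequence of complexes exact. The remaining ingredients --- acyclicity of $\text{E}\Sigma_n$ and the degree shift $k(1-n)$ coming from operadic suspension --- are immediate.
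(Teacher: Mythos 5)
Your proof is correct and follows essentially the same route as the paper's: both rely on Proposition~\ref{prop:stabmapontoML} to get the Mittag-Leffler condition on the tower of complexes, then invoke the Milnor $\lim^1$ exact sequence and observe that the homology $\text{H}_\bullet(\Sigma^k\mathpzc{E})(n) \cong \F_p[k(1-n)]$ is pushed to increasingly negative degrees for $n \ge 2$, so both $\lim$ and $\lim^1$ of the homology tower vanish. The only cosmetic difference is that you dispose of $n = 0$ and $n = 1$ by direct inspection of the towers (which is indeed cleaner, since for $n=0$ the full complex $\mathpzc{E}_{\text{st}}(0)$ is literally zero and for $n=1$ the maps are identities), whereas the paper's single uniform argument handles all $n$ at once, with the $n=1$ case delivered by the constant tower $\F_p[0]$ and the $n = 0$ case by the eventually-zero tower.
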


\begin{proof}
We shall give a proof of the case of the chain operad; the case of the cochain operad follows by reindexing. By definition, $\mathpzc{E}_{\text{st}}(n)$ is the limit of the following tower:
\[
\cdots \to (\Sigma^2\mathpzc{E})(n) \to (\Sigma\mathpzc{E})(n) \to \mathpzc{E}(n)
\]
By Proposition~\ref{prop:stabmapontoML}, this tower satisfies the Mittag-Leffler condition, and so, for each $d \in \Z$, we have an induced short exact sequence
\[
0 \to {\lim_k}^1\, \text{H}_{d+1}((\Sigma^k\mathpzc{E})(n)) \to \text{H}_d(\mathpzc{E}_{\text{st}}(n)) \to \lim_k \text{H}_d((\Sigma^k\mathpzc{E})(n)) \to 0.
\]
(For this induced short exact sequence, see, e.g., Theorem 3.5.8 in~\cite{Weibel}.) As $\mathpzc{E}$ is $\mathbb{E}_\infty$, $\text{H}_{d+1}((\Sigma^k\mathpzc{E})(n))$ is simply $\F_p$ if $d + 1 = k - kn$, and zero otherwise. Thus the tower comprising the $\text{H}_{d+1}((\Sigma^k\mathpzc{E})(n))$ clearly satisfies the Mittag-Leffler condition itself, so that the ${\lim}^1$ term vanishes and the induced map
\[
\text{H}_d(\mathpzc{E}_{\text{st}}(n)) \to \lim_k \text{H}_d((\Sigma^k\mathpzc{E})(n))
\]
is in fact an isomorphism, for each $d \in \Z$. The result now follows immediately from the fact that $\mathpzc{E}$ is $\mathbb{E}_\infty$ and that $(\Sigma^k\mathpzc{E})(n) = \mathpzc{E}(n)[k-kn]$.
\end{proof}

\section{Comparison with the Commutativity and Constant Operads}\label{subsec:comm_const_operads}

In this section, we provide a comparison of the relation between the Barratt-Eccles operad and the commutativity operad with the relation between the stable Barrat-Eccles operad and the constant operad. Let us first recall the definitions of the commutativity and constant operads. In this section, we shall work over an arbitrary ground field $k$.

\begin{Definition}
The \textit{commutativity chain operad}\index{commutativity operad} is the operad $\mathpzc{Comm}$ which is such that, for each $n \ge 0$, $\mathpzc{Comm}(n) = k[0]$ (the remainder of the operad structure is clear). The \textit{commutativity cochain operad} is the operad $\mathpzc{Comm}^\dagger$.
\end{Definition}

As is well-known and easily verified, an algebra over the commutativity operad is exactly a commutative dg algebra.

\begin{Definition}
The \textit{constant chain operad}\index{constant operad} is the operad $\mathpzc{Const}$ which is such that $\mathpzc{Const}(1) = k[0]$ and, for each $n \neq  1$, $\mathpzc{Const}(n) = 0$ (the remainder of the operad structure is clear). The \textit{constant cochain operad} is the operad $\mathpzc{Const}^\dagger$.
\end{Definition}

An algebra over the constant operad is simply a dg module. \\

The commutativity and constant operads are in fact related, in the sense of a stabilization, as we now demonstrate. One can easily verify that there is a stabilization map
\[
\Psi \colon \Sigma \mathpzc{Comm} \to \mathpzc{Comm}
\]
which is zero in each arity, except arity one, in which it is the identity on $k[0]$. (As for the other stabilization maps which we have encountered so far, we have denoted this map by $\Psi$, and as we've mentioned in the other cases, the context will always make clear which map we intend.) Using the maps $\Sigma^r\Psi$, for $r \ge 0$, we can construct a stabilization of the commutativity operad.

\begin{Definition}
The \textit{stable commutativity chain operad}\index{stable commutativity operad}, denoted $\mathpzc{Comm}_{\text{st}}$, is the operad defined as follows
\[
\mathpzc{Comm}_{\text{st}} := \underset{\longleftarrow}{\text{lim}}(\cdots \overset{\Sigma^2\Psi}\longrightarrow \Sigma^2\mathpzc{Comm} \overset{\Sigma\Psi}\longrightarrow \Sigma\mathpzc{Comm} \overset{\Psi}\longrightarrow \mathpzc{Comm}).
\]
The \textit{stable commutativity cochain operad} is the operad $\mathpzc{Comm}^\dagger_{\text{st}}$.
\end{Definition}

\begin{Remark}
Just as in the case of the other stabilized cochain operads (see Remarks~\ref{rmk:stable_ez_cochain},~\ref{rmk:stable_ms_cochain} and~\ref{rmk:stable_be_cochain}), note that we also have maps $\Sigma^r\Psi^\dagger \colon \Sigma^{r+1}\mathpzc{Comm}^\dagger \to \Sigma^{r}\mathpzc{Comm}^\dagger$, for $r \ge 0$, and that the stabilized commutativity cochain operad can also be constructed as the inverse limit
\[
\mathpzc{Comm}^\dagger_{\text{st}} = \underset{\longleftarrow}{\text{lim}}(\cdots \overset{\Sigma^2\Psi^\dagger}\longrightarrow \Sigma^2\mathpzc{Comm}^\dagger \overset{\Sigma\Psi^\dagger}\longrightarrow \Sigma\mathpzc{Comm}^\dagger \overset{\Psi^\dagger}\longrightarrow \mathpzc{Comm}^\dagger).
\] \customendremark
\end{Remark}

The next result shows that the stabilization of the commutativity operad is precisely the constant operad.

\begin{Proposition}\label{prop:stabilization_of_comm}
The stabilized operads $\mathpzc{Comm}_{\emph{st}}$ and $\mathpzc{Comm}_{\emph{st}}^\dagger$ are isomorphic to $\mathpzc{Const}$ and $\mathpzc{Const}^\dagger$, respectively.
\end{Proposition}

\begin{proof}
We shall demonstrate the case of the chain operad; the case of the cochain operad is analogous. For $n \ge 0$ and $r \ge 0$, we have that $(\Sigma^r\mathpzc{Comm})(n) = \mathpzc{Comm}(n)[r-rn] = k[r-rn]$. It follows that, if $n \neq 1$, the maps $\Sigma^r\Psi$ are all necessarily zero, and so $\mathpzc{Comm}_{\text{st}}(n) = 0$. On the other hand, in arity one, we have $(\Sigma^r\mathpzc{Comm})(1) = k[0]$ for all $r \ge 0$, and each map $\Sigma^r\Psi$ is the identity on $k[0]$, so that $\mathpzc{Comm}_{\text{st}}(1) = k[0]$, as desired. 
\end{proof}

Now, as is standard, there is a map 
\[
\mathpzc{E} \overset{\sim}{\relbar\joinrel\twoheadrightarrow} \mathpzc{Comm}
\]
given by mapping chains to sums of coefficients. This map is clearly surjective in each arity, and moreover, is a quasi-isomorphism in each arity (this follows from properties of the spaces $\mathrm{E}\Sigma_n$; see Proposition 2.3 in~\cite{BarrattEccles}). Thus $\mathpzc{E}$ is a $\Sigma_*$-projective resolution of $\mathpzc{Comm}$. We can also phrase this in model theoretic language as follows. The category whose objects are chain operads $\mathpzc{P}$ equipped with augmentation maps $\alpha \colon \mathpzc{P} \to \mathpzc{Comm}$ such that $\alpha(0)$ is an isomorphism, and whose morphisms are operad maps compatible with the augmentations, has a model structure with the componentwise quasi-isomorphisms as weak equivalences and componentwise epimorphisms as fibrations (see Theorem 3.1 in~\cite{BergerMoerdijk} and Remark 3.4 in~\cite{Hinich1err}). In this category, via the map above, $\mathpzc{E}$ is a cofibrant replacement of $\mathpzc{Comm}$. (Analogous remarks apply to the cochain operads $\mathpzc{E}^\dagger$ and $\mathpzc{Comm}^\dagger$.) \\

As we have seen in Proposition~\ref{prop:stabilization_of_comm} above, the constant operad is the stabilization of the commutativity operad. As such, we might guess that the stable Barratt-Eccles operad is a kind of resolution of the constant operad\index{stable Barratt-Eccles operad!relation to the constant operad}. \index{stable Barratt-Eccles operad!in arities zero and one}An easy check shows that $\mathpzc{E}_{\text{st}}(0) = 0$ and $\mathpzc{E}_{\text{st}}(1) = k[0]$. We then certainly do have a map
\[
\mathpzc{E}_{\text{st}} \overset{\sim}{\relbar\joinrel\twoheadrightarrow} \mathpzc{Const}
\]
which is the identity on $k[0]$ in arity one, and zero in all other arities. This map is clearly surjective in each arity, and, by Proposition~\ref{prop:stabhom}, is a quasi-isomorphism in each arity. Now, the category whose objects are chain operads $\mathpzc{P}$ where $\mathpzc{P}(0) = 0$, and whose morphisms are operad maps, has a model structure with the componentwise quasi-isomorphisms as weak equivalences and componentwise epimorphisms as fibrations (see Theorem 3.1 in~\cite{BergerMoerdijk} and Theorem 3.3 in~\cite{Hinich1err}). In this category then, the above map from $\mathpzc{E}_{\text{st}}$ to $\mathpzc{Const}$ is a trivial fibration. We do not know whether or not $\mathpzc{E}_{\text{st}}$ is cofibrant in this category (for this to be true, one might need to first restrict to a sub model category of additive operads; that is, a sub model category of operads whose associated monads are additive). Nonetheless, this discussion serves to illustrate that we ought to think of $\mathpzc{E}_{\text{st}}$ as a kind of resolution of $\mathpzc{Const}$. (Analogous remarks apply to the cochain operads $\mathpzc{E}_{\text{st}}^\dagger$ and $\mathpzc{Const}^\dagger$.)

%-----------------------------------------------------------------------
% Beginning of chap1.tex
%-----------------------------------------------------------------------

\chapter{Homotopy Theory of Algebras Over the Stable Operads}

We have constructed the stable Barratt-Eccles operad. In this section, we shall develop a homotopy theory of algebras over this operad, in the sense of Quillen model structures.

\section{Cell Algebras}\label{sec:cell_alg}

In order to develop our homotopy theory, we first need to recall the standard notion of cell algebras over operads. This notion is as follows.

\begin{Definition}\label{def:cell_alg}
Let $\mathpzc{P}$ be a dg operad over $k$. A \textit{cell $\mathpzc{P}$-algebra}\index{cell!algebra} is a $\mathpzc{P}$-algebra $A$ for which there exists a cotower of $\mathpzc{P}$-algebras
\[
A_0 \to A_1 \to A_2 \to \cdots
\]
and a universal cocone from this cotower to $A$, such that:
\begin{itemize}
	\item $A_0$ is the initial $\mathpzc{P}$-algebra, namely $\mathpzc{P}(0)$.
	\item For each $n \ge 0$, the map $A_n \to A_{n+1}$ fits into an algebra pushout square
\begin{center}
\begin{tikzpicture}[node distance=1.5cm]
\node(A){$\mathbf{P}M$};
\node[below of = A](C){$\mathbf{P}\text{C}M$};
\node[right of = A](B){$A_n$};
\node[below of = B,yshift=0.5mm](D){$A_{n+1}$};
	
\draw[->] (A) -- (B) node[midway,anchor=south]{};
\draw[->] (A) -- (C) node[midway,anchor=east]{};
\draw[->] (C) -- (D) node[midway,anchor=north]{};
\draw[->] (B) -- (D) node[midway,anchor=west]{};
		
\begin{scope}[shift=($(A)!.2!(D)$)]
\draw +(0,-0.25) -- +(0,0)  -- +(0.25,0);
\end{scope}
\end{tikzpicture}
\end{center}
where $M$ is a dg module which is degreewise free and has zero differentials, and where $\text{C}M$ is the cone on $M$.
\end{itemize}
\end{Definition}

\begin{Remark}
The condition that the dg module $M$ be degreewise free and have zero differentials is equivalent to that it be a sum of copies of the standard sphere complexes. Moreover, the cone on such a complex, denoted by $\text{C}M$ above, is then a sum of the standard disk complexes. (See Section~\ref{sec:nots_convs} for the standard sphere and disk complexes.) \customendremark
\end{Remark}

More generally, we also have cell maps as follows.

\begin{Definition}\label{def:cell_map}
Let $\mathpzc{P}$ be a dg operad over $k$. A \textit{cell map}\index{cell!map} $A \to B$ of $\mathpzc{P}$-algebras is a map such that there exists a cotower of $\mathpzc{P}$-algebras
\[
A_0 \to A_1 \to A_2 \to \cdots
\]
and a universal cocone from this cotower to $B$, such that:
\begin{itemize}
	\item $A_0 = A$ and the map $A_0 \to B$ is the given map $A \to B$.
	\item For each $n \ge 0$, the map $A_n \to A_{n+1}$ fits into an algebra pushout square
\begin{center}
\begin{tikzpicture}[node distance=1.5cm]
\node(A){$\mathbf{P}M$};
\node[below of = A](C){$\mathbf{P}\text{C}M$};
\node[right of = A](B){$A_n$};
\node[below of = B,yshift=0.5mm](D){$A_{n+1}$};
	
\draw[->] (A) -- (B) node[midway,anchor=south]{};
\draw[->] (A) -- (C) node[midway,anchor=east]{};
\draw[->] (C) -- (D) node[midway,anchor=north]{};
\draw[->] (B) -- (D) node[midway,anchor=west]{};
		
\begin{scope}[shift=($(A)!.2!(D)$)]
\draw +(0,-0.25) -- +(0,0)  -- +(0.25,0);
\end{scope}
\end{tikzpicture}
\end{center}
where $M$ is a dg module which is degreewise free and has zero differentials.
\end{itemize}
\end{Definition}

\begin{Remark}
Looking at the definitions, we see that a $\mathpzc{P}$-algebra is a cell algebra if and only if the unique map $\mathpzc{P}(0) \to A$ is a cell map. \customendremark
\end{Remark}

We now describe well-known concrete models of cell algebras. Let $\mathpzc{P}$ be a dg operad over $k$ and let $A$ be a cell $\mathpzc{P}$-algebra. Let also 
\[
A_0 \to A_1 \to A_2 \to \cdots
\]
be a cell filtration of $A$ and fix some choices $M_1, M_2, \dots$ for the dg modules which appear in the attachment squares above. For each $n \ge 0$, let $N_n = \oplus_{i \le n} M_i$, where $N_0 = 0$, and let also $N = \oplus_{i \ge 1} M_i$. Then one can construct models for the $A_n$ and $A$, denoted say by $B_n$ and $B$, as follows. Let $\mathpzc{P}^\#$ denote the operad in graded modules formed by forgetting the differentials present in $\mathpzc{P}$. For $n \ge 0$, we have that, as a graded module
\[
B_n = \mathbf{P}^\#(N_n[1]) = \bigoplus_{k \ge 0} \mathpzc{P}(k) \otimes_{\Sigma_k} (N_n[1])^{\otimes k}
\]
and the differentials of the $B_n$ are induced inductively, via the Leibniz rule and the attachment maps $\mathbf{P}M_n \to A_{n+1}$, together with the operadic composition maps of $\mathpzc{P}$. In the limit, we have that, as a graded module
\[
B = \mathbf{P}^\#(N[1]) = \bigoplus_{k \ge 0} \mathpzc{P}(k) \otimes_{\Sigma_k} (N[1])^{\otimes k}
\]
and in this case the differential is of course induced by those of the $B_n$. The precise analogue of the statement that the $B_n$ and $B$ are models, respectively, for the $A_n$ and $A$ is that there exists a diagram of isomorphisms of $\mathpzc{P}$-algebras as follows
\begin{center}
\begin{tikzpicture}
\node [] (A) {$A_0$};
\node [right of = A, xshift=1cm] (B) {$A_1$};
\node [right of = B, xshift=1cm] (C) {$A_2$};
\node [right of = C, xshift=1cm] (D) {$\cdots$};
\node [right of = D, xshift=1cm] (E) {};

\node [below of = A] (AA) {$B_0$};
\node [right of = AA, xshift=1cm] (BB) {$B_1$};
\node [right of = BB, xshift=1cm] (CC) {$B_2$};
\node [right of = CC, xshift=1cm] (DD) {$\cdots$};
\node [right of = DD, xshift=1cm] (EE) {};

\draw [->] (A) -- (B);
\draw [->] (B) -- (C);
\draw [->] (C) -- (D);

\draw [->] (AA) -- (BB);
\draw [->] (BB) -- (CC);
\draw [->] (CC) -- (DD);

\draw [->] (A) -- (AA) node[midway,anchor=west]{$\cong$};
\draw [->] (B) -- (BB) node[midway,anchor=west]{$\cong$};
\draw [->] (C) -- (CC) node[midway,anchor=west]{$\cong$};
\end{tikzpicture}
\end{center}
and this diagram induces, in the limit, an isomorphism $A \to B$. See, for example,~\cite{Mandell} and~\cite{FresseBook}.

\section{Enveloping Operads}\label{sec:env_op}

In the process of developing our homotopy theory, we shall have a need to consider algebra coproducts of the form $A \amalg \mathbf{P}X$ (where $\mathpzc{P}$ is an operad, $\mathbf{P}$ the associated free algebra functor and $A$ a $\mathpzc{P}$-algebra). Such coproducts can be formed with the help of a general construction on $A$, that of the enveloping operad of $A$ (this construction, as the name suggests, also has relations to other notions, such as that of representations of $A$).

\begin{Definition}\label{def:env_op}
Let $\mathpzc{P}$ be a dg operad over $k$ and $A$ a $\mathpzc{P}$-algebra. The \textit{enveloping operad}\index{enveloping operad} of $A$, denoted $\mathpzc{U}^A$, is defined as follows. For each $j \ge 0$, the dg $k[\Sigma_j]$-module $\mathpzc{U}^A(j)$ is defined to be the dg module coequalizer
\[
\bigoplus_{i \ge 0} \mathpzc{P}(i+j) \otimes_{\Sigma_i} (\mathbf{P}A)^{\otimes i} \rightrightarrows \bigoplus_{i \ge 0} \mathpzc{P}(i+j) \otimes_{\Sigma_i} A^{\otimes i} \to \mathpzc{U}^A(j)
\]
where one of the two parallel maps is induced by the $\mathpzc{P}$-algebra structure map $\mathbf{P}A \to A$ of $A$ and the other by the composition product of $\mathpzc{P}$. Morever, the operadic structure maps of $\mathpzc{U}^A$ are induced by those of $\mathpzc{P}$.
\end{Definition}

We now summarise some of the facts about enveloping operads which we shall need:
\begin{itemize}
	\item[(1)] In operadic degree $0$, we have
\[
\mathpzc{U}^A(0) \cong A
\]
where the universal coequalizer map is given by the $\mathpzc{P}$-algebra structure map of $A$, a map $\mathbf{P}A \to A$.
	\item[(2)] In operadic degree one, as usual, $\mathpzc{U}^A(1)$ forms a unital associative algebra via the composition product $\mathpzc{U}^A(1) \otimes \mathpzc{U}^A(1) \to \mathpzc{U}^A(1)$. By definition, this algebra $\mathpzc{U}^A(1)$ is the \textit{enveloping algebra} of $A$. See~\cite{GinzburgKapranov} and~\cite{FresseBook}.
	\item[(3)] In the construction of the $\mathpzc{U}^A(j)$, note that the two parallel maps preserve the $i=0$ summands, and moreover that, the coequalizer of just these two summands is simply $\mathpzc{P}(j)$. It follows that, for each $A$ and $j \ge 0$, $\mathpzc{U}^{A}(j)$ is equipped with a canonical map $\mathpzc{P}(j) \to\mathpzc{U}^{A}(j)$; in fact, these assemble into a canonical operad map
\[
\mathpzc{P} \to \mathpzc{U}^{A}.
\]
	\item[(4)] An easy check of the definitions and universal properties demonstrates that the enveloping operad $\mathpzc{U}^{\mathpzc{P}(0)}$ of the initial $\mathpzc{P}$-algebra $\mathpzc{P}(0)$ is simply $\mathpzc{P}$; that is, we have
\[
\mathpzc{U}^{\mathpzc{P}(0)} \cong \mathpzc{P}.
\]
	\item[(5)] In the case of a free algebra $\mathbf{P}X$, in forming the enveloping operad, we can simply generate on $X$ rather than $\mathbf{P}X$ and dispense with the relations imposed by the parallel maps in the coequalizer, so that
\[
\mathpzc{U}^{\mathbf{P}X}(j) \cong \bigoplus_{i \ge 0} \mathpzc{P}(i+j) \otimes_{\Sigma_i} X^{\otimes i}.
\]
	\item[(6)] We have an equivalence of categories $\mathpzc{U}^A\text{-}\mathsf{Alg} \simeq \mathpzc{P}\text{-}\mathsf{Alg}_{A/}$ between the category of $\mathpzc{U}^A$-algebras and the category of $\mathpzc{P}$-algebras under $A$. See \cite{GetzlerJones}, \cite{Mandell} or \cite{FresseBook} for details. For example, if $B$ is a $\mathpzc{U}^A$-algebra, it is also endowed with a $\mathpzc{P}$-algebra structure by pulling back across the canonical map $\mathpzc{P} \to \mathpzc{U}^{A}$ mentioned above, and moreover, $B$ carries a canonical map from the initial $\mathpzc{U}^A$-algebra, which we have seen above is $\mathpzc{U}^A(0) \cong A$. Note also that this equivalence of categories follows easily in the case of the enveloping operad $\mathpzc{U}^{\mathpzc{P}(0)}$ from the observation above that $\mathpzc{U}^{\mathpzc{P}(0)}$ is simply $\mathpzc{P}$.
	\item[(7)] Given any dg module $X$, we have a natural, in $X$, isomorphism of $\mathpzc{P}$-algebras under $A$ as follows
\[
\mathbf{U}^AX \cong A \amalg \mathbf{P}X.
\]
See~\cite{Mandell} for this result.
\end{itemize}

In addition to the above, we shall also need certain facts about the enveloping operad of a cell algebra. There are well-known concrete models for the enveloping operad of a cell algebra, and a resulting filtration of such an enveloping operad -- see, for example,~\cite{Mandell}. We record these models and filtrations here, for future reference. Let $\mathpzc{P}$ be a dg operad over $k$ and let $A$ be a cell $\mathpzc{P}$-algebra. Let also $A_0 \to A_1 \to A_2 \to \cdots$ be a cell filtration of $A$ and fix some choices $M_1, M_2, \dots$ for the dg modules which appear in the attachment squares. For each $n \ge 0$, let $N_n = \oplus_{i \le n} M_i$, where $N_0 = 0$, and let also $N = \oplus_{i \ge 1} M_i$. Then the models for the $\mathpzc{U}^{A_n}(j)$ and $\mathpzc{U}^A(j)$, for $n, j \ge 0$, are as follows. For each $n, j \ge 0$, we have that, as a $k[\Sigma_j]$-module
\[
\mathpzc{U}^{A_n}(j) = \bigoplus_{i \ge 0} \mathpzc{P}(i + j) \otimes_{\Sigma_i} (N_n[1])^{\otimes i}.
\]
Similarly, for each $j \ge 0$, we have that, as a $k[\Sigma_j]$-module
\[
\mathpzc{U}^A(j) = \bigoplus_{i \ge 0} \mathpzc{P}(i + j) \otimes_{\Sigma_i} (N[1])^{\otimes i}.
\]
The differential on the $\mathpzc{U}^{A_n}(j)$ and on $\mathpzc{U}^A(j)$ are given by the Leibniz rule, the attachment maps and the operadic composition. Thus we see that the operad $\mathpzc{U}^A$ is filtered by the operads $\mathpzc{U}^{A_n}$. Now, as $A_0$ is the initial $\mathpzc{P}$-algebra $\mathpzc{P}(0)$, we have $\mathpzc{U}^{A_0} = \mathpzc{U}^{\mathpzc{P}(0)} = \mathpzc{P}$. The terms of the operad $\mathpzc{U}^{A_1}$ then arise from the terms of $\mathpzc{U}^{A_0}$ by attachment of cells; more generally, for $n \ge 1$, the terms of the operad $\mathpzc{U}^{A_n}$ arise from the terms of $\mathpzc{U}^{A_{n-1}}$ by attachment of cells. This allows us to define, for $n \ge 1$, a filtration on the terms of the operad $\mathpzc{U}^{A_n}$ as follows. Fix such an $n$. For any $j \ge 0$, we let $\text{F}_m\mathpzc{U}^{A_n}(j)$, where $m \ge 0$, denote the sub graded module of $\mathpzc{U}^{A_n}(j)$ generated by the elements $\sigma \otimes a_1 \otimes \cdots \otimes a_i$ where at most $m$ of the factors $a_1,\dots,a_i \in N_n[1]$ project to a non-zero element in $M_n[1]$ (which comprises the ``most recently added cells''); note that, since, when computing the differential of $\sigma \otimes a_1 \otimes \cdots \otimes a_i$ via the Leibniz rule, if $a_r \in M_n[1]$, we map it to the corresponding element of $\bigoplus_{i \ge 0} \mathpzc{P}(i) \otimes_{\Sigma_i} (N_{n-1}[1])^{\otimes i}$ via the attachment map $M_n \to A_{n-1}$, we have that the differential preserves the sub graded module $\text{F}_m\mathpzc{U}^{A_n}(j)$ so that we in fact have a sub dg module. Now, given $n \ge 1$ and $j \ge 0$, as graded right $k[\Sigma_j]$-modules, note that we have that
\[
\resizebox{.95\hsize}{!}{$\mathpzc{U}^{A_n}(j) = \bigoplus_{i \ge 0} \mathpzc{P}(i + j) \otimes_{\Sigma_i} (N_n[1])^{\otimes i} = \bigoplus_{i \ge 0} \bigoplus_{l=0}^{i} \mathpzc{P}(i+j) \otimes_{\Sigma_{i-l} \times \Sigma_l} N_{n-1}[1]^{\otimes (i-l)} \otimes M_n[1]^{\otimes l}$}
\]
and, for any $m \ge 0$, the submodule $\text{F}_m\mathpzc{U}^{A_n}(j)$ is then given by
\[
\text{F}_m\mathpzc{U}^{A_n}(j) = \bigoplus_{i \ge 0} \bigoplus_{l=0}^{\text{min}(i,m)} \mathpzc{P}(i+j) \otimes_{\Sigma_{i-l} \times \Sigma_l} N_{n-1}[1]^{\otimes (i-l)} \otimes M_n[1]^{\otimes l}.
\]
Thus we see that, for $m \ge 1$, the inclusions $\text{F}_{m-1}\mathpzc{U}^{A_n}(j) \to \text{F}_m\mathpzc{U}^{A_n}(j)$ are, at the level of the underlying graded modules, split monomorphisms. We also have that
\begin{align*}
\text{F}_m\mathpzc{U}^{A_n}(j)/\text{F}_{m-1}\mathpzc{U}^{A_n}(j) &\cong \bigoplus_{i \ge m} \mathpzc{P}(i+j) \otimes_{\Sigma_{i-m} \times \Sigma_m} N_{n-1}[1]^{\otimes (i-m)} \otimes M_n[1]^{\otimes m} \\
&\cong \left(\bigoplus_{i \ge m} \mathpzc{P}(i+j) \otimes_{\Sigma_{i-m}} N_{n-1}[1]^{\otimes (i-m)}\right) \otimes_{\Sigma_m} M_n[1]^{\otimes m} \\
&= \left(\bigoplus_{i \ge 0} \mathpzc{P}(i+m+j) \otimes_{\Sigma_i} N_{n-1}[1]^{\otimes i}\right) \otimes_{\Sigma_m} M_n[1]^{\otimes m} \\
&= \mathpzc{U}^{A_{n-1}}(m+j) \otimes_{\Sigma_m} M_n[1]^{\otimes m}.
\end{align*}
Moreover, recalling that when we compute the differential of $\sigma \otimes a_1 \otimes \cdots \otimes a_i$ via the Leibniz rule, if $a_r \in M_n[1]$, we map it to the corresponding element of $\bigoplus_{i \ge 0} \mathpzc{P}(i) \otimes_{\Sigma_i} N_{n-1}[1]^{\otimes i}$ via the attachment map $M_n \to A_{n-1}$, and so in particular we map to zero in the quotient $\text{F}_m\mathpzc{U}^{A_n}(j)/\text{F}_{m-1}\mathpzc{U}^{A_n}(j)$, we see that the isomorphism
\[
\text{F}_m\mathpzc{U}^{A_n}(j)/\text{F}_{m-1}\mathpzc{U}^{A_n}(j) \cong \mathpzc{U}^{A_{n-1}}(m+j) \otimes_{\Sigma_m} M_n[1]^{\otimes m}
\]
is in fact one of dg modules, not only of graded modules.

\section{Admissibility and Semi-admissibility of Operads}

In this section, we consider some generalities on the homotopy theories, in the sense of Quillen model structures, of operads and their algebras. Let $\mathpzc{P}$ be a dg operad over $k$. The most common fashion in which one tries to place a model structure on $\mathpzc{P}\text{-}\mathsf{Alg}$ is by pull back, of the projective model structures, across the forgetful functor $\mathpzc{P}\text{-}\mathsf{Alg} \to \mathsf{DG}_k$. This motivates the following standard definition.

\begin{Definition}\label{def:admissibility}
Let $\mathpzc{P}$ be a dg operad over $k$. We say that $\mathpzc{P}$ is \textit{admissible}\index{admissible operad} if $\mathpzc{P}\text{-}\mathsf{Alg}$ admits a model structure where the weak equivalences and fibrations are the quasi-isomorphisms and degreewise epimorphisms, respectively.
\end{Definition}

For example, it is known that cofibrant $\mathbb{E}_\infty$ operads are admissible -- see~\cite{BergerMoerdijk}. \\

In the case of our stable operads, we don't quite have admissibility, but rather a weakened form of it. To describe it, we first need to describe a weakening of model structures, to semi-model structures, which have also been considered in~\cite{HoveyPaper},~\cite{White},~\cite{Spitzweck} and~\cite{FresseBook}. In short, a semi-model category is exactly a model category except that the factorization $\rightarrow \, = \, \overset{\sim}\hookrightarrow\twoheadrightarrow$ and the lifting property $(\overset{\sim}\hookrightarrow) \boxslash (\twoheadrightarrow)$ (which is to say, the factorization and lifting properties that involve trivial cofibrations) are required to hold only in the case where the source is cofibrant. 

\begin{Definition}\label{def:category_semi_model_str}
A \textit{Quillen semi-model structure}\index{Quillen semi-model structure} is a category $\mathsf{E}$, together with three specified classes of morphisms, $\mathcal{W}$, $\mathcal{C}$ and $\mathcal{F}$, such that the following hold:
\begin{itemize}[leftmargin=12mm]
	\item[SM1:] $\mathsf{E}$ is bicomplete.
	\item[SM2:] The class $\mathcal{W}$ satisfies 2-out-of-3.
	\item[SM3:] The classes $\mathcal{W}, \mathcal{C}$ and $\mathcal{F}$ are closed under retracts.
	\item[SM4:] Given a cofibration $i \colon A \hookrightarrow B$ and a fibraiton $p \colon X \twoheadrightarrow Y$ we have that $i \boxslash p$ if either $p$ is weak equivalence, or if $i$ is a weak equivalence and $A$ is a cofibrant.
	\item[SM5:] Given any map $A \to B$, it can be factored as a cofibration followed by a trivial fibration, and, if $A$ is cofibrant, it can also be factored as a trivial cofibration followed by a fibration.
\end{itemize}
We also add in the following requirement, which does not automatically follow from the above:
\begin{itemize}[leftmargin=12mm]
	\item[SM6:] Fibrations are closed under composition, products and base change.
\end{itemize}
\end{Definition}

With this definition, one can run through the the standard arguments for model categories to verify that we can still perform analogous constructions of the derived category and derived functors, with appropriate modifications. In particular, one can construct the derived category via bifibrant replacements of cofibrant objects -- see Theorem 2.13 in~\cite{Mandell}, and moreover, as for derived functors, the relevant result which we will need later is the following, for which we refer to Theorems 2.14 and 2.15 in~\cite{Mandell}.

\begin{Proposition}\label{prop:semimodelqadj}
Let $L \colon \mathsf{E} \to \mathsf{M}$ and $R \colon \mathsf{M} \to \mathsf{E}$ be left and right adjoints between a semi-model category $\mathsf{E}$ and a model category $\mathsf{M}$. Then we have the following:
\begin{itemize}
	\item[(i)] If $L$ preserves cofibrations between cofibrant objects and $R$ preserves fibrations, then the left derived functor of $L$ and the right derived functor of $R$ exist and are adjoint. Moreover, $L$ converts weak equivalences between cofibrant objects to weak equivalences, and the restriction of the left derived functor of $L$ to the cofibrant objects is naturally isomorphic to the derived functor of the restriction of $L$.
	\item[(ii)] Suppose that (i) holds and in addition for any cofibrant object $A$ in $\mathsf{E}$ and any fibrant object $Y$ in $M$, a map $A \to RY$ is a weak equivalence if and only if the adjoint $LA \to Y$ is a weak equivalence. Then the left derived functor of $L$ and the right derived functor of $R$ are inverse equivalences.
\end{itemize}
Moreover, we also have the following:
\begin{itemize}
	\item[(iii)] The hypothesis in (i) above is equivalent to each of the following:
\begin{itemize}
	\item $L$ preserves cofibrations between cofibrant objects and acyclic cofibrations between cofibrant objects.
	\item $R$ preserves fibrations and acyclic fibrations.
\end{itemize}
\end{itemize}
\qed
\end{Proposition}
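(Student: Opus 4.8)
The plan is to run the classical theory of Quillen adjunctions, following the approach of~\cite{Mandell}, the only extra care being to ensure that every factorization and every lifting problem invoked has a cofibrant source, as demanded by SM4 and SM5. We use freely that for a semi-model category $\mathsf{E}$ the homotopy category $\mathrm{Ho}(\mathsf{E})$ is constructed via bifibrant replacements of cofibrant objects (Theorem~2.13 in~\cite{Mandell}), and that, when $A$ is cofibrant and $B$ is fibrant, $\mathrm{Ho}(\mathsf{E})(A,B)$ is computed by left (equivalently right) homotopy classes of maps $A \to B$; the usual proofs of these facts survive once attention is restricted to cofibrant sources.

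First I would prove (iii), which is purely formal. Using $L \dashv R$ together with the fact that a map in a (semi-)model category is a cofibration (resp.\ trivial fibration) exactly when it has the left (resp.\ right) lifting property against all trivial fibrations (resp.\ cofibrations), detected against maps with cofibrant source in the cofibrantly generated setting at hand: given that $L$ preserves cofibrations between cofibrant objects, for an acyclic cofibration $i$ between cofibrant objects and a fibration $p$ in $\mathsf{M}$, $Li$ is a cofibration and $Li \boxslash p$ follows by transposing to $i \boxslash Rp$ and applying SM4 (legitimate since the source of $i$ is cofibrant), whence $Li$ is a trivial cofibration; conversely, if $L$ preserves acyclic cofibrations between cofibrant objects, then for an acyclic cofibration $j$ with cofibrant source (hence cofibrant target, so between cofibrant objects) and a fibration $p$ in $\mathsf{M}$, $Lj$ is an acyclic cofibration, so $Lj \boxslash p$ in the model category $\mathsf{M}$, hence $j \boxslash Rp$ and $Rp$ is a fibration. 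The remaining equivalence, involving acyclic fibrations, is the same transposition, now using the unconditional half of SM4/SM5 that $i \boxslash p$ for any cofibration $i$ and trivial fibration $p$.

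Next, for (i), the first step is the semi-model version of Ken Brown's lemma: $L$ carries weak equivalences between cofibrant objects to weak equivalences. Given a weak equivalence $f \colon A \to B$ of cofibrant objects, $A \amalg B$ is cofibrant; factor $(f,\mathrm{id}_B) \colon A \amalg B \to B$ as $A \amalg B \xrightarrow{j} C \xrightarrow{q} B$ with $j$ a cofibration and $q$ a trivial fibration (SM5, no cofibrancy needed). Then $C$ is cofibrant, and the composites $A \to A \amalg B \xrightarrow{j} C$ and $B \to A \amalg B \xrightarrow{j} C$ are cofibrations which are weak equivalences by 2-out-of-3, hence trivial cofibrations between cofibrant objects; by the hypothesis in the form of (iii), $L$ of each is a weak equivalence, and two applications of 2-out-of-3 (first to $Lq$, then to $Lf = Lq \circ L(A \to C)$) give that $Lf$ is a weak equivalence. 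Dually $R$ carries weak equivalences between fibrant objects to weak equivalences. This makes $\mathbb{L}L \colon \mathrm{Ho}(\mathsf{E}) \to \mathrm{Ho}(\mathsf{M})$, defined by cofibrant replacement, well-defined, with restriction to cofibrant objects naturally isomorphic to $L$ (apply $L$ to the weak equivalence $QA \to A$ of cofibrant objects), and similarly for $\mathbb{R}R$. For the adjunction $\mathbb{L}L \dashv \mathbb{R}R$: for $A$ cofibrant, $LA$ is cofibrant in $\mathsf{M}$ (apply $L$, a left adjoint hence preserving initial objects, to the cofibration $\varnothing \to A$), and for $Y$ fibrant, $RY$ is fibrant in $\mathsf{E}$ ($R$ preserves fibrations and terminal objects); the ordinary adjunction bijection $\mathsf{M}(LA,Y) \cong \mathsf{E}(A,RY)$ respects the homotopy relations (transport a cylinder on $A$ across $L$, or a path object on $Y$ across $R$), so it descends to $\mathrm{Ho}(\mathsf{M})(\mathbb{L}LA,Y) \cong \mathrm{Ho}(\mathsf{E})(A,\mathbb{R}RY)$, and extending over cofibrant and fibrant replacements yields the adjunction on homotopy categories.

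Finally, for (ii), it suffices to check that the derived unit and counit are isomorphisms. For $A$ cofibrant choose a fibrant replacement $LA \xrightarrow{\sim} Z$ in $\mathsf{M}$; the derived unit $A \to \mathbb{R}R\mathbb{L}LA = RZ$ is precisely the transpose of $LA \xrightarrow{\sim} Z$, so the ``if'' direction of the hypothesis of (ii) (applied with this cofibrant $A$ and fibrant $Z$) shows it is a weak equivalence. Dually, for $Y$ fibrant choose a cofibrant replacement $W \xrightarrow{\sim} RY$ in $\mathsf{E}$; the derived counit $\mathbb{L}L\mathbb{R}RY = LW \to Y$ is the transpose of $W \xrightarrow{\sim} RY$, so the ``only if'' direction gives that it too is a weak equivalence. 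Hence $\mathbb{L}L$ and $\mathbb{R}R$ are inverse equivalences. The main obstacle, and the only genuine departure from the classical argument, is the bookkeeping guaranteeing that each factorization and lifting problem above has a cofibrant source, together with the input that in the (cofibrantly generated) semi-model categories under consideration (trivial) fibrations are detected by lifting against maps with cofibrant source; these are exactly the points handled in~\cite{Mandell}.
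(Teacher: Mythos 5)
The paper offers no proof of this proposition; it is invoked with a bare citation to Theorems 2.14 and 2.15 of Mandell, so you are reconstructing the argument from scratch. Your treatment of (i) and (ii) is sound: the semi-model version of Ken Brown's lemma you give is the right device (the factorization of $(f,\mathrm{id}_B)$ needs only the unconditional half of SM5, and the trivial cofibrations produced land between cofibrant objects, so (i)$\Rightarrow$(a) of (iii) applies), and the computation of the derived unit and counit in (ii) correctly matches the stated hypothesis, cofibrant source on one side, fibrant target on the other.

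The gap is in part (iii), in precisely the directions that recover fibration-preservation by $R$ (namely (a)$\Rightarrow$(b), (a)$\Rightarrow$(i), and the acyclic-fibration half of (i)$\Rightarrow$(b)). Your argument runs a lifting/retract characterization of (trivial) fibrations in $\mathsf{E}$, but that characterization is \emph{not} a consequence of SM1--SM6. For trivial fibrations the retract argument does go through, because SM4 and the first half of SM5 are unconditional; but a map of $\mathsf{E}$ having the RLP against trivial cofibrations with cofibrant source need not be a fibration, since SM5 only supplies a (trivial cofibration, fibration) factorization of maps with cofibrant source, so the retract trick stalls. Likewise, showing $Rq$ is a trivial fibration requires $i \boxslash Rq$ for \emph{all} cofibrations $i$, whereas the transposition $Li \boxslash q$ is only available for $i$ between cofibrant objects under the stated hypotheses. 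You do flag this at the end by invoking ``cofibrant generation,'' and that is indeed the right escape hatch, provided the generating (trivial) cofibrations have cofibrant source, which is the case in Mandell's setup where the generators are free maps on sphere and disk complexes. But as written the proposition carries no such hypothesis, and the appeal should not be slipped in as an afterthought: either strengthen the hypotheses (cofibrantly generated with cofibrant-sourced generators, or, as in Mandell, a semi-model structure transported along a forgetful functor to a genuine model category so that fibrations in $\mathsf{E}$ are created there), or prove only the implications (i)$\Rightarrow$(a), (b)$\Rightarrow$(i), (b)$\Rightarrow$(a), which is all that the rest of the paper actually uses.
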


We can now define our weakening of admissibility for dg operads, which we call semi-admissibility.

\begin{Definition}\label{def:semiadmissibility}
Given a dg operad $\mathpzc{P}$ over $k$, we say that it is \textit{semi-admissible}\index{semi-admissible operad} if $\mathpzc{P}\text{-}\mathsf{Alg}$ admits a semi-model structure where the weak equivalences and fibrations are the quasi-isomorphisms and degreewise epimorphisms, respectively.
\end{Definition}

Finally, we mention criteria for the admissibility and semi-admissibility of a dg operad. See~\cite{Hinich1} for the admissibility criterion and~\cite{Mandell} for the semi-admissibility criterion (Mandell does not use the term ``semi-admissibility''; however, the results developed in Section 2 of~\cite{Mandell} are what we have codified in our definition of semi-model structures). Below, by ``disk complex'', we mean that which is referred to as such in Section~\ref{sec:nots_convs}.

\begin{Proposition}\label{prop:amenable_implies_admissible}
Let $\mathpzc{P}$ be a dg operad over $k$, $\mathbf{P}$ the associated free algebra functor. Then we have the following:
\begin{itemize}
	\item[(i)] If, for any $\mathpzc{P}$-algebra $A$, the natural map
\[
A \to A \amalg \mathbf{P}(\D^n)
\]
where $\D^n$ is a disk complex, is a quasi-isomorphism, $\mathpzc{P}$ is admissible.
	\item[(ii)] If the above condition holds for any cell $\mathpzc{P}$-algebra, $\mathpzc{P}$ is semi-admissible.
\end{itemize}
Moreover, in either case, the cofibrations are exactly the retracts of cell maps.
\qed
\end{Proposition}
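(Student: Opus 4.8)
The plan is to transfer the projective model structure on $\mathsf{DG}_k$ across the free--forgetful adjunction $\mathbf{P}\colon\mathsf{DG}_k\rightleftarrows\mathpzc{P}\text{-}\mathsf{Alg}$, applying the transfer criterion of Hinich~\cite{Hinich1} in case (i) and its semi-model refinement due to Mandell~\cite{Mandell}, Section~2, in case (ii) (whose axioms are codified in Definition~\ref{def:category_semi_model_str}). Since $k$ is a field, $\mathsf{DG}_k$ is cofibrantly generated with generating cofibrations $I=\{\Sph^{n-1}\hookrightarrow\D^n\}_{n\in\Z}$ and generating trivial cofibrations $J=\{0\to\D^n\}_{n\in\Z}$; I would take $\mathbf{P}I$ and $\mathbf{P}J$ as the candidate generating (trivial) cofibrations for $\mathpzc{P}\text{-}\mathsf{Alg}$. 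That category is bicomplete and locally presentable (standard, since the free $\mathpzc{P}$-algebra monad is finitary, hence accessible), so all the relevant sources $\mathbf{P}\Sph^{n-1}$ and $\mathbf{P}(0)=\mathpzc{P}(0)$ are small and the small object argument applies unconditionally. The transfer then hinges on a single point: every relative $\mathbf{P}J$-cell complex must have underlying map a quasi-isomorphism --- unconditionally in case (i), and with cofibrant source in case (ii).

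This is the crux. Because $\mathbf{P}(0)$ is initial and both $\mathbf{P}$ and $A\amalg(-)$ preserve colimits, a $\mathbf{P}J$-cell attachment on a $\mathpzc{P}$-algebra $A$ is exactly a map $A\to A\amalg\mathbf{P}(M)$ with $M=\bigoplus_\alpha\D^{n_\alpha}$ a sum of disk complexes, and a general relative $\mathbf{P}J$-cell complex is a transfinite composite of such. Using $\mathbf{P}(X\oplus Y)\cong\mathbf{P}X\amalg\mathbf{P}Y$, and hence $A\amalg\mathbf{P}(X\oplus Y)\cong(A\amalg\mathbf{P}X)\amalg\mathbf{P}Y$ (Section~\ref{sec:env_op}), one peels the summands of $M$ off one at a time, invoking the hypothesis at each step over the algebra already produced; since $\mathpzc{U}^B(0)\cong B$ and $B\amalg\mathbf{P}X\cong\mathbf{U}^BX$ for every $\mathpzc{P}$-algebra $B$ (Section~\ref{sec:env_op}), each such map is a split monomorphism of underlying graded modules, so --- homology commuting with filtered colimits --- the passage to an infinite sum $M$, and likewise the transfinite composition, preserves quasi-isomorphisms. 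For case (ii) one must additionally keep the source cofibrant, so that the hypothesis (assumed only for cell algebras) remains available at each stage: here one checks that $A\to A\amalg\mathbf{P}(\D^n)$ is itself a cell map in the sense of Definition~\ref{def:cell_map}. Indeed, since $\D^n=\text{C}(\Sph^{n-1})$, it factors as the cell attachment $A\to A\amalg\mathbf{P}(\Sph^{n-1})$, built from the module $\Sph^{n-2}$ via the zero attaching map, followed by the cell attachment $A\amalg\mathbf{P}(\Sph^{n-1})\to A\amalg\mathbf{P}(\D^n)$ along the coproduct inclusion $\mathbf{P}(\Sph^{n-1})\hookrightarrow A\amalg\mathbf{P}(\Sph^{n-1})$; transfinite composites of cell maps being cell maps, every stage of a relative $\mathbf{P}J$-cell complex on a cell algebra is again a cell algebra, and a general cofibrant algebra, being a retract of one, then satisfies the hypothesis by a retract argument.

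Granting this, the transfer criterion of~\cite{Hinich1} produces in case (i) a cofibrantly generated model structure on $\mathpzc{P}\text{-}\mathsf{Alg}$ with generating (trivial) cofibrations $\mathbf{P}I$ (resp.\ $\mathbf{P}J$): the weak equivalences are the quasi-isomorphisms by construction; the fibrations are the maps with the right lifting property against $\mathbf{P}J$, which by adjunction is the right lifting property of the underlying map against $J$, that is, the degreewise epimorphisms; and the cofibrations are the retracts of relative $\mathbf{P}I$-cell complexes. Since the attaching maps $\mathbf{P}M\to\mathbf{P}\text{C}M$ of Definition~\ref{def:cell_map} are precisely $\mathbf{P}$ applied to coproducts of the generators $\Sph^{n-1}\hookrightarrow\D^n$, these are exactly the retracts of cell maps, as asserted. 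In case (ii), the same input verifies Definition~\ref{def:category_semi_model_str}: SM1 (bicompleteness) and SM2--SM3 (2-out-of-3 and retract closure, inherited from $\mathsf{DG}_k$) are immediate; the unrestricted factorization and lifting for cofibrations and trivial fibrations come from the small object argument on $\mathbf{P}I$; the factorization and lifting involving trivial cofibrations out of cofibrant sources come from the small object argument on $\mathbf{P}J$ together with the quasi-isomorphism statement just established; and SM6 is read off from the right-lifting-property description of fibrations. Again the cofibrations are the retracts of cell maps.

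The main obstacle is slight in principle --- the result is essentially a repackaging of Kan's transfer theorem and Mandell's semi-model refinement --- and the one genuinely delicate point is the preservation of cofibrancy in case (ii): one must ensure the source remains a cell (hence cofibrant) algebra throughout the possibly transfinite string of disk attachments, so that the hypothesis, available only for cell algebras, is legitimately invoked at every stage. That is where the explicit enveloping-operad and filtered-colimit bookkeeping above is needed, rather than being a soft formality.
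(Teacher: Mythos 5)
The paper gives no proof of this proposition, citing Hinich~\cite{Hinich1} and Mandell~\cite{Mandell} for the transfer argument; your proposal is a correct and faithful reconstruction of exactly that transfer argument, including the one nontrivial bookkeeping point for the semi-model case (that $A\to A\amalg\mathbf{P}(\D^n)$ is itself a cell map, so the hypothesis remains available at every stage of a relative $\mathbf{P}J$-cell complex on a cell algebra). Your two-step cell decomposition of $A\to A\amalg\mathbf{P}(\D^n)$ and the retract argument for general cofibrant sources both check out.
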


\section{The Homotopy Theory of Algebras over the Stable Operads}

We can now develop our homotopy theory for algebras over the stable Barratt-Eccles chain and cochain operads. First, we wish to show that we have some homotopical control over these operads by showing that the associated monads preserve quasi-isomorphisms. To show this, we have a couple preliminary definitions and a lemma. 

\begin{Definition}\label{def:fin_mod}
Given a ring $R$, say that a dg left $R$-module is \textit{finite}\index{finite module} if it is bounded above and below and degreewise finitely generated.
\end{Definition}

\begin{Definition}\label{def:fin_flat}
Given a ring $R$ and a dg right $R$-module $C$, say that $C$ is \textit{semi-flat}\index{semi-flat module} if, as a functor on dg left $R$-modules, $C \otimes_R -$ preserves quasi-isomorphisms between finite modules.
\end{Definition}

In the case of the $\mathbb{E}_\infty$ operads $\mathpzc{E}$ and $\mathpzc{E}^\dagger$, we have that, for each $n$, $\mathpzc{E}_{\text{st}}(n)$ and $\mathpzc{E}^\dagger_{\text{st}}(n)$ are flat over $\F_p[\Sigma_n]$, as they are free over $\F_p[\Sigma_n]$. In the case of our stable operads, we have the following.

\begin{Lemma}\label{lem:finflatEst}
For each $n \ge 0$, $\mathpzc{E}_{\emph{st}}(n)$ and $\mathpzc{E}^\dagger_{\emph{st}}(n)$ are semi-flat over $\F_p[\Sigma_n]$.
\end{Lemma}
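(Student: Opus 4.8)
Throughout write $R = \F_p[\Sigma_n]$, $C = \mathpzc{E}_{\text{st}}(n)$ and $C_k = \Sigma^k\mathpzc{E}(n)$, so that $C = \lim_k C_k$, limits of operads being formed aritywise. The plan is to reduce everything to one structural fact: each $C_k$ is \emph{degreewise free} over $R$. This holds because $\text{C}_d(\text{E}\Sigma_n)$ is the free $\F_p$-module on the non-degenerate $d$-simplices of $\text{E}\Sigma_n$, a set on which $\Sigma_n$ acts freely, and the shift preserves this; in particular each $C_k \otimes_R -$ is exact and preserves all quasi-isomorphisms, and the real work is to pass to the inverse limit. I will also use that $R$ is finite-dimensional over $\F_p$, hence Noetherian — so finitely generated $R$-modules are finitely presented — and in particular coherent, so that arbitrary products of flat $R$-modules are flat (Chase's theorem). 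The case $n = 0$ is immediate: the connecting maps of the defining tower vanish for degree reasons, so $C = 0$, which is trivially semi-flat (and likewise for the cochain operad). So assume $n \ge 1$.

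First I would make a standard reduction. Given a quasi-isomorphism $f \colon X \to Y$ of finite dg $R$-modules, put $Z = \text{Cone}(f)$, a finite and acyclic dg $R$-module; since $C \otimes_R -$ commutes with mapping cones and a chain map is a quasi-isomorphism exactly when its cone is acyclic, it suffices to show $C \otimes_R Z$ is acyclic. By Proposition~\ref{prop:stabmapontoML}, for $n \ge 1$ the connecting maps $C_{k+1} \to C_k$ are surjective, so there is a short exact sequence of dg $R$-modules
\[
0 \longrightarrow C \longrightarrow \prod_k C_k \xrightarrow{\ \mathrm{id}-\mathrm{shift}\ } \prod_k C_k \longrightarrow 0 .
\]
Each $(\prod_k C_k)_d = \prod_k (C_k)_d$ is a product of free $R$-modules, hence flat, so $-\otimes_R Z$ keeps this sequence exact; and since each $Z_q$ is finitely presented and $Z$ is bounded, $(\prod_k C_k)\otimes_R Z \cong \prod_k(C_k\otimes_R Z)$ as dg modules. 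Combining, $C \otimes_R Z$ sits in a short exact sequence $0 \to C\otimes_R Z \to \prod_k(C_k\otimes_R Z) \to \prod_k(C_k\otimes_R Z) \to 0$, and taking homology (which commutes with products) gives, for each $m$, the Milnor sequence
\[
0 \longrightarrow {\lim_k}^1\,\text{H}_{m+1}(C_k\otimes_R Z) \longrightarrow \text{H}_m(C\otimes_R Z) \longrightarrow \lim_k \text{H}_m(C_k\otimes_R Z) \longrightarrow 0 .
\]
Now each $C_k$ is a bounded-below complex of flat $R$-modules and $Z$ is acyclic, so $C_k\otimes_R Z$ is acyclic (acyclic assembly lemma); hence both outer terms vanish and $C\otimes_R Z$ is acyclic, as needed. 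This proves $\mathpzc{E}_{\text{st}}(n)$ is semi-flat, and the cochain case follows formally, since $\mathpzc{E}^\dagger_{\text{st}}(n) = \mathpzc{E}_{\text{st}}(n)^\dagger$ and the reindexing $(-)^\dagger$ alters neither the $R$-module structure nor acyclicity.

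The step I expect to be the main obstacle is the identification $C\otimes_R Z \cong \lim_k(C_k\otimes_R Z)$: tensor products do not commute with inverse limits in general, and this one does only because (i) $\prod_k C_k$ is flat — which is where coherence of $R = \F_p[\Sigma_n]$ enters — so that $-\otimes_R Z$ preserves the exactness of the $\lim$/$\lim^1$ presentation, and (ii) $Z$ is finite, so that $-\otimes_R Z$ commutes with the products $\prod_k$. Everything else is standard homological algebra (the acyclic assembly lemma, the Milnor sequence) layered on the elementary observation, already used for $\mathpzc{E}$ itself, that each $\Sigma^k\mathpzc{E}(n)$ is degreewise $\F_p[\Sigma_n]$-free.
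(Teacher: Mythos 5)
Your proof is correct and takes essentially the same approach as the paper: both arguments hinge on the surjectivity from Proposition~\ref{prop:stabmapontoML}, commute tensor with the inverse limit using degreewise finiteness, and finish with a $\text{lim}^1$ argument. The paper simply asserts the commutation of tensor with inverse limit and then applies the five lemma to the homology towers, whereas you derive it more explicitly via the Milnor short exact sequence, Chase's theorem for flatness of products over the coherent ring $\F_p[\Sigma_n]$, and a cone reduction to acyclicity — this fills in a step the paper glosses over, but it is the same underlying argument.
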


\begin{proof}
We shall demonstrate the case of the chain operad; the case of the cochain operad is entirely analogous. Fix $n \ge 0$. Let $Z$ be a finite chain complex over $\F_p[\Sigma_n]$. For each $k$, $(\Sigma^k\mathpzc{E})(n)$ is degreewise of finite dimension over $\F_p[\Sigma_n]$. Moreover, $Z$ is degreewise finitely presented over $\F_p[\Sigma_n]$ (we have finite generation by assumption and then finite presentation follows because $\F_p[\Sigma_n]$ is Noetherian). As a result, we can commute tensor product and inverse limit to conclude that
\[
\mathpzc{E}_{\text{st}}(n) \otimes_{\Sigma_n} Z = (\lim_k \: (\Sigma^k\mathpzc{E})(n)) \otimes_{\Sigma_n} Z = \lim_k \: ((\Sigma^k\mathpzc{E})(n) \otimes_{\Sigma_n} Z).
\]
Next, given a map $f \colon Z \to Z'$ between finite $\F_p[\Sigma_n]$-complexes $Z$ and $Z'$, we can write the induced map $\mathpzc{E}_{\text{st}}(n) \otimes_{\Sigma_n} Z \to \mathpzc{E}_{\text{st}}(n) \otimes_{\Sigma_n} Z'$ as the map induced on inverse limits by the maps $(\Sigma^k\mathpzc{E})(n) \otimes_{\Sigma_n} Z \to (\Sigma^k\mathpzc{E})(n) \otimes_{\Sigma_n} Z'$. If $f$ if a quasi-isomorphism, each of the latter maps $(\Sigma^k\mathpzc{E})(n) \otimes_{\Sigma_n} Z \to \Sigma^k\mathpzc{E}(n) \otimes_{\Sigma_n} Z'$ is also a quasi-isomorphism. Thus we have the following diagram of quasi-isomorphisms

\begin{center}
\begin{tikzpicture}[node distance = 2.5cm]
\node [] (A) {$(\Sigma \mathpzc{E})(n) \otimes_{\Sigma_n} Z$};
\node [right of = A,xshift=1.25cm] (B) {$\mathpzc{E}(n) \otimes_{\Sigma_n} Z$};
\node [below of = A,yshift=6mm] (C) {$(\Sigma \mathpzc{E})(n) \otimes_{\Sigma_n} Z'$};
\node [below of = B,yshift=6mm] (D) {$\mathpzc{E}(n) \otimes_{\Sigma_n} Z'$};
\node [left of = A,xshift=-1.25cm] (E) {$(\Sigma^2 \mathpzc{E})(n) \otimes_{\Sigma_n} Z$};
\node [left of = E,xshift=-1cm] (F) {$\cdots$};
\node [left of = C,xshift=-1.25cm] (G) {$(\Sigma^2 \mathpzc{E})(n)\otimes_{\Sigma_n} Z'$};
\node [left of = G,xshift=-1cm] (H) {$\cdots$};

\draw [->] (A) -- (B) node[midway,anchor=south]{};
\draw [->] (A) -- (C) node[midway,anchor=west]{$\sim$};
\draw [->] (B) -- (D) node[midway,anchor=west]{$\sim$};
\draw [->] (C) -- (D) node[midway,anchor=south]{};
\draw [->] (F) -- (E) node[midway,anchor=south]{};
\draw [->] (E) -- (A) node[midway,anchor=south]{};
\draw [->] (H) -- (G) node[midway,anchor=south]{};
\draw [->] (G) -- (C) node[midway,anchor=south]{};
\draw [->] (E) -- (G) node[midway,anchor=west]{$\sim$};
\end{tikzpicture}
\end{center}

and the map $\mathpzc{E}_{\text{st}}(n) \otimes_{\Sigma_n} Z \to \mathpzc{E}_{\text{st}}(n) \otimes_{\Sigma_n} Z'$ is the map induced on the limits of the towers by the vertical arrows in this diagram. Now, it follows easily from Proposition~\ref{prop:stabmapontoML} that both the upper and lower towers satisfy the Mittag-Leffler condition. Thus, by the $\text{lim}{}^1$ short exact sequence and the five lemma, the map induced on the limits is itself a quasi-isomorphism.
\end{proof}

For the next result, recall the standard sphere and disk complexes defined in Section~\ref{sec:nots_convs}, denoted by $\Sph^n$ and $\D^n$.

\begin{Lemma}\label{lem:complexes_over_k_are_sums_of_spheres_and_disks}
Given a (co)chain complex $X$ over a field $k$, we have that 
\[
X \cong (\bigoplus_{i \in I} \Sph^{n_i}) \oplus (\bigoplus_{j \in J} \D^{n_j})
\]
for some index sets $I$ and $J$.
\end{Lemma}

Note that the above isomorphism is not natural. Note also that, under this isomorphism, the (co)homology of $X$ is given exactly by the sum of the spherical summands $\bigoplus_{i \in I} \Sph^{n_i}$.

\begin{proof}
This is standard. See, e.g., Exercise 1.1.3 in~\cite{Weibel}.
\end{proof}

We can now show that the monads associated to the stable Barratt-Eccles chain and cochain operads preserve quasi-isomorphisms.

\begin{Proposition}\label{prop:MS_st_pres_w_eqs}
The monads $\mathbf{E}_{\normalfont{\textbf{st}}}$ and $\mathbf{E}_{\normalfont{\textbf{st}}}^\dagger$ associated to the stable Barratt-Eccles chain and cochain operads preserve quasi-isomorphisms.
\end{Proposition}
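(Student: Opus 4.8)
The plan is to prove the statement for the chain operad and deduce the cochain case by reindexing (equivalently by applying $(-)^\dagger$ everywhere); the argument below uses only that each $\mathpzc{E}(n)$ is free and degreewise finite-dimensional over $\F_p[\Sigma_n]$, a property preserved by $(-)^\dagger$. Also note that this result is exactly Theorem~\ref{thm:monadweakequiv}.

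First I would reduce to a single arity. Since $\mathbf{E}_{\mathbf{st}}X = \bigoplus_{n \ge 0}\mathpzc{E}_{\text{st}}(n)\otimes_{\Sigma_n}X^{\otimes n}$ and $\mathrm{H}_*$ commutes with direct sums, it suffices to show that, for each fixed $n$, the functor $X \mapsto \mathpzc{E}_{\text{st}}(n)\otimes_{\Sigma_n}X^{\otimes n}$ preserves quasi-isomorphisms. Given a quasi-isomorphism $f \colon X \to Y$ of dg $\F_p$-modules, the K\"unneth theorem over the field $\F_p$ shows that $f^{\otimes n}\colon X^{\otimes n}\to Y^{\otimes n}$ is a $\Sigma_n$-equivariant quasi-isomorphism, so it is enough to prove: the functor $\mathpzc{E}_{\text{st}}(n)\otimes_{\Sigma_n}(-)$ on dg $\F_p[\Sigma_n]$-modules preserves quasi-isomorphisms. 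This functor is additive and commutes with shifts and mapping cones, so this is equivalent to the statement that $\mathpzc{E}_{\text{st}}(n)\otimes_{\Sigma_n}V$ is acyclic whenever $V$ is an acyclic dg $\F_p[\Sigma_n]$-module.

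The main step is to establish this acyclicity by combining a filtered-colimit argument with the inverse-limit analysis already carried out in Lemma~\ref{lem:finflatEst}. I would write $V = \mathrm{colim}_{\gamma}V_\gamma$ as the filtered colimit of its finite (bounded, degreewise finitely generated) $\Sigma_n$-stable subcomplexes. Since $\otimes_{\Sigma_n}$ (tensoring with the fixed module $\mathpzc{E}_{\text{st}}(n)$) and $\mathrm{H}_*$ each commute with filtered colimits, $\mathrm{H}_*(\mathpzc{E}_{\text{st}}(n)\otimes_{\Sigma_n}V)=\mathrm{colim}_\gamma \mathrm{H}_*(\mathpzc{E}_{\text{st}}(n)\otimes_{\Sigma_n}V_\gamma)$, so it suffices to show: for every $\gamma$ there is $\gamma'\ge\gamma$ with $\mathrm{H}_*(\mathpzc{E}_{\text{st}}(n)\otimes_{\Sigma_n}V_\gamma)\to\mathrm{H}_*(\mathpzc{E}_{\text{st}}(n)\otimes_{\Sigma_n}V_{\gamma'})$ the zero map. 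Because $V$ is acyclic and $V_\gamma$ is finite, $\mathrm{H}_*(V_\gamma)$ is finite-dimensional over $\F_p$ and each of its (finitely many) classes becomes a boundary somewhere in the colimit, so one can choose $\gamma'\ge\gamma$ making $\mathrm{H}_*(V_\gamma)\to\mathrm{H}_*(V_{\gamma'})$ zero. It remains to propagate this through $\mathpzc{E}_{\text{st}}(n)\otimes_{\Sigma_n}(-)$, and here the structure of $\mathpzc{E}_{\text{st}}(n)$ enters exactly as in Lemma~\ref{lem:finflatEst}: for the finite modules $V_\gamma, V_{\gamma'}$ one commutes the defining inverse limit past the tensor to get $\mathpzc{E}_{\text{st}}(n)\otimes_{\Sigma_n}V_\gamma\cong{\lim}_k\big((\Sigma^k\mathpzc{E})(n)\otimes_{\Sigma_n}V_\gamma\big)$, and likewise for $V_{\gamma'}$. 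Each $(\Sigma^k\mathpzc{E})(n)=\mathpzc{E}(n)[k-kn]$ is free, hence flat, over $\F_p[\Sigma_n]$, so $(\Sigma^k\mathpzc{E})(n)\otimes_{\Sigma_n}(-)$ commutes with homology and therefore sends the map $V_\gamma\to V_{\gamma'}$, which is zero on homology, to a map that is zero on homology for every $k$. By Proposition~\ref{prop:stabmapontoML} the transition maps of the towers $\{(\Sigma^k\mathpzc{E})(n)\otimes_{\Sigma_n}V_\gamma\}_k$ are surjective, so these towers are Mittag-Leffler, and the ${\lim}^{1}$ short exact sequence (with $\lim$ and ${\lim}^{1}$ of a system of zero maps being zero) forces $\mathrm{H}_*(\mathpzc{E}_{\text{st}}(n)\otimes_{\Sigma_n}V_\gamma)\to\mathrm{H}_*(\mathpzc{E}_{\text{st}}(n)\otimes_{\Sigma_n}V_{\gamma'})$ to vanish, completing the argument.

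The pieces I would not belabor are the K\"unneth computation, the bookkeeping that $\otimes_{\Sigma_n}$ commutes with shifts, cones and filtered colimits, and the exhaustion of $V$ by finite $\Sigma_n$-stable subcomplexes. The genuine obstacle is the one flagged above: one cannot simply write an acyclic $V$ as a filtered colimit of acyclic \emph{finite} complexes, since acyclicity and smallness clash on subobjects when $\F_p[\Sigma_n]$ fails to be hereditary, so the argument must instead route through ``zero on homology'' maps between finite pieces and control the interaction of the inverse limit defining $\mathpzc{E}_{\text{st}}(n)$ with such maps. It is precisely the conjunction of degreewise finiteness, freeness of each $(\Sigma^k\mathpzc{E})(n)$, and the Mittag-Leffler property of Proposition~\ref{prop:stabmapontoML} — essentially the content already isolated in Lemma~\ref{lem:finflatEst} — that makes this work.
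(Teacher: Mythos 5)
The framework is sensible --- reduce to acyclicity of $\mathpzc{E}_{\text{st}}(n)\otimes_{\Sigma_n}V$ for acyclic $V$, exhaust $V$ by finite $\Sigma_n$-stable subcomplexes, and run the Mittag-Leffler argument arity by arity --- but the pivotal sentence is wrong: freeness of $(\Sigma^k\mathpzc{E})(n)$ over $\F_p[\Sigma_n]$ gives that $(\Sigma^k\mathpzc{E})(n)\otimes_{\Sigma_n}(-)$ is exact, hence preserves quasi-isomorphisms, but it does \emph{not} follow that a map which is zero on homology is sent to a map zero on homology. Since $(\Sigma^k\mathpzc{E})(n)$ is a shift of a free resolution of the trivial module, $H_*\big((\Sigma^k\mathpzc{E})(n)\otimes_{\Sigma_n}V\big)$ is a shifted $\Sigma_n$-hyperhomology $H_*(\Sigma_n;V)$, and hyperhomology detects maps that vanish on homology. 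Concretely, take $p=2$, $n=2$, $R=\F_2[\Sigma_2]$, $\sigma=1+\tau$. The extension $0\to\F_2\xrightarrow{\iota}R\to\F_2\to 0$ with $\iota(1)=\sigma$ gives the two-term complex $V_\gamma$ with $R$ in degree $0$, $\F_2$ in degree $1$ and differential $\iota$; the projection $g\colon V_\gamma\to\F_2[1]$ (identity in degree $1$, zero in degree $0$) is a chain map with $H_*(g)=0$, since $H_*(V_\gamma)\cong\F_2[0]$. But $H_*\big(\mathpzc{E}(2)\otimes_R g\big)$ is the connecting homomorphism $H_n(\Sigma_2;\F_2)\to H_{n-1}(\Sigma_2;\F_2)$ for that extension, which is an isomorphism for all $n\ge 1$ because $R$ is induced, so $H_{>0}(\Sigma_2;R)=0$. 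Replacing $\F_2[1]$ by the mapping cylinder of $g$ turns this into an inclusion of finite $R$-subcomplexes, and the cone on the identity of that cylinder supplies the required acyclic ambient complex, so this is exactly the configuration your argument has to handle.

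Thus knowing that $H_*(V_\gamma)\to H_*(V_{\gamma'})$ is zero does not let you conclude that $H_*\big((\Sigma^k\mathpzc{E})(n)\otimes_{\Sigma_n}V_\gamma\big)\to H_*\big((\Sigma^k\mathpzc{E})(n)\otimes_{\Sigma_n}V_{\gamma'}\big)$ is zero, and the argument breaks there. The paper avoids the issue by never passing to arbitrary acyclic dg $\F_p[\Sigma_n]$-modules: it keeps the decomposition at the level of $\F_p$-complexes (any $\F_p$-complex is a sum of sphere and disk complexes, since $\F_p$ is a field), reduces to the case where the quasi-isomorphism is the inclusion of the spherical summand, and aligns the two filtered systems of finite subcomplexes so that the maps being compared are honest quasi-isomorphisms between finite complexes, to which preservation of quasi-isomorphisms between finite complexes --- the actual content extracted from Lemma~\ref{lem:finflatEst} --- applies directly. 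You correctly identified the tension between acyclicity and finiteness over $\F_p[\Sigma_n]$; routing through zero-on-homology maps does not resolve it, while the paper's route through the $\F_p$-linear structure theory of complexes does.
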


\begin{proof}
We shall demonstrate the case of the chain operad; the case of the cochain operad is entirely analogous. First, recall that the monad $\mathbf{E}$, associated to the unstable Barratt-Eccles chain operad, preserves quasi-isomorphisms, which follows immediately from the fact that, for each $n \ge 0$, $\mathpzc{E}(n)$ is $\F_p[\Sigma_n]$-free. For each $k \ge 0$, let $\Sigma^k\mathbf{E}$ denote the monad associated to the operadic suspension $\Sigma^k\mathpzc{E}$ (note that, despite the notation, the monad is not being suspended, only the operad is). For the same reason as for $\mathbf{E}$, each $\Sigma^k\mathbf{E}$ also preserves quasi-isomorphisms. We first note that $\mathbf{E}_{\textbf{st}}$ preserves quasi-isomorphisms between finite $\F_p$-chain complexes. This follows from Lemma~\ref{lem:finflatEst} and the fact that if $X \to Y$ is a quasi-isomorphism between finite complexes over $\F_p$ that then the induced map $X^{\otimes n} \to Y^{\otimes n}$ is a quasi-isomorphism between finite complexes over $\F_p[\Sigma_n]$. It remains to show that $\textbf{E}_{\textbf{st}}$ preserves quasi-isomorphisms between not necessarily finite $\F_p$-chain complexes. To deduce this from the case of finite complexes, recall that any monad associated to an operad preserves filtered colimits (see~\cite{Rezk}) and also that filtered colimits of complexes are exact. Next, given any chain complex $X$, note that
\[
X = \underset{S \subseteq_{\text{fin}} X}{\text{colim}} \: S
\]
where $S \subseteq_{\text{fin}} X$ denotes the category of finite subcomplexes of $X$, the category of which is clearly filtered. Let $f \colon X \to Y$ be a quasi-isomorphism, where $X$ and $Y$ are arbitrary $\F_p$-chain complexes. Because $\textbf{E}_{\textbf{st}}$ preserves filtered colimits, $\textbf{E}_{\textbf{st}}(f)$ is isomorphic to a map
\[
\underset{S \subseteq_{\text{fin}} X}{\text{colim}} \: \textbf{E}_{\textbf{st}}(S) \to \underset{T \subseteq_{\text{fin}} Y}{\text{colim}} \: \textbf{E}_{\textbf{st}}(T)
\]
which we then need to show to be a quasi-isomorphism. We have that these colimits can be taken to be in chain complexes because filtered colimits of dg operad algebras are created in the category of dg modules. We wish to use the fact that filtered colimits of complexes are exact, but are unable to do so at the moment because there are no induced maps between the summands in the colimits; in fact, the indexing categories for the colimits are not even the same. We remedy this as follows. As per Lemma~\ref{lem:complexes_over_k_are_sums_of_spheres_and_disks}, any chain complex over $\F_p$ is isomorphic to a direct sum $(\bigoplus_{i \in I} \Sph^{n_i}) \oplus (\bigoplus_{j \in J} \D^{n_j})$, where $\Sph^n$ and $\D^n$ denote the standard sphere and disk complexes (see Section~\ref{sec:nots_convs}). We now split the proof into two cases. \\

Case 1: Suppose that $X = (\bigoplus_{i \in I} \Sph^{n_i})$, $Y = (\bigoplus_{i \in I} \Sph^{n_i}) \oplus (\bigoplus_{j \in J} \D^{n_j})$ and $f$ is the inclusion $X \hookrightarrow Y$, which is obviously a quasi-isomorphism. Note that every subcomplex $T$ of $Y$ is necessarily a sum of the summands in $(\bigoplus_{i \in I} \Sph^{n_i}) \oplus (\bigoplus_{j \in J} \D^{n_j})$. For each finite subcomplex $T$ of $Y$, let $S_T$ denote the finite subcomplex of $X$ which contains only the spherical summands which occur in $T$. We thus have that, for each finite subcomplex $T$ of $Y$, $f$ restricts to a map $i_T \colon S_T \to T$ and that this map is itself a quasi-isomorphism. Moreover, we clearly have that
\[
X = \underset{T \subseteq_{\text{fin}} Y}{\text{colim}} \: S_T
\]
as the change of index category simply causes some repeats in the summands. We have thus decomposed the map $f \colon X \to Y$ into the map induced on colimits by the maps $i_T$
\[
X = \underset{T \subseteq_{\text{fin}} Y}{\text{colim}} \: S_T \to \underset{T \subseteq_{\text{fin}} Y}{\text{colim}} \: T = Y.
\]
Moreover, the map $\mathbf{E}_{\textbf{st}}X \to \mathbf{E}_{\textbf{st}}Y$ induced by $f$ is then decomposed as the following
\[
\mathbf{E}_{\textbf{st}}X = \underset{T \subseteq_{\text{fin}} Y}{\text{colim}} \: \textbf{E}_{\textbf{st}}(S_T) \to \underset{T \subseteq_{\text{fin}} Y}{\text{colim}} \: \textbf{E}_{\textbf{st}}(T) = \mathbf{E}_{\textbf{st}}Y.
\]
Finally, this map induced on colimits is a quasi-isomorphism by what we have shown above in the case of finite complexes and the exactness of filtered colimits. \\

Case 2: Now consider general $X$ and $Y$ and a quasi-isomorphism $f \colon X \to Y$. Let $X \cong (\bigoplus_{i \in I_1} \Sph^{n_i}) \oplus (\bigoplus_{j \in J_1} \D^{n_j})$ and let $Y \cong (\bigoplus_{i \in I_2} \Sph^{n_i}) \oplus (\bigoplus_{j \in J_2} \D^{n_j})$. Since $f$ is a quasi-isomorphism, and since the homologies of $X$ and $Y$ are given by $\bigoplus_{i \in I_1} \Sph^{n_i}$ and $\bigoplus_{i \in I_2} \Sph^{n_i}$, respectively, it follows that $f$, upon restriction, must induce an isomorphism $\bigoplus_{i \in I_1} \Sph^{n_i} \to \bigoplus_{i \in I_2} \Sph^{n_i}$. We then get a commutative diagram as follows.

\begin{center}
\begin{tikzpicture}[node distance = 1.5cm]
\node [] (A) {$\bigoplus_{i \in I_1} \Sph^{n_i}$};
\node [right of = A, xshift = 4.5cm] (B) {$\bigoplus_{i \in I_2} \Sph^{n_i}$};
\node [below of = A] (C) {$(\bigoplus_{i \in I_1} \Sph^{n_i}) \oplus (\bigoplus_{j \in J_1} \D^{n_j})$};
\node [below of = B] (D) {$(\bigoplus_{i \in I_2} \Sph^{n_i}) \oplus (\bigoplus_{j \in J_2} \D^{n_j})$};
\node [below of = C] (E) {$X$};
\node [below of = D] (F) {$Y$};

\draw [->] (A) -- (B) node[midway,anchor=south]{$\cong$};
\draw [->] (A) -- (C) node[midway,anchor=east]{$\subseteq$};
\draw [->] (B) -- (D) node[midway,anchor=west]{$\subseteq$};
\draw [->] (C) -- (E) node[midway,anchor=east]{$\cong$};
\draw [->] (D) -- (F) node[midway,anchor=west]{$\cong$};
\draw [->] (E) -- (F) node[midway,anchor=north]{$f$};
\end{tikzpicture}
\end{center}

Upon applying $\mathbf{E}_{\textbf{st}}$ to this diagram, having already established Case 1, we get the desired result.
\end{proof}

Next, to be able to do homotopy theory with algebras over our stable operads, we wish to show that the stable Barratt-Eccles chain and cochain operads $\mathpzc{E}_{\text{st}}$ and $\mathpzc{E}^\dagger_{\text{st}}$ are semi-admissible. We demonstrate this with the help of a few lemmas along the way. Due to the criterion in Proposition~\ref{prop:amenable_implies_admissible}, we are interested in the coproducts $A \amalg \mathbf{E}_{\text{st}}(\D^n)$ and $A \amalg \mathbf{E}_{\text{st}}^\dagger(\D^n)$ for cell algebras $A$. The construction, which we saw earlier in Section~\ref{sec:env_op}, of such coproducts via enveloping operads will lead us to consider the enveloping operads for cell algebras $A$.

\begin{Lemma}\label{lem:almost_splitunstable}
Let $R$ be a ring and let $i \colon C \to D$ be a map of dg right $R$-modules which is split as a morphism of graded right $R$-modules. Then, if any two of $C$, $D$ and $D/C$ are semi-flat, so is the third.
\end{Lemma}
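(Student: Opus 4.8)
The plan is to reduce everything to the long exact sequence in homology attached to a short exact sequence of complexes, and then feed that into the five lemma.

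First I would exploit the hypothesis that $i \colon C \to D$ is split as a map of graded right $R$-modules. A splitting provides an isomorphism of graded right $R$-modules $D \cong C \oplus (D/C)$ under which $i$ is the inclusion of the first summand and the canonical projection $D \to D/C$ is the projection onto the second (the differential of $D$ of course need not respect this decomposition, and this is the one point where a little care is needed; see below). Tensoring over $R$ with an arbitrary dg left $R$-module $M$, and using that $- \otimes_R M$ is additive, I obtain a sequence of dg modules
\[
0 \longrightarrow C \otimes_R M \xrightarrow{i \otimes 1} D \otimes_R M \xrightarrow{\pi \otimes 1} (D/C) \otimes_R M \longrightarrow 0
\]
which is split in each degree, hence a short exact sequence of complexes, and which is manifestly natural in $M$.

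Next, given a quasi-isomorphism $f \colon M \to M'$ between finite dg left $R$-modules, naturality yields a morphism from the short exact sequence for $M$ to the one for $M'$, with vertical maps $C \otimes_R f$, $D \otimes_R f$ and $(D/C) \otimes_R f$. Passing to homology and invoking the naturality of the connecting homomorphism, I get a ladder of long exact sequences. The five lemma then shows that whenever two of the three vertical maps induce isomorphisms on all homology groups, so does the third; the three placements of the ``unknown'' column correspond precisely to the three cases of the statement. Unwinding Definition~\ref{def:fin_flat}, ``two of $C$, $D$, $D/C$ are semi-flat'' means exactly that two of these three families of maps are quasi-isomorphisms for every quasi-isomorphism $f$ between finite modules, and the conclusion is that the third family consists of quasi-isomorphisms too, i.e.\ the third module is semi-flat.

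The only genuine obstacle is the observation flagged above: the splitting is merely a splitting of graded modules, not of complexes, so one cannot simply write $D \simeq C \oplus (D/C)$ in the derived sense. But a degreewise splitting is exactly what guarantees that $0 \to C \otimes_R M \to D \otimes_R M \to (D/C) \otimes_R M \to 0$ stays (degreewise split) exact after applying $- \otimes_R M$, which is all the long exact sequence argument requires. Everything else is formal, so I expect this lemma to be short.
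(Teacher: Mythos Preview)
Your proposal is correct and matches the paper's proof essentially verbatim: the paper also observes that the graded splitting forces $0 \to C \otimes_R P \to D \otimes_R P \to (D/C) \otimes_R P \to 0$ to be short exact for every dg left $R$-module $P$, then feeds the resulting map of long exact sequences (for a quasi-isomorphism $P \to Q$ between finite modules) into the five lemma. Your remark that the splitting is only at the graded level, not the dg level, and that this is exactly enough, is precisely the point the paper makes as well.
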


This fact also holds with semi-flat replaced by flat, with essentially the same proof -- see Proposition 13.12 in~\cite{Mandell}.

\begin{proof}
We shall consider the case of chain complexes, the case of cochain complexes differing only in some notations. Given any chain complex $P$ of left $R$-modules, the sequence
\[
0 \to C \otimes_R P \to D \otimes_R P \to (D/C) \otimes_R P \to 0
\]
is exact, as tensor is always right exact and the given retraction $r \colon D \to C$ gives an induced retraction $r \otimes_R \text{id}_P \colon D \otimes_R P \to C \otimes_R P$ (at the level of graded modules). This yields a long exact sequence in homology:
\[
\cdots \to \text{H}_n(C \otimes_R P) \to \text{H}_n(D \otimes_R P) \to \text{H}_n((D/C) \otimes_R P) \to \cdots
\]
Given any quasi-isomorphism $P \to Q$ between finite chain complexes of left $R$-modules, we get a morphism of these long exact sequences:
\begin{center}
\begin{tikzpicture}[node distance = 2cm]
\node [] (A) {$\cdots$};
\node [right of = A,xshift=1cm] (B) {$\text{H}_n(C \otimes_R P)$};
\node [right of = B,xshift=1cm] (C) {$\text{H}_n(D \otimes_R P)$};
\node [right of = C,xshift=1cm] (D) {$\text{H}_n((D/C) \otimes_R P)$};
\node [right of = D,xshift=1cm] (E) {$\cdots$};

\node [below of = A] (a) {$\cdots$};
\node [below of = B] (b) {$\text{H}_n(C \otimes_R Q)$};
\node [below of = C] (c) {$\text{H}_n(D \otimes_R Q)$};
\node [below of = D] (d) {$\text{H}_n((D/C) \otimes_R Q)$};
\node [below of = E] (e) {$\cdots$};

\draw [->] (B) -- (b);
\draw [->] (C) -- (c);
\draw [->] (D) -- (d);
\draw [->] (A) -- (B);
\draw [->] (B) -- (C);
\draw [->] (C) -- (D);
\draw [->] (D) -- (E);
\draw [->] (a) -- (b);
\draw [->] (b) -- (c);
\draw [->] (c) -- (d);
\draw [->] (d) -- (e);
\end{tikzpicture}
\end{center}
The result now follows by the five lemma.
\end{proof}

\begin{Lemma}\label{lem:change_of_ringsunstable}
Let $m, n \ge 0$. Given a semi-flat dg right $\F_p[\Sigma_{m+n}]$-module $M$ and a finite dg left $\F_p[\Sigma_m]$-module $N$, $M \otimes_{\F_p[\Sigma_m]} N$ is semi-flat over $\F_p[\Sigma_n]$.
\end{Lemma}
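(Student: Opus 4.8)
The plan is to reduce the semi-flatness of $M \otimes_{\F_p[\Sigma_m]} N$ over $\F_p[\Sigma_n]$ to the assumed semi-flatness of $M$ over $\F_p[\Sigma_{m+n}]$, by passing through the block inclusion $\Sigma_m \times \Sigma_n \hookrightarrow \Sigma_{m+n}$ and an induction--restriction argument. First I would note that, via this inclusion, $M$ becomes a dg right $\F_p[\Sigma_m \times \Sigma_n]$-module, and that, because the sub-$\Sigma_n$-action on $M$ commutes with the sub-$\Sigma_m$-action, the complex $M \otimes_{\F_p[\Sigma_m]} N$ (formed using the right $\F_p[\Sigma_m]$-action on $M$ and the left action on $N$) carries a residual right $\F_p[\Sigma_n]$-action. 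Thus, for a finite dg left $\F_p[\Sigma_n]$-module $Z$, the complex $(M \otimes_{\F_p[\Sigma_m]} N) \otimes_{\F_p[\Sigma_n]} Z$ is defined, and it suffices to show that any quasi-isomorphism $f \colon Z \to Z'$ of finite $\F_p[\Sigma_n]$-modules induces a quasi-isomorphism here.

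The key identification is the iterated tensor product isomorphism
\[
(M \otimes_{\F_p[\Sigma_m]} N) \otimes_{\F_p[\Sigma_n]} Z \;\cong\; M \otimes_{\F_p[\Sigma_m \times \Sigma_n]} (N \otimes_{\F_p} Z),
\]
natural in all variables, which holds at the dg level with the standard Koszul signs since $\F_p[\Sigma_m \times \Sigma_n] \cong \F_p[\Sigma_m] \otimes_{\F_p} \F_p[\Sigma_n]$ and both sides compute the quotient of $M \otimes_{\F_p} N \otimes_{\F_p} Z$ by the $\Sigma_m$- and $\Sigma_n$-relations. Composing this with the change-of-scalars isomorphism $M \otimes_{\F_p[\Sigma_m \times \Sigma_n]} (-) \cong M \otimes_{\F_p[\Sigma_{m+n}]} \bigl(\F_p[\Sigma_{m+n}] \otimes_{\F_p[\Sigma_m \times \Sigma_n]} (-)\bigr)$, I reduce the claim to showing that $M \otimes_{\F_p[\Sigma_{m+n}]} \mathrm{Ind}_{\Sigma_m \times \Sigma_n}^{\Sigma_{m+n}}(N \otimes_{\F_p} f)$ is a quasi-isomorphism between the relevant complexes.

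The remaining step is to verify that $\mathrm{Ind}_{\Sigma_m \times \Sigma_n}^{\Sigma_{m+n}}(N \otimes_{\F_p} Z)$ is a \emph{finite} dg left $\F_p[\Sigma_{m+n}]$-module and that $\mathrm{Ind}(N \otimes_{\F_p} f)$ is a quasi-isomorphism between two such; then the semi-flatness hypothesis on $M$ closes the argument. For finiteness: since $N$ and $Z$ are finite over $\F_p[\Sigma_m]$ and $\F_p[\Sigma_n]$ respectively, they are bounded and degreewise finite-dimensional over $\F_p$, hence so is $N \otimes_{\F_p} Z$, and induction along the finite free extension $\F_p[\Sigma_m \times \Sigma_n] \subseteq \F_p[\Sigma_{m+n}]$ (free of rank $\binom{m+n}{m}$) preserves this. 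For the quasi-isomorphism: $f$ is a quasi-isomorphism of complexes over the field $\F_p$, so $N \otimes_{\F_p} f$ is one as well (its cone is $N \otimes_{\F_p} \mathrm{cone}(f)$, which is acyclic because tensoring an acyclic complex over a field with anything stays acyclic), and induction along a free extension is exact, hence preserves quasi-isomorphisms. I expect the only mildly delicate point to be the bookkeeping --- confirming the iterated-tensor isomorphism respects the differentials and signs, and tracking which group actions are consumed at each stage --- but no genuine obstacle arises; in particular, unlike in the previous lemma, no splitting hypothesis is required here.
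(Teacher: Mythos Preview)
Your proof is correct and follows essentially the same approach as the paper: both establish the natural isomorphism $(M \otimes_{\F_p[\Sigma_m]} N) \otimes_{\F_p[\Sigma_n]} P \cong M \otimes_{\F_p[\Sigma_{m+n}]} \mathrm{Ind}_{\Sigma_m \times \Sigma_n}^{\Sigma_{m+n}}(N \otimes_{\F_p} P)$, observe that the induced module is finite because $\F_p[\Sigma_{m+n}]$ is free (hence flat) over $\F_p[\Sigma_m] \otimes_{\F_p} \F_p[\Sigma_n]$, and then invoke the semi-flatness hypothesis on $M$. Your write-up is somewhat more explicit about why the induced map remains a quasi-isomorphism, but the underlying argument is identical.
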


This fact also holds with semi-flat replaced by flat (in which case $N$ needn't be finite), with essentially the same proof -- see Proposition 13.13 in~\cite{Mandell}.

\begin{proof}
Given a finite dg left $\F_p[\Sigma_n]$-module $P$, we have a natural isomorphism
\[
(M \otimes_{\F_p[\Sigma_{m}]} N) \otimes_{\F_p[\Sigma_{n}]} P \cong M \otimes_{\F_p[\Sigma_{m+n}]} (\F_p[\Sigma_{m+n}] \otimes_{\F_p[\Sigma_m] \otimes_{\F_p} \F_p[\Sigma_n]} (N \otimes_{\F_p} P))
\]
and from this the result follows immediately, noting that $\F_p[\Sigma_{m+n}]$ is flat over $\F_p[\Sigma_m] \otimes_{\F_p} \F_p[\Sigma_n]$, and that $\F_p[\Sigma_{m+n}] \otimes_{\F_p[\Sigma_m] \otimes_{\F_p} \F_p[\Sigma_n]} (N \otimes_{\F_p} P)$ is a finite complex over $\F_p[\Sigma_{m+n}]$ given the finiteness of $N$ and $P$.
\end{proof}

\begin{Lemma}\label{lem:Estenvopsflat}
Let $A$ be a cell $\mathpzc{E}_{\emph{st}}$-algebra or a cell $\mathpzc{E}_{\emph{st}}^\dagger$-algebra. Let also $\mathpzc{U}^A$ denote the associated enveloping operad. Then, for all $j \ge 0$, $\mathpzc{U}^A(j)$ is semi-flat over $\F_p[\Sigma_j]$.
\end{Lemma}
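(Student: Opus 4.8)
The plan is to establish the statement by a double induction over the cell structure, feeding into the concrete models of enveloping operads of cell algebras recorded in Section~\ref{sec:env_op}, together with Lemmas~\ref{lem:almost_splitunstable}, \ref{lem:change_of_ringsunstable} and~\ref{lem:finflatEst}. I would treat the chain operad $\mathpzc{E}_{\text{st}}$; the case of $\mathpzc{E}_{\text{st}}^\dagger$ is entirely analogous, using the cochain half of Lemma~\ref{lem:finflatEst}. The one preliminary fact I would record first is that a filtered colimit of semi-flat dg right $R$-modules is again semi-flat: tensor commutes with colimits and homology commutes with filtered colimits, so for a quasi-isomorphism $P \to Q$ between finite dg left $R$-modules the induced map is a filtered colimit of quasi-isomorphisms, hence a quasi-isomorphism.

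Fix a cell filtration $A_0 \to A_1 \to \cdots$ of $A$ with attaching modules $M_1, M_2, \dots$. Since $\mathpzc{U}^A$ is filtered by the $\mathpzc{U}^{A_n}$, we have $\mathpzc{U}^A(j) = \operatorname{colim}_n \mathpzc{U}^{A_n}(j)$ as dg $\F_p[\Sigma_j]$-modules, a sequential (hence filtered) colimit; by the preliminary fact it therefore suffices to prove each $\mathpzc{U}^{A_n}(j)$ is semi-flat over $\F_p[\Sigma_j]$, which I would do by induction on $n$, uniformly in $j$. For $n = 0$ we have $A_0 = \mathpzc{E}_{\text{st}}(0)$, so $\mathpzc{U}^{A_0} \cong \mathpzc{E}_{\text{st}}$ and $\mathpzc{U}^{A_0}(j) = \mathpzc{E}_{\text{st}}(j)$ is semi-flat by Lemma~\ref{lem:finflatEst}. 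For the step, assume $\mathpzc{U}^{A_{n-1}}(i)$ is semi-flat over $\F_p[\Sigma_i]$ for all $i$, and run a second, inner induction up the filtration $\text{F}_0\mathpzc{U}^{A_n}(j) \subseteq \text{F}_1\mathpzc{U}^{A_n}(j) \subseteq \cdots$ of Section~\ref{sec:env_op}, whose union is $\mathpzc{U}^{A_n}(j)$. Here $\text{F}_0\mathpzc{U}^{A_n}(j) = \mathpzc{U}^{A_{n-1}}(j)$ is semi-flat by the outer hypothesis, the inclusion $\text{F}_{m-1}\mathpzc{U}^{A_n}(j) \hookrightarrow \text{F}_m\mathpzc{U}^{A_n}(j)$ is split as graded $\F_p[\Sigma_j]$-modules, and its cokernel is, as a dg module, $\mathpzc{U}^{A_{n-1}}(m+j) \otimes_{\Sigma_m} M_n[1]^{\otimes m}$. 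To see this cokernel is semi-flat over $\F_p[\Sigma_j]$: write $M_n[1]^{\otimes m}$ as the filtered colimit of its sub-dg-modules built from finitely many sphere complexes closed under the $\Sigma_m$-permutation action — these are cofinal since $\Sigma_m$ is finite, and each is a finite dg $\F_p[\Sigma_m]$-module — apply Lemma~\ref{lem:change_of_ringsunstable} to each (with the semi-flat $\F_p[\Sigma_{m+j}]$-module $\mathpzc{U}^{A_{n-1}}(m+j)$), and pass to the filtered colimit. Lemma~\ref{lem:almost_splitunstable} then promotes semi-flatness of $\text{F}_{m-1}\mathpzc{U}^{A_n}(j)$ and of the cokernel to semi-flatness of $\text{F}_m\mathpzc{U}^{A_n}(j)$, closing the inner induction; taking the filtered colimit over $m$ gives $\mathpzc{U}^{A_n}(j)$ semi-flat, closing the outer induction, and the filtered colimit over $n$ finishes.

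The only genuine subtlety — the step I expect to be the main obstacle to organize cleanly — is the need to pass through the filtration $\text{F}_\bullet\mathpzc{U}^{A_n}(j)$ rather than argue one summand at a time: although $\mathpzc{U}^{A_n}(j) = \bigoplus_i \mathpzc{E}_{\text{st}}(i+j) \otimes_{\Sigma_i} (N_n[1])^{\otimes i}$ is a direct sum at the graded level, its differential is twisted by the attaching maps and mixes these summands, so it is not a direct sum of dg modules; the filtration is precisely the device producing honest short exact sequences of dg modules whose subquotients involve only the lower-stage enveloping operad $\mathpzc{U}^{A_{n-1}}$, which is what lets the induction close. The remaining points — that the $M_n$ may be infinite sums of sphere complexes, handled by the filtered-colimit observation above, and the identification $\mathpzc{U}^A(j) = \operatorname{colim}_n \mathpzc{U}^{A_n}(j)$, immediate from the explicit models — are routine.
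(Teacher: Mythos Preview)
Your proposal is correct and follows essentially the same approach as the paper: a double induction over the cell index $n$ and the filtration index $m$, using the identification of the associated graded pieces $\text{F}_m\mathpzc{U}^{A_n}(j)/\text{F}_{m-1}\mathpzc{U}^{A_n}(j) \cong \mathpzc{U}^{A_{n-1}}(m+j) \otimes_{\Sigma_m} M_n[1]^{\otimes m}$ together with Lemmas~\ref{lem:finflatEst}, \ref{lem:almost_splitunstable}, \ref{lem:change_of_ringsunstable} and passage to filtered colimits. The only cosmetic difference is that the paper writes $M_n$ itself as a filtered colimit of finite subcomplexes (and then uses that $(-)^{\otimes m}$ and the tensor commute with filtered colimits), whereas you write $M_n[1]^{\otimes m}$ directly as a filtered colimit of finite $\F_p[\Sigma_m]$-submodules; both arguments are valid.
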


Note that in the case of the (unstable) Barratt-Eccles operad, an analogous result holds with semi-flat replaced by flat, with a similar proof -- see Lemma 13.6 in~\cite{Mandell}.

\begin{proof}
We shall demonstrate the result in the case of the chain operad $\mathpzc{E}_{\text{st}}$, the case of the cochain operad being entirely analogous. Let
\[
A_0 \to A_1 \to A_2 \to \cdots
\]
be a cell filtration of $A$ and fix some choices $M_1, M_2, \dots$ for the chain complexes which appear in the attachment squares. For each $n \ge 0$, let $N_n = \oplus_{i \le n} M_i$, where $N_0 = 0$, and let also $N = \oplus_{i \ge 0} M_i$. As in Section~\ref{sec:env_op}, we have that, for each $j \ge 0$, as a graded right $\F_p[\Sigma_j]$-module
\[
\mathpzc{U}^A(j) = \bigoplus_{i \ge 0} \mathpzc{E}_{\text{st}}(i + j) \otimes_{\Sigma_i} (N[1])^{\otimes i}.
\]
The differential on $\mathpzc{U}^A(j)$, we recall, is given by the Leibniz rule, the attachment maps and the operadic composition. Moreover, for each $n \ge 0$, and again for each $j \ge 0$, as a graded right $\F_p[\Sigma_j]$-module
\[
\mathpzc{U}^{A_n}(j) = \bigoplus_{i \ge 0} \mathpzc{E}_{\text{st}}(i + j) \otimes_{\Sigma_i} (N_n[1])^{\otimes i}.
\]
Moreover, from Section~\ref{sec:env_op}, recall that we have filtrations $\text{F}_m\mathpzc{U}^{A_n}$ of  the $\mathpzc{U}^{A_n}$. We shall prove the desired result by an induction. We show that, for each $m,j,n \ge 0$, $\text{F}_m\mathpzc{U}^{A_n}(j)$ is semi-flat over $\F_p[\Sigma_j]$, and we will do this by inducting on $n$. In the case $n=0$, as in Section~\ref{sec:env_op}, we have that $\mathpzc{U}^{A_0} = \mathpzc{E}_{\text{st}}$, and moreover that $\text{F}_m\mathpzc{U}^{A_0}(j) = \mathpzc{E}_{\text{st}}(j)$ for all $m, j \ge 0$. The required semi-flatness then follows by Lemma~\ref{lem:finflatEst}. Suppose now that, for some $n \ge 1$, we have that $\text{F}_m\mathpzc{U}^{A_{n-1}}(j)$ is finitely flat over $\F_p[\Sigma_j]$ for all $m,j \ge 0$. We wish to show that $\text{F}_m\mathpzc{U}^{A_n}(j)$ is finitely flat over $\F_p[\Sigma_j]$ for all $m, j \ge 0$. We shall do this by inducting over $m$. By definition of the filtration piece $\text{F}_0$, we have that, for each $j \ge 0$, $\text{F}_0\mathpzc{U}^{A_n}(j) = \mathpzc{U}^{A_{n-1}}(j) = \text{colim}_m\,\text{F}_m\mathpzc{U}^{A_{n-1}}(j)$ which, by invoking the inductive hypothesisis for the induction over $n$ and passing to the colimit, we see is semi-flat over $\F_p[\Sigma_j]$. Next, suppose that for some $m \ge 1$, $\text{F}_{m-1}\mathpzc{U}^{A_n}(j)$ is semi-flat over $\F_p[\Sigma_j]$. As in Section~\ref{sec:env_op}, we have that
\[
\text{F}_m\mathpzc{U}^{A_n}(j)/\text{F}_{m-1}\mathpzc{U}^{A_n}(j) \cong \mathpzc{U}^{A_{n-1}}(m+j) \otimes_{\Sigma_m} M_n[1]^{\otimes m}.
\]
Now, by invoking the inductive hypothesis for the induction over $n$ and passing to the colimit, we see that $\mathpzc{U}^{A_{n-1}}(j+m) = \text{colim}_{m'}\,\text{F}_{m'}\mathpzc{U}^{A_{n-1}}(m+j)$ is semi-flat over $\F_p[\Sigma_{m+j}]$. Moreover, by Lemma~\ref{lem:change_of_ringsunstable}, we have that $\mathpzc{U}^{A_{n-1}}(m+j) \otimes_{\Sigma_m} M_n[1]^{\otimes m}$ is then semi-flat over $\F_p[\Sigma_j]$ so long as $M_n$ is finite. In fact, this holds for arbitrary $M_n$ as a non-finite $M_n$ can be written as a filtered colimit of its finite subcomplexes and both the tensor product $\mathpzc{U}^{A_{n-1}}(m+j) \otimes_{\Sigma_m} -$ and the tensor power $(-)^{\otimes m}$ commute with filtered colimits. Next, recalling that the inclusion $\text{F}_{m-1}\mathpzc{U}^{A_n}(j) \to \text{F}_m\mathpzc{U}^{A_n}(j)$ is split at the level of the underlying graded modules (see Section~\ref{sec:env_op}), we may now invoke the inductive hypothesis for the induction over $m$ and apply Lemma~\ref{lem:almost_splitunstable} to conclude that $\text{F}_m\mathpzc{U}^{A_n}(j)$ is semi-flat over $\F_p[\Sigma_j]$, as desired. This completes the induction over $m$ so that we have that $\text{F}_m\mathpzc{U}^{A_n}(j)$ is semi-flat over $\F_p[\Sigma_j]$ for all $m,j \ge 0$. Moreover, this conclusion then completes the induction over $n$ so that we have that $\text{F}_m\mathpzc{U}^{A_n}(j)$ is semi-flat over $\F_p[\Sigma_j]$ for all $m,j,n \ge 0$. Finally then, if we fix a $j \ge 0$, upon passing to the colimit, we have that $\mathpzc{U}^{A_n}(j) = \text{colim}_m\text{F}_m\mathpzc{U}^{A_n}(j)$ is finitely flat over $\F_p[\Sigma_j]$, and then, passing to the colimit again, we have the desired result that $\mathpzc{U}^{A}(j) = \text{colim}_n\mathpzc{U}^{A_n}(j)$ is semi-flat over $\F_p[\Sigma_j]$, which completes the proof.
\end{proof}

We now use the above lemmas to demonstrate that the operads $\mathpzc{E}_{\text{st}}$ and $\mathpzc{E}_{\text{st}}^\dagger$ are semi-admissible.

\begin{Proposition}\label{prop:E_adm}
The Barratt-Eccles chain and cochain operads, $\mathpzc{E}_{\emph{st}}$ and $\mathpzc{E}^\dagger_{\emph{st}}$, are semi-admissible.
\end{Proposition}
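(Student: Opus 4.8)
The plan is to verify the hypothesis of the semi-admissibility criterion, Proposition~\ref{prop:amenable_implies_admissible}(ii): it suffices to show that for every cell $\mathpzc{E}_{\text{st}}$-algebra $A$ and every disk complex $\D^n$ (and likewise for every cell $\mathpzc{E}^\dagger_{\text{st}}$-algebra, with a cochain disk complex), the natural map $A \to A \amalg \mathbf{E}_{\text{st}}(\D^n)$ is a quasi-isomorphism. I would carry out the chain case in detail and then remark that the cochain case is entirely analogous (or follows by reindexing via $(-)^\dagger$).

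The first step is to rewrite the coproduct using the enveloping operad machinery of Section~\ref{sec:env_op}. There one has a natural isomorphism of $\mathpzc{E}_{\text{st}}$-algebras under $A$,
\[
A \amalg \mathbf{E}_{\text{st}}(\D^n) \cong \mathbf{U}^A(\D^n) = \bigoplus_{j \ge 0} \mathpzc{U}^A(j) \otimes_{\Sigma_j} (\D^n)^{\otimes j},
\]
under which, after the identification $\mathpzc{U}^A(0) \cong A$, the natural map $A \to A \amalg \mathbf{E}_{\text{st}}(\D^n)$ becomes the inclusion of the $j = 0$ summand. So the claim reduces to the assertion that $\bigoplus_{j \ge 1} \mathpzc{U}^A(j) \otimes_{\Sigma_j}(\D^n)^{\otimes j}$ is acyclic, and since homology commutes with direct sums, it is enough to show that $\mathpzc{U}^A(j) \otimes_{\Sigma_j}(\D^n)^{\otimes j}$ is acyclic for each $j \ge 1$.

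For this I would observe that $\D^n$ is a finite, acyclic complex over $\F_p$, so for $j \ge 1$ the tensor power $(\D^n)^{\otimes j}$ is a finite dg $\F_p[\Sigma_j]$-module (it is bounded and degreewise finite-dimensional over $\F_p$) which is acyclic by the K\"unneth theorem over the field $\F_p$; hence the map $0 \to (\D^n)^{\otimes j}$ is a quasi-isomorphism between finite dg $\F_p[\Sigma_j]$-modules. Lemma~\ref{lem:Estenvopsflat} tells us $\mathpzc{U}^A(j)$ is semi-flat over $\F_p[\Sigma_j]$, so $\mathpzc{U}^A(j) \otimes_{\Sigma_j} -$ preserves this quasi-isomorphism, giving a quasi-isomorphism $0 \to \mathpzc{U}^A(j) \otimes_{\Sigma_j}(\D^n)^{\otimes j}$; that is, the target is acyclic. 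Running the identifications above backwards then shows $A \to A \amalg \mathbf{E}_{\text{st}}(\D^n)$ is a quasi-isomorphism, and Proposition~\ref{prop:amenable_implies_admissible}(ii) gives the result (and also identifies the cofibrations with the retracts of cell maps).

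Most of the substance has been front-loaded into Lemma~\ref{lem:Estenvopsflat}, and through it into Lemma~\ref{lem:finflatEst} and the Mittag-Leffler behaviour of the stabilization tower (Proposition~\ref{prop:stabmapontoML}); the present argument is essentially assembly. The one point that needs care is that semi-flatness only guarantees preservation of quasi-isomorphisms between \emph{finite} modules, so one must check that both $0$ and $(\D^n)^{\otimes j}$ are finite over $\F_p[\Sigma_j]$ before invoking Lemma~\ref{lem:Estenvopsflat}; I expect this finiteness bookkeeping, rather than anything deeper, to be the only real subtlety.
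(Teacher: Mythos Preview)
Your proposal is correct and follows essentially the same approach as the paper: invoke Proposition~\ref{prop:amenable_implies_admissible}(ii), rewrite $A \amalg \mathbf{E}_{\textbf{st}}(\D^n)$ as $\bigoplus_{j \ge 0} \mathpzc{U}^A(j) \otimes_{\Sigma_j} (\D^n)^{\otimes j}$ via the enveloping operad, and then use Lemma~\ref{lem:Estenvopsflat} together with the finiteness and acyclicity of $(\D^n)^{\otimes j}$ to kill the $j \ge 1$ summands. Your explicit verification that both $0$ and $(\D^n)^{\otimes j}$ are finite before applying semi-flatness is a good piece of bookkeeping that the paper leaves more implicit.
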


\begin{proof}
We shall demonstrate the case of the chain operad, the case of the cochain operad being entirely analogous. By Proposition~\ref{prop:amenable_implies_admissible}, it suffices to show that, if $A$ is a cell $\mathpzc{E}_{\text{st}}$-algebra, then for each $n \in \Z$, the canonical map
\[
A \to A \amalg \mathbf{E}_{\textbf{st}}(\D^n)
\]
is a quasi-isomorphism; here $\D^n$ is a disk complex (see Section~\ref{sec:nots_convs}). As per the facts about enveloping operads in Section~\ref{sec:env_op}, we have that, as an algebra under $A$
\[
A \amalg \mathbf{E}_{\textbf{st}}(\D^n) \cong \mathbf{U}^A(\D^n) = \bigoplus_{j \ge 0} \mathpzc{U}^A(j) \otimes_{\Sigma_j} (\D^n)^{\otimes j} = A \oplus \left(\bigoplus_{j \ge 1} \mathpzc{U}^A(j) \otimes_{\Sigma_j} (\D^n)^{\otimes j}\right).
\]
Now, for $j \ge 1$, $(\D^n)^{\otimes j}$ has zero homology and is finite. Moreover, by Lemma~\ref{lem:Estenvopsflat}, $\mathpzc{U}^A(j)$ is semi-flat over $\F_p[\Sigma_j]$, so that this zero homology is preserved by the tensor, which gives us the desired result.
\end{proof}

Recalling the definition of semi-admissibility, and Proposition~\ref{prop:amenable_implies_admissible}, we get the following.

\begin{Corollary}\label{cor:stablesemimod}
\index{Quillen semi-model structure!on algebras over stable operads}The categories of algebras $\mathpzc{E}_{\emph{st}}\text{-}\mathsf{Alg}$ and $\mathpzc{E}^\dagger_{\emph{st}}\text{-}\mathsf{Alg}$ possess a Quillen semi-model structure where:
\begin{itemize}
	\item The weak equivalences are the quasi-isomorphisms.
	\item The fibrations are the surjective maps.
	\item The cofibrations are retracts of relative cell complexes, where the cells are the maps $\mathbf{E}_{\mathbf{st}}M \to \mathbf{E}_{\mathbf{st}}\emph{C}M$ in the chain case, and the maps $\mathbf{E}_{\mathbf{st}}^\dagger M \to \mathbf{E}_{\mathbf{st}}^\dagger\emph{C}M$ in the cochain case, where $M$ is a degreewise free complex with zero differentials.
\end{itemize}
\qed
\end{Corollary}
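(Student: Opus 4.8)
The plan is to read this off directly from Proposition~\ref{prop:E_adm} together with the general criterion of Proposition~\ref{prop:amenable_implies_admissible}; there is no new work to do, only a matching of vocabulary. First, recall that Proposition~\ref{prop:E_adm} asserts that $\mathpzc{E}_{\text{st}}$ and $\mathpzc{E}^\dagger_{\text{st}}$ are semi-admissible. By Definition~\ref{def:semiadmissibility}, semi-admissibility of an operad $\mathpzc{P}$ means exactly that $\mathpzc{P}\text{-}\mathsf{Alg}$ carries a Quillen semi-model structure (in the sense of Definition~\ref{def:category_semi_model_str}) whose weak equivalences are the quasi-isomorphisms and whose fibrations are the degreewise epimorphisms. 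Applying this to $\mathpzc{P} = \mathpzc{E}_{\text{st}}$ and to $\mathpzc{P} = \mathpzc{E}^\dagger_{\text{st}}$ yields the first two bullet points of the statement, once one notes that a map of $\mathpzc{E}_{\text{st}}$-algebras (resp.\ $\mathpzc{E}^\dagger_{\text{st}}$-algebras) is degreewise epimorphic precisely when it is surjective.

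For the third bullet point, I would invoke the closing sentence of Proposition~\ref{prop:amenable_implies_admissible}: in the situation where part (ii) of that proposition applies --- which is exactly the situation established in the proof of Proposition~\ref{prop:E_adm}, namely that $A \to A \amalg \mathbf{E}_{\mathbf{st}}(\D^n)$ (respectively $A \to A \amalg \mathbf{E}_{\mathbf{st}}^\dagger(\D^n)$) is a quasi-isomorphism for every cell algebra $A$ and every disk complex $\D^n$ --- the cofibrations of the resulting semi-model structure are precisely the retracts of cell maps in the sense of Definition~\ref{def:cell_map}. It then remains only to observe that a cell map, by Definition~\ref{def:cell_map}, is by construction a countable composite of pushouts of maps of the form $\mathbf{E}_{\mathbf{st}}M \to \mathbf{E}_{\mathbf{st}}\text{C}M$ with $M$ degreewise free and carrying the zero differential (and the analogous statement in the cochain case), that is, a relative cell complex built from exactly the generating maps displayed in the statement; hence ``retract of a cell map'' and ``retract of a relative cell complex on the displayed generators'' name the same class of maps. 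As recorded in the remark following Definition~\ref{def:cell_alg}, such an $M$ is a sum of sphere complexes and its cone $\text{C}M$ a sum of disk complexes, so the generators appearing in the statement are indeed of the form used in Definition~\ref{def:cell_map}. Assembling these observations completes the proof in both the chain and the cochain case.

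The only point requiring any care is this last identification of the two descriptions of the cofibrations; everything of substance --- the semi-flatness of the enveloping operads $\mathpzc{U}^A(j)$ over $\F_p[\Sigma_j]$ from Lemma~\ref{lem:Estenvopsflat}, and hence the acyclicity of $A \to A \amalg \mathbf{E}_{\mathbf{st}}(\D^n)$ --- was already carried out in establishing Proposition~\ref{prop:E_adm}, so I do not anticipate any genuine obstacle here. In short, this is a bookkeeping repackaging of Propositions~\ref{prop:amenable_implies_admissible} and~\ref{prop:E_adm} against the definitions of a semi-model structure and of a cell map.
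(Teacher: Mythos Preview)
Your proposal is correct and matches the paper's approach exactly: the corollary is stated with a \qed in the paper, preceded only by the sentence ``Recalling the definition of semi-admissibility, and the final part of Proposition~\ref{prop:amenable_implies_admissible}, we get the following,'' which is precisely the deduction you spell out. You have simply unpacked in more detail what the paper leaves implicit.
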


%-----------------------------------------------------------------------
% Beginning of chap1.tex
%-----------------------------------------------------------------------

\chapter{Cohomology Operations for Algebras Over the Stable Operads}\label{sec:cohom_ops}

Given an algebra $A$ over the (unstable) Barratt-Eccles operad, the (co)homology of $A$ possesses natural operations. In the case of the chain operad, these are the generalized Dyer-Lashof operations, and in the case of the cochain operad, these are the generalized Steenrod operations (just like the operads, the generalized Dyer-Lashof operations and the generalized Steenrod operations are simply reindexed variants of one another). In this section, we shall demonstrate that algebras over the stable Barratt-Eccles operad also have natural (co)homology operations, though they are now stable operations, in a precise sense to be described below. In fact, for the sake of brevity, henceforth, we shall work with only the stable Barratt-Eccles cochain operad, though all that we say will have clear analogues for the stable Barratt-Eccles chain operad. Throughout this section, the ground field will be $\F_p$, for an unspecified but fixed prime $p$.

\section{The Cohomology Operations I}\label{sec:stabops}

Let $A$ be an algebra over $\mathpzc{E}_{\text{st}}^\dagger$. Our aim is to show that there are induced natural operations
\[
P^s \colon \text{H}^\bullet(A) \to \text{H}^{\bullet}(A)
\]
and also
\[
\beta P^s \colon \text{H}^\bullet(A) \to \text{H}^{\bullet}(A)
\]
in the case $p > 2$. Recall that such operations exist in the case of the unstable operad $\mathpzc{E}^\dagger$ (for a construction of them, see~\cite{May}). In fact, in our stable case, we will have some other operations on $\text{H}^\bullet(A)$ as well. The general construction of all of them will be found in Section~\ref{sec:stabops2} below. Here, we wish to give a restricted but more concrete construction of them. We shall restrict ourselves in this section alone to the case $p = 2$; analogous explicit considerations in the $p > 2$ case are also possible, though more cumbersome. \\

To begin, recall that, as in~\cite{May}, the operations, when $p = 2$, in the case of the unstable operad are defined with the help of the arity two part of the operad $\mathpzc{E}^\dagger$. As such, our first goal is to examine the arity two part of the stable operad $\mathpzc{E}_{\text{st}}^\dagger$. Examining the definition of the Barrat-Eccles cochain operad, one finds that the cochain complex $\mathpzc{E}^\dagger(2)$ is isomorphic to the standard $\F_2[\Sigma_2]$-free resolution of $\F_2$, namely
\begin{equation}\tag*{$\mathpzc{E}^\dagger(2):$}
\cdots \longrightarrow \underset{\text{deg}\, -2}{\F_2[\Sigma_2]} \overset{1+\tau}{\longrightarrow} \underset{\text{deg}\, -1}{\F_2[\Sigma_2]} \overset{1+\tau}{\longrightarrow} \underset{\text{deg}\, 0}{\F_2[\Sigma_2]} \longrightarrow 0 \longrightarrow \cdots .
\end{equation}
Here, $\tau$ denotes the non-trivial permutation of $\{1,2\}$. To be more specific, the isomorphism between the above complex and $\mathpzc{E}^\dagger(2)$, in degree $-d$, for $d \ge 0$, sends $1$ to the $(d+1)$-tuple $(1, \tau, 1, \tau, \dots)$ in $\mathpzc{E}^\dagger(2)^{-d} = \mathpzc{E}(2)_d = \mathrm{C}_d(\mathrm{E}\Sigma_2)$. Now let us see what the arity two part of our stable operad looks like.

\begin{Proposition}\label{prop:Est(2)}
\index{stable Barratt-Eccles operad!in arity two}The cochain complex $\mathpzc{E}_{\emph{st}}^\dagger(2)$ is isomorphic to the following.
\begin{equation}\tag*{$\mathpzc{E}^\dagger_{\text{st}}(2):$}
\cdots \longrightarrow \F_2[\Sigma_2] \overset{1+\tau}{\longrightarrow} \F_2[\Sigma_2] \overset{1+\tau}{\longrightarrow} \F_2[\Sigma_2] \overset{1+\tau}{\longrightarrow} \F_2[\Sigma_2] \longrightarrow \cdots
\end{equation}
\end{Proposition}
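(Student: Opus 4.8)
The plan is to compute the defining tower
\[
\cdots \longrightarrow (\Sigma^2\mathpzc{E}^\dagger)(2) \overset{\Psi}\longrightarrow (\Sigma\mathpzc{E}^\dagger)(2) \overset{\Psi}\longrightarrow \mathpzc{E}^\dagger(2)
\]
one degree at a time, and to observe that, in arity $2$, each $\Psi$ is an isomorphism in every degree in which its target is nonzero; the inverse limit in each degree is then forced to be $\F_2[\Sigma_2]$. First I would record, using the discussion just before the statement, that $\mathpzc{E}^\dagger(2)$ is free of rank one over $\F_2[\Sigma_2]$ in each non-positive degree, with the degree $-d$ generator the alternating tuple $e_{-d} = (1,\tau,1,\tau,\dots)$ of length $d+1$, is zero in positive degrees, and has differential $1+\tau$. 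Using the formulas for operadic suspensions of cochain operads in Section~\ref{sec:opsusp}, $(\Sigma^k\mathpzc{E}^\dagger)(2) \cong \mathpzc{E}^\dagger(2)[k]$, so $(\Sigma^k\mathpzc{E}^\dagger)(2)$ is free of rank one over $\F_2[\Sigma_2]$ in every degree $\le k$, zero above degree $k$, and still has differential $1+\tau$.

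Second, I would read off the behaviour of $\Psi$ in arity $2$ from its algorithmic definition. An element of $(\Sigma\mathpzc{E})(2)$ of (chain) degree $d$ is a tuple $(\rho_0,\dots,\rho_{d+1}) \in \mathpzc{E}(2)_{d+1}$, sent by $\Psi$ to zero if $\rho_0(1)=\rho_1(1)$ and to $(\rho_1,\dots,\rho_{d+1}) \in \mathpzc{E}(2)_d$ otherwise. Applied to the generator $e_{d+1} = (1,\tau,1,\dots)$, since $(1(1),\tau(1)) = (1,2)$ is a permutation, this gives $\Psi(e_{d+1}) = (\tau,1,\tau,\dots) = \tau\cdot e_d$. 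Hence, as a map of free rank-one $\F_2[\Sigma_2]$-modules, $\Psi$ is in each degree multiplication by $\tau$; as $\tau$ is a unit, $\Psi$ is an isomorphism in every degree in which its target does not vanish, and is the zero map in the single degree where the target is zero. (One can phrase this directly for $\mathpzc{E}^\dagger$ after reindexing, or compute $\mathpzc{E}_{\text{st}}(2)$ first and apply $(-)^\dagger$; the outcome is the same.)

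Third, since limits of operads — and hence $\mathpzc{E}_{\text{st}}^\dagger$ — are computed termwise, $\mathpzc{E}_{\text{st}}^\dagger(2)^d = \lim_k (\Sigma^k\mathpzc{E}^\dagger)(2)^d$ for each $d \in \Z$. By the first two steps this tower is eventually (namely for $k \ge d$) the constant tower on $\F_2[\Sigma_2]$ with isomorphisms as transition maps, so the limit is $\F_2[\Sigma_2]$ — for every integer $d$, which is precisely the two-sided shape asserted. Finally, the transition maps intertwine the differentials $1+\tau$ (here one uses that $\tau$ is central in $\F_2[\Sigma_2]$), so the induced differential on the limit is again $1+\tau$ in each degree, and assembling over all degrees yields the displayed two-sided standard resolution.

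I expect the one genuinely delicate point to be the degree bookkeeping in the third step: one must track the operadic-suspension shifts carefully enough to see that the transition maps are isomorphisms above the edge (degree $0$) of $\mathpzc{E}^\dagger(2)$, so that the limit complex becomes nonzero in all degrees rather than staying one-sided. The verification that $\Psi$ acts by $\tau$ on arity-$2$ generators, and that the $1+\tau$ differential survives the limit, are then short direct computations.
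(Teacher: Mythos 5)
Your proposal is correct and follows essentially the same route as the paper: identify $(\Sigma^k\mathpzc{E}^\dagger)(2) \cong \mathpzc{E}^\dagger(2)[k]$, observe that in arity $2$ the stabilization map drops the leading entry and hence sends the standard generator to $\tau$ times the next generator (so is multiplication by a unit wherever the target is nonzero), and conclude by computing the termwise inverse limit. The paper conveys the last step with a diagram rather than spelling out, as you do, that the tower in each degree is eventually constant along isomorphisms, but the content is the same.
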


\begin{proof}
We have a description of $\mathpzc{E}^\dagger(2)$ above. Moreover, for each $k \ge 0$, we have that $(\Sigma^k\mathpzc{E}^\dagger)(2) = \mathpzc{E}^\dagger(2)[k]$. Now, given as input a tuple $(\rho_0,\dots,\rho_d)$ of permutations $\rho_i \in \Sigma_2$, for some $d \ge 0$, by definition, the stabilization map $(\Sigma^{k+1}\mathpzc{E}^\dagger)(2) \to (\Sigma^k\mathpzc{E}^\dagger(2)$ simply drops the first entry of the tuple. It follows that the inverse limit is then the desired complex -- to see this, note that, if we write the complexes $(\Sigma^k\mathpzc{E}^\dagger)(2)$ vertically, the tower $\cdots \to (\Sigma^2\mathpzc{E}^\dagger)(2) \to (\Sigma\mathpzc{E}^\dagger)(2) \to \mathpzc{E}^\dagger(2)$ looks as follows.
\begin{center}
	\begin{tikzpicture}[node distance=1.5cm]
	\node[](A){$\vdots$};
	\node[below of = A](B){$\F_2[\Sigma_2]$};
	\node[below of = B](C){$\F_2[\Sigma_2]$};
	\node[below of = C](D){$\F_2[\Sigma_2]$};
	\node[below of = D](E){$\F_2[\Sigma_2]$};
	\node[below of = E](F){$0$};
	\node[below of = F](G){$\vdots$};
	
	\node[right of = A,xshift=5mm](AA){$\vdots$};
	\node[below of = AA](BB){$\F_2[\Sigma_2]$};
	\node[below of = BB](CC){$\F_2[\Sigma_2]$};
	\node[below of = CC](DD){$\F_2[\Sigma_2]$};
	\node[below of = DD](EE){$0$};
	\node[below of = EE](FF){$0$};
	\node[below of = FF](GG){$\vdots$};
	
	\node[right of = AA,xshift=5mm](AAA){$\vdots$};
	\node[below of = AAA](BBB){$\F_2[\Sigma_2]$};
	\node[below of = BBB](CCC){$\F_2[\Sigma_2]$};
	\node[below of = CCC](DDD){$0$};
	\node[below of = DDD](EEE){$0$};
	\node[below of = EEE](FFF){$0$};
	\node[below of = FFF](GGG){$\vdots$};
	
	\node[left of = B,xshift=-5mm](BL){$\cdots$};
	\node[left of = C,xshift=-5mm](CL){$\cdots$};
	\node[left of = D,xshift=-5mm](DL){$\cdots$};
	\node[left of = E,xshift=-5mm](EL){$\cdots$};
	\node[left of = F,xshift=-5mm](FL){$\cdots$};
		
	\draw[->] (A) -- (B) node[midway,anchor=west]{};
	\draw[->] (B) -- (C) node[midway,anchor=west]{$1+\tau$};
	\draw[->] (C) -- (D) node[midway,anchor=west]{$1+\tau$};
	\draw[->] (D) -- (E) node[midway,anchor=west]{$1+\tau$};
	\draw[->] (E) -- (F) node[midway,anchor=west]{};
	\draw[->] (F) -- (G) node[midway,anchor=west]{};
	
	\draw[->] (AA) -- (BB) node[midway,anchor=west]{};
	\draw[->] (BB) -- (CC) node[midway,anchor=west]{$1+\tau$};
	\draw[->] (CC) -- (DD) node[midway,anchor=west]{$1+\tau$};
	\draw[->] (DD) -- (EE) node[midway,anchor=west]{};
	\draw[->] (EE) -- (FF) node[midway,anchor=west]{};
	\draw[->] (FF) -- (GG) node[midway,anchor=west]{};
	
	\draw[->] (AAA) -- (BBB) node[midway,anchor=west]{};
	\draw[->] (BBB) -- (CCC) node[midway,anchor=west]{$1+\tau$};
	\draw[->] (CCC) -- (DDD) node[midway,anchor=west]{};
	\draw[->] (DDD) -- (EEE) node[midway,anchor=west]{};
	\draw[->] (EEE) -- (FFF) node[midway,anchor=west]{};
	\draw[->] (FFF) -- (GGG) node[midway,anchor=west]{};
	
	\draw[->] (B) -- (BB) node[midway,anchor=south]{$\tau$};
	\draw[->] (BB) -- (BBB) node[midway,anchor=south]{$\tau$};
	
	\draw[->] (C) -- (CC) node[midway,anchor=south]{$\tau$};
	\draw[->] (CC) -- (CCC) node[midway,anchor=south]{$\tau$};
	
	\draw[->] (D) -- (DD) node[midway,anchor=south]{$\tau$};
	\draw[->] (DD) -- (DDD) node[midway,anchor=south]{};
	
	\draw[->] (E) -- (EE) node[midway,anchor=south]{};
	\draw[->] (EE) -- (EEE) node[midway,anchor=south]{};
	
	\draw[->] (F) -- (FF) node[midway,anchor=south]{};
	\draw[->] (FF) -- (FFF) node[midway,anchor=south]{};
	
	\draw[->] (BL) -- (B) node[midway,anchor=west]{};
	\draw[->] (CL) -- (C) node[midway,anchor=west]{};
	\draw[->] (DL) -- (D) node[midway,anchor=west]{};
	\draw[->] (EL) -- (E) node[midway,anchor=west]{};
	\draw[->] (FL) -- (F) node[midway,anchor=west]{};
	\end{tikzpicture}
\end{center}
\end{proof}

\begin{Remark}
We saw earlier, in Proposition~\ref{prop:stabhom}, that the non-equivariant homology of $\mathpzc{E}_{\text{st}}^\dagger(2)$ is zero. On the other hand, Proposition~\ref{prop:Est(2)} above and an easy calculation shows that the equivariant homology of $\mathpzc{E}_{\text{st}}^\dagger(2)$, by which we mean the homology of $\mathpzc{E}_{\text{st}}^\dagger(2)/\Sigma_2$, consists of exactly a unique $\F_2$ generator in each degree:
\begin{equation}\tag*{$\text{H}_\bullet(\mathpzc{E}_{\text{st}}^\dagger(2)/\Sigma_2):$} \cdots \qquad \underset{\text{deg}\,-1}{\F_2} \qquad \underset{\text{deg}\,0}{\F_2} \qquad \underset{\text{deg}\,1}{\F_2} \qquad \cdots .
\end{equation}
For comparison, in the unstable case, via the description of $\mathpzc{E}^\dagger(2)$ above and another easy calculation, we have that the homology of $\mathpzc{E}^\dagger(2)/\Sigma_2$, consists of exactly a unique $\F_2$ generator in each non-positive degree:
\begin{equation}\tag*{$\text{H}_\bullet(\mathpzc{E}^\dagger(2)/\Sigma_2):$}
\cdots \qquad \underset{\text{deg}\,-1}{\F_2} \qquad \underset{\text{deg}\, 0}{\F_2} \qquad \underset{\text{deg}\,1}{0} \qquad \cdots .
\end{equation}
In fact, we will see below that it is exactly the generators of these equivariant arity two homologies that give rise to the cohomology operations (see Definition~\ref{def:e_d^un_and_e_d^st} and Proposition~\ref{prop:opsalgEstconstrp=2}), and moreover, that the existence of generators in positive degrees in the stable case results in a stability property of these operations (see Remark~\ref{rmk:stable_vs_unstable_ops}). We will also see that the higher arity equivariant cohomologies correspond to iterated operations (see the filtrations of $\mathrm{H}^\bullet((\Sigma^k\mathbf{E}^\dagger)X)$ and $\mathcal{F}_k$ in Section~\ref{subsec:cohomologies_of_free_algebras_over_the_stable_operads}) and that the total equivariant cohomology is highly non-trivial, despite the trivial non-equivariant cohomology (see Remark~\ref{rmk:eqhomnontrivial}). \customendremark
\end{Remark}

Let us set in place some standardized notations to work with the unstable and stable Barratt-Eccles operads.

\begin{Definition}\label{def:e_d^un_and_e_d^st}
For each $d \in \Z$, we define an element $e_{d}^{\mathrm{un}} \in \mathpzc{E}^\dagger(2)^{d}$ by setting
\[
e_{d}^{\mathrm{un}} :=
\left\{
\begin{array}{ll}
  (1, \tau, 1, \tau, \dots) & d \le 0 \\
  0 & d > 0
\end{array}
\right.
\]
where, in the case $d \le 0$, the righthand side is a sequence of $|d|+1$ elements. In the stable case, for each $d \in \Z$, we define an element $e_d^{\mathrm{st}} \in \mathpzc{E}_{\mathrm{st}}^\dagger(2)^{d}$ by setting
\[
e_{d}^{\mathrm{st}} := (e_d^{\mathrm{un}}, e_{d-1}^{\mathrm{un}} \cdot \tau, e_{d-2}^{\mathrm{un}}, e_{d-3}^{\mathrm{un}} \cdot \tau, \dots)
\]
where, if $d > 0$, we note that the tuple on the righthand side has $d$ leading zeros.
\end{Definition}

\begin{Remark}\label{rmk:edtwice}
By the definition of $\mathpzc{E}_{\text{st}}^\dagger$ as an inverse limit, we have a canonical map
\[
\mathpzc{E}_{\text{st}}^\dagger \to \mathpzc{E}^\dagger
\]
from the stable Barratt-Eccles operad to the Barratt-Eccles operad, which, for any $n$, sends an infinite tuple in $\mathpzc{E}_{\text{st}}^\dagger(n)$ to its first entry. By the definition of the $e_d^{\text{un}}$ and $e_d^{\text{st}}$, we find that, in arity two, for each $d \in \Z$, this map sends $e_d^{\text{st}}$ to $e_d^{\text{un}}$. \customendremark
\end{Remark}

We set in place one more piece of notation: given an algebra $A$ over $\mathpzc{E}_{\text{st}}^\dagger$, $e \in \mathpzc{E}_{\text{st}}^\dagger(n)$ and $a_1, \dots, a_n \in A$, we let $e_*(a_1,\dots,a_n)$ denote the image of $e \otimes a_1 \otimes \cdots \otimes a_n$ under the algebra structure map $\mathpzc{E}_{\text{st}}^\dagger(n) \otimes A^{\otimes n} \to A$. With this in place, we can now define the cohomology operations for algebras over the stable Barratt-Eccles cochain operad, at least when $p = 2$. Given an algebra $A$ over $\mathpzc{E}_{\text{st}}^\dagger$, for each $s \in \Z$, we define $P^s \colon \text{H}^\bullet(A) \to \text{H}^\bullet(A)$ by setting, for $[a] \in \text{H}^q(A)$
\[
P^s([a]) = [(e_{s-q}^{\text{st}})_*(a,a)].
\]

\begin{Proposition}\label{prop:opsalgEstconstrp=2}
\index{generalized Steenrod operations!the $p=2$ case for the stable operad}The operations $P^s$, as defined above, are well-defined, linear over $\F_2$, of degree $s$ and natural in $A$.
\end{Proposition}

\begin{proof}
First, let us check that the operations are well-defined. This follows from the following two facts, which we shall demonstrate: (i) given a cocycle $a$ in $A$, $e_d^{\text{st}} \otimes a \otimes a$, for any $d$, is a cocycle in $\mathpzc{E}_{\text{st}}^\dagger(2) \otimes_{\Sigma_2} A^{\otimes 2}$ (ii) if $a$ and $a'$ are cohomologous cocycles in $A$, $e_d^{\text{st}} \otimes a \otimes a$ and $e_d^{\text{st}} \otimes a' \otimes a'$ are cohomologous cocycles in $\mathpzc{E}_{\text{st}}^\dagger(2) \otimes_{\Sigma_2} A^{\otimes 2}$. Consider (i) first. This follows from the following identities, which hold in $\mathpzc{E}_{\text{st}}^\dagger(2) \otimes_{\Sigma_2} A^{\otimes 2}$
\begin{align*}
\partial (e_d^{\text{st}} \otimes a \otimes a) &= (e_{d+1}^{\text{st}} \cdot (1 + \tau)) \otimes a \otimes a \\
&= e_{d+1}^{\text{st}} \otimes ((1+\tau) \cdot a \otimes a) \\
&= e_{d+1}^{\text{st}} \otimes a \otimes a + e_{d+1}^{\text{st}} \otimes (\tau \cdot a \otimes a) \\
&= e_{d+1}^{\text{st}} \otimes a \otimes a + e_{d+1}^{\text{st}} \otimes a \otimes a = 0.
\end{align*}
Next, consider (ii). We need to show that $e_d^{\text{st}} \otimes a \otimes a - e_d^{\text{st}} \otimes a' \otimes a'$ is a coboundary in $\mathpzc{E}_{\text{st}}^\dagger(2) \otimes_{\Sigma_2} A^{\otimes 2}$. By assumption, we know that $a - a'$ is a coboundary in $A$; let $a - a' = \partial b$. The desired result then follows from the following easily verifiable identity:
\[
\partial (e_d^{\text{st}} \otimes a \otimes b + e_d^{\text{st}} \otimes b \otimes a' + e_{d+1}^{\text{st}} \otimes b \otimes b) = e_d^{\text{st}} \otimes a \otimes a - e_d^{\text{st}} \otimes a' \otimes a'.
\]
Next, let us verify linearity over $\F_2$. First, we have homogeneity as follows:
\begin{align*}
P^s(\lambda [a]) &= P^s([\lambda a]) \\
&= [(e_{s-q}^{\text{st}})_*(\lambda a, \lambda a)] \\
& = \lambda^2 [(e_{s-q}^{\text{st}})_*(a, a)] \\
&= \lambda [(e_{s-q}^{\text{st}})_*(a, a)] \\
&= \lambda P^s([a]).
\end{align*}
As for additivity, let $[a],[b] \in \text{H}^q(A)$. Consider $e_{s-q}^{\text{st}} \otimes (a + b) \otimes (a + b) - e_{s-q}^{\text{st}} \otimes a \otimes a - e_{s-q} \otimes b \otimes b$ as an element of $\mathpzc{E}^\dagger_{\text{st}}(2) \otimes_{\Sigma_2} A^{\otimes 2}$. It suffices to show that this element is a coboundary in $\mathpzc{E}^\dagger_{\text{st}}(2) \otimes_{\Sigma_2} A^{\otimes 2}$. We have that $e_{s-q}^{\text{st}} \otimes (a + b) \otimes (a + b) - e_{s-q}^{\text{st}} \otimes a \otimes a - e_{s-q}^{\text{st}} \otimes b \otimes b = e_{s-q}^{\text{st}} \otimes a \otimes b + e_{s-q}^{\text{st}} \otimes b \otimes a$, and then the result follows by the following
\begin{align*}
e_{s-q}^{\text{st}} \otimes a \otimes b + e_{s-q}^{\text{st}} \otimes b \otimes a &= e_{s-q}^{\text{st}} \otimes a \otimes b +  e_{s-q}^{\text{st}} \tau \otimes a \otimes b \\
&= e_{s-q}^{\text{st}} (\tau + 1) \otimes a \otimes b \\
&= \partial (e_{s-q}^{\text{st}} \otimes a \otimes b).
\end{align*}

To see that $P^s$ is homogeneous of degree $s$, simply note that, given $[a] \in \text{H}^q(A)$, in $\mathpzc{E}_{\text{st}}^\dagger(2) \otimes A^{\otimes 2}$, $e_{s-q}^{\text{st}} \otimes a \otimes a$ has degree $(s-q)+q+q = q+s$. Finally, we verify naturality. Let $f \colon A \to B$ be a map of algebras over $\mathpzc{E}_{\text{st}}^\dagger$. We need to show that, for each $s \in \Z$, the following square commutes.

\begin{center}
\begin{tikzpicture}[node distance = 1.5cm]
\node [] (A) {$\text{H}^\bullet(A)$};
\node [below of = A] (B) {$\text{H}^\bullet(B)$};
\node [right of = A,xshift=1cm] (C) {$\text{H}^\bullet(A)$};
\node [right of = B,xshift=1cm] (D) {$\text{H}^\bullet(B)$};

\node [below right of = A,yshift=3mm,xshift=2mm] (E) {$\acts$ ?};

\draw [->] (A) -- (B) node[midway,anchor=east]{$f_*$};
\draw [->] (A) -- (C) node[midway,anchor=south]{$P^s$};
\draw [->] (C) -- (D) node[midway,anchor=west]{$f_*$};
\draw [->] (B) -- (D) node[midway,anchor=north]{$P^s$};
\end{tikzpicture}
\end{center}

This follows from the commutativity of the following diagram.

\begin{center}
\begin{tikzpicture}[node distance = 1.5cm]
\node [] (D) {$\text{H}_\bullet(\mathpzc{E}_{\text{st}}^\dagger(2) \otimes_{\Sigma_2} A^{\otimes 2})$};
\node [right of = D,xshift=4cm] (E) {$\text{H}_\bullet(A)$};

\node [below of = D] (DD) {$\text{H}_\bullet(\mathpzc{E}_{\text{st}}^\dagger(2) \otimes_{\Sigma_2} B^{\otimes 2})$};
\node [right of = DD,xshift=4cm] (EE) {$\text{H}_\bullet(B)$};

\draw [->] (D) -- (E) node[midway,anchor=south]{$\mathpzc{E}_{\text{st}}^\dagger$-action};
\draw [->] (DD) -- (EE) node[midway,anchor=north]{$\mathpzc{E}_{\text{st}}^\dagger$-action};
\draw [->] (D) -- (DD) node[midway,anchor=east]{$f_*$};
\draw [->] (E) -- (EE) node[midway,anchor=west]{$f_*$};
\end{tikzpicture}
\end{center}
\end{proof}

\begin{Remark}\label{rmk:stable_vs_unstable_ops}
We now make a few remarks regarding these stable operations and how they compare with the corresponding operations in the unstable case.
\begin{itemize}
	\item[(1)] In the case of the unstable Barratt-Eccles operad, the associated cohomology operations, which is to say the generalized Steenrod operations, are defined by exactly the same formula, $P^s([a]) = [(e_{s-q}^{\text{un}})_*(a,a)]$, except that we replace $e_{s-q}^{\text{st}}$ by $e_{s-q}^{\text{un}}$. See~\cite{May} for the construction of these operations; even the proof of the analogue of Proposition~\ref{prop:opsalgEstconstrp=2} is the same. What differs is that, since $e_d^{\text{un}}$ is zero for $d > 0$, these operations are zero when $s > q = |a|$. This property, that $P^s[a] = 0$ when $s > |a|$, is known as the \textit{instability} of the operations. In our stable case, we have non-zero generators $e_{d}^{\text{st}}$ even in positive degrees, and so our stable operations do not satisfy the instability property, as one would hope based on the terminology alone. In fact, we can see the disappearance of the instability of the operations in an iterative manner, as follows. We know that the operations in the case of algebras over $\mathpzc{E}^\dagger$ satisfy instability. By an analogous construction, or with the help of Proposition~\ref{prop:freealgsuspop}, one can show that one also has operations in the case of algebras over $\Sigma\mathpzc{E}^\dagger$ and moreover, that these satisfy a shifted instability condition, which says that $P^s[a] = 0$ when $s > |a|+1$. Similarly, one also has operations in the case of algebras over $\Sigma^2\mathpzc{E}^\dagger$, and these satisfy the shifted instability condition which says that $P^s[a] = 0$ when $s > |a|+2$. This continues, and eventually, in the limit, in the case of the stabilization $\mathpzc{E}^\dagger_{\text{st}}$, the instability disappears.
	\item[(2)] In the case of algebras over the unstable Barratt-Eccles operad, the cohomology operations satisfy the well-known Adem relations -- see~\cite{May}. These relations also hold for our operations above in the case of algebras over the stable Barratt-Eccles operad -- this follows, for example, from the computation, in Proposition~\ref{prop:stablefreehom} below, of the cohomologies of free algebras over $\mathpzc{E}_{\text{st}}^\dagger$. See also Remark~\ref{rmk:general_ops_vs_explicit_ops}.
	\item[(3)] In the case of an algebra $A$ over the unstable Barratt-Eccles operad, the cohomology $\text{H}^\bullet(A)$ not only possesses the operations $P^s$, but also a graded-commutative algebra structure -- see~\cite{May}. The definition of the products is given by the formula $[a] \cdot [b] = [(e_0^{\text{un}})_*(a,b)]$. In the case of algebras over the stable Barratt-Eccles operad, these products do not exist; we only have the additive structure provided by the operations, as one would expect in a stable situation. To see why these products do not exist in the stable situation, note that the obvious analogue of the above formula is $[a] \cdot [b] = [(e_0^{\text{st}})_*(a,b)]$. However, this does not make sense because, given cocycles $a$ and $b$, while $e_0^{\text{un}} \otimes a \otimes b$ is always a cocycle in $\mathpzc{E}^\dagger(2) \otimes A^{\otimes 2}$, $e_0^{\text{st}} \otimes a \otimes b$ is not always a cocycle in $\mathpzc{E}_{\text{st}}^\dagger(2) \otimes A^{\otimes 2}$, as $e_0^{\text{st}}$ is itself not a cocyle in $\mathpzc{E}_{\text{st}}^\dagger(2)$ (see Proposition~\ref{prop:Est(2)}). Put another way, $\mathpzc{E}^\dagger(2)$ has cohomology $\F_2[0]$, generated by $e_0^{\text{un}}$, and this generator yields the products in the unstable case, but $\mathpzc{E}_{\text{st}}^\dagger(2)$ has zero cohomology (see Proposition~\ref{prop:stabhom}). This argument shows that the products at least cannot be defined by the obvious analogous formula, though of course does not demonstrate that they necessarily do not exist. The latter follows however from the computation of the cohomology of free $\mathpzc{E}_{\text{st}}^\dagger$-algebras, to appear below in Proposition~\ref{prop:stablefreehom}. One can also see the lack of products in an iterative manner, as follows. We know that the cohomologies of algebras over $\mathpzc{E}^\dagger$ have a product operation. By an entirely analogous construction, or with the help of Proposition~\ref{prop:freealgsuspop}, one can show that the cohomologies of algebras over $\Sigma\mathpzc{E}^\dagger$ have a shifted product operation where the product of a degree $r$ element with a degree $s$ element lies in degree $r+s+1$. Similarly, the cohomologies of algebras over $\Sigma^2\mathpzc{E}^\dagger$ have a shifted product operation where the product of a degree $r$ element with a degree $s$ element lies in degree $r+s+2$. This continues, and eventually, in the limit, in the case of the stabilization $\mathpzc{E}^\dagger_{\text{st}}$, the product disappears, in the sense that it is undefinable.
	\item[(4)] Suppose that $A$ is an algebra over the unstable Barratt-Eccles operad $\mathpzc{E}^\dagger$. By pull back across the canonical map $\mathpzc{E}_{\text{st}}^\dagger \to \mathpzc{E}^\dagger$, $A$ is then also an algebra over the stable Barratt-Eccles operad $\mathpzc{E}^\dagger_{\text{st}}$. Each of the algebra structures induce operations on $\text{H}^\bullet(A)$, both of which are denote by $P^s$. The notation is consistent however, because, as noted in Remark~\ref{rmk:edtwice}, the canonical map $\mathpzc{E}_{\text{st}}^\dagger \to \mathpzc{E}^\dagger$ sends $e_d^{\text{st}}$ to $e_d^{\text{un}}$, for each $d \ge 0$.
\end{itemize} \customendremark
\end{Remark}

\section[The Completion $\widehat{\mathcal{B}}$]{The Completion $\widehat{\mathcal{B}}$ of the Algebra of Generalized Steenrod Operations}

Our next goal is to compute the cohomology of a free algebra $\textbf{E}_{\textbf{st}}^\dagger X$ over the stable Barratt-Eccles operad $\mathpzc{E}_{\text{st}}^\dagger$. In order to achieve this, we first need to recall the algebra $\mathcal{B}$ of generalized Steenrod operations and construct a completion $\widehat{\mathcal{B}}$ of it, which allows certain infinite sums of the iterated operations in $\mathcal{B}$. In the case of the unstable operad, it is well-known that $\text{H}^\bullet(\textbf{E}^\dagger X)$ is a free construction on $\text{H}^\bullet(X)$, that which first constructs the free unstable $\mathcal{B}$-module on $X$, and then applies a certain free algebra construction on this module to add in the products which exist in the unstable case (see Proposition~\ref{prop:cohomology_of_free_algebras_of_suspended_E}). In our stable case, we know that the products no longer exist and that the instability of the operations no longer holds (see Remark~\ref{rmk:stable_vs_unstable_ops}), and below (see Proposition~\ref{prop:stablefreehom}), we will prove that $\text{H}^\bullet(\textbf{E}_{\textbf{st}}^\dagger X)$ is exactly the free $\widehat{\mathcal{B}}$-module on $\text{H}^\bullet(X)$. \\

Now, let us begin constructing the completion $\widehat{\mathcal{B}}$. We first need to recall the definition of $\mathcal{B}$. In fact, we have an algebra $\mathcal{B}$ for each value of the prime $p$. In order to define these algebras, we need to discuss multi-indices and some associated definitions, both of which shall vary depending on whether $p = 2$ or $p > 2$. \\

First suppose that $p = 2$. In this case, a \textit{multi-index} is a sequence $I = (i_1,\dots,i_k)$ of integers $i_j \in \Z$, where $k \ge 0$ (if $k = 0$, we have the empty sequnce $()$). Given such a multi-index, we will associate to it the formal string $P^I = P^{i_1} \cdots P^{i_k}$. Given the multi-index $I = (i_1,\dots,i_k)$, we then set the following:
\begin{itemize}
	\item The \textit{length} $l(I)$ is $k$; if $I = ()$, this is to be interpreted as $0$.
	\item The \textit{degree} $d(I)$ is $i_1 + \cdots + i_k$; if $I = ()$, this is to be interpreted as $0$.
	\item The \textit{excess} $e(I)$ is $i_1 - i_2 - \cdots - i_k$; if $I = ()$, this is to be interpreted as $-\infty$.
	\item We say that $I$ is \textit{admissible} if $i_j \ge 2i_{j+1}$ for each $j$; the empty multi-index is admissible.
\end{itemize}

Now suppose that $p > 2$. In this case, a \textit{multi-index} is a sequence $I = (\varepsilon_1, i_1,\dots, \varepsilon_k, i_k)$ of integers $i_j \in \Z$ and $\varepsilon_j \in \{0,1\}$, where $k \ge 0$ (if $k = 0$, we have the empty sequnce $()$). Given such a multi-index, we associate to it the formal string $\beta^{\varepsilon_1} P^{i_1} \cdots \beta^{\varepsilon_k} P^{i_k}$, where $\beta^1$ here is to be intepreted as $\beta$ and $\beta^0$ as an empty symbol. Given the multi-index $I = (\varepsilon_1,i_1,\dots,\varepsilon_k,i_k)$, we then set the following:
\begin{itemize}
	\item The \textit{length} $l(I)$ is $k$; if $I = ()$, this is to be interpreted as $0$.
	\item The \textit{degree} $d(I)$ is $(2i_1(p-1)+\varepsilon_1) + \cdots + (2i_k(p-1)+\varepsilon_k)$; if $I = ()$, this is to be interpreted as $0$. 
	\item The \textit{excess} $e(I)$ is $(2i_1+\varepsilon_1) - (2i_2(p-1)+\varepsilon_2) - \cdots - (2i_k(p-1)+\varepsilon_k)$; if $I = ()$, this is to be interpreted as $-\infty$.
	\item We say that $I$ is \textit{admissible} if $i_j \ge pi_{j+1}+\varepsilon_{j+1}$ for each $j$; the empty multi-index is admissible.
\end{itemize}

\begin{Remark}\label{rmk:excess_S}
The excess of a multi-index $I$ is related to the instability of the cohomology operations on an algebra $A$ over $\mathpzc{E}^\dagger$, in that $P^I[a] = 0$ is zero whenever $e(I) > |a|$ (this is an easy consequence of the definition of excess and the instability property for a single operation $P^s$). Note also that the excess, in the case where $p = 2$, may also be written as $i_k - \sum_{j=2}^k(2i_j - i_{j-1})$, which gives a relation to the admissibility condition; moreover, it can also be written as $2i_1 - d(I)$, giving a relation to the degree. \customendremark
\end{Remark}

In order to construct $\mathcal{B}$, consider formal symbols $P^s$ and $\beta P^s$ for $s \in \Z$. We need to recall the Adem relations. If $p = 2$, the Adem relations consist of the relations
\[
P^rP^s = \sum_{i \in \Z} \binom{s-i-1}{r-2i}P^{r+s-i}P^i
\]
for all $r, s$ such that $r < 2s$. If $p > 2$, the Adem relations consist of the relations 
\[
P^rP^s=\sum_{i \in \Z}(-1)^{r+i}\binom{(p-1)(s-i)-1}{r-pi}P^{r+s-i}P^i
\]
\[
\beta P^rP^s = \sum_{i \in \Z} (-1)^{r+i}\binom{(p-1)(s-i)-1}{r-pi}\beta P^{r+s-i}P^i
\]
for all $r, s$ such that $r < ps$, and also the relations
\begin{multline*}
P^r \beta P^s = \sum_{i \in \Z} (-1)^{r+i}\binom{(p-1)(s-i)}{r-pi}\beta P^{r+s-i}P^i \\
+ \sum_{i \in \Z} (-1)^{r+i+1}\binom{(p-1)(s-i)-1}{r-pi-1} P^{r+s-i}\beta P^i
\end{multline*}
\[
\beta P^r \beta P^s = \sum_{i \in \Z} (-1)^{r+i}\binom{(p-1)(s-i)-1}{r-pi-1}\beta P^{r+s-i}\beta P^i
\]
for all $r, s$ such that $r \le ps$. In all cases, note that the Adem relations are homeogeneous.

\begin{Remark}
We can see that the admissibility of a multi-index is related to the Adem relations. For example, consider the case of $p = 2$, a multi-index $I = (i_1,\dots,i_k)$ and associated string $P^{i_1} \cdots P^{i_k}$. Certainly when $k = 2$, the term $P^{i_1}P^{i_2}$ admits an application of the Adem relations if and only if $i_1 < 2i_2$, which is to say if and only if $(i_1,i_2)$ is not admissible. More generally, the existence of the Cartan-Serre basis below (see Remark~\ref{rmk:facts_about_B}) implies that the relations apply to $P^{i_1} \cdots P^{i_k}$ if and only if $I$ is not admissible. \customendremark
\end{Remark}

Now we can define the algebra $\mathcal{B}$.

\begin{Definition}\label{def:alg_S_B}
The \textit{algebra of generalized Steenrod operations}\index{generalized Steenrod operations!algebra of} $\mathcal{B}$ is defined as follows. If $p = 2$, where $p$ is our fixed prime, we have
\[
\mathcal{B} := \mathbf{F}\{P^s \mid s \in \Z\}/I_{\text{Adem}}
\]
where $\mathbf{F}\{P^s \mid s \in \Z\}$ denotes the free graded algebra over $\F_2$ on the formal symbols $P^s$, for $s \in \Z$, where $P^s$ has degree $s$, and where $I_{\text{Adem}}$ denotes the two-sided ideal generated by the Adem relations. If $p > 2$, we have
\[
\mathcal{B} := \mathbf{F}\{P^s, \beta P^s \mid s \in \Z\}/I_{\text{Adem}}
\]
where $\mathbf{F}\{P^s, \beta P^s \mid s \in \Z\}$ denotes the free graded algebras over $\F_p$ on formal symbols $P^s, \beta P^s$, for $s \in \Z$, where $P^s, \beta P^s$ have degrees $2s(p-1), 2s(p-1)+1$ respectively, and where $I_{\text{Adem}}$ denotes the two-sided ideal generated by the Adem relations.
\end{Definition}

As the Adem relations are homogeneous, so is the ideal generated by them, and so $\mathcal{B}$ inherits the grading of the free algebra.

\begin{Remark}\label{rmk:facts_about_B}
We record some well-known facts about $\mathcal{B}$.
\begin{itemize}
	\item[(1)] There exists a well-known basis for $\mathcal{B}$, the \textit{Cartan-Serre basis}\index{Cartan-Serre basis}. This is an $\F_p$-basis and consists of the monomials $P^I$ where $I$ is admissible. See, for example,~\cite{CohenLadaMay}.
	\item[(2)] We will also have need to consider certain quotients of $\mathcal{B}$. If $p = 2$, for each $k \in \Z$, we set
\[
\mathcal{B}_{\le k} := \mathbf{F}\{P^s \mid s \in \Z\}/(I_{\text{Adem}} + I_{\text{exc} \, > \, k}) = \mathcal{B}/I_{> k}
\]
and if $p > 2$, for each $k \in \Z$, we set
\[
\mathcal{B}_{\le k} := \mathbf{F}\{P^s, \beta P^s \mid s \in \Z\}/(I_{\text{Adem}} + I_{\text{exc} \, > \, k}) = \mathcal{B}/I_{> k}
\]
where, in etiher case, $I_{\text{exc} \, > \, k}$ denotes the two-sided ideal generated by monomials of excess $> k$, and where $I_{> k} := (I_{\text{Adem}} + I_{\text{exc} \, > k})/I_{\text{Adem}}$. Sometimes, we will use the notations $I_{\text{exc} \, \ge \, k}$, $I_{\ge k}$ and $\mathcal{B}_{< k}$ in place of $I_{\text{exc} \, > \, k-1}$, $I_{> k-1}$ and $\mathcal{B}_{\le k-1}$. From the Cartan-Serre basis for $\mathcal{B}$, one can derive bases for these quotients; again, see~\cite{CohenLadaMay}. They are as follows: for each $k \in \Z$, the algebra $\mathcal{B}_{\le k}$ has an $\F_p$-basis given by the monomials $P^I$ where $I$ is admissible and $e(I) \le k$.
	\item[(3)] As is standard, we say that a $\mathcal{B}$-module $H$ is \textit{unstable}\index{unstable modules} if $P^Ih = 0$ is zero whenever $e(I) > |h|$. In the case of an algebra $A$ over the unstable Barratt-Eccles operad $\mathpzc{E}^\dagger$, we can sum up the cohomology operations and their instability property by noting that $\text{H}^\bullet(A)$ is then an unstable module over $\mathcal{B}$. For details, we refer the reader to~\cite{May}.
\end{itemize} \customendremark
\end{Remark}

We have recalled the definition of, and some facts about, the algebra $\mathcal{B}$. \index{generalized Steenrod operations!completion of the algebra of}We now move onto the construction of the completion $\widehat{\mathcal{B}}$ and shall begin by constructing the underlying graded module of $\widehat{\mathcal{B}}$. To define this graded module, consider functions
\[
f \colon \{\text{admissible multi-indices}\} \to \F_p.
\]
We have an addition and a scalar multiplication for such functions, computed pointwise. We think of such a function as a potentially infinite sum, and so use the suggestive notation
\[
\sum_{I \: \text{admissible}} a_IP^{I}
\]
where $a_I = f(I)$. Our graded module will consist of such sums, with particular finiteness properties in relation to the length and excess of multi-indices. Specifically, the underlying graded module of $\widehat{\mathcal{B}}$ is defined by setting that, in degree $d \in \Z$, the graded piece $\widehat{\mathcal{B}}_d$ is to consist of the potentially infinite sums above which satisfy the following requirements.
\begin{itemize}
	\item For all $I$, if $a_I \neq 0$, $d(I) = d$.
	\item The set of lengths $\{l(I) \mid a_I \neq 0\}$ is bounded above, or, equivalently, finite.
	\item For any $k \in \Z$, $\#\{I \mid a_I \neq 0, e(I) \le k\}$ is finite.
\end{itemize}

\begin{Remark}
Since, given any non-empty multi-index $I$ of degree $d$, we have $e(I) = 2i_1 - d(I) = 2i_1 - d$, where $I = (i_1,\dots,i_k)$, in the $p=2$ case and $e(I) = 2pi_1 + 2\varepsilon_1 - d(I) = 2pi_1 + 2\varepsilon_1 - d \le 2pi_1 + 2 - d$, where $I = (\varepsilon_1, i_1,\dots, \varepsilon_k, i_k)$, in the $p > 2$ case, we can rephrase the third condition as saying that, given any $k \in \Z$, there may exist at most finitely many $I$ with $a_I \neq 0$ which are such that the entry $i_1$ is not larger than $k$. We may then also, imprecisely though suggestively, package the condition as ``$i_1 \to +\infty$''. \customendremark
\end{Remark}

Note that we have an obvious embedding of graded modules
\[
\mathcal{B} \hookrightarrow \widehat{\mathcal{B}}.
\]
An example of an element, one in degree $0$, which is present in the completion $\widehat{\mathcal{B}}$ but not in $\mathcal{B}$, is the following infinite sum
\[
\sum_{k \ge 0} P^{k}P^{-k}.
\]
In fact, as the following proposition demonstrates, all elements of $\widehat{\mathcal{B}}$ which are not in $\mathcal{B}$ share the salient features of this example: that the initial entries of the multi-indices tend to $+\infty$ while the final entries tend to $-\infty$.

\begin{Proposition}\label{prop:i1posinf_ikneginf}
Let $\sum a_IP^I$ be an element of $\widehat{\mathcal{B}}$. We have the following:
\begin{itemize}
	\item[(i)] Given any $k \in \Z$, for all but finitely many $I$, the initial entry is greater than $k$.
	\item[(ii)] Given any $k \in \Z$, for all but finitely many $I$, the final entry is less than by $k$.
\end{itemize}
\end{Proposition}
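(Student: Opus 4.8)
The plan is to reduce to a homogeneous element and then read everything off the three conditions defining $\widehat{\mathcal{B}}_d$. Since the multi-indices occurring in an arbitrary element of $\widehat{\mathcal{B}}$ form the finite union of those occurring in its homogeneous components, it suffices to treat a homogeneous $\phi = \sum_I a_I P^I \in \widehat{\mathcal{B}}_d$. Write $S$ for the set of $I$ with $a_I \neq 0$; every such $I$ is admissible with $d(I) = d$, the lengths $l(I)$ for $I \in S$ are bounded by some $L$, and for each $c \in \Z$ the set of $I \in S$ with $e(I) < c$ is finite. The at most one empty multi-index in $S$ has no initial or final entry and is a single element, so I discard it at the outset. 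For part (i), fix $k \in \Z$. By the identity recalled in the Remark preceding the statement, a non-empty $I$ with initial entry $i_1$ satisfies $e(I) = 2i_1 - d$ if $p = 2$ and $e(I) \le 2p i_1 + 2 - d$ if $p > 2$; hence any $I \in S$ with $i_1 \le k$ has $e(I)$ bounded above by a fixed constant (namely $2k - d$, resp. $2pk + 2 - d$), and so lies in a finite subset of $S$. Thus all but finitely many $I \in S$ have $i_1 > k$.

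Part (ii) rests on a finiteness lemma: for each fixed length $m$ and each fixed $c \in \Z$ there are only finitely many admissible multi-indices $I$ with $l(I) = m$, $d(I) = d$, and $e(I) = c$. I would prove this by observing that prescribing the excess and the degree determines the initial entry $i_1$ through the identity above (when $p > 2$, up to the two choices of $\varepsilon_1$); admissibility, which reads $i_j \ge 2 i_{j+1}$ for $p = 2$ and $i_j \ge p i_{j+1} + \varepsilon_{j+1}$ for $p > 2$, then forces $i_j \le i_1/2^{\,j-1}$, resp. $i_j \le i_1/p^{\,j-1}$, bounding every entry from above; and the relation $d(I) = d$ (with the $\varepsilon_j$ bounded) then bounds every entry from below. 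So only finitely many such $I$ exist.

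Granting the lemma, fix $k \in \Z$ and suppose for contradiction that the set $T$ of $I \in S$ whose final entry is $\ge k$ is infinite. By the length bound, $T$ has an infinite subset $T_m$ consisting of multi-indices of one fixed length $m$, and necessarily $m \ge 2$, since there is at most one multi-index of length $1$ and degree $d$. By the lemma the excesses of the elements of $T_m$ take infinitely many values, hence form an unbounded subset of $\Z$; this subset cannot be unbounded below, as that would place infinitely many elements of $S$ below a fixed excess and contradict the third defining condition, so it is unbounded above. Using $e(I) = 2i_1 - d$ (resp. $e(I) = 2p i_1 + 2\varepsilon_1 - d$) this produces a sequence $I^{(n)} \in T_m$ with initial entries $i_1^{(n)} \to +\infty$. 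Since $d(I^{(n)}) = d$ is constant, the remaining entries satisfy $\sum_{j \ge 2} i_j^{(n)} \to -\infty$, so after passing to a subsequence some intermediate entry $i_{j_0}^{(n)}$, with $2 \le j_0 \le m$, tends to $-\infty$; admissibility (which gives $i_{j+1} \le i_j/2$, resp. $i_{j+1} \le i_j/p$, once $i_j$ is negative) then propagates this down the multi-index, so $i_m^{(n)} \to -\infty$, contradicting $i_m^{(n)} \ge k$. The $p > 2$ case runs identically, carrying along the bounded terms $\varepsilon_j$. The only real obstacle is organizational: isolating the finiteness lemma cleanly and tracking how ``tends to $-\infty$'' propagates along an admissible multi-index.
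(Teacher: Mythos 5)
Your part (i) is essentially the paper's: combine the excess--degree--initial-entry identity with the third defining condition of $\widehat{\mathcal{B}}$. For part (ii), however, you take a genuinely different route. The paper's argument is a two-line computation: telescoping the admissibility inequality gives $i_j \ge 2^{r-j}i_r$ (resp.\ $i_j \ge p^{r-j}i_r$), and summing over $j \ge 2$ together with $d(I) = d$ yields $d \ge i_1 + Ci_r$ with $C = 2^{r-1}-1 > 0$, hence $i_r \le (d-i_1)/C$; since the lengths occurring in the sum are bounded, (ii) then falls out of (i) immediately. You instead isolate an auxiliary finiteness lemma (fixing length, degree and excess pins down $i_1$, and admissibility plus the degree constraint box in the remaining entries) and run a compactness-style contradiction: an infinite family with bounded-below final entry would have, for some fixed length, excesses unbounded above, hence $i_1 \to +\infty$, hence a tail-sum $\to -\infty$, which admissibility propagates down to $i_m \to -\infty$. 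Both arguments are correct. Yours is considerably longer and more ``analytic'' (subsequences, pigeonhole), and the finiteness lemma is a tidy structural observation in its own right; but the paper's single inequality $i_r \le (d-i_1)/C$ is the slicker path, as it turns (ii) into an immediate corollary of (i) without needing the lemma or the contradiction framing.
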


Here, in the case $p > 2$, where multi-indices take the form $(\varepsilon_1,i_1,\dots,\varepsilon_r,i_r)$, where the $i_j$ lie in $\Z$ while the $\varepsilon_j$ lie in $\{0,1\}$, the first entry is taken to be $i_1$, and the final entry, $i_r$, which is to say we disregard the $\varepsilon_j$ for this particular purpose.

\begin{proof}
(i): Let the given element lie in degree $d$. When $p =2$, the result follows by the fact that $e(I) = 2i_1 - d$ for any $I = (i_1,\dots,i_r)$ of degree $d$ and that, in the sum, there can only be finitely many elements of excess below a given bound. When $p > 2$, the result follows in a similar fashion, using instead the identity $e(I) = 2pi_1 + 2\varepsilon_1 - d$ for any $I = (\varepsilon_1,i_1,\dots,\varepsilon_r,i_r)$ of degree $d$.  \\

(ii): Let the given element lie in degree $d$. Of course there can be only one length one monomial which occurs in the sum. Consider then monomials of length $r \ge 2$. First consider the case where $p = 2$. Given a multi-index $I = (i_1,\dots,i_r)$, by admissibility, we have $i_1 \ge 2i_2 \ge 2^2i_3 \ge \cdots \ge 2^{r-1}i_r$. Put another way, we have that $i_j \ge 2^{r-j}i_r$ for $j = 1,\dots,r$. Thus, we have
\begin{align*}
d &= i_1 + (i_2 + \cdots + i_r) \\
&\ge i_1 + (2^{r-2}i_r + 2^{r-3}i_r + \cdots + i_r) \\
&= i_1 + Ci_r
\end{align*}
where $C = 1 + 2 + \cdots + 2^{r-2} > 0$. Thus we have $i_r \le \frac{1}{C}(d -i_1)$ and so the result follows by part (i). Now consider the case $p > 2$. In this case, given a multi-index $(\varepsilon_1,i_1,\dots,\varepsilon_r,i_r)$, admissibility gives us that $i_j \ge pi_{j+1} + \varepsilon_{j+1}$ for each $j = 1,\dots,r-1$, and so, in particular, $i_j \ge pi_{j+1}$ for each $j = 1, \dots, r-1$. It follows that $i_j \ge p^{r-j}i_r$ for $j = 1,\dots,r$. Moreover, we have that $d(I) = 2(p-1)(i_1+ \cdots + i_k) + \varepsilon_1 + \cdots + \varepsilon_r \le 2(p-1)(i_1+ \cdots + i_k) + r$. The argument now is analogous to the one above for the $p = 2$ case.
\end{proof}

We now wish to endow our graded module $\widehat{\mathcal{B}}$ with an algebra structure. To do so, however, we first need some technical lemmas regarding the Adem relations. Given any multi-index $I$, via the Cartan-Serre basis which we mentioned above (see Remark~\ref{rmk:facts_about_B}), we know that $P^I$ can be written uniquely as a sum
\[
\sum a_KP^K
\]
where each $K$ is admissible. Let us call this the \textit{admissible monomials expansion} of $P^I$. Note that since the Adem relations either annihilate a monomial or preserve its length, any $K$ for which $a_K$ is non-zero must have the same length as $I$.

\begin{Lemma}\label{lem:first_entry_down_last_up}
Let $I$ be a non-empty multi-index. If $K$ is a multi-index which appears in the admissible monomials expansion of $P^I$, then the following hold.
\[
(\emph{initial entry of} \: K) \ge (\emph{initial entry of} \: I)
\quad
(\emph{final entry of} \: K) \le (\emph{final entry of} \: I)
\]
\end{Lemma}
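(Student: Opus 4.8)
The plan is to control how the first and last exponents of a monomial change under a single application of an Adem relation, and then to propagate this through the admissibility algorithm which computes the expansion of $P^I$. The key observation, uniform in the prime, is the following: if the left-hand side of one of the Adem relations listed above is a product of two generators with ``exponents'' $r$ and $s$ — for $p > 2$ we read $P^r\beta P^s$, $\beta P^r\beta P^s$, and so on as having exponents $r$ and $s$, disregarding the $\beta$'s — then every term occurring on the right-hand side with a nonzero coefficient is a product of two generators with exponents $r + s - i$ and $i$ for some integer $i$ with $i \le s$ (equivalently $r + s - i \ge r$). Indeed, by the convention $\binom{a}{b} = 0$ for $b < 0$, a nonzero binomial coefficient attached to the $i$-th summand forces $r - pi \ge 0$ or $r - pi - 1 \ge 0$ (with $p$ read as $2$ at the prime $2$), so $pi \le r$; and the hypothesis under which the relation applies, which reads either $r < ps$ or $r \le ps$, then gives $i \le r/p \le s$.

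Next I would use the fact that the admissible monomials expansion of $P^I$ is computed by the usual admissibility algorithm: one repeatedly selects an adjacent inadmissible pair of generators inside the current monomial, rewrites it via the corresponding Adem relation as an $\F_p$-linear combination, and continues until no inadmissible pair remains, the process terminating with a linear combination of admissible monomials (see~\cite{CohenLadaMay} and the references therein). In particular, every monomial occurring with a nonzero coefficient at any stage of the process is obtained from $P^I$ by a finite chain of steps, each of which applies one Adem relation to one adjacent pair and selects one of the resulting terms; moreover all monomials so produced have the same length $k := l(I)$, since the Adem relations preserve length.

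I would then prove, by induction on the length of such a chain, that every monomial obtained from $P^I$ in this way has initial entry $\ge i_1$ and final entry $\le i_k$, where $i_1$ and $i_k$ denote the initial and final entries of $I$. The base case is $P^I$ itself. For the inductive step, suppose a monomial with entries $(a_1,\dots,a_k)$ satisfies $a_1 \ge i_1$ and $a_k \le i_k$, and that one applies an Adem relation at positions $(j,j+1)$; by the observation above, the resulting nonzero terms have entries $(a_1,\dots,a_{j-1},\, a_j + a_{j+1} - i,\, i,\, a_{j+2},\dots,a_k)$ for integers $i \le a_{j+1}$, whence also $a_j + a_{j+1} - i \ge a_j$. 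So the new initial entry equals $a_1$ when $j \ge 2$ and equals $a_1 + a_2 - i \ge a_1$ when $j = 1$, hence is $\ge a_1 \ge i_1$; similarly the new final entry equals $a_k$ when $j \le k-2$ and equals $i \le a_k$ when $j = k-1$, hence is $\le a_k \le i_k$. Specializing to the admissible monomials $K$ appearing in the final expansion of $P^I$ yields the lemma.

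The point demanding the most care is the binomial bookkeeping in the first observation: since we are working stably, the exponents of the $P^s$ range over all of $\Z$, so one may use only the lower constraint $pi \le r$ forced by a nonzero binomial on the right-hand side of an Adem relation — the complementary upper constraint familiar from the classical unstable situation can fail once the top entry of the binomial becomes negative, and it is not needed here. Apart from this, the argument is a routine induction, and it applies verbatim to the analogous statement for the stable Barratt-Eccles chain operad.
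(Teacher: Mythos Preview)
Your proposal is correct and follows essentially the same approach as the paper: bound how a single Adem rewrite affects the first and last exponents, then induct along the rewriting algorithm that produces the admissible expansion. The paper organizes this as a case analysis on the length of $I$ (writing out $p=2$ explicitly and deferring $p>2$ to an analogous argument), whereas you treat all primes uniformly via the single observation $pi \le r$ combined with $r \le ps$; this is a cosmetic difference, not a substantive one.
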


As before, here we follow our convention that, in the case $p > 2$, where multi-indices take the form $(\varepsilon_1,i_1,\dots,\varepsilon_r,i_r)$, where the $i_j$ lie in $\Z$ while the $\varepsilon_j$ lie in $\{0,1\}$, the first entry is taken to be $i_1$, and the final entry, $i_r$, which is to say we disregard the $\varepsilon_j$ for this particular purpose.

\begin{proof}
We shall give a proof of the case where $p =2$; the case where $p > 2$ follows by a similar proof, upon appropriate modifications. Let $I = (i_1,\dots,i_n)$. In the case $n=1$, we already have admissibility and so the result is trivial. Consider the case where $n = 2$. Let $I = (r,s)$. If $r \ge 2s$, $P^rP^s$ is admissible and so the result is trivial. Suppose then that $r < 2s$. Then, by the Adem relations, we have that the admissible monomials expansion is:
\[
\sum_i \binom{s-i-1}{r-2i} P^{r+s-i}P^i
\]
(An easy check of when the binomial coefficient is non-zero shows that the terms which appear on the right-hand side are indeed admissible.) The terms on the right-hand side which appear are those with index $i$ satisfying $r-s+1 \le i \le r/2$. Thus the minimum first entry, say $k_{\text{min init}}$, of the multi-indices $(r+s-i,i)$ which occur satisfies $k_{\text{min init}} \ge r + s - r/2 = r/2 + s > r/2 + r/2 = r$, giving us the desired result. On the other hand, the maximum second entry, say $k_{\text{max final}}$, of the multi-indices $(a+b-i,i)$ which occur satisfies $k_{\text{max final}} \le r/2 < s$, once again giving us the desired result. \\

Now let us consider the case $n \ge 3$. We have that there exists a finite sequence of terms, say $T_1, \dots, T_m$, $m \ge 1$, in the free algebra over $\F_2$ on the $P^i$, $i \in \Z$, which is such that $T_1 = P^{i_1} \cdots P^{i_n}$, $T_m = \sum P^K$ is the admissible monomials expansion of $P^{i_1} \cdots P^{i_n}$, and, for each $j \ge 2$, $T_j$ is constructed from $T_{j-1}$ by taking some monomial summand $P^J$ and replacing a sub-monomial $P^rP^s$ of $P^J$ with the equivalent $\sum_i \binom{s-i-1}{r-2i} P^{r+s-i}P^i$ provided by the Adem relations. Now, if the move which is made in transitioning from $T_{j-1}$ to $T_j$ is applied to a sub-monomial $P^rP^s$ where $P^r$ is not the initial entry of the corresponding monomial $P^J$, there is no change made to any initial entry in any monomial summand. If, on the other hand, the move is applied to a sub-monomial $P^rP^s$ where $P^r$ is the initial entry of $P^J$, by the argument in the $n=2$ case above, the minimum of all the initial entries in the resulting multi-indices in $T_j$ is bounded below by the original such minimum in $T_{j-1}$. Thus, by a simple induction, we have that the minimum of the initial entries of all the multi-indices appearing in $\sum P^K$ is indeed bounded below by $i_1$. By an entirely analogous argument, considering instead the cases where $P^b$ is, or is not, the final entry of $P^J$, we have that the maximum of the final entries of all the multi-indices appearing in $\sum P^K$ is indeed bounded above by $i_k$.
\end{proof}

\begin{Lemma}\label{lem:excess_down}
Let $I$ be a multi-index. If $K$ is a multi-index which appears in the admissible monomials expansion of $P^I$, then $e(K) \ge e(I)$.
\end{Lemma}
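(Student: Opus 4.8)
The plan is to reduce everything to the identity, recorded in Remark~\ref{rmk:excess_S} and in the discussion following Proposition~\ref{prop:i1posinf_ikneginf}, expressing the excess of a non-empty multi-index through its leading entry and its degree: for $p=2$ one has $e(I)=2i_1-d(I)$ where $I=(i_1,\dots,i_r)$, and for $p>2$ one has $e(I)=2pi_1+2\varepsilon_1-d(I)$ where $I=(\varepsilon_1,i_1,\dots,\varepsilon_r,i_r)$. If $I$ is the empty multi-index the statement is vacuous, since $P^I$ is already admissible, so I would assume $I$ non-empty. Since the Adem relations either annihilate a monomial or preserve its length, every $K$ occurring in the admissible monomials expansion of $P^I$ is again non-empty, and since the ideal of Adem relations is homogeneous one has $d(K)=d(I)$.

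First I would dispatch the case $p=2$, which is then immediate: by Lemma~\ref{lem:first_entry_down_last_up} the initial entry $k_1$ of $K$ satisfies $k_1\ge i_1$, whence
\[
e(K)=2k_1-d(K)=2k_1-d(I)\ge 2i_1-d(I)=e(I).
\]
For $p>2$ the same computation works provided one knows, in addition to $k_1\ge i_1$, that the leading exponent $\eta_1$ of $K=(\eta_1,k_1,\dots)$ satisfies $\eta_1\ge\varepsilon_1$, for then $2pk_1+2\eta_1\ge 2pi_1+2\varepsilon_1$ and hence $e(K)\ge e(I)$. To establish $\eta_1\ge\varepsilon_1$ I would argue as in the proof of Lemma~\ref{lem:first_entry_down_last_up}: the admissible monomials expansion of $P^I$ is reached by a finite sequence of moves, each replacing a consecutive sub-monomial $\beta^{\delta}P^r\beta^{\varepsilon}P^s$ of some monomial summand by the right-hand side of the corresponding Adem relation. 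Inspecting the four families of Adem relations for $p>2$, one checks that in every one of them each monomial on the right-hand side has leading exponent $\ge\delta$ when the move occurs at the front of a summand (in particular the leading $\beta$, when present, is never deleted), while a move occurring away from the front leaves that summand's leading exponent unchanged; so along the whole reduction the leading exponent of every surviving summand is non-decreasing, starting from $\varepsilon_1$, giving $\eta_1\ge\varepsilon_1$.

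The only genuine work here is this case-check of the four $p>2$ Adem relations, which is entirely routine, and the $p=2$ case is a one-line consequence of Lemma~\ref{lem:first_entry_down_last_up}; I do not expect any real obstacle beyond the bookkeeping. (This monotonicity of the excess, together with Lemma~\ref{lem:first_entry_down_last_up}, is exactly what will be needed to check that the product on the completion $\widehat{\mathcal{B}}$ respects the finiteness conditions defining $\widehat{\mathcal{B}}$.)
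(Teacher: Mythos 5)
Your proof is correct and follows essentially the same route as the paper's: for $p=2$ you use the identity $e(I)=2i_1-d(I)$ together with Lemma~\ref{lem:first_entry_down_last_up} and degree-preservation of the Adem relations, and for $p>2$ you additionally observe that the leading $\beta$-exponent is non-decreasing under the Adem moves, which is exactly the paper's remark that $\varepsilon_1=1$ forces $\varepsilon'_1=1$. The paper leaves that last $p>2$ observation to a one-line ``examination of the Adem relations''; your elaboration of the case-check is a fair expansion of the same idea rather than a different argument.
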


\begin{proof}
The case of an empty $I$ is trivial, so suppose that it is non-empty. Suppose that $p = 2$. Let $I = (i_1, \dots, i_n)$ and $K = (k_1,\dots,k_n)$, where $n \ge 1$. We can write $e(I) = 2i_1 - d(I)$ and $e(K) = 2k_1 - d(K)$. The Adem relations preserve degree, so that $d(I) = d(K)$. The result then follows by Lemma~\ref{lem:first_entry_down_last_up}. Now suppose that $p > 2$. Let $I = (\varepsilon_1, i_1, \dots, \varepsilon_n, i_n)$ and $K = (\varepsilon'_1, k_1,\dots, \varepsilon'_n, k_n)$, where $n \ge 1$. We can write $e(I) = 2pi_1 + 2\varepsilon_1 - d(I)$ and $e(K) = 2pk_1 + 2\varepsilon'_1 - d(K)$. The Adem relations preserve degree, so that $d(I) = d(K)$. Moreover, an examination of the Adem relations shows that, if $\varepsilon_1 = 1$, then $\varepsilon'_1 = 1$, so that $\varepsilon_1 \le \varepsilon'_1$. The result now follows by Lemma~\ref{lem:first_entry_down_last_up}. 
\end{proof}

\begin{Lemma}\label{lem:QIQJprop}
Let $I$ and $J$ be admissible multi-indices. If $K$ is a multi-index which appears in the admissible monomials expansion of $P^IP^J$, then $e(K) \ge e(J)$.
\end{Lemma}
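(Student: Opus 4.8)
The plan is to peel one entry off $I$ at a time, reducing to the case $l(I)=1$, and to settle that case by tracking the leftmost exponent through the cascade of Adem relations. Write $I=(i_1,\dots,i_m)$ and $J=(j_1,\dots,j_n)$ (in the odd-primary case these also carry Bockstein labels, which I suppress for now). Since the admissible monomials form the Cartan--Serre basis of $\mathcal{B}$, the admissible-monomials expansion of a product is well defined and independent of the order in which Adem moves are performed; in particular, expanding $P^{i_m}P^J=\sum_L c_LP^L$ into admissible monomials, the admissible expansion of $P^IP^J=P^{i_1}\cdots P^{i_{m-1}}\bigl(P^{i_m}P^J\bigr)$ is obtained by expanding each $P^{(i_1,\dots,i_{m-1})}P^L$ and summing. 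Also Adem relations preserve degree, so $d(K)=d(I)+d(J)$ for every $K$ that appears; using $e(K)=2k_1-d(K)$ for a nonempty multi-index $K$ with first entry $k_1$ (resp.\ $e(K)=2pk_1+2\varepsilon_1'-d(K)$ when $p>2$), a lower bound on $e(K)$ amounts to a lower bound on $k_1$.

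The heart of the matter is the case $I=(a)$, a single operation. If $J$ is empty then $K=(a)$ and $e(K)=a>-\infty=e(J)$. Otherwise, since $J$ is admissible the monomial $P^aP^{j_1}\cdots P^{j_n}$ has at most one non-admissible adjacent pair, $P^aP^{j_1}$, and this pair fails admissibility exactly when $a<2j_1$. Now run the expansion as a tree of single-monomial Adem rewrites and follow, along each branch, the leftmost exponent of the current monomial. A rewrite at a non-leftmost position leaves the leftmost exponent unchanged. A rewrite at the leftmost position is applied to a non-admissible pair $P^{k_1}P^{k_2}$ (so $k_1<2k_2$) and replaces $k_1$ by $k_1+k_2-s$ for the finitely many $s\le\lfloor k_1/2\rfloor$ with nonzero Adem coefficient; since $k_1<2k_2$ forces $k_2\ge\lfloor k_1/2\rfloor+1$, every such value is $\ge k_2+\lceil k_1/2\rceil\ge k_1+1$, so the leftmost exponent strictly increases. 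If $a\ge 2j_1$ no rewrite occurs and $K=(a,J)$ with $k_1=a\ge j_1+\lceil a/2\rceil$; if $a<2j_1$ the first (hence leftmost) rewrite is $P^aP^{j_1}\mapsto\sum_i c_iP^{a+j_1-i}P^i$ with $i\le\lfloor a/2\rfloor$, after which the leftmost exponent is already $\ge j_1+\lceil a/2\rceil$. By the monotonicity just established, the final admissible $K$ satisfies $k_1\ge j_1+\lceil a/2\rceil$, and therefore $e(K)=2k_1-a-d(J)\ge 2j_1+\bigl(2\lceil a/2\rceil-a\bigr)-d(J)\ge 2j_1-d(J)=e(J)$.

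With the single-operation case in hand, I would finish by induction on $l(I)$. The cases $l(I)=0$ (where $P^IP^J=P^J$ is already admissible, so $K=J$) and $l(I)=1$ are done. For $l(I)=m\ge 2$ write $I=(I'',i_m)$ with $I''=(i_1,\dots,i_{m-1})$, which is admissible because $I$ is; expand $P^{i_m}P^J=\sum_L c_LP^L$, so by the single-operation case each $L$ is admissible with $e(L)\ge e(J)$. Then $P^IP^J=\sum_L c_LP^{I''}P^L$, and the inductive hypothesis applied to the admissible pair $(I'',L)$ (with $l(I'')=m-1<m$) gives that every admissible $K$ occurring in the expansion of $P^{I''}P^L$ satisfies $e(K)\ge e(L)\ge e(J)$. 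Every $K$ appearing in the expansion of $P^IP^J$ appears in some such $P^{I''}P^L$, so the claim follows.

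The main obstacle is the single-operation step: one must verify that the leftmost-exponent monotonicity is genuinely valid along the whole tree-shaped cascade of Adem relations, not just for a single application, and that it is unaffected by the negative exponents permitted in $\mathcal{B}$. In the odd-primary case one additionally carries the Bockstein labels $\varepsilon_j$ through the same bookkeeping, using --- as in the proof of Lemma~\ref{lem:excess_down} --- that an Adem move never decreases the leading label (it can only turn $\varepsilon_1=0$ into $\varepsilon_1'\in\{0,1\}$), so that the analogous estimate $e(K)=2pk_1+2\varepsilon_1'-d(K)\ge e(J)$ still goes through.
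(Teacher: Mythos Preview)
Your proof is correct, and it is organised more economically than the paper's. Two genuine differences are worth noting.

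\textbf{Outer structure.} The paper runs three nested inductions: first it proves the case $l(I)=1$ by induction on $l(J)$; then the case $l(J)=1$ by induction on $l(I)$ (using the first case); and finally the general statement by induction on $l(J)$ (using the second case together with Lemma~\ref{lem:excess_down}). You instead run a single induction on $l(I)$, peeling off the rightmost entry $i_m$ of $I$ and applying the base case to $P^{i_m}P^J$ followed by the inductive hypothesis to $P^{I''}P^L$. This collapses the paper's three-layer induction into one and avoids the intermediate $l(J)=1$ case entirely.

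\textbf{Base case $l(I)=1$.} The paper handles $P^aP^J$ by inducting on $l(J)$, splitting off the last entry $j_n$ and invoking Lemma~\ref{lem:excess_down} on the concatenation $(K,j_n)$. You instead track the leftmost exponent directly through an Adem rewriting tree, proving the sharper quantitative bound $k_1\ge j_1+\lceil a/2\rceil$ by observing that non-leftmost rewrites leave $k_1$ fixed while leftmost rewrites strictly increase it. This is essentially an in-line reproof of the ``first entry increases'' half of Lemma~\ref{lem:first_entry_down_last_up}, specialised to the situation at hand; the paper keeps that fact separate and appeals to it via Lemma~\ref{lem:excess_down}.

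What each approach buys: the paper's is more modular (it reuses already-stated lemmas), while yours is shorter and makes the mechanism more transparent. Your remark on the odd-primary case---that Adem moves never decrease the leading Bockstein label, so the estimate $e(K)=2pk_1+2\varepsilon_1'-d(K)$ still goes through---matches the paper's treatment in the proof of Lemma~\ref{lem:excess_down}.
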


\begin{proof}
We shall give a proof of the case where $p =2$; the case where $p > 2$ follows by a similar proof, upon appropriate modifications. The proof will be via three inductions. \\

Consider the case when $I$ has length $1$. Let $I = (r)$. We will prove this case by induction on the length of $J$. If $J$ has length $0$, it is empty, the monomial in question is $P^r$, which is already admissible, and $e(J) = -\infty$, so that we have the desired result. Now suppose $J$ has length $1$. Let $J = (s)$. If $r \ge 2s$, the monomial in question, $P^rP^s$, is already admissible, and the excess is $r-s$, which is bounded below by $e(J) = s$ since $r \ge 2s$. On the other hand, if $r < 2s$, the admissible monomials expansion is given by
\[
\sum_i \binom{s-i-1}{r-2i}P^{r+s-i}P^i
\]
where $r-s+1 \le i \le r/2$. The excess of a generic term on the right-hand side is given by $r+s-2i$ and this is bounded below by $r+s-2(r/2) = s = e(J)$, giving us the desired result. Now suppose that we have the desired result for $J$ of length $< n$, where $n \ge 2$. Consider $P^rP^{j_1} \cdots P^{j_n} = (P^rP^{j_1} \cdots P^{j_{n-1}})P^{j_n}$. Let $\sum P^K$ be the admissible monomials expansion of $P^aP^{j_1} \cdots P^{j_{n-1}}$. By the induction hypothesis, for each $K$, we have $e(K) \ge e(J) + j_n$. We now have $P^aP^{j_1} \cdots P^{j_n} = \sum P^KP^{j_n}$. By Proposition~\ref{lem:excess_down}, for a given $K$, any multi-index which appears in the admissible monomials expansion of the term $P^KP^{j_n}$ has excess bounded below by $e(K,j_n) = e(K) - j_n \ge e(J) + j_n - j_n = e(J)$, giving us the desired result. We have thus established, by induction on the length of $J$, the case in which $I$ has length $1$. \\

Now consider the case where $J$ has length $1$. Let $J = (s)$. We will prove this case by induction on the length of $I$. If $I$ has length zero, it is empty, the monomial in question is $P^s$, which is already admissible and so the desired result is trivial. Suppose that $I$ has length $1$. Let $I = (r)$. The monomial in question is then $P^rP^s$ and the desired result follows by exactly the same argument as the one above which was already made for this monomial. Now suppose that we have the desired result for $I$ of length $< n$, where $n \ge 2$. Consider $P^{i_1} \cdots P^{i_n}P^s = P^{i_1}(P^{i_2} \cdots P^{i_{n-1}}P^{s})$. Let $\sum P^K$ be the admissible monomials expansion of $P^{i_2} \cdots Q^{i_{n-1}}P^{s}$. By the induction hypothesis, for each $K$, we have $e(K) \ge s$. We now have $P^{i_1} \cdots P^{i_n}P^s = \sum P^{i_1}P^K$. By the result established by the previous induction, that of the case in which $I$ has length $1$, we have that, for a given $K$, any multi-index which appears in the admissible monomials expansion of the term $P^{i_1}P^K$ has excess bounded below by $e(K)$, which as we saw already is itself below below by $s$, giving us the desired result. We have thus established, by induction on the length of $I$, the case in which $J$ has length $1$. \\

We will now prove the general statement in the proposition by induction on the length of $J$. First suppose that $J$ has length $0$. Then the monomial in question is $P^I$, the admissible monomials expansion is also simply $P^I$ (as $I$ is assumed to be admissible), and we have $e(I) \ge e(J)$ for any $I$ since $e(J) = -\infty$. If $J$ has length $1$, we have the desired result by the second of the two previous inductions. Now suppose that we have the desired result for $J$ of length $< n$, where $n \ge 2$. Consider $P^IP^{j_1} \cdots P^{j_n} = (P^IP^{j_1} \cdots P^{j_{n-1}})P^{j_n}$. Let $\sum P^K$ be the admissible monomials expansion of $P^IP^{j_1} \cdots P^{j_{n-1}}$. By the induction hypothesis, for each $K$, we have $e(K) \le e(J) + j_n$. We now have $P^IP^{j_1} \cdots P^{j_n} = \sum P^KP^{j_n}$. By Lemma~\ref{lem:excess_down}, for a given $K$, any multi-index which appears in the admissible monomials expansion of the term $P^KP^{j_n}$ has excess bounded below by $e(K,j_n) = e(K) - j_n \ge e(J) + j_n - j_n = e(J)$, giving us the desired result. We have thus established, by induction on the length of $J$, the completely general case.
\end{proof}

We are now ready to equip our graded module $\widehat{\mathcal{B}}$ with an algebra structure. For each $d_1, d_2 \in \Z$, we must construct maps
\[
\widehat{\mathcal{B}}_{d_1} \otimes \widehat{\mathcal{B}}_{d_2} \to \widehat{\mathcal{B}}_{d_1+d_2}.
\]
Consider two infinite sums, the product of which
\[
\left( \sum_I a_IP^I \right) \cdot \left( \sum_I b_IP^I \right)
\]
we wish to construct, where we suppose that the only $a_I$ and $b_I$ which are non-zero are those for which the degree is $d_1$, $d_2$ respectively. Given any two admissible $I$ and $J$, let
\[
P^IP^J = \sum_{K \: \text{admissible}} c^{I,J}_K P^K
\]
be the admissible monomials expansion of $P^IP^J$; note that, for any fixed $I$ and $J$, only finitely many of the $c^{I,J}_K$ may be non-zero. We then set
\begin{equation}\label{eqn:prodShat}
\left( \sum_I a_IP^I \right) \cdot \left( \sum_I b_IP^I \right) := \sum_K \left(\sum_{I,J} a_Ib_Jc^{I,J}_K\right) P^K.
\end{equation}

\begin{Proposition}\label{prop:prodwelldefScomplete}
The product on $\widehat{\mathcal{B}}$ as above is well-defined and equips $\widehat{\mathcal{B}}$ with an algebra structure over $\F_p$.
\end{Proposition}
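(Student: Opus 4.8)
The plan is to verify, in order: (1) that for each admissible $K$ the inner sum in \eqref{eqn:prodShat} has only finitely many non-zero terms, so the coefficients $\sum_{I,J}a_Ib_Jc^{I,J}_K$ actually make sense; (2) that the resulting family of coefficients again satisfies the three defining conditions of $\widehat{\mathcal B}$, so \eqref{eqn:prodShat} lands in $\widehat{\mathcal B}_{d_1+d_2}$; and (3) that the operation is bilinear, associative and unital. Bilinearity is immediate from the formula, since the $c^{I,J}_K$ are fixed scalars and all operations on $\widehat{\mathcal B}$ are computed pointwise, and the product respects the grading because $c^{I,J}_K\neq 0$ forces $d(K)=d(I)+d(J)$, the Adem relations being degree-preserving. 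So the real content lies in (1), (2) and associativity.

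For (1) and (2), fix an admissible $K$; the case of empty $K$ is trivial, so assume $K$ non-empty. Suppose $a_Ib_Jc^{I,J}_K\neq 0$. From $c^{I,J}_K\neq 0$ and Lemma~\ref{lem:QIQJprop} we get $e(J)\le e(K)$, so by the third defining condition on $\sum b_JP^J$ only finitely many $J$ with $b_J\neq 0$ can occur. Next, $P^IP^J=P^{IJ}$ for the concatenated multi-index $IJ$ (non-empty, since Adem preserves length), so Lemma~\ref{lem:first_entry_down_last_up} tells us the initial entry of $K$ is at least that of $I$; feeding this into the identity $e(I)=2i_1-d(I)$ for $p=2$, respectively $e(I)=2pi_1+2\varepsilon_1-d(I)$ for $p>2$, bounds $e(I)$ above by $e(K)+d_2$, respectively $e(K)+d_2+2$, so by the third defining condition on $\sum a_IP^I$ only finitely many $I$ with $a_I\neq 0$ can occur (and only $I=()$ in the degenerate case where $I$ is empty). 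This proves (1). For (2): if the $P^K$-coefficient is non-zero then $l(K)=l(I)+l(J)$ is bounded by the sum of the length bounds of the two factors, giving the length condition; and given $k\in\Z$, any such $K$ with $e(K)<k$ forces $e(I)$ below a bound depending only on $k,d_1,d_2,p$ and forces $e(J)\le e(K)<k$, so $I$ and $J$ range over fixed finite sets and $K$ lies in the finite union of the admissible expansions of the finitely many products $P^IP^J$ --- giving the excess condition.

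For associativity I would avoid any ``triple excess'' lemma and instead reduce to the evident associativity of $\mathcal B$. For fixed $K$, the $P^K$-coefficient of $(x\cdot y)\cdot z$ is $\sum_{M,L}\bigl(\sum_{I,J}a_Ib_Jc^{I,J}_M\bigr)d_Lc^{M,L}_K$; by (1)--(2) applied to $x\cdot y$ and to $z$ this is a finite sum over $(M,L)$, and each inner sum over $(I,J)$ is finite, so only finitely many quadruples $(I,J,M,L)$ contribute and one may reindex freely, obtaining $\sum_{I,J,L}a_Ib_Jd_L\bigl(\sum_M c^{I,J}_Mc^{M,L}_K\bigr)$, where $\sum_M c^{I,J}_Mc^{M,L}_K$ is exactly the $P^K$-coordinate of $(P^IP^J)P^L$ in the Cartan--Serre basis of $\mathcal B$. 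The identical computation for $x\cdot(y\cdot z)$ yields $\sum_{I,J,L}a_Ib_Jd_L\bigl(\sum_N c^{J,L}_Nc^{I,N}_K\bigr)$, and since $(P^IP^J)P^L=P^I(P^JP^L)$ in $\mathcal B$ the two parenthesised sums coincide. Unitality is then easy: the empty multi-index gives the unit $1\in\mathcal B\subseteq\widehat{\mathcal B}$, and $P^{()}P^J=P^J$ is already admissible when $J$ is, so $c^{(),J}_K=\delta_{J,K}$ and hence $1\cdot x=x$, and symmetrically $x\cdot 1=x$.

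The main obstacle is the well-definedness step (1)--(2): everything rests on controlling, for a fixed target $K$, the spread of the contributing pairs $(I,J)$, and this is precisely what Lemmas~\ref{lem:first_entry_down_last_up} and~\ref{lem:QIQJprop} were proved for --- the first pins the initial entry, hence via the degree the excess, of $I$, and the second bounds $e(J)$ from below by $e(K)$. Once well-definedness is secured, associativity is pure bookkeeping with finite sums, and presents no real difficulty.
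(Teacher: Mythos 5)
Your well-definedness argument (parts (1) and (2)) follows the paper's proof essentially verbatim: both first invoke Lemma~\ref{lem:QIQJprop} to get $e(J)\le e(K)$ and then bound $e(I)\le e(K)+d_2$ (plus $2$ when $p>2$). You reach the second bound directly from Lemma~\ref{lem:first_entry_down_last_up} together with the identity $e(I)=2i_1-d(I)$ (respectively $2pi_1+2\varepsilon_1-d(I)$), whereas the paper applies Lemma~\ref{lem:excess_down} to the concatenation $IJ$; since that lemma is itself deduced from the initial-entry estimate, these are the same bound. Your treatment of the grading, length and excess conditions for $\widehat{\mathcal{B}}_{d_1+d_2}$, and of bilinearity and unitality, matches the paper.

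For associativity the paper is terse and you rightly expand it down to $(P^IP^J)P^L=P^I(P^JP^L)$ in $\mathcal{B}$, but the finiteness claim licensing the reindexing is stated slightly loosely. Knowing that the outer sum $\sum_{M,L}$ has finitely many non-zero terms and that each inner sum $\sum_{I,J}$ (for fixed $(M,L)$) is finite only controls the quadruples with $e_Md_Lc^{M,L}_K\neq0$, where $e_M=\sum_{I,J}a_Ib_Jc^{I,J}_M$; the set $S=\{(I,J,M,L):a_Ib_Jd_Lc^{I,J}_Mc^{M,L}_K\neq0\}$ over which you actually reindex could a priori be larger, since it may contain quadruples with $e_M=0$ by cancellation. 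The claim is nonetheless correct, and the clean fix is to bound $S$ directly rather than through the intermediate product: from $c^{M,L}_K\neq0$, Lemmas~\ref{lem:QIQJprop} and~\ref{lem:excess_down} give $e(L)\le e(K)$ and $e(M)\le e(K)+d(z)$; from $c^{I,J}_M\neq0$ they give $e(J)\le e(M)$ and $e(I)\le e(M)+d_2$; hence $I$, $J$ and $L$ range over finite sets, and for each $(I,J)$ the expansion of $P^IP^J$ contributes only finitely many $M$. With this small patch the reindexing, and hence the reduction to associativity of $\mathcal{B}$, goes through.
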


\begin{proof}
We first show that the righthand side of (\ref{eqn:prodShat}) is well-defined as an infinite sum. To do this, having fixed the degrees $d_1$ and $d_2$ as above, we need to ensure that the sum $\sum_{I,J} a_Ib_Jc^{I,J}_K$ is finite for any given $K$. Fix such a $K$, say $K_0$. By the definition of $\widehat{\mathcal{B}}$ as a graded module, we know that, for all but finitely many $I$, we have that $a_I = 0$ or $e(I) > e(K_0) + d_2$. Note that any $I$ satisfying $e(I) > e(K_0) + d_2$ is necessarily non-empty. Now, for such an $I$, where $e(I) > e(K) + d_2$, we have that, for any $J$ for which $b_J \neq 0$, $e(IJ)  = e(I) - d_2 > e(K_0) + d_2 - d_2 = e(K_0)$, where $IJ$ denotes the concatenation of $I$ and $J$. Thus, by Lemma~\ref{lem:excess_down}, there are only finitely many $I$ for which $a_I \neq 0$ and for which there exists a $J$ such that $c^{I,J}_{K_0} \neq 0$ (note that $c^{I,J}_{K_0} \neq 0$ amounts to saying that $P^{K_0}$ appears in the admissible monomials expansion of $P^IP^J$). Fix such an $I$, say $I_0$. We know that, for all but finitely many $J$, we have that $b_J = 0$ or $e(J) > e(K_0)$. Thus, by Lemma~\ref{lem:QIQJprop}, there are only finitely many $J$ for which $b_J \neq 0$ and $c^{I_0,J}_{K_0} \neq 0$, where the latter amounts to saying that $P^{K_0}$ appears in the admissible monomials expansion of $P^{I_0}P^J$. All told, we have demonstrated that, for any given $K_0$, there are only finitely many terms in both the infinite sums
\[
\sum_I a_IP^I \qquad \text{and} \qquad \sum_I b_IP^I
\]
which make a non-zero contribution to the coefficient
\[
\sum_{I,J} a_Ib_Jc^{I,J}_{K_0}
\]
of $P^{K_0}$. Thus the product is indeed well-defined, at least as an infinite sum. In fact, it is an element of $\widehat{\mathcal{B}}_{d_1+d_2}$ for the following reasons: (i) the degree condition is satisfied because the Adem relations preserve degree (ii) the length condition is satisfied because the Adem relations either annihilate an element or preserve its length (iii) the excess condition is satisfied by the same argument as above, which showed not only well-definedness as an infinite sum, but more strongly that all but finitely many pairings of the $P^I$ and $P^J$ yield monomials with an associated excess above any given bound. Finally, because any given coefficient arises from a product of finite sums, it is clear that the requisite associativity, identity and bilinearity follow from the fact that the definition yields the product of $\mathcal{B}$ when restricted to finite sums and that these properties do indeed hold for the product of $\mathcal{B}$.
\end{proof}

We have now constructed $\widehat{\mathcal{B}}$ as a graded algebra over $\F_p$. Moreover, the embedding 
\[
\mathcal{B} \hookrightarrow \widehat{\mathcal{B}}
\]
is now clearly one of algebras. It remains to make precise in what sense $\widehat{\mathcal{B}}$ is a completion of $\mathcal{B}$. For $k \ge 0$, recall the quotients $\mathcal{B}_{\le k}$ of $\mathcal{B}$ which we defined earlier in Remark~\ref{rmk:facts_about_B}. Filter each of these algebras by length so that $\text{F}_t\mathcal{B}_{\le k}$ consists of those monomials $P^I$ satisfying the length bound $p^{l(I)} \le t$. Note that, for any $k \le l$, we have a canonical map
\[
\mathcal{B}_{\le l} \to \mathcal{B}_{\le k}
\]
and that this map is a filtered map, in that, for each $t \ge 0$, we have induced maps
\[
\text{F}_t\mathcal{B}_{\le l} \to \text{F}_t\mathcal{B}_{\le k}.
\]
Next, note that we can also filter $\widehat{\mathcal{B}}$ similarly by length by setting, for $t \ge 0$, $\text{F}_t\widehat{\mathcal{B}}$ to comprise the sums $\sum_{I \: \text{admissible}} a_IP^{I}$ which satisfy the degree, length and excess requirements in the definition of $\widehat{\mathcal{B}}$ and which also satisfy, more specifically regarding length, the bound $p^{l(I)} \le t$ for any $I$ where $a_I \neq 0$. For each $k \ge 0$, we have a map
\[
\widehat{\mathcal{B}} \to \mathcal{B}_{\le k}
\]
which projects an infinite sum to the sub-sum of the elements of excess $\le k$, of which there are finitely many by definition of $\widehat{\mathcal{B}}$. Moreover, this map is a filtered map in that we have, for each $t \ge 0$, an induced map
\[
\text{F}_t\widehat{\mathcal{B}} \to \text{F}_t\mathcal{B}_{\le k}.
\]
These maps are compatible with the maps $\text{F}_t\mathcal{B}_{\le l} \to \text{F}_t\mathcal{B}_{\le k}$ above in that they yield a left cone on the tower
\[
\cdots \to \text{F}_t\mathcal{B}_{\le 2} \to \text{F}_t\mathcal{B}_{\le 1} \to \text{F}_t\mathcal{B}_{\le 0}
\]
and so yield a map $\text{F}_t\widehat{\mathcal{B}} \to \lim_{k \ge 0}\text{F}_t\mathcal{B}_{\le k}$. The precise statement then regarding in what sense $\widehat{\mathcal{B}}$ is a completion of $\mathcal{B}$ is the following. 

\begin{Proposition}\label{prop:FtBhatcomplete}
The map $\emph{F}_t\widehat{\mathcal{B}} \to \lim_{k \ge 0}\emph{F}_t\mathcal{B}_{\le k}$ constructed above is an isomorphism of graded modules. Moreover, as graded modules, we have that
\[
\widehat{\mathcal{B}} \cong \underset{t \ge 0}{\emph{colim}} \: \underset{k \ge 0}{\emph{lim}} \: \emph{F}_t\mathcal{B}_{\le k}.
\]
\end{Proposition}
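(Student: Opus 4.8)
The plan is to exhibit an explicit two-sided inverse to the comparison map $\text{F}_t\widehat{\mathcal{B}} \to \lim_{k \ge 0}\text{F}_t\mathcal{B}_{< k}$, working one degree at a time, and then to obtain the second isomorphism by passing to the filtered colimit over $t$. The Adem-relation lemmas (Lemmas~\ref{lem:first_entry_down_last_up}--\ref{lem:QIQJprop}) are not needed here; only the bases coming from the Cartan--Serre basis are used. First I would record the relevant bases: by the facts recalled after Definition~\ref{def:alg_S_B}, $\mathcal{B}_{< k}$ has $\F_p$-basis the admissible $P^I$ with $e(I) < k$, and hence $\text{F}_t\mathcal{B}_{< k}$ has $\F_p$-basis the admissible $P^I$ with $e(I) < k$ and $p^{l(I)} \le t$; in these bases the transition map $\text{F}_t\mathcal{B}_{< l} \to \text{F}_t\mathcal{B}_{< k}$ for $l \ge k$ is the projection killing exactly the $P^I$ with $k \le e(I) < l$.

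Fix a degree $d$. An element of $(\lim_{k} \text{F}_t\mathcal{B}_{< k})_d$ is then a sequence $(x_k)_{k \ge 0}$ with $x_k = \sum_I a_I^{(k)} P^I$ (sum over admissible $I$ with $d(I) = d$, $e(I) < k$, $p^{l(I)} \le t$) satisfying $a_I^{(l)} = a_I^{(k)}$ whenever $e(I) < k \le l$. Set $a_I := a_I^{(k)}$ for any $k > e(I)$; this is well defined and yields a function on the admissible multi-indices with $p^{l(I)} \le t$. I would then check that $\sum_I a_I P^I \in \text{F}_t\widehat{\mathcal{B}}_d$: the degree condition holds since each $x_k$ lies in degree $d$; the length bound $p^{l(I)} \le t$ holds by construction; and for the excess-finiteness condition, given $j \in \Z$, any $I$ with $e(I) < j$ and $a_I \ne 0$ has $a_I^{(j)} = a_I \ne 0$, and $x_j$, being an honest element of a graded module, is a finite linear combination, so only finitely many such $I$ occur. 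Conversely, the comparison map sends $\sum_I a_I P^I \in \text{F}_t\widehat{\mathcal{B}}$ to the sequence of partial sums $\bigl(\sum_{e(I) < k} a_I P^I\bigr)_k$, each a finite sum by the excess condition and hence a genuine element of $\text{F}_t\mathcal{B}_{< k}$, and these are compatible over the tower. A direct comparison of coefficients shows the two composites are the identities, and both maps are visibly additive and degree-preserving, giving the first assertion.

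For the second assertion, note that every element of $\widehat{\mathcal{B}}$ satisfies the ``set of lengths is bounded above'' clause of its definition, hence lies in $\text{F}_t\widehat{\mathcal{B}}$ for $t$ large; since the $\text{F}_t\widehat{\mathcal{B}}$ are nested, $\widehat{\mathcal{B}} = \text{colim}_{t \ge 0} \text{F}_t\widehat{\mathcal{B}}$ as graded modules. The isomorphisms of the first part are natural in $t$: for $t \le t'$ the inclusion $\text{F}_t\widehat{\mathcal{B}} \hookrightarrow \text{F}_{t'}\widehat{\mathcal{B}}$ corresponds, under the comparison maps, to the inclusion $\lim_k \text{F}_t\mathcal{B}_{< k} \hookrightarrow \lim_k \text{F}_{t'}\mathcal{B}_{< k}$ induced by $\text{F}_t\mathcal{B}_{< k} \hookrightarrow \text{F}_{t'}\mathcal{B}_{< k}$. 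Passing to the colimit over $t$ then yields $\widehat{\mathcal{B}} \cong \text{colim}_{t \ge 0} \lim_{k \ge 0} \text{F}_t\mathcal{B}_{< k}$.

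I expect the only genuinely delicate point to be the excess-finiteness bookkeeping in the middle step: one must match the three defining conditions of $\widehat{\mathcal{B}}$ (fixed degree, bounded length, finitely many monomials below any excess bound) precisely against ``lies in a single $\text{F}_t$'' together with ``compatible along the $k$-tower'', using crucially that each $x_j$ is a finite sum and therefore detects exactly the excess-$< j$ part of the candidate infinite sum. Everything else --- well-definedness of the bases, the projection description of the transition maps, naturality in $t$, and additivity --- is routine.
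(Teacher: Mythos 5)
Your argument is correct and follows essentially the paper's route: record the Cartan--Serre bases for the $\text{F}_t\mathcal{B}_{<k}$, observe that the transition maps are coordinate projections killing the fixed-excess basis elements, build the explicit inverse to the comparison map, and pass to the colimit over $t$ by naturality; you usefully spell out the universal-property verification that the paper leaves as an ``easy verification.'' One tiny bookkeeping point: the tower is indexed by $k\ge 0$, so for a negative excess bound $j<0$ your finiteness check should compare against $x_0$ rather than an undefined $x_j$ --- the set $\{I : a_I\neq 0,\ e(I)<j\}$ is contained in $\{I : a_I\neq 0,\ e(I)<0\}$, which is finite since $x_0$ is a finite sum.
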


\begin{proof}
We noted above that $\mathcal{B}_{\le k}$ has a basis given by the admissible monomials of excess strictly smaller than $k$. Moreover, each $\text{F}_t\mathcal{B}_{\le k}$ then has a basis given by the admissible monomials $P^I$ satisfying both $e(I) \le k$ and the length bound $p^{l(I)} \le t$. Moreover, the map $\text{F}_t\mathcal{B}_{\le k+1} \to \text{F}_t\mathcal{B}_{\le k}$ simply kills those basis elements with excess exactly $k+1$. The first part of the result now follows by an easy verification of the necessary universal property. The second part then follows by noting that, under this established isomorphism, given $t \le t'$, the map induced on the limits by the natural maps $\text{F}_t\mathcal{B}_{\le k} \to \text{F}_{t'}\mathcal{B}_{\le k}$ corresponds exactly to the inclusion $\text{F}_t\widehat{\mathcal{B}} \to \text{F}_{t'}\widehat{\mathcal{B}}$.
\end{proof}

\section{Cohomologies of Free Algebras over the Stable Operads}\label{subsec:cohomologies_of_free_algebras_over_the_stable_operads}

We mentioned earlier that we wish to compute the cohomology of a free algebra $\textbf{E}_{\textbf{st}}^\dagger X$ over the stable Barratt-Eccles operad $\mathpzc{E}_{\text{st}}^\dagger$. Having constructed the completion $\widehat{\mathcal{B}}$, we are now able to do this. We shall show that, if $X$ is a cochain complex, the cohomology of the free $\mathpzc{E}_{\text{st}}^\dagger$-algebra on $X$ is precisely the free $\widehat{\mathcal{B}}$-module on $\text{H}^\bullet(X)$, namely $\widehat{\mathcal{B}} \otimes \text{H}^\bullet(X)$. In order to compare $\text{H}^\bullet(\mathbf{E}_{\textbf{st}}^\dagger X)$ and $\widehat{\mathcal{B}} \otimes \text{H}^\bullet(X)$, we shall introduce an intermediating construction. Thus, we define a functor $\text{A}$ on cochain complexes by setting
\begin{equation}\label{eqn:def_of_A}
\text{A}(X) := \underset{t \ge 0}{\text{colim}} \lim_{k \ge 0} \text{F}_t\text{H}^\bullet((\Sigma^k\mathbf{E}^\dagger)X).
\end{equation}
Here the maps in the limiting tower come from the stabilization maps $\Sigma^{k+1}\mathpzc{E}^\dagger \to \Sigma^k\mathpzc{E}^\dagger$ and the filtration $\{\text{F}_t\text{H}^\bullet((\Sigma^k\mathbf{E}^\dagger)X)\}_{t \ge 0}$ of the cohomology $\text{H}^\bullet((\Sigma^k\mathbf{E}^\dagger)X)$ is as follows:
\[
\text{F}_t\text{H}^\bullet((\Sigma^k\mathbf{E}^\dagger)X) = \bigoplus_{n \le t} \text{H}^\bullet((\Sigma^k\mathpzc{E}^\dagger)(n) \otimes_{\Sigma_n} X^{\otimes n}).
\]
We study this functor $\text{A}(-)$ by studying the steps in its construction, one at a time; recall that here $\Sigma^k\mathbf{E}^\dagger$ denotes the monad associated to the operadic suspension $\Sigma^k\mathpzc{E}^\dagger$, as opposed to a monadic suspension. Let $\Phi_{\mathrm{oper}}$ be the functor which takes a graded $\mathbb{F}_p$-module $M$ to the free unstable $\mathcal{B}$-module on $M$; thus $\Phi_{\mathrm{oper}}(M) = \bigoplus_{d \in \Z} \mathcal{B}_{\le d} \otimes M_d$, where $\mathcal{B}_{\le d}$ is defined as in Remark~\ref{rmk:facts_about_B}. Let also $\Phi_{\text{prod}}$ be the functor which takes a $\mathcal{B}$-module $N$ to the free graded-commutative algebra over $\mathbb{F}_p$ on $N$ (this is in fact an algebra over $\mathcal{B}$, where the $\mathcal{B}$-action is given by the internal Cartan formula; see Section 2 of Chapter I of~\cite{CohenLadaMay}). Finally, set $\Phi := \Phi_{\text{prod}}\Phi_{\text{oper}}$. Given an $\F_p$-module $M$, there is a filtration $\{\mathrm{F}_t\Phi(M)\}_{t \ge 0}$ of $\Phi(M)$ where $\mathrm{F}_t\Phi(M)$ is the $\F_p$-submodule of $\Phi(M)$ generated by the products $(P^{I_1} \otimes m_1) \cdots (P^{I_r} \otimes m_r)$ where $p^{l(I_1)} + \cdots + p^{l(I_r)} \le t$.

\begin{Proposition}\label{prop:cohomology_of_free_algebras_of_suspended_E}
Given a cochain complex $X$, we have that
\[
\mathrm{H}^\bullet((\Sigma^k\mathbf{E}^\dagger) X) \cong \Phi((\mathrm{H}^\bullet X)[k])[-k].
\]
In fact, for each $t \ge 0$, we have that
\[
\mathrm{F}_t\mathrm{H}^\bullet((\Sigma^k\mathbf{E}^\dagger) X) \cong \mathrm{F}_t\Phi((\mathrm{H}^\bullet X)[k])[-k].
\]
\end{Proposition}

\begin{proof}
For the case $k = 0$, see Theorem 4.1, and its proof, in Chapter I of~\cite{CohenLadaMay}, or Theorem 2.1 in Chapter IX of~\cite{BrunerMayMcClureSteinberger}. The general case then follows from Proposition~\ref{prop:freealgsuspop}.
\end{proof}

Given a cochain complex $X$, for brevity, for each $k \ge 0$, set
\[
\mathcal{F}_k := \Phi((\mathrm{H}^\bullet X)[k])[-k].
\]

\begin{Remark}\label{rmk:basis_for_Phi}
If $\{c_i\}$ is a basis of $\text{H}^\bullet(X)$, then $\mathcal{F}_k$ is the free graded-commutative algebra over $\F_p$ on the terms $P^Ic_i$ where $I$ is admissible and $e(I) \le |c_i| + k$. \customendremark
\end{Remark}

We have now identified the terms in the tower that appear in the definition of $\mathrm{A}(-)$. The next step is to identify the limit. Via the maps $\Sigma^{k+1}\mathpzc{E}^\dagger \to \Sigma^k\mathpzc{E}^\dagger$ and the isomorphisms in Proposition~\ref{prop:cohomology_of_free_algebras_of_suspended_E} above, we have commutative diagrams as follows.

\begin{center}
\begin{tikzpicture}[node distance = 1.5cm]
\node [] (A) {$\mathcal{F}_1$};
\node [right of = A,xshift=1.5cm] (B) {$\mathcal{F}_0$};
\node [below of = A] (C) {$\text{H}^\bullet((\Sigma\mathbf{E}^\dagger) X)$};
\node [below of = B] (D) {$\text{H}^\bullet(\mathbf{E}^\dagger X)$};
\node [left of = A,xshift=-1.5cm] (E) {$\mathcal{F}_2$};
\node [left of = E,xshift=-1.5cm] (F) {$\cdots$};
\node [left of = C,xshift=-1.5cm] (G) {$\text{H}^\bullet((\Sigma^2 \mathbf{E}^\dagger) X)$};
\node [left of = G,xshift=-1.5cm] (H) {$\cdots$};

\draw [->] (A) -- (B) node[midway,anchor=south]{};
\draw [->] (A) -- (C) node[midway,anchor=west]{$\cong$};
\draw [->] (B) -- (D) node[midway,anchor=west]{$\cong$};
\draw [->] (C) -- (D) node[midway,anchor=south]{};
\draw [->] (F) -- (E) node[midway,anchor=south]{};
\draw [->] (E) -- (A) node[midway,anchor=south]{};
\draw [->] (H) -- (G) node[midway,anchor=south]{};
\draw [->] (G) -- (C) node[midway,anchor=south]{};
\draw [->] (E) -- (G) node[midway,anchor=west]{$\cong$};
\end{tikzpicture}
\end{center}

\begin{center}
\begin{tikzpicture}[node distance = 1.5cm]
\node [] (A) {$\text{F}_t\mathcal{F}_1$};
\node [right of = A,xshift=1.5cm] (B) {$\text{F}_t\mathcal{F}_0$};
\node [below of = A] (C) {$\text{F}_t\text{H}^\bullet((\Sigma\mathbf{E}^\dagger) X)$};
\node [below of = B] (D) {$\text{F}_t\text{H}^\bullet(\mathbf{E}^\dagger X)$};
\node [left of = A,xshift=-1.5cm] (E) {$\text{F}_t\mathcal{F}_2$};
\node [left of = E,xshift=-1.5cm] (F) {$\cdots$};
\node [left of = C,xshift=-1.5cm] (G) {$\text{F}_t\text{H}^\bullet((\Sigma^2 \mathbf{E}^\dagger) X)$};
\node [left of = G,xshift=-1.5cm] (H) {$\cdots$};

\draw [->] (A) -- (B) node[midway,anchor=south]{};
\draw [->] (A) -- (C) node[midway,anchor=west]{$\cong$};
\draw [->] (B) -- (D) node[midway,anchor=west]{$\cong$};
\draw [->] (C) -- (D) node[midway,anchor=south]{};
\draw [->] (F) -- (E) node[midway,anchor=south]{};
\draw [->] (E) -- (A) node[midway,anchor=south]{};
\draw [->] (H) -- (G) node[midway,anchor=south]{};
\draw [->] (G) -- (C) node[midway,anchor=south]{};
\draw [->] (E) -- (G) node[midway,anchor=west]{$\cong$};
\end{tikzpicture}
\end{center}

\begin{Proposition}\label{prop:instab_prods}
Let $X$ be a cochain complex. For each $k \ge 0$, the map $\mathcal{F}_{k+1} \to \mathcal{F}_k$ kills any product, including the empty product, which is to say the multiplicative unit.
\end{Proposition}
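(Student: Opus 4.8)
The plan is to identify the transition maps $\mathcal{F}_{k+1}\to\mathcal{F}_k$ with maps induced on cohomology by the operad morphisms $\Psi\colon\Sigma^{k+1}\mathpzc{E}^\dagger\to\Sigma^k\mathpzc{E}^\dagger$, and then to read off the required vanishing purely from the effect of $\Psi$ in each arity. Under the isomorphisms $\mathcal{F}_j\cong\text{H}^\bullet((\Sigma^j\mathbf{E}^\dagger)X)$ used just before the statement, the relevant map in the diagram is $\text{H}^\bullet$ applied to the map $(\Sigma^{k+1}\mathbf{E}^\dagger)X\to(\Sigma^k\mathbf{E}^\dagger)X$ of free algebras induced by $\Psi$. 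Because $\Psi$ is the identity in arity $1$ (its permutation test is vacuous for $\Sigma_1$ and it drops no entries there), this free-algebra map is the unique morphism of $\Sigma^{k+1}\mathpzc{E}^\dagger$-algebras extending $\mathrm{id}_X$, where its target carries the $\Sigma^{k+1}\mathpzc{E}^\dagger$-algebra structure obtained by restriction along $\Psi$. Since $\text{H}^\bullet$ is lax symmetric monoidal (and in fact strong symmetric monoidal over $\F_p$, by Künneth), it carries this to a morphism $\mathcal{F}_{k+1}\to\mathcal{F}_k$ of algebras over the homology operad $\text{H}^\bullet(\Sigma^{k+1}\mathpzc{E}^\dagger)$, with $\mathcal{F}_k$ given the structure restricted along $\text{H}^\bullet(\Psi)$; in particular this morphism commutes with every operadic structure operation, including the $m$-ary multiplications for all $m$ and the $0$-ary unit.

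The computational heart is then the behaviour of $\text{H}^\bullet(\Psi)$ arity by arity. First I would record, from Definition~\ref{def:operadic_susp_ch} and the table following it (see also the proof of Proposition~\ref{prop:stabhom}), that $(\Sigma^j\mathpzc{E}^\dagger)(m)$ is $\mathpzc{E}^\dagger(m)$ shifted by $j(m-1)$; since $\mathpzc{E}^\dagger$ is $\mathbb{E}_\infty$, the cohomology $\text{H}^\bullet(\mathpzc{E}^\dagger(m))$ is a copy of $\F_p$ concentrated in degree $0$, so $\text{H}^\bullet((\Sigma^j\mathpzc{E}^\dagger)(m))$ is a copy of $\F_p$ concentrated in cohomological degree $j(m-1)$. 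As $\Psi$ is a morphism of dg operads, it is arity-wise a degree-preserving chain map, so its arity-$m$ component on cohomology is a degree-preserving map from $\F_p$ in degree $(k+1)(m-1)$ to $\F_p$ in degree $k(m-1)$. For $m=1$ these degrees are both $0$ and the map is the identity; but for $m\ne 1$ the degrees $(k+1)(m-1)$ and $k(m-1)$ are distinct, so the map is forced to be zero. (For $m=0$ this is the vanishing of $\Psi$ on the arity-$0$ piece, where $\F_p$ sits in degrees $-(k+1)$ and $-k$.)

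The proposition now follows formally. Restricting the $\text{H}^\bullet(\Sigma^{k+1}\mathpzc{E}^\dagger)$-algebra structure on $\mathcal{F}_k$ along $\text{H}^\bullet(\Psi)$ annihilates every $m$-ary operation with $m\ne 1$: in particular the binary product $\mathcal{F}_k\otimes\mathcal{F}_k\to\mathcal{F}_k$, all higher products (arity $m\ge 2$), and the unit $\F_p\to\mathcal{F}_k$ (arity $0$) of this restricted structure are zero. Since $\mathcal{F}_{k+1}\to\mathcal{F}_k$ is a morphism of algebras for this structure, for any $x_1,\dots,x_m\in\mathcal{F}_{k+1}$ with $m\ge 2$ the product $x_1\cdots x_m$ is sent to the restricted $m$-ary operation applied to the images, namely $0$; likewise the unit $1\in\mathcal{F}_{k+1}$ is sent to the image of the arity-$0$ operation, again $0$. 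Hence every product, including the empty product, is killed.

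The only genuine obstacle is the grading bookkeeping: one must pin down the operadic (de)suspension conventions so that $(\Sigma^j\mathpzc{E}^\dagger)(m)$ really is $\mathpzc{E}^\dagger(m)$ shifted by $j(m-1)$ — this is exactly the shift that separates $(k+1)(m-1)$ from $k(m-1)$ once $m\ge 2$ (and that places the arity-$0$ pieces in distinct degrees), which is what forces $\text{H}^\bullet(\Psi)$ to vanish there. One should also double-check the first-paragraph reduction: that the diagram map is indeed $\text{H}^\bullet$ of the free-algebra map induced by $\Psi$, that this free-algebra map is a morphism of $\Sigma^{k+1}\mathpzc{E}^\dagger$-algebras for the restricted structure on the target, and that $\text{H}^\bullet$ therefore produces a morphism over the homology operad. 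All of this is routine, but it is where the argument is anchored.
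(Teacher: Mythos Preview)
Your argument is correct and follows essentially the same idea as the paper's proof: both rest on the degree mismatch that forces $\Psi$ to act by zero in every arity $m\neq 1$. The paper carries this out at the chain level with explicit representatives --- it observes that the cocycle $e_0^{\text{un}}[k+1]\in(\Sigma^{k+1}\mathpzc{E}^\dagger)(2)$ defining the binary product lives in a degree where $(\Sigma^k\mathpzc{E}^\dagger)(2)$ is already zero, so it is annihilated by $\Psi$, and likewise the arity-$0$ generator sits in degree $-(k+1)$ while the target is concentrated in degree $-k$ --- whereas you package the same observation as the vanishing of $\text{H}^\bullet(\Psi)$ in arities $m\neq 1$ and then invoke that $\text{H}^\bullet$ of a $\mathpzc{P}$-algebra is an algebra over $\text{H}^\bullet(\mathpzc{P})$ (via K\"unneth over $\F_p$). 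Your formulation treats all $m\ge 2$ uniformly, while the paper handles the binary case and implicitly reduces iterated products to it; either way the content is the same.
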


One can sum up this proposition as ``the instability of products''.

\begin{proof}
Let us first deal with non-empty products. Recall, from Definition~\ref{def:e_d^un_and_e_d^st}, our notation $e_d^{\text{un}}$, for $d \le 0$, for particular elements of $\mathpzc{E}^\dagger(2)$, where $e_d^{\text{un}}$ has degree $d$. Let $e_d^{\text{un}}[k]$ denote the corresponding element of $(\Sigma^k\mathpzc{E}^\dagger)(2) = \mathpzc{E}^\dagger(2)[k]$. Note that $e_d^{\text{un}}[k]$ has degree $d+k$. In particular $e_0^{\text{un}}[k]$ has degree $k$, and, since $(\Sigma^k\mathpzc{E}^\dagger)(2)$ is zero above degree $k$, the stabilization map $\Sigma^{k+1}\mathpzc{E}^\dagger \to \Sigma^k\mathpzc{E}^\dagger$ must map $e_0^{\text{un}}[k+1]$ to $0$.  Now, fix $k \ge 0$ and consider some product $(P^I c) \cdot (P^{I'} c')$ in $\mathcal{F}_{k+1}$. Let $[\eta]$ and $[\xi]$, respectively, be the images of $P^I c$ and $P^{I'} c'$ under the isomorphism $\mathcal{F}_{k+1} \to \text{H}^\bullet((\Sigma^{k+1}\mathbf{E}^\dagger)X)$. Then, by definition of the products in the cohomologies (see~\cite{May}), $(P^I c) \cdot (P^{I'} c')$ maps, under this same isomorphism, to $[e_0^{\text{un}}[k+1]_*(\eta, \xi)]$, by which we mean the class of the image of $e_0^{\text{un}}[k+1] \otimes \eta \otimes \xi$ under the map $(\Sigma^{k+1}\mathpzc{E}^\dagger)(2) \otimes ((\Sigma^{k+1}\mathbf{E}^\dagger)X)^{\otimes 2} \to (\Sigma^{k+1}\mathbf{E}^\dagger)X$. Since, as noted above, $e_0^{\text{un}}[k+1]$ maps to $0$ under $\Sigma^{k+1}\mathpzc{E}^\dagger \to \Sigma^k\mathpzc{E}^\dagger$, we have that the product $(P^I c) \cdot (P^{I'} c')$ maps to zero under the composite $\mathcal{F}_{k+1} \to \text{H}^\bullet((\Sigma^{k+1}\mathbf{E}^\dagger)X) \to \text{H}^\bullet((\Sigma^{k}\mathbf{E}^\dagger)X)$, and this proves the required result for the map $\mathcal{F}_{k+1} \to \mathcal{F}_k$, except in the case of the empty product. For the empty product, the argument is similar: one notes that the multiplicative unit in $\mathcal{F}_{k+1}$ corresponds to the generator in degree $-k-1$ of $(\Sigma^{k+1}\mathpzc{E}^\dagger)(0)$, and this generator is mapped to zero in $(\Sigma^k\mathpzc{E}^\dagger)(0)$ since the latter is zero in all degrees except degree $-k$.
\end{proof}

The result above motivates us to focus on the submodules $\Phi_{\mathrm{oper}}(M)$ inside $\Phi(M)$; these submodules omit all products in $\Phi(M)$, including the empty product. Note that, for any $\F_p$-module $M$, we have a filtration $\{\mathrm{F}_t\Phi_{\mathrm{oper}}(M)\}_{t \ge 0}$ where $\mathrm{F}_t\Phi_{\mathrm{oper}}(M)$ is the submodule generated by the terms $P^I \otimes m$ which satisfy $p^{l(I)} \le t$. Moreover, the inclusion $\Phi_{\mathrm{oper}}(M) \hookrightarrow \Phi(M)$ is then a filtered one.

\begin{Remark}\label{rmk:Phi_oper_as_I_mod_I2}
Given any graded $\F_p$-module $M$, an alternative construction of $\Phi_{\mathrm{oper}}(M)$ from $\Phi(M)$ is as follows. For each such $M$, $\Phi(M)$ admits an augmentation $\Phi(M) \to \F_p$ which maps a polynomial to its constant term. Let $I(M)$ denote the corresponding augmentation ideal of $\Phi(M)$. Then $\Phi_{\mathrm{oper}}(M)$ is precisely $I(M)/I(M)^2$. \customendremark
\end{Remark}

Given a cochain complex $X$, for brevity, for each $k \ge 0$, set
\[
\mathcal{F}_k^+ := \Phi_{\mathrm{oper}}((\mathrm{H}^\bullet X)[k])[-k].
\]

\begin{Remark}\label{rmk:basis_for_Phi_oper}
If $\{c_i\}$ is a basis of $\text{H}^\bullet(X)$, then $\mathcal{F}_k^+$ is the free $\F_p$-module on the terms $P^Ic_i$ where $I$ is admissible and $e(I) \le |c_i| + k$. \customendremark
\end{Remark}

We now have commutative diagrams as follows.

\begin{center}
\begin{tikzpicture}[node distance = 1.5cm]
\node [] (A) {$\mathcal{F}_1$};
\node [right of = A,xshift=1.5cm] (B) {$\mathcal{F}_0$};
\node [below of = A] (C) {$\text{H}^\bullet((\Sigma\mathbf{E}^\dagger) X)$};
\node [below of = B] (D) {$\text{H}^\bullet(\mathbf{E}^\dagger X)$};
\node [left of = A,xshift=-1.5cm] (E) {$\mathcal{F}_2$};
\node [left of = E,xshift=-1.5cm] (F) {$\cdots$};
\node [left of = C,xshift=-1.5cm] (G) {$\text{H}^\bullet((\Sigma^2 \mathbf{E}^\dagger) X)$};
\node [left of = G,xshift=-1.5cm] (H) {$\cdots$};
\node [above of = B] (X) {$\mathcal{F}_0^{+}$};
\node [above of = A] (Y) {$\mathcal{F}_1^{+}$};
\node [above of = E] (Z) {$\mathcal{F}_2^{+}$};
\node [above of = F] (W) {$\cdots$};

\draw [->] (A) -- (B) node[midway,anchor=south]{};
\draw [->] (A) -- (C) node[midway,anchor=west]{$\cong$};
\draw [->] (B) -- (D) node[midway,anchor=west]{$\cong$};
\draw [->] (C) -- (D) node[midway,anchor=south]{};
\draw [->] (F) -- (E) node[midway,anchor=south]{};
\draw [->] (E) -- (A) node[midway,anchor=south]{};
\draw [->] (H) -- (G) node[midway,anchor=south]{};
\draw [->] (G) -- (C) node[midway,anchor=south]{};
\draw [->] (E) -- (G) node[midway,anchor=west]{$\cong$};
\draw [->] (W) -- (Z);
\draw [->] (Z) -- (Y);
\draw [->] (Y) -- (X);
\draw [right hook->] (X) -- (B);
\draw [right hook->] (Y) -- (A);
\draw [right hook->] (Z) -- (E);
\end{tikzpicture}
\end{center}

\begin{center}
\begin{tikzpicture}[node distance = 1.5cm]
\node [] (A) {$\text{F}_t\mathcal{F}_1$};
\node [right of = A,xshift=1.5cm] (B) {$\text{F}_t\mathcal{F}_0$};
\node [below of = A] (C) {$\text{F}_t\text{H}^\bullet((\Sigma\mathbf{E}^\dagger) X)$};
\node [below of = B] (D) {$\text{F}_t\text{H}^\bullet(\mathbf{E}^\dagger X)$};
\node [left of = A,xshift=-1.5cm] (E) {$\text{F}_t\mathcal{F}_2$};
\node [left of = E,xshift=-1.5cm] (F) {$\cdots$};
\node [left of = C,xshift=-1.5cm] (G) {$\text{F}_t\text{H}^\bullet((\Sigma^2 \mathbf{E}^\dagger) X)$};
\node [left of = G,xshift=-1.5cm] (H) {$\cdots$};
\node [above of = B] (X) {$\text{F}_t\mathcal{F}_0^{+}$};
\node [above of = A] (Y) {$\text{F}_t\mathcal{F}_1^{+}$};
\node [above of = E] (Z) {$\text{F}_t\mathcal{F}_2^{+}$};
\node [above of = F] (W) {$\cdots$};

\draw [->] (A) -- (B) node[midway,anchor=south]{};
\draw [->] (A) -- (C) node[midway,anchor=west]{$\cong$};
\draw [->] (B) -- (D) node[midway,anchor=west]{$\cong$};
\draw [->] (C) -- (D) node[midway,anchor=south]{};
\draw [->] (F) -- (E) node[midway,anchor=south]{};
\draw [->] (E) -- (A) node[midway,anchor=south]{};
\draw [->] (H) -- (G) node[midway,anchor=south]{};
\draw [->] (G) -- (C) node[midway,anchor=south]{};
\draw [->] (E) -- (G) node[midway,anchor=west]{$\cong$};
\draw [->] (W) -- (Z);
\draw [->] (Z) -- (Y);
\draw [->] (Y) -- (X);
\draw [right hook->] (X) -- (B);
\draw [right hook->] (Y) -- (A);
\draw [right hook->] (Z) -- (E);
\end{tikzpicture}
\end{center}

\begin{Proposition}\label{prop:proddisappear}
If $X$ is a cochain complex, and $t \ge 0$, the induced maps
\[
\lim_{k \ge 0} \mathcal{F}^{+}_k \to \lim_{k \ge 0} \emph{H}^\bullet((\Sigma^k \mathbf{E}^\dagger) X)
\qquad
\lim_{k \ge 0} \emph{F}_t\mathcal{F}^{+}_k \to \lim_{k \ge 0} \emph{F}_t\emph{H}^\bullet((\Sigma^k \mathbf{E}^\dagger) X)
\]
are isomorphisms.
\end{Proposition}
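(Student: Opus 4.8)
The plan is to reduce both isomorphisms to a single structural fact about the transition maps $\psi_k \colon \mathcal{F}_{k+1} \to \mathcal{F}_k$, after which only formal manipulations of towers of graded modules remain. First I would fix the decomposition coming from the free graded-commutative algebra structure: write $\mathcal{F}_k = \mathcal{F}_k^{+} \oplus C_k$ as graded modules, where $C_k$ is spanned by the multiplicative unit together with all products $(P^{I_1}c_{i_1}) \cdots (P^{I_r}c_{i_r})$ with $r \ge 2$, so that $\mathcal{F}_k^{+}$ is the span of the length-one monomials $P^Ic_i$ and $C_k$ the span of everything else in the monomial basis. Intersecting with the length filtration gives $\mathrm{F}_t\mathcal{F}_k = \mathrm{F}_t\mathcal{F}_k^{+} \oplus \mathrm{F}_tC_k$, since the unit lies in every $\mathrm{F}_t$, a generator $P^Ic_i$ enters $\mathrm{F}_t\mathcal{F}_k^{+}$ exactly when $p^{l(I)} \le t$, and the decomposables are filtered by $\sum_j p^{l(I_j)} \le t$.

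The key claim is that $\psi_k$ is \emph{block diagonal} for this decomposition: it sends $C_{k+1}$ to $0$ and carries $\mathcal{F}_{k+1}^{+}$ into $\mathcal{F}_k^{+}$. The first half is precisely Proposition~\ref{prop:instab_prods}. For the second half I would use the identification $\mathcal{F}_k \cong \mathrm{H}^\bullet((\Sigma^k\mathbf{E}^\dagger)X)$ together with compatibility of the stabilization map $\Sigma^{k+1}\mathpzc{E}^\dagger \to \Sigma^k\mathpzc{E}^\dagger$ with the cohomology operations: the operations on $\Sigma^k\mathpzc{E}^\dagger$-algebras are built from the arity-$2$ part exactly as in Proposition~\ref{prop:opsalgEstconstrp=2}, and the same degree bookkeeping used in the proof of Proposition~\ref{prop:instab_prods} shows the stabilization map sends the relevant arity-$2$ elements $e_d^{\text{un}}[k+1]$ either to $e_{d+1}^{\text{un}}[k]$ (possibly twisted by $\tau$) or to $0$. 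Consequently $\psi_k$ carries the generator $P^Ic_i$ of $\mathcal{F}_{k+1}^{+}$ to the generator $P^Ic_i$ of $\mathcal{F}_k^{+}$ when $e(I) < |c_i| + k$, and to $0$ in the only borderline case $e(I) = |c_i| + k$ (recall $e(I) < |c_i| + k + 1$ already holds); either way the image lies in $\mathcal{F}_k^{+}$, and since $\psi_k$ preserves lengths this also holds after applying $\mathrm{F}_t$. This verification is the one genuinely non-formal step: nothing here is deep, but it requires unwinding the operad action on free algebras and tracking the stabilization map in arity $2$, and I expect it to be the main obstacle.

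Granting the key claim, I would finish the unfiltered statement as follows. One obtains a short exact sequence of towers of graded modules
\[
0 \longrightarrow \{\mathcal{F}_k^{+}\} \longrightarrow \{\mathcal{F}_k\} \longrightarrow \{C_k,\, 0\} \longrightarrow 0,
\]
in which the transition maps of the third tower vanish precisely because $\psi_k$ kills $C_{k+1}$ while preserving $\mathcal{F}_k^{+}$. Since $\lim_k$ is left exact and $\lim_k C_k = 0$ for a tower with zero transition maps, the inclusion induces an isomorphism $\lim_k \mathcal{F}_k^{+} \xrightarrow{\sim} \lim_k \mathcal{F}_k$; composing with the isomorphism $\lim_k \mathcal{F}_k \cong \lim_k \mathrm{H}^\bullet((\Sigma^k\mathbf{E}^\dagger)X)$ coming from the commutative ladder of isomorphisms displayed before the statement yields the first asserted isomorphism.

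For the filtered statement I would run the identical argument one filtration level at a time: since $\psi_k$ is a filtered map which is block diagonal with respect to $\mathrm{F}_t\mathcal{F}_k = \mathrm{F}_t\mathcal{F}_k^{+} \oplus \mathrm{F}_tC_k$, one gets a short exact sequence of towers $0 \to \{\mathrm{F}_t\mathcal{F}_k^{+}\} \to \{\mathrm{F}_t\mathcal{F}_k\} \to \{\mathrm{F}_tC_k,\, 0\} \to 0$, whence $\lim_k \mathrm{F}_t\mathcal{F}_k^{+} \cong \lim_k \mathrm{F}_t\mathcal{F}_k$ by left exactness together with $\lim_k \mathrm{F}_tC_k = 0$; composing with the filtered ladder of isomorphisms $\mathrm{F}_t\mathcal{F}_k \cong \mathrm{F}_t\mathrm{H}^\bullet((\Sigma^k\mathbf{E}^\dagger)X)$ gives the second asserted isomorphism.
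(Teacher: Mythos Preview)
Your approach is correct and essentially the same as the paper's: both reduce to showing that $\lim_k \mathcal{F}_k^{+} \to \lim_k \mathcal{F}_k$ is an isomorphism using that $\psi_k$ annihilates $C_{k+1}$ (Proposition~\ref{prop:instab_prods}) and preserves $\mathcal{F}_k^{+}$. The paper is much terser because the fact that $\psi_k(\mathcal{F}_{k+1}^{+}) \subseteq \mathcal{F}_k^{+}$ is already built into the commutative diagrams displayed immediately before the statement, and it then argues directly via the infinite-tuple description of inverse limits rather than your short exact sequence of towers; these are equivalent packagings of the same observation.
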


\begin{proof}
We shall demonstrate the case of the first map, from $\lim_k \mathcal{F}^{+}_k$ to $\lim_k \text{H}^\bullet((\Sigma^k \mathbf{E}^\dagger) X)$; the filtered case is analogous. We need to show that the induced map $\lim_k \mathcal{F}^{+}_k \to \lim_k \mathcal{F}_k$ is an isomorphism, as we already know that the map from $\lim_k \mathcal{F}_k$ to $\lim_k \text{H}^\bullet((\Sigma^k \mathbf{E}^\dagger) X)$ is an isomorphism. Injectivity is obvious from the standard description of inverse limits of towers via infinite tuples and, moreover, surjectivity follows from this description and the instability of products described in Proposition~\ref{prop:instab_prods}.
\end{proof}

\begin{Remark}
We mentioned earlier, in Section~\ref{sec:stabops}, that, while the cohomologies of algebras over $\mathpzc{E}^\dagger$ contain certain products, analogous products are not definable in the cohomologies of algebras over $\mathpzc{E}_{\text{st}}^\dagger$. Proposition~\ref{prop:proddisappear} allows us to see this lack of products in the limit concretely in the case of free algebras. \customendremark
\end{Remark}

We have now identified the limit which appears in the definition of $\mathrm{A}(-)$. The remaining piece in the definition of $\mathrm{A}(-)$ is the colimit. The next result computes this colimit and so computes $\mathrm{A}(-)$ itself.

\begin{Proposition}\label{prop:computation_of_A}
Given a cochain complex $X$ and a basis $\{c_i\}$ of $\mathrm{H}^\bullet(X)$, in degree say $d \in \Z$, $\mathrm{A}(X)$ is isomorphic to the $\F_p$-module which consists of the infinite sums
\[
\sum a_{I_i,c_i}(P^{I_i}c_i)
\]
satisfying the following requirements:
\begin{itemize}
	\item For all $(I_i,c_i)$, if $a_{I_i,c_i} \neq 0$, $d(I_i) + |c_i| = d$.
	\item The set of lengths $\{l(I_i) \mid a_{I_i,c_i} \neq 0\}$ is bounded above, or, equivalently, finite.
	\item For any $k \ge 0$, $\#\{(I_i, c_i) \mid a_{I_i,c_i} \neq 0, e(I_i) \le |c_i| + k\}$ is finite.
\end{itemize}

\end{Proposition}

Note that, by the infinite sum above, we mean the function $f \colon \{(I_i, c_i) \mid I_i \:\: \text{admissible}\} \to \F_p$, where $f(I_i,c_i) = a_{I_i,c_i}$.

\begin{proof}
By definition, we have $\mathrm{A}(X) = \mathrm{colim}_{t \ge 0} \lim_{k \ge 0} \mathrm{F}_t\mathrm{H}^\bullet((\Sigma^k\mathbf{E}^\dagger)X)$. By Proposition~\ref{prop:proddisappear}, we have that, for each $t \ge 0$, $\lim_{k \ge 0} \mathrm{F}_t\mathrm{H}^\bullet((\Sigma^k\mathbf{E}^\dagger)X)$ is isomorphic to $\lim_{k \ge 0} \mathrm{F}_t\mathcal{F}^{+}_k$. By construction, $\mathrm{F}_t\mathcal{F}^{+}_k$ is isomorphic to the free graded $\F_p$-module on the monomials $P^Ic_i$ which satisfy $e(I) \le |c_i|+k$ and $p^{l(I)} \le t$. Moreover, inspecting the tower in the proof of Proposition~\ref{prop:Est(2)} and recalling the definition of the operations $P^s$ as in Remark~\ref{rmk:stable_vs_unstable_ops}(1), under these isomorphisms, the map $\mathrm{F}_t\mathcal{F}^{+}_{k+1} \to \mathrm{F}_t\mathcal{F}^{+}_k$ simply kills each monomial $P^Ic_i$ where $e(I) = |c_i|+k+1$, while fixing all other monomials. The relevant limit and colimit are now easily computed and the result follows.
\end{proof}

The above computation of $\mathrm{A}(-)$ allows us to demonstrate the following a priori unexpected property of this functor. 

\begin{Proposition}\label{prop:ABadditive}
The functor $\mathrm{A}$ preserves arbitrary direct sums.
\end{Proposition}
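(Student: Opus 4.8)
The plan is to extract from Proposition~\ref{prop:proddisappear} a description of $\text{A}(X)$ purely in terms of $\text{H}^\bullet(X)$ and the combinatorics of admissible multi-indices, and then to observe that this description is manifestly additive. First I would note that, by Proposition~\ref{prop:proddisappear}, for each $t \ge 0$ the natural map $\lim_{k}\text{F}_t\mathcal{F}^{+}_k \to \lim_{k}\text{H}^\bullet((\Sigma^k\mathbf{E}^\dagger)X)$ restricts to an isomorphism onto $\lim_{k}\text{F}_t\text{H}^\bullet((\Sigma^k\mathbf{E}^\dagger)X)$, so that, passing to the colimit over $t$, we obtain a natural isomorphism $\text{A}(X) \cong \underset{t}{\text{colim}}\,\underset{k}{\lim}\,\text{F}_t\mathcal{F}^{+}_k$. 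Hence it suffices to show that the functor $X \mapsto \underset{t}{\text{colim}}\,\underset{k}{\lim}\,\text{F}_t\mathcal{F}^{+}_k$ preserves finite direct sums and sends $0$ to $0$.

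Next I would analyse the building blocks $\text{F}_t\mathcal{F}^{+}_k$. By construction $\text{F}_t\mathcal{F}^{+}_k$ is the free graded $\F_p$-module on the monomials $P^Ic_i$ with $I$ admissible, $e(I) < |c_i| + k$ and $p^{l(I)} \le t$, where $\{c_i\}$ is a chosen basis of $\text{H}^\bullet(X)$; thus it depends on $X$ only through the graded module $\text{H}^\bullet(X)$ and is an additive functor of it, since a basis of $\text{H}^\bullet(X_1) \oplus \text{H}^\bullet(X_2)$ may be taken to be the disjoint union of bases of the two summands, giving $\text{F}_t\mathcal{F}^{+}_k(X_1 \oplus X_2) = \text{F}_t\mathcal{F}^{+}_k(X_1) \oplus \text{F}_t\mathcal{F}^{+}_k(X_2)$ compatibly with the projections. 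I would then check that the structural maps of the double diagram are natural transformations of such additive functors of $\text{H}^\bullet(X)$: the length-filtration inclusions $\text{F}_t\mathcal{F}^{+}_k \hookrightarrow \text{F}_{t'}\mathcal{F}^{+}_k$ involve only the index $I$, so trivially respect the splitting by $c_i$; and the tower maps $\text{F}_t\mathcal{F}^{+}_{k+1} \to \text{F}_t\mathcal{F}^{+}_k$, being the restrictions to the ``no products'' part $\mathcal{F}^{+}_k \hookrightarrow \mathcal{F}_k \cong \text{H}^\bullet((\Sigma^k\mathbf{E}^\dagger)X)$ of the maps induced by the operad stabilizations $\Sigma^{k+1}\mathpzc{E}^\dagger \to \Sigma^k\mathpzc{E}^\dagger$, send each generator $P^Ic_i$ (an iterated operation applied to $c_i$) to $\pm P^Ic_i$ or to $0$ — this follows from the explicit form of the stabilization maps recorded in the proof of Proposition~\ref{prop:Est(2)} and the fact that these maps kill products (Proposition~\ref{prop:instab_prods}) — and so in particular they preserve the decomposition indexed by which basis element $c_i$ occurs.

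Finally I would conclude by commuting the limit and colimit past finite direct sums. A finite direct sum of graded modules is both a finite product and a finite coproduct, hence commutes with the inverse limit $\lim_k$ and with the colimit $\text{colim}_t$ respectively; applying this to the tower and colimit of split diagrams from the previous paragraph yields $\text{A}(X_1 \oplus X_2) \cong \text{A}(X_1) \oplus \text{A}(X_2)$, and the resulting isomorphism is the map induced by the two projections. Moreover $\text{H}^\bullet(0) = 0$ admits the empty basis, so $\mathcal{F}^{+}_k$ has no generators and $\text{A}(0) = 0$. Therefore $\text{A}$ is additive.

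The step I expect to be the main obstacle is the verification, in the second paragraph, that the tower maps really do restrict to the linear part $\mathcal{F}^{+}_k$ and act without mixing distinct cohomology classes $c_i$: at finite stages the monad $\mathbf{E}^\dagger$ is genuinely non-additive (its free algebras carry a product, and $\mathbf{E}^\dagger(X_1 \oplus X_2)$ has honest cross-terms), so this is precisely the point where one must use the instability of products together with the concrete description of the stabilization maps. Once that compatibility is in place the rest of the argument is formal.
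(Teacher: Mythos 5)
Your proposal is correct and takes essentially the same route as the paper: both reduce to the additivity of $\text{colim}_t\lim_k\text{F}_t\mathcal{F}^{+}_k$ via Proposition~\ref{prop:proddisappear}; the paper then writes this out as an explicit module of constrained infinite sums $\sum a_{I_i,c_i}P^{I_i}c_i$ and reads off additivity, while you commute $\lim$ and $\text{colim}$ past finite direct sums, which is the same point packaged abstractly. The fact you single out as the main obstacle --- that the tower maps carry each $P^Ic_i$ to itself or to zero and hence respect the splitting over the chosen basis of $\text{H}^\bullet(X)$ --- is indeed the load-bearing observation, and the paper's explicit description of $\text{A}(X)$ relies on it just as tacitly as you do.
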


\begin{proof}
This is clear from the computation in Proposition~\ref{prop:computation_of_A} since, if $\{X_i\}_{i \in I}$ is a family of cochain complexes, and $\{c^i_j\}_{j \in J}$ is a basis, for each $i \in I$, of $X_i$, then $\amalg_{i \in I} \{c^i_j\}_{j \in J}$ is a basis of $\bigoplus_{i \in I} X_i$. Alternatively, noting that we have, by definition, $
\mathrm{A}(X) = \mathrm{colim}_{t \ge 0} \lim_{k \ge 0} \mathrm{F}_t\mathrm{H}^\bullet((\Sigma^k\mathbf{E}^\dagger)X)$, and that colimits preserve arbitrary direct sums, we need only show that the limit $\lim_{k \ge 0} \mathrm{F}_t\mathrm{H}^\bullet((\Sigma^k\mathbf{E}^\dagger)X)$ preserves arbitrary direct sums, and this follows from Proposition~\ref{prop:proddisappear}, since, by definition, we have that $\mathcal{F}_k^+ = \Phi_{\mathrm{oper}}((\mathrm{H}^\bullet X)[k])[-k]$ and $\Phi_{\mathrm{oper}}$ clearly preserves arbitrary direct sums.
\end{proof}

Having completed our study of $\mathrm{A}(-)$, we now describe how, given a cochain complex $X$, $\mathrm{A}(X)$ is intermediate between $\text{H}^\bullet(\mathbf{E}_{\text{st}}^\dagger X)$ and $\widehat{\mathcal{B}} \otimes \text{H}^\bullet (X)$. Specifically, we shall construct maps
\[
\mathrm{H}^\bullet(\mathbf{E}_{\mathrm{st}}^\dagger X) \longrightarrow \mathrm{A}(X) \longleftarrow \widehat{\mathcal{B}} \otimes \mathrm{H}^\bullet (X).
\]
Via the canonical maps $\mathpzc{E}_{\text{st}}^\dagger \to \Sigma^k\mathpzc{E}^\dagger$, we get canonical maps $\mathbf{E}_{\textbf{st}}^\dagger X \to (\Sigma^k\mathbf{E}^\dagger)X$, for $k \ge 0$. These are in fact filtered maps, in that they decompose into maps $\text{F}_t\text{H}^\bullet(\mathbf{E}_{\textbf{st}}^\dagger X) \to \text{F}_t\text{H}^\bullet((\Sigma^k\mathbf{E}^\dagger)X)$. These latter maps are compatible with the maps from $\text{F}_t\text{H}^\bullet((\Sigma^{k+1}\mathbf{E}^\dagger)X)$ to $\text{F}_t\text{H}^\bullet((\Sigma^k\mathbf{E}^\dagger)X)$, so that we get an induced map $\text{F}_t\text{H}^\bullet(\mathbf{E}_{\textbf{st}}^\dagger X) \to \lim_{k \ge 0} \text{F}_t\text{H}^\bullet((\Sigma^k\mathbf{E}^\dagger)X)$. Upon taking colimits over $t$, we get an induced natural map
\begin{equation}\label{eqn:Phi_1}
\Phi_1 \colon \text{H}^\bullet(\mathbf{E}_{\text{st}}^\dagger X) \to \text{A}(X).
\end{equation}
Now consider $\widehat{\mathcal{B}} \otimes \text{H}^\bullet(X)$. We construct a natural map
\begin{equation}\label{eqn:Phi_2}
\Phi_2 \colon \widehat{\mathcal{B}} \otimes \text{H}^\bullet (X) \to \text{A}(X).
\end{equation}
This map is defined as follows; the idea is that the term $\text{H}^\bullet((\Sigma^k\mathbf{E}^\dagger)X)$ which arises in the construction of $\text{A}(X)$ inherits an action by the term $\mathcal{B}_{\le k}$ which arises in the construction of $\widehat{\mathcal{B}}$. Recall, as we saw in Propostion~\ref{prop:FtBhatcomplete}, that we have $\widehat{\mathcal{B}} = \text{colim}_{t \ge 0} \lim_{k \ge 0} \text{F}_t \mathcal{B}_{\le k}$. For each $d_1, d_2$, we need to specify a map $\widehat{\mathcal{B}}_{d_1} \otimes \text{H}^{d_2} (X) = \text{colim}_{t \ge 0} (\text{F}_{t}\widehat{\mathcal{B}}_{d_1} \otimes \text{H}^{d_2}(X)) \to \text{colim}_{t \ge 0} \lim_{k \ge 0} \text{F}_t\text{H}^{d_1 + d_2}((\Sigma^k\mathbf{E}^\dagger)X)$. Fix some $t_0 \ge 0$. For each $k \ge 0$, consider the following composite
\[
\text{F}_{t_0}\widehat{\mathcal{B}}_{d_1} \otimes \text{H}^{d_2}(X) \to \text{F}_{t_0}(\mathcal{B}_{\le d_2+k})_{d_1} \otimes \text{F}_1\text{H}^{d_2}((\Sigma^k\mathbf{E}^\dagger)X) \to \text{F}_{t_0+1}\text{H}^{d_1+d_2} ((\Sigma^k\mathbf{E}^\dagger)X).
\]
Here the first map is induced by the canonical map $\text{F}_{t_0}\widehat{\mathcal{B}}_{d_1} \to \text{F}_{t_0}(\mathcal{B}_{\le d_2+k})_{d_1}$ together with the unit of adjunction $X \to (\Sigma^k\mathbf{E}^\dagger)X$. The second map comes from Proposition~\ref{prop:cohomology_of_free_algebras_of_suspended_E}; or, more specifically, from the action of $\mathcal{B}$ on $\mathrm{H}^\bullet((\Sigma^k\mathbf{E}^\dagger)X)$. In short, given a degree $d$ cohomology class $c$ in $\text{H}^\bullet(X)$ and an infinite sum in $\widehat{\mathcal{B}}$, we pass from the former to a cohomology class $c'$ in $\text{H}^\bullet((\Sigma^k\mathbf{E}^\dagger)X)$, project the infinite sum onto the finite sub-sum comprising just the summands of excess at most $d+k$, and then act on $c'$ by the generalized Steenrod operations. Now, an easy check shows that the above composite maps are compatible with the maps $\text{F}_{t_0+1}\text{H}^{d_1+d_2}((\Sigma^{k+1}\mathbf{E}^\dagger)X) \to \text{F}_{t_0+1}\text{H}^{d_1+d_2}((\Sigma^k\mathbf{E}^\dagger)X)$, so that we get an induced map
\[
\text{F}_{t_0}\widehat{\mathcal{B}}_{d_1} \otimes \text{H}^{d_2}(X) \to \lim_{k \ge 0} \text{F}_{t_0+1}\text{H}^{d_1+d_2} ((\Sigma^k\mathbf{E}^\dagger)X) \hookrightarrow \underset{t \ge 0}{\text{colim}} \lim_{k \ge 0} \text{F}_t\text{H}^{d_1 + d_2}((\Sigma^k\mathbf{E}^\dagger)X).
\]
Finally, we note that the above composite maps are compatible with the inclusions $\text{F}_t \widehat{\mathcal{B}}_{d_1} \hookrightarrow \text{F}_{t+1} \widehat{\mathcal{B}}_{d_1}$, so that we get induced the desired map from $\widehat{\mathcal{B}}_{d_1} \otimes \text{H}^{d_2} (X)$ to $\text{colim}_{t \ge 0} \lim_{k \ge 0} \text{F}_t\text{H}^{d_1 + d_2}((\Sigma^k\mathbf{E}^\dagger)X)$, completing the construction of $\Phi_2$. \\

Given a cochain complex $X$, the maps $\Phi_1$ and $\Phi_2$ will allow us to relate $\text{H}^\bullet(\mathbf{E}_{\textbf{st}}^\dagger X)$ and $\widehat{\mathcal{B}} \otimes \text{H}^\bullet(X)$, via $\text{A}(X)$. In order to do this, we first need a lemma.
 
\begin{Lemma}\label{lem:stabmapontocx}
If $X$ is a cochain complex, the towers 
\[
\cdots \to (\Sigma^2\mathbf{E}^\dagger)X \to (\Sigma\mathbf{E}^\dagger)X \to \mathbf{E}^\dagger X
\]
\[
\cdots \to \emph{H}^\bullet((\Sigma^2\mathbf{E}^\dagger)X) \to \emph{H}^\bullet((\Sigma\mathbf{E}^\dagger)X) \to \emph{H}^\bullet(\mathbf{E}^\dagger X)
\]
satisfy the Mittag-Leffler condition. Moreover, for each $t \ge 0$, the towers
\[
\cdots \to \emph{F}_t(\Sigma^2\mathbf{E}^\dagger)X \to \emph{F}_t(\Sigma\mathbf{E}^\dagger)X \to \emph{F}_t\mathbf{E}^\dagger X
\]
\[
\cdots \to \emph{F}_t\emph{H}^\bullet((\Sigma^2\mathbf{E}^\dagger)X) \to \emph{F}_t\emph{H}^\bullet((\Sigma\mathbf{E}^\dagger)X) \to \emph{F}_t\emph{H}^\bullet(\mathbf{E}^\dagger X)
\]
also satisfy the Mittag-Leffler condition. 
\end{Lemma}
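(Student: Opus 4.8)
The plan is to prove each pair of Mittag-Leffler assertions by exhibiting, for each index $k$ (and each $t$ in the filtered statements), a fixed submodule which equals the image of \emph{every} map into the $k$-th stage coming from sufficiently far back in the tower; stable images then give the Mittag-Leffler condition immediately. The two ingredients are Proposition~\ref{prop:stabmapontoML} for the towers of free-algebra complexes, and the explicit model $\mathcal{F}_k$ for $\text{H}^\bullet((\Sigma^k\mathbf{E}^\dagger)X)$ together with Proposition~\ref{prop:instab_prods} for the cohomology towers.

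For the first pair, the point is that the connecting map $(\Sigma^{k+1}\mathbf{E}^\dagger)X \to (\Sigma^k\mathbf{E}^\dagger)X$, being induced by the operad map $\Psi\colon \Sigma^{k+1}\mathpzc{E}^\dagger \to \Sigma^k\mathpzc{E}^\dagger$, is the direct sum over $n \ge 0$ of the maps $(\Sigma^{k+1}\mathpzc{E}^\dagger)(n) \otimes_{\Sigma_n} X^{\otimes n} \to (\Sigma^k\mathpzc{E}^\dagger)(n) \otimes_{\Sigma_n} X^{\otimes n}$, and this decomposition is compatible with the arity filtration $\text{F}_t(\Sigma^k\mathbf{E}^\dagger)X = \bigoplus_{n \le t} (\Sigma^k\mathpzc{E}^\dagger)(n) \otimes_{\Sigma_n} X^{\otimes n}$. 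By Proposition~\ref{prop:stabmapontoML}, for $n \ge 1$ the arity-$n$ map $(\Sigma^{k+1}\mathpzc{E}^\dagger)(n) \to (\Sigma^k\mathpzc{E}^\dagger)(n)$ is onto, which is preserved after applying $- \otimes_{\Sigma_n} X^{\otimes n}$ (right exactness of the tensor product and of coinvariants); for $n = 0$, $(\Sigma^k\mathpzc{E}^\dagger)(0)$ is a shift of $\F_p$ and the map is zero, as already observed in the proof of that proposition. Hence for every $k' \ge k+1$ the image of $(\Sigma^{k'}\mathbf{E}^\dagger)X \to (\Sigma^k\mathbf{E}^\dagger)X$ is exactly $\bigoplus_{n \ge 1} (\Sigma^k\mathpzc{E}^\dagger)(n) \otimes_{\Sigma_n} X^{\otimes n}$, a value independent of $k'$; the filtered statement is identical with the sums restricted to $1 \le n \le t$. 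Stable images yield the Mittag-Leffler condition.

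For the second pair, I would first use Proposition~\ref{prop:freealgsuspop} together with the standard computation of $\text{H}^\bullet(\mathbf{E}^\dagger(-))$ to identify $\text{H}^\bullet((\Sigma^k\mathbf{E}^\dagger)X)$ with the object $\mathcal{F}_k$ and its pieces $\text{F}_t\mathcal{F}_k$, compatibly with the connecting maps, exactly as in the diagrams assembled just before the lemma. By Proposition~\ref{prop:instab_prods} the connecting map $\mathcal{F}_{k+1} \to \mathcal{F}_k$ annihilates every product, including the unit, so its image — generated by the images of the generators of $\mathcal{F}_{k+1}$ — lies in the submodule $\mathcal{F}^+_k$ spanned by the single-operation generators; the same then holds for any composite $\mathcal{F}_{k'} \to \mathcal{F}_k$ with $k' \ge k+1$. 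Conversely, a single-operation generator $P^I c_i$ of $\mathcal{F}_k$ has $e(I) < |c_i| + k \le |c_i| + k''$ for every $k'' \ge k$, so the like-named generator of $\mathcal{F}_{k'}$ is carried to $P^I c_i$ at each stage down to $\mathcal{F}_k$ (the connecting maps fix single-operation generators whose excess lies strictly below the relevant bound — this is exactly what underlies the surjectivity assertion in the proof of Proposition~\ref{prop:proddisappear}), whence $\mathcal{F}^+_k \subseteq \text{im}(\mathcal{F}_{k'} \to \mathcal{F}_k)$. Thus $\text{im}(\mathcal{F}_{k'} \to \mathcal{F}_k) = \mathcal{F}^+_k$ for all $k' \ge k+1$, a stable value, and the Mittag-Leffler condition follows; the filtered case is the same argument with $\mathcal{F}_k$, $\mathcal{F}^+_k$ replaced by $\text{F}_t\mathcal{F}_k$, $\text{F}_t\mathcal{F}^+_k$, using that the connecting maps respect the filtrations.

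The main obstacle is not conceptual but bookkeeping: pinning down the connecting maps $\mathcal{F}_{k+1} \to \mathcal{F}_k$ precisely enough to know both that they kill products and that they fix the low-excess generators. Both facts are already part of the structure set up in the preceding subsection — the first is Proposition~\ref{prop:instab_prods}, the second is implicit in the proof of Proposition~\ref{prop:proddisappear} — so no genuinely new input is required; one should, however, also record that the arity filtration on $(\Sigma^k\mathbf{E}^\dagger)X$ is the one whose induced filtration on cohomology is the stated $\text{F}_t\text{H}^\bullet((\Sigma^k\mathbf{E}^\dagger)X) = \bigoplus_{n \le t} \text{H}^\bullet((\Sigma^k\mathpzc{E}^\dagger)(n) \otimes_{\Sigma_n} X^{\otimes n})$, which is automatic since $\text{H}^\bullet$ commutes with direct sums.
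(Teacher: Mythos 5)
Your proposal is correct and follows essentially the same route as the paper's proof: both reduce to the arity-wise towers, deduce the first pair from the surjectivity in Proposition~\ref{prop:stabmapontoML} (preserved by $-\otimes_{\Sigma_n}X^{\otimes n}$), and handle the cohomology towers by identifying the stable image inside $\mathcal{F}_k$ (respectively $\text{F}_t\mathcal{F}_k$) as the span of the low-excess single-operation generators, with products killed by Proposition~\ref{prop:instab_prods}. The paper organizes the argument slightly differently by stating the arity-$n$ Mittag-Leffler claims once and deducing all four towers from them, but the content is the same.
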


\begin{proof}
We shall show that, for each $n \ge 0$ and any cochain complex $X$, the towers
\[
\cdots \to (\Sigma^{2}\mathpzc{E}^\dagger)(n) \otimes_{\Sigma_n} X^{\otimes n} \to (\Sigma\mathpzc{E}^\dagger)(n) \otimes_{\Sigma_n} X^{\otimes n} \to \mathpzc{E}^\dagger(n) \otimes_{\Sigma_n} X^{\otimes n}
\]
\[
\cdots \to \text{H}^\bullet((\Sigma^{2}\mathpzc{E}^\dagger)(n) \otimes_{\Sigma_n} X^{\otimes n}) \to \text{H}^\bullet((\Sigma\mathpzc{E}^\dagger)(n) \otimes_{\Sigma_n} X^{\otimes n}) \to \text{H}^\bullet(\mathpzc{E}^\dagger(n) \otimes_{\Sigma_n} X^{\otimes n})
\]
satisfy the Mittag-Leffler condition, from which the desired results follow immediately. The Mittag-Leffler property for the first of the two towers above follows immediately from the Mittag-Leffler property of the tower $\cdots \to (\Sigma^{2}\mathpzc{E}^\dagger)(n) \to (\Sigma\mathpzc{E}^\dagger)(n) \to \mathpzc{E}^\dagger(n)$, which was established in Proposition~\ref{prop:stabmapontoML}. Consider then the second tower. Let $\{c_i\}$ denote a basis of $\text{H}^\bullet(X)$. As above, for each $k \ge 0$, $\mathrm{H}^\bullet((\Sigma^k\mathbf{E}^\dagger)X)$ can be identified with the free graded-commutative algebra over $\F_p$ on the terms $P^Ic_i$ where $I$ is admissible and $e(I) \le |c_i| + k$. Moreover, for each $t \ge 0$, $\text{F}_t\text{H}^\bullet((\Sigma^k\mathbf{E}^\dagger)X)$ can be identified with the $\F_p$-submodule of this free graded-commutative algebra generated by the products $(P^{I_1}c_1) \cdots (P^{I_r}c_r)$, where $r \ge 0$, each $I_i$ is admissible, $e(I_i) < |c_i| + k$ and $p^{l(I_1)} + \cdots + p^{l(I_r)} \le t$. Inspecting the definitions of the filtrations, we have that, under these identifications, $\text{H}^\bullet((\Sigma^k\mathpzc{E}^\dagger)(n) \otimes_{\Sigma_n} X^{\otimes n})$ is the $\F_p$-submodule generated by the terms $(P^{I_1}c_1) \cdots (P^{I_r}c_r)$ where $r \ge 0$, each $I_i$ is admissible, $e(I_i) < |c_i| + k$ for each $i$, and $p^{l(I_1)} + \cdots + p^{l(I_k)} = n$. Now, given a fixed $k$, the map $\text{H}^\bullet((\Sigma^{k+1}\mathpzc{E}^\dagger)(n) \otimes_{\Sigma_n} X^{\otimes n}) \to \text{H}^\bullet((\Sigma^k\mathpzc{E}^\dagger)(n) \otimes_{\Sigma_n} X^{\otimes n})$ sends any $P^Ic_i$ which satisfies, not only that $e(I) \le |c_i| + k + 1$, but also $e(I) \le |c_i| + k$, to itself, so that all monomials $P^Ic_i$ in $\text{H}^\bullet((\Sigma^k\mathpzc{E}^\dagger)(n) \otimes_{\Sigma_n} X^{\otimes n})$ occur in the image. On the other hand, by Proposition~\ref{prop:instab_prods}, all products are killed, and in particular, no products occur in the image. We have thus identified the image of the map $\text{H}^\bullet((\Sigma^{k+1}\mathpzc{E}^\dagger)(n) \otimes_{\Sigma_n} X^{\otimes n}) \to \text{H}^\bullet((\Sigma^k\mathpzc{E}^\dagger)(n) \otimes_{\Sigma_n} X^{\otimes n})$, and, moreover, essentially the same argument shows that all maps $\text{H}^\bullet((\Sigma^{k'}\mathpzc{E}^\dagger)(n) \otimes_{\Sigma_n} X^{\otimes n}) \to \text{H}^\bullet((\Sigma^k\mathpzc{E}^\dagger)(n) \otimes_{\Sigma_n} X^{\otimes n})$, for $k' \ge k+1$, have this same image, which gives us the desired result.
\end{proof}

We can now finally prove that $\text{H}^\bullet(\mathbf{E}_{\textbf{st}}^\dagger X)$ and $\widehat{\mathcal{B}} \otimes \text{H}^\bullet(X)$ are naturally isomorphic, thus computing the cohomologies of free algebras over the stable Barratt-Eccles cochain operad $\mathpzc{E}_{\text{st}}^\dagger$.

\begin{Proposition}\label{prop:stablefreehom}
Given any cochain complex $X$, the maps $\Phi_1$ and $\Phi_2$ above are isomorphisms. Thus, for cochain complexes $X$, we have a natural isomorphism
\[
\mathrm{H}^\bullet(\mathbf{E}_{\mathbf{st}}^\dagger X) \cong \widehat{\mathcal{B}} \otimes \mathrm{H}^\bullet (X).
\]
\end{Proposition}

\begin{proof}
Consider first the case of $\Phi_2$. As per Lemma~\ref{lem:complexes_over_k_are_sums_of_spheres_and_disks}, any cochain complex over $\F_p$ is isomorphic to a direct sum $(\bigoplus_{i \in I} \Sph^{n_i}) \oplus (\bigoplus_{j \in J} \D^{n_j})$, where $\Sph^n$ and $\D^n$ denote the standard sphere and disk complexes (see Section~\ref{sec:nots_convs} for these complexes). Because, by Proposition~\ref{prop:ABadditive}, $\mathrm{A}(X)$ commutes with arbitrary direct sums, and because $\widehat{\mathcal{B}} \otimes \mathrm{H}^\bullet (X)$ also clearly commutes with arbitrary direct sums, it suffices to demonstrate that $\Phi_2$ is an isomorphism in the case of a sphere complex $\Sph^n$ or a disk complex $\D^n$. The case of the disk complex follows immediately from the fact that the $\Sigma^k\mathbf{E}^\dagger$ preserve quasi-isomorphisms (as the operads $\Sigma^k\mathpzc{E}^\dagger$ are $\Sigma$-free). Thus, it remains to consider the case where $X = \Sph^n$ for some $n \in \Z$. Let $c_n$ denote the generator of $\text{H}^\bullet(X)$ in degree $n$. By Proposition~\ref{prop:FtBhatcomplete}, we have that
\[
\widehat{\mathcal{B}} \otimes \text{H}^\bullet(X) = \underset{t \ge 0}{\text{colim}} \lim_{k \ge 0} \text{F}_t \mathcal{B}_{\le k} \otimes \F_p\{c_n\}.
\]
Using the bases for the $\mathcal{B}_{\le k}$ in Remark~\ref{rmk:facts_about_B}, we see that $\widehat{\mathcal{B}} \otimes \text{H}^\bullet(X)$, in degree say $d \in \Z$, is isomorphic to the module which consists of infinite sums
\[
\sum a_{I}(P^{I}c_n)
\]
by which we mean functions $f \colon \{I \mid I \: \text{admissible}\} \to \F_p$, where $f(I) = a_I$, satisfying the following requirements:
\begin{itemize}
	\item For all $I$, if $a_I \neq 0$, $d(I) + n = d$.
	\item The set of lengths $\#\{l(I) \mid a_I \neq 0\}$ is bounded above, or, equivalently, finite.
	\item For any $k \ge 0$, $\#\{I \mid a_I \neq 0, e(I) \le k\}$ is finite.
\end{itemize}
On the other hand, by the computation in Proposition~\ref{prop:computation_of_A}, in degree $d$, $\mathrm{A}(X)$ is isomorphic to the module which consists of infinite sums
\[
\sum a_{I}(P^{I}c_n)
\]
satisfying the following requirements:
\begin{itemize}
	\item For all $I$, if $a_I \neq 0$, $d(I) + n = d$.
	\item The set of lengths $\#\{l(I) \mid a_I \neq 0\}$ is bounded above, or, equivalently, finite.
	\item For any $k \ge 0$, $\#\{I \mid a_I \neq 0, e(I) \le k+n\}$ is finite.
\end{itemize}
The only difference occurs in the third condition; however, the two conditions are obviously equivalent, and an easy check shows that the map $\Phi_2$, under these isomorphisms, corresponds to simply the identity, and so is itself an isomorphism. \\

Next, consider the composite $\Phi_2^{-1}\Phi_1 \colon \mathrm{H}^\bullet(\mathbf{E}_{\mathbf{st}}^\dagger X) \to \widehat{\mathcal{B}} \otimes \mathrm{H}^\bullet (X)$. As both sides clearly commute with filtered colimits, it suffices to demonstrate that $\Phi_2^{-1}\Phi_1$, or equivalently, $\Phi_1$, is an isomorphism for finite complexes, where, as per Definition~\ref{def:fin_mod}, a finite $\F_p$-cochain complex is one which is bounded above and below, and of finite dimension in each degree. Let $X$ be a finite cochain complex. We have filtrations $\{\text{F}_t\mathbf{E}_{\textbf{st}}^\dagger X\}_{t \ge 0}$ and $\{\text{F}_t(\Sigma^k\mathbf{E}^\dagger) X\}_{t \ge 0}$ of the free algebras $\mathbf{E}_{\textbf{st}}^\dagger X$ and $(\Sigma^k\mathbf{E}^\dagger) X$ by setting
\[
\text{F}_t\mathbf{E}_{\textbf{st}}^\dagger X = \bigoplus_{n \le t} \mathpzc{E}_{\textbf{st}}^\dagger(n) \otimes_{\Sigma_n} X^{\otimes n}
\]
\[
\text{F}_t(\Sigma^k\mathbf{E}^\dagger) X = \bigoplus_{n \le t} (\Sigma^k\mathpzc{E}^\dagger)(n) \otimes_{\Sigma_n} X^{\otimes n}.
\]
By taking cohomologies, we also get similar filtrations of $\text{H}^\bullet(\mathbf{E}_{\textbf{st}}^\dagger X)$ and $\text{H}^\bullet((\Sigma^k\mathbf{E}^\dagger) X)$. \\

Now, since $X$ is finite, we have
\begin{align*}
\text{F}_t\mathbf{E}_{\text{st}}^\dagger X &= \bigoplus_{n \le t} \mathpzc{E}_{\text{st}}^\dagger(n) \otimes_{\Sigma_n} X^{\otimes n} \\
&= \bigoplus_{n \le t} \lim_{k \ge 0}((\Sigma^k\mathpzc{E}^\dagger)(n)) \otimes_{\Sigma_n} X^{\otimes n} \\
&\cong \bigoplus_{n \le t} \lim_{k \ge 0}((\Sigma^k\mathpzc{E}^\dagger)(n) \otimes_{\Sigma_n} X^{\otimes n}) \\
&\cong \lim_{k \ge 0} (\bigoplus_{n \le t}(\Sigma^k\mathpzc{E}^\dagger)(n) \otimes_{\Sigma_n} X^{\otimes n}) \\
& = \lim_{k \ge 0} \text{F}_t(\Sigma^k\mathbf{E}^\dagger)X.
\end{align*}
Here it is the first (non-identity) isomorphism which requires finiteness of $X$. By Lemma~\ref{lem:stabmapontocx}, we have that
\[
\text{F}_t\text{H}^\bullet(\mathbf{E}_{\textbf{st}}^\dagger X) \cong \lim_{k \ge 0} \text{F}_t\text{H}^\bullet((\Sigma^k\mathbf{E}^\dagger)X)
\] 
and then, upon taking colimits over $t \ge 0$, we get that $\Phi_1 \colon \text{H}^\bullet(\mathbf{E}_{\textbf{st}}X) \to \text{A}(X)$ is an isomorphism, as desired. This completes the proof.
\end{proof}

\begin{Remark}\label{rmk:eqhomnontrivial}
We saw in Proposition~\ref{prop:stabhom} that the non-equivariant homologies, in individual arities, $\text{H}^\bullet(\mathpzc{E}^\dagger_{\text{st}}(n))$, $n \ge 0$, of the stable operad $\mathpzc{E}_{\text{st}}^\dagger$ are simply zero (except the unit in arity 1). On the other hand, the equivariant homologies, summed up, $\bigoplus_n \text{H}^\bullet(\mathpzc{E}^\dagger_{\text{st}}(n)/\Sigma_n)$, yield $\text{H}^\bullet(\mathbf{E}_{\textbf{st}}^\dagger\F_p[0])$. By Proposition~\ref{prop:stablefreehom} above, we have that this is exactly the completion $\widehat{\mathcal{B}}$ of the generalized Steenrod algebra:
\[
\bigoplus_{n \ge 0} \text{H}^\bullet(\mathpzc{E}^\dagger_{\text{st}}(n)/\Sigma_n) \:\: \cong \:\: \widehat{\mathcal{B}}.
\]
Thus, while the non-equivariant homologies are (almost) zero, the equivariant homologies $\text{H}^\bullet(\mathpzc{E}^\dagger_{\text{st}}(n)/\Sigma_n)$, $n \ge 0$, are highly non-trivial objects. \customendremark
\end{Remark}

\section{The Cohomology Operations II}\label{sec:stabops2}

Let $A$ be an algebra over the stable Barratt-Eccles operad $\mathpzc{E}_{\text{st}}^\dagger$. Earlier, at least in the case $p = 2$, we constructed cohomology operations for $A$, which could be viewed as elements of $\mathcal{B}$. In fact, these operations do not account for all the operations which are naturally induced on the cohomology of $A$. The algebra of cohomology operations for $A$ is instead the completion $\widehat{\mathcal{B}}$, which, as we have seen, allows certain infinite sums.

\begin{Proposition}\label{prop:(co)hom_ops_stab_op}
Given an algebra $A$ over $\mathpzc{E}_{\emph{st}}^\dagger$, $\emph{H}^\bullet(A)$ is naturally an algebra over $\widehat{\mathcal{B}}$.
\end{Proposition}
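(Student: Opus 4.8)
The plan is to promote the natural isomorphism $\phi \colon \text{H}^\bullet(\mathbf{E}^\dagger_{\mathbf{st}}X)\xrightarrow{\ \sim\ }\widehat{\mathcal{B}}\otimes\text{H}^\bullet(X)$ of Proposition~\ref{prop:stablefreehom} to an isomorphism of monads, and then obtain the $\widehat{\mathcal{B}}$-module structure on $\text{H}^\bullet(A)$ by applying $\text{H}^\bullet$ to the structure map of $A$. Write $\mathbf{T} := \mathbf{E}^\dagger_{\mathbf{st}}$ for the monad on $\mathsf{Co}_{\F_p}$ and $\mathbf{B} := \widehat{\mathcal{B}}\otimes(-)$ for the monad on $\mathsf{Gr}_{\F_p}$ coming from the associative algebra $\widehat{\mathcal{B}}$; here an algebra over the monad $\mathbf{B}$ is exactly a left $\widehat{\mathcal{B}}$-module, which is the meaning of ``algebra over $\widehat{\mathcal{B}}$'' (consistent with the fact, established earlier, that in the stable case $\text{H}^\bullet(A)$ carries no products). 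Since $\mathbf{T}$ preserves quasi-isomorphisms by Proposition~\ref{prop:MS_st_pres_w_eqs} and $\text{H}^\bullet \colon \mathsf{Co}_{\F_p}\to\mathsf{Gr}_{\F_p}$ is the localization at the quasi-isomorphisms (over a field every complex splits as its cohomology plus a contractible summand), the monad $\mathbf{T}$ descends to a monad $\overline{\mathbf{T}}$ on $\mathsf{Gr}_{\F_p}$ together with a canonical identification $\text{H}^\bullet\circ\mathbf{T} = \overline{\mathbf{T}}\circ\text{H}^\bullet$, its unit and multiplication induced from those of $\mathbf{T}$. Proposition~\ref{prop:stablefreehom} then reads $\phi \colon \overline{\mathbf{T}}\xrightarrow{\ \sim\ }\mathbf{B}$, and it suffices to show $\phi$ is a morphism of monads: for then, given any $\mathpzc{E}_{\text{st}}^\dagger$-algebra $(A,\xi\colon\mathbf{T}A\to A)$, the composite $\text{H}^\bullet(\xi)\circ\phi_A^{-1}\colon\widehat{\mathcal{B}}\otimes\text{H}^\bullet(A)\to\text{H}^\bullet(A)$ is a left $\widehat{\mathcal{B}}$-module structure, its module axioms being exactly the translations under $\phi$ of the monad-algebra identities $\xi\circ\eta_A=\text{id}$ and $\xi\circ\mathbf{T}\xi=\xi\circ\mu_A$; naturality in $A$ is then evident.

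The unit compatibility is the routine half. The unit $\eta_X\colon X\to\mathbf{T}X$ is the inclusion of the weight-one summand $\mathpzc{E}_{\text{st}}^\dagger(1)\otimes_{\Sigma_1}X$, and $\mathpzc{E}_{\text{st}}^\dagger(1)\cong\F_p[0]$ (since $(\Sigma^k\mathpzc{E}^\dagger)(1)=\mathpzc{E}^\dagger(1)$ for every $k$ and the stabilization maps are the identity in arity $1$). Tracing this through the construction of the map $\Phi_2$ of Section~\ref{sec:stabops2}, which builds the $\mathcal{B}_{<k}$-actions out of the unit of adjunction $X\to(\Sigma^k\mathbf{E}^\dagger)X$ together with the canonical maps $\mathpzc{E}_{\text{st}}^\dagger\to\Sigma^k\mathpzc{E}^\dagger$ defining $\Phi_1$, one sees that $\phi_X\circ\text{H}^\bullet(\eta_X)$ is the inclusion $\text{H}^\bullet(X)=\F_p\otimes\text{H}^\bullet(X)\hookrightarrow\widehat{\mathcal{B}}\otimes\text{H}^\bullet(X)$, i.e. $\phi$ carries the unit of $\overline{\mathbf{T}}$ to the unit of $\mathbf{B}$; equivalently, $\text{H}^\bullet(\eta_X)$ hits precisely the length-zero terms $P^{()}c_i = 1\otimes c_i$. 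Consequently $1\cdot x = x$ in the action defined above.

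The multiplication compatibility is the substantive point and the main obstacle: one must show that, after the two identifications $\text{H}^\bullet(\mathbf{T}\mathbf{T}X)\cong\widehat{\mathcal{B}}\otimes\widehat{\mathcal{B}}\otimes\text{H}^\bullet(X)$ and $\text{H}^\bullet(\mathbf{T}X)\cong\widehat{\mathcal{B}}\otimes\text{H}^\bullet(X)$ provided by two applications of $\phi$, the map $\text{H}^\bullet(\mu_X)$ becomes $m_{\widehat{\mathcal{B}}}\otimes\text{id}$, where $m_{\widehat{\mathcal{B}}}$ is the product of Proposition~\ref{prop:prodwelldefScomplete}. By naturality and the fact that every $\F_p$-cochain complex is quasi-isomorphic to a sum of sphere complexes, it is enough to check this for $X=\Sph^n$, where $\text{H}^\bullet(\mathbf{T}\Sph^n)$ is a shift of $\widehat{\mathcal{B}}\otimes\F_p\{c_n\}$. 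There the operadic composition underlying $\mu$ realizes, on the cohomologies of the free algebras $(\Sigma^k\mathbf{E}^\dagger)\Sph^n$, the iteration of the generalized Steenrod operations they carry (Proposition~\ref{prop:freealgsuspop} together with the standard computation, \cite{CohenLadaMay}); passing to the inverse limit over $k$ and the colimit over the length filtrations $\text{F}_t$, this iteration is exactly the rewriting of $P^{i_1}\cdots P^{i_m}$ into admissible monomials via the Adem relations, which is the defining recipe~\eqref{eqn:prodShat} for the product on $\widehat{\mathcal{B}}$. The only feature absent from the unstable case is the convergence of the relevant infinite sums and their stability inside $\widehat{\mathcal{B}}$, and this is precisely the excess bookkeeping already carried out in Propositions~\ref{prop:prodwelldefScomplete} and~\ref{prop:FtBhatcomplete}; the argument otherwise mirrors the unstable one, \cite{May},~\cite{CohenLadaMay}. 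With multiplicativity of $\phi$ established, the associativity $\theta\cdot(\theta'\cdot x)=(\theta\theta')\cdot x$ of the action follows from $\xi\circ\mathbf{T}\xi=\xi\circ\mu_A$, finishing the proof; as a consistency check, one verifies that for $p=2$ the action of $P^s\in\widehat{\mathcal{B}}$ so obtained coincides with the operation $P^s$ of Proposition~\ref{prop:opsalgEstconstrp=2}.
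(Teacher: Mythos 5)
Your proof takes essentially the same route as the paper's: define the $\widehat{\mathcal{B}}$-action via the composite $\widehat{\mathcal{B}} \otimes \text{H}^\bullet(A) \cong \text{H}^\bullet(\mathbf{E}^\dagger_{\textbf{st}}A) \to \text{H}^\bullet(A)$, using Proposition~\ref{prop:stablefreehom} and the $\mathpzc{E}^\dagger_{\text{st}}$-algebra structure map, and then transport the monad-algebra identities to cohomology. You have been somewhat more careful than the paper in explicitly isolating the one step that is genuinely nontrivial --- that under the identifications of Proposition~\ref{prop:stablefreehom} the monadic multiplication $\text{H}^\bullet(\mu_X)$ becomes $m_{\widehat{\mathcal{B}}} \otimes \text{id}$ rather than some other map $\widehat{\mathcal{B}} \otimes \widehat{\mathcal{B}} \otimes \text{H}^\bullet(X) \to \widehat{\mathcal{B}} \otimes \text{H}^\bullet(X)$ --- which the paper handles with the terse phrase ``invoking Proposition~\ref{prop:stablefreehom}''; your reduction to sphere complexes and appeal to the standard iteration of generalized Steenrod operations on cohomologies of free $(\Sigma^k\mathbf{E}^\dagger)$-algebras is the right way to make that explicit, and the excess bookkeeping you cite (Propositions~\ref{prop:prodwelldefScomplete} and~\ref{prop:FtBhatcomplete}) is precisely what guarantees the convergence the stable setting requires.
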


\begin{proof}
The map describing the action of $\widehat{\mathcal{B}}$ is the following composite
\[
\widehat{\mathcal{B}} \otimes \text{H}^\bullet(A) \overset{\cong}\longrightarrow \text{H}^\bullet(\mathbf{E}_{\textbf{st}}^\dagger A) \to \text{H}^\bullet(A).
\]
Here the first map is the isomorphism provided by Proposition~\ref{prop:stablefreehom}. The second map is that which arises by applying $\text{H}^\bullet(-)$ to the structure map $\alpha \colon \mathbf{E}_{\textbf{st}}^\dagger A \to A$ which defines the $\mathpzc{E}_{\text{st}}^\dagger$-algebra structure of $A$. The properties required by a $\widehat{\mathcal{B}}$-action follows from those required by an $\mathpzc{E}_{\text{st}}^\dagger$-action. For example, associativity can be derived as follows. Letting $m \colon \mathbf{E}_{\textbf{st}}^\dagger \mathbf{E}_{\textbf{st}}^\dagger \Rightarrow \mathbf{E}_{\textbf{st}}^\dagger$ denote the monadic multiplication, we have the following commutative square.
\begin{center}
\begin{tikzpicture}[node distance = 2cm]
\node(A){$\mathbf{E}_{\textbf{st}}^\dagger \mathbf{E}_{\textbf{st}}^\dagger A$};
\node[below of = A](C){$\mathbf{E}_{\textbf{st}}^\dagger A$};
\node[right of = A, xshift = 0.5cm](B){$\mathbf{E}_{\textbf{st}}^\dagger A$};
\node[below of = B](D){$A$};
	
\draw[->] (A) -- (B) node[midway,anchor=south]{$\mathbf{E}_{\textbf{st}}^\dagger\alpha$};
\draw[->] (A) -- (C) node[midway,anchor=east]{$m_A$};
\draw[->] (C) -- (D) node[midway,anchor=north]{$\alpha$};
\draw[->] (B) -- (D) node[midway,anchor=west]{$\alpha$};
\end{tikzpicture}
\end{center}
Upon applying $\text{H}^\bullet(-)$, and invoking Proposition~\ref{prop:stablefreehom}, we get the following commutative square.
\begin{center}
\begin{tikzpicture}[node distance = 2cm]
\node(A){$\widehat{\mathcal{B}} \otimes \widehat{\mathcal{B}} \otimes \text{H}^\bullet(A)$};
\node[below of = A](C){$\widehat{\mathcal{B}} \otimes \text{H}^\bullet(A)$};
\node[right of = A, xshift = 1cm](B){$\widehat{\mathcal{B}} \otimes \text{H}^\bullet(A)$};
\node[below of = B](D){$\text{H}^\bullet(A)$};
	
\draw[->] (A) -- (B) node[midway,anchor=south]{};
\draw[->] (A) -- (C) node[midway,anchor=east]{};
\draw[->] (C) -- (D) node[midway,anchor=north]{};
\draw[->] (B) -- (D) node[midway,anchor=west]{};
\end{tikzpicture}
\end{center}
A diagram chase now yields associativity of the $\widehat{\mathcal{B}}$-action.
\end{proof}

\begin{Remark}\label{rmk:general_ops_vs_explicit_ops}
Earlier, in Section~\ref{sec:stabops}, we constructed cohomology operations for algebras over $\mathpzc{E}_{\text{st}}^\dagger$ explicitly, at least in the case $p = 2$. If one unravels the general construction in Proposition~\ref{prop:(co)hom_ops_stab_op} above, one can see that, in the case $p = 2$, and in the case of the finite sums in $\widehat{\mathcal{B}}$, the operations coincide with those which we constructed earlier. In fact, the action of an infinite sum, in any given instance, reduces to an action by a finite sum in $\mathcal{B}$ (one projects to an appropriate finite sub-sum comprising iterated operations up to a certain upper bound on the excess). Moreover, in the earlier, more explicit construction of the operations for algebras over $\mathpzc{E}_{\text{st}}^\dagger$, we did not verify such things as the Adem relations. These now follow from the general construction above and from the definition of $\widehat{\mathcal{B}}$ itself. \customendremark
\end{Remark}

\section{Unstable Modules over $\widehat{\mathcal{B}}$}

We saw earlier, in Proposition~\ref{prop:cohomology_of_free_algebras_of_suspended_E}, that, in the case of an algebra over the unstable operad $\mathpzc{E}^\dagger$, the cohomology is not only a $\mathcal{B}$-module, but an unstable $\mathcal{B}$-module, where instability here means that $P^Ix = 0$ when $e(I) > |x|$. We can entirely analogously define instability for $\widehat{\mathcal{B}}$-modules: say that a $\widehat{\mathcal{B}}$-module $H$ is \textit{unstable}\index{unstable modules} if $P^Ix = 0$ whenever $e(I) > |x|$ (where $P^I$ is now really an infinite sum with all, but one, coefficients equal to zero). In the case of an algebra over the stable Barratt-Eccles operad $\mathpzc{E}_{\text{st}}^\dagger$ however, the cohomology is in fact not an unstable $\widehat{\mathcal{B}}$-module -- we saw this earlier, in Section~\ref{sec:stabops}, via the explicit construction of the operations in the case $p = 2$. \\

Now, given that $\widehat{\mathcal{B}}$-modules appear naturally in the stable case, it is natural to ask: why wasn't $\widehat{\mathcal{B}}$ seen in the unstable case? The following proposition gives an answer for this: restricting attention to unstable modules, we find that an unstable $\mathcal{B}$-module is canonically also an unstable $\widehat{\mathcal{B}}$-module so that, in particular, in the case of the unstable operad, we could equivalently have said that the cohomologies are unstable $\widehat{\mathcal{B}}$-modules.
	 
\begin{Proposition}\label{prop:BBhatactions}
Let $H$ be a graded module over $\F_p$. An unstable action by $\mathcal{B}$ on $H$ extends canonically to an unstable action by $\widehat{\mathcal{B}}$.
\end{Proposition}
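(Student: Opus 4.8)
The plan is to define the extended action by the only formula that can possibly work, and then observe that instability together with the excess finiteness condition built into $\widehat{\mathcal{B}}$ force the relevant infinite sums to collapse to finite ones. Concretely, for a homogeneous $\omega = \sum_{I} a_I P^I \in \widehat{\mathcal{B}}$ and a homogeneous $x \in H$ of degree $q$, I would set $\omega \cdot x := \sum_{I} a_I (P^I x)$, where $P^I x$ is computed via the given unstable $\mathcal{B}$-action. The first observation is that this is a finite sum: since $H$ is unstable over $\mathcal{B}$, $P^I x = 0$ whenever $e(I) > q$, so only indices $I$ with $e(I) \le q$ contribute, and by the defining excess condition of $\widehat{\mathcal{B}}$ there are only finitely many such $I$ with $a_I \neq 0$. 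Extending $\F_p$-linearly across the (direct sum) grading then yields a well-defined graded map $\widehat{\mathcal{B}} \otimes H \to H$.

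The next step is to verify that this is an algebra action. Unitality is immediate: the embedding $\mathcal{B} \hookrightarrow \widehat{\mathcal{B}}$ is one of algebras, so the unit of $\widehat{\mathcal{B}}$ is the empty monomial, which acts as the identity by the given $\mathcal{B}$-action. For associativity, take homogeneous $\omega = \sum_I a_I P^I$, $\omega' = \sum_J b_J P^J$ and homogeneous $x$ of degree $q$. Unravelling $\omega \cdot (\omega' \cdot x)$ using $P^I(P^J x) = (P^I P^J) x = \sum_K c^{I,J}_K (P^K x)$, where $\sum_K c^{I,J}_K P^K$ is the admissible-monomials expansion of $P^I P^J$, one finds that both $\omega \cdot (\omega' \cdot x)$ and $(\omega \cdot \omega') \cdot x$ are given by $\sum_{(I,J,K)} a_I b_J c^{I,J}_K (P^K x)$. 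The point is to check that, after discarding terms killed by instability (those with $e(K) > q$), this triple sum ranges over a finite index set, so that the two groupings coincide by a trivial rearrangement. Here I would use Lemma~\ref{lem:QIQJprop} (if $c^{I,J}_K \neq 0$ then $e(K) \ge e(J)$) and Lemma~\ref{lem:excess_down} applied to the concatenation $IJ$ (if $c^{I,J}_K \neq 0$ then $e(K) \ge e(I) - d(J)$): these bound $e(J)$ and then $e(I)$ from above in terms of $q$ and the degrees of $\omega,\omega'$, and the excess conditions on $\omega' , \omega \in \widehat{\mathcal{B}}$ leave only finitely many pairs $(I,J)$, each contributing a finite expansion over $K$.

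Finally, the extension is canonical, in the sense that it is natural in $H$ and restricts on $\mathcal{B} \subseteq \widehat{\mathcal{B}}$ to the original action (on a monomial the formula reproduces $P^I x$), and it is unstable essentially for free: for a monomial $P^I$, its image in $\widehat{\mathcal{B}}$ acts by the given $\mathcal{B}$-action, which kills $x$ whenever $e(I) > |x|$. I expect the only genuine work to be the finiteness bookkeeping in the associativity step --- assembling Lemmas~\ref{lem:QIQJprop} and~\ref{lem:excess_down} to see that only finitely many triples $(I,J,K)$ survive once instability is imposed; once that is in place the equality of the two sides is purely formal, and everything else (well-definedness, linearity, unitality, compatibility with the embedding, instability) is routine.
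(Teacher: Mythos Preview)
Your proposal is correct and follows essentially the same approach as the paper: define the $\widehat{\mathcal{B}}$-action by truncating to the finite sub-sum of operations whose excess does not exceed $|x|$ (using instability), and then verify associativity by invoking Lemmas~\ref{lem:QIQJprop} and~\ref{lem:excess_down} to bound $e(J)$ and $e(I)$ so that only finitely many pairs $(I,J)$ contribute. The paper's write-up is slightly terser but the logic is identical.
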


\begin{proof}
Suppose given an unstable $\mathcal{B}$-action on $H$. We define a $\widehat{\mathcal{B}}$-action on $H$ as follows: given any element $x$, an infinite sum is to be projected to the finite sub-sum comprising operations of excess $< |x|$, and then act according to the given action by $\mathcal{B}$. We must simply check that this does indeed yield a $\widehat{\mathcal{B}}$-action, as instability and compatibility with the $\mathcal{B}$-action are clear. The only thing that is non-obvious is associativity. This can be verified as follows. Let $\Sigma_1$ and $\Sigma_2$ denote two infinite sums in $\widehat{\mathcal{B}}$, and let $x$ be an element of $H$. Then $(\Sigma_1)(\Sigma_2 x)$ can be computed as follows: consider only those operations $P^J$ in $\Sigma_2$ where $e(J) < |x|$, and then, given such an operation $P^J$, we only consider those operations $P^I$ in $\Sigma_1$ where $e(I) < |x| + d(J)$. On the other hand, $(\Sigma_1\Sigma_2)x$ can be computed as follows: consider only those terms in the product $\Sigma_1\Sigma_2$ coming from a product $P^I \cdot P^J$ which can contain a term of excess $< |x|$ (we don't need to restrict to exactly those terms with excesss $< |x|$ since any extraneous terms of excess $\ge |x|$ will act by zero due to instability). Due to Lemma~\ref{lem:QIQJprop}, we can restrict to products $P^I \cdot P^J$ where $e(J) < |x|$. Due to Lemma~\ref{lem:excess_down}, we can restrict further to those products which also satisfy $e(IJ) < |x|$, where $IJ$ denotes the concatenation of $I$ and $J$; if $I$ is non-empty, using $e(IJ) = e(I) - d(J)$, we can rephrase this condition to $e(I) < |x| + d(J)$, and moreover can do the same if $I$ is empty, as then both $e(I) < |x| + d(J)$ and $e(IJ) < |x|$ are true under the standing assumption that $e(J) < |x|$. These conditions, namely $e(J) < |x|$ and $e(I) < |x| + d(J)$, are exactly those which we saw in the consideration of $(\Sigma_1)(\Sigma_2 x)$, and so we have demonstrated that $(\Sigma_1)(\Sigma_2 x) = (\Sigma_1\Sigma_2)x$, as desired.
\end{proof}

\section{The Relation Between $\widehat{\mathcal{B}}$ and the Steenrod Algebra $\mathcal{A}$}

We have seen, in Proposition~\ref{prop:(co)hom_ops_stab_op}, that cohomologies of algebras over $\mathpzc{E}_{\text{st}}^\dagger$ yield modules over $\widehat{\mathcal{B}}$. We shall see later, in Proposition~\ref{prop:stabopactioncochains}, that spectral cochains yield algebras over $\mathpzc{E}_{\text{st}}^\dagger$, and as such, cohomologies of spectra possess actions by $\widehat{\mathcal{B}}$. In this special case of cochains on spectra, as in the case of cochains on spaces, we shall see, in Proposition~\ref{prop:P01sp}, that $P^0$ acts by the identity. For this reason, we shall now consider the quotient $\widehat{\mathcal{B}}/(1-P^0)$. Our goal is to show that this quotient is isomorphic to the Steenrod algebra $\mathcal{A}$. First, recall that the Steenrod algebra may be defined as follows. If $p = 2$, where $p$ is our fixed prime, we have
\[
\mathcal{A} = \mathbf{F}\{P^s \mid s \ge 0\}/(I_{\text{Adem}}, 1-P^0)
\]
where $\mathbf{F}\{P^s \mid s \ge 0\}$ denotes the free graded algebra over $\F_2$ on the formal symbols $P^s$, for $s \ge 0$, where $P^s$ has degree $s$, and where $I_{\text{Adem}}$ denotes the two-sided ideal generated by the Adem relations. If $p > 2$, we have
\[
\mathcal{A} = \mathbf{F}\{P^s, \beta P^s \mid s \ge 0\}/(I_{\text{Adem}}, 1-P^0)
\]
where $\mathbf{F}\{P^s, \beta P^s \mid s \in \Z\}$ denotes the free graded algebras over $\F_p$ on formal symbols $P^s, \beta P^s$, for $s \in \Z$, where $P^s, \beta P^s$ have degrees $2s(p-1), 2s(p-1)+1$ respectively, and where $I_{\text{Adem}}$ denotes the two-sided ideal generated by the Adem relations. \\

The Steenrod algebra has a basis, the Cartan-Serre basis, which is similar to the Cartan-Serre basis which we described earlier for $\mathcal{B}$. This is an $\F_p$-basis and is given by the monomials $P^I$ where $I$ is admissible and, if $p = 2$, $I = (i_1,\dots,i_k)$ satisfies $i_j > 0$ for each $j$, and if $p > 2$, $I = (\varepsilon_1, i_1,\dots,\varepsilon_k, i_k)$ satisfies, once again, $i_j > 0$ for each $j$. See, for example,~\cite{Milnor}. \\

Now, note that we have a map
\[
\widehat{\mathcal{B}} \to \mathcal{A}
\]
where, given the element $\sum a_IP^I$ of $\widehat{\mathcal{B}}$, we map it to the class of the sub-sum consisting of those multi-indices in which all entries are non-negative; this sub-sum is finite by Proposition~\ref{prop:i1posinf_ikneginf}. That this is indeed a map of algebras follows from the lemma below.

\begin{Lemma}\label{lem:PIPJnegentry}
Given admissible multi-indices $I$ and $J$, if either of them contains a negative entry, than all multi-indices in the admissible monomials expansion of $P^IP^J$ must contain a negative entry, or, equivalently by admissibility, must have a negative final entry.
\end{Lemma}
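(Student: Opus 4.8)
The plan is to handle the lemma in two cases, both resting on Lemma~\ref{lem:first_entry_down_last_up}. First I would record the elementary fact underlying the ``equivalently by admissibility'' clause: if $K$ is admissible and has some negative entry, then by the admissibility inequalities (for $p=2$, $i_j\ge 2i_{j+1}$; for $p>2$, $i_j\ge pi_{j+1}+\varepsilon_{j+1}$, which in particular gives $i_j\ge pi_{j+1}$) every subsequent entry of $K$ is negative too, so its final entry is negative; the converse is trivial. Hence it suffices to show that every $K$ occurring in the admissible monomials expansion of $P^IP^J$ has negative final entry.

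\textbf{Case 1: $J$ has a negative entry.} By the fact just recorded, $J$ then has negative final entry. The concatenation $IJ$ is a non-empty multi-index whose final entry equals that of $J$, and $P^IP^J=P^{IJ}$ as strings in the generators. Applying Lemma~\ref{lem:first_entry_down_last_up} to $P^{IJ}$, every admissible $K$ in its expansion has final entry at most that of $IJ$, which is negative. (Note this case does not use admissibility of $I$.)

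\textbf{Case 2: all entries of $J$ are $\ge 0$,} so that the negative entry lies in $I$ and $I$ has negative final entry $i_k<0$. Here I would induct on $l(J)$. If $l(J)=0$, then $P^IP^J=P^I$ is already admissible and we are done. For the inductive step, write $P^IP^J=(P^IP^{j_1})P^{J'}$ with $J'=(j_2,\dots,j_l)$, which is again admissible with $l(J')=l(J)-1$. Since $i_k<0\le 2j_1$ (resp.\ $i_k<0\le pj_1+\varepsilon'_1$ when $p>2$), the junction pair $P^{i_k}P^{j_1}$ (resp.\ the string $\beta^{\varepsilon_k}P^{i_k}\beta^{\varepsilon'_1}P^{j_1}$) is not admissible, and inspection of the Adem relations shows that every monomial produced by applying Adem there has final exponent at most $i_k/2<0$ (resp.\ $i_k/p<0$). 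Thus $P^IP^{j_1}$ is a linear combination of (not necessarily admissible) monomials each having negative final entry; applying Lemma~\ref{lem:first_entry_down_last_up} to each of them, the admissible monomials expansion of $P^IP^{j_1}$ is a combination $\sum_L d_L P^L$ of admissible monomials $P^L$ with negative final entry. Then $P^IP^J=\sum_L d_L\,P^LP^{J'}$, and the inductive hypothesis applies to each $P^LP^{J'}$ (each $L$ admissible with negative final entry, $l(J')<l(J)$); since the admissible monomials form a basis, every $K$ in the expansion of $P^IP^J$ has negative final entry.

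I expect the only real obstacle to be the middle step of Case~2: verifying that applying an Adem relation at the junction $P^{i_k}P^{j_1}$ genuinely forces a negative exponent into the last slot of every resulting monomial, so that Lemma~\ref{lem:first_entry_down_last_up} can then be invoked to pass to admissible monomials without leaving the span of monomials with negative final entry. I would carry out the computation explicitly for $p=2$ and indicate the routine modifications for $p>2$ (the extra families of Adem relations, the passenger factors $\beta^{\varepsilon}$, and the threshold $pi_{j+1}+\varepsilon_{j+1}$ in place of $2i_{j+1}$), which do not change the structure. Conceptually, Cases~1 and~2 together say that the span of admissible monomials having a negative entry is a two-sided ideal of $\mathcal{B}$, which is precisely what makes the map $\widehat{\mathcal{B}}\to\mathcal{A}$ above well defined.
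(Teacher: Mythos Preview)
Your proof is correct and follows essentially the same strategy as the paper: the identical Case~1 via Lemma~\ref{lem:first_entry_down_last_up} applied to the concatenation $IJ$, and Case~2 by induction on $l(J)$. The only difference is a mild streamlining in how you dispatch the sub-step $P^IP^{j_1}$: the paper runs a nested induction on $l(I)$ there, whereas you apply a single Adem move at the junction $P^{i_k}P^{j_1}$ (forcing the new final exponent $\le i_k/2<0$, resp.\ $\le i_k/p<0$) and then invoke Lemma~\ref{lem:first_entry_down_last_up} once to pass to admissible form; this is a small simplification but not a genuinely different route.
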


Here, as before, in the case where $p > 2$, in which case multi-indices take the form $(\varepsilon_1,i_1,\dots,\varepsilon_r,i_r)$, where the $i_j$ lie in $\Z$ while the $\varepsilon_j$ lie in $\{0,1\}$, the first entry is taken to be $i_1$, and the final entry, $i_r$, which is to say we disregard the $\varepsilon_j$ for this particular purpose.

\begin{proof}
We shall outline the case where $p = 2$; the $p > 2$ case is analogous. If $J$ contains a negative entry, or, equivalently by admissibility, has a negative final entry, the result follows by Lemma~\ref{lem:first_entry_down_last_up}. Assume then that it is $I$ that has a negative final entry. We shall prove the result by inducting on the length of $J$. If $J$ has length zero, it is empty and the result is trivial. Suppose that $J$ has length one, and say that it is equal to $(b)$, for some $b \in \Z$. Consider $P^IP^b$. We shall prove the result in this case by an induction on the length of $I$. As $I$ is required to contain an negative entry, it cannot have zero length. Suppose that $I$ has length one, and say that it is equal to $(a)$, where we must have $a < 0$. If $a \ge 2b$, the result is obvious as then $b < 0$. Suppose that $a < 2b$. Then the admissible monomials expansion of $P^aP^b$ is given by the Adem relations
\[
\sum \binom{b-i-1}{a-2i}P^{a+b-i}P^i.
\]
In order for the binomial coefficient to be non-zero, we must have $i \le a/2$, and so we see that the final entry $i$ in the multi-index $(a+b-i,i)$ is always negative, as desired. Now suppose that we have the result for terms $P^IP^b$ where $I$ has length $< n$, for some $n \ge 2$. Given an admissible multi-index $I = (i_1,\dots,i_n)$ of length $n$, where $i_n < 0$, we have $P^IP^b = P^{i_1}(P^{i_2} \cdots P^{i_n}P^b)$. Upon first forming the admissible monomials expansion of $P^{i_2} \cdots P^{i_n}P^b$, the inductive hypothesis for the induction on the length of $I$ and Lemma~\ref{lem:first_entry_down_last_up} give us the desired result. Now let us return to the induction on the length of $J$. We have demonstrated the result in the cases where $J$ has length zero or one. Now suppose that we have the result for terms $P^IP^J$ where $J$ has length $< n$, for some $n \ge 2$. Given an admissible multi-index $J = (j_1,\dots,j_n)$ of length $n$, we have $P^IP^J = (P^IP^{j_1} \cdots P^{j_{n-1}})P^{j_n}$. The result now follows by the inductive hypothesis and by the case where $J$ has length one.
\end{proof}

\begin{Proposition}\label{prop:BhatandA}
\index{generalized Steenrod operations!relation to the Steenrod algebra}We have the following.
\begin{itemize}
	\item[(i)] The following sequence is exact:
\[
0 \longrightarrow \widehat{\mathcal{B}} \overset{1-P^0}\longrightarrow \widehat{\mathcal{B}} \longrightarrow \mathcal{A} \longrightarrow 0.
\]
	\item[(ii)] The left ideal of $\widehat{\mathcal{B}}$ generated by $1-P^0$ coincides with the two-sided ideal and the above map $\widehat{\mathcal{B}} \to \mathcal{A}$ induces an algebra isomorphism
\[
\widehat{\mathcal{B}}/(1-P^0) \cong \mathcal{A}.
\]
\end{itemize}
\end{Proposition}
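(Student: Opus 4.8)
The plan is to establish (i), from which (ii) follows formally. The map $\widehat{\mathcal{B}} \to \mathcal{A}$ is an algebra homomorphism by Lemma~\ref{lem:PIPJnegentry}, and it is surjective because the composite $\mathcal{B} \hookrightarrow \widehat{\mathcal{B}} \to \mathcal{A}$ already hits the algebra generators $P^s$ ($s \ge 0$) of $\mathcal{A}$. Hence, once (i) identifies $\ker(\widehat{\mathcal{B}}\to\mathcal{A})$ with the image of (right) multiplication by $1-P^0$, i.e.\ with the left ideal $\widehat{\mathcal{B}}(1-P^0)$, that left ideal is automatically a two-sided ideal (being the kernel of an algebra map), so it coincides with the two-sided ideal generated by $1-P^0$, and the first isomorphism theorem gives the algebra isomorphism $\widehat{\mathcal{B}}/(1-P^0)\cong\mathcal{A}$.

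So the content is in (i), which is three assertions: $1-P^0$ is injective, its image is contained in the kernel, and the kernel is contained in its image. The middle one is immediate, since $\widehat{\mathcal{B}}\to\mathcal{A}$ is an algebra map sending $P^0$ to $1$. For injectivity I would argue directly: if $z(1-P^0)=0$ then $z=zP^0$, hence $z=z(P^0)^n$ for all $n\ge 0$; but right multiplication of an admissible monomial $P^I$ by $(P^0)^n$ re-expands, in the Cartan--Serre basis, as a sum of admissible monomials of length $l(I)+n$, because the Adem relations preserve length, so $z=z(P^0)^n$ forces every admissible monomial occurring in $z$ to have length $\ge n$; since elements of $\widehat{\mathcal{B}}$ have bounded length, $z=0$.

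The main obstacle is the inclusion $\ker(\widehat{\mathcal{B}}\to\mathcal{A})\subseteq\operatorname{im}(1-P^0)$. (I would treat $p=2$ first; the odd primary case is entirely analogous.) Writing $x=\sum_I a_IP^I$ in the Cartan--Serre basis, the image of $x$ in $\mathcal{A}$ is determined by the subsum over multi-indices with all entries $\ge 0$, and it vanishes exactly when $\sum_j a_{(b,0^j)}=0$ for every admissible strictly-positive $b$; thus, after regrouping, $x$ is assembled from two kinds of building block: the monomials $P^I$ with $I$ having a negative entry, and the differences $P^{(b,0^j)}-P^b$. The second kind is visibly in the ideal, since $P^{(b,0^j)}-P^b=P^b\bigl((P^0)^j-1\bigr)=-P^b(1-P^0)\bigl(1+P^0+\cdots+(P^0)^{j-1}\bigr)$. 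For the first kind, the key sub-lemma is that right multiplication by $P^0$ eventually annihilates any admissible monomial with a negative final entry: this follows from a finite descent using the Adem relation for $P^mP^0$ with $m<0$ (one checks $P^{-1}P^0=0$, while for $m\le -2$ the expansion of $P^mP^0$ produces admissible monomials whose negative entries are, in a suitable measure, strictly smaller, so iteration terminates). Granting this, $z_I:=\sum_{n\ge 0}P^I(P^0)^n$ is a finite sum with $z_I(1-P^0)=P^I$.

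The delicate step — and the part I expect to need the most care — is assembling these preimages for a genuinely infinite kernel element $x$. Using Lemma~\ref{lem:excess_down} together with the identity $e((I,0^n))=e(I)$, one sees that the admissible monomials appearing in $z_I$ all have excess $\ge e(I)$, so the excess-finiteness condition defining $\widehat{\mathcal{B}}$ is preserved upon summing the $z_I$ over the (negative-entry) building blocks of $x$; the subtle point is the length condition, which I would control either by sharpening the bookkeeping in the descent sub-lemma, or, more robustly, by observing that the left ideal generated by $1-P^0$ is closed under the limits permitted in the completion $\widehat{\mathcal{B}}$ (an element being approximated, in the excess filtration, by its finite truncations, which lie in $\mathcal{B}(1-P^0)$ by the clean finite-dimensional version of the argument just given for $\mathcal{B}$). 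With $\ker\subseteq\operatorname{im}$ established, the exact sequence (i) is complete, and (ii) is then formal as indicated above.
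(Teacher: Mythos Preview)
Your approach differs substantially from the paper's. Rather than attempt a direct elementwise construction of preimages, the paper reduces to Mandell's exact sequence $0\to\mathcal{B}_{\le k}\xrightarrow{1-P^0}\mathcal{B}_{\le k}\to\mathcal{A}_{\le k}\to 0$ at each finite excess level (Proposition~12.5 of~\cite{Mandell}), passes to a length-filtered refinement, takes $\lim_k$ (using that the transition maps are surjective so that $\lim^1$ vanishes), then $\operatorname{colim}_t$ over the length filtration, and finally invokes the identification $\widehat{\mathcal{B}}\cong\operatorname{colim}_t\lim_k F_t\mathcal{B}_{\le k}$ of Proposition~\ref{prop:FtBhatcomplete}. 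The point of working in the excess-truncated quotients is that the infinite tail of any element of $\widehat{\mathcal{B}}$ is killed at each finite stage, so one is always manipulating honestly finite objects and can appeal to Mandell's result directly.

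Your direct route has a genuine obstruction at the length-control step, and neither of your proposed fixes resolves it. Consider the degree $-1$ kernel element $x=\sum_{k\ge 1}P^{(k-1,-k)}$, which lies in $\widehat{\mathcal{B}}$ (all terms of length~$2$, excesses $1,3,5,\dots$). If $z\in\widehat{\mathcal{B}}$ satisfies $z(1-P^0)=x$ and has length bound $L$, then decomposing $z=\sum_\ell z_\ell$ by exact length and matching length-by-length forces $z_2=x$, $z_\ell=x(P^0)^{\ell-2}$ for $2\le\ell\le L$, and the terminal condition $x(P^0)^{L-1}=0$. But $x(P^0)^n\ne 0$ for every $n$: the summand $k=n+1$ contributes $P^{n}(P^{-1})^{n+1}$ (indeed, by the very nilpotence pattern your sub-lemma uncovers, $P^{-n-1}(P^0)^n=(P^{-1})^{n+1}$), and this monomial has excess $2n+1$, strictly below the excess $\ge 2n+3$ of any contribution from $k\ge n+2$, so it survives uncancelled. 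Thus your geometric-series preimage has unbounded length and is not in $\widehat{\mathcal{B}}$; since by your own injectivity argument the preimage is unique, ``sharpening the bookkeeping'' cannot help, and ``closure of the left ideal under limits'' is precisely what is in question. To get any traction on the length issue one must work through the excess quotients $\mathcal{B}_{\le k}$, as the paper does.
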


In the above sequence, the map denoted by $1-P^0$ is right multiplication by $1-P^0$. Note that, as shown in~\cite{Mandell}, analogues of both of these statements hold true if we replace $\widehat{\mathcal{B}}$ with $\mathcal{B}$.

\begin{proof}
Let us consider (i). Recall that, for each $k \ge 0$, we defined $\mathcal{B}_{\le k}$ as the quotient of $\mathcal{B}$ by $I_{\text{Adem}} + I_{\text{exc} \, > \, k}$, where $I_{\text{Adem}}$ denotes the ideal generated by the Adem relations and $I_{\text{exc} \, > \, k}$ the ideal generated by the monomials of excess $> k$. Let us let $\mathcal{A}_{\le k}$ denote the analogous quotients of $\mathcal{A}$. As per Proposition 12.5 in~\cite{Mandell}, for each $k \ge 1$, we have an exact sequence as follows
\[
0 \longrightarrow \mathcal{B}_{\le k} \overset{1-P^0}\longrightarrow \mathcal{B}_{\le k} \longrightarrow \mathcal{A}_{\le k} \longrightarrow 0.
\]
For each $t \ge 0$, we can consider a filtered version of the above sequence, where the filtrations are by length. That is, we can consider the sequence
\[
0 \longrightarrow \text{F}_t\mathcal{B}_{\le k} \overset{1-P^0}\longrightarrow \text{F}_{t+1}\mathcal{B}_{\le k} \longrightarrow \text{F}_{t+1}\mathcal{A}_{\le k} \longrightarrow 0
\]
and we note that this sequence is itself also exact: surjectivity at the righthand end is clear, injectivity at the lefthand end follows from the exactness of the previous sequence, and, just as in the case of the previous sequence, exactness in the middle follows by examination of the Cartan-Serre bases for $\mathcal{B}_{\le k}$ and $\mathcal{A}_{\le k}$ (these are the same as the bases for $\mathcal{B}$ and $\mathcal{A}$, made up of admissible multi-indices, together with the obvious restriction on the excess of the multi-indices). Upon taking limits over $k$, since the maps $\text{F}_t\mathcal{B}_{\le k+1} \to \text{F}_t\mathcal{B}_{\le k}$ are clearly onto, so that the corresponding $\lim^1$ vanishes, we get an exact sequence as follows
\[
0 \longrightarrow \lim_{k \ge 1}\text{F}_t\mathcal{B}_{\le k} \longrightarrow \lim_{k \ge 1}\text{F}_{t+1}\mathcal{B}_{\le k} \longrightarrow \lim_{k \ge 1}\text{F}_{t+1}\mathcal{A}_{\le k} \longrightarrow 0.
\]
Upon taking colimits over $t$, by Proposition~\ref{prop:FtBhatcomplete}, we get an exact sequence as follows
\[
0 \longrightarrow \widehat{\mathcal{B}} \longrightarrow \widehat{\mathcal{B}} \longrightarrow \underset{t \ge 0}{\text{colim}}\lim_{k \ge 1}\text{F}_{t+1}\mathcal{A}_{\le k} \longrightarrow 0.
\]
Moreover, an easy check shows that $\text{colim}_{t \ge 0}\lim_{k \ge 0}\text{F}_{t+1}\mathcal{A}_{\le k} \cong \mathcal{A}$ (in the case of $\mathcal{A}$, no infinite sums arise in this construction, unlike the case of $\mathcal{B}$; this follows by an argument similar to that in Proposition~\ref{prop:i1posinf_ikneginf}, where we saw that there must exist negative degree operations in all but finitely many terms of an infinite sum $\sum a_IP^I$ of iterated operations increasing in excess). Thus we have an exact sequence
\[
0 \longrightarrow \widehat{\mathcal{B}} \longrightarrow \widehat{\mathcal{B}} \longrightarrow \mathcal{A} \longrightarrow 0
\]
and, unravelling the identifications which we have made, we see that the maps in this sequence are exactly those in the proposition statement. This completes the proof of (i). \\

Now let us consider (ii). By the exactness in (i), the kernel of the map $\widehat{\mathcal{B}} \to \mathcal{A}$ coincides with the image of the map $\widehat{\mathcal{B}} \to \widehat{\mathcal{B}}$ which we have denoted by $1-P^0$. This latter image is the left ideal of $\widehat{\mathcal{B}}$ generated by $1-P^0$. By definition of the map $\widehat{\mathcal{B}} \to \mathcal{A}$, its kernel clearly contains the two-sided ideal of $\widehat{\mathcal{B}}$ generated by $1-P^0$, and so we have that this two-sided ideal is contained in the aforementioned left ideal, from which it follows that these two ideals coincide. Moreover, we have the isomorphism $\widehat{\mathcal{B}}/(1-P^0) \cong \mathcal{A}$ because the map $\widehat{\mathcal{B}} \to \mathcal{A}$ is onto and the kernel, as just established, is precisely the two-sided ideal generated by $1-P^0$.
\end{proof}

\section[Comparison of Algebras Over $\mathpzc{E}_{\mathrm{st}}^\dagger$ and $\mathpzc{M}_{\mathrm{st}}^\dagger$]{Comparison of the Homotopy Theories of Algebras Over $\mathpzc{E}_{\mathrm{st}}^\dagger$ and $\mathpzc{M}_{\mathrm{st}}^\dagger$}\label{subsec:comparison_EstAlg_MstAlg}

At the beginning of Section~\ref{subsec:cohomologies_of_stable_operads}, we chose the stable Barratt-Eccles operad as our standard model of stable operads, and mentioned that every result which we demonstrate for the stable Barratt-Eccles operad also holds for the stable McClure-Smith operad. In this section, we demonstrate that the homotopy theories of algebras over $\mathpzc{E}_{\text{st}}^\dagger$ and $\mathpzc{M}_{\text{st}}^\dagger$ are equivalent (an analogous result holds, of course, also for the chain operads $\mathpzc{E}_{\text{st}}$ and $\mathpzc{M}_{\text{st}}$). Recall the map $\mathrm{TR}^\dagger \colon \mathpzc{E}_{\text{st}}^\dagger \to \mathpzc{M}_{\text{st}}^\dagger$ defined in Section~\ref{subsec:stable_barratt_eccles_operad}. We get an induced functor
\[
(\mathrm{TR}^\dagger)_* \colon \mathpzc{M}_{\mathrm{st}}^\dagger\text{-}\mathsf{Alg} \to \mathpzc{E}_{\mathrm{st}}^\dagger\text{-}\mathsf{Alg}
\]
which, given an $\mathpzc{M}_{\mathrm{st}}^\dagger$-algebra $A$, with structure maps $\{ \alpha_n \colon \mathpzc{M}_{\mathrm{st}}^\dagger(n) \otimes A^{\otimes n} \to A\}_{n \ge 0}$, forgets part of the structure, yielding the same underlying cochain complex $A$, but with the composites $\alpha_n \circ (\mathrm{TR}^\dagger_n \otimes \mathrm{id}) \colon \mathpzc{E}_{\mathrm{st}}^\dagger(n) \otimes A^{\otimes n} \to \mathpzc{M}_{\mathrm{st}}^\dagger(n) \otimes A^{\otimes n} \to A$ as structure maps, which endow it with the structure of an algebra over $\mathpzc{E}_{\mathrm{st}}^\dagger$. This functor $(\mathrm{TR}^\dagger)_*$ has a left adjoint
\[
(\mathrm{TR}^\dagger)^* \colon \mathpzc{E}_{\mathrm{st}}^\dagger\text{-}\mathsf{Alg} \to \mathpzc{M}_{\mathrm{st}}^\dagger\text{-}\mathsf{Alg}
\]
which, given an $\mathpzc{E}_{\mathrm{st}}^\dagger$-algebra $A$, with structure map $\beta \colon \mathbf{E}_{\mathbf{st}}^\dagger A \to A$, sends $A$ to the quotient $\mathbf{M}_{\mathbf{st}}^\dagger A / (\mathbf{TR}^\dagger_A(\mathrm{ker}(\beta)))$, where $\mathbf{TR}^\dagger_A$ denotes the map $\mathbf{E}_{\mathbf{st}}^\dagger A \Rightarrow \mathbf{M}_{\mathbf{st}}^\dagger A$ induced by $\mathrm{TR}^\dagger$. (See Section 4 of~\cite{Hinich1} for this adjunction.) \\

As per Corollary~\ref{cor:stablesemimod}, $\mathpzc{E}_{\mathrm{st}}^\dagger\text{-}\mathsf{Alg}$ has a Quillen semi-model structure where the weak equivalences are quasi-isomorphisms and the fibrations are surjective maps. The semi-admissibility criterion proved in Proposition~\ref{prop:E_adm} for $\mathpzc{E}_{\mathrm{st}}^\dagger$ holds also for $\mathpzc{M}_{\mathrm{st}}^\dagger$, by the same proof, and so $\mathpzc{M}_{\mathrm{st}}^\dagger\text{-}\mathsf{Alg}$ also has a Quillen semi-model structure where the weak equivalences are quasi-isomorphisms and the fibrations are surjective maps.

\begin{Proposition}\label{prop:EstAlg_MstAlg_equivalence}
The adjunction $(\mathrm{TR}^\dagger)^* \dashv (\mathrm{TR}^\dagger)_*$ induces an equivalence between the homotopy categories $\mathsf{h}\mathpzc{E}_{\mathrm{st}}^\dagger\text{-}\mathsf{Alg}$ and $\mathsf{h}\mathpzc{M}_{\mathrm{st}}^\dagger\text{-}\mathsf{Alg}$.
\end{Proposition}

\begin{proof}
As the right adjoint $(\mathrm{TR}^\dagger)_*$ is simply forgetful, preserving the cochain complexes and the maps between them, it clearly preserves fibrations and acyclic fibrations. We thus have a derived adjunction $\mathbf{L}(\mathrm{TR}^\dagger)^* \dashv \mathbf{R}(\mathrm{TR}^\dagger)_*$. In order to show that these derived functors are equivalences, we wish to show that the unit of adjunction
\[
\eta_A \colon A \to (\mathrm{TR}^\dagger)_*(\mathrm{TR}^\dagger)^*(A)
\]
is a quasi-isomorphism for any cofibrant algebra $A$. In order to do this, we note a result, namely Theorem 4.7.4, from~\cite{Hinich1}. This says that if $\mathpzc{P}$ and $\mathpzc{P}'$ are ``$\Sigma$-split'' operads, and $F \colon \mathpzc{P} \to \mathpzc{P}^\prime$ is an operad map which is a quasi-isomorphism in each operadic degree and is compatible with the ``$\Sigma$-splittings'', that then $F$ induces a Quillen equivalence between the homotopy categories of algebras over $\mathpzc{P}$ and $\mathpzc{P}'$. The notions of $\Sigma$-split operads and $\Sigma$-splittings are defined in Section 4.2.4 of~\cite{Hinich1}, and, using the assumptions about them in the statement of the aforementioned result, Hinich demonstrates that the unit $\eta_A$ above is indeed a quasi-isomorphism. We, however, do not actually need these notions. Hinich's proof uses these notions for two purposes: (i) if $\mathpzc{P}$ is a $\Sigma$-split operad, the category of algebras $\mathpzc{P}\text{-}\mathsf{Alg}$ inherits a Quillen model structure where the weak equivalences are quasi-isomorphisms and the fibrations are surjective maps, and (ii) if the map $F \colon \mathpzc{P} \to \mathpzc{P}'$ is not only a quasi-isomorphism in each operadic degree, but also compatible with the $\Sigma$-splittings, then not only are the maps $\mathpzc{P}(n) \to \mathpzc{P}'(n)$, for each $n \ge 0$, quasi-isomorphisms, but so are the induced maps $\mathpzc{P}(n)/\Sigma_n \to \mathpzc{P}'(n)/\Sigma_n$, for each $n \ge 0$. We do not need (i) as we have already established that $\mathpzc{E}_{\mathrm{st}}^\dagger\text{-}\mathsf{Alg}$ and $\mathpzc{M}_{\mathrm{st}}^\dagger\text{-}\mathsf{Alg}$ have Quillen semi-model structures, which suffice for our purpose. We know from Section~\ref{subsec:stable_barratt_eccles_operad} that $\mathrm{TR}$ is a quasi-isomorphism in each operadic degree. In order to complete our proof, by the same argument as in Hinich's proof of Theorem 4.7.4 in~\cite{Hinich1}, it thus remains to establish that, for each $n \ge 0$, the map
\[
\mathpzc{E}_{\mathrm{st}}^\dagger(n) / \Sigma_n \to \mathpzc{M}_{\mathrm{st}}^\dagger(n) / \Sigma_n
\]
induced by $\mathrm{TR}$ is a quasi-isomorphism. Now, we have $\bigoplus_{n \ge 0} \mathpzc{E}_{\mathrm{st}}^\dagger(n) / \Sigma_n = \mathbf{E}_{\mathbf{st}}^\dagger\mathbb{F}_p[0]$, and, by Proposition~\ref{prop:stablefreehom}, we have a natural isomorphism between $\mathrm{H}^\bullet(\mathbf{E}_{\mathbf{st}}^\dagger\mathbb{F}_p[0])$ and $\widehat{\mathcal{B}}$. We also have that $\bigoplus_{n \ge 0} \mathpzc{M}_{\mathrm{st}}^\dagger(n) / \Sigma_n = \mathbf{M}_{\mathbf{st}}^\dagger\mathbb{F}_p[0]$, and, by a proof entirely analogous to that of Proposition~\ref{prop:stablefreehom}, we have a natural isomorphism between $\mathrm{H}^\bullet(\mathbf{M}_{\mathbf{st}}^\dagger\mathbb{F}_p[0])$ and $\widehat{\mathcal{B}}$. It suffices to demonstrate that the following diagram commutes.
\begin{center}
\begin{tikzpicture}
\node [] (A) {$\mathrm{H}^\bullet(\mathbf{E}_{\mathbf{st}}^\dagger\mathbb{F}_p[0])$};
\node [right of = A, xshift = 25mm] (B) {$\mathrm{H}^\bullet(\mathbf{M}_{\mathbf{st}}^\dagger\mathbb{F}_p[0])$};
\node [below of = A, xshift = 17mm] (C) {$\widehat{\mathcal{B}}$};

\draw [->] (A) -- (B);
\draw [->] (A) -- (C) node[midway, anchor = east, yshift = -1mm]{$\cong$};
\draw [->] (B) -- (C) node[midway, anchor = west, yshift = -1mm]{$\cong$};
\end{tikzpicture}
\end{center}
Here, the upper map is the obvious map induced by $\mathrm{TR}^\dagger$. By definition of the diagonal isomorphisms in the diagram, it suffices to demonstrate that the following diagram commutes.
\begin{center}
\begin{tikzpicture}
\node [] (A) {$\mathrm{H}^\bullet(\mathbf{E}_{\mathbf{st}}^\dagger\mathbb{F}_p[0])$};
\node [right of = A, xshift = 25mm] (B) {$\mathrm{A}^{\mathpzc{E}_{\mathrm{st}}^\dagger}(\mathbb{F}_p[0])$};
\node [below of = A] (C) {$\mathrm{H}^\bullet(\mathbf{M}_{\mathbf{st}}^\dagger\mathbb{F}_p[0])$};
\node [below of = B] (D) {$\mathrm{A}^{\mathpzc{M}_{\mathrm{st}}^\dagger}(\mathbb{F}_p[0])$};
\node [right of = B, xshift = 25mm, yshift = -5mm] (E) {$\widehat{\mathcal{B}}$};

\draw [->] (A) -- (B);
\draw [->] (A) -- (C);
\draw [->] (B) -- (D);
\draw [->] (C) -- (D);
\draw [->] (E) -- (B);
\draw [->] (E) -- (D);
\end{tikzpicture}
\end{center}
Here, $\mathrm{A}^{\mathpzc{E}_{\mathrm{st}}^\dagger}(-)$ is the functor which we called $\mathrm{A}(-)$ in Section~\ref{subsec:cohomologies_of_free_algebras_over_the_stable_operads}, defined in (\ref{eqn:def_of_A}), while $\mathrm{A}^{\mathpzc{M}_{\mathrm{st}}^\dagger}(-)$ is defined by the same formula, but with the stable Barratt-Eccles operad replaced by the stable McClure-Smith operad. The maps $\mathrm{H}^\bullet(\mathbf{E}_{\mathbf{st}}^\dagger\mathbb{F}_p[0]) \leftarrow \mathrm{A}^{\mathpzc{E}_{\mathrm{st}}^\dagger}(\mathbb{F}_p[0]) \to \widehat{\mathcal{B}}$ are as in Section~\ref{subsec:cohomologies_of_free_algebras_over_the_stable_operads}, and the maps $\mathrm{H}^\bullet(\mathbf{M}_{\mathbf{st}}^\dagger\mathbb{F}_p[0]) \leftarrow \mathrm{A}^{\mathpzc{M}_{\mathrm{st}}^\dagger}(\mathbb{F}_p[0]) \to \widehat{\mathcal{B}}$ are defined analogously, but with the stable Barratt-Eccles operad replaced by the stable McClure-Smith operad. The vertical maps are induced by $\mathrm{TR}^\dagger$. The lefthand square commutes because, as in Section~\ref{subsec:stable_barratt_eccles_operad}, the following square commutes.
\begin{center}
\begin{tikzpicture}[node distance = 1.5cm]
\node [] (A) {$\Sigma \mathpzc{E}^\dagger$};
\node [right of = A,xshift=1.5cm] (B) {$\mathpzc{E}^\dagger$};
\node [below of = A] (C) {$\Sigma \mathpzc{M}^\dagger$};
\node [below of = B] (D) {$\mathpzc{M}^\dagger$};

\draw [->] (A) -- (B) node[midway,anchor=south]{$\Psi$};
\draw [->] (A) -- (C) node[midway,anchor=east]{$\Sigma \text{TR}^\dagger$};
\draw [->] (B) -- (D) node[midway,anchor=west]{$\text{TR}^\dagger$};
\draw [->] (C) -- (D) node[midway,anchor=north]{$\Psi$};
\end{tikzpicture}
\end{center}
As for the righthand triangle, recall, as in Section~\ref{sec:stabops}, that, for $s \in \Z$, the operation $P^s$ on $\mathrm{H}^\bullet(\mathbf{E}^\dagger X)$ is defined using the elements $e_d^{\mathrm{un}}$, for $d \le 0$, of $\mathpzc{E}_{\mathrm{st}}^\dagger(2)$. As in Definition~\ref{def:e_d^un_and_e_d^st}, we have that $e_{d}^{\text{un}} = (1, \tau, 1, \tau, \dots)$ (a sequence of length $|d|+1$), where $\tau$ denotes the non-trivial permutation of $\{1,2\}$. The analogous operations $P^s$ on $\mathrm{H}^\bullet(\mathbf{M}^\dagger X)$ are defined by the same formula but with $e_d^{\mathrm{un}}$ replaced by elements, say $f_d^{\mathrm{un}}$, of $\mathpzc{M}_{\mathrm{st}}^\dagger(2)$. For each $d \le 0$, we have that $f_d^{\mathrm{un}}$ is given by the surjection $(1212\cdots)$ (a sequence of length $|d|+2$). It suffices to show that, for each $d \le 0$, $\mathrm{TR}^\dagger$ maps $e_d^{\mathrm{un}}$ to $f_d^{\mathrm{un}}$. This follows from the algorithmic procedure described in Section~\ref{subsec:stable_barratt_eccles_operad}. For example, consider the tuple $e_3^{\mathrm{un}} = (1,\tau,1,\tau)$. As per the algorithmic procedure, we must consider positive integers $r_0, r_1, r_2, r_3$ which sum to $2 + 3 = 5$. Each $r_i$ must necessarily be $1$, except for one, which must be $2$. An easy check shows that, unless $r_3 = 2$, the resulting surjection $f_{(r_0, r_1, r_2, r_3)}$ is degenerate. If indeed $r_3 = 2$, following the algorithm, we get the subsequences $(1)$, $(2)$, $(1)$ and $(21)$, which concatenate to form the surjection $(12121)$, which is precisely $f_3^{\mathrm{un}}$, as desired. (The proof for general $d$ is entirely analogous.)
\end{proof}

%-----------------------------------------------------------------------
% Beginning of chap1.tex
%-----------------------------------------------------------------------

\chapter{The Homotopy Coherent, or $\infty$-, Additivity of the Stable Operads}

In this chapter, we wish to demonstrate the homotopy coherent, or $\infty$-, additivity of stable Barratt-Eccles operad, justifying the adjective ``stable'', in three successively more general forms. First, we will show that given free algebras $\mathbf{E}_{\textbf{st}}^\dagger X$ and $\mathbf{E}_{\textbf{st}}^\dagger Y$, the algebra coproduct $\mathbf{E}_{\textbf{st}}^\dagger X \amalg \mathbf{E}_{\textbf{st}}^\dagger Y$ is naturally quasi-isomorphic to the direct sum $\mathbf{E}_{\textbf{st}}^\dagger X \oplus \mathbf{E}_{\textbf{st}}^\dagger Y$. As $\mathbf{E}_{\textbf{st}}^\dagger$ is a left adjoint as a functor from dg modules to algebras, and so preserves colimits, we can also phrase this result as saying that $\mathbf{E}_{\textbf{st}}^\dagger$, as a monad on dg modules, is homotopy additive. Next, we shall generalize this result and show that, for cofibrant algebras $A$ and $B$, over $\mathpzc{E}_{\text{st}}^\dagger$, the coproduct $A \amalg B$ is naturally quasi-isomorphic to $A \oplus B$. Here the cofibrancy is in the sense of the Quillen semi-model structure provided by Corollary~\ref{cor:stablesemimod}. Finally, we shall generalize this one step further and show that if, given a diagram of algebras $A \leftarrow C \rightarrow B$, $A$ and $B$ are cofibrant and $C \to A$ is a cofibration, then $A \amalg_C B$ is naturally quasi-isomorphic to $A \oplus_C B$.

\section{Derived Coproducts of Algebras Over the Stable Operads}

We begin with the homotopy additivity of the monad $\mathbf{E}_{\textbf{st}}^\dagger$.

\begin{Proposition}\label{prop:additivity_of_monad}
If $X$ and $Y$ are cochain complexes, we have a natural quasi-isomorphism
\[
\mathbf{E}_{\normalfont{\textbf{st}}}^\dagger(X \oplus Y) \sim \mathbf{E}^\dagger_{\normalfont{\textbf{st}}}(X) \oplus \mathbf{E}^\dagger_{\normalfont{\textbf{st}}}(Y).
\]
\end{Proposition}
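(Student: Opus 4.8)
The plan is to reduce the statement, via naturality and the fact that $\mathbf{E}_{\textbf{st}}^\dagger$ preserves quasi-isomorphisms (Proposition~\ref{prop:MS_st_pres_w_eqs}), to the case where $X$ and $Y$ are direct sums of sphere complexes, and then to compute both sides on cohomology using the free algebra calculation of Proposition~\ref{prop:stablefreehom}. First I would observe that there is always a natural map $\mathbf{E}_{\textbf{st}}^\dagger(X) \oplus \mathbf{E}_{\textbf{st}}^\dagger(Y) \to \mathbf{E}_{\textbf{st}}^\dagger(X \oplus Y)$ — it is assembled from the maps induced by the inclusions $X \hookrightarrow X \oplus Y$ and $Y \hookrightarrow X \oplus Y$ on the summands, using that $\mathbf{E}_{\textbf{st}}^\dagger(X) \oplus \mathbf{E}_{\textbf{st}}^\dagger(Y)$ is the coproduct in dg modules and projecting appropriately; concretely, $\mathbf{E}_{\textbf{st}}^\dagger(X)$ is the arity-$\ge 0$ part built on $X$ alone, $\mathbf{E}_{\textbf{st}}^\dagger(Y)$ likewise, and both sit inside $\mathbf{E}_{\textbf{st}}^\dagger(X\oplus Y) = \bigoplus_n \mathpzc{E}_{\text{st}}^\dagger(n)\otimes_{\Sigma_n}(X\oplus Y)^{\otimes n}$ as the ``pure $X$'' and ``pure $Y$'' sub-sums, which meet only in the arity-$0$ piece $\mathpzc{E}_{\text{st}}^\dagger(0)$. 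So the natural comparison map is the obvious one, and it is injective; the content is that it is a quasi-isomorphism.

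Next I would invoke naturality to reduce to a convenient class of inputs. Since any cochain complex over $\F_p$ is isomorphic to a direct sum of sphere and disk complexes (as used repeatedly in the excerpt, e.g.\ in the proof of Proposition~\ref{prop:stablefreehom}), and since the inclusion of the spherical part is a quasi-isomorphism, and since $\mathbf{E}_{\textbf{st}}^\dagger$ and $\oplus$ both preserve quasi-isomorphisms in each variable, it suffices to treat $X = \bigoplus_{i\in I}\Sph^{n_i}$ and $Y = \bigoplus_{j\in J}\Sph^{m_j}$, i.e.\ complexes with zero differential. Here $X \oplus Y$ is again a sum of sphere complexes. Now I would compute cohomology of both sides of the comparison map: by Proposition~\ref{prop:stablefreehom} we have $\text{H}^\bullet(\mathbf{E}_{\textbf{st}}^\dagger(X \oplus Y)) \cong \widehat{\mathcal{B}} \otimes \text{H}^\bullet(X \oplus Y) = \widehat{\mathcal{B}} \otimes (\text{H}^\bullet(X) \oplus \text{H}^\bullet(Y))$, while $\text{H}^\bullet(\mathbf{E}_{\textbf{st}}^\dagger(X) \oplus \mathbf{E}_{\textbf{st}}^\dagger(Y)) \cong (\widehat{\mathcal{B}} \otimes \text{H}^\bullet(X)) \oplus (\widehat{\mathcal{B}} \otimes \text{H}^\bullet(Y))$. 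Since $\widehat{\mathcal{B}} \otimes -$ commutes with direct sums, these are abstractly isomorphic; the remaining point is to check that the natural comparison map realizes precisely this isomorphism, which one does by tracing a basis element $P^I c_i$ (for $c_i$ a generator coming from $X$, say) through the construction — it lives in the ``pure $X$'' sub-sum and is sent to the corresponding element of $\text{H}^\bullet(\mathbf{E}_{\textbf{st}}^\dagger(X))$. This is the kind of bookkeeping already carried out in the proof of Proposition~\ref{prop:ABadditive}, and an entirely analogous argument applies.

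The main obstacle, I expect, is precisely this last verification that the \emph{natural} map — not merely some abstract isomorphism — induces the identification on cohomology; i.e.\ chasing the isomorphism of Proposition~\ref{prop:stablefreehom} (which itself is assembled from the intermediary functor $\text{A}(-)$ and the maps $\Phi_1,\Phi_2$) through the direct-sum decomposition and confirming compatibility with the splitting. One clean way to organize this is to route everything through $\text{A}(-)$: we already know $\text{A}$ is additive (Proposition~\ref{prop:ABadditive}) and that $\Phi_1 \colon \text{H}^\bullet(\mathbf{E}_{\textbf{st}}^\dagger X) \to \text{A}(X)$ is natural and an isomorphism on finite complexes (Lemma~\ref{lem:PhiPsILfinite}), hence, passing to filtered colimits as in Proposition~\ref{prop:stablefreehom}, an isomorphism for sums of sphere complexes. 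Then the square relating $\Phi_1$ for $X$, $Y$, and $X\oplus Y$ with the additivity isomorphism for $\text{A}$ commutes by naturality of $\Phi_1$, and the claim follows. A final remark would note, via Proposition~\ref{prop:opreindexsusp} and the parallel constructions, that the same argument gives the chain-operad analogue $\mathbf{E}_{\textbf{st}}(X \oplus Y) \sim \mathbf{E}_{\textbf{st}}(X) \oplus \mathbf{E}_{\textbf{st}}(Y)$.
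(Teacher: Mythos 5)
Your approach shares its key ingredients with the paper's (Lemma~\ref{lem:PhiPsILfinite} and Proposition~\ref{prop:ABadditive}), but the way you route the reduction to infinite inputs introduces a genuine gap. The problematic sentence is the claim that $\Phi_1 \colon \text{H}^\bullet(\mathbf{E}_{\textbf{st}}^\dagger X) \to \text{A}(X)$ is ``an isomorphism for sums of sphere complexes'' by passing to filtered colimits. This is false when $X$ has infinitely many spherical summands. In the proof of Proposition~\ref{prop:stablefreehom}, $\Phi_1$ is only shown to factor through the injection $\text{colim}_\alpha \text{A}(X_\alpha) \hookrightarrow \text{A}(X)$, whose image consists of those infinite sums with support bounded by some $\alpha < \lambda$ — a proper subset in general. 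For example, taking $X = \bigoplus_{k \ge 0}\Sph^0$ with basis $\{c_k\}$ (and $p=2$), the element $\sum_{k \ge 0} P^kP^{-k}c_k$ lies in $\text{A}(X)_0$ (lengths bounded by $2$, excesses $2k$ increasing), but its support is cofinal and so it is not in the image of $\Phi_1$. Because of this, the commutative square with $\text{A}(-)$ and the additivity isomorphism only gives injectivity of the comparison map on cohomology, not the claimed bijectivity; surjectivity requires a further argument that the image of $\Phi_1$ for $X \oplus Y$ decomposes, under the additivity isomorphism, into the images of $\Phi_1$ for $X$ and for $Y$ — a point you do not address.

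The paper sidesteps this entirely by never computing $\text{H}^\bullet(\mathbf{E}_{\textbf{st}}^\dagger Z)$ for infinite $Z$. It handles finite $X, Y$ directly via Lemma~\ref{lem:PhiPsILfinite} and Proposition~\ref{prop:ABadditive} (where $\Phi_1$ really is an isomorphism), and then extends by writing an arbitrary $Y$ as the filtered colimit of its finite subcomplexes, observing that both $\mathbf{E}_{\textbf{st}}^\dagger(X_0) \oplus \mathbf{E}_{\textbf{st}}^\dagger(-)$ and $\mathbf{E}_{\textbf{st}}^\dagger(X_0 \oplus -)$ preserve filtered colimits as endofunctors on cochain complexes (colimits of algebras are created in dg modules), and appealing to exactness of filtered colimits of complexes; one then repeats with the roles of $X$ and $Y$ interchanged. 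The filtered-colimit reduction happens at the level of complexes, not cohomology, so no surjectivity of $\Phi_1$ for infinite inputs is ever needed. Your argument can be repaired either by replacing your last step with a careful image-chase (showing the bounded-support condition is compatible with the decomposition of the basis), or, more simply, by dropping the detour through $\text{A}(-)$ for infinite inputs and instead running the filtered-colimit argument at the chain level as the paper does. The initial reduction to sums of spheres, while correct, is also unnecessary under the paper's approach.
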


\begin{proof}
Given the cochain complexes $X$ and $Y$, we have a canonical map
\[
f \colon \mathbf{E}^\dagger_{\textbf{st}} (X) \oplus \mathbf{E}^\dagger_{\textbf{st}} (Y) \to \mathbf{E}^\dagger_{\textbf{st}} (X \oplus Y)
\]
and we claim that this map is a quasi-isomorphism. As in Proposition~\ref{prop:stablefreehom}, for cochain complexes $Z$, we have a natural isomorphism $\eta_Z \colon \mathrm{H}^\bullet(\mathbf{E}^\dagger_{\textbf{st}} (Z)) \cong \widehat{\mathcal{B}} \otimes \mathrm{H}^\bullet(Z)$. By naturality, we get the commutative diagram
\begin{center}
\begin{tikzpicture}[node distance = 1.5cm]
\node [] (A) {$\mathrm{H}^\bullet(\mathbf{E}^\dagger_{\textbf{st}} (X)) \oplus \mathrm{H}^\bullet(\mathbf{E}^\dagger_{\textbf{st}} (Y))$};
\node [right of = A, xshift = 30mm] (B) {$\mathrm{H}^\bullet(\mathbf{E}^\dagger_{\textbf{st}} (X) \oplus \mathbf{E}^\dagger_{\textbf{st}} (Y))$};
\node [right of = B, xshift = 25mm] (C) {$\mathrm{H}^\bullet(\mathbf{E}^\dagger_{\textbf{st}} (X \oplus Y))$};
\node [below of = A] (D) {$(\widehat{\mathcal{B}} \otimes \mathrm{H}^\bullet(X)) \oplus (\widehat{\mathcal{B}} \otimes \mathrm{H}^\bullet(Y))$};
\node [below of = C] (E) {$\widehat{\mathcal{B}} \otimes \mathrm{H}^\bullet(X \oplus Y)$};

\draw [->] (A) -- (B) node[midway,anchor=south]{$\cong$};
\draw [->] (B) -- (C) node[midway,anchor=south]{$\mathrm{H}^\bullet(f)$};
\draw [->] (A) -- (D) node[midway,anchor=east]{$\eta_X \oplus \eta_Y$};
\draw [->] (C) -- (E) node[midway,anchor=west]{$\eta_{X \oplus Y}$};
\draw [->] (D) -- (E) node[midway,anchor=north]{$\cong$};
\end{tikzpicture}
\end{center}
and from this it follows that $\mathrm{H}^\bullet(f)$ is an isomorphism, as desired.
\end{proof}

As we noted above, Propostion~\ref{prop:additivity_of_monad} can be regarded as a statement about coproducts of free algebras. We now consider, more generally, coproducts of cofibrant algebras. As we saw in Section~\ref{sec:env_op}, coproducts of cell algebras may be computed via enveloping operads, and so we shall be led to consider the enveloping operads associated to the stable Barrat-Eccles operad $\mathpzc{E}_{\text{st}}^\dagger$. First, however, we have the following lemma.

\begin{Lemma}\label{lem:stabilityigorsense}
For each $j \ge 2$ and each non-trivial partition $j = j_1 + \cdots + j_k$, we have that
\[
\mathpzc{E}_{\emph{st}}^\dagger(j)/\Sigma_{j_1} \times \cdots \times \Sigma_{j_k} \sim 0.
\]
\end{Lemma}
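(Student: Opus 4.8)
The plan is to reduce the statement about the homotopy type of the coinvariants $\mathpzc{E}_{\text{st}}^\dagger(j)/(\Sigma_{j_1} \times \cdots \times \Sigma_{j_k})$ to the non-equivariant computation already in hand, namely Proposition~\ref{prop:stablefreehom}, which identifies the cohomology of a free algebra $\mathbf{E}_{\textbf{st}}^\dagger X$ with $\widehat{\mathcal{B}} \otimes \text{H}^\bullet(X)$. The key observation is that the summands $\mathpzc{E}_{\text{st}}^\dagger(j)/(\Sigma_{j_1} \times \cdots \times \Sigma_{j_k})$ appear, via the standard decomposition of a tensor power of a direct sum, inside $\mathbf{E}_{\textbf{st}}^\dagger(\Sph^{n_1} \oplus \cdots \oplus \Sph^{n_k})$ (or more precisely, the multilinear part of such an expression). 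So the first step is to write down the isomorphism of graded modules
\[
\mathbf{E}_{\textbf{st}}^\dagger(X_1 \oplus \cdots \oplus X_k) \cong \bigoplus_{i_1, \dots, i_k \ge 0} \mathpzc{E}_{\text{st}}^\dagger(i_1 + \cdots + i_k) \otimes_{\Sigma_{i_1} \times \cdots \times \Sigma_{i_k}} X_1^{\otimes i_1} \otimes \cdots \otimes X_k^{\otimes i_k}
\]
and then specialize each $X_l$ to an appropriate sphere complex to isolate the summand with $i_l = j_l$.

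The cleanest route is to invoke Proposition~\ref{prop:additivity_of_monad} directly: it gives a natural quasi-isomorphism $\mathbf{E}_{\textbf{st}}^\dagger(X \oplus Y) \sim \mathbf{E}_{\textbf{st}}^\dagger X \oplus \mathbf{E}_{\textbf{st}}^\dagger Y$, and iterating, $\mathbf{E}_{\textbf{st}}^\dagger(X_1 \oplus \cdots \oplus X_k) \sim \mathbf{E}_{\textbf{st}}^\dagger X_1 \oplus \cdots \oplus \mathbf{E}_{\textbf{st}}^\dagger X_k$. Taking $X_l = \Sph^{n_l}$ for suitably chosen distinct large integers $n_l$, the left side splits over multi-indices $(i_1, \dots, i_k)$, the right side is a direct sum of the $\mathbf{E}_{\textbf{st}}^\dagger \Sph^{n_l}$ which, by Proposition~\ref{prop:stablefreehom}, have cohomology $\widehat{\mathcal{B}} \otimes \F_p\{c_{n_l}\}$ concentrated in the ``multilinear'' part $i_l = 1$ (and all others zero) --- i.e., there is no cross-term on the right side. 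Comparing the part of the cohomology in the multidegree corresponding to $(j_1, \dots, j_k)$ with $j = j_1 + \cdots + j_k \ge 2$ and $k$ chosen so the partition is non-trivial, one finds that the cohomology of $\mathpzc{E}_{\text{st}}^\dagger(j) \otimes_{\Sigma_{j_1} \times \cdots \times \Sigma_{j_k}} (\Sph^{n_1})^{\otimes j_1} \otimes \cdots$ must vanish; since this tensor is, up to a degree shift, just $\mathpzc{E}_{\text{st}}^\dagger(j)/(\Sigma_{j_1} \times \cdots \times \Sigma_{j_k})$, we conclude the latter is acyclic, which is the claim. A subtlety to handle carefully: the degrees $n_l$ must be chosen so that the multidegree-$(j_1,\dots,j_k)$ contribution on the left is not accidentally identified with some other multidegree contribution, and so that it cannot coincide with a contribution of the form $\widehat{\mathcal{B}}\otimes c_{n_l}$ from a single summand on the right --- genericity of the $n_l$ (e.g., very spread out) ensures this.

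The main obstacle I expect is bookkeeping: precisely matching the multidegree decomposition of the iterated direct sum formula against the right-hand side of the iterated Proposition~\ref{prop:additivity_of_monad}, and verifying that the quasi-isomorphism of Proposition~\ref{prop:additivity_of_monad} respects the relevant grading by "number of factors from each summand" well enough to isolate a single summand --- strictly, Proposition~\ref{prop:additivity_of_monad} is only a quasi-isomorphism, not an isomorphism, so one must argue at the level of cohomology that the bigrading is preserved, which follows because the canonical comparison map $\mathbf{E}_{\textbf{st}}^\dagger X_1 \oplus \cdots \to \mathbf{E}_{\textbf{st}}^\dagger(X_1 \oplus \cdots)$ is visibly multidegree-preserving and is the quasi-isomorphism in question. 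An alternative, perhaps more direct, route avoiding the comparison subtlety is to bypass Proposition~\ref{prop:additivity_of_monad} and instead mimic its proof: reduce to finite complexes via filtered colimits and exactness, and then apply Lemma~\ref{lem:PhiPsILfinite} together with the additivity of $\text{A}$ (Proposition~\ref{prop:ABadditive}) directly to the relevant multilinear piece; this keeps everything at the level of honest computations of $\text{A}$ where the bigrading is transparent. Either way, once the reduction to the non-equivariant free-algebra computation is made, the vanishing is immediate from the fact that $\widehat{\mathcal{B}} \otimes \text{H}^\bullet(\Sph^n)$ has no contributions from tensor powers of arity $\ge 2$.
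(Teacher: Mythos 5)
Your approach is correct and is essentially the one taken in the paper: invoke Proposition~\ref{prop:additivity_of_monad}, decompose $\mathbf{E}_{\textbf{st}}^\dagger(X_1\oplus\cdots\oplus X_k)$ by multidegree, observe that the canonical comparison map from $\bigoplus_l\mathbf{E}_{\textbf{st}}^\dagger(X_l)$ is the inclusion of the summands whose multidegree has at most one nonzero entry, and deduce acyclicity of the complementary pieces. Two remarks. First, the ``distinct large integers $n_l$'' and the genericity discussion are unnecessary: the multidegree decomposition is a direct-sum decomposition of \emph{chain complexes} (the differential manifestly preserves each multidegree piece, since it never mixes tensor factors), so the canonical comparison map is a split subcomplex inclusion. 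A split subcomplex inclusion is a quasi-isomorphism if and only if the complementary summand is acyclic; there is no danger of ``accidental identification'' between distinct multidegree pieces regardless of the degrees $n_l$. The paper simply takes $X_l=\Sph^0$ for every $l$. Second, you do not need Proposition~\ref{prop:stablefreehom} at all: the argument requires only that the map is a quasi-isomorphism and that its image is the sub-multidegrees with at most one nonzero entry, not any explicit computation of $\text{H}^\bullet(\mathbf{E}_{\textbf{st}}^\dagger\Sph^{n})$. Relatedly, the phrase ``cohomology $\widehat{\mathcal{B}}\otimes\F_p\{c_{n_l}\}$ concentrated in the multilinear part $i_l=1$'' is not right as stated --- $\widehat{\mathcal{B}}\otimes\F_p\{c_{n_l}\}$ receives contributions from all arities $j\ge 1$ inside $\mathbf{E}_{\textbf{st}}^\dagger\Sph^{n_l}$; what is true, and what you actually use, is that the image of the $l$-th summand lands in multidegrees $(0,\dots,0,j,0,\dots,0)$ with a single nonzero slot, for varying $j$.
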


Here non-trivial means that the partition is not indiscrete. Moreover, by $\mathpzc{E}^\dagger_{\text{st}}(j)/\Sigma_{j_1} \times \cdots \times \Sigma_{j_k}$, we mean $\mathpzc{E}^\dagger_{\text{st}}(j) \otimes_{\Sigma_{j_1} \times \cdots \times \Sigma_{j_k}} (\Sph^0)^{\otimes j_1} \otimes \cdots \otimes (\Sph^0)^{\otimes j_k}$, where $\Sph^0$ is the complex $\F_p[0]$.

\begin{proof}
By Proposition~\ref{prop:additivity_of_monad}, we have that the canonical map
\[
\mathbf{E}^\dagger_{\textbf{st}}(\Sph^0) \oplus \cdots \oplus \mathbf{E}^\dagger_{\textbf{st}}(\Sph^0) \to \mathbf{E}^\dagger_{\textbf{st}}(\Sph^0 \oplus \cdots \oplus \Sph^0)
\]
where $\Sph^0$ denotes the complex $\F_p[0]$, and where we take $k$ copies of $\Sph^0$ in the case of partitions of size $k$, is a quasi-isomorphism. The result then follows from the fact that
\[
\mathbf{E}^\dagger_{\textbf{st}}(\Sph^0 \oplus \cdots \oplus \Sph^0) \cong \bigoplus_{j_1, \dots, j_k \ge 0} \mathpzc{E}_{\text{st}}^\dagger(j_1 +\cdots + j_k) \otimes_{\Sigma_{j_1} \times \cdots \times \Sigma_{j_k}} (\Sph^0)^{\otimes j_1} \otimes \cdots \otimes (\Sph^0)^{\otimes j_k}.
\]
\end{proof}

Now, given an algebra $A$ over $\mathpzc{E}_{\text{st}}^\dagger$, and the associated enveloping operad $\mathpzc{U}^A$, recall, as in Section~\ref{sec:env_op}, that we have a canonical map $\mathpzc{E}^\dagger_{\text{st}} \to \mathpzc{U}^A$.

\begin{Lemma}\label{lem:UandVforcofibA}
Given a cofibrant algebra $A$ over $\mathpzc{E}^\dagger_{\emph{st}}$, for each $j \ge 1$ and any partition $j = j_1 + \cdots + j_k$, the canonical map
\[
\mathpzc{E}_{\emph{st}}^\dagger(j)/\Sigma_{j_1} \times \cdots \times \Sigma_{j_k} \to \mathpzc{U}^A(j)/\Sigma_{j_1} \times \cdots \times \Sigma_{j_k}
\]
is a quasi-isomorphism.
\end{Lemma}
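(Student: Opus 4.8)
The plan is to reduce to the case of a cell algebra and then argue by induction along the cell filtration of $\mathpzc{U}^A$ using the explicit models and the associated graded computation recorded in Section~\ref{sec:env_op}. First I would note that, by Corollary~\ref{cor:stablesemimod}, a cofibrant $\mathpzc{E}^\dagger_{\text{st}}$-algebra $A$ is a retract of a cell algebra, and since $-/\Sigma_{j_1} \times \cdots \times \Sigma_{j_k}$ and the passage to cohomology both respect retracts, it suffices to treat the case where $A$ is itself a cell algebra. So fix a cell filtration $A_0 \to A_1 \to A_2 \to \cdots$ with attachment modules $M_1, M_2, \dots$, set $N_n = \oplus_{i \le n} M_i$ as in Section~\ref{sec:env_op}, and recall that $\mathpzc{U}^{A_0} = \mathpzc{U}^{\mathpzc{E}^\dagger_{\text{st}}(0)} = \mathpzc{E}^\dagger_{\text{st}}$ and that $\mathpzc{U}^A = \operatorname{colim}_n \mathpzc{U}^{A_n}$ with $\mathpzc{U}^{A_n}$ filtered by the $\text{F}_m\mathpzc{U}^{A_n}$.

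The core of the argument is to show, by a double induction on $n$ (and on the internal filtration degree $m$), that for every $n, m, j \ge 1$ and every partition $j = j_1 + \cdots + j_k$ the map $\mathpzc{E}^\dagger_{\text{st}}(j)/\prod \Sigma_{j_\ell} \to \text{F}_m\mathpzc{U}^{A_n}(j)/\prod \Sigma_{j_\ell}$ is a quasi-isomorphism. The base case $n = 0$ is the identity map $\mathpzc{E}^\dagger_{\text{st}}(j)/\prod \Sigma_{j_\ell} \to \mathpzc{E}^\dagger_{\text{st}}(j)/\prod \Sigma_{j_\ell}$, since $\text{F}_m\mathpzc{U}^{A_0}(j) = \mathpzc{E}^\dagger_{\text{st}}(j)$. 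For the inductive step, I would use the short exact sequence of dg modules (coming from the split-at-the-graded-level filtration of Section~\ref{sec:env_op})
\[
0 \to \text{F}_{m-1}\mathpzc{U}^{A_n}(j) \to \text{F}_m\mathpzc{U}^{A_n}(j) \to \mathpzc{U}^{A_{n-1}}(m+j) \otimes_{\Sigma_m} M_n[1]^{\otimes m} \to 0,
\]
apply $-\otimes_{\prod \Sigma_{j_\ell}} (\Sph^0)^{\otimes j_1} \otimes \cdots \otimes (\Sph^0)^{\otimes j_k}$, and invoke semi-flatness of the relevant enveloping-operad terms (Lemma~\ref{lem:Estenvopsflat}) together with the semi-flatness arguments of Lemma~\ref{lem:change_of_ringsunstable} to keep this sequence exact; then the long exact sequence in homology plus the inductive hypothesis for $\text{F}_{m-1}$ reduces matters to showing that the subquotient $\mathpzc{U}^{A_{n-1}}(m+j) \otimes_{\Sigma_m} M_n[1]^{\otimes m}$, coinvariants taken with respect to $\prod \Sigma_{j_\ell}$ as well, is acyclic. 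Here the point is that $M_n[1]^{\otimes m}$, being a sum of copies of $\Sph^{\bullet}$, lets one rewrite this as a direct sum over summands of $\mathpzc{U}^{A_{n-1}}(m+j)/(\Sigma_m \times \prod \Sigma_{j_\ell})$ shifted, and, since $m + j \ge m \ge 1$ is a genuinely non-indiscrete partition (there is at least one extra block $\Sigma_m$ when we think of $m + j = m + j_1 + \cdots + j_k$), the inductive hypothesis for $n-1$ identifies this with $\mathpzc{E}^\dagger_{\text{st}}(m+j)/(\Sigma_m \times \prod \Sigma_{j_\ell})$, which is acyclic by Lemma~\ref{lem:stabilityigorsense}. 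Passing to the colimit over $m$ gives the statement for $\mathpzc{U}^{A_n}(j)$, and then the colimit over $n$ gives it for $\mathpzc{U}^A(j)$, which is the claim.

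The main obstacle I anticipate is bookkeeping the coinvariants correctly: one must verify that taking $\Sigma_m$-coinvariants of the subquotient and then $\prod \Sigma_{j_\ell}$-coinvariants is the same as taking $(\Sigma_m \times \prod \Sigma_{j_\ell})$-coinvariants of $\mathpzc{U}^{A_{n-1}}(m+j) \otimes M_n[1]^{\otimes m}$, and that the resulting partition of $m + j$ to which Lemma~\ref{lem:stabilityigorsense} is applied is indeed non-trivial — this is where the hypothesis $j \ge 1$ (rather than $j \ge 2$) is used, the extra block coming from $\Sigma_m$ with $m \ge 1$. A secondary technical point is ensuring exactness is preserved after applying $-\otimes_{\prod\Sigma_{j_\ell}}(\Sph^0)^{\otimes j}$: since $\Sph^0 = \F_p[0]$ is finite and has trivial differential, and the enveloping-operad terms are semi-flat over the appropriate symmetric group rings by Lemma~\ref{lem:Estenvopsflat}, this should go through, but it needs to be checked that $(\Sph^0)^{\otimes j}$ qualifies as a finite module over $\F_p[\prod \Sigma_{j_\ell}]$ so that semi-flatness applies. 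Once these points are nailed down, the five-lemma and $\lim^1$-type arguments already used in Lemma~\ref{lem:finflatEst} and Lemma~\ref{lem:Estenvopsflat} carry the rest.
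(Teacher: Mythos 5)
Your proposal is correct and follows essentially the same double-induction-on-$(n,m)$ strategy as the paper's proof: reduce to a cell algebra, filter $\mathpzc{U}^{A_n}(j)$, pass to coinvariants of the short exact sequence, and conclude acyclicity of the subquotient via the inductive hypothesis and Lemma~\ref{lem:stabilityigorsense} (using $m+j\ge 2$). One small imprecision: to see that the coinvariants functor preserves the short exact sequence, the right tool is the graded-level splitting of $\text{F}_{m-1}\mathpzc{U}^{A_n}(j)\hookrightarrow \text{F}_m\mathpzc{U}^{A_n}(j)$ (exactly as in Lemma~\ref{lem:almost_splitunstable}) rather than the semi-flatness lemmas, which are statements about preserving quasi-isomorphisms under tensoring with the semi-flat module, not about preserving exactness of short exact sequences in which that module sits — the splitting alone does the job and is what the paper uses.
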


\begin{proof}
Without loss of generality, we may take $A$ to be a cell $\mathpzc{E}_{\text{st}}^\dagger$-algebra. Following the notation of Section~\ref{sec:cell_alg}, let
\[
A_0 \to A_1 \to A_2 \to \cdots
\]
be a cell filtration of $A$ and fix some choices $M_1, M_2, \dots$ for the cochain complexes which appear in the attachment squares. For each $n \ge 0$, let $N_n = \oplus_{i \le n} M_i$, where $N_0 = 0$, and let also $N = \oplus_{i \ge 0} M_i$. As per Section~\ref{sec:env_op}, we have that, for each $j \ge 0$, as a graded right $\F_p[\Sigma_j]$-module
\begin{equation}\label{eqn:cellU}
\mathpzc{U}^A(j) = \bigoplus_{i \ge 0} \mathpzc{E}^\dagger_{\text{st}}(i + j) \otimes_{\Sigma_i} (N[1])^{\otimes i}.
\end{equation}
The differential on $\mathpzc{U}^A(j)$, we recall, is given by the Leibniz rule, the attachment maps and the operadic composition. Moreover, for each $n \ge 0$, and again for each $j \ge 0$, as a graded right $\F_p[\Sigma_j]$-module
\begin{equation}\label{eqn:cellskelU}
\mathpzc{U}^{A_n}(j) = \bigoplus_{i \ge 0} \mathpzc{E}_{\text{st}}^\dagger(i + j) \otimes_{\Sigma_i} (N_n[1])^{\otimes i}.
\end{equation}
Moreover, from Section~\ref{sec:env_op}, recall that we have filtrations $\text{F}_m\mathpzc{U}^{A_n}$ of  the $\mathpzc{U}^{A_n}$. For each $j \ge 0$, the map $\mathpzc{E}_{\text{st}}^\dagger(j) \to \mathpzc{U}^{A}(j)$ corresponds to the inclusion of the $i = 0$ summand in (\ref{eqn:cellU}), and, similarly, for each $n \ge 0$, the map $\mathpzc{E}_{\text{st}}^\dagger(j) \to \mathpzc{U}^{A_n}(j)$ corresponds to the inclusion of the $i = 0$ summand in (\ref{eqn:cellskelU}). It follows that the map $\mathpzc{E}_{\text{st}}^\dagger(j) \to \mathpzc{U}^{A}(j)$ factors through $\mathpzc{U}^{A_n}(j)$ for each $n \ge 0$ and, moreover, the maps $\mathpzc{E}_{\text{st}}^\dagger(j) \to \mathpzc{U}^{A_n}(j)$ factor through $\text{F}_m\mathpzc{U}^{A_n}(j)$ for each $m \ge 0$. We shall now prove the desired result via an induction. Specifically, we shall show that, for each $m,n \ge 0$, the map
\[
\mathpzc{E}_{\text{st}}^\dagger(j)/\Sigma_{j_1} \times \cdots \times \Sigma_{j_k} \to \text{F}_m\mathpzc{U}^{A_n}(j)/\Sigma_{j_1} \times \cdots \times \Sigma_{j_k}
\]
is a quasi-isomorphism for all $j \ge 1$ and any partition $j = j_1 + \cdots + j_k$. The desired result then follows by passage to colimits. We will prove this statement by an induction on $n$. In the case $n = 0$, we have that $\text{F}_m\mathpzc{U}^{A_0}(j) = \mathpzc{E}_{\text{st}}^\dagger(j)$ for each $m \ge 0$ and $j \ge 1$, so that the result is obvious. Next suppose that, for some $n \ge 1$, the property holds for $\text{F}_m\mathpzc{U}^{A_{n-1}}$ holds for all $m \ge 0$. We shall show that this same property holds for $\text{F}_m\mathpzc{U}^{A_{n}}$ for $m \ge 0$, by an induction over $m$. We have that, for each $j \ge 1$, $\text{F}_0\mathpzc{U}^{A_n}(j) = \mathpzc{U}^{A_{n-1}}(j) = \text{colim}_m\,\text{F}_m\mathpzc{U}^{A_{n-1}}(j)$ which, by invoking the inductive hypothesisis for the induction over $n$ and passing to the colimit, we see satisfies the required property (recall that filtered colimits of complexes are exact). Next, suppose that the required property holds for $\text{F}_{m-1}\mathpzc{U}^{A_n}(j)$ for some $m \ge 1$. Fix some $j \ge 1$ and a partition $j = j_1 + \cdots + j _k$. We wish to show that the map $\mathpzc{E}_{\text{st}}(j)/\Sigma_{j_1} \times \cdots \times \Sigma_{j_k} \to \text{F}_m\mathpzc{U}^{A_n}(j)/\Sigma_{j_1} \times \cdots \times \Sigma_{j_k}$ is a quasi-isomorphism. Since we can factor this map as
\[
\mathpzc{E}_{\text{st}}(j)/\Sigma_{j_1} \times \cdots \times \Sigma_{j_k} \to \text{F}_{m-1}\mathpzc{U}^{A_n}(j)/\Sigma_{j_1} \times \cdots \times \Sigma_{j_k} \to \text{F}_m\mathpzc{U}^{A_n}(j)/\Sigma_{j_1} \times \cdots \times \Sigma_{j_k}
\]
it suffices, due to the inductive hypothesis for the induction over $m$, to show that the second map in this composition is a quasi-isomorphism. As in the proof of Lemma~\ref{lem:Estenvopsflat}, we have an exact sequence
\[
0 \to \text{F}_{m-1}\mathpzc{U}^{A_n}(j) \to \text{F}_m\mathpzc{U}^{A_n}(j) \to \text{F}_m\mathpzc{U}^{A_n}(j)/\text{F}_{m-1}\mathpzc{U}^{A_n}(j) \to 0
\]
which is split, at the lefthand end, at the level of graded modules. As in the proof of Lemma~\ref{lem:almost_splitunstable}, we thus have an induced exact sequence as follows
\begin{multline}\label{eqn:SES}
0 \to \text{F}_{m-1}\mathpzc{U}^{A_n}(j)/\Sigma_{j_1} \times \cdots \times \Sigma_{j_k} \to \text{F}_m\mathpzc{U}^{A_n}(j)/\Sigma_{j_1} \times \cdots \times \Sigma_{j_k} \\
\to (\text{F}_m\mathpzc{U}^{A_n}(j)/\text{F}_{m-1}\mathpzc{U}^{A_n}(j))/\Sigma_{j_1} \times \cdots \times \Sigma_{j_k} \to 0.
\end{multline}
By the long exact sequence in homology, it suffices to show that the righthand term, that is, the term $(\text{F}_m\mathpzc{U}^{A_n}(j)/\text{F}_{m-1}\mathpzc{U}^{A_n}(j))/\Sigma_{j_1} \times \cdots \times \Sigma_{j_k}$, has zero homology. From Section~\ref{sec:env_op}, we have that
\[
\text{F}_m\mathpzc{U}^{A_n}(j)/\text{F}_{m-1}\mathpzc{U}^{A_n}(j) \cong \mathpzc{U}^{A_{n-1}}(m+j) \otimes_{\Sigma_m} M_n[1]^{\otimes m}.
\]
It follows that $(\text{F}_m\mathpzc{U}^{A_n}(j)/\text{F}_{m-1}\mathpzc{U}^{A_n}(j))/\Sigma_{j_1} \times \cdots \times \Sigma_{j_k}$ is isomorphic to
\[
\mathpzc{U}^{A_{n-1}}(m+j) \otimes_{\Sigma_m \times \Sigma_{j_1} \times \cdots \times \Sigma_{j_k}} M_n[1]^{\otimes m} \otimes (\Sph^0)^{\otimes j_1} \otimes \cdots \otimes (\Sph^0)^{\otimes j_k}.
\]
By the inductive hypothesis for the induction over $n$, and by writing $M_n[1]$ (which is a sum of spheres) as a filtered colimit of its finite subcomplexes if it isn't already finite, we have that this term is quasi-isomorphic to
\[
\mathpzc{E}_{\text{st}}^\dagger(m+j) \otimes_{\Sigma_m \times \Sigma_{j_1} \times \cdots \times \Sigma_{j_k}} M_n[1]^{\otimes m} \otimes (\Sph^0)^{j_1} \otimes \cdots \otimes (\Sph^0)^{j_k}.
\]
Moreover, since $m+j \ge 2$, this has zero homology by Lemma~\ref{lem:stabilityigorsense}. This completes the induction over $m$ and then also the induction over $n$.
\end{proof}

\begin{Proposition}\label{prop:coproducts_of_algebras}
Let $A$ and $B$ be cofibrant algebras over $\mathpzc{E}^\dagger_{\emph{st}}$. Then we have a natural quasi-isomorphism
\[
A \amalg B \sim A \oplus B.
\]
\end{Proposition}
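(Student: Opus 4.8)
The plan is to show that the canonical comparison map $A \amalg B \to A \oplus B$ is a quasi-isomorphism. First I would observe that, since $\mathpzc{E}_{\text{st}}^\dagger(0) = 0$ (in arity $0$ the stabilization maps are zero, as recorded in the proof of Proposition~\ref{prop:stabmapontoML}), the zero algebra is a zero object in $\mathpzc{E}_{\text{st}}^\dagger\text{-}\mathsf{Alg}$; hence $A \oplus B$, equipped with the algebra structure annihilating all mixed tensor terms, is the categorical product $A \times B$, and there is a canonical map $A \amalg B \to A \times B = A \oplus B$. Because this map is natural and both $\amalg$ and $\oplus$ preserve retracts, and because a cofibrant algebra is a retract of a cell algebra, it suffices to treat the case where $A$ and $B$ are cell algebras.

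Fix $A$ a cell algebra and argue along a cell filtration $B_0 \to B_1 \to \cdots$ of $B$, with cell modules $M_1, M_2, \dots$ (each a sum of sphere complexes). The base case is $B_0 = \mathpzc{E}_{\text{st}}^\dagger(0) = 0$, where $A \amalg 0 = A = A \oplus 0$ and the map is the identity. Since $A \amalg (-)$ preserves colimits, $A \oplus (-)$ preserves filtered colimits, and filtered colimits of quasi-isomorphisms of dg modules are quasi-isomorphisms, it is enough to show that $A \amalg B_n \to A \oplus B_n$ is a quasi-isomorphism for each $n$, which I would prove by induction on $n$.

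For the inductive step I would regard $A \amalg B$ as a cell $\mathpzc{E}_{\text{st}}^\dagger$-algebra whose cells are those of $A$ followed by $M_1, M_2, \dots$; then the skeleta beyond $A$ are exactly the $A \amalg B_n$, with $A \amalg B_n$ obtained from $A \amalg B_{n-1}$ by attaching the single cell $M_n$. Applying the enveloping-operad filtrations of Section~\ref{sec:env_op} in arity $0$ to this cell algebra yields an exhaustive filtration $A \amalg B_{n-1} = \text{F}_0(A \amalg B_n) \subseteq \text{F}_1(A \amalg B_n) \subseteq \cdots$ of $A \amalg B_n$ with
\[
\text{F}_m(A \amalg B_n)/\text{F}_{m-1}(A \amalg B_n) \cong \mathpzc{U}^{A \amalg B_{n-1}}(m) \otimes_{\Sigma_m} M_n[1]^{\otimes m}
\]
as dg modules, and exactness of $\oplus$ produces the matching filtration $A \oplus \text{F}_m(B_n)$ of $A \oplus B_n$ with graded pieces $\mathpzc{U}^{B_{n-1}}(m) \otimes_{\Sigma_m} M_n[1]^{\otimes m}$. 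The comparison map is filtered: on $\text{F}_0$ it is $A \amalg B_{n-1} \to A \oplus B_{n-1}$, a quasi-isomorphism by the induction on $n$; on the $m$-th graded piece ($m \ge 1$) it is $\mathpzc{U}^{A \amalg B_{n-1}} \to \mathpzc{U}^{B_{n-1}}$ (induced by the projection $A \amalg B_{n-1} \to B_{n-1}$) tensored with $M_n[1]^{\otimes m}$ over $\Sigma_m$. Decomposing $M_n[1]$ into spheres (and, if it is infinite, as a filtered colimit of finite subcomplexes, over which $\mathpzc{U}^{(-)}(m) \otimes_{\Sigma_m}(-)$ commutes), $M_n[1]^{\otimes m}$ breaks up, up to shifts and Koszul sign twists, into induced $\Sigma_m$-representations from the Young subgroups $\Sigma_{m_1} \times \cdots \times \Sigma_{m_l}$ of partitions $m = m_1 + \cdots + m_l$, so this map becomes a sum of the maps $\mathpzc{U}^{A \amalg B_{n-1}}(m)/\Sigma_{m_1}\times\cdots\times\Sigma_{m_l} \to \mathpzc{U}^{B_{n-1}}(m)/\Sigma_{m_1}\times\cdots\times\Sigma_{m_l}$. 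Both $A \amalg B_{n-1}$ and $B_{n-1}$ are cell, hence cofibrant, so by Lemma~\ref{lem:UandVforcofibA} each of these sits in a commuting triangle over $\mathpzc{E}_{\text{st}}^\dagger(m)/\Sigma_{m_1}\times\cdots\times\Sigma_{m_l}$ whose other two legs are quasi-isomorphisms; by the two-out-of-three property the map on associated graded is a quasi-isomorphism for every $m$. The standard filtration argument (the long exact homology sequences and the five lemma, the filtrations being exhaustive and bounded below) then gives that $A \amalg B_n \to A \oplus B_n$ is a quasi-isomorphism, completing the induction on $n$; passing to the colimit over $n$ finishes the proof.

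The main obstacle I expect is organizational rather than conceptual: one must verify that the enveloping-operad filtration of Section~\ref{sec:env_op}, set up there for a single cell algebra, applies verbatim to $A \amalg B$ built by interleaving cells; that the comparison map $A \amalg B_n \to A \oplus B_n$ is genuinely filtered with the claimed behaviour on associated graded (in particular that its component $A \amalg B_n \to A$ contributes nothing on the positive graded pieces); and that the reduction to finite subcomplexes of the $M_n$, together with the harmless sign twists, legitimately puts us in the situation covered by Lemma~\ref{lem:UandVforcofibA}. Once this bookkeeping is carried out, all the homotopical content is already supplied by Lemma~\ref{lem:UandVforcofibA}, which in turn rests on Lemma~\ref{lem:stabilityigorsense}, i.e.\ on the homotopy additivity of $\mathbf{E}_{\textbf{st}}^\dagger$ established in Proposition~\ref{prop:additivity_of_monad}.
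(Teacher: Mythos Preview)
Your argument is correct, and it reaches the goal by a route that is related to but organizationally different from the paper's. Two points of contrast are worth noting. First, the paper works with the dg-module map $A\oplus B\to A\amalg B$ given by the sum of the coproduct inclusions, whereas you use the canonical algebra map $A\amalg B\to A\oplus B$ to the product (having observed that $\mathpzc{E}_{\text{st}}^\dagger(0)=0$ makes $0$ a zero object); since these compose to the identity on $A\oplus B$, the two statements are equivalent. Second, and more substantively, the paper runs a \emph{symmetric} induction: it filters $A$ and $B$ simultaneously, compares the filtrations on $\mathpzc{U}^{A_n}(0)\oplus\mathpzc{U}^{B_n}(0)$ and $\mathpzc{U}^{A_n\amalg B_n}(0)$, and on the $E^1$-page must analyze
\[
\bigl(\mathpzc{U}^{A_{n-1}}(m)\otimes_{\Sigma_m}(M_n^A[1])^{\otimes m}\bigr)\oplus\bigl(\mathpzc{U}^{B_{n-1}}(m)\otimes_{\Sigma_m}(M_n^B[1])^{\otimes m}\bigr)\longrightarrow\mathpzc{U}^{A_{n-1}\amalg B_{n-1}}(m)\otimes_{\Sigma_m}(M_n^A[1]\oplus M_n^B[1])^{\otimes m}.
\]
After replacing $\mathpzc{U}^{(-)}$ by $\mathpzc{E}_{\text{st}}^\dagger$ via Lemma~\ref{lem:UandVforcofibA}, the cross terms in the binomial expansion of the right-hand side are then killed by a direct appeal to Lemma~\ref{lem:stabilityigorsense}. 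Your asymmetric scheme (fix $A$, induct along $B_n$) sidesteps this: on the $m$-th graded piece you face only the single map $\mathpzc{U}^{A\amalg B_{n-1}}(m)\to\mathpzc{U}^{B_{n-1}}(m)$ induced by the projection, and two applications of Lemma~\ref{lem:UandVforcofibA} together with naturality of $\mathpzc{E}_{\text{st}}^\dagger\to\mathpzc{U}^{(-)}$ and two-out-of-three finish the job without invoking Lemma~\ref{lem:stabilityigorsense} explicitly. The price you pay is exactly the bookkeeping you flag: one must check that $A\amalg B_{n-1}$ is genuinely a cell algebra in the sense of Definition~\ref{def:cell_alg} (attach $B$'s finitely many cells first, then $A$'s, to keep the filtration $\omega$-indexed) so that both Lemma~\ref{lem:UandVforcofibA} and the explicit enveloping-operad model of Section~\ref{sec:env_op} apply; this is routine. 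Either argument is fine.
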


\begin{proof}
Without loss of generality, we may take $A$ and $B$ to be cell $\mathpzc{E}$-algebras. Following the notation of Section~\ref{sec:cell_alg}, let
\[
A_0 \to A_1 \to A_2 \to \cdots
\]
be a cell filtration of $A$ and fix some choices $M_1^A, M_2^A, \dots$ for the chain complexes which appear in the attachment squares. Let also
\[
B_0 \to B_1 \to B_2 \to \cdots
\]
be a cell filtration of $B$ and fix some choices $M_1^B, M_2^B, \dots$ for the chain complexes which appear in the attachment squares. From Section~\ref{sec:env_op}, recall the formulae for $\mathpzc{U}^{A_n}(j)$ and $\mathpzc{U}^{B_n}(j)$, and also their filtration pieces $\text{F}_m\mathpzc{U}^{A_n}(j)$ and $\text{F}_m\mathpzc{U}^{B_n}(j)$. It suffices to show that the canonical map
\[
A_n \oplus B_n \to A_n \amalg B_n
\]
is a quasi-isomorphism for each $n \ge 0$. In the case $n=0$, we get a map $\mathpzc{E}_{\text{st}}^\dagger(0) \oplus \mathpzc{E}_{\text{st}}^\dagger(0) \to \mathpzc{E}_{\text{st}}^\dagger(0) \amalg \mathpzc{E}_{\text{st}}^\dagger(0) \cong \mathpzc{E}_{\text{st}}^\dagger(0)$ (this isomorphism holds since $\mathpzc{E}_{\text{st}}^\dagger(0)$ is initial) and this map is necessarily a quasi-isomorphism since $\mathpzc{E}_{\text{st}}^\dagger(0)$ has zero homology as per~Proposition~\ref{prop:stabhom}. Suppose then that, for some $n \ge 1$, the map $A_{n-1} \oplus B_{n-1} \to A_{n-1} \amalg B_{n-1}$ is a quasi-isomorphism. Note that $A_n$, $B_n$, $A_n \amalg B_n$, may be identified, respectively, with $\mathpzc{U}^{A_n}(0)$, $\mathpzc{U}^{B_n}(0)$, $\mathpzc{U}^{A_n \amalg B_n}(0)$. Moreover, we have filtration pieces $\text{F}_m\mathpzc{U}^{A_n}(0), \text{F}_m\mathpzc{U}^{B_n}(0)$ and $\text{F}_m\mathpzc{U}^{A_n \amalg B_n}(0)$, and the map $A_n \oplus B_n \to A_n \amalg B_n$ is a filtered map, in that we have induced maps as follows
\[
\text{F}_m\mathpzc{U}^{A_n}(0) \oplus \text{F}_m\mathpzc{U}^{B_n}(0) \to \text{F}_m\mathpzc{U}^{A_n \amalg B_n}(0).
\]
Thus, from the map $A_n \oplus B_n \to A_n \amalg B_n$, we get an induced map of the strongly convergent spectral sequences associated to the aforementioned filtrations. Recalling, from Section~\ref{sec:env_op}, the computations of the associated graded pieces corresponding to the filtrations on the $\mathpzc{U}^{A_n}(j)$, we have that the map on the $E^1$-terms consists of the maps
\begin{multline*}
\Big(\mathpzc{U}^{A_{n-1}}(m) \otimes_{\Sigma_m} (M^A_n[1])^{\otimes m}\Big) \oplus \Big(\mathpzc{U}^{B_{n-1}}(m) \otimes_{\Sigma_m} (M^B_n[1])^{\otimes m}\Big) \\
\longrightarrow \mathpzc{U}^{A_{n-1} \amalg B_{n-1}}(m) \otimes_{\Sigma_m} (M^A_n[1] \oplus M^B_n[1])^{\otimes m}.
\end{multline*}
If $m = 0$, this map reduces to $A_{n-1} \oplus B_{n-1} \to A_{n-1} \amalg B_{n-1}$ and so is a quasi-isomorphism by the inductive hypothesis. Suppose then that $m \ge 1$. Then, by Lemma~\ref{lem:UandVforcofibA}, it suffices to show that the map
\begin{multline*}
\Big(\mathpzc{E}_{\text{st}}^\dagger(m) \otimes_{\Sigma_m} (M^A_n[1])^{\otimes m}\Big) \oplus \Big(\mathpzc{E}_{\text{st}}^\dagger(m) \otimes_{\Sigma_m} (M^B_n[1])^{\otimes m}\Big) \\
\longrightarrow \mathpzc{E}_{\text{st}}^\dagger(m) \otimes_{\Sigma_m} (M^A_n[1] \oplus M^B_n[1])^{\otimes m}
\end{multline*}
is a quasi-isomorphism. If $m = 1$, this is obvious, so suppose that $m \ge 2$. We have that
\[
\mathpzc{E}^\dagger_{\text{st}}(m) \otimes_{\Sigma_m} (M^A_n[1] \oplus M^B_n[1])^{\otimes m} \cong \bigoplus_{l = 0}^m \mathpzc{E}^\dagger_{\text{st}}(m) \otimes_{\Sigma_{m-l} \times \Sigma_l} (M^A_n[1])^{\otimes(m-l)} \otimes (M^B_n[1])^{\otimes l}.
\]
By Lemma~\ref{lem:stabilityigorsense}, the only two summands which have non-zero homology are those corresponding to $l = 0, m$, and so we once more have a quasi-isomorphism, as desired. It follows that the aforementioned map of spectral sequences is an isomorphism from $E^2$ onwards, and so we have that the map $A_n \oplus B_n \to A_n \amalg B_n$ is a quasi-isomorphism, completing the induction.
\end{proof}

\section{Derived Pushouts of Algebras Over the Stable Operads}

We shall now consider pushouts of algebras over the stable Barratt-Eccles operad $\mathpzc{E}_{\text{st}}^\dagger$. Our claim is that, for algebras $A$, $B$ and $C$ over $\mathpzc{E}^\dagger_{\text{st}}$, under suitable circumstances, we have that $A \amalg_C B \sim A \oplus_C B$. We shall compute the pushout via a bar construction. Given a diagram $A \leftarrow C \rightarrow B$ of algebras over $\mathpzc{E}_{\text{st}}^\dagger$, the bar construction $\beta_\bullet(A,C,B)$ is the simplicial $\mathpzc{E}_{\text{st}}^\dagger$-algebra that is given in simplicial degree $n$ as follows:
\[
\beta_n(A, C, B) := A \amalg \underbrace{C \amalg \cdots \amalg C}_{n \: \text{factors}} \amalg B.
\]
Given this simplicial algebra, we can consider its normalization $\text{N}(\beta_\bullet(A,C,B))$, which is once again an algebra over $\mathpzc{E}_{\text{st}}^\dagger$ via the shuffle map (see~\cite{KrizMay} for the normalization of a simplicial algebra and its algebra structure). Now, regarding $A \amalg_C B$ as a constant simplicial algebra, the canonical maps $A \amalg B \to A \amalg_C B$ and $C \to A \amalg_C B$ induce a map of simplicial algebras $\beta_\bullet(A, C, B) \to A \amalg_C B$ and therefore a map of algebras on their normalizations, $\text{N}(\beta_\bullet(A,C,B)) \to A \amalg_C B$.

\begin{Proposition}\label{prop:pushout_normalization}
Given a diagram $A \leftarrow C \rightarrow B$ of algebras over $\mathpzc{E}_{\emph{st}}^\dagger$, if each of $A$ and $B$ and $C$ are cofibrant and $C \to B$ is a cofibration, then the canonical map
\[
\emph{N}(\beta_\bullet(A,C,B)) \to A \amalg_C B
\]
is a quasi-isomorphism.
\end{Proposition}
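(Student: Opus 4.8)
The plan is to run a cellular induction in the variable $B$, closely following the proof of Proposition~\ref{prop:coproducts_of_algebras} and making repeated use of the enveloping operad filtrations of Section~\ref{sec:env_op} together with Lemma~\ref{lem:UandVforcofibA}. First I would reduce, by the usual retract arguments (quasi-isomorphisms and the relevant functors being closed under retracts, and cofibrations being retracts of relative cell complexes by Corollary~\ref{cor:stablesemimod}), to the case where $A$ and $C$ are cell $\mathpzc{E}_{\text{st}}^\dagger$-algebras and $C\to B$ is a relative cell complex: $C=B_0\to B_1\to B_2\to\cdots$ with $B=\text{colim}_m B_m$ and each $B_{m}\to B_{m+1}$ a pushout along a generating cofibration $\mathbf{E}_{\textbf{st}}^\dagger M_{m+1}\to\mathbf{E}_{\textbf{st}}^\dagger\text{C}M_{m+1}$. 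Since coproducts of algebras and the normalization functor both commute with filtered colimits, $\text{N}(\beta_\bullet(A,C,B))=\text{colim}_m\text{N}(\beta_\bullet(A,C,B_m))$ and $A\amalg_C B=\text{colim}_m(A\amalg_C B_m)$ compatibly with the canonical maps; as filtered colimits of cochain complexes are exact, it suffices to show that $\text{N}(\beta_\bullet(A,C,B_m))\to A\amalg_C B_m$ is a quasi-isomorphism for every $m\ge 0$, which I would prove by induction on $m$.

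For the base case $m=0$ we have $A\amalg_C B_0=A\amalg_C C=A$, and the augmented simplicial $\mathpzc{E}_{\text{st}}^\dagger$-algebra $\beta_\bullet(A,C,C)\to A$ is the usual two-sided bar resolution; it carries an extra degeneracy (insert a trivial copy of $C$, via the unit $\mathpzc{E}_{\text{st}}^\dagger(0)\to C$, just before the final $C$-factor), so the simplicial identities for an extra degeneracy give that $\text{N}(\beta_\bullet(A,C,C))\to A$ is a chain homotopy equivalence, in particular a quasi-isomorphism.

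For the inductive step, write $M:=M_m$. Pasting of pushouts gives, at each simplicial level and hence as simplicial algebras, $\beta_\bullet(A,C,B_m)=\beta_\bullet(A,C,B_{m-1})\amalg_{\mathbf{E}_{\textbf{st}}^\dagger M}\mathbf{E}_{\textbf{st}}^\dagger\text{C}M$, with the constant attaching map factoring through $B_{m-1}$, and likewise $A\amalg_C B_m=(A\amalg_C B_{m-1})\amalg_{\mathbf{E}_{\textbf{st}}^\dagger M}\mathbf{E}_{\textbf{st}}^\dagger\text{C}M$. Applying the cell-attachment filtration of an enveloping operad from Section~\ref{sec:env_op} level by level and then normalizing, I would obtain exhaustive filtrations $\text{F}_0\subseteq\text{F}_1\subseteq\cdots$ of $\text{N}(\beta_\bullet(A,C,B_m))$ and of $A\amalg_C B_m$ with $\text{F}_0$ equal to $\text{N}(\beta_\bullet(A,C,B_{m-1}))$ and to $A\amalg_C B_{m-1}$ respectively, so that the comparison map is filtered. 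On $\text{F}_0$ it is a quasi-isomorphism by the inductive hypothesis. On the $l$-th subquotient, $l\ge 1$, it is the map
\[
\text{N}\!\left(\mathpzc{U}^{\beta_\bullet(A,C,B_{m-1})}(l)\otimes_{\Sigma_l}(M[1])^{\otimes l}\right)\longrightarrow \mathpzc{U}^{A\amalg_C B_{m-1}}(l)\otimes_{\Sigma_l}(M[1])^{\otimes l}.
\]
Every algebra appearing here is cofibrant --- each $\beta_n(A,C,B_{m-1})=A\amalg C^{\amalg n}\amalg B_{m-1}$ is a coproduct of cell algebras, and $A\amalg_C B_{m-1}$ is an iterated pushout of cofibrations --- so Lemma~\ref{lem:UandVforcofibA} applies. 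After decomposing $(M[1])^{\otimes l}$ into its $\Sigma_l$-orbit types and writing $M[1]$ as a filtered colimit of its finite subcomplexes, Lemma~\ref{lem:UandVforcofibA} shows that the canonical maps $\mathpzc{E}_{\text{st}}^\dagger(l)\otimes_{\Sigma_l}(M[1])^{\otimes l}\to\mathpzc{U}^{\beta_n(A,C,B_{m-1})}(l)\otimes_{\Sigma_l}(M[1])^{\otimes l}$ and $\mathpzc{E}_{\text{st}}^\dagger(l)\otimes_{\Sigma_l}(M[1])^{\otimes l}\to\mathpzc{U}^{A\amalg_C B_{m-1}}(l)\otimes_{\Sigma_l}(M[1])^{\otimes l}$ are quasi-isomorphisms. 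The first family, being natural in the simplicial degree via the canonical maps $\mathpzc{E}_{\text{st}}^\dagger\to\mathpzc{U}^{(-)}$, exhibits $\mathpzc{U}^{\beta_\bullet(A,C,B_{m-1})}(l)\otimes_{\Sigma_l}(M[1])^{\otimes l}$ as levelwise quasi-isomorphic to the constant simplicial dg module $\mathpzc{E}_{\text{st}}^\dagger(l)\otimes_{\Sigma_l}(M[1])^{\otimes l}$, whose normalization is that value; combining with the second quasi-isomorphism and the compatibility of the canonical maps, the subquotient map is a quasi-isomorphism. A filtered map which is a quasi-isomorphism on every subquotient of an exhaustive, bounded-below filtration is itself a quasi-isomorphism, which completes the induction and the proof.

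I expect the main obstacle to be the bookkeeping that makes the inductive step run cleanly: that the cell-attachment filtration of Section~\ref{sec:env_op} is genuinely compatible with the simplicial structure of $\beta_\bullet(A,C,B_{m-1})$ --- this rests on the attaching map being \emph{constant}, which makes the associated-graded identifications of Section~\ref{sec:env_op} uniform in the simplicial degree --- that the comparison map respects the two filtrations, and that the quasi-isomorphisms supplied by Lemma~\ref{lem:UandVforcofibA} assemble into maps of simplicial dg modules over the constant one. The point of passing to the filtration is precisely that the normalization functor does \emph{not} commute with the pushout $\beta_\bullet(A,C,B_{m-1})\amalg_{\mathbf{E}_{\textbf{st}}^\dagger M}\mathbf{E}_{\textbf{st}}^\dagger\text{C}M$, so one cannot compare the two sides directly; on associated graded this pushout is replaced by the tractable free $\Sigma_l$-orbit terms $\mathpzc{U}^{(-)}(l)\otimes_{\Sigma_l}(M[1])^{\otimes l}$. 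Conceptually the whole argument is the statement that $\text{N}(\beta_\bullet(A,C,B))$ computes the homotopy pushout of $A\leftarrow C\to B$ while the hypothesis that $C\to B$ is a cofibration makes the strict pushout $A\amalg_C B$ a model for that same homotopy pushout; the argument above is a hands-on verification of this inside the semi-model category of Corollary~\ref{cor:stablesemimod}, in the spirit of the proof of Proposition~\ref{prop:coproducts_of_algebras}.
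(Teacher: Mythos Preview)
Your argument is correct but takes a genuinely different route from the paper's. Both proofs reduce to the relative cell case, pass to the colimit over the cell filtration of $C\to B$, handle the base case $B_0=C$ via the extra degeneracy, and then in the inductive step compare associated graded pieces of the cell-attachment filtration of Section~\ref{sec:env_op}. The divergence is in how that subquotient comparison is handled. The paper strengthens the claim to one about \emph{all} arities of the enveloping operad, showing that $\text{N}(\mathpzc{U}^{\beta_\bullet(A,C,B_n)}(j))\to\mathpzc{U}^{A\amalg_C B_n}(j)$ is a quasi-isomorphism for every $j\ge 0$; this is forced, because the $l$-th associated graded piece at arity $j$ involves arity $l+j$, so even the $j=0$ case needs the inductive hypothesis at all arities. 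To propagate the inductive hypothesis through $-\otimes_{\Sigma_l}(M[1])^{\otimes l}$ the paper then invokes two auxiliary semi-flatness results, Lemmas~\ref{lem:norm_fin_flat} and~\ref{lem:Estweksforpowers}. You instead stay at arity $0$ and, rather than using the inductive hypothesis on the subquotients, factor both sides of the subquotient map through the constant simplicial object $\mathpzc{E}_{\text{st}}^\dagger(l)\otimes_{\Sigma_l}(M[1])^{\otimes l}$ via Lemma~\ref{lem:UandVforcofibA}; since normalization sends levelwise quasi-isomorphisms to quasi-isomorphisms and the normalization of a constant is itself, this suffices. Your route is lighter --- it bypasses the two semi-flatness lemmas entirely --- at the cost of not establishing the all-arities statement, which the paper does not actually use elsewhere. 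One small remark: to make the reduction to Lemma~\ref{lem:UandVforcofibA} explicit, note that since $M[1]$ is a sum of sphere complexes, $(M[1])^{\otimes l}$ decomposes into $\Sigma_l$-orbits each of which is (a shift of) an induced module $\F_p[\Sigma_l]\otimes_{\Sigma_{j_1}\times\cdots\times\Sigma_{j_k}}\F_p$, so that $\mathpzc{U}^{(-)}(l)\otimes_{\Sigma_l}(M[1])^{\otimes l}$ is a direct sum of shifts of the quotients $\mathpzc{U}^{(-)}(l)/\Sigma_{j_1}\times\cdots\times\Sigma_{j_k}$ to which Lemma~\ref{lem:UandVforcofibA} applies directly.
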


In order to prove this result, we first need two lemmas.

\begin{Lemma}\label{lem:norm_fin_flat}
Given a diagram $A \leftarrow C \rightarrow B$ of algebras over $\mathpzc{E}_{\emph{st}}^\dagger$, if each of $A$, $B$ and $C$ is cofibrant, the normalization $\emph{N}(\mathpzc{U}^{\beta_\bullet(A,C,B)}(j))$ is semi-flat as a dg right $\F_p[\Sigma_j]$-module.
\end{Lemma}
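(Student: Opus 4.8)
The plan is to bootstrap from the levelwise semi-flatness supplied by Lemma~\ref{lem:Estenvopsflat} and push it through the normalization functor.

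First I would reduce to a levelwise statement. For each simplicial degree $n$, the algebra $\beta_n(A,C,B) = A \amalg C \amalg \cdots \amalg C \amalg B$ is a finite coproduct of cofibrant $\mathpzc{E}_{\text{st}}^\dagger$-algebras, hence itself cofibrant (a coproduct of cofibrant objects is an iterated pushout of cofibrations along the initial algebra, hence a cofibration out of the initial algebra), and therefore, by the last assertion of Proposition~\ref{prop:amenable_implies_admissible}, a retract of a cell $\mathpzc{E}_{\text{st}}^\dagger$-algebra. Since $\mathpzc{U}^{(-)}(j)$ is functorial in its algebra argument, a retract of algebras induces a retract of dg right $\F_p[\Sigma_j]$-modules; and semi-flatness passes to retracts, because $(-)\otimes_{\F_p[\Sigma_j]}(-)$ commutes with retracts and the class of quasi-isomorphisms is closed under retracts. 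Hence, invoking Lemma~\ref{lem:Estenvopsflat}, $\mathpzc{U}^{\beta_n(A,C,B)}(j)$ is semi-flat over $\F_p[\Sigma_j]$ for all $n, j \ge 0$.

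Next I would observe that tensoring commutes with normalization: for a dg left $\F_p[\Sigma_j]$-module $P$, by Dold--Kan the normalization of a simplicial dg $\F_p[\Sigma_j]$-module is, degreewise, a natural direct summand of the associated Moore total complex, which is assembled from direct sums of the simplicial terms; since $(-)\otimes_{\F_p[\Sigma_j]}P$ commutes with direct sums and preserves this natural splitting, one obtains a natural isomorphism $\text{N}(\mathpzc{U}^{\beta_\bullet(A,C,B)}(j))\otimes_{\F_p[\Sigma_j]}P \cong \text{N}(\mathpzc{U}^{\beta_\bullet(A,C,B)}(j)\otimes_{\F_p[\Sigma_j]}P)$. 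Now, given a quasi-isomorphism $P \to Q$ between finite $\F_p[\Sigma_j]$-modules, the levelwise maps $\mathpzc{U}^{\beta_n(A,C,B)}(j)\otimes_{\F_p[\Sigma_j]}P \to \mathpzc{U}^{\beta_n(A,C,B)}(j)\otimes_{\F_p[\Sigma_j]}Q$ are all quasi-isomorphisms by the preceding paragraph, and I would show the induced map on normalizations is again a quasi-isomorphism. Filtering the normalization by simplicial skeleta, one has $\text{sk}_{k-1}\text{N} \hookrightarrow \text{sk}_k\text{N}$ with $\text{N} = \text{colim}_k \text{sk}_k\text{N}$, and the subquotients are shifts of the degree-$k$ Moore piece, a natural direct summand of the $k$-th simplicial term; since homology commutes with direct sums and the Dold--Kan decomposition is natural, an induction on $k$ shows each summand map is a quasi-isomorphism, then an induction on $k$ using the long exact homology sequences of $0 \to \text{sk}_{k-1}\text{N} \to \text{sk}_k\text{N} \to (\text{subquotient}) \to 0$ and the five lemma shows each $\text{sk}_k\text{N}$-map is a quasi-isomorphism, and finally passing to the filtered colimit over $k$ (exactness of filtered colimits of complexes) gives the claim. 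Combining with the displayed isomorphism then shows $\text{N}(\mathpzc{U}^{\beta_\bullet(A,C,B)}(j))$ is semi-flat over $\F_p[\Sigma_j]$.

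The main obstacle is the bookkeeping around the normalization functor: one must check that the internally unbounded total complex really is the filtered colimit of its skeleta — which is fine since the simplicial direction is bounded below — and that tensoring with $P$ genuinely commutes with $\text{N}$, for which the cleanest route is the Dold--Kan splitting together with explicit use of its naturality. Everything else is a routine combination of Lemma~\ref{lem:Estenvopsflat}, standard facts about cofibrant objects, and exactness of filtered colimits.
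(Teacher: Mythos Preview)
Your proposal is correct and follows essentially the same route as the paper's proof: both rest on the natural isomorphism $\text{N}(\mathpzc{U}^{\beta_\bullet(A,C,B)}(j)) \otimes_{\Sigma_j} X \cong \text{N}(\mathpzc{U}^{\beta_\bullet(A,C,B)}(j) \otimes_{\Sigma_j} X)$ together with the levelwise semi-flatness of $\mathpzc{U}^{\beta_n(A,C,B)}(j)$ supplied by Lemma~\ref{lem:Estenvopsflat}. The paper's proof is a two-line sketch that asserts this isomorphism and invokes the lemma; your version explicitly fills in the steps the paper leaves tacit---that coproducts of cofibrant algebras are cofibrant, that semi-flatness passes to retracts, and that levelwise quasi-isomorphisms of simplicial complexes induce quasi-isomorphisms on normalizations---but the underlying argument is the same.
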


\begin{proof}
Given a dg left $\F_p[\Sigma_j]$-module $X$, we have a natural isomorphism
\[
\text{N}(\mathpzc{U}^{\beta_\bullet(A,C,B)}(j)) \otimes_{\Sigma_j} X \cong \text{N}(\mathpzc{U}^{\beta_\bullet(A,C,B)}(j) \otimes_{\Sigma_j} X).
\]
The required flatness now follows from Lemma~\ref{lem:Estenvopsflat}.
\end{proof}

\begin{Lemma}\label{lem:Estweksforpowers}
Let $n \ge 0$, $P$ and $Q$ be dg right $\F_p[\Sigma_n]$-modules which are semi-flat, and let $P \to Q$ be a quasi-isomorphism. Then, for all dg $\F_p$-modules $X$, the induced map
\[
P \otimes_{\Sigma_n} X^{\otimes n} \to Q \otimes_{\Sigma_n} X^{\otimes n}
\]
is a quasi-isomorphism.
\end{Lemma}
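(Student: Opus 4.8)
The plan is to strip the general cochain complex $X$ away in stages until only a module-theoretic statement about the group algebras remains, which I expect to be the hard core. First, reduce to finite $X$: every $X$ is the filtered colimit of its finite subcomplexes (bounded, degreewise finite-dimensional over $\F_p$), and the functors $(-)\otimes_{\Sigma_n}(-)^{\otimes n}$ evaluated on $P$ and on $Q$ commute with filtered colimits in the complex variable; since filtered colimits of complexes of $\F_p$-modules are exact, it suffices to prove the statement for finite $X$. Next, pass to a mapping cone: put $C:=\text{Cone}(P\to Q)$. The short exact sequence $0\to Q\to C\to P[1]\to 0$ is split as a sequence of graded right $\F_p[\Sigma_n]$-modules, and $P[1]$ is semi-flat iff $P$ is, so Lemma~\ref{lem:almost_splitunstable} shows $C$ is semi-flat; it is acyclic because $P\to Q$ is a quasi-isomorphism. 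As $(-)\otimes_{\Sigma_n}X^{\otimes n}$ is additive and commutes with shifts it sends $C$ to $\text{Cone}\big(P\otimes_{\Sigma_n}X^{\otimes n}\to Q\otimes_{\Sigma_n}X^{\otimes n}\big)$, so it remains to show: for every semi-flat acyclic right $\F_p[\Sigma_n]$-module $C$ and every finite $X$, the complex $C\otimes_{\Sigma_n}X^{\otimes n}$ is acyclic.

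Now reduce to modules. Since $\F_p$ is a field, $X\cong \text{H}^\bullet(X)\oplus D$ with $D$ contractible, hence $X^{\otimes n}$ splits $\Sigma_n$-equivariantly as $\bigoplus_{k=0}^{n}\text{Ind}_{\Sigma_{n-k}\times\Sigma_k}^{\Sigma_n}\big(\text{H}^\bullet(X)^{\otimes(n-k)}\boxtimes D^{\otimes k}\big)$ (external tensor product, then induction). For $k\ge 1$ the complex $D^{\otimes k}$ is acyclic over $\F_p$, so $\text{H}^\bullet(X)^{\otimes(n-k)}\boxtimes D^{\otimes k}$ is a finite acyclic complex over $\F_p[\Sigma_{n-k}\times\Sigma_k]$; since restriction preserves semi-flatness (the argument of Lemma~\ref{lem:change_of_ringsunstable}, via the induction--restriction adjunction, gives this), the $k\ge 1$ summands of $C\otimes_{\Sigma_n}X^{\otimes n}$ are acyclic by semi-flatness alone. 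The $k=0$ summand is $C\otimes_{\Sigma_n}\text{H}^\bullet(X)^{\otimes n}$, where $\text{H}^\bullet(X)^{\otimes n}$ is a finite complex with zero differential; filtering it by total degree, a spectral sequence of a finite filtration with acyclic associated graded reduces us to a single finitely generated $\F_p[\Sigma_n]$-module placed in one degree. Thus the lemma comes down to the assertion that for a finite group $H$, a semi-flat acyclic right $\F_p[H]$-module $C$, and a finitely generated left $\F_p[H]$-module $M$, the complex $C\otimes_{\F_p[H]}M$ is acyclic.

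This module-level assertion is the main obstacle. I would attack it by resolving $M$ by a complex $P_\bullet$ of finitely generated free $\F_p[H]$-modules and working with the augmented acyclic complex $B=(\cdots\to P_1\to P_0\to M)$: the brutal truncations $\sigma_{\le m}B$ are finite, the inclusions $\sigma_{\le m}B\hookrightarrow\sigma_{\le m+1}B$ have cofibre a shift of $C\otimes_{\F_p[H]}(\text{free})$ and hence become quasi-isomorphisms after $C\otimes_{\F_p[H]}(-)$, and using semi-flatness to transport the quasi-isomorphisms $\sigma_{\le m}B\simeq(\text{$m$-th syzygy of }M)[m]$ on these finite pieces one reads off $\text{H}_\bullet(C\otimes_{\F_p[H]}M)$ from $\text{H}_\bullet(C\otimes_{\F_p[H]}B)$. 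The genuine difficulty --- and the point where semi-flatness must be used in an essential way, since the conclusion is false for arbitrary acyclic $C$ (complete resolutions over $\F_p[H]$ are acyclic but not semi-flat) --- is to show this colimit vanishes: the transition maps being quasi-isomorphisms does not by itself force it. I would instead run the dual tower $\{\text{Hom}_{\F_p[H]}(C,\sigma_{\ge -m}\widetilde B)\}$ attached to a finite injective co-resolution $\widetilde B$ of $M^\vee$, exploiting that $\F_p[H]$ is self-injective (so the co-resolution's terms are projective and $\text{Hom}_{\F_p[H]}(C,-)$ of them is acyclic when $C$ is), together with the natural isomorphism $(C\otimes_{\F_p[H]}M)^\vee\cong\text{Hom}_{\F_p[H]}(C,M^\vee)$ over the field $\F_p$. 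Reconciling the two towers --- in effect, verifying that a semi-flat $C$ ``sees'' the finite module $M$ as though $C$ were $K$-flat --- is where I expect the real work to lie.
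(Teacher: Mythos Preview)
Your reduction to the module-level assertion --- that $C\otimes_{\F_p[H]}M$ is acyclic for $C$ semi-flat acyclic and $M$ finitely generated --- is sound, and your remark that complete resolutions show semi-flatness must enter essentially here is on the mark. But you leave that assertion open, and the dual-tower\,/\,self-injectivity strategy you sketch is both more elaborate than needed and not carried to a conclusion. Your tower $\{C\otimes\sigma_{\le m}B\}$ stalls for exactly the reason you identify: the transition maps being quasi-isomorphisms only yields $H_*(C\otimes M)\cong H_{*}(C\otimes(\text{$m$-th syzygy}))[m]$, which is a periodicity statement, not vanishing. This is a genuine gap.

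The paper closes it with a much simpler device, and in fact bypasses all of your intermediate reductions (the cone, the decomposition of $X^{\otimes n}$, the reduction to a single module). After passing to finite $X$ as you do, it proves directly that $P\otimes_{\Sigma_n}Z\to Q\otimes_{\Sigma_n}Z$ is a quasi-isomorphism for \emph{any} finite left $\F_p[\Sigma_n]$-complex $Z$. Take a cofibrant replacement $Z_{\text{cof}}\to Z$, bounded below and degreewise finitely generated projective; then $P\otimes Z_{\text{cof}}\to Q\otimes Z_{\text{cof}}$ is a quasi-isomorphism since it is a map of derived tensor products and $P\simeq Q$. The idea you are missing is how to compare $P\otimes Z_{\text{cof}}$ with $P\otimes Z$: the truncations $\tau_{\le d+i}Z_{\text{cof}}$ (with $d$ the top degree of $Z$) are \emph{finite} complexes, and each maps by a quasi-isomorphism to the \emph{fixed target} $Z$. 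Semi-flatness of $P$ therefore makes each $P\otimes\tau_{\le d+i}Z_{\text{cof}}\to P\otimes Z$ a quasi-isomorphism; taking the colimit over $i$ --- with constant target, so there is no convergence subtlety --- gives that $P\otimes Z_{\text{cof}}\to P\otimes Z$ is one as well. Translated into your setup: resolve $M$ by $P_\bullet$, but map the truncations of $P_\bullet$ to $M$ itself rather than embedding them in the augmented $B$. Semi-flatness applies to each of these maps between finite complexes, the colimit has constant target $C\otimes M$, and one obtains $C\otimes P_\bullet\simeq C\otimes M$; the left side is acyclic since $P_\bullet$ is $K$-projective and $C\simeq 0$.
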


\begin{proof}
We shall demonstrate this for finite $X$ (where finiteness over $\F_p$ means bounded above and below and of finite dimension in each degree). The case of a general $X$ then follows since any $X$ is a filtered colimit of its finite subcomplexes and filtered colimits commute with finite tensor powers and tensor products. In fact, we will show that, for any finite dg right $\F_p[\Sigma_n]$-module $Z$, the natural map
\[
P \otimes_{\Sigma_n} Z \to Q \otimes_{\Sigma_n} Z
\]
is a quasi-isomorphism. To see this, let $Z_{\text{cof}} \to Z$ be a cofibrant approximation of $Z$, in the projective Quillen model structure on dg right $\F_p[\Sigma_n]$-modules, and then consider the following commutative square:

\begin{center}
\begin{tikzpicture}[node distance = 1.5cm]
\node(A){$P \otimes_{\Sigma_n} Z$};
\node[below of = A](C){$P \otimes_{\Sigma_n} Z_{\text{cof}}$};
\node[right of = A, xshift = 1cm](B){$Q \otimes_{\Sigma_n} Z$};
\node[below of = B, yshift = 0mm](D){$Q \otimes_{\Sigma_n} Z_{\text{cof}}$};
	
\draw[->] (A) -- (B) node[midway,anchor=south]{};
\draw[->] (C) -- (A) node[midway,anchor=east]{};
\draw[->] (C) -- (D) node[midway,anchor=north]{};
\draw[->] (D) -- (B) node[midway,anchor=west]{};
\end{tikzpicture}
\end{center}

The bottom horizontal map is then a map
\[
P \otimes_{\Sigma_n}^{\mathbb{L}} Z \to Q \otimes_{\Sigma_n}^{\mathbb{L}} Z
\]
between the derived tensor products. To be precise, the derived functors are those of the functors $P \otimes_{\Sigma_n} -$ and $Q \otimes_{\Sigma_n} -$. Since, however, the two possible derived tensor products, achieved upon fixing one or the other variable, are naturally isomorphic, the above map can be identified with the image of $P \to Q$ under the derived functor of $- \otimes_{\Sigma_n}^{\mathbb{L}} Z$. As such, as $P \to Q$ is a quasi-isomorphism, the above map is also quasi-isomorphism. Now, if we can show that the vertical maps are also quasi-isomorphisms, we will have the desired result. The proofs for the two are identical, so we will describe just the one for the lefthand vertical map. Since $Z$ is bounded below, we can take $Z_{\text{cof}}$ to also be bounded below. Suppose that $Z$ is bounded above at degree $d$, in that $Z_{d'}$ is zero for $d' > d$. Consider the truncation $\tau_{\le d+1}Z_{\text{cof}}$ which is to say the complex which coincides with $Z_{\text{cof}}$ up to, and including, degree $d+1$, but is zero thereafter. More generally, we consider the truncations $\tau_{\le d+i}Z_{\text{cof}}$ for $i \ge 1$. Then we can write $Z_{\text{cof}}$ as the colimit of:
\[
\tau_{d+1}Z_{\text{cof}} \to \tau_{d+2}Z_{\text{cof}} \to \cdots .
\]
Moreover, since $Z$ is zero above degree $d$, we have maps $\tau_{\le d+i}Z_{\text{cof}} \to Z$, each of which is a quasi-isomorphism. Thus we get the following diagram.

\begin{center}
\begin{tikzpicture}[node distance = 1.5cm]
\node [] (A) {$\tau_{\le d+1}Z_{\text{cof}}$};
\node [right of = A,xshift=1.5cm] (B) {$\tau_{\le d+2}Z_{\text{cof}}$};
\node [right of = B,xshift=1.5cm] (E) {$\tau_{\le d+3}Z_{\text{cof}}$};
\node [right of = E,xshift=1.5cm] (F) {$\cdots$};
\node [below of = A] (C) {$Z$};
\node [below of = B] (D) {$Z$};
\node [right of = D,xshift=1.5cm] (G) {$Z$};
\node [right of = G,xshift=1.5cm] (H) {$\cdots$};

\draw [->] (A) -- (B) node[midway,anchor=south]{};
\draw [->] (B) -- (E) node[midway,anchor=south]{};
\draw [->] (E) -- (F) node[midway,anchor=south]{};
\draw [->] (A) -- (C) node[midway,anchor=west]{$\sim$};
\draw [->] (B) -- (D) node[midway,anchor=west]{$\sim$};
\draw [->] (E) -- (G) node[midway,anchor=west]{$\sim$};
\draw [->] (C) -- (D) node[midway,anchor=south]{};
\draw [->] (D) -- (G) node[midway,anchor=south]{};
\draw [->] (G) -- (H) node[midway,anchor=south]{};
\end{tikzpicture}
\end{center}

If we tensor this diagram with $P$, since $P$ is semi-flat, the vertical arrows remain quasi-isomorphisms. Moreover, the map induced on the colimits of the two cotowers in the resulting diagram is exactly the map $P \otimes_{\Sigma_n} Z_{\text{cof}} \to P \otimes_{\Sigma_n} Z$, so that we have our desired result, as sequential colimits are exact.
\end{proof}

\begin{proof}[Proof of Proposition~\ref{prop:pushout_normalization}]
We may assume, without loss of generality, that $A$ is a cell algebra and that $C \to B$ is a relative cell map. We shall in fact prove the more general fact that the map
\[
\text{N}(\mathpzc{U}^{\beta_\bullet(A,C,B)}(j)) \to \mathpzc{U}^{A \amalg_C B}(j)
\]
is a quasi-isomorphism for each $j \ge 0$. The desired result is the case $j=0$. Let
\[
B_0 \to B_1 \to B_2 \to \cdots
\]
be a factorization of $C \to B$ as a relative cell map, so that $B_0 = C$, and fix some choices $M_1, M_2, \dots$ for the cochain complexes which appear in the attachment squares. By passage to colimits, it suffices to show that, for all $n \ge 0$, the map
\[
\text{N}(\mathpzc{U}^{\beta_\bullet(A,C,B_n)}(j)) \to \mathpzc{U}^{A \amalg_C B_n}(j)
\]
is a quasi-isomorphism for each $j \ge 0$. Now, in the case where $n=0$, we get the map $\text{N}(\mathpzc{U}^{\beta_\bullet(A,C,C)}(j)) \to \mathpzc{U}^{A}(j)$. By a standard argument, the map of simplicial algebras $\beta_\bullet(A,C,C) \to A$ is a homotopy equivalence, and so, upon forming the arity $j$ parts of the enveloping operads, we have that the map $\mathpzc{U}^{\beta_\bullet(A,C,C)}(j) \to \mathpzc{U}^A(j)$ is a homotopy equivalence of simplicial dg right $\F_p[\Sigma_j]$-modules and, as simplicial homotopies induce chain homotopies on normalizations, upon taking normalizations, we get a chain homotopy equivalence of dg right $\F_p[\Sigma_j]$-modules. In particular, the map is a quasi-isomorphism, as desired. Now suppose that, for some $n \ge 1$, the map $\text{N}(\mathpzc{U}^{\beta_\bullet(A,C,B_{n-1})}(j)) \to \mathpzc{U}^{A \amalg_C B_{n-1}}(j)$ is a quasi-isomorphism for each $j \ge 0$. Recall the filtrations on the enveloping operads of cell algebras, for example, as in Section~\ref{sec:env_op}. The simplicial map $\mathpzc{U}^{\beta_\bullet(A,C,B_n)}(j) \to \mathpzc{U}^{A \amalg_C B_n}(j)$ is in fact a filtered map, in that, for each $m \ge 0$, we have an induced map $\text{F}_m\mathpzc{U}^{\beta_\bullet(A,C,B_n)}(j) \to \text{F}_m\mathpzc{U}^{A \amalg_C B_n}(j)$. We now take the normalization and consider the induced map on the strongly convergent spectral sequences associated to these filtrations. Recalling, from Section~\ref{sec:env_op}, the computations of the associated graded pieces corresponding to the filtrations on the enveloping operads, we have that the map on the $E^1$-terms consists of the following maps:
\[
\text{N}(\mathpzc{U}^{\beta_\bullet(A,C,B_{n-1})}(m+j)) \otimes_{\Sigma_m} M_n[1]^{\otimes m} \to \mathpzc{U}^{A \amalg_C B_{n-1}}(m+j) \otimes_{\Sigma_m} M_n[1]^{\otimes m}.
\]
By Lemma~\ref{lem:norm_fin_flat}, Lemma~\ref{lem:Estenvopsflat}, the inductive hypothesis and Lemma~\ref{lem:Estweksforpowers}, this map is a quasi-isomorphism for all $m \ge 0$ (to see this, first write $M_n[1]$, which is a sum of sphere complexes, as a filtered colimit of its finite subcomplexes). It follows that the map of spectral sequences is an isomorphism from $E^2$ onwards, and so the map $\text{N}(\mathpzc{U}^{\beta_\bullet(A,C,B_{n})}(j)) \to \mathpzc{U}^{A \amalg_C B_{n}}(j)$ is a quasi-isomorphism. This completes the induction.
\end{proof}

Now, with the help of the computation of the pushout in Proposition~\ref{prop:pushout_normalization}, we can now prove the desired result.

\begin{Proposition}\label{prop:pushouts_alg}
Given a diagram $A \leftarrow C \rightarrow B$ of algebras over $\mathpzc{E}_{\emph{st}}^\dagger$, if each of $A$, $B$ and $C$ are cofibrant, and $C \to B$ is a cofibration, then we have that
\[
A \amalg_C B \sim A \oplus_C B.
\]
\end{Proposition}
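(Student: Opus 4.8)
The plan is to combine the bar-construction model for the pushout from Proposition~\ref{prop:pushout_normalization} with the homotopy additivity of coproducts from Proposition~\ref{prop:coproducts_of_algebras}. Since $A$, $B$ and $C$ are cofibrant and $C \to B$ is a cofibration, Proposition~\ref{prop:pushout_normalization} gives a natural quasi-isomorphism $\text{N}(\beta_\bullet(A,C,B)) \to A \amalg_C B$, so it is enough to identify $\text{N}(\beta_\bullet(A,C,B))$, up to natural quasi-isomorphism, with the additive pushout $A \oplus_C B$ (the pushout of $A \leftarrow C \rightarrow B$ formed in dg modules, i.e.\ $\operatorname{coker}(C \xrightarrow{(i_A,-i_B)} A \oplus B)$, where $i_A \colon C \to A$ and $i_B \colon C \to B$ are the structure maps).

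To do this I would introduce the \emph{additive} two-sided bar construction $\beta^\oplus_\bullet(A,C,B)$: the simplicial dg module with $\beta^\oplus_n(A,C,B) = A \oplus C^{\oplus n} \oplus B$, whose outer faces are induced by $i_A$ and $i_B$, whose inner faces are the codiagonals $C \oplus C \to C$, and whose degeneracies insert the zero complex. The coproduct bar construction $\beta_\bullet(A,C,B)$ of the previous section is the analogous two-sided bar construction for the cocartesian monoidal structure on $\mathpzc{E}_{\text{st}}^\dagger\text{-}\mathsf{Alg}$ (inner faces the codiagonals $C \amalg C \to C$, degeneracies induced by the unit $\mathpzc{E}_{\text{st}}^\dagger(0) \to C$). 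For every $n$ the algebra $\beta_n(A,C,B)$ is a coproduct of the cofibrant algebras $A$, $B$ and $n$ copies of $C$, hence cofibrant, and iterating the natural quasi-isomorphism of Proposition~\ref{prop:coproducts_of_algebras} produces the natural comparison quasi-isomorphism $A \oplus C^{\oplus n} \oplus B \to A \amalg C^{\amalg n} \amalg B$. Because a map out of a direct sum of dg modules is determined by its restrictions to the summands, these levelwise maps commute with all the simplicial operators — the key point being that post-composing the comparison $C \oplus C \to C \amalg C$ with the algebra codiagonal recovers the additive codiagonal $C \oplus C \to C$ — so they assemble into a levelwise quasi-isomorphism of simplicial dg modules $\beta^\oplus_\bullet(A,C,B) \to \beta_\bullet(A,C,B)$.

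Next I would pass to normalized complexes. A levelwise quasi-isomorphism of simplicial dg modules induces a quasi-isomorphism on normalizations (compare the spectral sequences of the skeletal filtrations, which converge as the filtrations are exhaustive and bounded below in each total degree, as in the proof of Proposition~\ref{prop:pushout_normalization}), so $\text{N}(\beta^\oplus_\bullet(A,C,B)) \sim \text{N}(\beta_\bullet(A,C,B))$. Finally one computes $\text{N}(\beta^\oplus_\bullet(A,C,B))$ directly: since the degeneracies of $\beta^\oplus_\bullet$ insert zeros, the normalized complex is concentrated in simplicial degrees $0$ and $1$, with $\text{N}_0 = A \oplus B$, $\text{N}_1 = C$, and simplicial differential $d_0 - d_1 = (i_A,-i_B) \colon C \to A \oplus B$; hence $\text{N}(\beta^\oplus_\bullet(A,C,B))$ is the mapping cone of $(i_A,-i_B)$, that is, the homotopy pushout of $A \leftarrow C \rightarrow B$ in dg modules. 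The underlying map of the cofibration $C \to B$ is a degreewise split monomorphism (by Corollary~\ref{cor:stablesemimod} together with the concrete cell models of Section~\ref{sec:cell_alg}), so $(i_A,-i_B)$ is a monomorphism and its mapping cone is quasi-isomorphic to its cokernel $A \oplus_C B$. Chaining the quasi-isomorphisms gives the asserted natural quasi-isomorphism $A \amalg_C B \sim A \oplus_C B$.

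The step I expect to be the main obstacle is precisely this last identification — passing from the homotopy pushout $\operatorname{cone}((i_A,-i_B))$ to the strict pushout $A \oplus_C B$ — since this is where the hypothesis that $C \to B$ be a cofibration is genuinely used; without it the statement should be read with $A \oplus_C B$ interpreted as the homotopy pushout. The remaining points, namely checking that the levelwise comparison respects the inner face maps and that the normalization spectral sequence converges for these (unbounded) simplicial dg modules, are routine bookkeeping.
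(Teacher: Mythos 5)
Your proposal is correct and follows essentially the same route as the paper: introduce the additive bar construction $\beta^{\mathrm{dg}}_\bullet(A,C,B)$ (the paper's name for your $\beta^\oplus_\bullet$), use Proposition~\ref{prop:coproducts_of_algebras} levelwise to compare it with $\beta_\bullet(A,C,B)$, invoke Proposition~\ref{prop:pushout_normalization} for the map to $A\amalg_C B$, and identify $\mathrm{N}(\beta^{\mathrm{dg}}_\bullet(A,C,B))$ with $A\oplus_C B$ using that a cofibration of $\mathpzc{E}_{\text{st}}^\dagger$-algebras is in particular a cofibration (degreewise split monomorphism) of underlying complexes. You spell out more of the bookkeeping — the computation that $\mathrm{N}(\beta^{\mathrm{dg}}_\bullet)$ is the mapping cone of $(i_A,-i_B)$, and the check that the levelwise comparison respects the simplicial operators — which the paper leaves implicit.
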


\begin{proof}
Let $\beta^{\text{dg}}_\bullet(A,C,B)$ denote the bar construction in dg modules, so that, in simplicial degree $n$, we have
\[
\beta_n^{\text{dg}}(A, C, B) := A \oplus \underbrace{C \oplus \cdots \oplus C}_{n \: \text{factors}} \oplus B.
\]
Then we have a composite quasi-isomorphism
\[
\text{N}(\beta^{\text{dg}}_\bullet(A,C,B)) \overset{\sim}\to \text{N}(\beta_\bullet(A,C,B)) \overset{\sim}\to A \amalg_C B.
\]
Here the first map is a quasi-isomorphism by Proposition~\ref{prop:coproducts_of_algebras}, and the second is a quasi-isomorphism by Proposition~\ref{prop:pushout_normalization}. Moreover, since cofibrations of algebras, being retracts of relative cell maps, are necessarily cofibrations of complexes (in the standard projective model structure on complexes), we have a natural quasi-isomorphism $\text{N}(\beta^{\text{dg}}_\bullet(A,C,B)) \sim A \oplus_C B$, and this gives us the desired result.
\end{proof}

%-----------------------------------------------------------------------
% Beginning of chap1.tex
%-----------------------------------------------------------------------

\chapter{A Spectral Cochains Adjunction}

Hitherto, we have dealt with algebras over the stable Barratt-Eccles operad $\mathpzc{E}_{\text{st}}^\dagger$ at a general level. In this chapter, we shall consider an application of the stable Barratt-Eccles operad, and show that cochains on spectra yield examples of such algebras.

\section{Spectra and Their Model Structure}

We begin by fixing what it is that we mean by a spectrum, which is to say we need to fix a model for spectra. We adopt the following model. A spectrum\index{spectra} $E$ is a sequence of based simplicial sets $E_0, E_1, E_2, \dots$ together with a collection of structure maps $\rho_n \colon \Sigma E_n \to E_{n+1}$, where the suspension is the Kan suspension as defined in Definition~\ref{def:kan_susps}, or equivalently, structure maps $\sigma_n \colon E_n \to \Omega E_{n+1}$, where the loop space is the Moore loop space as defined in Definition~\ref{def:moore_loopings}. A map of spectra $f \colon E \to F$ is given by a collection of maps $f_n \colon E_n \to F_n$ which are compatible with the structure maps. We thus have a category of spectra, and we denote this category by $\mathsf{Sp}$. \\
	
We now also make some standard definitions regarding stable homotopy groups. Given a spectrum $E$, for $i \in \Z$, the $i^{\text{th}}$ \textit{stable homotopy group}\index{stable homotopy group} of $E$ is the colimit $\pi_i^{\text{st}}(E) := \text{colim}_{k \ge 0} \pi_{i+k}(|E_k|)$; here, given $k \ge 0$, the map $\pi_{i+k}(|E_k|) \to \pi_{i+k+1}(|E_{k+1}|)$ is that which sends the class of a based map $\Sph^{i+k}_{\text{top}} \to |E_k|$ to the class of the composite $\Sph^{i+k+1}_{\text{top}} \cong \Sigma \Sph^{i+k}_{\text{top}} \to |\Sigma E_k| \to |E_{k+1}|$ where we use the fact that, upon geometric realization, the Kan suspension can be identified, up to natural isomorphism, with the usual topological suspension, as shown for example in Proposition 2.16 in~\cite{MarcStephan}. Moreover, given a map $f \colon E \to F$ of spectra, we call it a \textit{stable weak homotopy equivalence} if the induced maps $\pi_i^{\text{st}}(E) \to \pi_i^{\text{st}}(F)$ are isomorphisms for each $i \in \Z$. 

\begin{Proposition}\label{prop:spec_mod_str}
\index{spectra!Quillen model structure on}On $\mathsf{Sp}$, there is a Quillen model structure such that:
\begin{itemize}
	\item[(i)] A map $f \colon E \to F$ is a weak equivalence if and only if it is a stable weak homotopy equivalence.
	\item[(ii)] A map $f \colon E \to F$ is a cofibration if and only if the map $f_0 \colon E_0 \to F_0$ and the maps $E_{n+1} \, \amalg_{\Sigma E_n} \, \Sigma F_n \to F_{n+1}$, for $n \ge 0$, are cofibrations of based simplicial sets; they are, in particular, levelwise cofibrations.
	\item[(iii)] A map $f \colon E \to F$ is a fibration if and only if it has the right lifting property with respect to maps which are both cofibrations and weak equivalences; they are, in particular, levelwise fibrations.
\end{itemize}
\end{Proposition}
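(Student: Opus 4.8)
The plan is to realize this as the Bousfield--Friedlander stable model structure on sequential spectra, constructed in two stages: a \emph{levelwise} (or \emph{strict}) model structure, followed by a stabilization. Throughout, $\mathsf{Spc}_*$ carries its Kan--Quillen model structure, which is proper and cofibrantly generated, and the first point to record is that the pair $(\Sigma, \Omega)$ --- Kan suspension and Moore loops --- is a Quillen adjunction on $\mathsf{Spc}_*$: adjointness is Proposition~\ref{prop:SigmaOmegadj}(i); $\Sigma$ preserves cofibrations by Proposition~\ref{prop:susp_monos}, cofibrations of simplicial sets being exactly the monomorphisms; and $\Sigma$ preserves all weak equivalences, since it is naturally weakly equivalent to $-\wedge\Sph^1$, which preserves weak equivalences as every simplicial set is cofibrant, whence two-out-of-three applies.

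Next I would put the levelwise model structure on $\mathsf{Sp}$, in which the weak equivalences and fibrations are detected levelwise. This is cofibrantly generated, with generating (trivial) cofibrations the maps $F_n i$ where $i$ ranges over the generating (trivial) cofibrations of $\mathsf{Spc}_*$ and $F_n$ is left adjoint to the $n$-th evaluation functor $E \mapsto E_n$; it is left proper because $\mathsf{Spc}_*$ is and colimits in $\mathsf{Sp}$ are levelwise, and right proper for the same reason. Unwinding the lifting property against levelwise trivial fibrations shows that a map $f \colon E \to F$ is a levelwise cofibration exactly when $f_0 \colon E_0 \to F_0$ and the latching maps $E_{n+1}\amalg_{\Sigma E_n}\Sigma F_n \to F_{n+1}$ (for $n \ge 0$) are cofibrations of based simplicial sets --- this is precisely clause (ii), and since this class of maps is unchanged by the stabilization below, clause (ii) will thereby be established.

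For the stabilization, I would introduce $\Theta E$ with $(\Theta E)_n := \Omega E_{n+1}$ and structure maps induced from those of $E$ via the adjunction, together with the natural map $E \to \Theta E$, and set $Q E := \operatorname{colim}\big(E \to \Theta E \to \Theta^2 E \to \cdots\big)$. Declaring $f$ to be a stable equivalence when $Q f$ is a levelwise weak equivalence (after levelwise fibrant replacement), one verifies the Bousfield--Friedlander axioms for $Q$: (A1) $Q$ preserves levelwise weak equivalences; (A2) for levelwise Kan $E$, the maps $E \to QE$ and $QE \to \Theta QE$ are levelwise weak equivalences, so $QE$ is an $\Omega$-spectrum; (A3) the properness ``cube'' condition. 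The Bousfield--Friedlander localization theorem then yields a model structure on $\mathsf{Sp}$ whose weak equivalences are the stable equivalences, whose cofibrations are the levelwise cofibrations just described (clause (ii), and in particular these are levelwise cofibrations), and whose fibrations are defined by the right lifting property against stable trivial cofibrations (clause (iii)); stable fibrations are in particular levelwise fibrations, since every levelwise trivial cofibration is a stable trivial cofibration.

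It then remains to identify the stable equivalences with the $\pi^{\mathrm{st}}_*$-isomorphisms, giving clause (i). For levelwise Kan $E$ one has $\pi_{i+k}(|E_k|) \cong \pi_{i+k}(E_k)$, and since the Kan suspension realizes, up to natural isomorphism, to the topological suspension (Proposition~2.16 of~\cite{MarcStephan}, as recalled above) while Moore loops model $\mathrm{F}(\Sph^1,-)$, one computes $\pi_j\big((QE)_n\big) \cong \operatorname{colim}_k \pi_{j+k}(E_{n+k}) \cong \pi^{\mathrm{st}}_{j-n}(E)$; as homotopy groups commute with the filtered colimit defining $Q$, it follows that $f$ is a $Q$-equivalence precisely when it induces isomorphisms on all $\pi^{\mathrm{st}}_i$. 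The main obstacle I anticipate is verifying axiom (A3), and, more specific to this setup, checking that replacing the classical $-\wedge\Sph^1$ and $\mathrm{F}(\Sph^1,-)$ by the Kan suspension and Moore loops is harmless throughout --- in particular that $\Omega$ preserves levelwise fibrations and enough weak equivalences, and that the counit $\Sigma\Omega \to \mathrm{id}$, a monomorphism by Proposition~\ref{prop:SigmaOmegadj}(iii), interacts correctly with $Q$ --- but because these functors are objectwise weakly equivalent to their classical counterparts, the Bousfield--Friedlander verification transfers essentially verbatim.
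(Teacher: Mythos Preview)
Your approach is correct in outline and would yield the result, but it differs substantially from the paper's. The paper does not reconstruct the Bousfield--Friedlander localization at all: it simply invokes Theorem~2.29 of~\cite{MarcStephan}, which already provides the model structure with the stated weak equivalences, the latching-map description of cofibrations, and the lifting characterization of fibrations (including that fibrations are levelwise). The only claim the paper proves directly is the parenthetical in clause~(ii), that cofibrations are levelwise monomorphisms. This is done by an elementary induction: $f_0$ is a monomorphism by hypothesis; then $\Sigma f_n$ is a monomorphism by Proposition~\ref{prop:susp_monos}, so the pushout map $E_{n+1} \to E_{n+1}\amalg_{\Sigma E_n}\Sigma F_n$ is a monomorphism (pushouts of monos in simplicial sets are monos), and composing with the latching map gives that $f_{n+1}$ is a monomorphism.

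Your reconstruction via the strict model structure plus $Q$-localization is the standard proof of what \cite{MarcStephan} packages, so it is sound; it is simply far more work than the paper undertakes. Note, however, that the one assertion the paper \emph{does} isolate and prove --- that cofibrations are levelwise cofibrations in $\mathsf{Spc}_*$ --- is the one you handle most loosely: you write ``in particular these are levelwise cofibrations'' without argument, and your use of ``levelwise cofibration'' earlier in the same paragraph (for ``cofibration in the levelwise model structure'') risks conflating two meanings. The inductive pushout argument above is what actually justifies this, and it is worth stating explicitly since it is not automatic from the latching description alone.
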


\begin{proof}
All except that the cofibrations are levelwise monomorphisms is immediate from Theorem 2.29 in~\cite{MarcStephan}. To see that cofibrations are monomorphisms, let $f \colon E \to F$ be a cofibration of spectra $E$ and $F$. By definition, $f_0$ is a monomorphism, and moreover, so is $E_{1} \, \lamalg_{\Sigma E_0} \, \Sigma F_0 \to F_{1}$. Consider the following pushout square.
\begin{center}
\begin{tikzpicture}[node distance = 1.5cm]
\node(A){$\Sigma E_0$};
\node[below of = A](C){$E_{1}$};
\node[right of = A, xshift = 1cm](B){$\Sigma F_0$};
\node[below of = B, yshift = 0.5mm](D){$E_{1} \, \lamalg_{\Sigma E_0} \, \Sigma F_0$};
	
\draw[->] (A) -- (B) node[midway,anchor=south]{$\Sigma f_0$};
\draw[->] (A) -- (C) node[midway,anchor=east]{$\rho_0$};
\draw[->] (C) -- (D) node[midway,anchor=north]{$i$};
\draw[->] (B) -- (D) node[midway,anchor=west]{};

\begin{scope}[shift=($(A)!.2!(D)$)]
\draw +(0,-0.25) -- +(0,0)  -- +(0.25,0);
\end{scope}
\end{tikzpicture}
\end{center}
By Proposition~\ref{prop:susp_monos}, $\Sigma f_0$ is a monomorphism. Since in simplicial sets, pushouts of monomorphisms are once again monomorphisms, we see that the map $i$ is a monomorphism, and thus so is the composite $E_1 \to E_{1} \, \lamalg_{\Sigma E_0} \, \Sigma F_0 \to F_1$, which is exactly $f_1$. Repeating this argument, we see by induction that each $f_n \colon E_n \to F_n$ is a monomorphism.
\end{proof}

\begin{Remark}\label{rmk:comparison_comb_sp}
\index{spectra!relation to combinatorial spectra}As per Bousfield and Friedlander in~\cite[\S 2.5]{BousfieldFriedlander} and Stephan in~\cite{MarcStephan}, there is a Quillen equivalence
\[
\text{Sp} \colon \mathsf{Sp} \rightleftarrows \mathsf{CSp} \colon \text{Ps}
\]
between our category of spectra and $\mathsf{CSp}$, the category of Kan's combinatorial spectra, equipped with the model structure of Brown in~\cite{Brown}. \customendremark
\end{Remark}

We can single out some special spectra with some standard definitions, as follows. Given a spectrum $E$, say that it is an \textit{$\Omega$-spectrum}\index{spectra!$\Omega$-spectra} if each $E_n$ is a Kan complex and the maps $\sigma_n$ are weak homotopy equivalences of based simplicial sets. Given a spectrum $E$, say that it is a \textit{strict $\Omega$-spectrum}\index{spectra!strict $\Omega$-spectra} if each $E_n$ is a Kan complex and the maps $\sigma_n$ are isomorphisms of based simplicial sets. Given a spectrum $E$, say that it is \textit{$\Sigma$-cofibrant}\index{spectra!$\Sigma$-cofibrant spectra} if the maps $\rho_n$ are cofibrations of based simplicial sets.

\begin{Remark}\label{rmk:htpy_grps_fib_sp}
Given an $\Omega$-spectrum $E$, we clearly have that $\pi_i^{\text{st}}(E) \cong \pi_i(E_0)$ for $i \ge 0$, and $\pi_i^{\text{st}}(E) \cong \pi_0(E_{|i|})$ for $i < 0$. \customendremark
\end{Remark}

\begin{Proposition}\label{prop:fibcofibsp}
We have the following:
\begin{itemize}
	\item[(i)] The fibrant spectra are exactly the $\Omega$-spectra. The cofibrant spectra are exactly the $\Sigma$-cofibrant spectra. The strict $\Omega$-spectra are bifibrant.
	\item[(ii)] A map between $\Omega$-spectra is a weak equivalence if and only if it is a levelwise weak homotopy equivalence of based simplicial sets.
	\item[(iii)] A map between strict $\Omega$-spectra is a fibration if and only if it is levelwise fibration of based simplicial sets.
\end{itemize}
\end{Proposition}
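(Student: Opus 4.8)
The plan is to deduce all three parts from Proposition~\ref{prop:spec_mod_str}, the adjunction facts of Proposition~\ref{prop:SigmaOmegadj}, and the structural description of the stable model structure due to Bousfield--Friedlander and Marc Stephan; no genuinely new argument is needed. First, for the identification of the cofibrant spectra, I would apply the cofibration criterion of Proposition~\ref{prop:spec_mod_str}(ii) to the unique map out of the initial (zero) spectrum, every level of which is the one-point simplicial set $*$ and every structure map of which is the identity. Its $0$th level being $*$, the condition on the map in degree $0$ is automatic, and since $\Sigma *\cong *$, the pushout-corner map $*\,\amalg_{*}\,\Sigma F_n\to F_{n+1}$ is simply the structure map $\rho_n\colon\Sigma F_n\to F_{n+1}$. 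Hence a spectrum $F$ is cofibrant if and only if every $\rho_n$ is a cofibration of based simplicial sets, i.e.\ if and only if $F$ is $\Sigma$-cofibrant.

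For the fibrant objects, the identification with the $\Omega$-spectra is the heart of the Bousfield--Friedlander construction and of its Kan-suspension variant, so I would cite it directly from~\cite{BousfieldFriedlander} and~\cite{MarcStephan} --- or transport it across the Quillen equivalence of Remark~\ref{rmk:comparison_comb_sp}, under which the fibrant objects correspond to Kan's combinatorial spectra --- rather than reprove it. Granting this, let $E$ be a strict $\Omega$-spectrum. It is in particular an $\Omega$-spectrum, so it is fibrant. It is also $\Sigma$-cofibrant: the structure map $\rho_n$ is the adjunct of the isomorphism $\sigma_n\colon E_n\to\Omega E_{n+1}$, hence factors as $\Sigma E_n\xrightarrow{\ \Sigma\sigma_n\ }\Sigma\Omega E_{n+1}\to E_{n+1}$ with $\Sigma\sigma_n$ an isomorphism and the second map the counit, which is a monomorphism by Proposition~\ref{prop:SigmaOmegadj}(iii); so $\rho_n$ is a monomorphism, hence a cofibration of based simplicial sets. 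By the first paragraph $E$ is then cofibrant, so bifibrant, which proves the remaining assertions of (i).

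For (ii), the ``if'' direction is immediate: a levelwise weak homotopy equivalence $f\colon E\to F$ induces isomorphisms $\pi_{i+k}(|E_k|)\to\pi_{i+k}(|F_k|)$ for all $i,k$, hence on the colimits defining $\pi_i^{\text{st}}$, so $f$ is a stable weak equivalence. For the ``only if'' direction, for an $\Omega$-spectrum the weak equivalences $\sigma_n$ give, after realization, natural isomorphisms $\pi_j(|E_n|)\cong\pi_{j+1}(|E_{n+1}|)$ (with the usual care in low degrees for loop spaces), which iterate and stabilize to a natural isomorphism $\pi_j(|E_n|)\cong\pi_{j-n}^{\text{st}}(E)$, in the spirit of Remark~\ref{rmk:htpy_grps_fib_sp}; a stable equivalence between $\Omega$-spectra therefore induces isomorphisms on all homotopy groups of all levels and so is a levelwise weak equivalence. (Equivalently, one may invoke the general fact that, in the Bousfield--Friedlander stable structure, a map between fibrant objects is a stable equivalence iff it is a strict, i.e.\ levelwise, equivalence.) Part (iii) is the companion statement: the fibrations of the underlying strict model structure on $\mathsf{Sp}$ are exactly the levelwise fibrations of based simplicial sets, and a map between fibrant objects is a stable fibration iff it is a strict fibration; since strict $\Omega$-spectra are bifibrant (shown above), a map between them is a stable fibration iff it is a levelwise fibration. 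As before I would cite~\cite{BousfieldFriedlander} and~\cite{MarcStephan} for this localization statement.

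The only non-formal ingredient is the identification of the fibrant spectra with the $\Omega$-spectra --- equivalently, the Bousfield--Friedlander/Stephan presentation of the stable structure as a localization of the strict one --- from which the fibration statement in (iii) and the ``only if'' half of (ii) also flow; everything else is routine manipulation of the $\Sigma\dashv\Omega$ adjunction and of homotopy groups. Since that ingredient is exactly what the cited references provide, I expect the write-up to be short.
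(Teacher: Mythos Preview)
Your proposal is correct and, for parts (i) and (ii), matches the paper's proof essentially verbatim: the cofibrant identification by specializing the pushout-corner criterion to the map out of the zero spectrum, the factorization $\Sigma E_n \xrightarrow{\Sigma\sigma_n} \Sigma\Omega E_{n+1} \to E_{n+1}$ with the counit a monomorphism by Proposition~\ref{prop:SigmaOmegadj}(iii), and the citation to Stephan for the fibrant identification and for (ii) (the paper simply cites Lemma~2.28 there, whereas you sketch the standard argument).

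For (iii) the paper argues differently. Rather than invoking the general Bousfield-localization principle that a map between local-fibrant objects is a local fibration iff it is a strict fibration, the paper transports the question across the Quillen equivalence with combinatorial spectra (Remark~\ref{rmk:comparison_comb_sp}): for a strict $\Omega$-spectrum $E$ the unit $E \to \text{Ps}\,\text{Sp}\,E$ is an isomorphism, so a levelwise fibration between strict $\Omega$-spectra is carried to a map whose image under $\text{Ps}$ is a levelwise fibration, and Proposition~3.18 of Stephan then identifies this with a fibration in $\mathsf{CSp}$; pulling back along the right Quillen functor $\text{Ps}$ yields a fibration in $\mathsf{Sp}$. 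Your route is cleaner and in fact proves the stronger statement that (iii) holds for all $\Omega$-spectra; the paper's route is more concrete and avoids any appeal to abstract localization theory, but genuinely uses strictness (the unit being an isomorphism) and so does not extend.
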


\begin{proof}
(i): The case of fibrant objects follows from Theorem 2.29 in~\cite{MarcStephan}. For the cofibrant objects, note that a map $f \colon E' \to E$ is a cofibration if $E'_0 \to E_0$ and $E'_{n+1} \, \lamalg_{\Sigma E'_n} \, \Sigma E_n \to E_{n+1}$ are cofibrations. Taking the $E'_n$ to be $*$, we are left with the map $* \to E_0$ and the maps $\Sigma E_n \to E_{n+1}$. The former is of course always a cofibration of based simplicial sets. Now consider the case of strict $\Omega$-spectra. If each map $E_n \to \Omega E_{n+1}$ is an isomorphism, the maps $\Sigma E_n \to E_{n+1}$ are monomorphisms as they may be written as $\Sigma E_n \to \Sigma \Omega E_{n+1} \to E_{n+1}$ where the first map is an isomorphism and the second is a monomorphism by part (iii) of Proposition~\ref{prop:SigmaOmegadj}. \\

(ii): See Lemma 2.28 in~\cite{MarcStephan}. \\

(iii): Let $E \to F$ be a levelwise fibration between strict $\Omega$-spectra $E$ and $F$. Consider the adjunction between spectra and Kan's combinatorial spectra in Remark~\ref{rmk:comparison_comb_sp} above. It is immediate from the definitions (in, e.g., \cite{MarcStephan}) that if $E$ is a strict $\Omega$-spectra, then the unit of adjunction $E \to \text{Ps}\,\text{Sp}E$ is an isomorphism. Thus the induced map $\text{Ps}\,\text{Sp}E$ is a levelwise fibration. By Proposition 3.18 in~\cite{MarcStephan}, given a map $f$ between combinatorial spectra, $\text{Ps}(f)$ is a levelwise fibration if and only if it is a fibration. Thus $\text{Ps}\,\text{Sp}E$ is a fibration. It follows that the map $E \to F$ is itself a fibration, as desired.
\end{proof}

Next, let us consider some examples of spectra, under our model for spectra. The most obvious is that of suspension spectra, which are spectra freely generated on a space. Given a based simplicial set $S$, the \textit{suspension spectrum} $\Sigma^\infty S$ is defined by setting $(\Sigma^\infty S)_n = \Sigma^nS$, and the structure maps, in the form via suspensions, are identities. More generally, for each $n \ge 0$, we define $\Sigma^{\infty - n}X$ by setting $(\Sigma^{\infty - n}X)_m = \Sigma^{m-n}X$ for $m \ge n$, and $*$ for $m < n$ and the structure maps to be the obvious ones in suspension form. If $n = 0$, we recover $\Sigma^\infty S$. Note that the structure maps $\rho_n \colon \Sigma (\Sigma^\infty S)_n \to (\Sigma^\infty S)_{n+1}$ for the corresponding suspension spectrum $\Sigma^\infty$ are identities. It follows that the structure maps $\sigma _n \colon (\Sigma^\infty S)_n \to \Omega (\Sigma^\infty S)_{n+1 }$ are components of the unit of the $(\Sigma, \Omega)$-adjunction and so are, by Proposition~\ref{prop:SigmaOmegadj}, isomorphisms. The component simplicial sets $\Sigma^n S$, however, are not necessarily Kan complexes, even if $S$ is (e.g., consider the case of the $0$-sphere), and so the suspension spectrum is not necessarily a strict $\Omega$-spectrum, or an $\Omega$-spectrum at all for that matter. \\

Now let us consider Eilenberg-MacLane spectra in our model. Let $A$ be any abelian group. Then a standard model for the Eilenberg-MacLane space $\text{K}(A,n)$, for each $n \ge 0$, as a simplicial set, is the simplicial set whose $d$-simplices are given by the cocycles $\text{Z}^n(\Delta_d; A)$. Consider the simplicial set $\text{K}(A,n)$ as a based simplicial set, with zero as the basepoint. The based simplicial sets $\text{K}(A,0), \text{K}(A,1), \text{K}(A,2), \dots$ then assemble into a spectrum, and this is the \textit{Eilenberg-MacLane spectrum}\index{Eilenberg-MacLane spectra} $\text{H}A$. To see this, we need structure maps
\[
\text{K}(A,n) \to \Omega \text{K}(A,n+1).
\]
These are as follows. Let $\alpha$ be an $n$-cocycle on $\Delta_d$. Then, we may act on the chains on $\Delta_{d+1}$ by stipulating that, given a simplex $[n+1] \to [d+1]$ in $\Delta_{d+1}$, if no entry maps to zero, we send it to zero, or if exactly one entry maps to zero, we drop $0$ from both the source and target and then reindex to get a map $[n] \to [d]$, a simplex in $\Delta_d$, and then act by $\alpha$. (We needn't concern ourselves with the case of maps $[n+1] \to [d+1]$ which send more than one entry to zero as those yield degenerate simplices of $\Delta_{d+1}$.) An easy check shows that this action on chains defines an $(n+1)$-cocycle $\beta$ on $\Delta_{d+1}$, that this cocycle lies in $\Omega \text{K}(A,n+1)$ and moreover that the assignment $\alpha \mapsto \beta$ yields a map of based simplicial sets $\text{K}(A,n) \to \Omega \text{K}(A,n+1)$, as desired. \\

We can also consider shifts of Eilenberg-MacLane spectra under our model. Let $A$ be an abelian group as above. Given any $k \in \Z$, the \textit{shifted, or generalized, Eilenberg-MacLane spectrum}\index{Eilenberg-MacLane spectra!shifted}\index{Eilenberg-MacLane spectra!generalized} $\Sigma^k\text{H}A$ is the spectrum where $(\Sigma^k\text{H}A)_n = \text{K}(A,k+n)$. Here, for $n < 0$, we interpret $\text{K}(A,n)$ to be $*$, by which we mean the based $\Delta_0$. In general, we have that $(\Sigma^k\text{H}A)_{n,d} = \text{Z}^{k+n}(\Delta_d; A)$.

\begin{Proposition}\label{prop:EM_spec_fib}
Given any abelian group $A$, the Eilenberg-MacLane spectrum $\emph{H}A$, and more generally the shifted Eilenberg-MacLane spectrum $\Sigma^k\emph{H}A$ for any $k \in \Z$, is a strict $\Omega$-spectrum, and so is bifibrant.
\end{Proposition}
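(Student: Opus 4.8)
The plan is to check, directly from the constructions already in place, the two defining features of a strict $\Omega$-spectrum for $E = \Sigma^k\text{H}A$ (which subsumes $\text{H}A = \Sigma^0\text{H}A$): that every level $E_n$ is a Kan complex, and that every structure map $\sigma_n \colon E_n \to \Omega E_{n+1}$ is an isomorphism of based simplicial sets. Granting these two facts, the final assertion that $\Sigma^k\text{H}A$ is bifibrant is then immediate from Proposition~\ref{prop:fibcofibsp}(i).

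First I would dispose of the levels. When $k + n < 0$ we have $E_n = * = \Delta_0$, which is a Kan complex. When $k + n \ge 0$, the $d$-simplices of $E_n = \text{K}(A, k+n)$ are the normalized cocycles $\text{Z}^{k+n}(\Delta_d; A)$, and under pointwise addition these assemble into a simplicial abelian group, the simplicial operators being the (additive, differential-preserving, hence cocycle-preserving) pullback maps on cochains. Since every simplicial group is a Kan complex (Moore's theorem; see~\cite{GoerssJardine}), each $E_n$ is Kan.

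Next I would treat the structure maps, fixing $n$ and writing $m = k+n$. If $m < -1$ both $E_n$ and $E_{n+1}$ are $*$, and if $m = -1$ a direct inspection of the definition of the Moore looping shows $\Omega\,\text{K}(A,0) = *$ as well (a $0$-cocycle on $\Delta_{d+1}$ whose $d_0$-face is the basepoint vanishes); in either case $\sigma_n$ is the unique isomorphism $* \to *$. So assume $m \ge 0$. Since $\text{K}(A,m+1)_0 = \text{Z}^{m+1}(\Delta_0;A) = 0$, the condition ``$d_1\cdots d_{d+1}\beta = *$'' is automatic, whence
\[
(\Omega\,\text{K}(A,m+1))_d = \{\beta \in \text{Z}^{m+1}(\Delta_{d+1};A) \mid d_0\beta = 0\},
\]
and $d_0\beta = 0$ says exactly that $\beta$ vanishes on every non-degenerate $(m+1)$-simplex of $\Delta_{d+1}$ whose image misses the vertex $0$. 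The map $\sigma_{n,d}$ carries $\alpha \in \text{Z}^m(\Delta_d;A)$ to the cochain $\beta$ with $\beta(0,j_1,\dots,j_{m+1}) = \alpha(j_1-1,\dots,j_{m+1}-1)$ and $\beta$ zero off the simplices through $0$. I would verify: (i) $\beta$ is a cocycle --- on an $(m+2)$-simplex missing $0$ this is trivial since all of its faces miss $0$, and on one through $0$ the coboundary reduces, after reindexing, to $\pm\,\delta\alpha = 0$; (ii) $d_0\beta = 0$, which is clear; (iii) $\alpha \mapsto \beta$ is a bijection onto $(\Omega\,\text{K}(A,m+1))_d$, its inverse being $\beta \mapsto \alpha$ with $\alpha(i_1,\dots,i_m) := \beta(0,i_1+1,\dots,i_m+1)$ --- well-definedness of the inverse is the converse of (i), and the two assignments are mutually inverse because $d_0\beta = 0$ determines $\beta$ off the simplices through $0$. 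Since the construction of $\Sigma^k\text{H}A$ already presents $\sigma_n$ as a map of based simplicial sets, a levelwise bijection is all that is needed to conclude it is an isomorphism of simplicial sets, and hence that $E$ is a strict $\Omega$-spectrum.

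The only non-formal point, and hence the main obstacle, is the sign bookkeeping in the normalized cochain complex needed for (i) and (iii): one must match the coboundary of the ``cone'' cochain $\beta$ on $\Delta_{d+1}$ with the coboundary of $\alpha$ on $\Delta_d$, and conversely see that requiring $d_0\beta = 0$ leaves precisely the freedom of choosing $\alpha$. Everything else is a matter of unwinding definitions already recorded above.
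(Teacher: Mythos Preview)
Your proposal is correct and follows essentially the same approach as the paper: Kan-ness via the simplicial group structure, and an explicit check that $\alpha \leftrightarrow \beta$ gives a bijection $\text{K}(A,m)_d \cong (\Omega\,\text{K}(A,m+1))_d$ by abutting/removing the vertex $0$. The only real difference is presentational---the paper separates injectivity (via $\ker = 0$ for the group homomorphism) from surjectivity, whereas you package both into an explicit inverse---and you are slightly more careful than the paper in treating the boundary case $m=-1$ where one must see $\Omega\,\text{K}(A,0) = *$.
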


\begin{proof}
The case for the shifted Eilenberg-MacLane spectra follows immediately from that of the unshifted $\text{H}A$. Moreover, we only need to verify that $\text{H}A$ is a strict $\Omega$-spectrum as the bifibrancy then follows by Proposition~\ref{prop:fibcofibsp}. That each $(\text{H}A)_n$ is a Kan complex follows by the fact that it is the underlying simplicial set of a simplicial group. It then remains to show that the structure maps $\text{K}(A,n) \to \Omega \text{K}(A,n+1)$ above are isomorphisms. First, note that the graded pieces are in fact abelian groups and that the structure maps are clearly maps of abelian groups. Thus, for injectivity, we can simply check that only zero maps to zero. Consider some $n$-cocycle on $\Delta_p$ which maps to the zero $(n+1)$-cocycle on $\Delta_{p+1}$. Given $[n] \to [p]$, abut $0 \mapsto 0$ (where by ``abut $0 \mapsto 0$'', we mean construct the map which sends $0$ to $0$ and otherwise, for $i \ge 1$, sends $i$ to one more than the image of $i-1$ under the provided map). On the resulting map $[n+1] \to [p+1]$ the image cocycle acts by the original one. Thus the original one must be zero. This demonstrates injectivity. Now we show surjectivity. Consider some $(n+1)$-cocycle on $\Delta_{p+1}$. We define an $n$-cocycle on $\Delta_p$ as follows: given any $[n] \to [p]$, abut $0 \mapsto 0$ and then act by the given cocycle. This is indeed a cocycle: given $[n+1] \to [p]$, if we take faces then abut $0 \mapsto 0$ we get the same as first abutting to $[n+2] \to [p+1]$ and then taking the faces $d_i$ for $1 \le i \le n+2$, so that we must be getting the same final result as if we acted upon $d_0$ of the abutment to $[n+2] \to [p+1]$ by the original cocycle, but then this will map nothing to $0$ (since in the abutment only $0$ mapped to $0$), and thus the final result will be zero, as desired. Now we note that our given $(n+1)$-cocycle on $\Delta_{p+1}$ is exactly the image of this newly constructed $n$-cocycle on $\Delta_p$: it certainly is if exactly one entry maps to $0$; moreover, if no entry maps to $0$, it must be mapped to $0$ by our cocycle due to the ``$d_0 = *$'' condition, and we can ignore the cases where more than one entry maps to zero since we are taking normalized cochains.
\end{proof}

\begin{Remark}\label{rmk:EM_htpy_grps}
As a result of Proposition~\ref{prop:EM_spec_fib} above and Remark~\ref{rmk:htpy_grps_fib_sp}, we have that $\pi_0^{\text{st}}(\text{H}A) \cong \pi_0(\text{K}(A,0)) \cong A$, whereas $\pi_i^{\text{st}}(\text{H}A) \cong \pi_i(\text{K}(A,0)) \cong *$ for $i > 0$ and $\pi_i^{\text{st}}(\text{H}A) \cong \pi_0(\text{K}(A,|i|)) \cong *$ for $i < 0$. More generally, similar case by case considerations show that $\pi_k^{\text{st}}(\Sigma^k\text{H}A) \cong \pi_k(\text{K}(A,k)) \cong A$ whereas $\pi_i^{\text{st}}(\Sigma^k\text{H}A) \cong *$ for $i \neq k$. \customendremark
\end{Remark}

\section{Spectral Cochains as Algebras Over the Stable Operads}\label{subsec:spec_cochains}

Our goal in this section is to construct explicit models for the mod $p$ spectral (co)chains for our model of spectra and show that these (co)chains possess an algebraic structure given by a (co)action of the stable Barratt-Eccles operad. Let $E$ be a spectrum, with structure maps $\rho_n \colon \Sigma E_n \to E_{n+1}$. If we apply the mod $p$ chains functor, we get maps $\text{C}_\bullet(E_n;\F_p)[1] \cong \text{C}_\bullet(\Sigma E_n;\F_p) \to \text{C}_\bullet(E_{n+1};\F_p)$ where the first map is that which is in Proposition~\ref{prop:n_r_chains_susp_X}. Equivalently, we have a map
\[
\text{C}_\bullet(E_n;\F_p) \to \text{C}_\bullet(E_{n+1};\F_p)[-1].
\] 
Moreover, upon applying the dualization operator $(-)^\vee$ from Section~\ref{sec:nots_convs}, we get maps $\text{C}_\bullet(E_{n+1};\F_p)^\vee[1] = \text{C}_\bullet(E_{n+1};\F_p)[-1]^\vee \to \text{C}_\bullet(E_n;\F_p)^\vee$, and so, upon applying the reindexing operator $(-)^{\dagger}$, and moving the shift from the source to the target, we get maps
\[
\text{C}^\bullet(E_{n+1};\F_p)[-1] \to \text{C}^\bullet(E_n;\F_p).
\]
The model for the mod $p$ spectral (co)chains is then as follows. Let $E$ be a spectrum. The chains\index{spectra!chains on} on $E$ with coefficients in $\F_p$, denoted $\text{C}_\bullet(E;\F_p)$, are as follows:
\[
\text{C}_\bullet(E;\F_p) := \text{colim}(\text{C}_\bullet(E_0;\F_p) \to \text{C}_\bullet(E_{1};\F_p)[-1] \to \text{C}_\bullet(E_{2};\F_p)[-2] \to \cdots).
\]
The cochains\index{spectra!cochains on} on $E$ with coefficients in $\F_p$, denoted $\text{C}^\bullet(E;\F_p)$, are as follows:
\[
\text{C}^\bullet(E;\F_p) := \text{lim}(\cdots \to \text{C}^\bullet(E_2;\F_p)[-2] \to \text{C}^\bullet(E_1;\F_p)[-1] \to \text{C}^\bullet(E_0;\F_p)).
\]

\begin{Remark}\label{rmk:spec_chain_notation}
We can relate the (co)chains above to those on combinatorial spectra. We have that the (co)chains on $E$ as described above are exactly the (co)chains, in the usual sense, on the associated combinatorial spectrum $\text{Ps}(E)$, where $\text{Ps}$ is as in Remark~\ref{rmk:comparison_comb_sp}. Moreover, we can describe the spectral (co)chains above more explicitly. Let $E$ be a spectrum, with structure maps $\rho_n \colon \Sigma E_n \to E_{n+1}$. Given a $d$-simplex $x$ in $E_n$, we have a corresponding $(d+1)$-simplex $\Sigma x$ in $\Sigma E_n$ (the notation $\Sigma x$ here is as in Definition~\ref{def:susp_simps}), and thus, upon applying $\rho$, a $(d+1)$-simplex $\rho(\Sigma x) \in E_{n+1}$. Pictorially:
\[
x \in (E_n)_d \leadsto \Sigma x \in (\Sigma E_n)_{d+1} \leadsto \rho(\Sigma x) \in (E_{n+1})_{d+1}.
\]
We then clearly have that
\[
\text{C}_\bullet(E;\F_p) = \bigoplus_{n \ge 0} \text{C}_\bullet(E_n;\F_p)[-n] \Big/ (x - \rho(\Sigma x)).
\]
To be even more explicit, an easy check, using Proposition~\ref{prop:faces_of_susp_simps}, shows that $\text{C}_d(E;\F_p)$ is the free $\F_p$-module on $\amalg_{e - n = d} (E_n)_e$, modulo the submodule generated by the basepoint, the degenerate simplices and the terms $x - \rho(\Sigma x)$ where $x \in (E_n)_e$ for some $n,e$ such that $e - n = d$ (and so then $\rho(\Sigma x) \in (E_{n+1})_{e+1}$, and we also have that $(e+1)-(n+1) = d$). To keep the bookkeeping precise, in the future, given an element $x \in \amalg_{e - n = d} (E_n)_e$, we shall let $[n,e,x]$ denote the corresponding element of the spectral chains $\text{C}_d(E;\F_p)$. Also, on the cochains: an easy degreewise check shows that the internal hom complex functor $\text{F}(-,\F_p[0])$ converts the colimit appearing in the definition of the chains into a limit, and it then follows that the cochains are exactly the cochain complex formed by application of $(-)^\dagger \circ (-)^\vee$ to the chains. \customendremark
\end{Remark}

Next, we wish to show that the spectral cochains yield algebras over the stable Barratt-Eccles cochain operad $\mathpzc{E}_{\text{st}}^\dagger$. We begin by showing that chains on spectra naturally form coalgebras over the stable Eilenberg-Zilber chain operad $\mathpzc{Z}_{\text{st}}$.

\begin{Proposition}\label{prop:spec_chains_coalg}
Let $E$ be a spectrum. The chains $\emph{C}_\bullet(E;\F_p)$ naturally form a coalgebra over the stable Eilenberg-Zilber chain operad $\mathpzc{Z}_{\emph{st}}$.
\end{Proposition}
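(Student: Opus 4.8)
The plan is to build the $\mathpzc{Z}_{\text{st}}$-coaction on $\text{C}_\bullet(E;\F_p)$ one stage at a time, mirroring the inverse-limit definition of $\mathpzc{Z}_{\text{st}}$. Recall that $\text{C}_\bullet(E;\F_p)$ is the colimit of the tower $\text{C}_\bullet(E_0) \to \text{C}_\bullet(E_1)[-1] \to \text{C}_\bullet(E_2)[-2] \to \cdots$, where the bonding maps are induced from the structure maps $\rho_n\colon \Sigma E_n \to E_{n+1}$ via the isomorphism $\text{C}_\bullet(\Sigma E_n) \cong \text{C}_\bullet(E_n)[1]$ of Proposition~\ref{prop:n_r_chains_susp_X}. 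Since each $E_n$ is a \emph{based} simplicial set, the reduced chains $\text{C}_\bullet(E_n;\F_p)$ carry a natural coaction by the reduced Eilenberg-Zilber operad $\mathpzc{Z}_*$ — this is just the tautological coaction of co-operations on reduced normalized chains. The first step is therefore to check that these coactions are compatible with the bonding maps of the tower in the following sense: the bonding map $\text{C}_\bullet(E_n)[-n] \to \text{C}_\bullet(E_{n+1})[-n-1]$ intertwines the $\Sigma^n\mathpzc{Z}_*$-coaction on $\text{C}_\bullet(E_n)[-n]$ (obtained by shifting the $\mathpzc{Z}_*$-coaction via Proposition~\ref{prop:chainopsuspalg}) with the $\Sigma^{n+1}\mathpzc{Z}_*$-coaction on $\text{C}_\bullet(E_{n+1})[-n-1]$, \emph{after} restricting the latter along the stabilization map $\Psi\colon \Sigma^{n+1}\mathpzc{Z}_* \to \Sigma^n\mathpzc{Z}_*$. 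This compatibility is precisely the content of the description of $\Psi$ via the Kan suspension given in Section~2.3: $\Psi$ was \emph{defined} so that precomposing a co-operation with the Kan suspension functor corresponds, after the identification $\text{C}_\bullet(\Sigma-)\cong\text{C}_\bullet(-)[1]$, to the stabilization map. So this step should reduce to unwinding that definition together with the naturality of the co-operations in $\mathpzc{Z}_*$ applied to the based maps $\rho_n$.

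Granting this, the second step is to pass to the colimit. We have a tower of dg modules $\text{C}_\bullet(E_n)[-n]$ each equipped with a $\Sigma^n\mathpzc{Z}_*$-coaction, and bonding maps that are coaction-compatible relative to $\Psi$. Applying Proposition~\ref{prop:chainopsuspalg} again, each $\text{C}_\bullet(E_n)[-n]$ is in particular a coalgebra over $\Sigma^n\mathpzc{Z}_*$, hence, by restriction along the iterated stabilization $\mathpzc{Z}_{\text{st}} \to \Sigma^n\mathpzc{Z}_*$ (the canonical projection out of the inverse limit), a coalgebra over $\mathpzc{Z}_{\text{st}}$; and the bonding maps are maps of $\mathpzc{Z}_{\text{st}}$-coalgebras. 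Since the forgetful functor from $\mathpzc{Z}_{\text{st}}$-coalgebras to dg modules creates filtered colimits — coalgebra structure maps land in $\mathpzc{Z}_{\text{st}}(n)\otimes_{\Sigma_n}(-)^{\otimes n}$, and tensoring with a fixed module and taking $\Sigma_n$-coinvariants both commute with filtered colimits — the colimit $\text{C}_\bullet(E;\F_p)$ inherits a canonical $\mathpzc{Z}_{\text{st}}$-coalgebra structure, and this is visibly natural in $E$.

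The main obstacle I expect is the first step: verifying cleanly that the bonding maps intertwine the coactions in the $\Psi$-twisted sense. This requires being careful about three bookkeeping layers at once — the degree shifts $[-n]$ and how Proposition~\ref{prop:chainopsuspalg} distributes them across $\mathpzc{Z}_*(n)$ and $\text{C}_\bullet(E_n)^{\otimes n}$; the identification $\Phi\colon\text{C}_\bullet(\Sigma S)\cong\text{C}_\bullet(S)[1]$ of Proposition~\ref{prop:n_r_chains_susp_X} and its compatibility with tensor powers (one needs $\Phi^{\otimes n}$ versus $\Phi$, which is where the sign $n-1$ in $\Sigma\mathpzc{Z}_*(n) = \mathpzc{Z}_*(n)[1-n]$ enters); and the naturality square for a co-operation $\alpha$ applied to $\rho_n$. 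My plan is to reduce everything to values on standard simplices: a co-operation in $\mathpzc{Z}_*(n)$ is determined by its effect on the chains of $\Delta_{d+}$ (Remark~\ref{rmk:cochainEZ}), and the structure map $\rho_n$ together with $\Sigma$ on simplices (Definition~\ref{def:susp_simps}) is explicit, so the compatibility becomes a finite check on generators, exactly parallel to — and no harder than — the verification already invoked in Section~2.3 that $\Psi$ is an operad map. Everything else is formal. A brief remark at the end will note that one could equivalently phrase the argument by observing that $\text{Ps}(E)$, the associated combinatorial spectrum, has chains agreeing with $\text{C}_\bullet(E;\F_p)$ (Remark~\ref{rmk:spec_chain_notation}), but the colimit argument above is self-contained and is what we will write out.
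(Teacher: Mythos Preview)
Your approach is correct and amounts to the paper's argument in a more categorical packaging: the paper defines the coaction directly by letting $(\alpha_0,\alpha_1,\dots)\in\mathpzc{Z}_{\text{st}}(k)$ act on a simplex $x\in(E_n)_e$ via $\alpha_n$, and the check that this descends to the colimit $\text{C}_\bullet(E)$ is exactly your $\Psi$-compatibility verification for the bonding maps, carried out via the same naturality square for $\alpha_{n+1}$ applied to $\rho_n$. One small slip: the coaction maps have the form $\mathpzc{Z}_{\text{st}}(k)\otimes C\to C^{\otimes k}$, so they do not ``land in $\mathpzc{Z}_{\text{st}}(n)\otimes_{\Sigma_n}(-)^{\otimes n}$'' as you wrote --- but your conclusion about filtered colimits still holds, since both $\mathpzc{Z}_{\text{st}}(k)\otimes(-)$ and $(-)^{\otimes k}$ commute with filtered colimits.
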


\begin{proof}
We wish to produce a coaction of the stable Eilenberg-Zilber operad on the chains, which, for brevity, we shall denote by $\mathrm{C}_\bullet(E)$. Fix $n \ge 0$. We then want a map
\[
\mu \colon \mathpzc{Z}_{\text{st}}(n) \otimes \text{C}_\bullet(E) \to \text{C}_\bullet(E)^{\otimes n}.
\]
The idea for this coaction is as follows. As in Section~\ref{subsec:stable_ez_operad}, an element $\alpha \in \mathpzc{Z}_{\text{st}}(n)$ is a sequence $(\alpha_0, \alpha_1, \dots)$ of natural transformations $\alpha_k \colon \mathrm{C}_\bullet(-) \Rightarrow \mathrm{C}_\bullet(-)^{\otimes n}$ over spaces. The chains on $E$ are a colimit of the chains on the spaces $E_k$; if a spectral chain $c$ on $E$ is a chain on $E_0$, we act on it by $\alpha_0$; if it is a chain on $E_1$, we act on it by $\alpha_1$, and so on. \\

Now, in order to construct the above map precisely, we will construct a bilinear map $\bar\mu \colon \mathpzc{Z}_{\text{st}}(n) \times \text{C}_\bullet(E) \to \text{C}_\bullet(E)^{\otimes n}$ which preserves the degrees and then check that the resulting map on the tensor product commutes with the differentials. To do this, we first produce a map $\bar{\bar\mu} \colon \mathpzc{Z}_{\text{st}}(n) \times \text{S}_\bullet(E) \to \text{C}_\bullet(E)^{\otimes n}$ where $\text{S}_\bullet(E) := \F_p[\amalg_{k,d} (E_k)_d]$ (with degrees so that $\text{C}_\bullet(E)$ is a quotient of $\text{S}_\bullet(E)$,  as in Remark~\ref{rmk:spec_chain_notation}). Let $\alpha = (\alpha_0,\alpha_1,\dots) \in \mathpzc{Z}_{\text{st}}(n)$ be of degree $d$ and let $x \in (E_k)_e$, which is of degree $e-k$ in $\text{S}_\bullet(E)$. We set $\bar{\bar\mu}(\alpha,x) := \alpha_k(x)$; or more precisely, we set $\bar{\bar\mu}(\alpha,x)$ to be the image of $x$ under the composite
\[
\text{C}_\bullet(E_k) \overset{(\alpha_k)_{E_k}}\longrightarrow \text{C}_\bullet(E_k)^{\otimes n} \longrightarrow \text{C}_\bullet(E)^{\otimes n}.
\]
Next we extend this definition to all of $\mathpzc{Z}_{\text{st}}(n) \times \text{S}_\bullet(E)$ by linearity in the second variable, recalling that $\text{S}_\bullet(E)$ is the free $\F_p$-module on $\lamalg_{k,e} (E_k)_e$. The map is then clearly bilinear in both variables. We need to check that $\bar{\bar\mu}(\alpha,x)$ is of degree $d+e-k$. Recall, as in Section~\ref{subsec:stable_ez_operad}, that $\alpha_k$ is of degree $d+k(n-1)$. As an element of $\text{C}_\bullet(E_k)$, $x$ has degree $e$. Upon application of $(\alpha_k)_{E_k}$, we get an element of degree $e+d+k(n-1) = d+e-k+nk$. By the definition of spectral chains, the map $\mathrm{C}_\bullet(E_k) \to \mathrm{C}_\bullet(E)$ has degree $-k$, and so the map $\text{C}_\bullet(E_k)^{\otimes n} \longrightarrow \text{C}_\bullet(E)^{\otimes n}$ has degree $-nk$. It follows that $\bar{\bar\mu}(\alpha, x)$ is of degree $d+e-n+nk-nk = d+e-n$, as desired. \\

Now we show that our map $\mathpzc{Z}_{\text{st}}(n) \times \text{S}_\bullet(E) \to \text{C}_\bullet(E)^{\otimes n}$ descends to a map $\mathpzc{Z}_{\text{st}}(n) \times \text{C}_\bullet(E) \to \text{C}_\bullet(E)^{\otimes n}$. Suppose first that $x \in (E_k)_e$ is degenerate, the basepoint or a degeneracy of a basepoint. Then, it represents zero in $\mathrm{C}_\bullet(E_k)$, and so $\bar{\bar\mu}(\alpha,x)$ is zero for any $\alpha$. Next, consider an $x \in (E_k)_e$ and $\rho_k(\Sigma x) \in (E_{k+1})_{e+1}$. We need $\bar{\bar\mu}(\alpha, x) = \bar{\bar\mu}(\alpha, \rho_k(\Sigma x))$ for any $\alpha$. Thus, for any given $\alpha$, we need the image of $x$ under
\begin{equation}\label{eq:comp1}
\text{C}_\bullet(E_k) \overset{(\alpha_k)_{E_k}}\longrightarrow \text{C}_\bullet(E_k)^{\otimes n} \longrightarrow \text{C}_\bullet(E)^{\otimes n}
\end{equation}
to coincide with the image of $\rho_k(\Sigma x)$ under
\begin{equation}\label{eq:comp2}
\text{C}_\bullet(E_{k+1}) \overset{(\alpha_{k+1})_{E_{k+1}}}\longrightarrow \text{C}_\bullet(E_{k+1})^{\otimes n} \longrightarrow \text{C}_\bullet(E)^{\otimes n}.
\end{equation}
Consider the following diagram.
\begin{center}
\begin{tikzpicture}[node distance=1.5cm]
\node[](A){$\text{C}_\bullet(\Sigma E_k)$};
\node[below of = A](C){$\text{C}_\bullet(E_{k+1})$};
\node[right of = A, xshift = 2.5cm](B){$\text{C}_\bullet(\Sigma E_k)^{\otimes n}$};
\node[below of = B, yshift = 0mm](D){$\text{C}_\bullet(E_{k+1})^{\otimes n}$};
\node[right of = D, xshift = 2cm](E){$\text{C}_\bullet(E)^{\otimes n}$};
\node[above of = A](F){$\mathrm{C}(E_k)$};
\node[above of = B](G){$\mathrm{C}(E_k)^{\otimes n}$};
	
\draw[->] (A) -- (B) node[midway,anchor=south]{$(\alpha_{k+1})_{\Sigma E_k}$};
\draw[->] (A) -- (C) node[midway,anchor=east]{$\text{C}_\bullet(\rho_k)$};
\draw[->] (C) -- (D) node[midway,anchor=north]{$(\alpha_{k+1})_{E_{k+1}}$};
\draw[->] (B) -- (D) node[midway,anchor=west]{$\text{C}_\bullet(\rho_k)^{\otimes n}$};
\draw[->] (D) -- (E);
\draw[->] (F) -- (G) node[midway,anchor=south]{$(\alpha_k)_{E_k}$};
\draw[->] (F) -- (A) node[midway,anchor=east]{$\mathrm{deg} \: 1 \cong$};
\draw[->] (G) -- (B) node[midway,anchor=west]{$\mathrm{deg} \: n \cong$};
\end{tikzpicture}
\end{center}
Here, the two vertical isomorphisms come from Proposition~\ref{prop:n_r_chains_susp_X}, the upper square commutes because $\alpha_k = \Psi(\alpha_{k+1})$ and the lower square commutes by naturality of $\alpha_{k+1}$. Starting with $x$ at the topleft corner, applying the sequence of maps given by $\downarrow, \downarrow, \rightarrow, \rightarrow$, we get the image of $\rho_k(\Sigma x)$ under the composite in (\ref{eq:comp2}). On the other hand, applying the sequence of maps given by $\rightarrow, \downarrow, \downarrow, \rightarrow$, we get the image of $x$ under the composite in (\ref{eq:comp1}). Thus the two images coincide, as desired. \\

We have now demonstrated that the map $\bar{\bar\mu} \colon \mathpzc{Z}_{\text{st}}(n) \times \text{S}_\bullet(E) \to \text{C}_\bullet(E)^{\otimes n}$ descends to a bilinear map $\bar\mu \colon \mathpzc{Z}_{\text{st}}(n) \times \text{C}_\bullet(E) \to \text{C}_\bullet(E)^{\otimes n}$ which preserves the degrees, and we let the associated map $\mathpzc{Z}_{\text{st}}(n) \otimes \text{C}_\bullet(E) \to \text{C}_\bullet(E)^{\otimes n}$ be $\mu$. To complete the construction of the coaction, it remains to check that $\mu$ commutes with the differentials. Consider $\alpha \in \mathpzc{Z}_{\text{st}}(n)_d$ and $x \in (E_k)_e$. Recall the notation $[k, e, x]$ introduced for spectral chains in Remark~\ref{rmk:spec_chain_notation}. We have $\partial(\alpha \otimes [k,e,x]) = \partial \alpha \otimes [k,e,x] + (-1)^d \alpha \otimes \partial [k,e,x] = (\partial \alpha_0, \partial \alpha_1, \dots) \otimes [k,e,x] + (-1)^d\alpha \otimes (\sum_i [k,e,d_i(x)])$ and this, under $\mu$, has image
\begin{align*}
\begin{split}
(\partial \alpha_k)_{E_k}(x) + (-1)^d \alpha_k(\sum_i d_i(x)) ={}& \partial_{\text{C}_\bullet(E)^{\otimes n}}(\alpha_k(x)) - (-1)^{d}\alpha_k(\sum_i d_i(x)) \\ & + (-1)^d\alpha_k(\sum_i d_i(x))
\end{split} \\
\begin{split}
={}& \partial_{\text{C}_\bullet(E)^{\otimes n}}(\alpha_k(x))
\end{split} \\
\begin{split}
={}& \partial_{\text{C}_\bullet(E)^{\otimes n}}(\mu(\alpha,x))
\end{split}
\end{align*}
as desired. We have now constructed the coaction maps $\mu$, and one can verify that the compatibility conditions required for an operad coaction are indeed satisfied.  \\

Finally, we demonstrate the functoriality. Let $E$ and $F$ be spectra and $f \colon E \to F$ a map of spectra. We then have an induced map $f_* \colon \text{C}_\bullet(E) \to \text{C}_\bullet(F)$ of chain complexes, and we need to check that this map is compatible with the coaction of $\mathpzc{Z}_{\text{st}}$. Consider $[k,e,x]$ in $\text{C}_\bullet(E)$. Applying $f_*$ and then $\mu^F \colon \mathpzc{Z}_{\text{st}}(n) \otimes \text{C}_\bullet(F) \to \text{C}_\bullet(F)^{\otimes n}$, we get $\alpha_k(f_k(x))$, which by naturality of $\alpha_k$ is equal to $f_k^{\otimes n}(\alpha_k(x))$, and this is exactly the result upon instead first applying $\mu^E$ and then $f_*^{\otimes n}$. This gives us the desired compatibility.
\end{proof}

\begin{Proposition}\label{prop:stabopactioncochains}
Given a spectrum $E$, the cochain complex $\emph{C}^\bullet(E; \F_p)$ is naturally an algebra over the stable Barratt-Eccles cochain operad $\mathpzc{E}_{\emph{st}}^\dagger$.
\end{Proposition}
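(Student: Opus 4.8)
The plan is to deduce this from the coalgebra statement of Proposition~\ref{prop:spec_chains_coalg} together with the reindexing/dualization machinery set up in Section~\ref{sec:nots_convs} and the operad maps $\text{TR}_{\text{st}}^\dagger \colon \mathpzc{E}_{\text{st}}^\dagger \to \mathpzc{M}_{\text{st}}^\dagger$ and $\text{AW}_{\text{st}}^\dagger \colon \mathpzc{M}_{\text{st}}^\dagger \to \mathpzc{Z}_{\text{st}}^\dagger$ constructed in Section~\ref{sec:stabilizations}. First I would recall, from Proposition~\ref{prop:spec_chains_coalg}, that $\text{C}_\bullet(E;\F_p)$ is a coalgebra over the stable Eilenberg–Zilber chain operad $\mathpzc{Z}_{\text{st}}$. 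Applying the reindexing functor $(-)^\dagger$ to the coalgebra and to the operad (which is compatible with all operadic structure, as noted in Section~\ref{sec:nots_convs}, and commutes with $\Sigma$ and with inverse limits), we get that $\text{C}_\bullet(E;\F_p)^\dagger$ is a coalgebra over $\mathpzc{Z}_{\text{st}}^\dagger$.

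Next I would pass from a coalgebra to an algebra by dualizing. The subtlety here, and the main obstacle, is the finiteness hypothesis required for $(-)^\vee$ to carry coalgebras over a dg operad to algebras over the same operad: the cited fact in Section~\ref{sec:nots_convs} requires each $\mathpzc{P}(n)$ to be of finite dimension in each degree. The stable operads are \emph{not} degreewise finite-dimensional in each arity — each $\mathpzc{Z}_{\text{st}}^\dagger(n)$, $\mathpzc{M}_{\text{st}}^\dagger(n)$ and $\mathpzc{E}_{\text{st}}^\dagger(n)$ is an inverse limit of complexes and typically infinite-dimensional degreewise — so one cannot simply invoke that statement. The correct route, which I would carry out, is to dualize at the \emph{unstable, finite} level and then re-assemble. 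Concretely: for each $k \ge 0$ the complex $(\Sigma^k\mathpzc{E}^\dagger)(n) = \mathpzc{E}^\dagger(n)[k-kn]$ is degreewise finite-dimensional (the Barratt–Eccles operad is built from $\text{C}_\bullet(\text{E}\Sigma_n)$, which is degreewise finite), so the dualization trick applies levelwise; then one takes the appropriate (co)limit over $k$. Since dualization $\text{F}(-,\F_p[0])$ converts colimits to limits (as already observed in Remark~\ref{rmk:spec_chain_notation} for the spectral chains themselves), and since $(-)^\dagger$ commutes with limits, the inverse-limit defining $\mathpzc{E}_{\text{st}}^\dagger$ is exactly what one obtains after dualizing. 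In other words: a $\mathpzc{Z}_{\text{st}}$-coaction on $\text{C}_\bullet(E;\F_p)$ amounts to compatible $\Sigma^k\mathpzc{Z}_*$-coactions, each of which dualizes to a $\Sigma^k\mathpzc{Z}_*^\dagger$-action on the finite pieces and hence, in the limit, to a $\mathpzc{Z}_{\text{st}}^\dagger$-action on $\text{C}^\bullet(E;\F_p)$; and by Remark~\ref{rmk:spec_chain_notation}, $\text{C}^\bullet(E;\F_p) = (\text{C}_\bullet(E;\F_p))^{\dagger\vee}$, so this is precisely the object whose algebra structure we want.

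Having produced a natural $\mathpzc{Z}_{\text{st}}^\dagger$-algebra structure on $\text{C}^\bullet(E;\F_p)$, I would then pull back along the composite operad map
\[
\mathpzc{E}_{\text{st}}^\dagger \overset{\text{TR}_{\text{st}}^\dagger}{\longrightarrow} \mathpzc{M}_{\text{st}}^\dagger \overset{\text{AW}_{\text{st}}^\dagger}{\longrightarrow} \mathpzc{Z}_{\text{st}}^\dagger
\]
constructed at the end of Section~\ref{sec:stabilizations}. Pullback of an algebra structure along an operad map is functorial, so this yields the desired natural $\mathpzc{E}_{\text{st}}^\dagger$-algebra structure on $\text{C}^\bullet(E;\F_p)$. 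Naturality in $E$ is inherited from the naturality of the $\mathpzc{Z}_{\text{st}}$-coaction established in Proposition~\ref{prop:spec_chains_coalg} and the functoriality of $(-)^\dagger$, $(-)^\vee$ and of pullback.

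The remaining care is purely bookkeeping: one must check that the levelwise dualizations are genuinely compatible with the stabilization maps $\Psi$ — that is, that the square relating $\Psi \colon \Sigma^{k+1}\mathpzc{Z}_*^\dagger \to \Sigma^k\mathpzc{Z}_*^\dagger$ to the dual of $\Psi \colon \Sigma^{k+1}\mathpzc{Z}_* \to \Sigma^k\mathpzc{Z}_*$ commutes under the natural isomorphisms, so that the limit assembles correctly — and that the coaction-to-action passage respects the operadic composition and unit (an easy check, since dualization is a monoidal functor on finite complexes and the operad structure on $\mathpzc{Z}_*$ is the co-endomorphism-type one). I expect the genuine obstacle to be making the finiteness argument airtight: one cannot dualize the stable operad directly, so the argument must route through the finite unstable pieces $(\Sigma^k\mathpzc{E}^\dagger)(n)$ and commute dualization past the defining limit, using that the chains $\text{C}_\bullet(E_n;\F_p)$ need not be finite but the operad pieces at each stabilization stage are. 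Once that is set up, everything else is formal functoriality.
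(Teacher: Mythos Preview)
Your overall strategy is the same as the paper's: start from the $\mathpzc{Z}_{\text{st}}$-coalgebra structure on chains from Proposition~\ref{prop:spec_chains_coalg}, dualize and reindex to get a $\mathpzc{Z}_{\text{st}}^\dagger$-algebra structure on cochains, then pull back along $\mathpzc{E}_{\text{st}}^\dagger \to \mathpzc{Z}_{\text{st}}^\dagger$. The paper does exactly this in three lines, applying $(-)^\vee$ then $(-)^\dagger$ and pulling back.

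Your detour through the tower is unnecessary because the finiteness concern is misplaced. The passage from a $\mathpzc{P}$-coalgebra $A$ to a $\mathpzc{P}$-algebra $A^\vee$ does \emph{not} require $\mathpzc{P}(n)$ to be degreewise finite: given coaction maps $\mu_n \colon \mathpzc{P}(n) \otimes A \to A^{\otimes n}$, one defines $\nu_n \colon \mathpzc{P}(n) \otimes (A^\vee)^{\otimes n} \to A^\vee$ by $\nu_n(p \otimes f_1 \otimes \cdots \otimes f_n)(a) = (f_1 \otimes \cdots \otimes f_n)(\mu_n(p \otimes a))$, using only the always-available map $(A^\vee)^{\otimes n} \to (A^{\otimes n})^\vee$. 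The finiteness hypothesis quoted in Section~\ref{sec:nots_convs} is a sufficient condition (and is needed, for instance, for the reverse passage or for turning operads into co-operads), but it is not necessary for this direction. Note also that your proposed workaround would not actually sidestep the issue as you frame it: the unstable pieces $\Sigma^k\mathpzc{Z}_*(n)$ are themselves not degreewise finite-dimensional (the Eilenberg--Zilber operad is large), so invoking the finiteness clause levelwise for $\mathpzc{Z}_*$ would fail just as it would for $\mathpzc{Z}_{\text{st}}$; your remark about $(\Sigma^k\mathpzc{E}^\dagger)(n)$ being degreewise finite is correct but does not help, since the coaction you have is over $\mathpzc{Z}_{\text{st}}$, not $\mathpzc{E}_{\text{st}}$.
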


\begin{proof}
By Propositions~\ref{prop:spec_chains_coalg}, the chains $\text{C}_\bullet(E;\F_p)$ are a coalgebra over $\mathpzc{Z}_{\text{st}}$. As in Section~\ref{sec:nots_convs}, if we apply $(-)^\vee$ to the chains, we get an algebra over $\mathpzc{Z}_{\text{st}}$. Thus, again as in Section~\ref{sec:nots_convs}, if we apply $(-)^\dagger \circ (-)^{\vee}$ to the chains, yielding the cochains as per Remark~\ref{rmk:spec_chain_notation}, we get an algebra over $\mathpzc{Z}_{\text{st}}^\dagger$. Finally, we get an $\mathpzc{E}_{\text{st}}^\dagger$-algebra structure by pulling back across the map $\mathpzc{E}_{\text{st}}^\dagger \to \mathpzc{Z}_{\text{st}}^\dagger$ constructed in Section~\ref{subsec:stable_barratt_eccles_operad}.
\end{proof}

As a result of the above, we can interpret cochains on spectra as a functor to algebras over $\mathpzc{E}_{\text{st}}^\dagger$:

\[
\text{C}^\bullet \colon \mathsf{Sp}^{\text{op}} \to \mathpzc{E}_{\text{st}}^\dagger\text{-}\mathsf{Alg}.
\]

\begin{Remark}
We can in fact view the action of the stable operad on spectral cochains in an iterative manner, as follows. By definition, we have that
\[
\text{C}_\bullet(E;\F_p) := \text{colim}(\text{C}_\bullet(E_0;\F_p) \to \text{C}_\bullet(E_{1};\F_p)[-1] \to \text{C}_\bullet(E_{2};\F_p)[-2] \to \cdots)
\]
\[
\text{C}^\bullet(E;\F_p) := \text{lim}(\cdots \to \text{C}^\bullet(E_2;\F_p)[-2] \to \text{C}^\bullet(E_1;\F_p)[-1] \to \text{C}^\bullet(E_0;\F_p)).
\]
As is well-known in the unstable case, we have that $\text{C}_\bullet(E_0;\F_p)$ and $\text{C}^\bullet(E_0;\F_p)$ form, respectively, a coalgebra over $\mathpzc{E}$ and an algebra over $\mathpzc{E}^\dagger$. Thus, by Proposition~\ref{prop:chainopsuspalg}, we have that the second terms, $\text{C}_\bullet(E_0;\F_p)[-1]$ and $\text{C}^\bullet(E_0;\F_p)[-1]$ form, respectively, a coalgebra over $\Sigma\mathpzc{E}$ and an algebra over $\Sigma\mathpzc{E}^\dagger$. Similarly, we have that the third terms, $\text{C}_\bullet(E_0;\F_p)[-2]$ and $\text{C}^\bullet(E_0;\F_p)[-2]$ form, respectively, a coalgebra over $\Sigma^2\mathpzc{E}$ and an algebra over $\Sigma^2\mathpzc{E}^\dagger$. In the limit, we get (co)algebra structures over the stable operads. \customendremark
\end{Remark}

\begin{Remark}\label{stable_EZ_operad_as_spectral_ops}
\index{stable Eilenberg-Zilber operad!as natural transformations on spectral (co)chains}The Eilenberg-Zilber chain operad $\mathpzc{Z}$ was defined in Definition~\ref{def:ez_op} using natural transformations. In particular, in operad degree $n \ge 0$ and chain degree $d \in \Z$, we have that
\[
\mathpzc{Z}(n)_d := \left\{
\begin{array}{ll}
\text{degree $d$ natural transformations from $\mathrm{C}_{\bullet}(-)$ to $\mathrm{C}_{\bullet}(-)^{\otimes n}$} \\
\text{where $\mathrm{C}_{\bullet}(-)$ and $\mathrm{C}_{\bullet}(-)^{\otimes n}$ are viewed as functors $\mathsf{Spc} \to \mathsf{Gr}_{\F_p}$}
\end{array}
\right\}.
\]
The construction in Proposition~\ref{prop:spec_chains_coalg} allows a similar interpretation of the stable Eilenberg-Zilber chain operad, using spectral chains. We can define a chain operad $\tilde{\mathpzc{Z}}_{\mathrm{st}}$ by setting, in operadic degree $n$ and chain degree $d \in \Z$, that
\[
\tilde{\mathpzc{Z}}_{\mathrm{st}}(n)_d = \left\{
\begin{array}{ll}
\text{degree $d$ natural transformations from $\mathrm{C}_{\bullet}(-)$ to $\mathrm{C}_{\bullet}(-)^{\otimes n}$} \\
\text{where $\mathrm{C}_{\bullet}(-)$ and $\mathrm{C}_{\bullet}(-)^{\otimes n}$ are viewed as functors $\mathsf{Sp} \to \mathsf{Gr}_{\F_p}$}
\end{array}
\right\}.
\]
We find that the stable Eilenberg-Zilber chain operad is, up to isomorphism, exactly this operad; that is, 
$\mathpzc{Z}_{\mathrm{st}} \cong \tilde{\mathpzc{Z}}_{\mathrm{st}}$. The correspondence is as follows. Given $\alpha \in \mathpzc{Z}_{\mathrm{st}}(n)_d$, for each spectrum $E$, using the coaction $\mathpzc{Z}_{\mathrm{st}}(n) \otimes \mathrm{C}_\bullet(E) \to \mathrm{C}_\bullet(E)^{\otimes n}$ defined in Proposition~\ref{prop:spec_chains_coalg}, we get a degree $d$ map $\mathrm{C}_\bullet(E) \to \mathrm{C}_\bullet(E)^{\otimes n}$. These maps assemble into the desired natural transformation $\mathrm{C}_\bullet(-) \Rightarrow \mathrm{C}_\bullet(-)^{\otimes n}$. On the other hand, given $\beta \in \tilde{\mathpzc{Z}}_{\mathrm{st}}(n)_d$, we need to define a tuple $(\alpha_0, \alpha_1, \dots)$ where $\alpha_0$ is a degree $d$ natural transformation  $\mathrm{C}_\bullet(-) \Rightarrow \mathrm{C}_\bullet(-)^{\otimes n}$ over spaces, $\alpha_1$ is a degree $d-n+1$ natural transformation $\mathrm{C}_\bullet(-) \Rightarrow \mathrm{C}_\bullet(-)^{\otimes n}$ over spaces such that $\Psi(\alpha_1) = \alpha_0$, and so on (here, $\Psi$ is the stabilization map which occurs in Definition~\ref{def:stable_ez_operad}). For each $i \ge 0$, the transformation $\alpha_i$ is given by the maps $\beta_{\Sigma^{\infty - i}X} \colon \mathrm{C}_\bullet(\Sigma^{\infty - i}X) \to \mathrm{C}_\bullet(\Sigma^{\infty - i}X)^{\otimes n}$. These maps have degree $d$, and we note that $\mathrm{C}_\bullet(\Sigma^{\infty - i}X) = \mathrm{C}_\bullet(X)[-i]$, $\mathrm{C}_\bullet(\Sigma^{\infty - i}X)^{\otimes n} = \mathrm{C}_\bullet(X)^{\otimes n}[-in]$, so that the maps can equivalently be considered as maps $\mathrm{C}_\bullet(X) \to \mathrm{C}_\bullet(X)^{\otimes n}$ of degree $d + i - in$, as desired. An analogous interpretation can also be made for the stable Eilenberg-Zilber cochain operad $\mathpzc{Z}_{\mathrm{st}}^\dagger$. \customendremark
\end{Remark}

Now, by Proposition~\ref{prop:stabopactioncochains} and Proposition~\ref{prop:(co)hom_ops_stab_op}, the cohomologies $\text{H}^\bullet(E;\F_p)$ inherit operations $P^s$, and also $\beta P^s$ in the case $p > 2$. As in the case of spaces, as shown below, they satisfy an important property which does not hold in general. This property is of course well-known to hold; we include it as a demonstration of work with our explicit model.

\begin{Proposition}\label{prop:P01sp}
\index{generalized Steenrod operations!the operation $P^0$ on spectral cochains}Given a spectrum $E$, the operation $P^0$ acts by the identity on $\emph{H}^\bullet(E;\F_p)$.
\end{Proposition}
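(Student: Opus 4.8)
The plan is to reduce the statement, via the explicit models for spectral cochains built in this section, to the well-known fact that $P^0$ acts by the identity on the cohomology of a \emph{space}. First I would recall that by Remark~\ref{rmk:spec_chain_notation} we have $\text{C}^\bullet(E;\F_p) = \lim_n \text{C}^\bullet(E_n;\F_p)[-n]$, so that a cohomology class in $\text{H}^d(E;\F_p)$ is represented by a compatible sequence of cocycles $a_n \in \text{C}^{d+n}(E_n;\F_p)$, compatible under the structure maps $\text{C}^\bullet(E_{n+1};\F_p)[-1] \to \text{C}^\bullet(E_n;\F_p)$ dual to the Kan suspensions. The operation $P^s$ on $\text{H}^\bullet(E;\F_p)$ is, by the construction in Section~\ref{sec:stabops} (and Proposition~\ref{prop:(co)hom_ops_stab_op}), built from the arity-$2$ generator $e_{s-d}^{\text{st}} \in \mathpzc{E}_{\text{st}}^\dagger(2)^{s-d}$, and by Remark~\ref{rmk:edtwice} the canonical map $\mathpzc{E}_{\text{st}}^\dagger \to \mathpzc{E}^\dagger$ carries $e_d^{\text{st}}$ to $e_d^{\text{un}}$. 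The key point is that the $\mathpzc{E}_{\text{st}}^\dagger$-action on $\text{C}^\bullet(E;\F_p)$ is, by Proposition~\ref{prop:stabopactioncochains}, obtained by dualizing and reindexing the $\mathpzc{Z}_{\text{st}}$-coaction on chains, which in turn (Proposition~\ref{prop:spec_chains_coalg}) is levelwise: on the $n$-th term it is just the (un)stable Eilenberg--Zilber coaction $\alpha_n$ on $\text{C}_\bullet(E_n;\F_p)$, pulled back along $\mathpzc{E}^\dagger \to \mathpzc{Z}^\dagger$.

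Concretely, I would argue as follows. Fix a class $[a] \in \text{H}^d(E;\F_p)$ represented by $(a_n)_n$ as above. Tracing through the definition of $P^s$ in terms of $(e_{s-d}^{\text{st}})_*(a,a)$ and the explicit coaction of Proposition~\ref{prop:spec_chains_coalg}, one finds that the class $P^s[a]$, viewed in the limit $\lim_n \text{H}^{\bullet}(E_n;\F_p)[-n]$, is represented levelwise by $P^s_{\text{un}}[a_n]$, where $P^s_{\text{un}}$ denotes the generalized Steenrod operation on the cohomology of the \emph{space} $E_n$ coming from the unstable Barratt--Eccles operad action on $\text{C}^\bullet(E_n;\F_p)$ --- here the shift of degree by $n$ is exactly accounted for by the degree shift $e_{s-d}^{\text{st}} \leftrightarrow e_{(s)-(d+n)}^{\text{un}}[n]$ arising from $(\Sigma^n\mathpzc{E}^\dagger)(2) = \mathpzc{E}^\dagger(2)[n]$ and Proposition~\ref{prop:chainopsuspalg}. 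Since $E_n$ is a based simplicial set and $\text{C}^\bullet(E_n;\F_p)$ carries the usual $\mathbb{E}_\infty$ structure with its cup-product and Steenrod operations, the classical fact (see~\cite{May}) that $P^0$ acts as the identity on $\text{H}^\bullet(E_n;\F_p)$ gives $P^0_{\text{un}}[a_n] = [a_n]$ for every $n$. Passing to the limit over $n$, we conclude $P^0[a] = [a]$.

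The main obstacle will be the bookkeeping in the first step: one must verify carefully that, under the chain of operations $(-)^\dagger \circ (-)^\vee$ applied to the colimit-of-chains description and under the operadic maps $\mathpzc{E}_{\text{st}}^\dagger \to \mathpzc{Z}_{\text{st}}^\dagger$ and $\mathpzc{E}^\dagger \to \mathpzc{Z}^\dagger$, the operation $P^0$ on $\text{H}^\bullet(E;\F_p)$ genuinely decomposes as the inverse limit of the unstable operations $P^0_{\text{un}}$ on the $\text{H}^\bullet(E_n;\F_p)$, with all degree shifts matching up. This amounts to unwinding the definitions of Section~\ref{sec:stabops} together with Propositions~\ref{prop:freealgsuspop}, \ref{prop:chainopsuspalg}, \ref{prop:spec_chains_coalg} and \ref{prop:stabopactioncochains}; once that compatibility is in place, the conclusion is immediate from the unstable statement. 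A cleaner alternative, which I would mention, is to observe that the composite $\mathpzc{E}_{\text{st}}^\dagger \to \mathpzc{E}^\dagger$ of Remark~\ref{rmk:edtwice} equips $\text{C}^\bullet(E_0;\F_p)$-type pieces with compatible unstable structures at each finite level, so that the stable operation $P^0$ restricts at each level to the unstable $P^0$; the identity-action then propagates to the limit. Either way, no genuinely new computation is needed beyond the classical $p$-local fact about $P^0$ and the explicit level-wise nature of the coaction established in Proposition~\ref{prop:spec_chains_coalg}.
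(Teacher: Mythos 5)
Your proposal is correct and rests on the same key insight as the paper's proof: the $\mathpzc{Z}_{\text{st}}$-coaction on spectral chains is levelwise (Proposition~\ref{prop:spec_chains_coalg}), so that $P^0$ reduces, on a simplex $x \in (E_n)_{d+n}$, to an unstable operation on $\text{C}_\bullet(E_n)$, and your degree bookkeeping relating the $n$-th component of $e^{\text{st}}_{s-d}$ to the unstable generator of the correct degree is right. The chief difference is where the unstable input is invoked. The paper carries out the computation directly at the (co)chain level: reading the level-$n$ coaction through $\mathpzc{E}^\dagger \to \mathpzc{M}^\dagger$, the relevant generator corresponds to the surjection $(1,2,1,2,\dots)$, whose sequence co-operation carries $x$ to $x \otimes x$, giving the cochain identity $\beta(x)=\alpha(x)^p=\alpha(x)$; so no passage through a limit of cohomologies is needed. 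You instead cite the classical cohomology-level statement $P^0_{\text{un}}[a_n]=[a_n]$ and then pass to the inverse limit over $n$. That passage is valid here, because over $\F_p$ the spectral cochains are the linear dual of a filtered colimit of chains and $(-)^\vee$ is exact, so $\text{H}^\bullet(E;\F_p) \cong \lim_n \text{H}^\bullet(E_n;\F_p)[-n]$ with no $\lim^1$ correction; but this should be stated explicitly, since cohomology of an inverse limit of cochain complexes does not coincide with the inverse limit of cohomologies in general. Since the classical unstable proof of $P^0=\mathrm{id}$ is itself a cochain identity, you could also lift your argument to the cochain level and sidestep the limit question entirely, which is effectively what the paper does.
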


\begin{proof}
We shall outline the $p = 2$ case; a similar proof is possible in the $p > 2$ case. For brevity, let $\text{C}^\bullet(E)$ denote the mod $p$ cochains. The operation $P^0$ is computed via images under the map $\mathpzc{E}_{\text{st}}^\dagger(2) \otimes \text{C}^\bullet(E)^{\otimes 2} \to \text{C}^\bullet(E)$. From Definition~\ref{def:e_d^un_and_e_d^st}, recall the notations $e_d^{\text{un}}$ and $e_d^{\text{st}}$, for certain elements of $\mathpzc{E}^\dagger(2)$ and $\mathpzc{E}^\dagger_{\text{st}}(2)$, respectively. In particular, recall that
\begin{equation}\label{eqn:P0_e_un_vs_e_st}
e_{d}^{\mathrm{st}} := (e_d^{\mathrm{un}}, e_{d-1}^{\mathrm{un}} \cdot \tau, e_{d-2}^{\mathrm{un}}, e_{d-3}^{\mathrm{un}} \cdot \tau, \dots).
\end{equation}
Consider a cocycle $\alpha \in \mathrm{C}^{d}(E)$. Let $\beta \in \mathrm{C}^d(E)$ denote the image of $e^{\mathrm{st}}_{-d} \otimes \alpha \otimes \alpha$ under the map $\mathpzc{E}^\dagger_{\mathrm{st}}(2) \otimes \mathrm{C}^\bullet(E)^{\otimes 2} \to \mathrm{C}^\bullet(E)$. By definition of $P^0$, $P^0\alpha$ is given by the class of $\beta$. Thus we need to show that $\beta = \alpha$. Consider the $d$-dimensionsal chain in $\mathrm{C}_d(E)$ determined by a simplex $x \in (E_n)_{d'}$, where $d'-n = d$. As the $\mathpzc{E}_{\text{st}}^\dagger$-action on cochains is dual to an $\mathpzc{E}_{\text{st}}$-coaction on chains, $\beta(x)$ is given by $(\alpha \otimes \alpha)(y)$ where $y$ is the image of $e_{-d}^{\text{st}} \otimes x$ under the coaction map $\mathpzc{E}_{\text{st}}(2) \otimes \text{C}_\bullet(E) \to \text{C}_\bullet(E)^{\otimes 2}$. By the definition of the $\mathpzc{E}_{\text{st}}$-coaction on chains and by (\ref{eqn:P0_e_un_vs_e_st}) above, we have that, since $x$ lies in $E_n$, computation of $y$ reduces to a computation of the image of $e^{\mathrm{un}}_{-d-n}\tau^{\varepsilon} \otimes x$, where $\varepsilon = (1+(-1)^n)/2$, under the unstable coaction map $\mathpzc{E}(2) \otimes \mathrm{C}_\bullet(E_n) \to \mathrm{C}_\bullet(E_n)^{\otimes 2}$. An explicit computation of this coaction in the unstable case shows that $y = x \otimes x$. One possible method of doing this computation is as follows:
\begin{itemize}
	\item As $x \otimes x$ is invariant under the action of $\tau$, it suffices to demonstrate that the coaction is $x \otimes x$ in the case of $e^{\mathrm{un}}_{-d-n} \otimes x$. Pass, under the map $\mathpzc{E}^\dagger \to \mathpzc{M}^\dagger$ described in Section~\ref{subsec:stable_barratt_eccles_operad}, from $e_{-d-n}^{\mathrm{un}}$ to the corresponding surjection in the McClure-Smith operad. An easy check shows that this is the surjection $(d+n+2) \to (2)$ given by the sequence $(1212 \cdots)$. Let us denote this surjection by $f$. Note that, since $d'- n = d$, $f$ is a surjection from $(d'+2)$ to $(2)$. 
	\item We now need to compute $\mathrm{AW}(f)(x)$ where $\mathrm{AW}$ is as in Section~\ref{subsec:stable_MS_operad}. As per Definition~\ref{defn:seqcoop}, we need to compute a sum, over overlapping partitions of $\{0,1,\dots,d'\}$, of terms of the form $x' \otimes x''$, where both $x'$ and $x''$ are restrictions of $x$. Since, upon computing the image $y$, we shall be computing $\beta(x) = (\alpha \otimes \alpha)(y)$, and $\alpha$ is a $d$-cocycle, we need only compute the terms where both $x'$ and $x''$ are of dimension $d$. An easy check shows that there is only one overlapping partition, namely $\{0\}, \{0,1\}, \{1,2\}, \dots, \{d'-1, d'\}. \{d'\}$, for which this is true, and in this case, we have $x' = x'' = x$, so that $y = x \otimes x$, as desired.
\end{itemize}
Now, as $y = x \otimes x$, we have that $\beta(x) = (\alpha \otimes \alpha)(x \otimes x) = \alpha(x)^2 = \alpha(x)$, and so $\beta = \alpha$, as desired.
\end{proof}

\section{Change of Coefficients from $\F_p$ to $\overline{\F}_p$}

Hitherto, we have worked with coefficients in $\F_p$, whether it was with stable operads or with spectral cochains. In order to construct algebraic models of $p$-adic stable homotopy types however, we must pass to the algebraic closure $\overline{\F}_p$. We describe the necessary modifications in this section. First, we define a new operad, one over $\overline{\F}_p$.

\begin{Definition}\label{def:operadFpbar}
The operad $\widebar{\mathpzc{E}}_{\text{st}}^{\dagger}$, an operad in cochain complexes over $\overline{\F}_p$, is as follows:
\[
\widebar{\mathpzc{E}}_{\text{st}}^\dagger(n) = \mathpzc{E}_{\text{st}}^\dagger(n) \otimes_{\F_p[\Sigma_n]} \overline{\F}_p[\Sigma_n].
\]
\end{Definition}

We now have three tasks to complete, tasks which we completed in the case of the operad $\mathpzc{E}_{\text{st}}^\dagger$:
\begin{itemize}
	\item[(T1)] Show that one can do homotopy theory over $\widebar{\mathpzc{E}}_{\text{st}}^{\dagger}$.
	\item[(T2)] Compute the cohomology of free algebras over $\widebar{\mathpzc{E}}_{\text{st}}^{\dagger}$ and develop a theory of cohomology operations.
	\item[(T3)] Demonstrate homotopy additivity properties of $\widebar{\mathpzc{E}}_{\text{st}}^{\dagger}$.
\end{itemize}

Let us first consider (T1). Our goal is to show that, just like $\mathpzc{E}_{\text{st}}^\dagger$, the monad associated to $\widebar{\mathpzc{E}}_{\text{st}}^{\dagger}$ preserves quasi-isomorphisms, and moreover, that $\widebar{\mathpzc{E}}_{\text{st}}^{\dagger}$ is semi-admissible, in the sense of Definition~\ref{def:semiadmissibility}. As usual, we denote the monad, and also the free algebra functor, associated to the operad $\widebar{\mathpzc{E}}_{\text{st}}^{\dagger}$, by $\overline{\mathbf{E}}_{\text{st}}^\dagger$.

\begin{Lemma}\label{lem:Estbar_on_free_Fpbar_complexes}
For cochain complexes over $X$ over $\F_p$, we have a natural isomorphism
\[
\overline{\mathbf{E}}_{\mathrm{st}}^\dagger (\overline{\F}_p \otimes_{\F_p} X) \cong \overline{\F}_p \otimes_{\F_p} \mathbf{E}_{\mathrm{st}}^\dagger X.
\]
\end{Lemma}

\begin{proof}
We have:
\begin{align*}
\overline{\mathbf{E}}_{\mathrm{st}}^\dagger (\overline{\F}_p \otimes_{\F_p} X) &= \bigoplus_{n \ge 0} \widebar{\mathpzc{E}}_{\text{st}}^{\dagger}(n) \otimes_{\overline{\F}_p[\Sigma_n]} (\overline{\F}_p \otimes_{\F_p} X)^{\otimes_{\overline{\F}_p} n} \\
&\cong \bigoplus_{n \ge 0} (\mathpzc{E}_{\text{st}}^\dagger(n) \otimes_{\F_p[\Sigma_n]} \overline{\F}_p[\Sigma_n]) \otimes_{\overline{\F}_p[\Sigma_n]} (\overline{\F}_p[\Sigma_n] \otimes_{\F_p[\Sigma_n]} X^{\otimes_{\F_p} n}) \\
&\cong \bigoplus_{n \ge 0} \mathpzc{E}_{\text{st}}^\dagger(n) \otimes_{\F_p[\Sigma_n]} (\overline{\F}_p[\Sigma_n] \otimes_{\overline{\F}_p[\Sigma_n]} (\overline{\F}_p[\Sigma_n] \otimes_{\F_p[\Sigma_n]} X^{\otimes_{\F_p} n})) \\
&\cong \bigoplus_{n \ge 0} \mathpzc{E}_{\text{st}}^\dagger(n) \otimes_{\F_p[\Sigma_n]} (\overline{\F}_p[\Sigma_n] \otimes_{\F_p[\Sigma_n]} X^{\otimes_{\F_p} n}) \\
&\cong \bigoplus_{n \ge 0} \mathpzc{E}_{\text{st}}^\dagger(n) \otimes_{\F_p[\Sigma_n]} \overline{\F}_p[\Sigma_n] \otimes_{\F_p[\Sigma_n]} X^{\otimes_{\F_p} n} \\
&\cong \bigoplus_{n \ge 0} \mathpzc{E}_{\text{st}}^\dagger(n) \otimes_{\F_p} \overline{\F}_p \otimes_{\F_p} X^{\otimes_{\F_p} n} \\
&\cong \overline{\F}_p \otimes_{\F_p} \Big(\bigoplus_{n \ge 0} \mathpzc{E}_{\text{st}}^\dagger(n) \otimes_{\F_p}  X^{\otimes_{\F_p} n}\Big) \\
&= \overline{\F}_p \otimes_{\F_p} \mathbf{E}_{\mathrm{st}}^\dagger X.
\end{align*}
\end{proof}

\begin{Proposition}\label{prop:Ebarmonad}
We have the following:
\begin{itemize}
	\item[(i)] The monad corresponding to the operad $\widebar{\mathpzc{E}}_{\text{st}}^{\dagger}$ preserves quasi-isomorphisms.
	\item[(ii)] The operad $\widebar{\mathpzc{E}}_{\text{st}}^{\dagger}$ is semi-admissible, which is to say the category of algebras $\widebar{\mathpzc{E}}_{\emph{st}}^\dagger\text{-}\mathsf{Alg}$ possesses a Quillen semi-model structure where:
\begin{itemize}
	\item The weak equivalences are the quasi-isomorphisms.
	\item The fibrations are the surjective maps.
	\item The cofibrations are retracts of relative cell complexes, where the cells are the maps $\overline{\mathbf{E}}_{\emph{st}}^\dagger M \to \overline{\mathbf{E}}_{\emph{st}}^\dagger\emph{C}M$, where $M$ is a degreewise free $\overline{\F}_p$-complex with zero differentials.
\end{itemize}
\end{itemize}
\end{Proposition}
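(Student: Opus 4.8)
**

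The plan is to deduce everything for $\widebar{\mathpzc{E}}_{\text{st}}^{\dagger}$ from the corresponding results for $\mathpzc{E}_{\text{st}}^{\dagger}$ by a flat base change argument along the faithfully flat extension $\F_p \hookrightarrow \overline{\F}_p$. The key observation is that $\overline{\F}_p[\Sigma_n]$ is free, hence flat, as a module over $\F_p[\Sigma_n]$, and that $\overline{\F}_p$ is flat over $\F_p$, so tensoring up commutes with homology and with all the relevant colimits and limits. First I would record that, for any cochain complex $Y$ over $\overline{\F}_p$, one has a natural isomorphism $\overline{\mathbf{E}}_{\text{st}}^\dagger Y \cong \mathbf{E}_{\text{st}}^\dagger(Y|_{\F_p}) \otimes_{\F_p} \overline{\F}_p$ is \emph{not} quite what we want; rather the cleaner route is: for $Y$ an $\overline{\F}_p$-complex, $\widebar{\mathpzc{E}}_{\text{st}}^{\dagger}(n) \otimes_{\overline{\F}_p[\Sigma_n]} Y^{\otimes n} \cong \mathpzc{E}_{\text{st}}^{\dagger}(n) \otimes_{\F_p[\Sigma_n]} Y^{\otimes n}$ as complexes over $\overline{\F}_p$ (the $\overline{\F}_p$ in $\widebar{\mathpzc{E}}_{\text{st}}^{\dagger}(n)$ is absorbed by the $Y^{\otimes n}$ factor), so in fact $\overline{\mathbf{E}}_{\text{st}}^\dagger$ applied to an $\overline{\F}_p$-complex agrees with the functor $\bigoplus_n \mathpzc{E}_{\text{st}}^{\dagger}(n) \otimes_{\F_p[\Sigma_n]} (-)^{\otimes n}$.

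For part (i), I would argue exactly as in the proof of Proposition~\ref{prop:MS_st_pres_w_eqs}, with $\F_p$-complexes replaced by $\overline{\F}_p$-complexes. The two ingredients that need checking are: that $\widebar{\mathpzc{E}}_{\text{st}}^{\dagger}(n)$ is semi-flat over $\overline{\F}_p[\Sigma_n]$ in the sense of Definition~\ref{def:fin_flat}, and that $\overline{\mathbf{E}}_{\text{st}}^\dagger$ preserves filtered colimits (the latter is automatic for monads associated to operads, by~\cite{Rezk}). For the semi-flatness, note $\widebar{\mathpzc{E}}_{\text{st}}^{\dagger}(n) = \mathpzc{E}_{\text{st}}^{\dagger}(n) \otimes_{\F_p[\Sigma_n]} \overline{\F}_p[\Sigma_n]$, and for a finite $\overline{\F}_p[\Sigma_n]$-complex $Z$ we have $\widebar{\mathpzc{E}}_{\text{st}}^{\dagger}(n) \otimes_{\overline{\F}_p[\Sigma_n]} Z \cong \mathpzc{E}_{\text{st}}^{\dagger}(n) \otimes_{\F_p[\Sigma_n]} Z$, where $Z$ is also finite over $\F_p[\Sigma_n]$ (it is degreewise finite-dimensional over $\overline{\F}_p$, but $\overline{\F}_p$ is infinite-dimensional over $\F_p$, so this needs a moment's care) --- the correct statement is that $\widebar{\mathpzc{E}}_{\text{st}}^{\dagger}(n) \otimes_{\overline{\F}_p[\Sigma_n]} -$ preserves quasi-isomorphisms between finite $\overline{\F}_p[\Sigma_n]$-modules, which follows because one may write such a finite $Z$ as $Z_0 \otimes_{\F_p} \overline{\F}_p$ for $Z_0$ a finite $\F_p[\Sigma_n]$-complex (over a field every complex splits as spheres and disks, so finiteness descends), whence $\widebar{\mathpzc{E}}_{\text{st}}^{\dagger}(n) \otimes_{\overline{\F}_p[\Sigma_n]} Z \cong (\mathpzc{E}_{\text{st}}^{\dagger}(n) \otimes_{\F_p[\Sigma_n]} Z_0) \otimes_{\F_p} \overline{\F}_p$, and now Lemma~\ref{lem:finflatEst} plus flatness of $\overline{\F}_p$ over $\F_p$ gives the result. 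With semi-flatness in hand, the two-case splitting-into-spheres-and-disks argument of Proposition~\ref{prop:MS_st_pres_w_eqs} goes through verbatim over $\overline{\F}_p$.

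For part (ii), by Proposition~\ref{prop:amenable_implies_admissible}(ii) it suffices to show that for every cell $\widebar{\mathpzc{E}}_{\text{st}}^{\dagger}$-algebra $A$ and every disk complex $\D^n$ over $\overline{\F}_p$, the canonical map $A \to A \amalg \overline{\mathbf{E}}_{\text{st}}^\dagger(\D^n)$ is a quasi-isomorphism; the statement about the explicit form of the model structure then follows from the final clause of Proposition~\ref{prop:amenable_implies_admissible}. This is proved exactly as Proposition~\ref{prop:E_adm}: one writes $A \amalg \overline{\mathbf{E}}_{\text{st}}^\dagger(\D^n) \cong \mathbf{U}^A(\D^n) = \bigoplus_{j \ge 0} \mathpzc{U}^A(j) \otimes_{\Sigma_j} (\D^n)^{\otimes j}$ using the enveloping operad, and one needs the analogue of Lemma~\ref{lem:Estenvopsflat}, namely that $\mathpzc{U}^A(j)$ is semi-flat over $\overline{\F}_p[\Sigma_j]$ for a cell $\widebar{\mathpzc{E}}_{\text{st}}^{\dagger}$-algebra $A$. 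I would prove this by the same nested induction as in Lemma~\ref{lem:Estenvopsflat}, relying on the $\overline{\F}_p$-analogues of Lemmas~\ref{lem:almost_splitunstable} and~\ref{lem:change_of_ringsunstable} (which hold with identical proofs, since the only facts used are that $\overline{\F}_p[\Sigma_n]$ is Noetherian and that $\overline{\F}_p[\Sigma_{m+n}]$ is flat over $\overline{\F}_p[\Sigma_m] \otimes_{\overline{\F}_p} \overline{\F}_p[\Sigma_n]$), with the base case $n=0$ now supplied by the semi-flatness of $\widebar{\mathpzc{E}}_{\text{st}}^{\dagger}(j)$ established in part (i). Then, since $(\D^n)^{\otimes j}$ is finite with zero homology for $j \ge 1$, the semi-flatness preserves this triviality and the map is a quasi-isomorphism.

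The main obstacle, such as it is, is bookkeeping around the word ``finite'': over $\overline{\F}_p$ one must be careful that ``finite'' means degreewise finite-dimensional \emph{over $\overline{\F}_p$}, not over $\F_p$, so the reduction ``$\widebar{\mathpzc{E}}_{\text{st}}^{\dagger}(n) \otimes_{\overline{\F}_p[\Sigma_n]} Z \cong \mathpzc{E}_{\text{st}}^{\dagger}(n) \otimes_{\F_p[\Sigma_n]} Z_0 \otimes_{\F_p} \overline{\F}_p$'' with $Z_0$ finite over $\F_p$ must be justified via the splitting of $\overline{\F}_p$-complexes into spheres and disks (which lets one descend the base field). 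Once that bridge is in place, every argument is a mechanical transcription of its $\F_p$-counterpart, and no genuinely new idea is required.
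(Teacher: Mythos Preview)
Your treatment of part~(ii) matches the paper's exactly: rerun the proofs of Lemmas~\ref{lem:finflatEst}, \ref{lem:almost_splitunstable}, \ref{lem:change_of_ringsunstable} and hence \ref{lem:Estenvopsflat} over $\overline{\F}_p$, then invoke Proposition~\ref{prop:amenable_implies_admissible}(ii).

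For part~(i), however, your descent argument for semi-flatness has a genuine gap. You assert that a finite $\overline{\F}_p[\Sigma_n]$-complex $Z$ can be written as $Z_0 \otimes_{\F_p} \overline{\F}_p$ with $Z_0$ finite over $\F_p[\Sigma_n]$, justified by ``over a field every complex splits as spheres and disks''. But that splitting is a statement about complexes over the \emph{field} $\overline{\F}_p$, not over the ring $\overline{\F}_p[\Sigma_n]$, which is not semisimple when $p \mid n!$; there is no reason an arbitrary $\overline{\F}_p[\Sigma_n]$-complex decomposes into $\Sigma_n$-equivariant spheres and disks. Even if one could descend the objects, you would still need the quasi-isomorphism $Z \to Z'$ to descend to a map of $\F_p[\Sigma_n]$-complexes, and this is not addressed (e.g.\ multiplication by some $\lambda \in \overline{\F}_p \smallsetminus \F_p$ on $\overline{\F}_p[\Sigma_n]$ does not arise from a map over $\F_p[\Sigma_n]$).

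The paper sidesteps this entirely by descending at the level of \emph{plain} $\overline{\F}_p$-complexes, where the sphere-and-disk decomposition is genuinely available. It records the monad-level identity
\[
\overline{\mathbf{E}}_{\textbf{st}}^\dagger(\overline{\F}_p \otimes_{\F_p} X) \;\cong\; \overline{\F}_p \otimes_{\F_p} (\mathbf{E}_{\textbf{st}}^\dagger X),
\]
then, exactly as in the reduction to Case~1 of Proposition~\ref{prop:MS_st_pres_w_eqs}, reduces an arbitrary quasi-isomorphism of $\overline{\F}_p$-complexes to the inclusion of the spherical summand into a sum of spheres and disks. That particular map has matrix entries $0$ and $1$ and so descends trivially to $\F_p$, whence Proposition~\ref{prop:MS_st_pres_w_eqs} and flatness of $\overline{\F}_p$ over $\F_p$ finish. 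Your route can also be salvaged, but by rerunning the proof of Lemma~\ref{lem:finflatEst} directly over $\overline{\F}_p$ (as you already propose for part~(ii)) rather than via $\Sigma_n$-equivariant descent; the only additional point to check is that $\widebar{\mathpzc{E}}_{\text{st}}^{\dagger}(n)$ agrees with the limit of the $\overline{\F}_p$-tensored tower, which holds because in each fixed degree the tower for $\mathpzc{E}_{\text{st}}^{\dagger}(n)$ consists of surjections of finite-dimensional $\F_p$-spaces and hence eventually stabilizes.
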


\begin{proof}
(i): As in the proof of Proposition~\ref{prop:MS_st_pres_w_eqs}, it suffices to demonstrate the preservation of quasi-isomorphisms $f \colon X \to Y$ where, for some index sets $I$ and $J$, $X = \bigoplus_{i \in I} \Sph^{n_i}_{\overline{\F}_p}$, $Y = (\bigoplus_{i \in I} \Sph^{n_i}_{\overline{\F}_p}) \oplus (\bigoplus_{j \in J} \D^{n_j}_{\overline{\F}_p})$ and $f$ is the obvious inclusion. (For the notations $\Sph^{n_i}_{\overline{\F}_p}$ and $\D^{n_j}_{\overline{\F}_p}$, see the standard sphere and disk complexes in Section~\ref{sec:nots_convs}; here we've added the $\overline{\F}_p$ subscripts in the notations for clarity.) Define cochain complexes over $\F_p$ by setting $\underline{X} := \bigoplus_{i \in I} \Sph^{n_i}_{\F_p}$ and $\underline{Y} := (\bigoplus_{i \in I} \Sph^{n_i}_{\F_p}) \oplus (\bigoplus_{j \in J} \D^{n_j}_{\F_p})$, and let $\underline{f}$ be the obvious inclusion $\underline{X} \to \underline{Y}$. We then get a commutative square as follows.
\begin{center}
\begin{tikzpicture}[node distance = 1.5cm]
\node [] (A) {$X$};
\node [right of = A,xshift=1cm] (B) {$\overline{\F}_p \otimes_{\F_p} \underline{X}$};
\node [below of = A] (C) {$Y$};
\node [right of = C,xshift=1cm] (D) {$\overline{\F}_p \otimes_{\F_p} \underline{Y}$};

\draw [->] (A) -- (B) node[midway,anchor=south]{$\cong$};
\draw [->] (A) -- (C) node[midway,anchor=east]{$f$};
\draw [->] (C) -- (D) node[midway,anchor=north]{$\cong$};
\draw [->] (B) -- (D) node[midway,anchor=west]{$\overline{\F}_p \otimes_{\F_p} \underline{f}$};
\end{tikzpicture}
\end{center}
Upon applying $\overline{\mathbf{E}}_{\text{st}}^\dagger$ to this diagram, the result follows by Lemma~\ref{lem:Estbar_on_free_Fpbar_complexes} and Proposition~\ref{prop:MS_st_pres_w_eqs}. \\

(ii): Let $A$ be a cell $\widebar{\mathpzc{E}}_{\text{st}}^\dagger$-algebra. As in the proof of Proposition~\ref{prop:E_adm}, it suffices to show that, for each $j \ge 0$, $\mathpzc{U}^A(j)$ is semi-flat over $\overline{\F}_p[\Sigma_j]$; that is, it suffices to show that the obvious analogue of Lemma~\ref{lem:Estenvopsflat} holds. In fact, an easy inspection show that, by entirely analogous proofs, the obvious analogues of Lemmas~\ref{lem:finflatEst} and~\ref{lem:change_of_ringsunstable} hold, and then as a result, so does the analogue of Lemma~\ref{lem:Estenvopsflat}, as desired.
\end{proof}

The semi-model structure constructed above yields also the derived category of $\widebar{\mathpzc{E}}_{\text{st}}^\dagger$-algebras. We have now completed our first task (T1). We now move on to the second task (T2), that of computing the cohomologies of free $\widebar{\mathpzc{E}}_{\text{st}}^\dagger$-algebras, and developing a theory of cohomology operations.

\begin{Lemma}\label{lem:free_algebras_over_Est_and_Estbar}
For cochain complexes over $X$ over $\overline{\F}_p$, we have a natural isomorphism
\[
\overline{\mathbf{E}}_{\mathrm{st}}^\dagger X \cong \bigoplus_{n \ge 0} \mathpzc{E}_{\mathrm{st}}^\dagger(n) \otimes_{\F_p[\Sigma_n]} X^{\otimes_{\overline{\F}_p} n}.
\]
\end{Lemma}

Note that the only difference between the righthand side above and $\mathbf{E}_{\mathrm{st}}^\dagger X$ is that the tensor power on $X$ is over $\overline{\F}_p$ instead of $\F_p$.

\begin{proof}
We have:
\begin{align*}
\overline{\mathbf{E}}_{\mathrm{st}}^\dagger X &= \bigoplus_{n \ge 0} \widebar{\mathpzc{E}}_{\text{st}}^{\dagger}(n) \otimes_{\overline{\F}_p[\Sigma_n]} X^{\otimes_{\overline{\F}_p} n} \\
&= \bigoplus_{n \ge 0} (\mathpzc{E}_{\text{st}}^\dagger(n) \otimes_{\F_p[\Sigma_n]} \overline{\F}_p[\Sigma_n]) \otimes_{\overline{\F}_p[\Sigma_n]} X^{\otimes_{\overline{\F}_p} n} \\
&\cong \bigoplus_{n \ge 0} \mathpzc{E}_{\text{st}}^\dagger(n) \otimes_{\F_p[\Sigma_n]} (\overline{\F}_p[\Sigma_n] \otimes_{\overline{\F}_p[\Sigma_n]} X^{\otimes_{\overline{\F}_p} n}) \\
&\cong \bigoplus_{n \ge 0} \mathpzc{E}_{\text{st}}^\dagger(n) \otimes_{\F_p[\Sigma_n]} X^{\otimes_{\overline{\F}_p} n}.
\end{align*}
\end{proof}

\begin{Proposition}\label{prop:freealghomEbarpoint}
For $\overline{\F}_p$-complexes $X$, we have a natural isomorphism
\[
\mathrm{H}^\bullet(\overline{\mathbf{E}}_{\mathrm{st}}^\dagger X) \cong \widehat{\mathcal{B}} \otimes_{\F_p} \mathrm{H}^\bullet(X).
\]
\end{Proposition}

Note that the tensor in the righthand side is over $\F_p$, not over $\overline{\F}_p$.

\begin{proof}
Consider the composite
\[
\widehat{\mathcal{B}} \otimes_{\F_p} \mathrm{H}^\bullet(X) \overset{\cong}\longrightarrow \mathrm{H}^\bullet(\mathbf{E}_{\mathrm{st}}^\dagger X) \to \mathrm{H}^\bullet(\overline{\mathbf{E}}_{\mathrm{st}}^\dagger X)
\]
where the first map is the isomorphism in Proposition~\ref{prop:stablefreehom} and the second is induced by the map $\mathbf{E}_{\mathrm{st}}^\dagger X = \bigoplus_{n \ge 0} \mathpzc{E}_{\text{st}}^\dagger(n) \otimes_{\F_p[\Sigma_n]} X^{\otimes_{\F_p} n} \to \bigoplus_{n \ge 0} \mathpzc{E}_{\text{st}}^\dagger(n) \otimes_{\F_p[\Sigma_n]} X^{\otimes_{\overline{\F}_p} n} \cong \overline{\mathbf{E}}_{\mathrm{st}}^\dagger X$ (here, the final isomorphism comes from Lemma~\ref{lem:free_algebras_over_Est_and_Estbar}). We claim that the above composite is an isomorphism. As per Lemma~\ref{lem:complexes_over_k_are_sums_of_spheres_and_disks}, we have that, for some index sets $I$ and $J$, $X \cong (\bigoplus_{i \in I} \Sph^{n_i}_{\overline{\F}_p}) \oplus (\bigoplus_{j \in J} \D^{n_j}_{\overline{\F}_p})$. (For the notations $\Sph^{n_i}_{\overline{\F}_p}$ and $\D^{n_j}_{\overline{\F}_p}$, see the standard sphere and disk complexes in Section~\ref{sec:nots_convs}; here we've added the $\overline{\F}_p$ subscripts in the notations for clarity.) Define a cochain complex $\underline{X}$ over $\F_p$ by setting $\underline{X} := (\bigoplus_{i \in I} \Sph^{n_i}_{\F_p}) \oplus (\bigoplus_{j \in J} \D^{n_j}_{\F_p})$. We then have that $X \cong \overline{\F}_p \otimes_{\F_p} \underline{X}$. We also have a commutative diagram

\begin{center}
\begin{tikzpicture}[node distance = 1.5cm]
\node [] (A) {$\widehat{\mathcal{B}} \otimes_{\F_p} \mathrm{H}^\bullet(X)$};
\node [right of = A, xshift = 3.9cm] (B) {$\mathrm{H}^\bullet(\overline{\mathbf{E}}_{\mathrm{st}}^\dagger X)$};

\node [below of = A] (D) {$\overline{\F}_p \otimes_{\F_p} (\widehat{\mathcal{B}} \otimes_{\F_p} \mathrm{H}^\bullet(\underline{X}))$};
\node [below of = B] (E) {$\overline{\F}_p \otimes_{\F_p} \mathrm{H}^\bullet(\mathbf{E}_{\mathrm{st}}^\dagger \underline{X})$};

\draw [->] (A) -- (B) node[midway, anchor=south]{};
\draw [->] (A) -- (D) node[midway, anchor=east]{$\cong$};
\draw [->] (B) -- (E) node[midway, anchor=west]{$\cong$};
\draw [->] (D) -- (E) node[midway, anchor=south]{};
\end{tikzpicture}
\end{center}

where the upper horizontal map is the composite above, the righthand vertical map is induced by the isomorphism in Lemma~\ref{lem:Estbar_on_free_Fpbar_complexes} and where, an easy check shows, the lower horizontal map is the image under $\overline{\F}_p \otimes_{\F_p} -$ of the isomorphism in Proposition~\ref{prop:stablefreehom}. The desired result follows immediately.
\end{proof}

Next, we consider cohomology operations for $\widebar{\mathpzc{E}}_{\text{st}}^\dagger$-algebras.

\begin{Proposition}\label{prop:cohom_ops_Fpbar}
Given an algebra $A$ over $\widebar{\mathpzc{E}}_{\emph{st}}^\dagger$, the cohomology $\emph{H}^\bullet(A)$ possesses a natural $\F_p$-linear action by $\widehat{\mathcal{B}}$.
\end{Proposition}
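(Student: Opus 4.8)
The plan is to mimic the construction of the $\widehat{\mathcal{B}}$-action in Proposition~\ref{prop:(co)hom_ops_stab_op}, simply replacing $\mathpzc{E}_{\text{st}}^\dagger$ by $\widebar{\mathpzc{E}}_{\text{st}}^\dagger$ and invoking the $\overline{\F}_p$-analogue of Proposition~\ref{prop:stablefreehom}, namely Proposition~\ref{prop:freealghomEbarpoint}. Concretely, given an $\widebar{\mathpzc{E}}_{\text{st}}^\dagger$-algebra $A$ with structure map $\alpha \colon \overline{\mathbf{E}}_{\textbf{st}}^\dagger A \to A$, I would define the action of $\widehat{\mathcal{B}}$ on $\text{H}^\bullet(A)$ as the composite
\[
\widehat{\mathcal{B}} \otimes_{\F_p} \text{H}^\bullet(A) \overset{\cong}\longrightarrow \text{H}^\bullet(\overline{\mathbf{E}}_{\textbf{st}}^\dagger A) \overset{\text{H}^\bullet(\alpha)}\longrightarrow \text{H}^\bullet(A),
\]
where the first arrow is the isomorphism of Proposition~\ref{prop:freealghomEbarpoint}. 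Note that $\text{H}^\bullet(A)$ is an $\overline{\F}_p$-vector space, but the tensor in the source is taken over $\F_p$, which is exactly why the resulting action is only claimed to be $\F_p$-linear and not $\overline{\F}_p$-linear.

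The verification that this is a genuine (associative, unital) $\widehat{\mathcal{B}}$-action proceeds exactly as in Proposition~\ref{prop:(co)hom_ops_stab_op}: one chases the monadic multiplication square
\begin{center}
\begin{tikzpicture}[node distance=1cm]
\node(A){$\overline{\mathbf{E}}_{\textbf{st}}^\dagger \overline{\mathbf{E}}_{\textbf{st}}^\dagger A$};
\node[below= of A](C){$\overline{\mathbf{E}}_{\textbf{st}}^\dagger A$};
\node[right= of A,xshift=1cm](B){$\overline{\mathbf{E}}_{\textbf{st}}^\dagger A$};
\node[below= of B,yshift=-0.75mm](D){$A$};
\draw[->] (A) -- (B) node[midway,anchor=south]{$\overline{\mathbf{E}}_{\textbf{st}}^\dagger\alpha$};
\draw[->] (A) -- (C) node[midway,anchor=east]{$m_A$};
\draw[->] (C) -- (D) node[midway,anchor=north]{$\alpha$};
\draw[->] (B) -- (D) node[midway,anchor=west]{$\alpha$};
\end{tikzpicture}
\end{center}
applying $\text{H}^\bullet(-)$ and Proposition~\ref{prop:freealghomEbarpoint}, obtaining a commuting square whose top and left edges encode the multiplication $\widehat{\mathcal{B}} \otimes_{\F_p} \widehat{\mathcal{B}} \to \widehat{\mathcal{B}}$ and whose diagonal composite both ways yields associativity; unitality comes from the unit axiom of the monad. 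The only genuinely new point compared to Proposition~\ref{prop:(co)hom_ops_stab_op} is that one should check the isomorphism of Proposition~\ref{prop:freealghomEbarpoint} is natural in $X$ and compatible with the monadic structure maps in the appropriate sense — this is implicit in its proof, which factors through the naturality of Proposition~\ref{prop:stablefreehom} and the base-change isomorphism \eqref{eqn:freealgfpbar}, so I would spell out that the square above maps, under $\text{H}^\bullet$, to the corresponding square for the free-module multiplication on $\widehat{\mathcal{B}}$.

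I do not expect a real obstacle here; the statement is essentially formal once Proposition~\ref{prop:freealghomEbarpoint} is in hand. The one point requiring a little care — and the closest thing to a ``hard part'' — is making sure the identification $\text{H}^\bullet(\overline{\mathbf{E}}_{\textbf{st}}^\dagger X) \cong \widehat{\mathcal{B}} \otimes_{\F_p} \text{H}^\bullet(X)$ is genuinely compatible with the two monadic structure maps, i.e. that the $\widehat{\mathcal{B}}$-module structure it induces on $\text{H}^\bullet(\overline{\mathbf{E}}_{\textbf{st}}^\dagger X)$ agrees with the free one under monad multiplication; this reduces, via \eqref{eqn:freealgfpbar}, to the corresponding compatibility over $\F_p$ established in the proof of Proposition~\ref{prop:(co)hom_ops_stab_op}. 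A brief remark could also note that, as in the $\F_p$ case, when $p=2$ the action of the finite sums in $\widehat{\mathcal{B}}$ recovers the explicit operations $P^s$, now acting on the $\overline{\F}_p$-cohomology.
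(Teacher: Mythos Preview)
Your proposal is correct and follows exactly the paper's approach: define the action as the composite $\widehat{\mathcal{B}} \otimes_{\F_p} \text{H}^\bullet(A) \overset{\cong}\to \text{H}^\bullet(\overline{\mathbf{E}}_{\textbf{st}}^\dagger A) \to \text{H}^\bullet(A)$ using Proposition~\ref{prop:freealghomEbarpoint} and the algebra structure map. The paper's proof is in fact terser than yours, omitting the associativity verification you sketch (having already spelled it out in Proposition~\ref{prop:(co)hom_ops_stab_op}).
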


Note that the operations are $\F_p$-linear, as opposed to $\overline{\F}_p$-linear.

\begin{proof}
As per Proposition~\ref{prop:freealghomEbarpoint}, we have a natural isomorphism $\text{H}^\bullet(\widebar{\mathbf{E}}_{\textbf{st}}^\dagger A) \cong \widehat{\mathcal{B}} \otimes_{\F_p} \text{H}^\bullet(A)$. The $\F_p$-linear action of $\widehat{\mathcal{B}}$ is then via the composite
\[
\widehat{\mathcal{B}} \otimes_{\F_p} \text{H}^\bullet(A) \overset{\cong}\longrightarrow \text{H}^\bullet(\widebar{\mathbf{E}}_{\textbf{st}}^\dagger A) \longrightarrow \text{H}^\bullet (A)
\]
where the second map is that which we achieve by applying $\text{H}^\bullet(-)$ to the algebra structure map $\widebar{\mathbf{E}}_{\textbf{st}}^\dagger A \to A$ of $A$.
\end{proof}

Finally, we shall now move on to the third task (T3), that of demonstrating homotopy additivity properties of $\widebar{\mathpzc{E}}_{\text{st}}^\dagger$.

\begin{Proposition}\label{prop:additivity_of_monad_Fpbar}
We have the following:
\begin{itemize}
	\item[(i)] Given $\overline{\F}_p$-complexes $X$ and $Y$, there is a natural quasi-isomorphism
\[
\overline{\mathbf{E}}_{\mathrm{st}}^\dagger(X \oplus Y) \sim \overline{\mathbf{E}}_{\mathrm{st}}^\dagger(X) \oplus \overline{\mathbf{E}}_{\mathrm{st}}^\dagger(Y).
\]
	\item[(ii)] Given cofibrant $\widebar{\mathpzc{E}}^\dagger_{\emph{st}}$-algebras $A$ and $B$, there is a natural quasi-isomorphism
\[
A \amalg B \sim A \oplus B.
\]
	\item[(iii)] Given a diagram $A \leftarrow C \rightarrow B$ of $\widebar{\mathpzc{E}}^\dagger_{\emph{st}}$-algebras, if each of $A$, $B$ and $C$ are cofibrant, and $C \to B$ is a cofibration, then we have that
\[
A \amalg_C B \sim A \oplus_C B.
\]
\end{itemize}
\end{Proposition}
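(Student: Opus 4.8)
The plan is to obtain all three parts by reduction to their $\F_p$-counterparts, namely Propositions~\ref{prop:additivity_of_monad}, \ref{prop:coproducts_of_algebras} and \ref{prop:pushouts_alg}, using the base-change identity $(\ref{eqn:freealgfpbar})$, which identifies $\overline{\mathbf{E}}_{\text{st}}^\dagger(\overline{\F}_p \otimes_{\F_p} X)$ with $\overline{\F}_p \otimes_{\F_p}(\mathbf{E}_{\text{st}}^\dagger X)$, together with the flatness of $\overline{\F}_p$ over $\F_p$.

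For (i), given $\overline{\F}_p$-complexes $X$ and $Y$, a choice of bases produces $\F_p$-complexes $\underline{X}$, $\underline{Y}$ with $X \cong \overline{\F}_p \otimes_{\F_p}\underline{X}$ and $Y \cong \overline{\F}_p \otimes_{\F_p}\underline{Y}$. Since $\overline{\F}_p \otimes_{\F_p}(-)$ preserves finite direct sums, $(\ref{eqn:freealgfpbar})$ identifies the canonical comparison map $\overline{\mathbf{E}}_{\text{st}}^\dagger(X) \oplus \overline{\mathbf{E}}_{\text{st}}^\dagger(Y) \to \overline{\mathbf{E}}_{\text{st}}^\dagger(X \oplus Y)$ with $\overline{\F}_p \otimes_{\F_p}(-)$ applied to the corresponding $\F_p$-map for $\underline{X}$, $\underline{Y}$; that $\F_p$-map is a quasi-isomorphism by Proposition~\ref{prop:additivity_of_monad}, and tensoring it with the flat $\F_p$-module $\overline{\F}_p$ keeps it so. Naturality is formal. (Alternatively, one could run the $\overline{\F}_p$-version of the argument through the functor $\text{A}$, using Proposition~\ref{prop:ABadditive} and the $\overline{\F}_p$-analogue of Lemma~\ref{lem:PhiPsILfinite}, but the base-change argument is shorter.)

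For (ii) and (iii) one cannot simply base-change a cofibrant $\widebar{\mathpzc{E}}_{\text{st}}^\dagger$-algebra from an $\F_p$-algebra, since the attaching maps of a cell presentation need not be defined over $\F_p$; instead I would re-run the proofs of Propositions~\ref{prop:coproducts_of_algebras}, \ref{prop:pushout_normalization} and \ref{prop:pushouts_alg} verbatim over $\overline{\F}_p$. This is legitimate because every auxiliary input has an evident $\overline{\F}_p$-analogue with an identical or base-changed proof: the analogues of Lemmas~\ref{lem:finflatEst}, \ref{lem:almost_splitunstable}, \ref{lem:change_of_ringsunstable} and \ref{lem:Estenvopsflat} (already recorded in the proof of Proposition~\ref{prop:Ebarmonad}), the analogue of Lemma~\ref{lem:stabilityigorsense} (which follows from part (i) above in the same way Lemma~\ref{lem:stabilityigorsense} follows from Proposition~\ref{prop:additivity_of_monad}, or by base change, using that $\widebar{\mathpzc{E}}_{\text{st}}^\dagger(j) \otimes_{\overline{\F}_p[\Sigma_{j_1} \times \cdots \times \Sigma_{j_k}]} (\Sph^0)^{\otimes j_1} \otimes \cdots \otimes (\Sph^0)^{\otimes j_k}$ is isomorphic to $\overline{\F}_p \otimes_{\F_p}$ of its $\F_p$ counterpart), the analogue of Lemma~\ref{lem:UandVforcofibA}, and the analogues of Lemmas~\ref{lem:norm_fin_flat} and \ref{lem:Estweksforpowers}. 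Granting these, the enveloping-operad filtration arguments and the spectral-sequence comparisons proceed word for word, and the bar-construction argument for the pushout works over an arbitrary field, so (ii) and (iii) follow.

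The only item requiring genuine verification is thus this list of lemma-analogues, and the one mildly delicate point in it is the ring-theoretic identification $\widebar{\mathpzc{E}}_{\text{st}}^\dagger(j) \otimes_{\overline{\F}_p[\Sigma_{j_1} \times \cdots \times \Sigma_{j_k}]} (-) \cong \overline{\F}_p \otimes_{\F_p}\big(\mathpzc{E}_{\text{st}}^\dagger(j) \otimes_{\F_p[\Sigma_{j_1} \times \cdots \times \Sigma_{j_k}]} (-)\big)$, which is what transports acyclicity of the relevant orbit complexes from the $\F_p$ setting; once that is recorded, everything else is bookkeeping.
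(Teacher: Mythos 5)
Your proposal is correct and follows essentially the same route as the paper: part (i) by choosing $\F_p$-bases and applying the base-change identity $(\ref{eqn:freealgfpbar})$ to reduce to Proposition~\ref{prop:additivity_of_monad}, and parts (ii)--(iii) by noting that cofibrant $\widebar{\mathpzc{E}}_{\text{st}}^\dagger$-algebras need not base-change from $\F_p$ and therefore re-running the proofs of Propositions~\ref{prop:coproducts_of_algebras}, \ref{prop:pushout_normalization} and~\ref{prop:pushouts_alg} verbatim over $\overline{\F}_p$ using the $\overline{\F}_p$-analogues of the supporting lemmas. Your additional observation on the orbit-complex base-change isomorphism $\widebar{\mathpzc{E}}_{\text{st}}^\dagger(j)/\Sigma_{j_1}\times\cdots\times\Sigma_{j_k} \cong \overline{\F}_p\otimes_{\F_p}\bigl(\mathpzc{E}_{\text{st}}^\dagger(j)/\Sigma_{j_1}\times\cdots\times\Sigma_{j_k}\bigr)$ is a useful piece of bookkeeping that the paper leaves implicit but is certainly what makes the $\overline{\F}_p$-analogue of Lemma~\ref{lem:stabilityigorsense} go through.
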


\begin{proof}
(i): Given the $\overline{\F}_p$-complexes $X$ and $Y$, we have a canonical map
\[
\overline{\mathbf{E}}_{\text{st}}^\dagger(X) \oplus \overline{\mathbf{E}}_{\text{st}}^\dagger(Y) \to \overline{\mathbf{E}}_{\text{st}}^\dagger(X \oplus Y).
\]
As per Lemma~\ref{lem:complexes_over_k_are_sums_of_spheres_and_disks}, we have that, for some index sets $I_1$ and $J_1$, $X \cong (\bigoplus_{i \in I_1} \Sph^{n_i}_{\overline{\F}_p}) \oplus (\bigoplus_{j \in J_1} \D^{n_j}_{\overline{\F}_p})$. (For the notations $\Sph^{n_i}_{\overline{\F}_p}$ and $\D^{n_j}_{\overline{\F}_p}$, see the standard sphere and disk complexes in Section~\ref{sec:nots_convs}; here we've added the $\overline{\F}_p$ subscripts in the notations for clarity.) Similarly, for some index sets $I_2$ and $J_2$, $Y \cong (\bigoplus_{i \in I_2} \Sph^{n_i}_{\overline{\F}_p}) \oplus (\bigoplus_{j \in J_2} \D^{n_j}_{\overline{\F}_p})$. We then also get an induced isomorphism $X \oplus Y \cong (\bigoplus_{i \in I_1 \sqcup I_2} \Sph^{n_i}_{\overline{\F}_p}) \oplus (\bigoplus_{j \in J_1 \sqcup J_2} \D^{n_j}_{\overline{\F}_p})$. Define cochain complexes over $\F_p$ by setting $\underline{X} := (\bigoplus_{i \in I_1} \Sph^{n_i}_{\F_p}) \oplus (\bigoplus_{j \in J_1} \D^{n_j}_{\F_p})$ and $\underline{Y} := (\bigoplus_{i \in I_2} \Sph^{n_i}_{\F_p}) \oplus (\bigoplus_{j \in J_2} \D^{n_j}_{\F_p})$. We then have that $X \cong \overline{\F}_p \otimes_{\F_p} \underline{X}$, $Y \cong \overline{\F}_p \otimes_{\F_p} \underline{Y}$ and $X \oplus Y \cong \overline{\F}_p \otimes_{\F_p} (\underline{X} \oplus \underline{Y})$. We also have the following commutative diagram.

\begin{center}
\begin{tikzpicture}[node distance = 1.5cm]
\node [] (A) {$\overline{\mathbf{E}}_{\text{st}}^\dagger(X) \oplus \overline{\mathbf{E}}_{\text{st}}^\dagger(Y)$};
\node [right of = A, xshift = 1.5cm] (B) {};
\node [right of = B, xshift = 1.5cm] (C) {$\overline{\mathbf{E}}_{\text{st}}^\dagger(X \oplus Y)$};
\node [below of = A] (D) {$(\overline{\F}_p \otimes_{\F_p} \mathbf{E}_{\text{st}}^\dagger(\underline{X})) \oplus (\overline{\F}_p \otimes_{\F_p} \mathbf{E}_{\text{st}}^\dagger(\underline{Y}))$};
\node [below of = B] (E) {};
\node [below of = C] (F) {$\overline{\F}_p \otimes_{\F_p} \mathbf{E}_{\text{st}}^\dagger(\underline{X} \oplus \underline{Y})$};

\node [below of = E] (G) {$\overline{\F}_p \otimes_{\F_p} (\mathbf{E}_{\text{st}}^\dagger(\underline{X})) \oplus \mathbf{E}_{\text{st}}^\dagger(\underline{Y}))$};

\draw [->] (A) -- (C) node[midway, anchor=south]{};
\draw [->] (A) -- (D) node[midway, anchor=east]{$\cong$};
\draw [->] (C) -- (F) node[midway, anchor=west]{$\cong$};
\draw [->] (D) -- (G) node[midway, anchor=south]{$\cong$};
\draw [->] (G) -- (F) node[midway, anchor=south]{$\sim$};
\end{tikzpicture}
\end{center}

Here, the vertical isomorphisms are from Lemma~\ref{lem:Estbar_on_free_Fpbar_complexes} and the diagram commutes due to the naturality of those isomorphisms. The righthand diagonal map is a quasi-isomorphism due to Proposition~\ref{prop:additivity_of_monad}. It follows that the map $\overline{\mathbf{E}}_{\text{st}}^\dagger(X) \oplus \overline{\mathbf{E}}_{\text{st}}^\dagger(Y) \to \overline{\mathbf{E}}_{\text{st}}^\dagger(X \oplus Y)$ is also a quasi-isomorphism, as desired. \\

(ii): We are trying to show that the obvious analogue of Proposition~\ref{prop:coproducts_of_algebras} holds. An easy inspection shows that, by entirely analogous proofs, the obvious analogues of Lemmas~\ref{lem:stabilityigorsense} and~\ref{lem:UandVforcofibA} hold, and then, as a result, so does the analogue of Proposition~\ref{prop:coproducts_of_algebras}, as desired. \\

(iii): We are trying to show that the obvious analogue of Proposition~\ref{prop:pushouts_alg} holds. An easy inspection shows that, by entirely analogous proofs, the obvious analogues of Proposition~\ref{prop:pushout_normalization} and Lemmas~\ref{lem:norm_fin_flat} and~\ref{lem:Estweksforpowers} hold, and then, as a result, so does the analogue of Proposition~\ref{prop:pushouts_alg}, as desired.
\end{proof}

We have now completed the transition from coefficients in $\F_p$ to coefficients in $\overline{\F}_p$ at the level of the operad. Next, we consider spectral cochains with coefficients in $\overline{\F}_p$. Henceforth, we let $\text{C}^\bullet(-)$ denote cochains with coefficients in $\F_p$.

\begin{Definition}\label{def:speccochainsFpbar}
Given a spectrum $E$, we set the following:
\[
\widebar{\mathrm{C}}^\bullet(E) := \overline{\F}_p \otimes_{\F_p} \mathrm{C}^\bullet(E).
\]
\end{Definition}

\begin{Remark}\label{rmk:Fpbar_cochains_as_maps}
Given a spectrum $E$, we saw in Remark~\ref{rmk:spec_chain_notation} that $\F_p$-cochains on spectra, say in $\mathrm{C}^d(E)$, amount to the same thing as $\F_p$-linear maps $\mathrm{C}_d(E) \to \F_p$. Similarly, if we set $\widebar{\mathrm{C}}_\bullet(E) := \overline{\F}_p \otimes_{\F_p} \mathrm{C}_\bullet(E)$, $\overline{\F}_p$-cochains on spectra in $\widebar{\mathrm{C}}^d(E)$ amount to the same thing as $\overline{\F}_p$-linear maps $\widebar{\mathrm{C}}_d(E) \to \overline{\F}_p$, or, equivalently, $\F_p$-linear maps $\mathrm{C}_d(E) \to \overline{\F}_p$. To see this, note that $\widebar{\mathrm{C}}^d(E) = \overline{\F}_p \otimes_{\F_p} \mathrm{C}^d(E) \cong \overline{\F}_p \otimes_{\F_p} \mathrm{Hom}_{\F_p}(\mathrm{C}_d(E), \F_p) \cong \mathrm{Hom}_{\overline{\F}_p}(\overline{\F}_p \otimes_{\F_p} \mathrm{C}_d(E), \overline{\F}_p \otimes_{\F_p} \F_p) \cong \mathrm{Hom}_{\overline{\F}_p}(\widebar{\mathrm{C}}_d(E), \overline{\F}_p)$. \customendremark
\end{Remark}

Of couse, one would hope that the cochains $\widebar{\mathrm{C}}^\bullet(E)$ yield algebras over $\widebar{\mathpzc{E}}_{\mathrm{st}}^\dagger$. The following result confirms this.

\begin{Proposition}\label{prop:speccochainsalgoverFpbar}
Given a spectrum $E$, $\widebar{\mathrm{C}}^\bullet(E)$ is naturally an algebra over $\widebar{\mathpzc{E}}_{\mathrm{st}}^\dagger$.
\end{Proposition}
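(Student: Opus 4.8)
The plan is to bootstrap the $\overline{\F}_p$ statement from the $\F_p$ statement already established in Proposition~\ref{prop:stabopactioncochains}, exactly as was done for the operad-level results in Propositions~\ref{prop:Ebarmonad}, \ref{prop:freealghomEbarpoint} and \ref{prop:additivity_of_monad_Fpbar}. The key observation is that extension of scalars is compatible with all the structure in sight: tensoring an operad action over $\F_p[\Sigma_n]$ with $\overline{\F}_p[\Sigma_n]$ produces an action of the extended operad. Concretely, by Proposition~\ref{prop:stabopactioncochains}, $\text{C}^\bullet(E) = \text{C}^\bullet(E;\F_p)$ is an algebra over $\mathpzc{E}_{\text{st}}^\dagger$, so we have structure maps $\mathpzc{E}_{\text{st}}^\dagger(n) \otimes_{\Sigma_n} \text{C}^\bullet(E)^{\otimes n} \to \text{C}^\bullet(E)$ satisfying the operad algebra axioms.

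First I would apply $- \otimes_{\F_p} \overline{\F}_p$ to these structure maps. The point is the natural isomorphism, for cochain complexes $X$ over $\F_p$,
\[
\widebar{\mathpzc{E}}_{\text{st}}^\dagger(n) \otimes_{\overline{\F}_p[\Sigma_n]} (X \otimes_{\F_p} \overline{\F}_p)^{\otimes n} \cong \big(\mathpzc{E}_{\text{st}}^\dagger(n) \otimes_{\Sigma_n} X^{\otimes n}\big) \otimes_{\F_p} \overline{\F}_p,
\]
which one checks directly from Definition~\ref{def:operadFpbar} together with the fact that $(X \otimes_{\F_p} \overline{\F}_p)^{\otimes_{\overline{\F}_p} n} \cong X^{\otimes_{\F_p} n} \otimes_{\F_p} \overline{\F}_p$ as $\overline{\F}_p[\Sigma_n]$-modules, and the base-change identity $\overline{\F}_p[\Sigma_n] \otimes_{\F_p[\Sigma_n]} (X^{\otimes n}) \otimes_{\overline{\F}_p[\Sigma_n]} (\ldots)$ collapsing appropriately. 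Applying this with $X = \text{C}^\bullet(E)$ and recalling Definition~\ref{def:speccochainsFpbar} that $\overline{\text{C}}^\bullet(E) = \text{C}^\bullet(E) \otimes_{\F_p} \overline{\F}_p$, we obtain maps $\widebar{\mathpzc{E}}_{\text{st}}^\dagger(n) \otimes_{\Sigma_n} \overline{\text{C}}^\bullet(E)^{\otimes n} \to \overline{\text{C}}^\bullet(E)$ as the base change of the original structure maps. The associativity and unit axioms for an operad action are then inherited: they are expressed by commuting diagrams of $\F_p$-linear maps, and applying the (exact, monoidal) functor $- \otimes_{\F_p} \overline{\F}_p$ preserves commutativity of diagrams. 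Naturality in $E$ follows because the original $\mathpzc{E}_{\text{st}}^\dagger$-algebra structure is natural in $E$ (established in Proposition~\ref{prop:stabopactioncochains}) and base change is functorial.

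I do not anticipate a genuine obstacle here; this is a routine change-of-coefficients argument, and the proof in the paper will almost certainly just say ``by Proposition~\ref{prop:stabopactioncochains} and base change'' or reference the analogous arguments in Propositions~\ref{prop:Ebarmonad}--\ref{prop:additivity_of_monad_Fpbar}. The only point requiring a moment's care is the natural isomorphism displayed above — specifically verifying that the symmetric group actions and the coinvariants interact correctly with extension of scalars, i.e. that $(-)_{\Sigma_n}$ commutes with $- \otimes_{\F_p} \overline{\F}_p$ (which holds since $\overline{\F}_p$ is flat, indeed free, over $\F_p$, so tensoring is exact and commutes with the colimit defining coinvariants). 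Once that compatibility is in hand, the rest is formal: transport the structure maps, transport the axioms, transport naturality. So the write-up will be short — essentially the display above, the remark that base change is exact and symmetric monoidal hence preserves operad-algebra structures, and an invocation of Proposition~\ref{prop:stabopactioncochains}.
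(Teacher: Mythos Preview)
Your proposal is correct and follows essentially the same approach as the paper: transport the $\mathpzc{E}_{\text{st}}^\dagger$-algebra structure on $\text{C}^\bullet(E)$ from Proposition~\ref{prop:stabopactioncochains} along base change $-\otimes_{\F_p}\overline{\F}_p$. The paper's proof is even terser, phrasing it in monadic form via the isomorphism $\overline{\mathbf{E}}_{\text{st}}^\dagger(\overline{\F}_p\otimes_{\F_p} X)\cong \overline{\F}_p\otimes_{\F_p}(\mathbf{E}_{\text{st}}^\dagger X)$ (the identity you already noted as (\ref{eqn:freealgfpbar}) in the proof of Proposition~\ref{prop:Ebarmonad}), but this is the same content as your aritywise formulation.
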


\begin{proof}
In general, if $X$ if an $\F_p$-complex which is an $\mathpzc{E}_{\text{st}}^\dagger$-algebra via a structure map $\alpha \colon \mathbf{E}_{\text{st}}^\dagger X \to X$, then $\overline{\F}_p \otimes_{\F_p} X$ is an $\overline{\F}_p$-complex which is an $\widebar{\mathpzc{E}}_{\text{st}}^\dagger$-algebra, via the structure map
\[
\overline{\mathbf{E}}_{\text{st}}^\dagger (\overline{\F}_p \otimes_{\F_p} X) \overset{\cong}\longrightarrow \overline{\F}_p \otimes_{\F_p} (\mathbf{E}_{\text{st}}^\dagger X) \longrightarrow \overline{\F}_p \otimes_{\F_p} X
\]
where the first  map is the isomorphism in Lemma~\ref{lem:Estbar_on_free_Fpbar_complexes} and the second map is $\overline{\F}_p \otimes_{\F_p} \alpha$.
\end{proof}

In Proposition~\ref{prop:P01sp}, we saw that, in the case of $\F_p$-cochains on spectra, the operation $P^0$ acts by the identity. The following result tells us the analogous facts in the case of $\overline{\F}_p$-cochains spectral cochains.

\begin{Proposition}\label{prop:P0_Fpbar_spectral_cochains}
Given a spectrum $E$, we have the following:
\begin{itemize}
	\item[(i)] The operation $P^0$ is the identity on elements of $\mathrm{H}^\bullet(E; \overline{\F}_p)$ in the image of $\mathrm{H}^\bullet(E; \F_p)$
	\item[(ii)] The operation $P^0$ is $\overline{\F}_p$-semilinear; that is, $P^0(\lambda\alpha) = \varphi(\lambda)P^0(\alpha)$, where $\varphi$ is the Frobenius automorphism of $\overline{\F}_p$.
\end{itemize}
\end{Proposition}

\begin{proof}
By the argument in the proof of Proposition~\ref{prop:P01sp}, we have that, if $\alpha$ is a $d$-cocycle, then $P^0[\alpha]$ is represented by a cocyle $\beta$ where $\beta(s) = \alpha(s)^p$ for all simplices $s$. Thus, if $\alpha \colon \mathrm{C}_d(E) \to \overline{\F}_p$ maps into $\F_p$, $\beta = \alpha$ and, in general, we have the semilinearity described in (ii).
\end{proof}

We have now completed the transition of all relevant previous material to coefficients in $\overline{\F}_p$.

\section{The Adjoint to Spectral Cochains}

Consider the spectral cochains functor
\[
\overline{\text{C}}^\bullet \colon \mathsf{Sp}^{\text{op}} \to \widebar{\mathpzc{E}}_{\text{st}}^\dagger\text{-}\mathsf{Alg}
\]
with coefficients in $\overline{\F}_p$, as defined in Definition~\ref{def:speccochainsFpbar}. We shall construct an adjoint to this spectral cochains functor. That is, we will construct an adjoint functor
\[
\text{U} \colon \widebar{\mathpzc{E}}_{\text{st}}^\dagger\text{-}\mathsf{Alg} \to \mathsf{Sp}^{\text{op}}.
\]
We define $\mathrm{U}$ by setting, given an $\widebar{\mathpzc{E}}_{\text{st}}^\dagger$-algebra $A$, the following in spectral degree $n \ge 0$ and simplicial degree $d \ge 0$:
\[
\mathrm{U}(A)_{n,d} := \widebar{\mathpzc{E}}_{\text{st}}^\dagger\mathsf{Alg}(A,\widebar{\text{C}}^\bullet(\Sigma^{\infty-n}\Delta_{d+})) = \widebar{\mathpzc{E}}_{\text{st}}^\dagger\mathsf{Alg}(A, \overline{\F}_p \otimes_{\F_p} \text{C}^\bullet(\Sigma^{\infty-n}\Delta_{d+})).
\]

\begin{Remark}
Before proceeding any further, we make a remark to give some context for what is to come. We are going to verify that $\mathrm{U}$, as defined above, is an adjoint to spectral cochains, and moreover that this adjunction is a Quillen adjunction. Although we don't develop this theory, one interpretation of these results is as follows. The category of algebras $\widebar{\mathpzc{E}}_{\text{st}}^\dagger\text{-}\mathsf{Alg}$ is enriched in spectra: tensored, cotensored, and with a mapping spectrum $\mathrm{F}(A,B)$ between any two algebras $A$ and $B$, all satisfying a version of Quillen’s Axiom SM7. Under this theory, the functor $\mathrm{U}$ is given by $\mathrm{U}(A) = \mathrm{F}(A, \widebar{\mathrm{C}}^\bullet\Sph)$ where $\Sph$ denotes the sphere spectrum. \customendremark
\end{Remark}

In order to check that, for each $A$, the above definition yields a spectrum $\mathrm{U}(A)$, and that this construction is functorial in $A$, we must carry out several verifications. Let us fix an $\widebar{\mathpzc{E}}_{\text{st}}^\dagger$-algebra $A$. Note that, with $n$ fixed, $\text{U}(A)_{n,d} $ is clearly contravariantly functorial in $d$, so that, for each $n \ge 0$, we have a simplicial set $\text{U}(A)_n$; moreover, it becomes a based simplicial set upon endowing it with the zero map as a basepoint. Next, for each $n \ge 0$, we need a spectral structure map $\mathrm{U}(A)_n \to \Omega\mathrm{U}(A)_{n+1}$. In simplicial dimension $d \ge 0$, this means a map $\mathrm{U}(A)_{n,d} \to \mathrm{U}(A)_{n+1,d+1}$, such that, for each algebra map $f \in \mathrm{U}(A)_{n+1,d+1}$, the image of $f$ under the simplicial operators $d_0$ and $d_1 \cdots d_{d+1}$ are both the zero map. Thus we need a map
\[
\widebar{\mathpzc{E}}_{\text{st}}^\dagger\text{-}\mathsf{Alg}(A, \overline{\F}_p \otimes_{\F_p} \mathrm{C}^\bullet(\Sigma^{\infty - n}\Delta_{d+})) \to \widebar{\mathpzc{E}}_{\text{st}}^\dagger\text{-}\mathsf{Alg}(A, \overline{\F}_p \otimes_{\F_p} \mathrm{C}^\bullet(\Sigma^{\infty - n-1}\Delta_{d+1,+}))
\]
which is such that the following requirements hold:
\begin{itemize}
	\item[(R1)] The algebra maps which lie in the image of this map satisfy the property that they yield the zero map upon postcomposition with the map $\overline{\F}_p \otimes_{\F_p} \mathrm{C}^\bullet(\Sigma^{\infty - n-1}\Delta_{d+1,+}) \to \overline{\F}_p \otimes_{\F_p} \mathrm{C}^\bullet(\Sigma^{\infty - n-1}\Delta_{d+})$ induced by $d_0$.
	\item[(R2)] The algebra maps which lie in the image of this map satisfy the property that they yield the zero map upon postcomposition with the map $\overline{\F}_p \otimes_{\F_p} \mathrm{C}^\bullet(\Sigma^{\infty - n-1}\Delta_{d+1,+}) \to  \overline{\F}_p \otimes_{\F_p} \mathrm{C}^\bullet(\Sigma^{\infty - n-1}\Delta_{0+})$ induced by $d_1 \cdots d_{d+1}$.
\end{itemize}

In order to construct this map, we first note that we have an isomorphism of differential graded $\F_p$-modules
\[
\mathrm{C}_\bullet(\Sigma^{\infty - n}\Delta_{d+}) \to \mathrm{C}_\bullet(\Sigma \Delta_{d+})
\]
of degree $n+1$ (note that here in the source we are taking chains on a spectrum while in the target we are taking chains on a based simplicial set). This isomorphism is given by sending $[n,e,x]$ (see Remark~\ref{rmk:spec_chain_notation} for this notation) in $\Delta_{d+}$, which is of degree $e-n$, to $[\Sigma x]$ (see Definition~\ref{def:susp_simps} for this notation) in $\Sigma \Delta_{d+}$, which is of degree $e+1$; that this is an isomorphism follows from Proposition~\ref{prop:nd_susp}. Next, note that we have another isomorphism of differential graded $\F_p$-modules
\[
\mathrm{C}_\bullet(\Sigma^{\infty - n-1}\Delta_{d+1,+}) \to \mathrm{C}_\bullet(\Delta_{d+1,+})
\]
again of degree $n+1$. This isomorphism is given by sending $[n+1,e,x]$ in $\Delta_{d+1,+}$, which is of degree $e-n-1$, to $[x]$ in $\Delta_{d,+}$, which is of degree $e$. Now, using the isomorphism in Example~\ref{examp:cones_Delta_k_+} and the canonical map $\text{C}(X) \to \Sigma X$, we have a canonical map $\Delta_{d+1} \to \Sigma \Delta_{d+}$, yielding a map $\Delta_{d+1,+} \to \Sigma \Delta_{d+}$, and so, using the above isomorphisms, we get a composite map
\begin{equation}\label{eqn:composite_for_U(A)_spectral_structure_maps}
\mathrm{C}_\bullet(\Sigma^{\infty - n-1}\Delta_{d+1,+}) \to \mathrm{C}_\bullet(\Delta_{d+1,+}) \to \mathrm{C}_\bullet(\Sigma\Delta_{d+}) \to \mathrm{C}_\bullet(\Sigma^{\infty - n}\Delta_{d+})
\end{equation}
which is a map of chain complexes since it is of degree $(n+1) + 0 - (n+1) = 0$. By Proposition~\ref{prop:spec_chains_coalg} (together with a pullback of structure across the map $\mathpzc{E}_{\mathrm{st}} \to \mathpzc{Z}_{\mathrm{st}}$ defined in Section~\ref{subsec:stable_barratt_eccles_operad}), the source and target, $\mathrm{C}_\bullet(\Sigma^{\infty - n-1}\Delta_{d+1,+})$ and $\mathrm{C}_\bullet(\Sigma^{\infty - n}\Delta_{d+})$, are $\mathpzc{E}_{\text{st}}$-coalgebras, and we claim that the composite above is in fact a map of $\mathpzc{E}_{\text{st}}$-coalgebras. Upon verifying this, by dualization, tensoring with $\overline{\F}_p$, and postcomposition, we shall have the desired map
\[
\mathpzc{E}_{\text{st}}\text{-}\mathsf{Alg}(A, \overline{\F}_p \otimes_{\F_p} \mathrm{C}^\bullet(\Sigma^{\infty - n}\Delta_{d+})) \to \mathpzc{E}_{\text{st}}\text{-}\mathsf{Alg}(A, \overline{\F}_p \otimes_{\F_p} \mathrm{C}^\bullet(\Sigma^{\infty - n-1}\Delta_{d+1,+})).
\]
To see that the composite in (\ref{eqn:composite_for_U(A)_spectral_structure_maps}) is an $\mathpzc{E}_{\text{st}}$-coalgebra map, consider some element $[n+1,e,x]$ in the source chains $\text{C}_\bullet(\Sigma^{\infty - n-1}\Delta_{d+1,+})$, where $x$ is a map $[e] \to [d+1]$. Let $x'$ be the corresponding map $[e'] \to [d]$ constructed by restricting to the preimage of the final $d+1$ elements of $[d+1]$ (if this preimage is empty, or if it is all of $[e]$, set $x'$ to be $*$). The image under the composite in (\ref{eqn:composite_for_U(A)_spectral_structure_maps}) of $[n+1,e,x]$, in the target chains $\text{C}_\bullet(\Sigma^{\infty - n}\Delta_{d+})$, is $[n+1,e',x']$ (we have $n+1$ here instead of $n$ since the third map in (\ref{eqn:composite_for_U(A)_spectral_structure_maps}) maps to the $(n+1)^{\text{st}}$ level, instead of the $n^{\text{th}}$ level, of the spectral chains $\text{C}_\bullet(\Sigma^{\infty - n}\Delta_{d+})$). Thus an element $\alpha = (\alpha_0,\alpha_1,\dots)$ of $\mathpzc{E}_{\text{st}}(k)$ coacts on both the element in the source and the element in the target by $\alpha_{n+1}$, yielding $\alpha_{n+1}(e)$ and $\alpha_{n+1}(e')$. As such, the verification that the composite in (\ref{eqn:composite_for_U(A)_spectral_structure_maps}) is an $\mathpzc{E}_{\text{st}}$-coalgebra map amounts to checking that the following square commutes.
\begin{center}
\begin{tikzpicture}[node distance = 1.5cm]
\node(A){$\text{C}_\bullet(\Delta_{d+1,+})$};
\node[below of = A](C){$\text{C}_\bullet(\Delta_{d+1,+})^{\otimes k}$};
\node[right of = A, xshift = 1cm](B){$\text{C}_\bullet(\Sigma \Delta_{d+})$};
\node[below of = B, yshift = 0mm](D){$\text{C}_\bullet(\Sigma \Delta_{d+})^{\otimes k}$};
	
\draw[->] (A) -- (B) node[midway,anchor=south]{};
\draw[->] (A) -- (C) node[midway,anchor=east]{$\alpha_{n+1}$};
\draw[->] (C) -- (D) node[midway,anchor=north]{};
\draw[->] (B) -- (D) node[midway,anchor=west]{$\alpha_{n+1}$};
\end{tikzpicture}
\end{center}
The commutativity of this square follows from the naturality of $\alpha_{n+1}$. \\

We have now constructed a map
\begin{equation}\label{eqn:U(A)_spectral_structure_map}
\mathpzc{E}_{\text{st}}\text{-}\mathsf{Alg}(A, \overline{\F}_p \otimes_{\F_p} \mathrm{C}^\bullet(\Sigma^{\infty - n}\Delta_{d+})) \to \mathpzc{E}_{\text{st}}\text{-}\mathsf{Alg}(A, \overline{\F}_p \otimes_{\F_p} \mathrm{C}^\bullet(\Sigma^{\infty - n-1}\Delta_{d+1,+})).
\end{equation}
but must still verify the requirements (R1) and (R2) above. Consider, first, the requirement (R1). Given a map
\[
A \to \overline{\F}_p \otimes_{\F_p} \mathrm{C}^\bullet(\Sigma^{\infty - n}\Delta_{d+}) \to \overline{\F}_p \otimes_{\F_p} \mathrm{C}^\bullet(\Sigma^{\infty - n - 1}\Delta_{d+1,+})
\]
where the second map is induced by the composite (\ref{eqn:composite_for_U(A)_spectral_structure_maps}), we must verify that the composite
\begin{multline*}
A \longrightarrow \overline{\F}_p \otimes_{\F_p} \mathrm{C}^\bullet(\Sigma^{\infty - n}\Delta_{d+}) \longrightarrow \overline{\F}_p \otimes_{\F_p} \mathrm{C}^\bullet(\Sigma^{\infty - n - 1}\Delta_{d+1,+}) \\
\longrightarrow \overline{\F}_p \otimes_{\F_p} \mathrm{C}^\bullet(\Sigma^{\infty - n-1}\Delta_{d,+})
\end{multline*}
where the final map is induced by $d_0$, is the zero map. Here the composite of the latter two maps can be constructed from the composite below by dualization and tensoring with $\overline{\F}_p$.

\begin{center}
\begin{tikzpicture}[node distance = 1cm]
\node [] (A) {$\mathrm{C}_\bullet(\Sigma^{\infty - n-1}\Delta_{d+})$};
\node [below of = A,yshift=-5mm] (B) {$\mathrm{C}_\bullet(\Sigma^{\infty - n-1}\Delta_{d+1,+})$};
\node [right of = B,xshift=2.5cm] (C) {$\mathrm{C}_\bullet(\Delta_{d+1,+})$};
\node [right of = C,xshift=2cm] (D) {$\mathrm{C}_\bullet(\Sigma\Delta_{d+})$};
\node [right of = D,xshift=2cm] (E) {$\mathrm{C}_\bullet(\Sigma^{\infty - n}\Delta_{d+})$};

\draw [->] (A) -- (B) node[midway,anchor=east]{};
\draw [->] (B) -- (C) node[midway,anchor=north]{};
\draw [->] (C) -- (D) node[midway,anchor=north]{};
\draw [->] (D) -- (E) node[midway,anchor=north]{};
\end{tikzpicture}
\end{center}

Thus, in order to verify (R1), it suffices to demonstrate that the above composite is zero. Start with some $q$-simplex $[e] \to [d]$ in $\mathrm{C}_\bullet(\Sigma^{\infty - n-1}\Delta_{d+})$. It gets postcomposed to a map $[e] \to [d+1]$ where the image doesn't contain $0$; then we get this same map again but with an altered degree; then we restrict to those entries which don't map to $0$ and so get back the original map $[e] \to [d]$ which, in the suspension, is killed (mapped to $*$) and thus, at the end, we get zero, as desired. \\

Next, let us verify the requirement (R2). In this case, we postcompose the algebra maps instead with the map from $\overline{\F}_p \otimes_{\F_p} \mathrm{C}^\bullet(\Sigma^{\infty - n-1}\Delta_{d+1,+})$ to $\overline{\F}_p \otimes_{\F_p} \mathrm{C}^\bullet(\Sigma^{\infty - n-1}\Delta_{0+})$ induced by $d_1 \cdots d_{d+1}$. The verification is analogous to that of (R1) except that, in the diagram above, the vertical map is instead the map $\mathrm{C}_\bullet(\Sigma^{\infty - n-1}\Delta_{0+}) \to \mathrm{C}_\bullet(\Sigma^{\infty - n-1}\Delta_{d+1,+})$ induced by $d_1 \cdots d_{d+1}$. The check that the resulting composite is indeed zero is as follows. Start with the identity on $[0]$; this is mapped to the inclusion $[0] \to [d+1]$ mapping $0$ to $0$; this, in turn, is killed by the map $\mathrm{C}_\bullet(\Delta_{d+1,+}) \to \mathrm{C}_\bullet(\Sigma\Delta_{d+})$ (see the definition of the map to the cone in Example~\ref{examp:cones_Delta_k_+}), and so, at the end, we again get zero, as desired. \\

We have now constructed, for each $n, d \ge 0$, a map $\mathrm{U}(A)_{n,d} \to \mathrm{U}(A)_{n+1,d+1}$, or, more specifically, a map $(\mathrm{U}(A)_{n})_{d} \to (\Omega\mathrm{U}(A)_{n+1})_{d}$. One can readily check that these maps commute with the simplicial operators, so that we have the desired spectral structure maps $\mathrm{U}(A)_n \to \Omega \mathrm{U}(A)_{n+1}$.\\

We now have a spectrum $\mathrm{U}(A)$ associated to each $\widebar{\mathpzc{E}}_{\mathrm{st}}^\dagger$-algebra $A$. Next, we show that this construction is functorial in the algebras $A$. This is easily seen from the fact that
\[
\mathrm{U}(A)_{n,d} = \widebar{\mathpzc{E}}_{\mathrm{st}}^\dagger\text{-}\mathsf{Alg}(A, \overline{\F}_p \otimes_{\F_p} \mathrm{C}^\bullet(\Sigma^{\infty - n}\Delta_{d+}))
\]
as, when given an algebra map from $A$ to $B$, we get, by precomposition, an induced map from $\widebar{\mathpzc{E}}_{\mathrm{st}}^\dagger\text{-}\mathsf{Alg}(B, \overline{\F}_p \otimes_{\F_p} \mathrm{C}^\bullet(\Sigma^{\infty - n}\Delta_{d+}))$ to $\widebar{\mathpzc{E}}_{\mathrm{st}}^\dagger\text{-}\mathsf{Alg}(A, \overline{\F}_p \otimes_{\F_p} \mathrm{C}^\bullet(\Sigma^{\infty - n}\Delta_{d+}))$. Considering these for a fixed $n$ but variable $d$, we get a simplicial set map $\mathrm{U}(B)_n \to \mathrm{U}(A)_n$ as the simplicial operators act by postcomposition and so commute with precomposition maps. Moreover, one can immediately verify that these simplicial set maps are compatible with the structure maps, defined above, of the spectra $\mathrm{U}(A)$ and $\mathrm{U}(B)$. Thus we have a functor
\[
\mathrm{U} \colon \widebar{\mathpzc{E}}_{\mathrm{st}}^\dagger\text{-}\mathsf{Alg} \to \mathsf{Sp}^{\mathrm{op}}.
\]

\begin{Proposition}\label{prop:U_is_right_adjoint}
The functor $\emph{U}$ is left adjoint to the cochains functor on spectra, so that we have an adjunction
\begin{center}
\begin{tikzpicture}[node distance=3cm]
\node[](A){$\mathsf{Sp}^{\mathrm{op}}$};
\node[right of = A](B){$\widebar{\mathpzc{E}}_{\mathrm{st}}^\dagger\text{-}\mathsf{Alg}$.};

\draw[->,transform canvas={yshift=2mm}] (A) -- (B) node[midway,anchor=south]{$\widebar{\mathrm{C}}^\bullet$};
\draw[->,transform canvas={yshift=-2mm}] (B) -- (A) node[midway,anchor=north]{$\mathrm{U}$} node[midway,anchor=south,yshift=-0.75mm]{$\top$};
\end{tikzpicture}
\end{center}
\end{Proposition}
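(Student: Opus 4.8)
We must construct a natural bijection
\[
\mathsf{Sp}(E,\text{U}(A))\;\cong\;\widebar{\mathpzc{E}}_{\text{st}}^\dagger\text{-}\mathsf{Alg}(A,\overline{\text{C}}^\bullet(E)),
\]
which is what is needed since $\mathsf{Sp}^{\text{op}}(\text{U}(A),E)=\mathsf{Sp}(E,\text{U}(A))$. The plan is to reduce this to a single levelwise statement and reassemble. First, every spectrum is the canonical colimit $E\cong\text{colim}_n\Sigma^{\infty-n}E_n$ along the maps $\Sigma^{\infty-n}E_n\to\Sigma^{\infty-n-1}E_{n+1}$ built from the structure maps $\rho_n$ (immediate levelwise, each level being a finite chain ending at $E_m$), so $\mathsf{Sp}(E,\text{U}(A))\cong\lim_n\mathsf{Sp}(\Sigma^{\infty-n}E_n,\text{U}(A))$. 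Next, $\Sigma^{\infty-n}$ is left adjoint to the $n$-th-space functor $\mathsf{Sp}\to\mathsf{Spc}_*$: a map out of $\Sigma^{\infty-n}S$ is determined by its level-$n$ component since all structure maps of $\Sigma^{\infty-n}S$ are identities, and conversely any based map $S\to F_n$ extends by the recursion $g_{m+1}=\rho^F_m\circ\Sigma g_m$ together with $\Sigma\dashv\Omega$ from Proposition~\ref{prop:SigmaOmegadj}. Hence $\mathsf{Sp}(\Sigma^{\infty-n}E_n,\text{U}(A))\cong\mathsf{Spc}_*(E_n,\text{U}(A)_n)$, so we are reduced to a levelwise adjunction.

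The levelwise claim is that for every based simplicial set $S$, $\mathsf{Spc}_*(S,\text{U}(A)_n)\cong\widebar{\mathpzc{E}}_{\text{st}}^\dagger\text{-}\mathsf{Alg}(A,\overline{\text{C}}^\bullet(\Sigma^{\infty-n}S))$. By construction $\text{U}(A)_n$ is the simplicial set $[d]\mapsto\widebar{\mathpzc{E}}_{\text{st}}^\dagger\text{-}\mathsf{Alg}(A,\overline{\text{C}}^\bullet(\Sigma^{\infty-n}\Delta_{d+}))$ associated to the cosimplicial algebra $R^\bullet:=\overline{\text{C}}^\bullet(\Sigma^{\infty-n}\Delta_{\bullet+})$, based at the zero maps; hence a simplicial map $S\to\text{U}(A)_n$ is an algebra map from $A$ into the cotensor $\{S,R^\bullet\}=\int_{[d]}(R^d)^{S_d}=\lim_{\Delta_d\to S}R^d$, and the basepoint condition selects those killing the composite with the restriction at the basepoint simplex. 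It remains to identify $\lim_{\Delta_d\to S}\overline{\text{C}}^\bullet(\Sigma^{\infty-n}\Delta_{d+})$, subject to that condition, with $\overline{\text{C}}^\bullet(\Sigma^{\infty-n}S)$. Here I would use that $\overline{\text{C}}^\bullet\colon\mathsf{Sp}^{\text{op}}\to\widebar{\mathpzc{E}}_{\text{st}}^\dagger\text{-}\mathsf{Alg}$ carries colimits of spectra to limits of algebras — normalized reduced chains preserve colimits, dualization converts these to limits, the defining tower $\overline{\text{C}}^\bullet(E)=\lim_n\overline{\text{C}}^\bullet(E_n)[-n]$ is assembled from them, limits of algebras are created in cochain complexes, and the $\mathpzc{E}_{\text{st}}^\dagger$-action (pulled back along $\mathpzc{E}_{\text{st}}^\dagger\to\mathpzc{Z}_{\text{st}}^\dagger$ from the natural $\mathpzc{Z}_{\text{st}}$-coaction of Proposition~\ref{prop:spec_chains_coalg}) is compatible with all restriction maps. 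Writing $S$ as the canonical colimit of its simplices presents $\Sigma^{\infty-n}S_+$ as the colimit of the cells $\Sigma^{\infty-n}\Delta_{d+}$, and $\overline{\text{C}}^\bullet(\Sigma^{\infty-n}S)$ sits inside $\overline{\text{C}}^\bullet(\Sigma^{\infty-n}S_+)$ as the sub-algebra annihilated by the basepoint restriction $\overline{\text{C}}^\bullet(\Sigma^{\infty-n}S_+)\to\overline{\text{C}}^\bullet(\Sigma^{\infty-n}\Delta_{0+})$, precisely the condition arising on the other side. The zero map occurring as the basepoint of $\text{U}(A)_n$ is an honest algebra map because $\widebar{\mathpzc{E}}_{\text{st}}^\dagger(0)=\mathpzc{E}_{\text{st}}^\dagger(0)\otimes_{\F_p}\overline{\F}_p=0$ — indeed $\mathpzc{E}_{\text{st}}^\dagger(0)$ is an inverse limit of zero maps, see the proof of Proposition~\ref{prop:stabmapontoML} — so that $\widebar{\mathbf{E}}_{\text{st}}^\dagger(0)=0$.

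Assembling, $\mathsf{Sp}(E,\text{U}(A))\cong\lim_n\widebar{\mathpzc{E}}_{\text{st}}^\dagger\text{-}\mathsf{Alg}(A,\overline{\text{C}}^\bullet(\Sigma^{\infty-n}E_n))\cong\widebar{\mathpzc{E}}_{\text{st}}^\dagger\text{-}\mathsf{Alg}\big(A,\lim_n\overline{\text{C}}^\bullet(\Sigma^{\infty-n}E_n)\big)=\widebar{\mathpzc{E}}_{\text{st}}^\dagger\text{-}\mathsf{Alg}(A,\overline{\text{C}}^\bullet(E))$, the last equality being the definition of spectral cochains; one checks that the transition maps of $\lim_n\mathsf{Spc}_*(E_n,\text{U}(A)_n)$ coming from the structure maps of $E$ correspond under the levelwise bijections to the tower maps $\overline{\text{C}}^\bullet(\Sigma^{\infty-n-1}E_{n+1})\to\overline{\text{C}}^\bullet(\Sigma^{\infty-n}E_n)$ — which is exactly how the structure maps $\rho_n$ of $\text{U}(A)$ were defined in the first place. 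Naturality in $A$ and $E$ is then a routine diagram chase through each identification. I expect the main obstacle to be the bookkeeping in the middle paragraph: tracking the dimension shifts $[-n]$, reconciling reduced chains on based simplicial sets with the disjointly based $\Delta_{d+}$ used to define $\text{U}$, and above all verifying that each isomorphism above is one of $\widebar{\mathpzc{E}}_{\text{st}}^\dagger$-algebras and not merely of underlying cochain complexes, i.e.\ that $\overline{\text{C}}^\bullet$ is continuous as a functor into algebras; once that is granted, the rest is formal.
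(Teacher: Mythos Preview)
Your approach differs substantially from the paper's. The paper constructs the bijection by an explicit formula --- given $f=\{f_n\}\colon E\to\text{U}(A)$, it sets $g(a)([m,e,x]):=f_m(x)(a)([m,e,\text{id}_{[e]}])$ and checks by hand that $g$ respects the differential and the $\widebar{\mathpzc{E}}_{\text{st}}^\dagger$-action --- whereas you proceed by categorical decomposition through $E\cong\text{colim}_n\,\Sigma^{\infty-n}E_n$ and density of simplices. Your route is more conceptual and explains why the adjunction should hold; the paper's buys a concrete formula.

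There is, however, a genuine gap in your argument, and it is not the one you anticipate. The claim that $\overline{\text{C}}^\bullet$ carries colimits of spectra to limits of algebras is false as stated: by Definition~\ref{def:speccochainsFpbar} one has $\overline{\text{C}}^\bullet(E)=\text{C}^\bullet(E)\otimes_{\F_p}\overline{\F}_p$, and $(-)\otimes_{\F_p}\overline{\F}_p$ does not commute with infinite limits. Concretely, take $A=\overline{\mathbf{E}}_{\textbf{st}}^\dagger\overline{\F}_p[0]$ and $E=\Sigma^\infty S$ with $S$ a countably infinite discrete based set: then $\mathsf{Sp}(E,\text{U}(A))\cong\prod_{\N}\overline{\F}_p$ while $\widebar{\mathpzc{E}}_{\text{st}}^\dagger\text{-}\mathsf{Alg}(A,\overline{\text{C}}^\bullet(E))\cong\bigl(\prod_{\N}\F_p\bigr)\otimes_{\F_p}\overline{\F}_p$, and these differ. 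What your argument actually identifies as the right adjoint is $E\mapsto\text{Hom}_{\F_p}(\text{C}_\bullet(E),\overline{\F}_p)$, which agrees with $\overline{\text{C}}^\bullet(E)$ only when $\text{C}_\bullet(E)$ is degreewise finite-dimensional. The paper's explicit proof elides the same point --- its $g(a)$ is produced as an $\overline{\F}_p$-valued function on simplices and is never shown to lie in $\text{C}^\bullet(E)\otimes\overline{\F}_p$ rather than in $\text{Hom}_{\F_p}(\text{C}_\bullet(E),\overline{\F}_p)$ --- so you are in good company; but the obstacle you flagged (algebra-compatibility of the isomorphisms) is not the real one.
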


\begin{proof}
Let $E$ be a spectrum and $A$ a $\mathpzc{E}_{\text{st}}$-algebra. We wish to construct the natural isomorphism between $\mathsf{Sp}(E,\text{U}(A))$ and $\widebar{\mathpzc{E}}^\dagger_{\text{st}}\text{-}\mathsf{Alg}(A,\widebar{\text{C}}^{\bullet}(E))$. This requires verifications which are not obvious but not too difficult, though they are rather lengthy. We shall provide here the part of the correspondence which yields an $\widebar{\mathpzc{E}}^\dagger_{\text{st}}$algebra map $g = \{g_n\} \colon A \to \widebar{\text{C}}^\bullet(E)$ when given a spectrum map $f =\{f_n\} \colon E \to \text{U}(A)$. Fix such a spectrum map $f = \{f_n\} \colon E \to \text{U}(A)$. We want an algebra map $g = \{g_n\} \colon A \to \widebar{\text{C}}^\bullet(E)$. Consider $a \in A$, of degree say $n$. We shall construct the image $g_n(a)$ in dualized form, as a map $g_n(a) \colon \mathrm{C}_n(E) \to \overline{\F}_p$. Consider some $[m,e,x]$ (see Remark~\ref{rmk:spec_chain_notation} for this notation) in $(E_m)_e$ where $e-m=n$. We have an element $f_m(x) \in \text{U}(A)_{m,e}$, which is to say a map $A \to \widebar{\text{C}}^\bullet(\Sigma^{\infty - m}\Delta_{e+})$, and we set $g_n(a)([m,e,x])$ to be image of $a$ under the composite
\[
A \to \widebar{\mathrm{C}}^\bullet(\Sigma^{\infty - m}\Delta_{e+}) \to \overline{\F}_p
\]
where the first map is the map $f_m(x)$ and the second is induced by the evaluation map $\mathrm{C}^\bullet(\Sigma^{\infty - m}\Delta_{e+}) \to \F_p$ which sends a cochain to its value on the chain $[m,e,\text{id}_{[e]}]$ (note that this chain is of degree $e-m = n$). Thus $g_n(a)([m,e,x]) = f_m(x)(a)([m,e,\text{id}_{[e]}])$. Linearity of $g_n$ follows from that of $f_m(x)$. Next, we must check that the differentials are preserved. Fix some $a \in A^n$. We have two $(n+1)$-cochains $g_{n+1}(\partial a), \partial g_n(a) \colon \mathrm{C}_{n+1}(E) \to \overline{\F}_p$ and we desire that these two be the same. Consider some $[m,e,x]$ where $x \in (E_m)_e$ and $e-m=n+1$. The latter cochain first forms $\partial [m,e,x] = \sum_i [m,e-1,d_i(x)]$ and then sends this to $\sum_i f_m(d_i(x))(a)([m,e-1,\text{id}_{[e-1]}])$. On the other hand, the former cochain sends $[m,e,x]$ to $f_m(x)(\partial a)([m,e,\text{id}_{[e]}])$. Now, since $f_m$ is a map of simplicial sets, we have $\sum_i f_m(d_i(x))(a)([m,e-1,\text{id}_{[e-1]}]) = \sum_i (d_if_m(x))(a)([m,e-1,\text{id}_{[e-1]}])$. For each $i$, the map $d_if_m(e)$ is the composite
\[
A \overset{f_m(x)}\longrightarrow \widebar{\text{C}}^\bullet(\Sigma^{\infty - m}\Delta_{e+}) \overset{d_i}\longrightarrow \widebar{\text{C}}^\bullet(\Sigma^{\infty - m}\Delta_{e-1,+}).
\]
It follows that $\sum_i (d_if_m(x))(a)([m,e-1,\text{id}_{[e-1]}]) = \sum_i (f_m(x))(a)([m,e-1,d_i])$. On the other hand, since $f_m(x)$ is a map of cochain complexes, we have that $f_m(x)(\partial a) = \partial f_m(x)(a)$. It follows that
\begin{align*}
f_m(x)(\partial a)([m,e,\text{id}_{[e]}]) &= (\partial f_m(x)(a))([m,e,\text{id}_{[e]}]) \\
&= f_m(x)(a)(\partial [m,e,\text{id}_{[e]}]) \\
&= f_m(x)(a)\left(\sum_i [m,e-1,d_i]\right) \\
&= \sum_i f_m(x)(a)([m,e-1,d_i])
\end{align*}
Thus, as desired, the two cochains coincide. As our last verification, we must check that our map $g = \{g_n\} \colon A \to \widebar{\text{C}}^\bullet(E)$ respects the actions by $\widebar{\mathpzc{E}}^\dagger_{\text{st}}$. Let $\alpha = (\alpha_0,\alpha_1,\dots) \in \widebar{\mathpzc{E}}^\dagger_{\text{st}}(k)$. Consider some $a_1, \dots, a_k \in A$, and assume without loss of generality (due to linearity), that the $a_i$ are homogeneous, say of degrees $n_1, \dots, n_k$. If we first act by $\alpha$ and then apply $g$, we get a cochain whose image at $[m,e,x]$, where $e-m = n_1 + \cdots + n_k$, is $f_m(x)(\alpha(a_1,\dots,a_k))([m,e,\text{id}_{[e]}])$. Since, by definition, $f_m(x)$ is a map of $\widebar{\mathpzc{E}}^\dagger_{\mathrm{st}}$-algebras, we have that this is in fact equivalent to $\alpha(f_m(x)(a_1),\dots,f_m(x)(a_k))([m,e,\text{id}_{[e]}])$. On the other hand, if we apply $g$ first and then act by $\alpha$, we first get cochains $g_{n_1}(a_1), \dots, g_{n_k}(a_k)$ and then the cochain $\alpha(g_{n_1}(a_1), \dots, g_{n_k}(a_k))$. Let the coaction of $\alpha$ on $[m,e,\text{id}_{[e]}]$ be given by $\sum [m,e_1,\theta_1] \otimes \cdots \otimes [m,e_k,\theta_k]$, where $\theta_i$ is a map $[e_i] \to [e]$. Then, considering the map $\Delta_e \to E_m$ corresponding to the same $x \in (E_m)_e$ as above, and using the naturality of $\alpha_m$, we find that the coaction of $\alpha$ on $[m,e,x]$ is given by $\sum [m,e_1,\theta_1^*x] \otimes \cdots \otimes [m,e_k,\theta_k^*x]$. Now, by definition of the $\widebar{\mathpzc{E}}^\dagger_{\text{st}}$-action on spectral cochains, if we evaluate the cochain $\alpha(g_{n_1}(a_1), \dots, g_{n_k}(a_k))$ at $[m,e,x]$, we get
\[
\alpha(g_{n_1}(a_1), \dots, g_{n_k}(a_k))([m,e,x]) = (g_{n_1}(a_1) \otimes \cdots \otimes g_{n_k}(a_k))(\alpha \cdot [m,e,x])
\]
which amounts to
\[
(g_{n_1}(a_1) \otimes \cdots \otimes g_{n_k}(a_k))\left(\sum [m,e_1,\theta_1^*x] \otimes \cdots \otimes [m,e_k,\theta_k^*x]\right).
\]
On the other hand, we have
\begin{multline*}
\alpha(f_m(x)(a_1),\dots,f_m(x)(a_k))([m,e,\text{id}_{[e]}]) \\
= (f_m(x)(a_1) \otimes \cdots \otimes f_m(x)(a_k))(\alpha \cdot [m,e,\text{id}_{[e]}])
\end{multline*}
which is to say
\[
(f_m(x)(a_1) \otimes \cdots \otimes f_m(x)(a_k))\left(\sum [m,e_1,\theta_1] \otimes \cdots \otimes [m,e_k,\theta_k]\right)
\]
and this amounts to
\[
\sum f_m(x)(a_1)([m,e_1,\theta_1]) \otimes \cdots \otimes f_m(x)(a_k)([m,e_k,\theta_k]).
\]
In either case, we only have to worry about summands where $e_i - m = n_i$ for each $i$. In this case, the former becomes
\[
\sum (f_m(\theta_1^*x)(a_1)[m,q_1,\text{id}_{[q_1]}] \otimes \cdots \otimes f_m(\theta_k^*x)(a_k)[m,q_k,\text{id}_{[q_k]}]
\]
which is to say
\[
\sum ((\theta_1^*f_m)(x)(a_1)[m,e_1,\text{id}_{[e_1]}] \otimes \cdots \otimes (\theta_k^*f_m)(x)(a_k)[m,e_k,\text{id}_{[e_k]}].
\]
Now, for each $i$, $\theta_i^*f_m(x)$ is the composite
\[
A \overset{f_m(x)}\longrightarrow \widebar{\text{C}}^\bullet(\Sigma^{\infty - m}\Delta_{e+}) \overset{\theta_i}\longrightarrow \widebar{\text{C}}^\bullet(\Sigma^{\infty - m}\Delta_{e_i,+}).
\]
It follows that, for each $i$, $((\theta_1^*f_m)(x)(a_1)[m,e_i,\text{id}_{[e_i]}] = f_m(x)(a_1)[m,e_i,\theta_i]$. Thus the two cochains coincide, as desired.
\end{proof}

Now we consider homotopical properties of the above spectral cochains adjunction.

\begin{Proposition}\label{prop:spec_coch_adj_qadj}
The spectral cochains adjunction
\begin{center}
\begin{tikzpicture}[node distance=3cm]
\node[](A){$\mathsf{Sp}^{\emph{op}}$};
\node[right of = A](B){$\widebar{\mathpzc{E}}_{\emph{st}}^\dagger\text{-}\mathsf{Alg}$};

\draw[->,transform canvas={yshift=2mm}] (A) -- (B) node[midway,anchor=south]{$\overline{\emph{C}}^\bullet$};
\draw[->,transform canvas={yshift=-2mm}] (B) -- (A) node[midway,anchor=north]{$\emph{U}$} node[midway,anchor=south,yshift=-0.75mm]{$\top$};
\end{tikzpicture}
\end{center}
is a Quillen adjunction.
\end{Proposition}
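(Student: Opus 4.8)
The plan is to recognize the stated adjunction as one to which Proposition~\ref{prop:semimodelqadj} applies, with $\mathsf{E} = \widebar{\mathpzc{E}}_{\text{st}}^\dagger\text{-}\mathsf{Alg}$ the semi-model category (Proposition~\ref{prop:Ebarmonad}(ii)) and $\mathsf{M} = \mathsf{Sp}^{\text{op}}$ the model category (Proposition~\ref{prop:spec_mod_str}), taking $L = \text{U}$ and $R = \overline{\text{C}}^\bullet$; by Proposition~\ref{prop:U_is_right_adjoint}, $\text{U}$ is indeed the left adjoint of $\overline{\text{C}}^\bullet$ in this direction, so the variances match the hypotheses of Proposition~\ref{prop:semimodelqadj}. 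By part~(iii) of that proposition it then suffices to show that $R = \overline{\text{C}}^\bullet$ preserves fibrations and acyclic fibrations. Unwinding the opposite model structure on $\mathsf{Sp}^{\text{op}}$ and the description of the semi-model structure on $\widebar{\mathpzc{E}}_{\text{st}}^\dagger\text{-}\mathsf{Alg}$ from Proposition~\ref{prop:Ebarmonad}(ii), this reduces to two claims: (a) $\overline{\text{C}}^\bullet$ carries cofibrations of spectra to degreewise epimorphisms of $\widebar{\mathpzc{E}}_{\text{st}}^\dagger$-algebras, and (b) $\overline{\text{C}}^\bullet$ carries trivial cofibrations of spectra to surjective quasi-isomorphisms.

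For (a), I would use that a cofibration $f \colon E \to F$ of spectra is, by Proposition~\ref{prop:spec_mod_str}(ii), in particular a levelwise monomorphism of based simplicial sets. A monomorphism of based simplicial sets reflects degeneracy and hence restricts to an injection on non-degenerate simplices, so it induces a monomorphism on normalized reduced chains; thus each $\text{C}_\bullet(E_n;\F_p) \to \text{C}_\bullet(F_n;\F_p)$ is injective. Since $f$ commutes with the structure maps, the defining relations $x - \rho(\Sigma x)$ in the colimit model of the spectral chains (Remark~\ref{rmk:spec_chain_notation}) are compatible, and passing to the sequential colimit, which is exact, shows $\text{C}_\bullet(f;\F_p)$ is a monomorphism of chain complexes. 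Now $\overline{\text{C}}^\bullet(-)$ is obtained from $\text{C}_\bullet(-;\F_p)$ by applying $(-)^\vee$, then $(-)^\dagger$, then $-\otimes_{\F_p}\overline{\F}_p$; the first is exact over the field $\F_p$ and turns monomorphisms into epimorphisms, the second is a reindexing, and the third is exact since $\overline{\F}_p$ is flat over $\F_p$. Hence $\overline{\text{C}}^\bullet(f)\colon \overline{\text{C}}^\bullet(F)\to\overline{\text{C}}^\bullet(E)$ is a degreewise epimorphism.

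For (b), suppose in addition $f$ is a stable weak homotopy equivalence; by (a) we already have surjectivity of $\overline{\text{C}}^\bullet(f)$, so it remains to show it is a quasi-isomorphism. Since homology commutes with sequential colimits, the homology of $\text{C}_\bullet(E;\F_p)$ is $\operatorname{colim}_n \widetilde{\text{H}}_{\bullet + n}(E_n;\F_p)$, which is the stable mod-$p$ homology of $E$, i.e. $\pi_\bullet(E\wedge\text{H}\F_p)$. A stable weak equivalence is a $\pi^{\text{st}}_\bullet$-isomorphism, hence has weakly contractible mapping cone, hence induces an isomorphism on mod-$p$ homology (Hurewicz for spectra, equivalently because $-\wedge\text{H}\F_p$ preserves weak equivalences); thus $\text{C}_\bullet(f;\F_p)$ is a quasi-isomorphism. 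Applying $(-)^\vee$ (exact over $\F_p$), $(-)^\dagger$, and $-\otimes_{\F_p}\overline{\F}_p$ all preserve quasi-isomorphisms, so $\overline{\text{C}}^\bullet(f)$ is a surjective quasi-isomorphism, i.e. an acyclic fibration of $\widebar{\mathpzc{E}}_{\text{st}}^\dagger$-algebras, which finishes the argument.

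Most of this is bookkeeping; the one step I expect to require genuine care is the identification in (b) of the homology of the explicit colimit complex $\text{C}_\bullet(E;\F_p)$ of Remark~\ref{rmk:spec_chain_notation} with $\pi_\bullet(E\wedge\text{H}\F_p)$, together with the standard input that stable weak equivalences induce isomorphisms on this mod-$p$ homology. The other potential pitfall is purely formal: one must be careful to match the variances correctly so that the adjunction $\text{U}\dashv\overline{\text{C}}^\bullet$ genuinely sits in the form $L\colon\mathsf{E}\to\mathsf{M}$, $R\colon\mathsf{M}\to\mathsf{E}$ demanded by Proposition~\ref{prop:semimodelqadj}, after which part~(iii) delivers the conclusion.
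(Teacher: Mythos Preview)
Your approach is correct and genuinely different from the paper's. Both verify the hypothesis of Proposition~\ref{prop:semimodelqadj}, but via different equivalent characterizations from part~(iii): the paper verifies condition~(i) directly by showing that $R=\overline{\text{C}}^\bullet$ preserves fibrations \emph{and} that $L=\text{U}$ preserves cofibrations, whereas you verify the second bullet of~(iii) by showing that $R=\overline{\text{C}}^\bullet$ preserves both fibrations and acyclic fibrations. Your part~(a) coincides with the paper's first step. For the second step, the paper undertakes a substantial direct analysis of $\text{U}$: it shows that for finite-type $X$ the spectrum $\text{U}\,\overline{\mathbf{E}}_{\textbf{st}}^\dagger X$ is a strict $\Omega$-spectrum, reduces to the generating cofibrations $\overline{\mathbf{E}}_{\textbf{st}}^\dagger\Sph^n\to\overline{\mathbf{E}}_{\textbf{st}}^\dagger\D^{n+1}$, and then solves the relevant lifting problems by adjunction into chain complexes. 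Your route is more economical for this proposition alone: once~(a) is in hand, (b) follows from the standard fact that the colimit $\operatorname{colim}_n\widetilde{\text{H}}_{*+n}(E_n;\F_p)$ computes spectrum mod~$p$ homology and is therefore invariant under stable weak equivalences (you rightly flag this identification as the only point requiring real care). The trade-off is that the paper's longer argument is not redundant: the strict $\Omega$-spectrum property of $\text{U}\,\overline{\mathbf{E}}_{\textbf{st}}^\dagger X$ established there is invoked again in the proofs of Propositions~\ref{prop:Utosquare} and~\ref{prop:HFp_resolv}, so that work would still need to be done somewhere even if one adopted your shortcut here.
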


Note that here on the righthand side we have a Quillen semi-model category, as opposed to a Quillen model category. By a Quillen adjunction, we mean one which satisfies the conditions in Proposition~\ref{prop:semimodelqadj}(iii).

\begin{proof}
We first demonstrate that $\widebar{\mathrm{C}}^\bullet$ preserves fibrations, which is to say that $\widebar{\mathrm{C}}^\bullet$ sends a cofibration $i \colon E \to F$ of spectra to an epimorphism. Since $i$ is a cofibration, we have that $i_0 \colon E_0 \to F_0$ and, for $n \ge 0$, the maps
\[
E_{n+1} \, \amalg_{\Sigma E_n} \, \Sigma F_n \to F_{n+1}
\]
are cofibrations of based simplicial sets, which is to say that they are injective in each simplicial degree. In particular, by Proposition~\ref{prop:spec_mod_str}, each $i_n \colon E_n \to F_n$, for $n \ge 0$, is a monomorphism. Since monomorphisms of simplicial sets preserve non-degenerate simplices, each $i_n$, for $n \ge 0$, preserves non-degenerate simplices, and of course also the basepoints and their degeneracies. Thus, upon taking chains $\text{C}_\bullet(-)$, we get a sequential colimit of monomorphisms, which is once again a monomorphism since sequential colimits are exact. As we are working over a field, we have a split inclusion, so that, upon dualizing, reindexing and tensoring, we have the desired result for the cochains $\widebar{\mathrm{C}}^\bullet(-)$. \\

Next, we shall show that $\text{U}$ preserves cofibrations. Given a cofibration $A \to B$ of algebras, we wish to show that $\text{U}(A) \to \text{U}(B)$ is a cofibration in the opposite category of spectra. We know that all cofibrations of algebras may be written as retracts of cell maps. As $\text{U}$ is a left adjoint and so preserves colimits, we need only show that $\text{U}$ maps the cofibrations $\widebar{\mathbf{E}}_{\textbf{st}}^\dagger M \to \widebar{\mathbf{E}}_{\textbf{st}}^\dagger \text{C}M$ to cofibrations. Here $M$ is an $\overline{\F}_p$-complex with zero differentials. In fact, since for such $M$ the map $M \to \text{C}M$ decomposes as a direct sum of maps $\mathbb{S}^n \to \mathbb{D}^{n+1}$ for various $n$, we need only consider the case of the map $\mathbb{S}^n \to \mathbb{D}^{n+1}$ (see Section~\ref{sec:nots_convs} for the sphere and disk complexes $\Sph^n$ and $\D^n$). We shall show, more generally, that if $X \to Y$ is an inclusion of complexes where $X$ and $Y$ are of finite type (by which we mean that they are of finite dimension in each degree), then $\text{U}\widebar{\mathbf{E}}_{\textbf{st}}^\dagger Y \to \text{U} \widebar{\mathbf{E}}_{\textbf{st}}^\dagger X$ is a fibration of spectra. \\

To begin, we claim that, given any complex $X$ of finite type, $\text{U}\widebar{\mathbf{E}}_{\textbf{st}}^\dagger X$ is a strict $\Omega$-spectrum. First, note that, in each spectral degree $n$ and simplicial degree $d$, we have that
\begin{align*}
(\text{U}\widebar{\mathbf{E}}_{\textbf{st}}^\dagger X)_{n,d} &= \widebar{\mathpzc{E}}_{\text{st}}^\dagger\text{-}\mathsf{Alg}(\widebar{\mathbf{E}}_{\textbf{st}}^\dagger X, \overline{\F}_p \otimes_{\F_p} \mathrm{C}^\bullet(\Sigma^{\infty - n}\Delta_{d+})) \\
&\cong \mathsf{Co}_{\overline{\F}_p}(X, \overline{\F}_p \otimes_{\F_p} \mathrm{C}^\bullet(\Sigma^{\infty - n}\Delta_{d+})).
\end{align*}
As maps of complexes are closed under addition, we have that, for each $n \ge 0$, $(\text{U}\widebar{\mathbf{E}}_{\textbf{st}}^\dagger X)_n$ is the underlying simplicial set of a simplicial abelian group, and so a Kan complex. Next, we will show that the maps $\text{U}(A)_n \to \Omega\text{U}(A)_{n+1}$ are bijections in each simplicial degree $d$. To see this, first note that, since $X$ is of finite type, we may dualize to find that
\begin{align*}
(\text{U}\widebar{\mathbf{E}}_{\textbf{st}}^\dagger X)_{n,d} \cong \mathsf{Ch}_{\overline{\F}_p}(\overline{\F}_p \otimes_{\F_p} \mathrm{C}_\bullet(\Sigma^{\infty - n}\Delta_{d+}), \mathrm{D}X) \cong \mathsf{Ch}_{\F_p}(\mathrm{C}_\bullet(\Sigma^{\infty - n}\Delta_{d+}), \mathrm{D}X)
\end{align*}
where $\text{D}X$ is the chain complex given by $(\text{D}X)_e = \text{Hom}_{\overline{\F}_p}(X_e,\overline{\F}_p)$. Under this isomorphism, the map $\text{U}(A)_n \to \Omega\text{U}(A)_{n+1}$ is given by sending a complex map $\text{C}_\bullet(\Sigma^{\infty - n}\Delta_{d+}) \to \text{D}X$ to the composite 
\begin{center}
\begin{tikzpicture}[node distance = 1cm]
\node [] (B) {$\text{C}_\bullet(\Sigma^{\infty - n-1}\Delta_{d+1,+})$};
\node [right of = B,xshift=2.5cm] (C) {$\text{C}_\bullet(\Delta_{d+1,+})$};
\node [right of = C,xshift=2cm] (D) {$\text{C}_\bullet(\Sigma\Delta_{d+})$};
\node [right of = D,xshift=2cm] (E) {$\text{C}_\bullet(\Sigma^{\infty - n}\Delta_{d+})$};
\node [below of = E] (F) {$\text{D}X$};

\draw [->] (B) -- (C) node[midway,anchor=north]{};
\draw [->] (C) -- (D) node[midway,anchor=north]{};
\draw [->] (D) -- (E) node[midway,anchor=north]{};
\draw [->] (E) -- (F) node[midway,anchor=north]{};
\end{tikzpicture}
\end{center}
where the first three maps are the maps which arose in the definition of $\text{U}$. Consider a map $\text{C}_\bullet(\Sigma^{\infty - n}\Delta_{d+}) \to \text{D}X$ which is non-zero. Then there exists some simplex $\theta \colon [e] \to [d]$ in $\Delta_{d+}$ which is not mapped to zero. Consider now the map $\theta' \colon [e+1] \to [d+1]$ which maps $0$ to $0$ and maps $i$ to $\theta(i-1)+1$ for $i \ge 1$. This gives a simplex in $\Delta_{d+1,+}$ and so an element of the source of the composite above. Upon applying the first map, we get $\theta'$ again, and then upon applying the second map, we get the original $\theta \colon [e] \to [d]$ but in dimension $e+1$, then the original $\theta$ and then finally a non-zero element in $X$ by our assumption above. This shows that $\text{U}(X)_n \to \Omega\text{U}(X)_{n+1}$ is injective in each simplicial degree. It remains to demonstrate surjectivity. The proof is similar. Suppose given a map $f \colon \text{C}_\bullet(\Sigma^{\infty - n-1}\Delta_{d+1,+}) \to \text{D}X$ and suppose that it satisfies the ``$d_0 = d_1 \cdots d_{n+1} = *$'' condition required for membership in $\Omega\text{U}(X)_{n+1,d}$. We then need to define a map $g \colon \text{C}_\bullet(\Sigma^{\infty - n}\Delta_{d+}) \to \text{D}X$. Given $\theta \colon [e] \to [d]$, we map it to $f(\theta')$ where $\theta'$ is defined as above. One can check directly that this is indeed a map of complexes, and we see that, upon precomposition with the first three maps in the composite above, we get the original map $f$ since, as before, $\theta' \mapsto \theta$ under the composite of the first three maps. This completes the proof that $\text{U}\widebar{\mathbf{E}}_{\textbf{st}}^\dagger X$ is a strict $\Omega$-spectrum. \\

Now, invoking Proposition~\ref{prop:fibcofibsp}(iii), it remains to show that if $X \to Y$ is an inclusion of complexes where $X$ and $Y$ are of finite type, then $\text{U}\widebar{\mathbf{E}}_{\textbf{st}}^\dagger Y \to \text{U} \widebar{\mathbf{E}}_{\textbf{st}}^\dagger X$ is a levelwise fibration of spectra. Thus, for each $n \ge 0$, we desire lifts of the following squares.
\begin{equation}\label{eq:liftsqU1}
\begin{tikzpicture}[baseline=(current  bounding  box.center), node distance = 1.5cm]
\node [] (A) {$\Lambda_d^i$};
\node [right of = A,xshift=1cm] (B) {$(\text{U}\widebar{\mathbf{E}}_{\textbf{st}}^\dagger Y)_n$};
\node [below of = A] (C) {$\Delta_d$};
\node [right of = C,xshift=1cm] (D) {$(\text{U}\widebar{\mathbf{E}}_{\textbf{st}}^\dagger X)_n$};

\draw [->] (A) -- (B);
\draw [->] (A) -- (C);
\draw [->] (C) -- (D);
\draw [->] (B) -- (D);
\draw [->,dashed] (C) -- (B);
\end{tikzpicture}
\end{equation}
Via the earlier identifications, this amounts to a lift of the following square.
\begin{equation}\label{eq:liftsqU2}
\begin{tikzpicture}[baseline=(current  bounding  box.center), node distance = 1.5cm]
\node [] (A) {$\Lambda_d^i$};
\node [right of = A,xshift=1cm] (B) {$(\mathrm{V}(\mathrm{D} Y))_n$};
\node [below of = A] (C) {$\Delta_d$};
\node [right of = C,xshift=1cm] (D) {$(\mathrm{V}(\mathrm{D} X))_n$};

\draw [->] (A) -- (B);
\draw [->] (A) -- (C);
\draw [->] (C) -- (D);
\draw [->] (B) -- (D);
\draw [->,dashed] (C) -- (B);
\end{tikzpicture}
\end{equation}
Here $\mathrm{V}$ is the functor $\mathsf{Ch}_{\overline{\mathbb{F}}_p} \to \mathsf{Sp}$ given by setting
\[
\mathrm{V}(Z)_{n,d} := \mathsf{Ch}_{\overline{\F}_p}(\widebar{\mathrm{C}}_\bullet(\Sigma^{\infty - n}\Delta_{d+}), Z)
\]
which we note is a right adjoint to the spectral chains functor taken as a functor to simply chain complexes, forgetting the coalgebra structure. Considering the composite adjunction
\begin{center}
\begin{tikzpicture}[node distance=3cm]
\node[](A){$\mathsf{Sp}$};
\node[right of = A](B){$\mathsf{Ch}_{\overline{\F}_p}$};
\node[left of = A](C){$\mathsf{Spc}$};

\draw[->,transform canvas={yshift=2mm}] (A) -- (B) node[midway,anchor=south]{$\widebar{\mathrm{C}}_\bullet$};
\draw[->,transform canvas={yshift=-2mm}] (B) -- (A) node[midway,anchor=north]{$\mathrm{V}$} node[midway,anchor=south,yshift=-0.5mm]{$\bot$};

\draw[->,transform canvas={yshift=2mm}] (C) -- (A) node[midway,anchor=south]{$\Sigma^{\infty - n}_+$};
\draw[->,transform canvas={yshift=-2mm}] (A) -- (C) node[midway,anchor=north]{$(-)_n$} node[midway,anchor=south,yshift=-0.5mm]{$\bot$};
\end{tikzpicture}
\end{center}
we see that the above lifting problem is equivalent to one of the following form in chain complexes.
\begin{center}
\begin{tikzpicture}[node distance = 1.5cm]
\node [] (A) {$\widebar{\mathrm{C}}_\bullet(\Sigma^{\infty - n}\Lambda_{d+}^i)$};
\node [right of = A,xshift=1cm] (B) {$\mathrm{D}Y$};
\node [below of = A] (C) {$\widebar{\mathrm{C}}_\bullet(\Sigma^{\infty - n}\Delta_{d+})$};
\node [right of = C,xshift=1cm] (D) {$\mathrm{D}X$};

\draw [->] (A) -- (B) node[midway,anchor=south]{};
\draw [->] (A) -- (C);
\draw [->] (C) -- (D) node[midway,anchor=north]{};
\draw [->] (B) -- (D) node[midway,anchor=west]{$\text{D}f$};
\draw [->,dashed] (C) -- (B);
\end{tikzpicture}
\end{center}
Now, up to shifts, we have that $\widebar{\mathrm{C}}_\bullet(\Sigma^{\infty - n}\Delta_{d+}) \cong \widebar{\text{C}}_\bullet(\Delta_{d+})$ and $\widebar{\mathrm{C}}_\bullet(\Sigma^{\infty - n}\Lambda_{d+}^i) \cong \widebar{\mathrm{C}}_\bullet(\Lambda_{d+}^i)$. As a result, $\widebar{\mathrm{C}}_\bullet(\Sigma^{\infty - n}\Delta_{d+})$ and $\widebar{\mathrm{C}}_\bullet(\Sigma^{\infty - n}\Lambda_{d+}^i)$ are acyclic chain complexes. Moreover, $\widebar{\mathrm{C}}_\bullet(\Sigma^{\infty - n}\Lambda_{d+}^i) \to \widebar{\mathrm{C}}_\bullet(\Sigma^{\infty - n}\Delta_{d+})$ is clearly an inclusion between free complexes. Thus the lefthand vertical map is a trivial cofibration in the standard projective model structure on chain complexes. On the other hand, since $X \to Y$ was an inclusion of complexes, $\text{D}Y \to \text{D}X$ is an epimorphism. Thus the lift exists, as desired.
\end{proof}

As a result of the above, we have a derived adjunction
\begin{center}
\begin{tikzpicture}[node distance=2cm]
\node[](A){$\mathsf{hSp}^{\text{op}}$};
\node[right of = A](B){$\mathsf{h}\widebar{\mathpzc{E}}_{\text{st}}^\dagger\text{-}\mathsf{Alg}.$};

\draw[<-,transform canvas={yshift=2mm}] (B) -- (A) node[midway,anchor=south]{
};
\draw[<-,transform canvas={yshift=-2mm}] (A) -- (B) node[midway,anchor=north]{} node[midway,anchor=south,yshift=-0.5mm]{};
\end{tikzpicture}
\end{center}

%-----------------------------------------------------------------------
% Beginning of chap1.tex
%-----------------------------------------------------------------------

\chapter{Algebraic Models of $p$-Adic Stable Homotopy Types}

In this section, we shall provide an application of our stable operads to $p$-adic stable homotopy theory. We will do this using the derived spectral cochains adjunction
\begin{center}
\begin{tikzpicture}[node distance=2cm]
\node[](A){$\mathsf{hSp}^{\text{op}}$};
\node[right of = A](B){$\mathsf{h}\widebar{\mathpzc{E}}_{\text{st}}^\dagger\text{-}\mathsf{Alg}$};

\draw[<-,transform canvas={yshift=2mm}] (B) -- (A) node[midway,anchor=south]{
};
\draw[<-,transform canvas={yshift=-2mm}] (A) -- (B) node[midway,anchor=north]{} node[midway,anchor=south,yshift=-0.5mm]{};
\end{tikzpicture}
\end{center}
constructed in the previous chapter. We have shown that cochains on spectra, appropriately defined, yield algebras over the stable Barratt-Eccles operad. We will now show that these cochains, endowed with this structure,  yield algebraic models for $p$-adic stable homotopy types, where $p$ here is a fixed but unspecified prime, as in previous chapters. More specifically, we will to show that, when restricted to the bounded below $p$-complete spectra of finite $p$-type, the map $\mathsf{hSp}^{\text{op}} \to \mathsf{h}\widebar{\mathpzc{E}}_{\text{st}}^\dagger\text{-}\mathsf{Alg}$ is fully faithful. This is equivalent, for formal reasons, to showing that, for any such spectrum $E$, the unit of the derived adjunction $E \to (\mathrm{der} \: \mathrm{U}) \circ (\mathrm{der} \: \widebar{\mathrm{C}}^\bullet(-)) (E)$ (``der'' indicates a derived functor) is an isomorphism. For this reason,  following~\cite{Mandell}, we make the following definition.

\begin{Definition}\label{def:res_sp}
A spectrum $E$ is said to be \textit{resolvable}\index{spectra!resolvable} if the unit of the derived spectral cochains adjunction above, evaluated at this spectrum, is an isomorphism.
\end{Definition}

\section{The Case of the Generalized Eilenberg-MacLane Spectra $\Sigma^n\mathrm{H}\mathbb{F}_p$}

To begin, we wish to prove that $\text{H}\F_p$, and more generally $\Sigma^n\text{H}\F_p$, $n \in \Z$, is resolvable. In order to do this,  first, note that, due to Proposition~\ref{prop:EM_spec_fib}, in computing the derived cochains functor, we need not perform any replacement of $\Sigma^n\text{H}\F_p$, though we do still need to perform a cofibrant replacement of $\widebar{\mathrm{C}}^\bullet(\Sigma^n\mathrm{H}\F_p)$. We will do this by constructing a cell model for $\widebar{\mathrm{C}}^\bullet(\Sigma^n\mathrm{H}\F_p)$. Fix $n \in \Z$. Intuitively, one expects that $\Sigma^n\text{H}\F_p$ on the spectral side ought to correspond to $\overline{\textbf{E}}_{\textbf{st}}^\dagger\overline{\F}_p[n]$, or something similar, on the algebraic side. As per Proposition~\ref{prop:cohom_ops_Fpbar}, we have an operation $P^0$ on the cohomology of the free algebra  $\overline{\textbf{E}}_{\textbf{st}}^\dagger\overline{\F}_p[n]$. As per Proposition~\ref{prop:P01sp}, $P^0$ always acts by the identity on spectral cochains. Moreover, we shall see that this is the only special circumstance which we need to take into account, in that we shall be able to construct our cell model for $\widebar{\mathrm{C}}^\bullet(\Sigma^n\mathrm{H}\F_p)$ by forcing this operation $P^0$ to be the identity. \\

First, recall that $\widebar{\mathrm{C}}^\bullet(\mathrm{H}\F_p)$ is given by applying $\overline{\F}_p \otimes_{\F_p} -$ to the following:
\[
\text{lim}(\cdots \to \text{C}^\bullet(\text{K}(\F_p,2))[-2] \to \text{C}^\bullet(\text{K}(\F_p,1))[-1] \to \text{C}^\bullet(\text{K}(\F_p,0))).
\]
Thus, in particular $\widebar{\mathrm{C}}^0(\mathrm{H}\F_p)$ is given by applying $\overline{\F}_p \otimes_{\F_p} -$ to the following:
\[
\text{lim}(\cdots \to \text{C}^2(\text{K}(\F_p,2)) \to \text{C}^1(\text{K}(\F_p,1)) \to \text{C}^0(\text{K}(\F_p,0))).
\]
Next, recall that, for each $m \ge 0$, $\text{K}(\F_p,m)$ is the based simplicial set whose $d$-simplices are given by $\text{Z}^m(\Delta_d;\F_p)$ (note that this is $*$ when $d < m$ and $\F_p$ when $d = m$). For each $m \ge 0$, we have a canonical fundamental class given by the cocyle $k_m$ in $\text{C}^m\text{K}(\F_p,m)$ which sends $\alpha \in \text{Z}^m(\Delta_m;\F_p)$ to $\alpha(\text{id}_{[m]})$. Upon unravelling the definition of the structure maps for Eilenberg-MacLane spectra, we find that, for each $m \ge 0$, the map $\text{C}^{m+1}(\text{K}(\F_p,m+1)) \to \text{C}^m(\text{K}(\F_p,m))$ sends $k_{m+1}$ to $k_m$. As such, we have a well-defined canonical element $(\cdots, k_2, k_1, k_0)$ in the inverse limit and so a well-defined canonical element $h_0 = 1 \otimes (\cdots, k_2, k_1, k_0)$ in $\widebar{\mathrm{C}}^0(\mathrm{H}\F_p)$. More generally, with $n$ as above, consider again $\widebar{\mathrm{C}}^\bullet(\Sigma^n\mathrm{H}\F_p)$, which is given by applying $\overline{\F}_p \otimes_{\F_p} -$ to the following:
\[
\text{lim}(\cdots \to \mathrm{C}^\bullet(\mathrm{K}(\F_p,n+2))[-2] \to \mathrm{C}^\bullet(\mathrm{K}(\F_p,n+1))[-1] \to \mathrm{C}^\bullet(\mathrm{K}(\F_p,n))).
\]
We have that, in particular, $\widebar{\mathrm{C}}^n(\Sigma^n\mathrm{H}\F_p)$ is given by applying $\overline{\F}_p \otimes_{\F_p} -$ to the following:
\[
\text{lim}(\cdots \to \text{C}^{n+2}(\text{K}(\F_p,n+2)) \to \text{C}^{n+1}(\text{K}(\F_p,n+1)) \to \text{C}^n(\text{K}(\F_p,n))).
\]
Once again, upon unravelling the definition of the structure maps for generalized Eilenberg-MacLane spectra, we find that, for each $m \ge n$, the map $\text{C}^{m+1}(\text{K}(\F_p,m+1)) \to \text{C}^m(\text{K}(\F_p,m))$ sends $k_{m+1}$ to $k_m$. As such, we have a well-defined canonical element $(\cdots, k_{n+2}, k_{n+1}, k_n)$ in the inverse limit, and can make the following definition for a canonically defined element of $\widebar{\mathrm{C}}^n(\Sigma^n\mathrm{H}\F_p)$.

\begin{Definition}\label{def:canonical_h_n}
For each $n \in \Z$, define $h_n \in \widebar{\mathrm{C}}^n(\Sigma^n\mathrm{H}\F_p)$ by setting
\[
h_n := 1 \otimes (\cdots, k_{n+2}, k_{n+1}, k_n)
\]
where the $k_i$ are as above.
\end{Definition}

Note that, for each $n$, $h_n$ is a cocycle, because each $k_m$, $m \ge 0$, is a cocycle. Now, we shall construct our cell model for $\widebar{\mathrm{C}}^\bullet(\Sigma^n\mathrm{H}\F_p)$ by attaching a cell to the free algebra $\widebar{\mathbf{E}}_{\textbf{st}}^\dagger\overline{\F}_p[n]$ to ensure that $P^0$ acts by the identity. Let $i_n$ denote the degree $n$ cocycle of $\widebar{\mathbf{E}}_{\textbf{st}}^\dagger\overline{\F}_p[n]$ given by the tensor $1 \otimes \mathrm{id}$. Let also $p_n$ be a representative of the class $(1-P^0)[i_n]$, and then denote also by the same symbol the map $\widebar{\mathbf{E}}_{\textbf{st}}^\dagger\overline{\F}_p[n] \to \widebar{\mathbf{E}}_{\textbf{st}}^\dagger\overline{\F}_p[n]$ induced by the map $\overline{\F}_p[n] \to \widebar{\mathbf{E}}_{\textbf{st}}^\dagger\overline{\F}_p[n] \colon 1 \mapsto p_n$. Now define an $\widebar{\mathpzc{E}}_{\mathrm{st}}^\dagger$-algebra $J_n$ via the following pushout diagram.

\begin{center}
\begin{tikzpicture}[node distance = 1.5cm]
\node(A){$\overline{\mathbf{E}}_{\textbf{st}}^\dagger\overline{\F}_p[n]$};
\node[below of = A](C){$\overline{\mathbf{E}}_{\textbf{st}}^\dagger\text{C}\overline{\F}_p[n]$};
\node[right of = A, xshift = 1.5cm](B){$\overline{\mathbf{E}}_{\textbf{st}}^\dagger\overline{\F}_p[n]$};
\node[below of = B](D){$J_n$};
	
\draw[->] (A) -- (B) node[midway,anchor=south]{$p_n$};
\draw[->] (A) -- (C) node[midway,anchor=east]{};
\draw[->] (C) -- (D) node[midway,anchor=north]{};
\draw[->] (B) -- (D) node[midway,anchor=west]{};
		
\begin{scope}[shift=($(A)!.2!(D)$)]
\draw +(0,-0.25) -- +(0,0)  -- +(0.25,0);
\end{scope}
\end{tikzpicture}
\end{center}

This algebra $J_n$ is our hopeful cell model for $\widebar{\mathrm{C}}^\bullet(\Sigma^n\mathrm{H}\F_p)$. In order to show that it is indeed a model for these cochains in an appropriate sense, we construct a comparison map $J_n \to \widebar{\mathrm{C}}^\bullet(\Sigma^n\mathrm{H}\F_p)$. First, let $f \colon \widebar{\mathbf{E}}_{\textbf{st}}^\dagger\overline{\F}_p[n] \to \widebar{\mathrm{C}}^\bullet(\Sigma^n\mathrm{H}\F_p)$ denote the map induced by the map $\overline{\F}_p[n] \to \widebar{\mathrm{C}}^\bullet(\Sigma^n\mathrm{H}\F_p) \colon 1 \mapsto h_n$. Next, let $q_n$ denote a degree $n+1$ element in $\widebar{\mathrm{C}}^\bullet(\Sigma^n\mathrm{H}\F_p)$ which is such that $\partial(q_n)$ is a representative of $(1-P^0)[h_n]$ (such an element $q_n$ exists since, as per Proposition~\ref{prop:P01sp}, $(1-P^0)[h_n]$ is zero). Denote by $g$ the map $\widebar{\mathbf{E}}_{\textbf{st}}^\dagger\mathrm{C}\overline{\F}_p[n] \to \widebar{\mathrm{C}}^\bullet(\Sigma^n\mathrm{H}\F_p)$ induced by the map $\mathrm{C}\overline{\F}_p[n] \to \widebar{\mathrm{C}}^\bullet(\Sigma^n\mathrm{H}\F_p)$ which sends the degree $n$ and $n+1$ generators, respectively, to $p_n$ and $q_n$. Now, by checking the images of $i_n$, we have that the following square commutes.

\begin{center}
\begin{tikzpicture}[node distance = 1.5cm]
\node(A){$\widebar{\mathbf{E}}_{\textbf{st}}^\dagger\overline{\F}_p[n]$};
\node[below of = A](C){$\widebar{\mathbf{E}}_{\textbf{st}}^\dagger\mathrm{C}\overline{\F}_p[n]$};
\node[right of = A, xshift = 1.5cm](B){$\widebar{\mathbf{E}}_{\textbf{st}}^\dagger\overline{\F}_p[n]$};
\node[below of = B](D){$\widebar{\mathrm{C}}^\bullet(\Sigma^n\mathrm{H}\F_p)$};
	
\draw[->] (A) -- (B) node[midway,anchor=south]{$p_n$};
\draw[->] (A) -- (C) node[midway,anchor=east]{};
\draw[->] (C) -- (D) node[midway,anchor=north]{$g$};
\draw[->] (B) -- (D) node[midway,anchor=west]{$f$};
		
\end{tikzpicture}
\end{center}

As such, we get an induced map
\[
a \colon J_n \to \widebar{\mathrm{C}}^\bullet(\Sigma^n\mathrm{H}\F_p).
\]
The following result now makes precise that $J_n$ is a cell model for $\widebar{\mathrm{C}}^\bullet(\Sigma^n\mathrm{H}\F_p)$.

\begin{Proposition}\label{prop:cell_model_EM}
For each $n \in \Z$, the map $a \colon J_n \to \widebar{\mathrm{C}}{}^\bullet(\Sigma^n\mathrm{H}\F_p)$ above is a quasi-isomorphism.
\end{Proposition}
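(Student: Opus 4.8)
The plan is to compare both $J_n$ and $\overline{\text{C}}{}^\bullet(\Sigma^n\text{H}\F_p)$ to a common object by computing their cohomologies and checking that $a$ induces an isomorphism on cohomology. First I would compute $\text{H}^\bullet(J_n)$ using the pushout square defining $J_n$ together with the cell-attachment models from Section~\ref{sec:cell_alg} and Section~\ref{sec:env_op}. Since $J_n$ is obtained from the free algebra $\overline{\mathbf{E}}_{\textbf{st}}^\dagger\overline{\F}_p[n]$ by attaching a single cell along $p_n$, which represents the class $(1-P^0)[i_n]$, the enveloping operad filtration shows that, at the level of cohomology, this attachment has the effect of quotienting $\text{H}^\bullet(\overline{\mathbf{E}}_{\textbf{st}}^\dagger\overline{\F}_p[n])$ by the (left) ideal generated by the image of $(1-P^0)$ acting on the generator. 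By Proposition~\ref{prop:freealghomEbarpoint}, $\text{H}^\bullet(\overline{\mathbf{E}}_{\textbf{st}}^\dagger\overline{\F}_p[n]) \cong \widehat{\mathcal{B}} \otimes_{\F_p} \text{H}^\bullet(\overline{\F}_p[n]) \cong \widehat{\mathcal{B}}[n]$ (a shift of $\widehat{\mathcal{B}}$), with the generator $[i_n]$ corresponding to $1 \in \widehat{\mathcal{B}}$; under this identification the cell attachment along $p_n$ kills the submodule $(1-P^0)\widehat{\mathcal{B}}[n]$ — more precisely, one must check the relevant $\text{Tor}$ terms vanish so that the cohomology of the pushout is exactly the pushout of cohomologies, which is where the explicit enveloping-operad models and the semi-flatness results of Section~\ref{sec:env_op} and Lemma~\ref{lem:Estenvopsflat} enter. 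Then Proposition~\ref{prop:BhatandA}(ii), namely $\widehat{\mathcal{B}}/(1-P^0) \cong \mathcal{A}$, identifies $\text{H}^\bullet(J_n)$ with a shift of the Steenrod algebra $\mathcal{A}[n]$.

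Next I would compute $\text{H}^\bullet(\overline{\text{C}}{}^\bullet(\Sigma^n\text{H}\F_p))$, which is, up to the reindexing and $\overline{\F}_p$-coefficient change built into the definition of spectral cochains, the $\overline{\F}_p$-cohomology of the spectrum $\Sigma^n\text{H}\F_p$. By Remark~\ref{rmk:EM_htpy_grps} this spectrum has a single nonzero homotopy group, $\pi_k^{\text{st}}(\Sigma^n\text{H}\F_p) \cong \F_p$ for $k=n$; classically its cohomology is exactly the Steenrod algebra, appropriately shifted: $\text{H}^\bullet(\overline{\text{C}}{}^\bullet(\Sigma^n\text{H}\F_p)) \cong \mathcal{A}[n] \otimes_{\F_p}\overline{\F}_p$, with $[h_n]$ the generator corresponding to $1 \in \mathcal{A}$ and the $\widehat{\mathcal{B}}$-action (equivalently the $\mathcal{A}$-action via Proposition~\ref{prop:BhatandA}, since $P^0$ acts by the identity by Proposition~\ref{prop:P01sp}) being the regular left action. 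Having both sides identified, it remains to check that $a$ respects the generators and the $\widehat{\mathcal{B}}$-module structures: by construction $a$ sends $[i_n]$ to $[h_n]$ and is a map of $\widehat{\mathcal{B}}$-modules (it is induced by an algebra map, so commutes with the cohomology operations as in Proposition~\ref{prop:(co)hom_ops_stab_op}), hence on cohomology it is the unique $\widehat{\mathcal{B}}$-module map sending generator to generator, which under the identifications above is the identity $\mathcal{A}[n]\otimes\overline{\F}_p \to \mathcal{A}[n]\otimes\overline{\F}_p$. Therefore $a$ is a quasi-isomorphism.

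The main obstacle I expect is the first step: rigorously showing that the cell attachment defining $J_n$ has, on cohomology, precisely the effect of quotienting $\widehat{\mathcal{B}}[n]$ by the left ideal $(1-P^0)\widehat{\mathcal{B}}[n]$, with no extra cohomology appearing in degree $n+1$ or above from the cone. This requires running the enveloping-operad filtration spectral sequence for $J_n = \overline{\mathbf{U}}{}^{\overline{\mathbf{E}}_{\textbf{st}}^\dagger\overline{\F}_p[n]}(\text{C}\overline{\F}_p[n])$-type construction — really for the pushout $\overline{\mathbf{E}}_{\textbf{st}}^\dagger\overline{\F}_p[n] \amalg_{\overline{\mathbf{E}}_{\textbf{st}}^\dagger\overline{\F}_p[n]} \overline{\mathbf{E}}_{\textbf{st}}^\dagger\text{C}\overline{\F}_p[n]$ along $p_n$ — and using the semi-flatness of the enveloping operad terms (Lemma~\ref{lem:Estenvopsflat}) together with the fact that $\text{C}\overline{\F}_p[n]/\overline{\F}_p[n]$ is acyclic to control the higher filtration quotients; combined with Proposition~\ref{prop:BhatandA}(i), the exactness of $0 \to \widehat{\mathcal{B}} \xrightarrow{1-P^0} \widehat{\mathcal{B}} \to \mathcal{A} \to 0$, this should pin down $\text{H}^\bullet(J_n)$ exactly. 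A secondary technical point is being careful with the identification of $\overline{\text{C}}{}^\bullet(\Sigma^n\text{H}\F_p)$ with the classical cohomology of $\Sigma^n\text{H}\F_p$ and with the shift/degree conventions, but this is bookkeeping rather than a genuine difficulty.
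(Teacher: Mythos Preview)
Your overall strategy---compute $\text{H}^\bullet(J_n)$, compute $\text{H}^\bullet(\overline{\text{C}}{}^\bullet(\Sigma^n\text{H}\F_p))$, and check that $a$ matches generators and respects the $\widehat{\mathcal{B}}$-action---is correct and is exactly what the paper does. The difference lies in how $\text{H}^\bullet(J_n)$ is computed.

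You propose attacking $J_n$ via the enveloping-operad filtration and semi-flatness, and you correctly flag this as the main obstacle: you would need to show that the higher filtration quotients $\mathpzc{U}^{\overline{\mathbf{E}}_{\textbf{st}}^\dagger\overline{\F}_p[n]}(m) \otimes_{\Sigma_m} M[1]^{\otimes m}$ for $m\ge 2$ contribute nothing, which amounts to re-proving a special case of the additivity machinery. The paper sidesteps this entirely by invoking the already-established homotopy additivity result for pushouts (Proposition~\ref{prop:pushouts_alg}), which gives directly that the canonical map from the \emph{dg-module} pushout $\overline{\mathbf{E}}_{\textbf{st}}^\dagger\overline{\F}_p[n] \oplus_{\overline{\mathbf{E}}_{\textbf{st}}^\dagger\overline{\F}_p[n]} \overline{\mathbf{E}}_{\textbf{st}}^\dagger\text{C}\overline{\F}_p[n]$ to $J_n$ is a quasi-isomorphism. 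One then has a genuine short exact sequence of complexes
\[
0 \to \overline{\mathbf{E}}_{\textbf{st}}^\dagger\overline{\F}_p[n] \xrightarrow{\,p_n - \iota\,} \overline{\mathbf{E}}_{\textbf{st}}^\dagger\overline{\F}_p[n] \oplus \overline{\mathbf{E}}_{\textbf{st}}^\dagger\text{C}\overline{\F}_p[n] \to (\text{module pushout}) \to 0,
\]
whose long exact sequence, together with Proposition~\ref{prop:freealghomEbarpoint} and the injectivity of right multiplication by $1-P^0$ on $\widehat{\mathcal{B}}\otimes_{\F_p}\overline{\F}_p$ (from Proposition~\ref{prop:BhatandA}(i)), immediately identifies the cohomology of $J_n$ with $\mathcal{A}\otimes_{\F_p}\overline{\F}_p[n]$. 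The paper then finishes by observing that the composite $\overline{\mathbf{E}}_{\textbf{st}}^\dagger\overline{\F}_p[n]\to J_n \to \overline{\text{C}}{}^\bullet(\Sigma^n\text{H}\F_p)$ has, on cohomology, the same image and kernel as the map into $J_n$. So your plan works, but the paper's route is substantially shorter because it cashes in the general additivity theorem rather than redoing its content by hand.

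One minor correction: $\text{H}^\bullet(\overline{\mathbf{E}}_{\textbf{st}}^\dagger\overline{\F}_p[n])$ is $\widehat{\mathcal{B}}\otimes_{\F_p}\overline{\F}_p[n]$, not $\widehat{\mathcal{B}}[n]$; the $\overline{\F}_p$ factor must be carried through (you do this on the cochain side but drop it on the $J_n$ side).
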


\begin{proof}
Consider the composite
\[
\widebar{\mathbf{E}}_{\textbf{st}}^\dagger\overline{\F}_p[n] \oplus_{\widebar{\mathbf{E}}_{\textbf{st}}^\dagger\overline{\F}_p[n]} \widebar{\mathbf{E}}_{\textbf{st}}^\dagger\mathrm{C}\overline{\F}_p[n] \longrightarrow J_n \overset{a}\longrightarrow \widebar{\mathrm{C}}{}^\bullet(\Sigma^n\mathrm{H}\F_p).
\]
By Proposition~\ref{prop:pushouts_alg}, the first map is a quasi-isomorphism, and so it suffices to demonstrate that the composite, say $c$, is a quasi-isomorphism. Consider now instead the composite
\[
\widebar{\mathbf{E}}_{\textbf{st}}^\dagger\overline{\F}_p[n] \overset{b}\longrightarrow \widebar{\mathbf{E}}_{\textbf{st}}^\dagger\overline{\F}_p[n] \oplus_{\widebar{\mathbf{E}}_{\textbf{st}}^\dagger\overline{\F}_p[n]} \overline{\mathbf{E}}_{\textbf{st}}^\dagger\mathrm{C}\overline{\F}_p[n] \overset{c}\longrightarrow \overline{\mathrm{C}}{}^\bullet(\Sigma^n\mathrm{H}\F_p).
\]
Here $b$ is the canonical map from the first summand in the pushout. We claim that, upon taking cohomology, both $b$ and $c \circ b$ are onto and have the same kernel. It suffices to demonstrate this as then $c$ is clearly necessarily a quasi-isomorphism. Let $\iota$ denote the  map $\widebar{\mathbf{E}}_{\textbf{st}}^\dagger \overline{\F}_p[n] \to \widebar{\mathbf{E}}_{\textbf{st}}^\dagger\mathrm{C}\overline{\F}_p[n]$ and consider the following exact sequence:
\[
0 \to \widebar{\mathbf{E}}_{\textbf{st}}^\dagger\overline{\F}_p[n] \overset{p_n-\iota}\longrightarrow \widebar{\mathbf{E}}_{\textbf{st}}^\dagger\overline{\F}_p[n] \oplus \overline{\mathbf{E}}_{\textbf{st}}^\dagger\mathrm{C}\overline{\F}_p[n] \longrightarrow \overline{\mathbf{E}}_{\textbf{st}}^\dagger\overline{\F}_p[n] \oplus_{\widebar{\mathbf{E}}_{\textbf{st}}^\dagger\overline{\F}_p[n]} \overline{\mathbf{E}}_{\textbf{st}}^\dagger\mathrm{C}\overline{\F}_p[n] \to 0.
\]
By Proposition~\ref{prop:freealghomEbarpoint}, we can identify the cohomology of $\overline{\mathbf{E}}_{\textbf{st}}^\dagger\overline{\F}_p[n]$ with $\widehat{\mathcal{B}} \otimes_{\F_p} \overline{\F}_p[n]$, and by Propositions~\ref{prop:freealghomEbarpoint} and~\ref{prop:Ebarmonad}, we can identify the cohomology of $\overline{\mathbf{E}}_{\textbf{st}}^\dagger\overline{\F}_p[n] \oplus \overline{\mathbf{E}}_{\textbf{st}}^\dagger\text{C}\overline{\F}_p[n]$ also with $\widehat{\mathcal{B}} \otimes_{\F_p} \overline{\F}_p[n]$. Moreover, under this identification, the map corresponding to $p_n-b$ sends $1$ to $1-P^0$ and so, more generally, becomes right multiplication by $1-P^0$. Noting that this map is injective (which follows from the fact that the Adem relations preserve length), it follows from the long exact sequence in cohomology that, on cohomology, the map $b$ is onto with kernel the left ideal of $\widehat{\mathcal{B}} \otimes_{\F_p} \overline{\F}_p[n]$ generated by $1-P^0$, which we note, by Proposition~\ref{prop:BhatandA}, coincides with the two-sided ideal generated by $1-P^0$. \\

Now consider the composite $c \circ b$. Upon identifying once more the cohomology of $\overline{\mathbf{E}}_{\textbf{st}}^\dagger\overline{\F}_p[n]$ with $\widehat{\mathcal{B}} \otimes_{\F_p} \overline{\F}_p[n]$,  we have a map $\widehat{\mathcal{B}} \otimes_{\F_p} \overline{\F}_p[n] \to \mathrm{H}^\bullet(\widebar{\mathrm{C}}^\bullet(\Sigma^n\mathrm{H}\F_p))$. By Propositions~\ref{prop:P01sp} and~\ref{prop:BhatandA}, we get an induced map
\[
(\widehat{\mathcal{B}} \otimes_{\F_p} \overline{\F}_p[n])/(1-P^0) \cong \mathcal{A} \otimes_{\F_p} \overline{\F}_p[n] \to \text{H}^\bullet(\widebar{\mathrm{C}}^\bullet(\Sigma^n\mathrm{H}\F_p)).
\]
Noting that $1$ is mapped to the fundamental class $[h_n]$, by the standard calculation of the cohomology of Eilenberg-MacLane spectra, we have that this map is an isomorphism. As such, just as with $b$, at the level of cohomology, $c \circ b$ is onto with kernel the two-sided ideal generated by $1-P^0$, and this completes the proof.
\end{proof}

Having constructed our cofibrant replacement of $\widebar{\mathrm{C}}^\bullet(\Sigma^n\mathrm{H}\F_p)$, we now need to consider how this replacement transforms under application of $\mathrm{U}$. For this purpose, we have the following result.

\begin{Proposition}\label{prop:Utosquare}
We have the following:
\begin{itemize}
	\item[(i)] $\mathrm{U}\widebar{\mathbf{E}}_{\normalfont{\textbf{st}}}^\dagger\overline{\F}_p[n] \cong \Sigma^n\mathrm{H}\overline{\F}_p$ and, under this identification, $\mathrm{U}p_n$ induces on $\pi^{\mathrm{st}}_n$ the map $1 - \varphi$ where $\varphi$ is the Frobenius automorphism of $\overline{\F}_p$.
	\item[(ii)] $\mathrm{U}\widebar{\mathbf{E}}_{\normalfont{\textbf{st}}}^\dagger\mathrm{C}\overline{\F}_p[n] \sim *$ or, more specifically, $\mathrm{U}\overline{\mathbf{E}}_{\normalfont{\textbf{st}}}^\dagger\mathrm{C}\overline{\F}_p[n]$ is a contractible Kan complex in each spectral degree.
\end{itemize}
\end{Proposition}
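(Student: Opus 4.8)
The plan is to compute $\text{U}$ applied to each of the free algebras in question by using the adjunction of Proposition~\ref{prop:U_is_right_adjoint}, or rather the free-forgetful composite that appears in its proof. Recall from the proof of Proposition~\ref{prop:spec_coch_adj_qadj} that, for a dg module $X$, we have a natural identification
\[
(\text{U}\overline{\mathbf{E}}_{\textbf{st}}^\dagger X)_{n,d} \cong \mathsf{Co}_{\overline{\F}_p}(X, \text{C}^\bullet(\Sigma^{\infty-n}\Delta_{d+}) \otimes_{\F_p} \overline{\F}_p) \cong \mathsf{Ch}_{\overline{\F}_p}(\overline{\text{C}}_\bullet(\Sigma^{\infty-n}\Delta_{d+}), \text{D}X),
\]
where $\text{D}X$ is the degreewise $\overline{\F}_p$-dual, provided $X$ is degreewise finite-dimensional (which holds for $X = \overline{\F}_p[n]$ and $X = \text{C}\overline{\F}_p[n]$). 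Then, using the isomorphisms $\overline{\text{C}}_\bullet(\Sigma^{\infty-n}\Delta_{d+}) \cong \overline{\text{C}}_\bullet(\Delta_{d+})$ up to shift that are already used in that proof, one reduces the computation of $\text{U}\overline{\mathbf{E}}_{\textbf{st}}^\dagger X$ to a purely simplicial-set computation: $(\text{U}\overline{\mathbf{E}}_{\textbf{st}}^\dagger X)_n$ is, levelwise, the simplicial abelian group represented by a suitable shift/truncation of $\text{D}X$, i.e. a (shifted) Eilenberg--MacLane object.

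For part (i), I would take $X = \overline{\F}_p[n]$, so $\text{D}X = \overline{\F}_p[-n]$ (concentrated in a single degree), and identify $(\text{U}\overline{\mathbf{E}}_{\textbf{st}}^\dagger\overline{\F}_p[n])_m$ with $\mathsf{Ch}_{\overline{\F}_p}(\overline{\text{C}}_\bullet(\Sigma^{\infty-m}\Delta_{d+}), \overline{\F}_p[-n])$; unwinding the shifts, a chain map out of $\overline{\text{C}}_\bullet(\Sigma^{\infty-m}\Delta_{d+})$ into $\overline{\F}_p$ placed in the appropriate degree is exactly an $(n+m)$-cocycle on $\Delta_d$ with $\overline{\F}_p$-coefficients, so $(\text{U}\overline{\mathbf{E}}_{\textbf{st}}^\dagger\overline{\F}_p[n])_{m,d} = \text{Z}^{n+m}(\Delta_d; \overline{\F}_p)$, which is precisely $(\Sigma^n\text{H}\overline{\F}_p)_{m,d}$ as defined earlier in the paper; one then checks the structure maps match, which is the same bookkeeping as in the definition of $\text{U}$ (using Example~\ref{examp:cones_Delta_k_+} and Proposition~\ref{prop:faces_of_susp_simps}). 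For the claim about $\text{U}p_n$ on $\pi_n^{\text{st}}$: since $p_n$ is (a representative of) $(1-P^0)[i_n]$, the induced map $\text{U}p_n$ on the relevant homotopy group is dual to multiplication by $1-P^0$ on $\text{H}^n(\overline{\mathbf{E}}_{\textbf{st}}^\dagger\overline{\F}_p[n]) \cong \widehat{\mathcal{B}} \otimes_{\F_p} \overline{\F}_p[n]$ in degree $n$, which is $\overline{\F}_p$ with $P^0$ acting as the identity; but the operations here are only $\F_p$-linear, not $\overline{\F}_p$-linear (Proposition~\ref{prop:cohom_ops_Fpbar}), and $P^0$ on the cohomology of cochains with $\overline{\F}_p$-coefficients is well known to be the Frobenius semilinear map $\lambda \mapsto \lambda^p$ on $\overline{\F}_p$ (this is the $\overline{\F}_p$-analogue of Proposition~\ref{prop:P01sp}, which over $\F_p$ gives the identity since Frobenius is trivial there). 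Hence $\text{U}p_n$ is $1 - \Phi$ on $\pi_n^{\text{st}} \cong \overline{\F}_p$, where $\Phi$ is Frobenius.

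For part (ii), I would take $X = \text{C}\overline{\F}_p[n]$, which is a disk complex $\D^{n+1}$ (two copies of $\overline{\F}_p$ in adjacent degrees joined by an identity), so $\text{D}X$ is again a disk complex, hence acyclic. By the levelwise description above, $(\text{U}\overline{\mathbf{E}}_{\textbf{st}}^\dagger\text{C}\overline{\F}_p[n])_m$ is, in each spectral degree, the simplicial abelian group whose $d$-simplices are $\mathsf{Ch}_{\overline{\F}_p}(\overline{\text{C}}_\bullet(\Sigma^{\infty-m}\Delta_{d+}), \D^{n+1}[\,\cdot\,])$; since $\D^{n+1}$ is contractible (chain homotopy equivalent to $0$) and $\overline{\text{C}}_\bullet(\Delta_{d+})$ is free, this mapping complex is itself contractible as a simplicial abelian group, so in particular a contractible Kan complex. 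Concretely, one exhibits the nullhomotopy coming from the chain contraction of $\D^{n+1}$, transported along the identifications; alternatively one observes that $\text{U}\overline{\mathbf{E}}_{\textbf{st}}^\dagger$ sends the acyclic cofibration $\overline{\F}_p[n] \hookrightarrow \text{C}\overline{\F}_p[n]$ between (duals of) finite type complexes, together with the fact that $\text{U}$ of free algebras on finite type complexes are strict $\Omega$-spectra (established in the proof of Proposition~\ref{prop:spec_coch_adj_qadj}), to reduce to checking each level is a contractible Kan complex.

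The main obstacle I expect is not part (ii) — that is essentially the contractibility of a mapping complex out of a free object into an acyclic one — but rather the Frobenius statement in part (i): one must be careful that the operations produced by Proposition~\ref{prop:cohom_ops_Fpbar} are genuinely only $\F_p$-linear, and that $P^0$, acting on $\text{H}^\bullet$ of $\overline{\F}_p$-cochains, is the $p$-th power (semilinear) map rather than the identity. Tracing this through requires redoing the computation of Proposition~\ref{prop:P01sp} with $\overline{\F}_p$-coefficients, where the step ``$\alpha(x)^2 = \alpha(x)$'' (valid over $\F_2$) becomes ``$\alpha(x)^p$'', which is $\Phi(\alpha(x))$ rather than $\alpha(x)$; combined with identifying this with the effect of $\text{U}p_n$ on $\pi_n^{\text{st}}$ under the adjunction, this is the delicate point of the proof.
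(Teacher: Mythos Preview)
Your proposal is correct and, for part (i), essentially identical to the paper's argument: both compute $(\text{U}\overline{\mathbf{E}}_{\textbf{st}}^\dagger\overline{\F}_p[n])_{m,d}$ via the free--forgetful adjunction, identify it with $\text{Z}^{n+m}(\Delta_d;\overline{\F}_p)$, and then trace the effect of $p_n$ through to see that $P^0$ acts on a cochain $\alpha$ by $\alpha(s)\mapsto\alpha(s)^p$, yielding $1-\Phi$ on $\pi_n^{\text{st}}\cong\overline{\F}_p$. Your identification of the Frobenius point as the delicate step is exactly right, and your suggested redo of Proposition~\ref{prop:P01sp} over $\overline{\F}_p$ is precisely what the paper does.

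For part (ii) you take a slightly different route from the paper. The paper argues indirectly: the canonical map $\overline{\mathpzc{E}}_{\text{st}}^\dagger(0)\to\overline{\mathbf{E}}_{\textbf{st}}^\dagger\text{C}\overline{\F}_p[n]$ is a quasi-isomorphism (since the monad preserves quasi-isomorphisms and $\text{C}\overline{\F}_p[n]$ is acyclic), $\text{U}$ sends this to a weak equivalence by the Quillen adjunction of Proposition~\ref{prop:spec_coch_adj_qadj}, and $\text{U}$ of the initial algebra is $*$; the levelwise contractibility then follows because $\text{U}\overline{\mathbf{E}}_{\textbf{st}}^\dagger\text{C}\overline{\F}_p[n]$ is a strict $\Omega$-spectrum. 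Your primary approach instead computes each level directly as a mapping simplicial abelian group into a contractible chain complex and uses the chain contraction of $\D^{n+1}$ to exhibit the nullhomotopy. Both are valid; yours is more elementary and self-contained, while the paper's leverages the abstract machinery already in place and avoids redoing any explicit simplicial computation.
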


\begin{proof}
(i): In spectral degree $m$ and simplicial degree $d$, we have
\begin{align*}
(\mathrm{U}\widebar{\mathbf{E}}_{\textbf{st}}^\dagger\overline{\F}_p[n])_{m,d} &= \widebar{\mathpzc{E}}_{\mathrm{st}}^\dagger\text{-}\mathsf{Alg}(\widebar{\mathbf{E}}_{\textbf{st}}^\dagger\overline{\F}_p[n],\widebar{\mathrm{C}}^\bullet(\Sigma^{\infty - m}\Delta_{d+})) \\
&\cong \mathsf{Co}_{\overline{\F}_p}(\overline{\F}_p[n], \widebar{\mathrm{C}}^\bullet(\Sigma^{\infty - m}\Delta_{d+})) \\
&\cong \text{Z}^n(\widebar{\text{C}}{}^\bullet(\Sigma^{\infty - m}\Delta_{d+})) \\
&= \mathrm{Z}^n(\overline{\F}_p \otimes_{\F_p} \mathrm{C}^\bullet(\Sigma^{\infty - m}\Delta_{d+})) \\
&\cong \mathrm{Z}^n(\overline{\F}_p \otimes_{\F_p} \mathrm{C}^\bullet(\Delta_{d+})[-m]) \\
&\cong \text{Z}^{n+m}(\Delta_{d};\overline{\F}_p) \\
&= (\Sigma^n\text{H}\overline{\mathbb{F}}_p)_{m,d}.
\end{align*}
One can readily verify directly that the action of the simplicial operators coincide and that so do the spectral structure maps. \\

By Proposition~\ref{prop:EM_spec_fib} and Remark~\ref{rmk:htpy_grps_fib_sp}, we can compute the $n^{\text{th}}$ stable homotopy group of $\Sigma^n\text{H}\overline{\F}_p$ via the $n^{\text{th}}$ unstable homotopy group of the space in spectral degree zero. Moreover, we find that, under the identification $\pi^{\text{st}}_n(\text{U}\widebar{\mathbf{E}}_{\textbf{st}}^\dagger\overline{\F}_p[n]) \cong \pi^{\text{st}}_n(\Sigma^n\text{H}\overline{\F}_p) \cong \overline{\F}_p$, an element $\lambda \in \overline{\F}_p$ corresponds to the class of the map
\[
\widebar{\mathbf{E}}_{\textbf{st}}^\dagger\overline{\F}_p[n] \longrightarrow \widebar{\mathrm{C}}{}^\bullet(\Sigma^{\infty}\Delta_{n+}) \cong \widebar{\mathrm{C}}{}^\bullet(\Delta_{n+})
\]
which sends $i_n$ to the cochain $\alpha$ which sends $\text{id}_{[n]} \in (\Delta_n)_n$ to $\lambda$. To act by $\text{U}p_n$, we precompose with $p_n \colon \widebar{\mathbf{E}}_{\textbf{st}}^\dagger\overline{\F}_p[n] \to \widebar{\mathbf{E}}_{\textbf{st}}^\dagger\overline{\F}_p[n]$. Thus, we need to compute the image of $i_n$ under the following composite:
\[
\widebar{\mathbf{E}}_{\textbf{st}}^\dagger\overline{\F}_p[n] \overset{p_n}\longrightarrow \widebar{\mathbf{E}}_{\textbf{st}}^\dagger\overline{\F}_p[n] \longrightarrow \widebar{\mathrm{C}}^\bullet(\Sigma^{\infty}\Delta_{n+}) \cong \widebar{\mathrm{C}}{}^\bullet(\Delta_{n+}).
\]
To do this, we need to act by $1-P^0$ on $\alpha \in \widebar{\mathrm{C}}{}^\bullet(\Sigma^{\infty}\Delta_{n+})$. As per Proposition~\ref{prop:P0_Fpbar_spectral_cochains}, $P^0$ fixes those cochains in $\widebar{\mathrm{C}}^\bullet(\Sigma^{\infty}\Delta_{n+})$ which are in the image of $\mathrm{C}^\bullet(\Sigma^{\infty}\Delta_{n+})$, and is $\overline{\F}_p$-semilinear, so that, under the composite above, $i_n$ maps to the cochain which sends $\text{id}_{[n]}$ to $1-\lambda^p$, as desired. \\

(ii): By Proposition~\ref{prop:Ebarmonad} (i), as $\text{C}\overline{\F}_p$ is acyclic, the canonical map $\widebar{\mathpzc{E}}^\dagger_{\text{st}}(0) \to \widebar{\mathbf{E}}^\dagger_{\textbf{st}}\text{C}\overline{\F}_p[n]$ is a quasi-isomorphism. Since $\widebar{\mathbf{E}}^\dagger_{\textbf{st}}\text{C}\overline{\F}_p[n]$ is a cell algebra, by Propositions~\ref{prop:semimodelqadj} and~\ref{prop:spec_coch_adj_qadj}, we have that $\text{U}\widebar{\mathbf{E}}^\dagger_{\textbf{st}}\text{C}\overline{\F}_p[n] \to \text{U}\widebar{\mathpzc{E}}^\dagger_{\text{st}}(0)$ is a weak equivalence of spectra. As $\widebar{\mathpzc{E}}^\dagger_{\text{st}}(0)$ is the initial algebra, by the definition of $\text{U}$, we have that $\mathrm{U}\widebar{\mathpzc{E}}^\dagger_{\text{st}}(0) = *$. Moreover, we saw in the proof of Proposition~\ref{prop:spec_coch_adj_qadj} that $\mathrm{U}\widebar{\mathbf{E}}^\dagger_{\textbf{st}}X$ is a strict $\Omega$-spectrum, and so a fibrant spectrum, when $X$ is a complex of finite type, so that $\mathrm{U}\widebar{\mathbf{E}}^\dagger_{\textbf{st}}\text{C}\overline{\F}_p[n]$ is fibrant spectrum. Thus, by Proposition~\ref{prop:fibcofibsp}(ii), $\text{U}\widebar{\mathbf{E}}^\dagger_{\textbf{st}}\text{C}\overline{\F}_p[n] \to \text{U}\widebar{\mathpzc{E}}^\dagger_{\text{st}}(0)$ is a levelwise weak equivalence, from which the desired result immediately follows.
\end{proof}

We can now demonstrate the resolvability of $\Sigma^n\text{H}\F_p$.

\begin{Proposition}\label{prop:HFp_resolv}
For each $n \in \Z$, $\Sigma^n\emph{H}\F_p$ is resolvable.
\end{Proposition}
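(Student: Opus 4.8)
The plan is to verify resolvability of $\Sigma^n\text{H}\F_p$ directly by computing both sides of the derived adjunction unit using the cell model $J_n$ constructed above. Since $\Sigma^n\text{H}\F_p$ is bifibrant by Proposition~\ref{prop:EM_spec_fib}, the derived cochains functor applied to it is computed simply by taking a cofibrant replacement of $\overline{\text{C}}^\bullet(\Sigma^n\text{H}\F_p)$, and Proposition~\ref{prop:cell_model_EM} tells us that $J_n \overset{\sim}\to \overline{\text{C}}^\bullet(\Sigma^n\text{H}\F_p)$ is precisely such a replacement (it is a cell algebra by construction and the map is a quasi-isomorphism). Therefore the derived unit is represented by the composite $\Sigma^n\text{H}\F_p \to \text{U}(J_n)$ obtained from the adjunction; concretely, I would form the map $\Sigma^n\text{H}\F_p \to \text{U}(\overline{\text{C}}^\bullet(\Sigma^n\text{H}\F_p))$ which is the ordinary unit, then observe $\text{U}$ applied to the quasi-isomorphism $J_n \to \overline{\text{C}}^\bullet(\Sigma^n\text{H}\F_p)$ is a weak equivalence of spectra (using that $\text{U}$ is a right Quillen functor by Proposition~\ref{prop:spec_coch_adj_qadj}, together with the fact that both source and target, being $\text{U}$ of cofibrant algebras of finite type, are strict $\Omega$-spectra, hence fibrant, and using Ken Brown's lemma / Proposition~\ref{prop:fibcofibsp}). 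So it suffices to show the map $\Sigma^n\text{H}\F_p \to \text{U}(J_n)$ is a stable weak homotopy equivalence.

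The key computation is then to identify $\text{U}(J_n)$ up to weak equivalence. Applying $\text{U}$ to the pushout square defining $J_n$ gives, since $\text{U}$ is a left adjoint on $\widebar{\mathpzc{E}}_{\text{st}}^\dagger\text{-}\mathsf{Alg}^{\text{op}}$ to $\mathsf{Sp}^{\text{op}}$ and hence sends pushouts of algebras to pullbacks of spectra, a pullback square of spectra
\begin{center}
\begin{tikzpicture}[node distance=1.2cm]
\node(A){$\text{U}(J_n)$};
\node[right= of A](B){$\text{U}\overline{\mathbf{E}}_{\textbf{st}}^\dagger\overline{\F}_p[n]$};
\node[below= of A](C){$\text{U}\overline{\mathbf{E}}_{\textbf{st}}^\dagger\text{C}\overline{\F}_p[n]$};
\node[below= of B](D){$\text{U}\overline{\mathbf{E}}_{\textbf{st}}^\dagger\overline{\F}_p[n]$};
\draw[->] (A) -- (B);
\draw[->] (A) -- (C);
\draw[->] (C) -- (D);
\draw[->] (B) -- (D) node[midway,anchor=west]{$\text{U}p_n$};
\end{tikzpicture}
\end{center}
One should check this is in fact a homotopy pullback: the map $\text{U}p_n$ is a levelwise fibration (as established in the proof of Proposition~\ref{prop:spec_coch_adj_qadj}, $\text{U}$ of the generating cofibrations gives fibrations, and $p_n$ factors appropriately; alternatively invoke that $\text{U}$ is right Quillen so preserves fibrations and all objects here are fibrant), so the pullback is a homotopy pullback. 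By Proposition~\ref{prop:Utosquare}(ii), $\text{U}\overline{\mathbf{E}}_{\textbf{st}}^\dagger\text{C}\overline{\F}_p[n] \sim *$, so the homotopy pullback computes the homotopy fiber of $\text{U}p_n \colon \Sigma^n\text{H}\overline{\F}_p \to \Sigma^n\text{H}\overline{\F}_p$. By Proposition~\ref{prop:Utosquare}(i), $\text{U}p_n$ induces on $\pi_n^{\text{st}}$ the map $1-\Phi \colon \overline{\F}_p \to \overline{\F}_p$ where $\Phi$ is the Frobenius, and is zero on all other stable homotopy groups (since $\Sigma^n\text{H}\overline{\F}_p$ has homotopy concentrated in degree $n$ by Remark~\ref{rmk:EM_htpy_grps}). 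The long exact sequence of the fibration then shows $\pi_i^{\text{st}}(\text{U}(J_n)) = 0$ for $i \neq n, n-1$, while in degrees $n$ and $n-1$ we get the kernel and cokernel of $1-\Phi$ on $\overline{\F}_p$. The kernel of $1-\Phi$ is exactly the fixed field $\F_p$, and $1-\Phi$ is surjective on $\overline{\F}_p$ (Artin--Schreier / a standard fact: for any $a\in\overline{\F}_p$ the polynomial $x^p-x-a$ has a root in $\overline{\F}_p$ since it is algebraic and $\overline{\F}_p$ is algebraically closed), so the cokernel vanishes. Hence $\pi_i^{\text{st}}(\text{U}(J_n)) \cong \F_p$ for $i=n$ and $0$ otherwise, matching $\pi_i^{\text{st}}(\Sigma^n\text{H}\F_p)$.

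Finally, one must check that the derived unit map $\Sigma^n\text{H}\F_p \to \text{U}(J_n)$ actually realizes this isomorphism of homotopy groups, rather than merely that the groups abstractly agree. This is where I expect the only real subtlety: I would trace through the explicit description of the adjunction unit from the proof of Proposition~\ref{prop:U_is_right_adjoint}, together with the explicit canonical cocycle $h_n \in \overline{\text{C}}^n(\Sigma^n\text{H}\F_p)$ and the fundamental class $i_n$, to see that the unit sends the generator of $\pi_n^{\text{st}}(\Sigma^n\text{H}\F_p) = \F_p$ into the $\Phi$-fixed part $\F_p \subseteq \overline{\F}_p = \pi_n^{\text{st}}(\text{U}\overline{\mathbf{E}}_{\textbf{st}}^\dagger\overline{\F}_p[n])$, landing on a generator of $\ker(1-\Phi)$; the compatibility of $f \colon \overline{\mathbf{E}}_{\textbf{st}}^\dagger\overline{\F}_p[n] \to \overline{\text{C}}^\bullet(\Sigma^n\text{H}\F_p)$ with $h_n$ from the construction preceding Proposition~\ref{prop:cell_model_EM} is exactly what pins this down. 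Since both sides are bounded below of finite type and the map is a $\pi_*^{\text{st}}$-isomorphism, it is a stable weak homotopy equivalence, so $\Sigma^n\text{H}\F_p$ is resolvable. The main obstacle is thus the bookkeeping of the explicit unit map and verifying it is nonzero on $\pi_n^{\text{st}}$; everything else is a formal consequence of the homotopy pullback square and the Artin--Schreier computation.
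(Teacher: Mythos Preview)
Your proposal is correct and follows essentially the same route as the paper: apply $\text{U}$ to the pushout defining $J_n$ to obtain a homotopy pullback of spectra, use Proposition~\ref{prop:Utosquare} together with the long exact sequence and the Artin--Schreier surjectivity of $1-\Phi$ to identify $\pi_*^{\text{st}}(\text{U}J_n)$ with $\F_p$ concentrated in degree $n$, and then check that the derived unit hits $\pi_n^{\text{st}}$ nontrivially.

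Two small points of comparison. First, a technical slip: your claim that $\text{U}$ sends the quasi-isomorphism $J_n \to \overline{\text{C}}^\bullet(\Sigma^n\text{H}\F_p)$ to a weak equivalence via Ken Brown is not justified, since $\overline{\text{C}}^\bullet(\Sigma^n\text{H}\F_p)$ is not cofibrant. This does not matter for the argument, however: the composite $\Sigma^n\text{H}\F_p \to \text{U}\overline{\text{C}}^\bullet(\Sigma^n\text{H}\F_p) \to \text{U}J_n$ represents the derived unit simply because $J_n$ is a cofibrant replacement, by the general formalism of derived functors; no weak-equivalence claim about $\text{U}$ on the replacement map is needed.

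Second, for the final step --- verifying that the derived unit is nonzero on $\pi_n^{\text{st}}$ --- the paper avoids chasing the explicit unit. Instead it passes through the derived adjunction to identify the map on $\pi_n^{\text{st}}$ with the map
\[
\mathsf{hSp}(\Sigma^\infty\Sph^n,\Sigma^n\text{H}\F_p) \longrightarrow \mathsf{h}\widebar{\mathpzc{E}}_{\text{st}}^\dagger\text{-}\mathsf{Alg}\bigl(J_n,\overline{\text{C}}^\bullet(\Sigma^\infty\Sph^n)\bigr),
\]
and checks injectivity by observing that the $p$ distinct maps $\sigma_\lambda \colon \Sigma^\infty\Sph^n \to \Sigma^n\text{H}\F_p$ (for $\lambda \in \F_p$) remain distinguishable on cohomology after applying $\overline{\text{C}}^\bullet$ and precomposing with $J_n \to \overline{\text{C}}^\bullet(\Sigma^n\text{H}\F_p)$. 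Since both sides have exactly $p$ elements, injectivity gives bijectivity. This is cleaner than the bookkeeping you anticipate, and you may prefer it.
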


\begin{proof}
Consider again the pushout square
\begin{center}
\begin{tikzpicture}[node distance = 1.5cm]
\node(A){$\widebar{\mathbf{E}}_{\textbf{st}}^\dagger\overline{\F}_p[n]$};
\node[below of = A](C){$\widebar{\mathbf{E}}_{\textbf{st}}^\dagger\text{C}\overline{\F}_p[n]$};
\node[right of = A, xshift = 1.5cm](B){$\widebar{\mathbf{E}}_{\textbf{st}}^\dagger\overline{\F}_p[n]$};
\node[below of = B](D){$J_n$};
	
\draw[->] (A) -- (B) node[midway,anchor=south]{$p_n$};
\draw[->] (A) -- (C) node[midway,anchor=east]{};
\draw[->] (C) -- (D) node[midway,anchor=north]{};
\draw[->] (B) -- (D) node[midway,anchor=west]{};
		
\begin{scope}[shift=($(A)!.2!(D)$)]
\draw +(0,-0.25) -- +(0,0)  -- +(0.25,0);
\end{scope}
\end{tikzpicture}
\end{center}
and the map $J_n \to \widebar{\text{C}}{}^\bullet(\Sigma^n\text{H}\F_p)$ which we constructed earlier in this section. Upon applying $\text{U}$ to the pushout square, as $\text{U}$ is a left adjoint and maps to the opposite category of spectra, we get a pullback square of spectra as follows.

\begin{center}
\begin{tikzpicture}[node distance = 1.5cm]
\node(A){$\text{U}J_n$};
\node[below of = A](C){$\text{U}\widebar{\mathbf{E}}^\dagger_{\textbf{st}}\overline{\F}_p[n]$};
\node[right of = A, xshift = 1.5cm](B){$\text{U}\widebar{\mathbf{E}}^\dagger_{\textbf{st}}\text{C}\widebar{\F}_p[n]$};
\node[below of = B](D){$\text{U}\overline{\mathbf{E}}^\dagger_{\textbf{st}}\overline{\F}_p[n]$};
	
\draw[->] (A) -- (B) node[midway,anchor=south]{};
\draw[->>] (A) -- (C) node[midway,anchor=east]{};
\draw[->] (C) -- (D) node[midway,anchor=north]{$\text{U}p_n$};
\draw[->>] (B) -- (D) node[midway,anchor=west]{};
		
\begin{scope}[shift=($(D)!.2!(A)$)]
\draw +(-0.25,0) -- +(0,0)  -- +(0,0.25);
\end{scope}
\end{tikzpicture}
\end{center}

Here the vertical maps are fibrations because $\widebar{\mathbf{E}}_{\textbf{st}}^\dagger\overline{\F}_p[n] \to \widebar{\mathbf{E}}_{\textbf{st}}^\dagger\text{C}\overline{\F}_p[n]$ is a cofibration between cell algebras and because, by Proposition~\ref{prop:spec_coch_adj_qadj}, $\text{U}$ maps cofibrations between cofibrant algebras to fibrations of spectra. By Proposition~\ref{prop:cell_model_EM}, the unit of the derived adjunction is represented by the composite
\[
\Sigma^n\text{H}\F_p \to \text{U}\widebar{\text{C}}{}^\bullet(\Sigma^n\text{H}\F_p) \to \text{U}J_n
\]
so that we need to show that this map is weak equivalence. We saw in the proof of Proposition~\ref{prop:spec_coch_adj_qadj} that $\text{U}\overline{\mathbf{E}}^\dagger_{\textbf{st}}X$ is a strict $\Omega$-spectrum, and so a fibrant spectrum, when $X$ is a complex of finite type. This implies that all spectra in the square above are fibrant. By Proposition~\ref{prop:Utosquare} and the long exact sequence in stable homotopy groups, we have that $\pi_i^{\text{st}}(\text{U}J_n)$ is $\F_p$ when $i = n$, and zero otherwise. Thus, it suffices to show that the map $\Sigma^n\text{H}\F_p \to \text{U}J_n$, given by the composite above, and denoted by say $\eta$, is an isomorphism on $\pi^{\text{st}}_n$. This amounts to showing that the map
\begin{multline*}
\mathsf{hSp}(\Sigma^\infty\Sph^n,\Sigma^n\text{H}\F_p) \longrightarrow \mathsf{hSp}(\Sigma^\infty\Sph^n, \text{U}J_n) = \mathsf{hSp}^{\text{op}}(\text{U}J_n,\Sigma^\infty\Sph^n) \\
\cong \mathsf{h}\widebar{\mathpzc{E}}_{\text{st}}^\dagger\text{-}\mathsf{Alg}(J_n,\widebar{\text{C}}{}^\bullet(\Sigma^\infty\Sph^n))
\end{multline*}
induced by $\eta$ and the derived adjunction isomorphism is bijective, or equivalently, injective. For each $\lambda \in \F_p$, consider the map $\sigma_\lambda \colon \Sigma^\infty\Sph^n \to \Sigma^n\text{H}\F_p$ given by the map $\Sph^n \to \text{K}(\F_p,n)$ which sends the unique non-denenerate $n$-simplex to the $n$-cocycle $\alpha$ on $\Delta_n$ defined by $\alpha(\text{id}_{[n]}) = \lambda$. The images of these maps under the localization functor $\gamma_{\mathsf{Sp}} \colon \mathsf{Sp} \to \mathsf{hSp}$ give the $p$ distinct maps in $\mathsf{hSp}(\Sigma^\infty\Sph^n,\Sigma^n\text{H}\F_p)$. Fix $\lambda \in \F_p$ and consider $\sigma_\lambda$. Unravelling the definition of the above map, the image of $\gamma_{\mathsf{Sp}}(\sigma_\lambda)$ is computed as follows: form the composite $J_n \to \widebar{\text{C}}{}^\bullet(\Sigma^n\text{H}\F_p) \to \widebar{\text{C}}{}^\bullet(\Sigma^\infty\Sph^n)$, where the second map is $\sigma_\lambda$ and the first map is the adjoint of $\eta$, and then take the image of this map under the localization functor $\gamma_{\widebar{\mathpzc{E}}_{\text{st}}^\dagger\text{-}\mathsf{Alg}} \colon \widebar{\mathpzc{E}}_{\text{st}}^\dagger\text{-}\mathsf{Alg} \to \mathsf{h}\overline{\mathpzc{E}}_{\text{st}}^\dagger\text{-}\mathsf{Alg}$. We have that, for different values of $\lambda$, the maps $\widebar{\mathrm{C}}\bullet(\Sigma^n\mathrm{H}\F_p) \to \widebar{\mathrm{C}}^\bullet(\Sigma^\infty\Sph^n)$ differ on cohomology, and thus so must the composite maps $J_n \to \widebar{\text{C}}{}^\bullet(\Sigma^n\text{H}\F_p) \to \widebar{\text{C}}{}^\bullet(\Sigma^\infty\Sph^n)$, and as a result the images of these maps under $\gamma_{\overline{\mathpzc{E}}_{\text{st}}^\dagger\text{-}\mathsf{Alg}}$ must be distinct. Thus the above composite map between $\mathsf{hSp}(\Sigma^\infty\Sph^n,\Sigma^n\text{H}\F_p)$ and $\mathsf{h}\widebar{\mathpzc{E}}_{\text{st}}^\dagger\text{-}\mathsf{Alg}(J_n,\widebar{\text{C}}{}^\bullet(\Sigma^\infty\Sph^n))$ is injective, and so bijective, as desired.
\end{proof}

\section{Fibration Theorems}

In the previous section, we have demonstrated the resolvability of (generalized) Eilenberg-MacLane spectra. We now demonstrate results which will also allow us to induct up Postnikov towers for more general resolvability results.

\begin{Proposition}\label{prop:resolv_inv_limits}
Let $E$ be a spectrum and suppose that it can be described as the inverse limit of a diagram
\[
\cdots \to E_2 \to E_1 \to E_0
\]
such that:
\begin{itemize}
	\item Each map $E_{n+1} \to E_n$ is a fibration and $E_0$ is fibrant.
	\item The canonical map $\emph{colim}\,\widebar{\emph{H}}{}^\bullet E_n \to \widebar{\emph{H}}{}^\bullet E$ is an isomorphism.
\end{itemize}
Then $E$ is resolvable whenever the $E_n$,  for all $n \ge 0$,  are resolvable.
\end{Proposition}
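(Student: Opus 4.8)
The plan is to exploit the compatibility between the spectral cochains functor, inverse limits of spectra, and filtered colimits of cohomology, and to reduce to a statement about the behavior of the derived adjunction on the resolvable pieces $E_n$. First I would fix a functorial cofibrant replacement machine and set up the derived unit map $E \to \mathrm{der}\,\mathrm{U} \circ \mathrm{der}\,\overline{\mathrm{C}}{}^\bullet(E)$ explicitly as a zig-zag; the goal is to show this is a stable weak equivalence, which amounts to showing it induces an isomorphism on all $\pi_i^{\mathrm{st}}$. Since each $E_{n+1}\to E_n$ is a fibration and $E_0$ is fibrant, $E = \lim_n E_n$ is a homotopy limit, so there is a Milnor-type $\lim^1$ short exact sequence relating $\pi_i^{\mathrm{st}}(E)$ to $\lim_n$ and $\lim^1_n$ of the $\pi_i^{\mathrm{st}}(E_n)$.

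The key steps, in order: (1) Apply $\overline{\mathrm{C}}{}^\bullet(-)$ to the tower. Because $\overline{\mathrm{C}}{}^\bullet$ sends the inverse limit of spectra to a colimit of cochain complexes (it is built, levelwise, from the colimit definition of spectral chains followed by dualization, so it converts the relevant limit into a colimit — compare Remark~\ref{rmk:spec_chain_notation}), we get that $\overline{\mathrm{C}}{}^\bullet(E) \simeq \mathrm{colim}_n \overline{\mathrm{C}}{}^\bullet(E_n)$ at the level of the underlying dg modules, and by the second hypothesis this colimit is also a cohomology colimit, so $\overline{\mathrm{C}}{}^\bullet(E_n) \to \overline{\mathrm{C}}{}^\bullet(E)$ exhibits $\overline{\mathrm{C}}{}^\bullet(E)$ as the derived colimit of the $\overline{\mathrm{C}}{}^\bullet(E_n)$ in the derived category of $\widebar{\mathpzc{E}}_{\mathrm{st}}^\dagger$-algebras (filtered colimits of algebras are created in dg modules and are exact, and the monad $\overline{\mathbf{E}}_{\mathrm{st}}^\dagger$ preserves filtered colimits and weak equivalences by Proposition~\ref{prop:Ebarmonad}). (2) Apply $\mathrm{der}\,\mathrm{U}$. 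Since $\mathrm{U}$ is a left adjoint to $\overline{\mathrm{C}}{}^\bullet$ and lands in $\mathsf{Sp}^{\mathrm{op}}$, i.e.\ $\mathrm{U}$ sends colimits of algebras to limits of spectra, $\mathrm{der}\,\mathrm{U}$ sends the derived colimit $\mathrm{colim}_n \overline{\mathrm{C}}{}^\bullet(E_n)$ to the homotopy limit $\mathrm{holim}_n\,\mathrm{der}\,\mathrm{U}\,\overline{\mathrm{C}}{}^\bullet(E_n)$. (3) Use resolvability of each $E_n$: $\mathrm{der}\,\mathrm{U}\,\mathrm{der}\,\overline{\mathrm{C}}{}^\bullet(E_n) \simeq E_n$ compatibly in $n$, so the homotopy limit of this tower is $\mathrm{holim}_n E_n \simeq \lim_n E_n = E$ (the last identification because the tower is a tower of fibrations between fibrant spectra, so the strict limit computes the homotopy limit). (4) Chase the identifications to check that the composite equivalence is indeed the derived unit at $E$, i.e.\ that all the maps above are compatible with the unit maps; this is a naturality diagram chase.

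The main obstacle I expect is step (1): carefully verifying that $\overline{\mathrm{C}}{}^\bullet$ applied to $\lim_n E_n$ is \emph{the} filtered colimit $\mathrm{colim}_n \overline{\mathrm{C}}{}^\bullet(E_n)$ on the nose (not just up to quasi-isomorphism) and that this colimit, together with the second hypothesis that $\mathrm{colim}\,\overline{\mathrm{H}}{}^\bullet E_n \cong \overline{\mathrm{H}}{}^\bullet E$, genuinely forces the cohomology to be computed correctly — one has to be careful that the limit over $n$ of the \emph{spectra} interacts correctly with the colimit over spectral degree inside the chains construction, and that no $\lim^1$ obstruction is hiding. Concretely, $\overline{\mathrm{C}}{}_\bullet(E)$ in each degree is the colimit over spectral level of $\overline{\mathrm{C}}_\bullet(E_{m,\bullet})[-m]$, and one needs that a fibration tower of spectra induces, levelwise and compatibly, the right behavior on chains; the second hypothesis is exactly what guarantees this after passing to cohomology. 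A secondary technical point is ensuring the homotopy limit in step (3) is the strict limit: this follows from Proposition~\ref{prop:fibcofibsp} together with the hypothesis that the maps $E_{n+1}\to E_n$ are fibrations and $E_0$ is fibrant, since then the tower is fibrant in the injective/Reedy sense and its limit is a homotopy limit, but it should be spelled out. Once these compatibilities are in hand, the argument is formal.
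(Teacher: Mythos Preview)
Your overall strategy matches the paper's: produce a cofibrant model for $\overline{\text{C}}{}^\bullet(E)$ as a filtered colimit of cofibrant models for the $\overline{\text{C}}{}^\bullet(E_n)$, apply $\text{U}$ to convert this to an inverse limit of spectra, and use resolvability of the $E_n$ together with the tower-of-fibrations hypothesis to conclude. However, your step (1) contains a confusion you should fix.

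The claim that $\overline{\text{C}}{}^\bullet$ ``converts the relevant limit into a colimit'' because of the construction in Remark~\ref{rmk:spec_chain_notation} is wrong: the colimit appearing in the definition of spectral chains runs over the \emph{spectral levels} of a single spectrum, not over the tower index $n$. There is no reason for $\overline{\text{C}}{}^\bullet(\lim_n E_n)$ to equal $\text{colim}_n \overline{\text{C}}{}^\bullet(E_n)$ on the nose. What you actually have is only the comparison map $\text{colim}_n \overline{\text{C}}{}^\bullet(E_n) \to \overline{\text{C}}{}^\bullet(E)$ induced by the projections $E \to E_n$; since filtered colimits of complexes are exact, on cohomology this is precisely the map $\text{colim}\,\overline{\text{H}}{}^\bullet E_n \to \overline{\text{H}}{}^\bullet E$, which is an isomorphism by the second hypothesis. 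So the hypothesis does all the work here, not the chains construction.

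The paper makes this concrete rather than arguing at the derived level: it factors the cotower $\overline{\text{C}}{}^\bullet E_0 \to \overline{\text{C}}{}^\bullet E_1 \to \cdots$ into relative cell inclusions $A_0 \hookrightarrow A_1 \hookrightarrow \cdots$ followed by trivial fibrations $A_n \overset{\sim}{\twoheadrightarrow} \overline{\text{C}}{}^\bullet E_n$, sets $A = \text{colim}\, A_n$ (cofibrant), and observes $A \to \overline{\text{C}}{}^\bullet E$ is a quasi-isomorphism by the hypothesis. Then $\text{U}A = \lim_n \text{U}A_n$ on the nose (as $\text{U}$ is a left adjoint into $\mathsf{Sp}^{\text{op}}$), the maps $\text{U}A_{n+1} \to \text{U}A_n$ are fibrations between fibrant spectra (since $\text{U}$ sends cofibrations between cofibrant algebras to fibrations), and the weak equivalences $E_n \to \text{U}A_n$ assemble to a weak equivalence on limits. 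This sidesteps any abstract ``derived colimit $\mapsto$ homotopy limit'' statement, which in a semi-model category would itself require justification.
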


\begin{proof}
Suppose that the $E_n$ are resolvable. We can factor maps of $\widebar{\mathpzc{E}}^\dagger_{\text{st}}$-algebras into relative cell inclusions followed by trivial fibrations. Applying this to the cotower $\widebar{\text{C}}{}^\bullet E_0 \to \widebar{\text{C}}{}^\bullet E_1 \to \widebar{\text{C}}{}^\bullet E_2 \to \cdots$ we get a diagram of $\widebar{\mathpzc{E}}^\dagger_{\text{st}}$-algebras as follows:

\begin{center}
\begin{tikzpicture}[node distance = 2cm]
\node [] (A) {$\overline{\F}_p$};
\node [right of = A,xshift=0cm] (B) {$A_0$};
\node [below of = B] (C) {$\widebar{\text{C}}{}^\bullet E_0$};
\node [right of = C] (D) {$\widebar{\text{C}}{}^\bullet E_1$};
\node [right of = B,xshift=0cm] (E) {$A_1$};
\node [right of = E,xshift=0cm] (F) {$\cdots$};
\node [right of = D,xshift=0cm] (G) {$\cdots$};

\draw [right hook->] (A) -- (B) node[midway,anchor=south]{};
\draw [right hook->] (B) -- (E) node[midway,anchor=west]{};
\draw [right hook->] (E) -- (F) node[midway,anchor=west]{};
\draw [->] (C) -- (D) node[midway,anchor=west]{};
\draw [->] (D) -- (G) node[midway,anchor=west]{};

\draw [->>] (B) -- (C) node[midway,anchor=west]{$\sim$};
\draw [->>] (E) -- (D) node[midway,anchor=west]{$\sim$};
\end{tikzpicture}
\end{center}

Set $A = \text{colim}\,A_n$. Then, by the assumption that $\widebar{\text{H}}{}^\bullet E \cong \text{colim}\,\widebar{\text{H}}{}^\bullet E_n$, we have that the canonical map $A \to \widebar{\text{C}}{}^\bullet E$ is a quasi-isomorphism. Applying $\text{U}$, we have that $\text{U}A$ is the inverse limit of the $\text{U}A_n$ and we have a commutative diagram as follows.

\begin{center}
\begin{tikzpicture}[node distance = 2cm]
\node [] (A) {$E_1$};
\node [right of = A,xshift=0cm] (B) {$E_0$};
\node [below of = A] (C) {$\text{U}A_1$};
\node [below of = B] (D) {$\text{U}A_0$};
\node [left of = A,xshift=0cm] (E) {$\cdots$};
\node [left of = C,xshift=0cm] (G) {$\cdots$};

\draw [->] (A) -- (B) node[midway,anchor=south]{};
\draw [->] (A) -- (C) node[midway,anchor=west]{$\sim$};
\draw [->] (B) -- (D) node[midway,anchor=west]{$\sim$};
\draw [->] (C) -- (D) node[midway,anchor=south]{};
\draw [->] (E) -- (A) node[midway,anchor=south]{};
\draw [->] (G) -- (C) node[midway,anchor=south]{};
\end{tikzpicture}
\end{center}

Here the vertical maps are the composites $E_n \to \text{U}\widebar{\text{C}}{}^\bullet E_n \to \text{U}A_n$ which are quasi-isomorphisms as the $E_n$ are resolvable and these composites represent components of the unit of the derived adjunction. Moreover, since $\text{U}$ maps cofibrations between cofibrant algebras to fibrations of spectra, each map in the bottom row is a fibration and each of the $\text{U}A_n$ are fibrant. As weak equivalences between fibrant spectra are simply levelwise weak equivalences (see Proposition~\ref{prop:fibcofibsp}(ii)), by the standard argument for inverse limits of weak equivalences of spaces along towers of fibrations, we find that the induced map on limits $E \to \text{U}A$ is a weak equivalence. This map is the composite $E \to \text{U}\widebar{\text{C}}{}^\bullet E \to \text{U}A$ and so represents the evaluation of the unit of the derived adjunction at $E$. This unit is thus an isomorphism at $E$, and so $E$ is resolvable, as desired.
\end{proof}

Next, we wish to consider resolvability of fibre products.

\begin{Proposition}\label{prop:resolv_fib_prods}
Let $E$ be a spectrum and suppose that it can be written as a fibre product
\begin{center}
\begin{tikzpicture}[node distance = 1.5cm]
\node(A){$E$};
\node[below of = A](C){$E_1$};
\node[right of = A](B){$E_2$};
\node[below of = B](D){$F$};
	
\draw[->] (A) -- (B) node[midway,anchor=south]{};
\draw[->] (A) -- (C) node[midway,anchor=east]{};
\draw[->] (C) -- (D) node[midway,anchor=north]{};
\draw[->] (B) -- (D) node[midway,anchor=west]{};

\begin{scope}[shift=($(D)!.2!(A)$)]
\draw +(-0.25,0) -- +(0,0)  -- +(0,0.25);
\end{scope}
\end{tikzpicture}
\end{center}
such that:
\begin{itemize}
	\item $E_1, E_2$ and $F$ are fibrant and $E_1, E_2$ are of finite $p$-type.
	\item The righthand vertical map $E_2 \to F$ is a fibration.
	\item There exists an $N$ such that, for $n > N$, $(E_1)_n, (E_2)_n$ are connected and $F_n$ is simply connected.
\end{itemize}
Then $E$ is resolvable whenever $E_1, E_2, F$ are resolvable.
\end{Proposition}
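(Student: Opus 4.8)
The plan is, in the same spirit as the proof of Proposition~\ref{prop:resolv_inv_limits}, to exhibit $\overline{\text{C}}{}^\bullet(E)$ as an algebraic homotopy pushout, transport this through $\text{U}$ using that $\text{U}$ is right Quillen, and then compare homotopy pullbacks of spectra. What makes the stable case go through cleanly is the homotopy additivity of the stable operad, i.e. Proposition~\ref{prop:pushouts_alg} in the $\overline{\F}_p$-form of Proposition~\ref{prop:additivity_of_monad_Fpbar}(iii).

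First I would observe that, since $E_1, E_2, F$ are fibrant and $E_2 \to F$ is a fibration, the square
\[
\begin{array}{ccc}
E & \longrightarrow & E_2 \\
\downarrow & & \downarrow \\
E_1 & \longrightarrow & F
\end{array}
\]
is a homotopy pullback square in $\mathsf{Sp}$, hence, read in $\mathsf{Sp}^{\text{op}}$, a homotopy pushout square (a pushout along a cofibration between cofibrant objects). By Proposition~\ref{prop:spec_coch_adj_qadj} the functor $\overline{\text{C}}{}^\bullet \colon \mathsf{Sp}^{\text{op}} \to \widebar{\mathpzc{E}}_{\text{st}}^\dagger\text{-}\mathsf{Alg}$ is left Quillen, so it carries this to a pushout square of $\widebar{\mathpzc{E}}_{\text{st}}^\dagger$-algebras in which $\overline{\text{C}}{}^\bullet(F) \to \overline{\text{C}}{}^\bullet(E_2)$ is a cofibration, all of $\overline{\text{C}}{}^\bullet(F), \overline{\text{C}}{}^\bullet(E_1), \overline{\text{C}}{}^\bullet(E_2)$ are cofibrant, and the pushout corner is $\overline{\text{C}}{}^\bullet(E)$ (the last point because $\overline{\text{C}}{}^\bullet$, being a left adjoint on $\mathsf{Sp}^{\text{op}}$, preserves the colimit $E = E_1 \times_F E_2$). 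Thus $\overline{\text{C}}{}^\bullet(E) = \overline{\text{C}}{}^\bullet(E_1) \amalg_{\overline{\text{C}}{}^\bullet(F)} \overline{\text{C}}{}^\bullet(E_2)$ is already a homotopy pushout of cofibrant algebras along a cofibration.

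By Proposition~\ref{prop:additivity_of_monad_Fpbar}(iii) this homotopy pushout is quasi-isomorphic to $\overline{\text{C}}{}^\bullet(E_1) \oplus_{\overline{\text{C}}{}^\bullet(F)} \overline{\text{C}}{}^\bullet(E_2)$, the pushout taken in $\mathsf{Co}_{\overline{\F}_p}$; since $\overline{\text{C}}{}^\bullet(F) \to \overline{\text{C}}{}^\bullet(E_2)$ is in particular a cofibration of cochain complexes, this is a homotopy pushout in $\mathsf{Co}_{\overline{\F}_p}$, and as $\mathsf{Co}_{\overline{\F}_p}$ is stable it is also a homotopy pullback square there. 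Since the forgetful functor $\widebar{\mathpzc{E}}_{\text{st}}^\dagger\text{-}\mathsf{Alg} \to \mathsf{Co}_{\overline{\F}_p}$ detects weak equivalences and creates the relevant fibrations and limits, it follows that the square of algebras with corners $\overline{\text{C}}{}^\bullet(F), \overline{\text{C}}{}^\bullet(E_1), \overline{\text{C}}{}^\bullet(E_2), \overline{\text{C}}{}^\bullet(E)$ is simultaneously a homotopy pullback square. Applying $\text{der}\,\text{U}$, which preserves homotopy pullbacks as $\text{U}$ is right Quillen, and using the resolvability of $E_1, E_2, F$ to identify three of the corners with $E_1, E_2, F$, we obtain a homotopy pullback square of spectra with fourth corner $\text{der}\,\text{U}\,\text{der}\,\overline{\text{C}}{}^\bullet(E)$. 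By naturality of the unit of the derived adjunction over the original square, the comparison map $E \to \text{der}\,\text{U}\,\text{der}\,\overline{\text{C}}{}^\bullet(E)$ is exactly the map of homotopy pullbacks induced by the units; being a weak equivalence on each of the three outer corners, it is a weak equivalence, so $E$ is resolvable.

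The finiteness and connectivity hypotheses — $E_1, E_2$ of finite $p$-type and, for $n > N$, $(E_1)_n, (E_2)_n$ connected with $F_n$ simply connected — are what is needed to guarantee that the concrete objects manipulated above are the homotopically correct ones: the spectral cochains $\overline{\text{C}}{}^\bullet(-)$ are by construction inverse limits of cochains on the constituent based simplicial sets, and $E_n = (E_1)_n \times_{F_n} (E_2)_n$, so identifying $\overline{\text{H}}{}^\bullet$ of these complexes (and confirming that $\overline{\text{C}}{}^\bullet(E)$, with its $\widebar{\mathpzc{E}}_{\text{st}}^\dagger$-action, genuinely realises the algebraic homotopy pushout) requires running the space-level Eilenberg--Moore computation at each spectral degree and then passing to the limit; simple connectivity of $F_n$ secures the convergence of that spectral sequence and finite $p$-type kills the ensuing Milnor $\lim^1$. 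I expect this reconciliation of the explicit spacewise cochain model with the abstract algebraic homotopy pushout to be the main obstacle; once it is in hand, the rest is the formal homotopy-pushout/homotopy-pullback shuffle sketched above, driven by Proposition~\ref{prop:additivity_of_monad_Fpbar}(iii) and the Quillen adjunction of Proposition~\ref{prop:spec_coch_adj_qadj}.
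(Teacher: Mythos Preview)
Your proposal rests on a misreading of the adjunction: in Proposition~\ref{prop:U_is_right_adjoint} and Proposition~\ref{prop:spec_coch_adj_qadj}, it is $\text{U}$ that is the \emph{left} adjoint from $\widebar{\mathpzc{E}}_{\text{st}}^\dagger\text{-}\mathsf{Alg}$ to $\mathsf{Sp}^{\text{op}}$, and $\overline{\text{C}}{}^\bullet$ is the \emph{right} adjoint. The proof of Proposition~\ref{prop:spec_coch_adj_qadj} checks exactly this: $\overline{\text{C}}{}^\bullet$ sends fibrations of $\mathsf{Sp}^{\text{op}}$ (i.e.\ cofibrations of spectra) to fibrations of algebras (epimorphisms), and it is $\text{U}$ that preserves cofibrations. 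Consequently two of your key moves fail outright. First, the cochain algebras $\overline{\text{C}}{}^\bullet(E_1), \overline{\text{C}}{}^\bullet(E_2), \overline{\text{C}}{}^\bullet(F)$ are \emph{not} cofibrant $\widebar{\mathpzc{E}}_{\text{st}}^\dagger$-algebras in general, and $\overline{\text{C}}{}^\bullet(F) \to \overline{\text{C}}{}^\bullet(E_2)$ is not a cofibration of algebras; so Proposition~\ref{prop:additivity_of_monad_Fpbar}(iii) does not apply to them. Second, being a right adjoint, $\overline{\text{C}}{}^\bullet$ has no reason to send the limit $E = E_1 \times_F E_2$ to the colimit $\overline{\text{C}}{}^\bullet(E_1) \amalg_{\overline{\text{C}}{}^\bullet(F)} \overline{\text{C}}{}^\bullet(E_2)$; the identification you asserted formally is precisely the substantive statement to be proved.

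The paper's proof accordingly begins by choosing a cofibrant approximation $B \leftarrow A \rightarrow C$ of the diagram $\overline{\text{C}}{}^\bullet(E_1) \leftarrow \overline{\text{C}}{}^\bullet(F) \rightarrow \overline{\text{C}}{}^\bullet(E_2)$, and the heart of the argument is establishing that $B \amalg_A C \to \overline{\text{C}}{}^\bullet(E)$ is a quasi-isomorphism. This is done by comparing the bar construction $\text{N}(\text{Bar}_\bullet(B,A,C))$ (identified with the pushout via Proposition~\ref{prop:pushout_normalization}) with the cobar construction $\text{N}(\overline{\text{C}}{}^\bullet(\text{Cobar}^\bullet(E_1,F,E_2)))$, and showing the latter computes $\overline{\text{C}}{}^\bullet(E)$ by invoking the space-level Eilenberg--Moore result at each spectral degree and then passing through the inverse limit via a $\lim^1$ argument. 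You correctly identified this Eilenberg--Moore input and the role of the finiteness and connectivity hypotheses in your last paragraph, but you treated it as a side verification rather than the main content; in fact the formal ``homotopy-pushout/homotopy-pullback shuffle'' you sketched in the first two paragraphs cannot be carried out as stated. Once the quasi-isomorphism is in hand, one applies $\text{U}$ (the \emph{left} adjoint, which turns the algebra pushout into a spectrum pullback) and finishes by comparing the two pullback squares using the resolvability of $E_1, E_2, F$, essentially as you outlined.
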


\begin{proof}
Suppose that $E_1, E_2$ and $F$ are resolvable. For the diagram $\widebar{\text{C}}{}^\bullet(E_1) \leftarrow \widebar{\text{C}}{}^\bullet(F) \rightarrow \widebar{\text{C}}{}^\bullet(E_2)$, we take a cofibrant approximation as below.

\begin{center}
\begin{tikzpicture}[node distance = 2cm]
\node [] (A) {$A$};
\node [right of = A,xshift=0cm] (B) {$C$};
\node [below of = A] (C) {$\widebar{\text{C}}{}^\bullet(F)$};
\node [below of = B] (D) {$\widebar{\text{C}}{}^\bullet(E_2)$};
\node [left of = A,xshift=0cm] (E) {$B$};
\node [left of = C,xshift=0cm] (G) {$\widebar{\text{C}}{}^\bullet(E_1)$};

\draw [right hook->] (A) -- (B) node[midway,anchor=south]{};
\draw [->] (A) -- (C) node[midway,anchor=west]{$\sim$};
\draw [->] (B) -- (D) node[midway,anchor=west]{$\sim$};
\draw [->] (C) -- (D) node[midway,anchor=south]{};
\draw [left hook->] (A) -- (E) node[midway,anchor=south]{};
\draw [->] (C) -- (G) node[midway,anchor=south]{};
\draw [->] (E) -- (G) node[midway,anchor=west]{$\sim$};
\end{tikzpicture}
\end{center}

Suppose, for the time being, that we have shown that the induced map $B \amalg_A C \to \widebar{\text{C}}{}^\bullet(E)$ is a quasi-isomorphism. Then, having formed a cofibrant replacement of the cochains $\widebar{\text{C}}{}^\bullet(E)$, the unit of the derived adjunction, evaluated at $E$, is represented by the composite $E \to \text{U}\widebar{\text{C}}^\bullet (E) \to \text{U}(B \amalg_A C)$. Moreover,  we have the following commutative diagram.

\begin{center}
\begin{tikzpicture}[node distance = 3cm]
\node [] (A) {$E$};
\node [below of = A] (B) {$E_1$};
\node [right of = A] (C) {$E_2$};
\node [below of = C] (D) {$F$};

\node [below right of = A,yshift=1cm,xshift=3cm] (AA) {$\text{U}(B \amalg_A C)$};
\node [below of = AA] (BB) {$\text{U}B$};
\node [right of = AA] (CC) {$\text{U}C$};
\node [below of = CC] (DD) {$\text{U}A$};

\draw[->] (A) -- (B);
\draw[->] (A) -- (C);
\draw[->] (B) -- (D);
\draw[->] (C) -- (D);

\draw[->] (AA) -- (BB);
\draw[->] (AA) -- (CC);
\draw[->] (BB) -- (DD);
\draw[->] (CC) -- (DD);

\draw[->] (A) -- (AA);
\draw[->] (B) -- (BB);
\draw[->] (C) -- (CC);
\draw[->] (D) -- (DD);
\end{tikzpicture}
\end{center}
If $E_1, E_2, F$ are resolvable, each of the maps $E_1 \to \text{U}B$, $E_2 \to \text{U}C$ and $F \to \text{U}A$ is a weak equivalence. Moreover, each of the lefthand and righthand squares are pullback squares, and the maps $E_2 \to F$ and $\text{U}C \to \text{U}A$ are fibrations (the latter because, as in the proof of Proposition~\ref{prop:spec_coch_adj_qadj}, $\text{U}$ maps cofibrations to fibrations of spectra). It follows that $E \to \text{U}(B \amalg_A C)$ is also weak equivalence,  and this is exactly what is desired to show that $E$ is resolvable. \\

Due to the argument just described, it remains only to show that the map $B \amalg_A C \to \widebar{\text{C}}{}^\bullet(E)$ is a quasi-isomorphism. Recall that the pushout may be computed via the bar construction
\[
\text{Bar}_n(B,A,C) = B \amalg \underbrace{A \amalg \cdots \amalg A}_{n \: \text{factors}} \amalg \, C
\]
in that, by Proposition~\ref{prop:pushout_normalization} (or really the analogue of it for $\widebar{\mathpzc{E}}^\dagger_{\text{st}}$),  the induced map from the normalization $\text{N}(\text{Bar}_\bullet(B,A,C)) \to B \amalg_A C$ is a quasi-isomorphism. We wish to relate this pushout to the fibre product. We first construct cochains on the fibre product via a cobar construction. The cobar construction is defined as follows:
\[
\text{Cobar}^n(E_1,F,E_2) = E_1 \times \underbrace{F \times \cdots \times F}_{n \: \text{factors}} \times E_2.
\]
This gives a cosimplicial spectrum with coface maps induced by diagonal maps and codegeneracies by projections. Applying cochains, we get a simplicial cochain complex
\[
\widebar{\text{C}}{}^\bullet(\text{Cobar}^\bullet(E_1,F,E_2)).
\]
Considering $E$ as a constant cosimplicial spectrum, we have an induced map
\[
\text{N}(\widebar{\text{C}}{}^\bullet(\text{Cobar}^\bullet(E_1,F,E_2))) \to \widebar{\text{C}}{}^\bullet(E)
\]
and we claim that this is a quasi-isomorphism. Expressing, as in Section~\ref{subsec:spec_cochains}, spectral cochains as an inverse limit of space level cochains, we have that the map 
\[
\widebar{\text{C}}{}^\bullet(\text{Cobar}^\bullet(E_1,F_,E_2)) \to \widebar{\text{C}}{}^\bullet(E)
\]
is an inverse limit of the maps
\[
\widebar{\text{C}}{}^\bullet(\text{Cobar}^\bullet((E_1)_n,F_n,(E_2)_n))[-n] \to \widebar{\text{C}}{}^\bullet(E_n)[-n].
\]
As in the proof of Lemma 5.2 in~\cite{Mandell} (a lemma proven in the course of demonstrating the unstable analogue of our result here),  for sufficiently large $n$, upon normalization, these maps are quasi-isomorphisms. Moreover, the maps forming the inverse limit tower are epimorphisms since $E_1, E_2, F$ are fibrant. Thus, by a $\text{lim}^1$ argument, we have that the map $\widebar{\text{C}}{}^\bullet((\text{Cobar}^\bullet(E_1,F_,E_2))) \to \widebar{\text{C}}{}^\bullet(E)$ between the inverse limits is also a quasi-isomorphism. \\

Now we relate the bar and cobar constructions. Using the various projection maps on $E_1 \times F \times \cdots \times F \times E_2$, we have maps
\begin{multline*}
B \amalg A \amalg \cdots \amalg A \amalg C \longrightarrow \widebar{\text{C}}{}^\bullet(E_1) \amalg \widebar{\text{C}}{}^\bullet(F) \amalg \cdots \amalg \widebar{\text{C}}{}^\bullet(F) \amalg \widebar{\text{C}}{}^\bullet(E_2) \\
\longrightarrow \widebar{\text{C}}{}^\bullet(E_1 \times F \times \cdots \times F \times E_2).
\end{multline*}
These maps are quasi-isomorphisms because if we postcompose with the map
\[
\widebar{\text{C}}{}^\bullet(E_1 \times F \times \cdots \times F \times E_2) \longrightarrow \widebar{\text{C}}{}^\bullet(E_1 \amalg F \amalg \cdots \amalg F \amalg E_2)
\]
induced by the canonical map
\[
E_1 \amalg F \amalg \cdots \amalg F \amalg E_2 \to E_1 \times F \times \cdots \times F \times E_2
\]
(given by a matrix with identity maps along the diagonal and zero maps elsewhere) and make the identification $\widebar{\text{C}}{}^\bullet(E_1 \amalg F \amalg \cdots \amalg F \amalg E_2) \cong \widebar{\text{C}}{}^\bullet(E_1) \amalg \widebar{\text{C}}{}^\bullet(F) \amalg \cdots \amalg \widebar{\text{C}}{}^\bullet(F) \amalg \widebar{\text{C}}{}^\bullet(E_2)$, we get a quasi-isomorphism by definition of $A, B$ and $C$, and because the canonical map $E_1 \amalg F \amalg \cdots \amalg F \amalg E_2 \to E_1 \times F \times \cdots \times F \times E_2$ is a weak equivalence of spectra by the standard argument (coproducts and products of fibrant spectra are weakly equivalent). Now, it follows that we get a quasi-isomorphism of simplicial $\widebar{\mathpzc{E}}^\dagger_{\text{st}}$-algebras
\[
\text{Bar}_\bullet(B,A,C) \to \widebar{\text{C}}{}^\bullet(\text{Cobar}^\bullet(E_1,F,E_2))
\]
and so a quasi-isomorphism
\[
\text{N}(\text{Bar}_\bullet(B,A,C)) \to \text{N}(\widebar{\text{C}}{}^\bullet(\text{Cobar}^\bullet(E_1,F,E_2))).
\]
Finally, we can make use of this by noting that we have a commutative square as follows.

\begin{center}
\begin{tikzpicture}[node distance = 1.5cm]
\node(A){$\text{N}(\text{Bar}_\bullet(B,A,C))$};
\node[below of = A](C){$B \amalg_C A$};
\node[right of = A, xshift = 3cm](B){$\text{N}(\widebar{\mathrm{C}}^\bullet(\text{Cobar}^\bullet(E_1,F,E_2)))$};
\node[below of = B](D){$\widebar{\mathrm{C}}^\bullet(E)$};
	
\draw[->] (A) -- (B) node[midway,anchor=south]{$\sim$};
\draw[->] (A) -- (C) node[midway,anchor=east]{$\sim$};
\draw[->] (C) -- (D) node[midway,anchor=north]{};
\draw[->] (B) -- (D) node[midway,anchor=west]{$\sim$};
\end{tikzpicture}
\end{center}

Here the bottom map is the aforementioned map, the map which we wished to show to be a weak equivalence, and so we are done.
\end{proof}

\section{Proof of the Main Theorem}

With the aid of the results in the previous two sections, we can now extend our earlier resolvability result to include other spectra.

\begin{Proposition}\label{prop:ZmodpmZphat}
The Eilenberg-MacLane spectra $\Sigma^n\emph{H}A$, for $n \in \Z$, with $A = \Z/p^m$ for some $m \ge 1$ or $A = \Z_p^\wedge$ are resolvable.
\end{Proposition}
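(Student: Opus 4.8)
The plan is to bootstrap from the resolvability of $\Sigma^n\text{H}\F_p$, established in Proposition~\ref{prop:HFp_resolv}, using the two fibration theorems of the preceding subsection. First I would dispose of the case $A = \Z/p^m$ by induction on $m \geq 1$, the base case $m = 1$ being precisely Proposition~\ref{prop:HFp_resolv}. Then I would handle $A = \Z_p^\wedge$ by exhibiting $\Sigma^n\text{H}\Z_p^\wedge$ as a homotopy inverse limit of the $\Sigma^n\text{H}\Z/p^m$ and applying Proposition~\ref{prop:resolv_inv_limits}. Throughout, I would freely replace maps of spectra by weakly equivalent fibrations (and use that the homotopy fibre product, resp.\ homotopy limit, is thereby modelled by the strict one), which is legitimate since resolvability depends only on the weak equivalence type.

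For the inductive step, fix $m \geq 2$. The short exact sequence $0 \to \Z/p \to \Z/p^m \to \Z/p^{m-1} \to 0$ of abelian groups induces a homotopy fibre sequence of Eilenberg--MacLane spectra
\[
\Sigma^n\text{H}\Z/p \longrightarrow \Sigma^n\text{H}\Z/p^m \longrightarrow \Sigma^n\text{H}\Z/p^{m-1} \overset{\delta}\longrightarrow \Sigma^{n+1}\text{H}\Z/p,
\]
so that $\Sigma^n\text{H}\Z/p^m$ is the homotopy fibre product of the diagram $* \to \Sigma^{n+1}\text{H}\Z/p \leftarrow \Sigma^n\text{H}\Z/p^{m-1}$. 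After replacing $\delta$ by a fibration we may apply Proposition~\ref{prop:resolv_fib_prods} with $E_1 = *$, $E_2 = \Sigma^n\text{H}\Z/p^{m-1}$ and $F = \Sigma^{n+1}\text{H}\Z/p$: all three are fibrant by Proposition~\ref{prop:EM_spec_fib} and are visibly of finite $p$-type; the map $E_2 \to F$ is a fibration; and in each spectral degree $d$ large enough (depending on $n$) the spaces $*$ and $\text{K}(\Z/p^{m-1}, n+d)$ are connected while $\text{K}(\Z/p, n+1+d)$ is simply connected. Since $*$ is trivially resolvable, $\Sigma^{n+1}\text{H}\Z/p$ is resolvable by Proposition~\ref{prop:HFp_resolv}, and $\Sigma^n\text{H}\Z/p^{m-1}$ is resolvable by the inductive hypothesis, Proposition~\ref{prop:resolv_fib_prods} yields that $\Sigma^n\text{H}\Z/p^m$ is resolvable. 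This completes the induction and settles the case $A = \Z/p^m$ for all $m \geq 1$ and all $n \in \Z$.

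For $A = \Z_p^\wedge = \lim_m \Z/p^m$, the spectrum $\Sigma^n\text{H}\Z_p^\wedge$ is the homotopy inverse limit of the tower $\cdots \to \Sigma^n\text{H}\Z/p^2 \to \Sigma^n\text{H}\Z/p$; replacing the bonding maps by fibrations between the fibrant spectra $\Sigma^n\text{H}\Z/p^m$, this becomes a strict limit of a tower of fibrations with fibrant terms. To invoke Proposition~\ref{prop:resolv_inv_limits} it remains to check that the canonical map $\mathrm{colim}_m\,\overline{\text{H}}{}^\bullet(\Sigma^n\text{H}\Z/p^m) \to \overline{\text{H}}{}^\bullet(\Sigma^n\text{H}\Z_p^\wedge)$ is an isomorphism. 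Since $\overline{\text{H}}{}^\bullet$ is mod-$p$ cohomology base-changed to $\overline{\F}_p$, and $p$-completion induces an isomorphism on mod-$p$ cohomology, the right-hand side is $\text{H}^\bullet(\Sigma^n\text{H}\Z;\F_p) \otimes_{\F_p} \overline{\F}_p$; the left-hand colimit is computed from the standard description of $\text{H}^*(\text{H}\Z/p^m;\F_p)$ as a module over the Steenrod algebra together with the maps induced by the surjections $\Z/p^{m+1} \twoheadrightarrow \Z/p^m$ --- the mod-$p$ reductions being compatible, the fundamental classes match up, whereas the extra generator detecting the order-$p^m$ Bockstein becomes null in the colimit --- giving $\mathrm{colim}_m \text{H}^*(\text{H}\Z/p^m;\F_p) \cong \text{H}^*(\text{H}\Z;\F_p)$, exactly as in the corresponding unstable computation carried out in~\cite{Mandell}. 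Granting this identification, Proposition~\ref{prop:resolv_inv_limits} and the already-established resolvability of each $\Sigma^n\text{H}\Z/p^m$ give that $\Sigma^n\text{H}\Z_p^\wedge$ is resolvable.

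The only step that is not a purely formal assembly of the fibration theorems is the colimit-of-cohomologies computation needed to verify the second hypothesis of Proposition~\ref{prop:resolv_inv_limits} in the $\Z_p^\wedge$ case; I expect to handle this either by a direct Steenrod-module computation or by citing the parallel space-level argument in~\cite{Mandell}. Everything else --- the fibre sequences, the finiteness and connectivity bookkeeping, and the fibrant/fibration replacements --- is routine.
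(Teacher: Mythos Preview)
Your proposal is correct and follows essentially the same approach as the paper: induction on $m$ via the fibre sequence $\Sigma^n\text{H}\Z/p \to \Sigma^n\text{H}\Z/p^m \to \Sigma^n\text{H}\Z/p^{m-1}$ together with Proposition~\ref{prop:resolv_fib_prods}, then Proposition~\ref{prop:resolv_inv_limits} for the tower $\Sigma^n\text{H}\Z_p^\wedge = \lim_m \Sigma^n\text{H}\Z/p^m$. The paper uses the path spectrum $\text{P}\Sigma^{n+1}\text{H}\Z/p$ in place of your $*$ (equivalent, since both are contractible and fibrant) and does not spell out the verification of the colimit-of-cohomologies hypothesis that you discuss; otherwise the arguments coincide.
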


\begin{proof}
For $m \ge 1$ and $n \in \Z$, recall that we have well-known commutative squares as follows.

\begin{center}
\begin{tikzpicture}[node distance = 1.5cm]
\node(A){$\text{K}(\Z/p^m,n)$};
\node[below of = A](C){$\text{K}(\Z/p^{m-1},n)$};
\node[right of = A, xshift = 2cm](B){$\text{P}\text{K}(\Z/p,n+1)$};
\node[below of = B](D){$\text{K}(\Z/p,n+1)$};
	
\draw[->] (A) -- (B) node[midway,anchor=south]{};
\draw[->] (A) -- (C) node[midway,anchor=east]{};
\draw[->] (C) -- (D) node[midway,anchor=north]{};
\draw[->>] (B) -- (D) node[midway,anchor=west]{};

\begin{scope}[shift=($(D)!.2!(A)$)]
\draw +(-0.25,0) -- +(0,0)  -- +(0,0.25);
\end{scope}
\end{tikzpicture}
\end{center}

(Here $\text{P}$ denotes a path space, and, given the description of the Eilenberg-MacLane spaces before, these maps can be given precise combinatorial descriptions.) An easy check shows that the maps in these squares in fact assemble together to yield maps of the Eilenberg-MacLane spectra, so that, for $m \ge 1$ and $n \in \Z$, we have commutative squares as follows.

\begin{center}
\begin{tikzpicture}[node distance = 1.5cm]
\node(A){$\Sigma^n\text{H}\Z/p^{m+1}$};
\node[below of = A](C){$\Sigma^n\text{H}\Z/p^m$};
\node[right of = A, xshift = 1cm](B){$\text{P}\Sigma^{n+1}\text{H}\Z/p$};
\node[below of = B, yshift = 0mm](D){$\Sigma^{n+1}\text{H}\Z/p$};
	
\draw[->] (A) -- (B) node[midway,anchor=south]{};
\draw[->] (A) -- (C) node[midway,anchor=east]{};
\draw[->] (C) -- (D) node[midway,anchor=north]{};
\draw[->>] (B) -- (D) node[midway,anchor=west]{};
\end{tikzpicture}
\end{center}

Moreover, the conditions of Proposition~\ref{prop:resolv_fib_prods} are satisfied, so that, by induction, we have the desired result for $\Z/p^m$ for $m \ge 1$. Next, Proposition~\ref{prop:resolv_inv_limits} gives us the case of $\Sigma^n\text{H}\Z_p^{\wedge}$ using the following tower:
\[
\Sigma^n\text{H}\Z_p^{\wedge} = \lim(\cdots \to \Sigma^n\text{H}\Z/p^m \to \cdots \to \Sigma^n\text{H}\Z/p).
\]
\end{proof}

We are now finally able to provide the desired algebraic models of $p$-adic stable homotopy types.

\begin{Proposition}\label{prop:algmodels}
All bounded below, $p$-complete spectra of finite $p$-type are resolvable. As a result, the cochains functor
\[
\widebar{\emph{C}}^\bullet \colon \mathsf{Sp}^{\emph{op}} \to \widebar{\mathpzc{E}}_{\emph{st}}^\dagger\emph{-}\mathsf{Alg}
\]
induces a full embedding of the homotopy category of spectra into the derived category of $\widebar{\mathpzc{E}}_{\emph{st}}^\dagger$-algebras when we restrict to bounded below, $p$-complete spectra of finite $p$-type.
\end{Proposition}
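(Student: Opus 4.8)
The plan is to reduce the statement about all bounded below, $p$-complete spectra of finite $p$-type to a Postnikov tower induction, bottoming out at the resolvability of the (generalized) Eilenberg-MacLane spectra already established in Proposition~\ref{prop:ZmodpmZphat}, and combining the fibration theorems Propositions~\ref{prop:resolv_inv_limits} and~\ref{prop:resolv_fib_prods}. First I would recall the standard fact that the class of resolvable spectra, once shown to contain the Eilenberg-MacLane spectra $\Sigma^n\text{H}\Z/p^m$ and $\Sigma^n\text{H}\Z_p^\wedge$ and to be closed under the two kinds of homotopy limits appearing in those two propositions, automatically contains every bounded below $p$-complete spectrum of finite $p$-type: such a spectrum $E$ is the inverse limit of its Postnikov tower $\cdots \to E[n] \to E[n-1] \to \cdots$, each stage of which is built from the previous one by a pullback square whose right-hand corner is a shifted Eilenberg-MacLane spectrum $\Sigma^{k}\text{H}\pi$ with $\pi$ a finitely generated $\Z_p^\wedge$-module, hence a finite product of spectra of the form $\Sigma^k\text{H}\Z/p^m$ and $\Sigma^k\text{H}\Z_p^\wedge$.

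The key steps, in order, would be: (1) observe that resolvability is preserved under finite products — this follows since $\overline{\text{C}}^\bullet$ and $\text{U}$ both carry finite products to finite coproducts up to the relevant equivalences, and finite coproducts of cofibrant algebras are computed correctly by Proposition~\ref{prop:additivity_of_monad_Fpbar}; (2) conclude from Proposition~\ref{prop:ZmodpmZphat} and step (1) that every $\Sigma^k\text{H}\pi$ with $\pi$ a finitely generated $\Z_p^\wedge$-module is resolvable; (3) set up the Postnikov tower of a bounded below $p$-complete spectrum $E$ of finite $p$-type, checking that the $k$-invariant squares
\begin{center}
\begin{tikzpicture}[node distance=1cm]
\node(A){$E[n]$};
\node[below= of A](C){$E[n-1]$};
\node[right= of A,xshift=1cm](B){$\text{P}\Sigma^{n+1}\text{H}\pi_n$};
\node[below= of B,yshift=0mm](D){$\Sigma^{n+1}\text{H}\pi_n$};

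\draw[->] (A) -- (B) node[midway,anchor=south]{};
\draw[->] (A) -- (C) node[midway,anchor=east]{};
\draw[->] (C) -- (D) node[midway,anchor=north]{};
\draw[->>] (B) -- (D) node[midway,anchor=west]{};
\end{tikzpicture}
\end{center}
satisfy the hypotheses of Proposition~\ref{prop:resolv_fib_prods} (fibrancy, finite $p$-type of the two non-terminal corners, the connectivity hypothesis for $n$ large, which holds since $E$ is bounded below), and using it together with the path-space contractibility argument from Proposition~\ref{prop:ZmodpmZphat} to conclude inductively that each $E[n]$ is resolvable; (4) apply Proposition~\ref{prop:resolv_inv_limits} to $E = \lim_n E[n]$, verifying that the maps $E[n] \to E[n-1]$ may be taken to be fibrations and, crucially, that $\text{colim}_n \overline{\text{H}}^\bullet E[n] \to \overline{\text{H}}^\bullet E$ is an isomorphism; (5) translate the resolvability of all such $E$ into the full-and-faithfulness statement, which is purely formal: the derived unit $E \to (\text{der}\,\text{U})(\text{der}\,\overline{\text{C}}^\bullet)(E)$ being an isomorphism is equivalent to $\overline{\text{C}}^\bullet$ inducing a bijection on derived hom-sets out of $E$.

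I expect step (4), specifically the colimit-of-cohomology hypothesis in Proposition~\ref{prop:resolv_inv_limits}, to be the main obstacle — or rather the step requiring the most care rather than real difficulty. One needs that $\overline{\text{H}}^\bullet E = \text{colim}_n \overline{\text{H}}^\bullet E[n]$, which for bounded below spectra of finite $p$-type follows because in each fixed cohomological degree the tower $\{\overline{\text{H}}^\bullet E[n]\}$ stabilizes (the fibre of $E \to E[n]$ is $n$-connected, so $\overline{\text{H}}^q E \to \overline{\text{H}}^q E[n]$ is an isomorphism once $n > q$), but spelling this out requires the $\lim^1$-vanishing and the comparison of spectral cohomology with space-level cohomology as in Section~\ref{subsec:spec_cochains}. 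A secondary point of care is step (1): one must make sure that the derived coproduct appearing when computing $\text{U}$ of a product of Eilenberg-MacLane algebras is genuinely the homotopy coproduct, which is where Proposition~\ref{prop:additivity_of_monad_Fpbar} and the cofibrancy of the cell models $J_n$ (and their finite-product analogues) are needed. Everything else is a faithful transcription of Mandell's unstable argument in~\cite{Mandell}, now with $\widebar{\mathpzc{E}}_{\text{st}}^\dagger$ in place of the unstable $\mathbb{E}_\infty$ operad and with $\widehat{\mathcal{B}}$, $\mathcal{A}$ playing the roles that $\mathcal{B}$, $\mathcal{A}$ played there.
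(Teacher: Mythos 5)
Your proposal is correct and follows the same route as the paper's (very terse) proof: a Postnikov tower induction bottoming out at the Eilenberg--MacLane spectra of Proposition~\ref{prop:ZmodpmZphat}, climbing the tower with the fibre-product theorem (Proposition~\ref{prop:resolv_fib_prods}) and passing to the limit with the inverse-limit theorem (Proposition~\ref{prop:resolv_inv_limits}). The only small variation is your step (1): you absorb the fact that each $\pi_n$ is a finitely generated $\Z_p^\wedge$-module by arguing that resolvability is closed under finite products (using Proposition~\ref{prop:additivity_of_monad_Fpbar} and that products and coproducts of spectra agree), whereas the paper's phrasing implicitly uses the standard principal refinement of the Postnikov tower so that each stage has fibre a single cyclic $\Sigma^n\text{H}\Z/p^m$ or $\Sigma^n\text{H}\Z_p^\wedge$; both devices achieve the same reduction.
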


\begin{proof}
This follows from our resolvability results above, namely Propositions~\ref{prop:ZmodpmZphat},~\ref{prop:resolv_fib_prods} and~\ref{prop:resolv_inv_limits}, and the fact that bounded below, $p$-complete spectra of finite $p$-type admit Postnikov towers in which the fibres are $\Sigma^n\text{H}A$, for $n \in \Z$, with either $A = \Z/p^m$ for some $m \ge 1$ or $A = \Z_p^\wedge$.
\end{proof}

\backmatter

\nocite{*}
\bibliographystyle{amsalpha}
\bibliography{bib}

\providecommand{\bysame}{\leavevmode\hbox to3em{\hrulefill}\thinspace}
\providecommand{\MR}{\relax\ifhmode\unskip\space\fi MR }
% \MRhref is called by the amsart/book/proc definition of \MR.
\providecommand{\MRhref}[2]{%
  \href{http://www.ams.org/mathscinet-getitem?mr=#1}{#2}
}
\providecommand{\href}[2]{#2}
\begin{thebibliography}{BMMS86}

\bibitem[BE74]{BarrattEccles}
M.~Barratt and P.~Eccles, \emph{{On $\Gamma_+$ structures. I. A free group
  functor for stable homotopy theory}}, Topology \textbf{13} (1974), 25--45.

\bibitem[BF78]{BousfieldFriedlander}
A.~Bousfield and E.~Friedlander, \emph{{Homotopy theory of $\Gamma$-spaces,
  spectra, and bisimplicial sets}}, Geometric Applications of Homotopy Theory
  II, Lecture Notes in Mathematics \textbf{658} (1978), 80--130.

\bibitem[BF04]{BergerFresse}
C.~Berger and B.~Fresse, \emph{{Combinatorial operad actions on cochains}},
  Mathematical Proceedings of the Cambridge Philosophical Society \textbf{137}
  (2004), no.~1, 135--174.

\bibitem[BLS81]{BrownLenagan}
K.A. Brown, T.H. Lenaganm, and J.T. Stafford, \emph{{K-Theory and Stable
  Structure of Some Noetherian Group Rings}}, Proceedings of the London
  Mathematical Society \textbf{s3-42} (1981), no.~2, 193--230.

\bibitem[BM03]{BergerMoerdijk}
C.~Berger and I.~Moerdijk, \emph{{Axiomatic homotopy theory for operads}}, I.
  Comment. Math. Helv. \textbf{78} (2003), no.~805.

\bibitem[BMMS86]{BrunerMayMcClureSteinberger}
R.R. Bruner, J.P. May, J.E. McClure, and M.~Steinberger, \emph{{$H_\infty$ ring
  spectra and their applications}}, 1 ed., Lecture Notes in Mathematics, vol.
  1176, Springer Berlin, Heidelberg, 1986.

\bibitem[Bro73]{Brown}
K.S. Brown, \emph{{Abstract homotopy theory and generalized sheaf cohomology}},
  Trans. Amer. Math. Soc. \textbf{186} (1973), 419--458.

\bibitem[CKP17]{ChenKrizPultr}
R.~Chen, I.~Kriz, and A.~Pultr, \emph{{Kan's combinatorial spectra and their
  sheaves revisited}}, Theory and Applications of Categories \textbf{32}
  (2017), no.~39, 1363--1396.

\bibitem[CLM76]{CohenLadaMay}
F.~Cohen, T.~Lada, and J.P. May, \emph{{The homology of iterated loop spaces}},
  1 ed., Lecture Notes in Mathematics, vol. 533, Springer Berlin, Heidelberg,
  1976.

\bibitem[Fre98]{Fresse}
B.~Fresse, \emph{{Cogroups in algebras over an operad are free algebras}},
  Comment. Math. Helv. \textbf{73} (1998), no.~637.

\bibitem[Fre09a]{French}
J.~French, \emph{{A comparison of spectral sequences computing unstable
  homotopy groups of $p$-complete, nilpotent spaces}}, 2009.

\bibitem[Fre09b]{FresseBook}
B.~Fresse, \emph{{Modules over operads and functors}}, 1 ed., Lecture Notes in
  Mathematics, vol. 1967, Springer Berlin, Heidelberg, 2009.

\bibitem[GJ94]{GetzlerJones}
E.~Getzler and J.D.S. Jones, \emph{{Operads, homotopy algebra and iterated
  integrals for double loop spaces}}, 1994.

\bibitem[GJ09]{GoerssJardine}
P.~Goerss and R.~Jardine, \emph{{Simplicial homotopy theory}}, Birkh\"{a}user
  Basel, 2009.

\bibitem[GK94]{GinzburgKapranov}
V.~Ginzburg and M.~Kapranov, \emph{{Koszul duality for operads}}, Duke Math. J.
  \textbf{76} (1994), no.~1, 203--272.

\bibitem[Hin97]{Hinich1}
V.~Hinich, \emph{{Homological algebra of homotopy algebras}}, Communications in
  Algebra \textbf{25} (1997), no.~10, 3291--3323.

\bibitem[Hin01]{Hinich2}
\bysame, \emph{{Virtual operad algebras and realization of homotopy types}}, J.
  Pure Appl. Algebra \textbf{159} (2001), no.~2-3, 173--185.

\bibitem[Hin03]{Hinich1err}
V.~Hinich, \emph{{Erratum to "Homological algebra of homotopy algebras"}},
  2003.

\bibitem[Hov98]{HoveyPaper}
M.~Hovey, \emph{Monoidal model categories}, 1998.

\bibitem[Hov99]{Hovey}
\bysame, \emph{{Model categories}}, Mathematical Surveys and Monographs,
  vol.~63, American Mathematical Society, 1999.

\bibitem[HS87]{HinichSchechtman}
V.~Hinich and V.~Schechtman, \emph{{On homotopy limits of homotopy algebras}},
  Lecture Notes in Math \textbf{1289} (1987), 240--264.

\bibitem[iL23]{iLucio}
V.R. i~Lucio, \emph{Absolute algebras, contramodules, and duality squares},
  2023.

\bibitem[KM95]{KrizMay}
I.~Kriz and J.P. May, \emph{{Operads, Algebras, Modules, and Motives}},
  Ast\'{e}risque \textbf{233} (1995).

\bibitem[LGL22]{leGrignouLejay}
B.~Le~Grignou and D.~Lejay, \emph{Homotopy theory of linear coalgebras}, 2022.

\bibitem[Man01]{Mandell}
M.~Mandell, \emph{{$E_\infty$-Algebras and $p$-adic homotopy theory}}, Topology
  \textbf{40} (2001), no.~1, 43--94.

\bibitem[Mar06]{Markl}
M.~Markl, \emph{{Operads and PROPS}}, Handbook of Algebra \textbf{5} (2006),
  87--140.

\bibitem[May70]{May}
J.P. May, \emph{{A general algebraic approach to the Steenrod operations}}, The
  Steenrod Algebra and Its Applications: A Conference to Celebrate N.E.
  Steenrod's Sixtieth Birthday, Lecture Notes in Mathematics \textbf{168}
  (1970).

\bibitem[May03]{MayUnpublished}
\bysame, \emph{{Operads and sheaf cohomology}}, Unpublished notes (2003).

\bibitem[Mil58]{Milnor}
J.~Milnor, \emph{{The Steenrod algebra and its dual}}, The Annals of
  Mathematics \textbf{67} (1958), no.~1, 150--171.

\bibitem[MS03]{McClureSmith}
J.~McClure and J.~Smith, \emph{{Multivariable cochain operations and little
  $n$-cubes}}, Journal of the American Mathematical Society \textbf{16} (2003),
  no.~3, 681--704.

\bibitem[Rez96]{Rezk}
C.~Rezk, \emph{{Spaces of Algebra Structures and Cohomology of Operads}}, Ph.D.
  Thesis, Massachusetts Institute of Technology (1996).

\bibitem[Smi89]{Smirnov1}
V.~Smirnov, \emph{{On the chain complex of an iterated loop space}}, Izv. Akad.
  Nauk SSSR Ser. Mat. \textbf{53} (1989), 1108--1119, English translation in:
  Math. USSR-Izv. 35 (1990), 445–455.

\bibitem[Smi02]{Smirnov2}
\bysame, \emph{{The homology of iterated loop spaces}}, Forum Math. \textbf{14}
  (2002), 345--381.

\bibitem[Spi01]{Spitzweck}
M.~Spitzweck, \emph{{Operads, Algebras and Modules in Model Categories and
  Motives}}, Ph.D. Thesis, University of Bonn (2001).

\bibitem[Ste15]{MarcStephan}
M.~Stephan, \emph{{Kan spectra, group spectra and twisting structures}}, Ph.D.
  Thesis, \'{E}cole Polytechnique F\'{e}d\'{e}rale de Lausanne (2015).

\bibitem[Wei94]{Weibel}
C.~Weibel, \emph{{An Introduction to Homological Algebra}}, Cambridge
  University Press, 1994.

\bibitem[Whi17]{White}
D.~White, \emph{{Model structures on commutative monoids in general model
  categories}}, Journal of Pure and Applied Algebra \textbf{221} (2017),
  no.~12, 3124--3168.

\bibitem[Wu10]{Wu}
J.~Wu, \emph{{Simplicial objects and homotopy groups}}, Braids (A. J. Berrick,
  F. R. Cohen, E. Hanbury, Y.-L. Wong, J. Wu, editors), Lect. Notes Ser. Inst.
  Math. Sci. Natl. Univ. Singap. \textbf{19} (2010).

\bibitem[Yal14]{Yalin}
S.~Yalin, \emph{{The homotopy theory of bialgebras over pairs of operads}},
  Journal of Pure and Applied Algebra \textbf{218} (2014), no.~6, 973--991.

\end{thebibliography}

\printindex

\end{document}